\documentclass[11pt, reqno]{amsart}

\usepackage[top=3.75cm, bottom=3cm, left=3cm, right=3cm]{geometry}
\usepackage{amsmath}
\usepackage{amsthm}
\usepackage{amsfonts}
\usepackage{amssymb}
\usepackage{bbm}
\usepackage{comment}
\usepackage{color}
\usepackage[shortlabels]{enumitem}
\usepackage[mathscr]{eucal}
\usepackage{graphicx}
\usepackage[colorlinks=true, citecolor=darkblue, urlcolor=darkblue, linkcolor=darkblue, linktocpage = true]{hyperref}
\usepackage{mathtools}
\usepackage{tikz}
\usepackage{tikz-cd}
\usepackage[new]{old-arrows}
\usepackage{xpatch}
\usepackage[all,cmtip]{xy}

\setlist[itemize]{topsep=5pt,itemsep=3pt}
\setlist[enumerate]{topsep=5pt,itemsep=3pt}

\definecolor{darkblue}{rgb}{0,0,.75}

\numberwithin{equation}{section}

\theoremstyle{plain}
\newtheorem{theorem}{Theorem}[section]
\newtheorem{lemma}[theorem]{Lemma}
\newtheorem{proposition}[theorem]{Proposition}
\newtheorem{corollary}[theorem]{Corollary}
\theoremstyle{remark}
\newtheorem*{claim*}{Claim}
\theoremstyle{definition}
\newtheorem{setup}[theorem]{Setup}
\newtheorem{definition}[theorem]{Definition}
\newtheorem{example}[theorem]{Example}
\newtheorem{remark}[theorem]{Remark}
\newtheorem{notation}[theorem]{Notation}
\newtheorem{warning}[theorem]{Warning}

\makeatletter
\newtheoremstyle{italicsname}
{3pt}
{3pt}
{\itshape}
{}
{\bfseries}
{.}
{.5em}
{\thmname{#1}\thmnumber{\@ifnotempty{#1}{ }#2}%
	\thmnote{ {\the\thm@notefont(#3)}}}
\makeatother
\theoremstyle{italicsname}
\newtheorem{innerstep}{Step}
\newenvironment{step}[1]
{\renewcommand\theinnerstep{#1}\innerstep}
{\endinnerstep}

\newcommand{\st}{\mid}
\newcommand{\set}[1]{\left\{ \, #1 \, \right\}}
\newcommand{\id}{\mathrm{id}}
\newcommand{\wtilde}{\widetilde}
\def\Re{\mathop{\rm Re}\nolimits}
\def\Im{\mathop{\rm Im}\nolimits}
\newcommand{\bv}{\mathbf{v}}
\newcommand{\bw}{\mathbf{w}}

\def\Ctor{{\ensuremath{C\text{-}\mathrm{tor}}}}
\def\Ctf{{\ensuremath{C\text{-}\mathrm{tf}}}}
\newcommand{\Db}{\mathrm{D^b}}
\newcommand{\Dperf}{\mathrm{D}_{\mathrm{perf}}}
\newcommand{\Dqc}{\mathrm{D}_{\mathrm{qc}}}
\newcommand{\Dpug}{\mathrm{D_{pug}}}
\newcommand{\Coh}{\mathrm{Coh}}
\newcommand{\num}{\mathrm{num}}
\DeclareMathOperator{\Knum}{\rK_{\num}}
\DeclareMathOperator{\Stab}{Stab}
\newcommand{\Stabdagger}{\mathrm{Stab}^\dagger}

\DeclareMathOperator{\Coprod}{Coprod}
\DeclareMathOperator{\Pic}{Pic}
\DeclareMathOperator{\Spec}{Spec}
\DeclareMathOperator{\RHom}{RHom}
\DeclareMathOperator{\Hom}{Hom}
\DeclareMathOperator{\Ext}{Ext}
\DeclareMathOperator{\Aut}{Aut}
\DeclareMathOperator{\GL}{GL}
\DeclareMathOperator{\Quot}{Quot}
\DeclareMathOperator{\Tor}{Tor}
\DeclareMathOperator{\cone}{cone}
\DeclareMathOperator{\ch}{{ch}}
\DeclareMathOperator{\rk}{{rk}}
\newcommand{\td}{\mathrm{td}}

\newcommand{\cO}{\mathcal{O}}
\newcommand{\cE}{\mathcal{E}}
\newcommand{\cF}{\mathcal{F}}
\newcommand{\cG}{\mathcal{G}}
\newcommand{\cH}{\mathcal{H}}
\newcommand{\cL}{\mathcal{L}}
\newcommand{\cM}{\mathcal{M}}
\newcommand{\cN}{\mathcal{N}}
\newcommand{\cT}{\mathcal{T}}
\newcommand{\cZ}{\mathcal{Z}}

\newcommand{\cA}{\mathscr{A}}
\newcommand{\cC}{\mathscr{C}}
\newcommand{\cD}{\mathscr{D}}
\newcommand{\cP}{\mathscr{P}}

\newcommand{\rH}{\mathrm{H}}
\newcommand{\rK}{\mathrm{K}}
\newcommand{\fS}{\mathfrak{S}}
\newcommand{\bZ}{\mathbf{Z}}

\newcommand{\KK}{\mathbbm{k}}
\def\CC{\ensuremath{\mathbb{C}}}
\def\QQ{\ensuremath{\mathbb{Q}}}
\def\RR{\ensuremath{\mathbb{R}}}
\def\ZZ{\ensuremath{\mathbb{Z}}}
\def\PP{\ensuremath{\mathbb{P}}}

\makeatletter   
\xpatchcmd{\@tocline}
{\hfil\hbox to\@pnumwidth{\@tocpagenum{#7}}\par}
{\ifnum#1<0\hfill\else\dotfill\fi\hbox to\@pnumwidth{\@tocpagenum{#7}}\par}
{}{}
\makeatother 

\makeatletter
\renewcommand\part{%
	\if@noskipsec \leavevmode \fi
	\par
	\addvspace{4ex}%
	\@afterindentfalse
	\secdef\@part\@spart}

\def\@part[#1]#2{%
	\ifnum \c@secnumdepth >\m@ne
	\refstepcounter{part}%
	\addcontentsline{toc}{part}{Part \thepart.\hspace{1em}#1}%
	\else
	\addcontentsline{toc}{part}{#1}%
	\fi
	{\parindent \z@ \raggedright
		\interlinepenalty \@M
		\normalfont
		\ifnum \c@secnumdepth >\m@ne
		\centering 
		\Large\bfseries \partname\nobreakspace\thepart     
		\nobreak. 
		\fi
		\Large \bfseries { #2}%
	\par}%
\nobreak
\vskip 3ex
\@afterheading}
\def\@spart#1{%
{\parindent \z@ \raggedright
	\interlinepenalty \@M
	\normalfont
	\huge \bfseries #1\par}%
\nobreak
\vskip 3ex
\@afterheading}
\makeatother

\renewcommand{\thepart}{\Roman{part}}

\makeatletter 
\def\l@subsection{\@tocline{2}{0pt}{3pc}{6pc}{}} 
\makeatother

\def\citestacks#1{\cite[\href{https://stacks.math.columbia.edu/tag/#1}{#1}]{stacks-project}}

\DeclareRobustCommand\longtwoheadrightarrow
{\relbar\joinrel\twoheadrightarrow}

\title[Stability conditions and moduli spaces on projective families]{Stability conditions and moduli spaces on \\ projective families} 

\begin{document}

\author[C.~Li]{Chunyi Li}
\address{\parbox{0.9\textwidth}{Mathematics Institute (WMI), University of Warwick\\[1pt]
Coventry, CV4 7AL, United Kingdom
\vspace{1mm}}}
\email{C.Li.25@warwick.ac.uk}
\urladdr{\url{https://sites.google.com/site/chunyili0401/}}

\author[Z.~Liu]{Zhiyu Liu}
\address{\parbox{0.9\textwidth}{Department of Mathematics, Princeton University\\[1pt] Princeton, NJ 08544, USA
\vspace{1mm}}}
\email{zl3301@princeton.edu}
\urladdr{\url{https://sites.google.com/view/zhiyuliu}}

\author[Z.~Liu]{Ziqi Liu}
\address{\parbox{0.9\textwidth}{Dipartimento di Matematica ``F.~Enriques'', Universit\`a degli Studi di Milano\\[1pt]
Via Cesare Saldini 50, 20133 Milano, Italy
\vspace{1mm}}}
\email{ziqi.liu@unimi.it}
\urladdr{\url{https://sites.google.com/view/ziqiliu}}

\author[E.~Macr\`i]{Emanuele Macr\`i}
\address{\parbox{0.9\textwidth}{Universit\'e Paris-Saclay, CNRS, Laboratoire de Math\'ematiques d'Orsay\\[1pt]
Rue Michel Magat, B\^at. 307, 91405 Orsay, France
\vspace{1mm}}}
\email{{emanuele.macri@universite-paris-saclay.fr}}
\urladdr{\url{https://www.imo.universite-paris-saclay.fr/~macri/}}

\author[A.~Perry]{Alexander Perry}
\address{\parbox{0.9\textwidth}{Department of Mathematics, University of Michigan\\[1pt]
530 Church Street, Ann Arbor, MI 48109, USA
\vspace{1mm}}}
\email{arper@umich.edu}
\urladdr{\url{http://www-personal.umich.edu/~arper/}}

\author[P.~Stellari]{Paolo Stellari}
\address{\parbox{0.9\textwidth}{Dipartimento di Matematica ``F.~Enriques'', Universit\`a degli Studi di Milano\\[1pt]
Via Cesare Saldini 50, 20133 Milano, Italy
\vspace{1mm}}}
\email{paolo.stellari@unimi.it}
\urladdr{\url{https://sites.unimi.it/stellari}}

\author[X.~Zhao]{Xiaolei Zhao}
\address{\parbox{0.9\textwidth}{Department of Mathematics, University of California Santa Barbara\\[1pt]
552 University Rd, Santa Barbara, CA 93117, USA
\vspace{1mm}}}
\email{xlzhao@ucsb.edu}
\urladdr{\url{https://sites.google.com/site/xiaoleizhaoswebsite/}}


\begin{abstract}
We extend the construction of stability conditions on projective schemes over a field to projective families over an arbitrary base, and prove that they admit proper relative moduli spaces of semistable objects. 
We also prove a number of complementary results: the existence of mass-Hom bounds for these stability conditions, as conjectured by Halpern-Leistner and Robotis; a comparison with tilt-stability on surfaces and threefolds; a construction of stability conditions on the supported derived category of total spaces of certain vector bundles, including all local Calabi--Yau varieties; and a simple new proof of Bondal and Orlov's reconstruction theorem. 
\end{abstract}

\maketitle

\setcounter{tocdepth}{1}
\tableofcontents


\section{Introduction}\label{sec:intro}

Despite the profound influence of Bridgeland stability conditions in algebraic and symplectic geometry, representation theory, and mathematical physics, 
their construction on arbitrary projective varieties remained elusive for many years. 
The paper~\cite{chunyi-stability} solved this problem for smooth projective varieties over the complex numbers. 
The purpose of this article is to extend the construction to projective families over an arbitrary base, and to establish the existence of proper moduli spaces of semistable objects. 

\subsection{Main result}
\label{subsec:BridgelandIntro}

Let $X$ be a projective scheme over a field $\KK$. 
An ample line bundle on $X$ can be used to define the classical notion of (Gieseker or slope) stability for coherent sheaves on $X$. 
A stability condition $\sigma$ on $\Db(X)$, as introduced by Bridgeland~\cite{Bridgeland:Stab} and reviewed in Section~\ref{subsec:PreliminaryStability}, similarly provides a notion of stability for objects in the bounded derived category of coherent sheaves $\Db(X)$. 
Unfortunately, unlike the case of Gieseker stability, it is not known in general whether the moduli functor parameterizing $\sigma$-semistable objects (with fixed class and phase) is necessarily representable by an algebraic stack, and if so whether the algebraic stack admits a proper good moduli space. 
This is a particularly important problem, as virtually all applications of stability conditions in algebraic geometry go through the use of such moduli spaces. 

The work \cite{stability-families} gave a partial solution to this problem, 
while simultaneously generalizing the theory to the relative setting. 
Namely, let $\pi \colon X \to S$ be a flat projective morphism of schemes. 
We assume that $X$ and $S$ satisfy some very mild technical hypotheses, recorded in Setup~\ref{assum-stab}; in particular, $X$ and $S$ could be arbitrary quasi-projective varieties, or quite general schemes of mixed characteristic. 
Then \cite{stability-families} introduced the notion of a \emph{stability condition $\sigma$ on $\Db(X)$ over $S$}, 
which is a collection $\sigma_s$ of stability conditions on the fibers $\Db(X_s)$ for all (possibly non-closed) points $s \in S$, subject to several compatibility and tameness conditions reviewed in Section~\ref{sec:StabilityFamily}. 
Parallel to the classical case, $\sigma$ is defined with respect to an auxiliary homomorphism $v \colon \Knum(X/S) \to \Lambda$, where $\Knum(X/S)$ is the relative numerical K-group (see Section \ref{subsec:RelativeKnum}) and $\Lambda$ is a finite rank free abelian group. 
We emphasize that even when $S = \Spec(\KK)$ is the spectrum of a field, an arbitrary stability condition $\sigma$ on $\Db(X)$ is not a priori a stability condition \emph{over $\KK$} (i.e., over $\Spec(\KK)$) in this sense.

The theory of relative stability conditions satisfies two key theorems. 
First, the moduli functor parameterizing relatively $\sigma$-semistable objects of a fixed class is an algebraic stack, which is of finite type and quasi-proper over $S$, and in characteristic $0$ admits a proper good moduli space~\cite{AHLH}. 
Second, extending Bridgeland's fundamental deformation theorem, 
the collection $\Stab_{(\Lambda, v)}(\Db(X)/S)$ of all stability conditions over $S$ with respect to the data $(\Lambda, v)$ forms a complex manifold of dimension $\rk \Lambda$.  

Our main result can now be summarized as follows. 

\begin{theorem}
\label{main-theorem} 
Let $\pi \colon X \to S$ be a flat projective morphism of schemes, where $X$ and $S$ satisfy the assumptions of Setup~\ref{assum-stab}. 
Let $H_{X/S}$ be the relative numerical class of a relatively ample line bundle on $X/S$. 
Then there is a homomorphism $v_{H_{X/S}} \colon \Knum(X/S) \to \Lambda_{H_{X/S}}$ to a finite rank free abelian group and a nonempty distinguished connected component  
\begin{equation*}
\varnothing \neq \Stabdagger_{{H_{X/S}}}(\Db(X)/S) \subset \Stab_{\left(\Lambda_{H_{X/S}},v_{H_{X/S}}\right)}(\Db(X)/S)
\end{equation*}
of the stability manifold which contains geometric stability conditions. 
\end{theorem}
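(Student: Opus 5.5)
The plan is to follow the strategy of \cite{chunyi-stability} fiberwise and then verify the relative axioms of \cite{stability-families}. The lattice $\Lambda_{H_{X/S}}$ will be the image of the truncated twisted Chern character map $E \mapsto (H^{n}\ch_0, H^{n-1}\ch_1, \dots, \ch_n\cdot H^0)$. Here $n$ is the relative dimension, and the intersection numbers are computed on fibers. Since they are locally constant on $S$ for flat families, they factor through $\Knum(X/S)$, and we take $v_{H_{X/S}}$ to be this map. Because $S$ has finitely many connected components (Setup~\ref{assum-stab}), we may reduce to $S$ connected. The rank of $\Lambda$ is then finite. On a single fiber $X_s$, which is a projective scheme that may be singular, the construction proceeds by induction on dimension. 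We take a sufficiently divisible multiple of $H$ and a general complete intersection curve, surface, and so on, and use a Bogomolov--Gieseker-type inequality on each slice to produce iterated tilts. The central charges are of the form
\[
Z(E) = \sum_i a_i H^{n-i}\ch_i(E) + \sqrt{-1}\,\sum_i b_i H^{n-i}\ch_i(E),
\]
with coefficients chosen so that skyscraper sheaves $\cO_x$ are stable of the same phase. That choice is what makes the stability conditions geometric.

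The main steps, in order, are as follows.
\begin{enumerate}[(1)]
\item Establish the fiberwise construction for every point $s\in S$, including non-closed points. We pass to geometric points when needed and check that the construction is invariant under field extension.
\item Prove the support property with respect to $v_{H_{X/S}}$, using a quadratic form whose coefficients depend only on the numerical data of $H$ on fibers, and hence are uniform in $s$.
\item Verify the relative conditions of \cite{stability-families}: compatibility under base change, openness of geometric stability, generic flatness, and universal closedness (the existence of semistable reductions) for the heart over DVRs mapping to $S$.
\item Check the boundedness and tameness conditions, deduce that the family $\sigma = (\sigma_s)_{s\in S}$ lies in $\Stab_{(\Lambda_{H_{X/S}}, v_{H_{X/S}})}(\Db(X)/S)$, and define $\Stabdagger_{H_{X/S}}(\Db(X)/S)$ as the connected component containing it.
\end{enumerate}

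For step (3), I would realize each tilted heart relatively. On $\Db(X)$, I would define torsion pairs by relative slope and tilt-slope conditions tested fiber by fiber. Then I would show that these define a relative heart in the sense of \cite{stability-families}, which amounts to proving openness of flatness for the tilted hearts. This mirrors the known treatment of the tilt heart over a base.

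The main obstacle I expect is the uniformity in $s$ of the Bogomolov-type inequalities needed for the iterated tilts. These are required both for the support property and for the relative Harder--Narasimhan property, in particular the existence of HN filtrations in families and the boundedness of destabilizing quotients. I would first try to prove the inequality on the generic fiber of each connected component. I would then spread it out using constructibility of the relevant numerical invariants, and handle specializations by a semicontinuity argument for $\Ext$ and $\Hom$ groups together with the valuative criterion applied to semistable objects over DVRs.
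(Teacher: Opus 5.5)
There is a genuine gap in step (1), and it is more basic than the uniformity issue you flag: the fiberwise stability conditions themselves are not available by this method. Building a stability condition on an $n$-dimensional fiber by iterated tilting requires, at each tilt after the first, a Bogomolov--Gieseker-type inequality for the tilt-semistable objects on the whole fiber. For $n\ge 3$ this is the BMT conjecture \cite[Conjecture~4.1]{BMS:StabCY3s} and its higher-dimensional analogues. These are open in general and false in their original form for some threefolds. Restricting to general complete intersection slices does not supply the inequality. Restriction theorems control slope-semistable sheaves, not the tilt-semistable complexes that must be bounded at the next step.

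In the present setting it is worse still. Fibers may be singular, so $H^{n-i}\ch_i$ is not even defined on $\Db(X_s)$ without extra work. Fibers may also have positive or mixed characteristic, where the classical Bogomolov inequality can fail already for surfaces. Consequently the support property in (2) and the relative HN and boundedness statements in (3)--(4) all rest on an input that does not exist. The paper's introduction notes exactly this: tilting reaches only surfaces and special classes of threefolds.

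The missing idea is to avoid such inequalities entirely, which is what the paper does.
\begin{enumerate}
\item It fixes $\iota\colon X\hookrightarrow\PP^n_S$ and takes $\Lambda_{H_{X/S}}$ to be the image of $\iota_*$ in $\cN(\PP^n_S/S)$, so no Chern characters on singular fibers are needed.
\item It constructs $\sigma^{a,b}$ on $E^n$ from curves alone, via Liu's product construction.
\item It pushes forward along $E^n\to(\PP^1)^n\to\PP^n$ and pulls back along $\iota$. Every hypothesis needed for these operations reduces to Bayer properties of the form $\cP\preceq\cP\otimes\cL[l]$. The pullback needs $a\gg0$, and this can be made uniform in $s$ because only finitely many Hilbert polynomials occur.
\end{enumerate}

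The relative axioms are never verified on $X$ directly. For $\frac{n-1}{n}<b_0<1$ and $a$ small, suitable shifts of the line bundles $\cO_{(\PP^1)^n}(I_k)$ lie in a rotated heart. Hence $\sigma^{a,b_0}_{(\PP^1)^n}$ is algebraic (Theorem~\ref{thm:AlgebraicP1n}). Openness, boundedness, and HN structures over Dedekind bases then come from King's moduli of quiver representations. The family version of Bridgeland deformation, with a field-independent quadratic form, extends this to all $(a,b)$. Theorem~\ref{thm:main-criterion} then transports relative stability along the pushforward to $\PP^n_S$ and the pullback to $X$.

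Geometricity likewise does not come from a choice of central charge. It comes from stability of skyscraper sheaves on $E^n$, which is preserved under finite pushforward and pullback.
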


Here, a stability condition $\sigma$ on $\Db(X)$ over $S$ is called \emph{geometric} if for every geometric point $\overline{s}$ over $S$ and every closed point $\overline{x} \in X_{\overline{s}}$ of the fiber, the skyscraper sheaf $\cO_{\overline{x}} \in \Db(X_{\overline{s}})$ is $\sigma_{\overline{s}}$-stable of a fixed phase (see Definition~\ref{def:GeometricSatbility}).  
As the name suggests, such stability conditions are important because of their tight connection to the geometry of $X$ (see e.g. Section~\ref{section-intro-BO} below). 

In view of the discussion above, the stability conditions on $\Db(X)$ over $S$ given by Theorem~\ref{main-theorem} admit well-behaved moduli spaces of semistable objects. 
This result is new even in the absolute case where the base $S = \Spec(\CC)$ is the spectrum of the complex numbers. 

Theorem~\ref{main-theorem} is a combination of the more precise Theorems~\ref{thm:ProjectiveScheme},~\ref{thm:canonical-component}, and~\ref{thm:MainSecondVersion} in the body of the paper. 
In particular, Theorem~\ref{thm:MainSecondVersion} provides an explicit $2$-parameter family of stability conditions $\wtilde{\sigma}_{X/S}^{a,b}$ on $\Db(X)$ over $S$, parameterized by $(a,b) \in \RR_{>0} \times \RR$ with $a \gg 0$, generalizing the construction from \cite{chunyi-stability} for smooth projective varieties over the complex numbers. 
Moreover, we note that $v_{H_{X/S}} \colon \Knum(X/S) \to \Lambda_{H_{X/S}}$ is defined explicitly in terms of the choice of a closed embedding $\iota \colon X \hookrightarrow \PP^n_S$ such that the class of a multiple $mH_{X/S}$ is equal to the class of $\iota^*\cO_{\PP^n_S}(1)$ (see~\eqref{definition-vHXS}), but the resulting connected component $\Stabdagger_{{H_{X/S}}}(\Db(X)/S)$ does not depend on this choice. 

\begin{remark}
    When $S = \Spec(\KK)$ is the spectrum of a field (in which case we write simply $H_X$ for $H_{X/S}$), there is an incarnation of Theorem~\ref{main-theorem} that makes no reference to relative stability conditions. Namely, Theorems~\ref{thm:ProjectiveScheme} and ~\ref{thm:canonical-component} give a nonempty distinguished connected component 
    \begin{equation*}
        \varnothing \neq \Stabdagger_{H_X}(\Db(X)) \subset \Stab_{(\Lambda_{H_X}, v_{H_X})}(\Db(X)) 
    \end{equation*}
    of the classical stability manifold which contains geometric stability conditions. As we observe in Corollary~\ref{cor:Stabdagger=Stabdagger}, this agrees with the distinguished component of the relative stability manifold, i.e., we have  
    \begin{equation*}
        \Stabdagger_{H_X}(\Db(X)) = \Stabdagger_{H_X}(\Db(X)/\Spec(\KK)). 
    \end{equation*}
Concretely, this implies that all of the stability conditions in the left-hand side admit well-behaved moduli spaces of semistable objects. 
\end{remark}

Before sketching the main ideas of proof of Theorem~\ref{main-theorem}, we highlight some complementary results of independent interest that we obtain along the way. 

\subsection{Mass-Hom bounds} 
Halpern-Leistner and Robotis~\cite{DHL-robotis} introduced an interesting perspective on the problem of the existence of moduli spaces of $\sigma$-semistable objects.
This depends on the notion of a \emph{mass-Hom bound} for $\sigma$, which says that on a projective scheme $X$ over a field $\KK$, for any objects $A\in\Dperf(X)$ and $F\in\Db(X)$, the quantity $\dim_\KK \Hom(A,F)$ is bounded by the mass of $F$ with respect to $\sigma$ up to a constant which depends only on $A$ (see Definition~\ref{definition-mass-hom}). 
In these terms, if $X$ is smooth and $\mathrm{char}(\KK)=0$, they proved that the existence of a mass-Hom bound implies the existence of proper good moduli spaces of semistable objects; see Theorem~\ref{theorem-DHL-robotis} for the precise statement. 
Moreover, they conjectured that any stability condition on a smooth proper variety (or more generally, a smooth proper category) admits a mass-Hom bound. 
We provide significant evidence for their conjecture. 

\begin{theorem}\label{thm:MassHomBoundMain}
Let $X$ be a projective scheme over a field $\KK$ equipped with an ample numerical class $H_X$.  
Then any stability condition $\sigma \in \Stabdagger_{H_X}(\Db(X))$ admits a mass-Hom bound. 
\end{theorem}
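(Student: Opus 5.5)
The plan has two stages. First, reduce to a single well-chosen stability condition in $\Stabdagger_{H_X}(\Db(X))$. Second, prove the bound there by induction along the construction of the paper.

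\textbf{Reduction.} The reduction uses that the mass-Hom bound is stable under deformation within a connected component. If $\sigma$ and $\tau$ lie in the same component and are close, then $m_\tau(F) \le C\, m_\sigma(F)$ for a constant $C$ independent of $F$. Indeed, the HN filtrations are comparable up to bounded phase shifts, and the mass is controlled through the support property. Hence a mass-Hom bound for $\sigma$, of the form $\dim \Hom(A,F) \le C_A\, m_\sigma(F)$, transfers to $\tau$ after enlarging the constant. The $\widetilde{\mathrm{GL}}^+_2(\RR)$-action also preserves the property, since it changes masses only by bounded factors. As $\Stabdagger_{H_X}(\Db(X))$ is connected, a chain of small deformations reduces the theorem to one stability condition in it, say $\wtilde{\sigma}^{a,b}_X$ with $a \gg 0$ from Theorem~\ref{thm:MainSecondVersion}.

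\textbf{Bound at the base point.} For this special stability condition, first reduce $A$ to a generating set. Any $A \in \Dperf(X)$ is built by finitely many cones and shifts from objects $\cO_X(-k)$ with $k \gg 0$, via a Beilinson-type resolution pulled back from $\PP^n$ along $\iota$. Since $\dim \Hom(-,F)$ is subadditive in triangles, up to finitely many shifts, it suffices to bound $\dim \Hom(\cO_X(-k)[i], F) = h^{-i}(F(k))$ for each fixed $k$ and all $i$ together. Next reduce $F$ to its HN factors: mass is additive over HN factors and Hom dimensions are subadditive, so it suffices to treat $\sigma$-semistable $F$. For these the goal is the inequality
\[
\sum_i \dim \Hom(\cO_X(-k)[i],F) \le C_k\, |Z(F)|.
\]

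\textbf{Main obstacle.} The inequality above is where the real work lies. I would prove it by following the inductive construction of \cite{chunyi-stability} as extended here, which runs by induction on dimension via restriction to hyperplane sections, or equivalently pushforward to $\PP^n$. There are two steps.
\begin{enumerate}
\item[(i)] Reduce to $\PP^n$: the pushforward $\iota_*$ is compatible with the central charges defining $v_{H_X}$, and $\Hom(\cO_X(-k),F)=\Hom(\cO_{\PP^n}(-k),\iota_*F)$. On $\PP^n$, the cohomology $h^*(G(k))$ is a finite sum of Hom-dimensions from the exceptional collection. Each of these is bounded by a linear function of the Chern character of the cohomology sheaves of $G$.
\item[(ii)] Bound the Chern data of the cohomology sheaves of a $\sigma$-semistable object linearly by $|Z(F)|$. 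This is the support-property and Bogomolov-type inequality input used to construct $\wtilde{\sigma}^{a,b}$. The relevant quantities are the truncated $H$-degrees $H^{n-j}\ch_j$ of the cohomology sheaves of a semistable object of the heart, which are controlled by the central charge through the iterated tilts.
\end{enumerate}
I expect step (ii) to be the hardest. There I would try induction on the number of tilts, using at each stage the quadratic form of the support property to bound the positive and negative parts of each $\ch_j$ separately, instead of only their sum.
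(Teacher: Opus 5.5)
Your reduction to a single base point is correct and matches the paper's opening move (Remark~\ref{remark-mass-hom-connected}). So are the reductions to the generators $\cO_X(-k)$, up to direct summands as in Remark~\ref{remark-mass-hom-criterion}, and to semistable $F$. The first half of your step (i) is also right: passing to $\PP^n$ via $\Hom(\cO_X(-k),F)=\Hom(\cO_{\PP^n}(-k),\iota_*F)$ is exactly the paper's last step, Lemma~\ref{lemma-pullback-mass-hom-bound}.

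The gap is that the remaining inequality $\sum_i\dim\Hom(\cO_X(-k)[i],F)\le C_k|Z(F)|$ is the entire content of the theorem, and your plan for it does not work. The second claim in (i) is false. Hom-dimensions on $\PP^n$ are not bounded by any linear function of the Chern characters of the cohomology sheaves, because Chern characters are additive and cancel. For example, $G=\cO_{\PP^1}(d)\oplus\cO_{\PP^1}(-d)$ has $\ch(G)=(2,0)$ for every $d$, but $h^0(G)=d+1$. A linear function of $\ch$ only sees $\chi$. So even a linear bound on the Chern data of the $\cH^j(F)$, with positive and negative parts separated as in (ii), would not suffice. You would need control of the slope Harder--Narasimhan filtrations of the $\cH^j(F)$.

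Step (ii) also misdescribes the construction. $\wtilde{\sigma}^{a,b}_X$ is not built by iterated tilting on $X$ or $\PP^n$ with Bogomolov-type inequalities. It comes from Liu's stability conditions on $E^n$, pushed forward along $E^n\to(\PP^1)^n\to\PP^n$ and pulled back along $\iota$. On $\PP^n$ the only available inequality is $Q(v(F))\ge 0$ for the class of $F$ itself, which says nothing about its individual cohomology sheaves.

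The missing idea is that no direct estimate is needed, because the component contains (up to pushforward and pullback) a stability condition where the bound is formal.
\begin{enumerate}
\item By Theorem~\ref{thm:AlgebraicP1n}, for $\frac{n-1}{n}<b_0<1$ and $a$ small, a $\widetilde{\GL}_2^+(\RR)$-translate of $\sigma^{a,b_0}_{(\PP^1)^n}$ is algebraic. Its heart is a module category over a finite-dimensional algebra, with simple objects $\cO_{(\PP^1)^n}(I_k)[n-k]$.
\item For such a heart, $\dim\Hom(A,F)$ is subadditive along a composition series of $F$. Hence it is bounded linearly by the dimension vector of $F$, and so linearly by $|Z(F)|$ \cite[Example 2.9]{DHL-robotis}.
\item The bound propagates to all $\sigma^{a,b}_{(\PP^1)^n}$ by connectedness of the family. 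It then passes to $\sigma^{a,b}_{\PP^n}$ via $h_\sharp$ (Theorem~\ref{theorem-chunyi-Pn}), and to $\wtilde{\sigma}^{a,b}_X=\iota^\sharp\sigma^{a,b}_{\PP^n}$ (Theorem~\ref{thm:ProjectiveScheme}).
\item These last two transfers use Lemma~\ref{lemma-pullback-mass-hom-bound}, a one-line adjunction argument. It shows that mass-Hom bounds survive pullback along finite maps and pushforward along faithfully flat maps with perfect dualizing complex.
\end{enumerate}
Your proposal lacks this algebraicity input, which in turn rests on locating shifts of line bundles in the hearts on $E^n$ (Proposition~\ref{prop:LineBundleStability}).
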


In particular, by Theorem~\ref{theorem-DHL-robotis} mentioned above, for smooth $X$ this yields a different argument for 
the existence of well-behaved moduli spaces of $\sigma$-semistable objects, established more generally in Theorem~\ref{main-theorem}. 

\begin{remark}
    The existence of a mass-Hom bound is invariant under derived equivalences. Hence,  Theorem~\ref{thm:MassHomBoundMain} holds more generally for any stability condition contained in the union 
    \[
\Stab^\ast_{H_X}(\Db(X))\coloneqq\bigcup\limits_{\scriptstyle\Phi\in \Aut_{(\Lambda_{H_X}, v_{H_X})}(\Db(X))}\Phi\big(\Stabdagger_{H_X}(\Db(X)\big), 
\]
where $\Aut_{(\Lambda_{H_X}, v_{H_X})}(\Db(X))$ is the group of autoequivalences which preserve $(\Lambda_{H_X}, v_{H_X})$ (see~\eqref{AutLambdav} for the precise definition). 
\end{remark}

\subsection{Tilt-stability} 
Prior to the work~\cite{chunyi-stability}, the main approach to constructing stability conditions proceeded via tilting from coherent sheaves~\cite{Bridgeland:K3,TodaK3,BMT:BG,BMS:StabCY3s}. 
This method led to the existence of stability conditions on all smooth projective surfaces \cite{ArcaraBertram}, as well as on many special classes of threefolds \cite{Li:FanoThreefolds,Li:CY,FKLR:CY,LiuMao:Tilt,PT:Moduli}. 
In the threefold case, this method reduces the problem to proving the Bayer--Macr\`{i}--Toda (BMT) Conjecture~\cite[Conjecture 4.1]{BMS:StabCY3s}, which predicts an inequality involving the Chern characters of certain tilt-semistable objects.

When $(X, H_X)$ is a polarized complex surface, the stability conditions on $\Db(X)$ constructed by tilting are contained in the distinguished connected component $\Stabdagger_{H_X}(\Db(X))$; in fact, as we observe in Proposition~\ref{prop:MassHomBoundSurfaces}, this holds more generally for any geometric stability condition on a surface. 
Similarly, when $(X, H_X)$ is a polarized complex threefold satisfying the BMT conjecture and a technical condition on the lattice $(\Lambda_{H_X}, v_{H_X})$, we prove in Proposition~\ref{prop:BMTmassbound} that the stability conditions constructed by tilting are contained in $\Stabdagger_{H_X}(\Db(X))$. 
In particular, by Theorem~\ref{thm:MassHomBoundMain}, all of these stability conditions admit mass-Hom bounds. 
  
\begin{remark}
    In the recent paper \cite{FeyYiran:Threefolds}, Cheng and Feyzbakhsh prove that for any polarized complex threefold $(X, H_X)$, certain stability conditions in $\Stabdagger(\Db(X))$ (the ones in Theorem~\ref{thm:ProjectiveScheme}) are in fact obtained from the tilting construction, and they use this to deduce a weaker form of the BMT conjecture.
\end{remark}

\subsection{Supported derived categories of total spaces of vector bundles} 

Beyond projective varieties, if we are given a quasi-projective variety $Y$ containing a projective variety $X \subset Y$, then we may consider the bounded derived category $\mathrm{D}_X^{\mathrm{b}}(Y)$ of coherent sheaves on $Y$ with set-theoretic support on $X$. 
An important case is when $Y = \mathrm{Tot}(\cE)$ is the total space of a vector bundle on $X$ and $X \hookrightarrow Y$ is embedded as the $0$-section, in which case stability conditions on $\mathrm{D}_X^{\mathrm{b}}(Y)$ have connections to  Springer theory (see, e.g.,~\cite{Bridgeland:NonCompactCY,ABM:Stability}),  counting invariants (see, e.g.,~\cite{Bou:Takahashi,BDLP:Dentroscopy}), and quadratic differentials (see, e.g.,~\cite{BridgelandSmith:IHES,HKK}).  

Bootstrapping from the projective case, we obtain stability conditions on many interesting categories of this type. 
A coarse version of the result is as follows; see Theorems~\ref{thm:stabonlocal} and \ref{thm:local-cy} for more precise statements. 

\begin{theorem}\label{thm:LocalMain}
Let $X$ be a projective scheme over a field $\KK$, and let $\cE$ be a vector bundle on $X$ such that $\cE^{\vee}$ is globally generated or is an ample line bundle. 
Then the category $\mathrm{D}_X^{\mathrm{b}}(\mathrm{Tot}(\cE))$ admits stability conditions.
\end{theorem}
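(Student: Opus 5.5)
The plan is to reduce to the projective case, using the main theorem for the absolute setting $S=\Spec(\KK)$ (Theorems~\ref{thm:ProjectiveScheme} and~\ref{thm:canonical-component}). Let $p\colon Y=\mathrm{Tot}(\cE)\to X$ be the projection. The key observation is that when $\cE^\vee$ is globally generated, $Y$ embeds into a projective scheme in a way that is compatible with the zero section. Concretely, fix a surjection $\cO_X^{\oplus N}\twoheadrightarrow \cE^\vee$. Dually this gives an embedding $\cE\hookrightarrow \cO_X^{\oplus N}$ of vector bundles, and hence a closed embedding
\[
\mathrm{Tot}(\cE)\hookrightarrow X\times \mathbb{A}^N \subset X\times\PP^N ,
\]
which sends the zero section to $X\times\{0\}$. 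Taking the closure gives a projective scheme $\overline{Y}=\PP(\cE\oplus\cO_X)\subset X\times\PP^N$, namely the projective completion of $Y$. In the case where $\cE^\vee=L$ is an ample line bundle, I would instead use that $\mathrm{Tot}(L^\vee)$ contracts the zero section. One can then work on the projective cone $\overline{Y}=\mathbb{P}(L^\vee\oplus\cO)$, or on its contraction, which is a projective scheme over $\KK$. In either case we obtain a projective scheme $\overline Y$ containing $Y$ as an open subscheme, with $X$ a closed subscheme of $Y$ that is proper.

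The next step is to identify $\mathrm{D}^{\mathrm{b}}_X(Y)$ with the full subcategory $\mathrm{D}^{\mathrm{b}}_X(\overline{Y})\subset\Db(\overline Y)$ of objects whose cohomology is supported on $X$. This holds because $X$ is closed in $\overline Y$ and contained in the open $Y$, so supported complexes extend by zero. Then I take a geometric stability condition $\sigma=(Z,\cP)$ in $\Stabdagger_{H}(\Db(\overline Y))$, supplied by Theorem~\ref{main-theorem} for the polarization $H$ of $\overline Y$. I would try to restrict $\sigma$ to the supported subcategory $\cD_X:=\mathrm{D}^{\mathrm{b}}_X(\overline{Y})$. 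For this I need the Harder--Narasimhan filtration of an object supported on $X$ to have all its factors supported on $X$, i.e.\ the slicing must restrict.

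The argument I would use is that $\cD_X$ is the colimit of the essential images of pushforwards from the infinitesimal thickenings $X_n$ of $X$, and it is a thick subcategory. Its objects $F$ are characterized by having a morphism $F\otimes s^{n}\colon F\to F\otimes \cO(D)^{\otimes n}$ that vanishes, where $D$ is a divisor (or a collection of sections) cutting out $X$ set-theoretically in $\overline Y$. I would instead use skyscraper sheaves. By geometricity, $\Hom(F,\cO_y[i])=0$ for $y\notin X$ and all $i$, and the support of an object can be detected this way. Since the $\cO_y$ are stable of a fixed phase $\phi_0$, this vanishing propagates to the HN factors: the first HN factor $A$ of $F$ maps to $F$, and I would argue $\mathrm{supp}(A)\subset\mathrm{supp}(F)$.

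This support argument is the main obstacle. I expect to handle it the way the relative theory does for points, namely by restricting $\sigma$ over the open subset $\overline Y\setminus X$ via Theorem~\ref{main-theorem} applied to the family $\overline Y\to\overline Y$ itself is not available, so I would use a different route. I would take the family $\overline Y\times_{\KK}\overline{Y}$ over $S=\overline Y$, or more simply argue via the heart $\cA=\cP(\phi_0-1,\phi_0]$. This is the heart in which skyscrapers lie and in which the torsion-theoretic description of \cite{chunyi-stability} shows objects are built from sheaves. In that heart, sub- and quotient objects of an object supported on $X$ are supported on $X$, because tensoring with the ideal of $X$ (equivalently, the action of functions) is compatible with the construction. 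Once $\cA\cap\cD_X$ is known to be a heart of $\cD_X$ that is closed under subquotients, the HN property and the support property restrict immediately. The restriction of $Z$ to $K(\cD_X)$ factoring through $\Lambda_H$ then gives the stability condition on $\mathrm{D}^{\mathrm{b}}_X(\mathrm{Tot}(\cE))$.
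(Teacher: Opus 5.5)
\paragraph{There is a genuine gap.} Everything hinges on the step you yourself flag as the main obstacle, and that step is not established. For the slicing of $\sigma$ to restrict to $\cD_X=\mathrm{D}^{\mathrm{b}}_X(\overline Y)$, every HN filtration in $\Db(\overline Y)$ would have to preserve set-theoretic support in $X$. Equivalently, the truncations of all the t-structures $(\cP(>\phi),\cP(\leq\phi+1))$ would have to preserve $\cD_X$. Neither of your two justifications gives this.

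Your claim that, in $\cA=\cP((\phi_0-1,\phi_0])$, subobjects and quotients of objects supported on $X$ are supported on $X$ is false. Take $\overline Y$ a surface and $\cA$ a tilted heart $\Coh^{\beta}(\overline Y)$ at $\mu_H$-slope $\beta$, which is the form of the heart of a geometric stability condition with skyscrapers of phase $1$. Let $C\subset\overline Y$ be a curve and $M$ a line bundle with $\mu_H(M(-C))\leq\beta<\mu_H(M)$. Then $0\to M(-C)\to M\to M|_C\to 0$ becomes the short exact sequence $0\to M\to M|_C\to M(-C)[1]\to 0$ in $\cA$, and neither the subobject nor the quotient is supported on $C$. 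Hearts of stability conditions are not compatible with the $\cO_{\overline Y}$-module structure; that compatibility would be $\overline Y$-locality, which you correctly note is unavailable.

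The skyscraper argument fails as well. If $A\to F\to F'$ is the first step of the HN filtration, then $\Hom(F,\cO_y[i])=0$ only yields $\Hom(A,\cO_y[i])\hookrightarrow\Hom(F',\cO_y[i+1])$. Phase considerations kill $\Hom(A,\cO_y[i])$ only for $i<\phi(A)-\phi_0$. Note also that your argument never uses the hypothesis on $\cE^\vee$ in an essential way, since the completion $\PP(\cE\oplus\cO_X)$ exists for every $\cE$. So the positivity would have to enter precisely in this unproved step.

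\paragraph{The missing idea is to induce rather than restrict.} The paper never passes through a compactification. It applies Proposition~\ref{prop:extendstabcond} to $s_*\colon\Db(X)\to\mathrm{D}^{\mathrm{b}}_X(\mathrm{Tot}(\cE))$ for the zero section $s$, setting $\cP'(\phi)=\langle s_*F\st F\in\cP_X(\phi)\rangle$.
\begin{itemize}
\item Generation by extensions is automatic, since every object supported on $X$ is filtered by objects of the form $s_*F$.
\item The real condition is that $\Hom(F,F')\to\Hom(s_*F,s_*F')$ is bijective for $F\in\cP_X(\phi)$ and $F'\in\cP_X(<\phi+1)$.
\item By adjunction and the Koszul resolution, $s^*s_*F\cong F\otimes\bigoplus_i\bigwedge^i\cE^\vee[i]$. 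So one needs $F\otimes\bigwedge^i\cE^\vee[i]\in\cP_X(\geq\phi+1)$ for $i\geq 1$.
\end{itemize}
This is exactly where the hypothesis is used. For $\widetilde\sigma^{a,b}_X$ with $a\gg0$, the paper proves the Bayer-type positivity $\widetilde\cP^{a,b}_X(\phi)\otimes\cF\subset\widetilde\cP^{a,b}_X(\geq\phi)$ for each of the finitely many globally generated sheaves $\cF=\bigwedge^i\cE^\vee$. The proof uses a truncated resolution of $\cF$ by sums of $\cO_X(-b_j)$, the Bayer properties of Theorem~\ref{thm:ProjectiveScheme}, and the fact that the heart lies in $\langle\Coh(X),\dots,\Coh(X)[n-1]\rangle$; see Theorem~\ref{thm:stabonlocal}. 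The case where $\cE^\vee$ is an ample line bundle is handled by an analogous Bayer-type estimate for $a\gg0$ (Theorem~\ref{thm:local-cy}).
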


As a special case, when $X$ is a Fano variety, we obtain stability conditions on the so-called \emph{local Calabi--Yau} $\mathrm{Tot}(\omega_X)$. 
In fact, for any line bundle $\cL$ on $X$, we also find a more precise criterion for when a stability condition on $\Db(X)$ induces one on $\mathrm{D}_X^{\mathrm{b}}(\mathrm{Tot}(\cL))$, as explained in Remark~\ref{remark-inducing-TotL}. 

\subsection{Stability of skyscraper sheaves and the Bondal--Orlov theorem} 
\label{section-intro-BO} 
Bondal and Orlov famously proved that for a smooth projective variety $X$ with ample or antiample canonical bundle, the derived category $\Db(X)$ determines $X$. 
Using the theory of moduli spaces of stable objects, we obtain a new proof of this result (see Theorem~\ref{thm:BOreconstruction}). 

The key point is that for certain stability conditions $\sigma \in \Stabdagger_{H_X}(\Db(X))$ (the ones in Theorem~\ref{thm:ProjectiveScheme}), where $H_X = \pm K_X$ according to whether $K_X$ or $-K_X$ is ample, we can show that $X$ is recovered as a moduli space of $\sigma$-stable objects, with the isomorphism given by sending a closed point $x \in X$ to the skyscraper sheaf $\cO_x$; see Proposition~\ref{prop:Mpt=X} for the precise statement. 

\subsection{Outline of the proof of the main theorem} 

Let us explain the strategy of the proof of Theorem~\ref{main-theorem}. 
We first consider the case where $S = \Spec(\KK)$ is the spectrum of a field, and explain the construction of stability conditions on $X$ \emph{over $\KK$} (in the technical sense recalled in Section~\ref{subsec:BridgelandIntro} above). 

Fix an elliptic curve $E$ over $\KK$. 
By \cite{Liu:StabProd} (see also \cite{curves} and Theorem~\ref{thm:Yucheng}), any power $E^n$ admits stability conditions $\sigma^{a,b}$ parameterized by $(a,b)\in \QQ_{>0}\times\QQ$; in fact, as we prove in Proposition~\ref{prop:sigmaabRonEn}, there is a continuous extension to a family parameterized by $(a,b) \in \RR_{>0} \times \RR$. 
There is a natural finite map $g \colon E^n\to\PP^n$, obtained by quotienting by $-1$ on each factor and the symmetric group permuting the factors. 
Then the arguments of~\cite{chunyi-stability} (adapted to an arbitrary base field $\KK$) show that:  
\begin{enumerate}[{\rm (i)}]
    \item\label{enum:IntroSketch1} $\sigma^{a,b}$ can be ``pushed forward'' along $g$ to a stability condition $\sigma_{\PP^n}^{a,b}$ on~$\PP^n$ (Theorem~\ref{theorem-chunyi-Pn}). 
    \item\label{enum:IntroSketch2} 
    If $\iota \colon X \hookrightarrow \PP^n$ is a closed immersion, then 
    $\sigma^{a,b}_{\PP^n}$ can be ``pulled back'' along $\iota$ to a stability condition $\wtilde{\sigma}_X^{a,b}$ on $X$  for $a \gg 0$ (Theorem~\ref{thm:ProjectiveScheme}). 
\end{enumerate}

In Section~\ref{sec:PullbackPushforward} we develop the theory of pullback and pushforward of stability conditions in the absolute setting, and in Section~\ref{sec:PullPushRelativeStability} we treat the relative setting; in particular, in Theorem~\ref{thm:main-criterion} we prove that these operations preserve the property of being a stability condition over a base. 
Thus, the problem reduces to proving that the stability condition $\sigma^{a,b}$ on $E^n$ is a stability condition over $\KK$.  
We do not know how to prove this statement for $\sigma^{a,b}$, so we work on an intermediate space instead. 
Namely, the morphism $g \colon E^n \to \PP^n$ factors as 
\begin{equation*}
    g \colon E^n \xrightarrow{\, f \,} (\PP^1)^n \xrightarrow{\, h \,} \PP^n, 
\end{equation*}
where $f$ is the quotient by multiplication by $-1$ on each factor and $h$ is the quotient by the symmetric group $\fS_n$ permuting the factors. 
The proof of~\ref{enum:IntroSketch1} above proceeds by showing that $\sigma^{a,b}$ pushes forward along $f$ to a stability condition $\sigma^{a,b}_{(\PP^1)^n}$ on $(\PP^1)^n$, 
and then that $\sigma^{a,b}_{(\PP^1)^n}$ pushes forward along $h$ to a stability condition $\sigma^{a,b}_{\PP^n}$ on $\PP^n$. 
Thus, by the above logic, it suffices to prove that $\sigma^{a,b}_{(\PP^1)^n}$ is a stability condition over $\KK$. 

The key point is that the stability conditions $\sigma_{(\PP^1)^n}^{a,b}$ on $(\PP^1)^n$ are actually \emph{algebraic} for $b$ suitably close to $1$ and $a>0$ sufficiently small (Theorem~\ref{thm:AlgebraicP1n}).
Concretely, this means that the moduli of stable objects for $\sigma_{(\PP^1)^n}^{a,b}$ identifies with the moduli of  stable modules over an associative finite-dimensional algebra over $\KK$ (in the sense of King~\cite{King}), and from this it follows that $\sigma_{(\PP^1)^n}^{a,b}$ is a stability condition over $\KK$ (Proposition~\ref{prop:AlgebraicP1nFamily}). 
We note that, although our argument sidesteps proving that the stability condition $\sigma^{a,b}$ on $E^n$ is one over $\KK$, the proof of the algebraicity of $\sigma_{(\PP^1)^n}^{a,b}$ crucially relies on an analysis of $\sigma^{a,b}$-stable line bundles on~$E^n$ (see Proposition~\ref{prop:LineBundleStability}). 
By a deformation argument, it follows that $\sigma_{(\PP^1)^n}^{a,b}$ is a stability condition over $\KK$ for all parameters $(a,b) \in \RR_{>0} \times \RR$.

Altogether, this explains the construction of a continuous family of stability conditions $\wtilde{\sigma}^{a,b}_X$ on $X$ over $\KK$ for $(a, b) \in \RR_{>0} \times \RR$ with $a \gg 0$. 
These stability conditions are geometric because the stability conditions $\sigma^{a,b}$ are geometric  (Theorem~\ref{thm:Eninj}\ref{enum:Eninj1}) and this property is preserved under pushforward and pullback. 
The distinguished connected component appearing in Theorem~\ref{main-theorem} is then the connected component containing the family $\wtilde{\sigma}_X^{a,b}$; 
in Section~\ref{subsec:ConnectedComponent}, we show that the distinguished component only depends on the underlying ample numerical class, not the embedding $\iota \colon X \hookrightarrow \PP^n$. 

Finally, let us consider the case of a general base scheme $S$. 
Again, we choose a closed immersion $\iota \colon X \hookrightarrow \PP^n_S$. 
Then for each $s \in S$, we obtain by the absolute case discussed above a stability condition $\wtilde{\sigma}_{X_s}^{a,b}$ on $X_s$ for $a \gg 0$; as we show in Theorem~\ref{thm:ProjectiveScheme}, there is a bound for how large $a$ must be chosen which depends only on the Hilbert polynomial of $X_s \subset \PP^n_s$, so $a \gg 0$ may be chosen uniformly across all fibers. 
The collection $\wtilde{\sigma}^{a,b}_X = (\wtilde{\sigma}_{X_s}^{a,b})_{s \in S}$ is then proved to be a stability condition on $X$ over $S$ along similar lines to the absolute case, by 
reducing to the assertion that the collection $\sigma^{a,b}_{(\PP^1_S)^n} = (\sigma^{a,b}_{(\PP^1_s)^n})_{s \in S}$ forms a stability condition on $(\PP^1_S)^n$ over $S$.

\subsection{Open questions and future directions} 
This paper establishes foundational results about stability conditions and moduli spaces on projective families, putting the theory on nearly equal footing with the general theory of Gieseker stability for coherent sheaves.  
It also suggests various avenues for future research, some of which we highlight below.

\subsubsection*{Projectivity of moduli spaces} 
An important outstanding question is whether the moduli spaces of semistable objects for the stability conditions in Theorem~\ref{main-theorem} are projective, instead of merely proper. 
Projectivity holds for moduli of Gieseker semistable sheaves, but the proof relies on a GIT interpretation of the moduli space, which is unavailable in our setting. 
Due to \cite{BM-nef-divisor} (see Remark~\ref{rmk:LineBundle}), there is always at least a natural strictly nef divisor on the moduli space of semistable objects; it would be very interesting to understand if, or under which assumptions, this divisor gives projectivity.

\subsubsection*{The full support property} 
In this paper, we show that stability conditions exist on a polarized scheme $(X, H_X)$ over a field $\KK$ with respect to a certain quotient $\Knum(X) \twoheadrightarrow \Lambda_{H_X}$ of the numerical Grothendieck group. 
An important open question is whether the support property is in fact satisfied with respect to the entire numerical Grothendieck group. 
One can deduce a stronger support property by embedding $X$ into a product of projective spaces, but this is not enough; for instance, in the case of cubic fourfolds which are not very general, it is not clear how to obtain the full support property with our methods. 

\subsubsection*{Stability conditions on other categories} 
Beyond the derived categories of projective varieties, 
there are a number of other interesting categories arising in algebraic geometry on which stability conditions are expected, but not yet known, to exist. 
One natural example is the derived category of a smooth proper Deligne--Mumford stack.
Relatedly, it would be interesting to construct stability conditions on twisted derived categories $\Db(X, \alpha)$, where $\alpha$ is a Brauer class, even in the case where $X$ is a smooth projective variety (\cite{Yos:Twisted,HS:Twisted,Lieblich:Twisted,HP:PeriodIndex}). 
Our techniques do not seem to directly yield stability conditions in these examples. 

More generally, it is an interesting problem to construct stability conditions on semiorthogonal components $\cC \subset \Db(X)$ of the derived categories of smooth projective varieties. 
While there are some partial results in this direction \cite{BLMS, PPZ:GM, Peize-cubic-fivefolds, KLP}, 
in general it is not even clear when to expect stability conditions to exist on such a category $\cC$.

\subsection{Organization of the paper} 
The article is divided into three parts. 

Part~\ref{part:Absolute}, consisting of Sections~\ref{sec:StabilityConditionsBaseChange}--\ref{sec:Complements}, treats the absolute setting, i.e., the case of stability conditions in the classical sense (as opposed to relative \emph{\`{a} la}~\cite{stability-families}) on projective schemes over a field. 
The main results are the identification of algebraic stability conditions on powers of the projective line (Theorem~\ref{thm:AlgebraicP1n}), 
the extension of the construction in~\cite{chunyi-stability} to projective schemes over an arbitrary base field (Theorem~\ref{thm:ProjectiveScheme}), and our complementary results on mass-Hom bounds (Theorem~\ref{thm:MassHomBoundProjective}), tilt-stability (Propositions~\ref{prop:MassHomBoundSurfaces} and~\ref{prop:BMTmassbound}), and stability conditions on supported derived categories (Theorem~\ref{thm:stabonlocal} and \ref{thm:local-cy}). We refer to the beginning of Part~\ref{part:Absolute} for a more detailed overview of these results. 

In Part~\ref{part:Relative}, consisting of Sections~\ref{sec:StabilityFamily}--\ref{sec:BondalOrlov}, we turn to the relative setting, i.e., the case of stability conditions on projective families over a base. 
The main results are criteria for pullback and pushforward to preserve stability conditions over a base (Theorem~\ref{thm:main-criterion}), the main theorem of the paper (Theorem~\ref{main-theorem}), and a characterization of skyscraper sheaves and its application to the Bondal--Orlov reconstruction theorem (Section~\ref{sec:BondalOrlov}). 

Part~\ref{part:Appendices} consists of two appendices. 
In Appendix~\ref{app:Inducing}, we review and extend some fundamental results of Polishchuk~\cite{P-t-structures} on induced t-structures. 
In Appendix~\ref{app:Filtration}, we give a different proof for the geometric part of the argument to pushforward stability conditions from $(\PP^1)^n$ to $\PP^n$, which works over any field.

\subsection{Conventions}
For a scheme $X$, $\Dqc(X)$ denotes the unbounded derived category of $\cO_X$-modules with quasi-coherent cohomology, $\Db(X)\subset\Dqc(X)$ the full subcategory of pseudo-coherent complexes with bounded cohomology (which in the noetherian case coincides with the bounded derived category of coherent sheaves), and $\Dperf(X)\subset\Dqc(X)$ the subcategory of perfect complexes. 
All functors are derived. 
Our conventions on stability conditions are reviewed in Section~\ref{sec:StabilityConditionsBaseChange}. 

\subsection{Acknowledgements}
The paper benefited from many useful discussions with the following people, whom we gratefully acknowledge: Arend Bayer, Tom Bridgeland, Soheyla Feyzbakhsh, Naoki Koseki, Alexander Kuznetsov, Shengxuan Liu, and Laura Pertusi.

During the preparation of this paper, C.L.~was supported by the Royal Society University Research Fellowship URF\textbackslash R1\textbackslash 251025 ``Stability condition and application in algebraic geometry''; Zh.L.~was supported by NSFC Grant 123B2002; E.M.~was partially supported by the ERC Synergy grant ERC-2020-SyG-854361-HyperK; A.P.~was partially supported by the NSF CAREER grant DMS-2143271; P.S.~was partially supported by the ERC Consolidator grant ERC-2017-CoG-771507-StabCondEn, by the research project PRIN 2017 ``Moduli and Lie Theory'', and by the research project FARE 2018 HighCaSt (grant number R18YA3ESPJ); and X.Z.~was partially supported by the NSF FRG grant DMS-2052665. 
Zi.L.~ and P.S.~ are members of the INdAM research group GNSAGA.


\newpage 

\part{Absolute case}\label{part:Absolute}


We fix a field $\KK$.
The goal for the first part of the paper is to construct and study Bridgeland stability conditions on projective schemes over $\KK$, without yet considering moduli spaces of semistable objects. 
The main results, Theorem~\ref{theorem-chunyi-Pn} and Theorem~\ref{thm:ProjectiveScheme}, are extensions of~\cite{chunyi-stability} to the singular case over an arbitrary field, which follow from similar arguments. 
In particular, for any projective scheme $X$ over $\KK$ equipped with an embedding $\iota \colon X \hookrightarrow \PP^n$, we obtain a distinguished connected component $\Stabdagger_{\iota}(\Db(X))$ in the space of stability conditions (with respect to a lattice defined in terms of $\iota$). 
Our main new result is Theorem~\ref{thm:AlgebraicP1n}, which shows that certain stability conditions in $\Stabdagger_{\iota}(\Db(X))$ come from algebraic stability conditions on $\Db((\PP^1)^n)$. 
We use this result to show that all stability conditions in $\Stabdagger_{\iota}(\Db(X))$ have a mass-Hom bound (Theorem~\ref{thm:MassHomBoundProjective}), and later in Part~\ref{part:Relative} to show the existence of proper moduli spaces. 
In Section~\ref{subsec:ConnectedComponent}, we also prove that $\Stabdagger_{\iota}(\Db(X))$ only depends on the underlying ample numerical class, not the embedding $\iota \colon X \hookrightarrow \PP^n$. 

In Section~\ref{sec:StabilityConditionsBaseChange}, we review the basics of stability conditions, as well as their behavior under base change, which plays an important role in our arguments.
In Section~\ref{sec:PullbackPushforward}, we study the operations of pullback and pushforward of stability conditions and introduce the fundamental Bayer properties.
In Section~\ref{sec:ProductEllipticCurves}, we extend the construction of stability conditions on the product of elliptic curves from~\cite{Liu:StabProd} to an arbitrary base field, and study phases of line bundles.
All of this is applied in Section~\ref{sec:ProjectiveSpace} to show Theorem~\ref{thm:AlgebraicP1n} and Theorem~\ref{theorem-chunyi-Pn}.
The case of general projective schemes is treated in Section~\ref{sec:ProjectiveSchemes}, where we prove Theorem~\ref{thm:ProjectiveScheme}. 
In Section~\ref{sec:Complements}, we discuss several complements to these results: the existence of mass-Hom bounds mentioned above; the relation between our construction and tilt-stability in small dimensions; and the construction of stability conditions on the supported derived category of total spaces of vector bundles.


\section{Stability conditions and base change}\label{sec:StabilityConditionsBaseChange} 

In this section, we review Bridgeland's notion of a stability condition.
The basic definitions are discussed in Section~\ref{subsec:PreliminaryStability}, while in Section~\ref{subsec:BaseChange} we discuss the behavior under change of the base field.

\subsection{Basic definitions}\label{subsec:PreliminaryStability}

Stability conditions were introduced in~\cite{Bridgeland:Stab}.
We follow here the presentation in~\cite[Section 2]{curves}. 
Let $\cD$ be a triangulated category.

\subsubsection{Stability conditions in terms of slicings} 
\label{section-stability-conditions-slicings}
\begin{definition}\label{def:slicing}
A \emph{slicing} $\cP$ of $\cD$ is a collection of full additive subcategories $\cP(\phi)\subset\cD$, indexed by $\phi\in\RR$, satisfying:
\begin{enumerate}[{\rm (1)}]
\item\label{enum:slicing1} For all $\phi\in\RR$, $\cP(\phi+1)=\cP(\phi)[1]$. 
\item\label{enum:slicing2} If $\phi_1>\phi_2$ and $F_1\in\cP(\phi_1)$, $F_2\in\cP(\phi_2)$, then $\Hom(F_1,F_2)=0$. 
\item\label{enum:slicing3}
For every nonzero object $F \in \cD$, there exists a finite sequence of morphisms
\[ 
0 = F_0 \xlongrightarrow{s_1} F_1 \xlongrightarrow{s_2} \cdots \xlongrightarrow{s_m} F_m = F 
\]
such that $A_i\coloneqq\mathrm{cone}(s_i)\in\cP(\phi_i)$ for a sequence of real numbers 
$\phi_1 > \phi_2 > \dots > \phi_m$. 
\end{enumerate}
\end{definition}

As a consequence of the definition, the categories $\cP(\phi)$ are abelian.
The nonzero objects of $\cP(\phi)$ are called \emph{semistable} of \emph{phase} $\phi$; the simple objects in $\cP(\phi)$ are called \emph{stable}. 
For a nonzero object $F\in\cD$, the sequence of morphisms appearing in condition~\ref{enum:slicing3} is unique and called the \emph{Harder--Narasimhan (HN) filtration} of $F$, and the objects $A_i$ are called the \emph{Harder--Narasimhan factors} of $F$. 
We denote by $\phi_\cP^+(F)\coloneqq \phi_1$ and $\phi_\cP^-(F)\coloneqq\phi_m$ the maximal and minimal phases of the Harder--Narasimhan factors; when $F$ is semistable, these numbers coincide and are denoted simply by $\phi_{\cP}(F)$.

A slicing gives a collection of bounded t-structures, parameterized by $\phi\in\RR$. 
More precisely, for an interval $I\subset\RR$, we write 
\begin{equation*}
\cP(I)\coloneqq\left\{F\in\cD \st F\neq 0 \text{ and } \phi_\cP^+(F),\phi_\cP^-(F)\in I\right\}\cup \set{ 0};
\end{equation*}
equivalently, $\cP(I)= \langle\cP(\phi)\rangle_{\phi \in I}\subset\cD$ is the extension closure of the subcategories $\cP(\phi)$ for $\phi \in I$. 
Then for any $\phi \in \RR$, there is a bounded t-structure on $\cD$ given by the pair 
\begin{equation}
\label{tau-phi-in-text}
    (\cP( > \phi), \cP( \leq \phi + 1)) ,
\end{equation}
with heart $\cP((\phi,\phi+1])$. 
We refer to Appendix~\ref{app-t-structures-preliminaries} for background on t-structures.

Finally, we define a generalized distance between slicings as follows:
\[
\mathrm{dist}(\cP_1,\cP_2)\coloneqq \sup_{0\neq F\in \cD}\left\{\left|\phi^+_{\cP_1}(F)-\phi^+_{\cP_2}(F)\right|,\left|\phi^-_{\cP_1}(F)-\phi^-_{\cP_2}(F)\right|\right\}\in [0,+\infty].
\]
For later use, note that the above definition gives in particular that, if $\mathrm{dist}(\cP_1,\cP_2)<1$, then for all $\phi \in \RR$ we have 
\begin{equation}\label{eq:Distance}
\cP_1(\phi)\subset\cP_2((\phi-1,\phi+1))\subset\cP_2(<\phi+1).
\end{equation}

\begin{definition}
\label{definition-stability-condition}
Let $\Lambda$ be an abelian group and fix a homomorphism $v\colon\rK_0(\cD) \to\Lambda$.
Consider a pair $\sigma=(Z,\cP)$, where $Z\colon\Lambda\rightarrow\CC$ is a homomorphism and $\cP$ is a slicing of $\cD$. 
We say that: 
\begin{enumerate}[{\rm (1)}]
    \item $\sigma$ is a \emph{pre-stability condition} on $\cD$ with respect to $(\Lambda,v)$ if for all phases $\phi \in \RR$ and objects $0 \neq F \in \cP(\phi)$, we have $Z(v(F)) \in\RR_{>0}\cdot e^{i\pi\phi}$. 
    \item $\sigma$ is a \emph{stability condition} on $\cD$ with respect to $(\Lambda,v)$ if it is a pre-stability condition, $\Lambda$ is free of finite rank, and the \emph{support property} holds, i.e., there exists a quadratic form $Q$ on the vector space $\Lambda_\RR\coloneqq\Lambda\otimes\RR$ such that
\begin{itemize}
\item $Q$ is negative definite on the kernel of the $\RR$-linear extension $Z_{\RR}\colon\Lambda_\RR\to\CC$, and
\item for any $\sigma$-semistable object $F \in \cD$, we have $Q(v(F)) \geqslant 0$. 
\end{itemize}
\end{enumerate}
\end{definition}

We often suppress $v$ from the notation, for instance writing $Z(F)$ for $Z(v(F))$.
Given a pre-stability condition $\sigma$ and $F\in\cD$ a nonzero object, we denote by 
\begin{equation}\label{eq:DefMass}
m_{\sigma}(F)\coloneqq\sum |Z(A_i)|    
\end{equation}
the \emph{mass} of $F$, where the sum runs over all HN factors $A_i$ of $F$ with respect to $\sigma$.

\subsubsection{The space of stability conditions} 

\begin{definition}
Let $\Lambda$ be a free abelian group of finite rank and fix a homomorphism $v\colon\rK_0(\cD) \to\Lambda$. 
The set of all stability conditions on $\cD$ with respect to $(\Lambda,v)$ is denoted by $\Stab_{(\Lambda, v)}(\cD)$.
\end{definition}

The set $\Stab_{(\Lambda, v)}(\cD)$ carries a generalized metric $\mathrm{dist}(-,-)$, defined as follows: for any $\sigma_1=(Z_1,\cP_1)$ and $\sigma_2=(Z_2,\cP_2)$ in $\Stab_{(\Lambda, v)}(\cD)$, we set
\[
\mathrm{dist}(\sigma_1,\sigma_2)\coloneqq\sup_{0\neq F\in \cD}\left\{\left|\phi^+_{\cP_1}(F)-\phi^+_{\cP_2}(F)\right|,\left|\phi^-_{\cP_1}(F)-\phi^-_{\cP_2}(F)\right|,\left|\log \frac{m_{\sigma_1}(F)}{m_{\sigma_2}(F)}\right|\right\}\in [0,+\infty].
\]
This is locally equivalent to the generalized metric given by
\[
\max\left\{\mathrm{dist}(\cP_1,\cP_2), \|Z_1-Z_2\|\right\},
\]
where $\|-\|$ denotes any fixed norm on the finite-dimensional vector space $\Hom_{\ZZ}(\Lambda, \CC)$. 
Hence we may regard $\Stab_{(\Lambda, v)}(\cD)$ as a topological space with the induced topology by either metric. 

\begin{theorem}[Bridgeland]\label{thm:bridgeland-deformation} 
Let $\Lambda$ be a finite rank free abelian group, and let 
$v \colon \rK_0(\cD) \to \Lambda$ be a homomorphism. 
Then the forgetful map
$$ \Stab_{(\Lambda, v)}(\cD)\longrightarrow \Hom_{\ZZ}(\Lambda, \CC),\quad \sigma=(Z,\cP)\longmapsto Z,$$
is a local homeomorphism.
\end{theorem}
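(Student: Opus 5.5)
This is Bridgeland's deformation theorem; I will follow his original argument, in the streamlined form of \cite[Section 2]{curves}. The plan has three parts. First show that the forgetful map $\mathcal{Z}\colon\sigma\mapsto Z$ is locally injective. Then show that every small deformation of $Z$ lifts to a stability condition. Finally check that the lift depends continuously on $Z$ and still satisfies the support property.

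\emph{Local injectivity.} Suppose $\sigma_1=(Z,\cP_1)$ and $\sigma_2=(Z,\cP_2)$ have the same central charge and $\mathrm{dist}(\cP_1,\cP_2)<1$. Take $F\in\cP_1(\phi)$ and consider its $\cP_2$-HN filtration. By \eqref{eq:Distance}, all of its $\cP_2$-phases lie in $(\phi-1,\phi+1)$. The first factor $A_1$ of maximal $\cP_2$-phase $\phi_1$ satisfies $\Hom(A_1,F)\neq0$. Comparing $Z(A_1)$ with the phases forced by $\cP_1$, and using the Hom-vanishing of \ref{enum:slicing2}, gives $\phi_1\le\phi$. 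The symmetric argument applied to the last factor gives the reverse inequality, so $F\in\cP_2(\phi)$. Hence $\cP_1=\cP_2$.

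\emph{Lifting deformations.} Use the support property to choose a quadratic form $Q$ that is negative definite on $\ker Z$. Work with the neighbourhood
\[
U=\left\{W \st |W(w)-Z(w)|<\varepsilon|Z(w)| \text{ for all } w \text{ with } Q(w)\ge0\right\},
\]
where $\varepsilon$ is small. The support property gives a uniform bound $|Z(w)|\ge C\|w\|$ on the cone $\{Q\ge0\}$, which is what makes $U$ an open neighbourhood of $Z$. For $W\in U$ and each $\phi$, consider the thin quasi-abelian subcategory $\cP((\phi-\eta,\phi+\eta))$. Define $W$-semistable objects of phase $\psi$ inside these categories, with $|\psi-\phi|<\eta$, and set $\cQ(\psi)$ to be this class.

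The main obstacle is proving that HN filtrations exist for the new slicing $\cQ$. My approach is to show that each thin category $\cP((a,b))$ with $b-a$ small is of finite length with respect to strict chains of $W$-phases. For this I use the support property: semistable factors of bounded mass have classes in a discrete bounded subset of $\Lambda$. This forces ascending and descending chains of subobjects of maximal $W$-phase to stabilize, which yields HN filtrations inside $\cP((a,b))$. These local filtrations are then glued using the $\cP$-HN filtration. Once existence is established, Hom-vanishing and the condition $\cQ(\psi+1)=\cQ(\psi)[1]$ follow formally.

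\emph{Support and continuity.} For support of the new stability condition, deform the quadratic form to $Q'=Q+\delta\,|W-Z|$-correction terms, so that $Q'$ is still negative definite on $\ker W$. Every $W$-semistable object is an extension of $\sigma$-semistables with nearby phases, and the cone $\{Q\ge0\}$ is convex on classes of nearby phase. Together these give $Q'\ge0$ on $\cQ$-semistables. Continuity follows from the estimate $\mathrm{dist}(\cP,\cQ)<\eta$, which shrinks with $\varepsilon$. Combining local injectivity with the lifting on $U$ gives a local homeomorphism.
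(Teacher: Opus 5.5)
The paper does not prove this theorem; it is quoted from \cite{Bridgeland:Stab}, and the path-lifting version of \cite{bayer:short-proof} is used later. Your outline follows Bridgeland's original architecture: thin quasi-abelian categories, finite length from the support property, then gluing. That is a legitimate route, but two steps fail as written.

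\textbf{Local injectivity.} The first $\cP_2$-factor $A_1$ of $F\in\cP_1(\phi)$ only satisfies $A_1\in\cP_1((\phi_1-1,\phi_1+1))$ and $\phi^-_{\cP_1}(A_1)\le\phi$. This is compatible with $Z(A_1)\in\RR_{>0}e^{i\pi\phi_1}$, for instance if $A_1$ has $\cP_1$-factors of phases slightly above $\phi_1$ and slightly below $\phi$. So comparing $Z(A_1)$ with its $\cP_1$-phases yields no contradiction. What works is to use the whole truncation $A=\tau^{>\phi}_{\cP_2}F\in\cP_2((\phi,\phi+1))$ together with $B=\tau^{\le\phi}_{\cP_2}F$. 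Since $B[-1]\in\cP_2((\phi-2,\phi-1])\subset\cP_1(<\phi)$, the triangle $B[-1]\to A\to F$ gives $A\in\cP_1((\phi-1,\phi])$. Hence $\Im(e^{-i\pi\phi}Z(A))\le 0$, whereas $A\neq 0$ would force $\Im(e^{-i\pi\phi}Z(A))>0$. The lower truncation is handled symmetrically via the triangle $F\to\tau^{<\phi}_{\cP_2}F\to(\tau^{\ge\phi}_{\cP_2}F)[1]$.

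\textbf{Support property.} This step rests on a false claim. The set $\{Q\ge0\}$ is convex only along a single ray $Z^{-1}(\RR_{>0}e^{i\pi\psi})$, not on classes of nearby phase. Take $\Lambda_\RR=\RR^3$, $Z(x,y,z)=x+iy$ and $Q=x^2+y^2-z^2$, which is negative definite on $\ker Z$. The vectors $(1,0,1)$ and $(\cos\theta,\sin\theta,1)$ both have $Q=0$ and phases differing by $\theta/\pi$. Their sum has $Q=2\cos\theta-2<0$. Bayer keeps the same $Q$ by using only same-ray convexity at walls, combined with an induction.

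The simplest repair is the norm form of the support property, which you already invoke: $\|v(A)\|\le C|Z(A)|$ for $\sigma$-semistable $A$, with $\|\cdot\|$ Euclidean. A $W$-semistable $E$ has $\sigma$-HN factors $A_i$ with phases in an interval of width $2\eta<\frac12$. Therefore $\|v(E)\|\le C\sum_i|Z(A_i)|\le \frac{C}{\cos(\pi\eta)}|Z(E)|\le C'|W(E)|$. The form $Q'=C'^2|W|^2-\|\cdot\|^2$ is then negative definite on $\ker W$ and nonnegative on classes of $\mathscr{Q}$-semistable objects.

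\textbf{Two steps that are not formal.} First, $\mathscr{Q}(\psi)$ must be defined using thin intervals that contain $\psi$ with a margin comparable to the size of the deformation. You then have to show the resulting class does not depend on the interval. This is what makes $\mathscr{Q}$ well defined and what gives Hom-vanishing. With your condition $|\psi-\phi|<\eta$, $\psi$ may sit at the edge of the interval, and neither is available. Second, concatenating the local $W$-HN filtrations of the $\cP$-HN pieces produces factors whose phases overlap at the junctions. These must be regrouped inside a single thin category and re-filtered before you obtain an HN filtration.
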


To conclude, we recall that $\Stab_{(\Lambda, v)}(\cD)$ carries two group actions.
The first is by the universal cover $\widetilde{\mathrm{GL}}_2^+(\RR)$ of the $2\times2$ real matrices with positive determinant, which acts on $Z$ by multiplication.
On the other hand, the group of autoequivalences of $\cD$ naturally acts on the set of slicings. 
This upgrades to an action of the group 
\begin{equation}
\label{AutLambdav}
    \Aut_{(\Lambda, v)}(\cD) \coloneqq \set{ (\Phi, a) \in \Aut(\cD) \times \Aut(\Lambda) \st v \circ \Phi_* = a \circ v}
\end{equation}
on $\Stab_{(\Lambda, v)}(\cD)$, 
where $\Phi_* \colon \rK_0(\cD) \to \rK_0(\cD)$ denotes the induced automorphism of the Grothendieck group; explicitly, the action on $\sigma = (Z, \cP) \in \Stab_{(\Lambda, v)}(\cD)$ is given by 
\begin{equation*}
    (\Phi, a) \cdot \sigma \coloneqq (Z \circ a^{-1}, \Phi \cdot \cP), 
\end{equation*}
where $\Phi \cdot \cP$ is the slicing with $(\Phi \cdot \cP)(\phi) = \Phi(\cP(\phi))$ for $\phi \in \RR$. 
Note that if $v \colon \rK_0(\cD) \to \Lambda$ is surjective, which is often the case in practice, then $\Aut_{(\Lambda, v)}(\cD)$ is in fact a subgroup of $\Aut(\cD)$.

\subsubsection{Stability conditions in terms of hearts}
There is an equivalent description of stability conditions in terms of hearts of bounded t-structures, which is sometimes useful. 

\begin{definition}
    Let $\cA$ be an abelian category. 
    Let $\Lambda$ be an abelian group and fix a homomorphism $v \colon \rK_0(\cA) \to \Lambda$. 
    Consider a homomorphism $Z \colon \Lambda \to \CC$. 
    \begin{enumerate}[{\rm (1)}]
        \item We say that $Z$ is a \emph{stability function} on $\cA$ with respect to $(\Lambda, v)$ if for all objects $0 \neq F \in \cA$, we have 
        \begin{equation*}
            Z(v(F)) \in \set{ z \in \CC \st \Im z > 0, \text{ or } \Im z = 0 \text{ and } \Re z < 0}. 
        \end{equation*}
        In this case, we define the \emph{phase} of a nonzero object $0 \neq F \in \cA$ by 
        \begin{equation*} 
        \phi(F) \coloneqq \frac{1}{\pi} \arg Z(F) \in (0,1],
        \end{equation*}
        and we say $F$ is \emph{$Z$-semistable} if for all subobjects $0 \neq A \hookrightarrow F$ we have $\phi(A) \leq \phi(F)$. 
        \item If $Z$ is a stability function, we say that it satisfies the \emph{Harder--Narasimhan (HN) property} if for every nonzero object $F \in \cA$ there exists a sequence
        \begin{equation*}
            0 = F_0 \hookrightarrow F_1 \hookrightarrow \dots \hookrightarrow F_m = F 
        \end{equation*}
        of subobjects in $\cA$ such that each factor $F_i/F_{i-1}$ is $Z$-semistable and 
        \begin{equation*}
        \phi(F_1/F_0) > \phi(F_2/F_1) > \cdots > \phi(F_m/F_{m-1}). 
        \end{equation*} 
    \end{enumerate}
\end{definition}

\begin{lemma}[{\cite[Proposition~5.3]{Bridgeland:Stab}}]
    Let $\Lambda$ be a finite rank free abelian group and fix a homomorphism
    $v \colon \rK_0(\cD) \to \Lambda$. 
    Then the following data are equivalent: 
    \begin{enumerate}[{\rm (1)}]
        \item \label{correspondence-P} A pre-stability condition on $\cD$ with respect to $(\Lambda, v)$. 
        \item \label{correspondence-A} A pair $(Z, \cA)$ where $\cA \subset \cD$ is the heart of a bounded t-structure and $Z$ is a stability function on $\cA$ with respect to $(\Lambda, v)$ which satisfies the Harder--Narasimhan property.
    \end{enumerate}
    Explicitly, given a pre-stability condition $\sigma = (Z, \cP)$ as in~\ref{correspondence-P}, we get a pair $(Z, \cA)$ as in~\ref{correspondence-A} by taking $\cA = \cP((0,1])$. 
    Conversely, given a pair $(Z, \cA)$ as in~\ref{correspondence-A}, we get a pre-stability condition $(Z, \cP)$ as in~\ref{correspondence-P} by taking $\cP(\phi)$ to consist of $0$ and $Z$-semistable objects of phase $\phi$ in $\cA$ for $\phi \in (0,1]$ and extending to all $\phi \in \RR$ via the rule $\cP(\phi+1) = \cP(\phi)[1]$. 
\end{lemma}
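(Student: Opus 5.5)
The plan is to build explicit maps in both directions and then check that they are mutually inverse.

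\emph{From slicings to hearts.} Start with a pre-stability condition $(Z,\cP)$ and set $\cA=\cP((0,1])$. By the discussion after Definition~\ref{def:slicing}, $\cA$ is the heart of the bounded t-structure \eqref{tau-phi-in-text} with $\phi=0$.
\begin{itemize}
\item Every nonzero $F\in\cA$ has HN factors $A_i\in\cP(\phi_i)$ with $\phi_i\in(0,1]$. Each $Z(A_i)$ lies in $\RR_{>0}e^{i\pi\phi_i}$, which sits in the semi-closed upper half plane. Since $v$ is additive on triangles, $Z(F)=\sum Z(A_i)$, and this sum also lies in that half plane. So $Z$ is a stability function.
\item Next show that the $Z$-semistable objects of phase $\phi$ in $\cA$ are exactly $\cP(\phi)$. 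If $F\in\cP(\phi)$ and $A\hookrightarrow F$ is a subobject in $\cA$, then $\phi^+(A)\le\phi$. Otherwise the first HN factor of $A$ would map nontrivially to $F$, contradicting axiom~\ref{enum:slicing2}. Hence $\arg Z(A)\le\pi\phi$.
\item Conversely, suppose $F\in\cA$ is $Z$-semistable. The first HN step $F_1\to F$ is a subobject in $\cA$: its cone lies in $\cP((0,\phi_1))\subset\cA$, so the triangle is a short exact sequence in $\cA$. We have $\phi(F_1)=\phi_1\ge\phi(F)$, with equality only if $F$ has a single factor. Semistability therefore forces $F\in\cP(\phi_1)$.
\item With this identification, the slicing HN filtration of $F\in\cA$ becomes a chain of subobjects in $\cA$ with semistable factors of decreasing phases. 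This gives the HN property for $Z$ on $\cA$.
\end{itemize}

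\emph{From hearts to slicings.} Start with $(Z,\cA)$ and define $\cP(\phi)$ for $\phi\in(0,1]$ as $0$ together with the $Z$-semistable objects of phase $\phi$ in $\cA$. Extend to all $\phi$ by the rule $\cP(\phi+1)=\cP(\phi)[1]$. Then check the three slicing axioms and the compatibility with $Z$.
\begin{itemize}
\item Axiom~\ref{enum:slicing1} holds by construction.
\item For axiom~\ref{enum:slicing2}, take phases $\phi_1>\phi_2$. If $\lfloor\phi_1\rfloor$-type shifts differ, i.e. the phases lie in different unit intervals $(k,k+1]$, then vanishing follows from $\Hom(\cA[j],\cA)=0$ for $j>0$. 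If both lie in $(0,1]$, use the standard argument: a nonzero map $f\colon F_1\to F_2$ factors through $\operatorname{im} f$. Semistability of $F_1$ gives $\phi(\operatorname{im} f)\ge\phi_1$, via the seesaw property on $0\to\ker f\to F_1\to\operatorname{im} f\to 0$. Semistability of $F_2$ gives $\phi(\operatorname{im} f)\le\phi_2$. This is a contradiction.
\item For axiom~\ref{enum:slicing3}, take any $F\in\cD$. Filter it by its cohomology objects $H^{-k}(F)[k]$ with respect to the bounded t-structure, which are finitely many since the t-structure is bounded. Refine each of these using the HN filtration in $\cA$, shifted. The phases decrease across the pieces because $H^{-k}[k]$ has phases in $(k,k+1]$. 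Concatenating the filtrations via the octahedral axiom gives the required sequence.
\item The condition $Z(F)\in\RR_{>0}e^{i\pi\phi}$ holds by the definition of phase, using $Z(F[1])=-Z(F)$.
\end{itemize}

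\emph{Mutually inverse.} Starting from $(Z,\cA)$, the heart $\cP((0,1])$ equals $\cA$, since it is the extension closure of semistable objects of $\cA$ and the HN property applies. Starting from $(Z,\cP)$, the slicing is recovered by the identification of semistables proved in the first direction.

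I expect the main obstacle to be the first direction: showing that the slicing-semistable objects coincide with the $Z$-semistable objects of $\cA$. That step is where the HN triangles must be shown to be short exact sequences in $\cA$. The remaining steps are formal.
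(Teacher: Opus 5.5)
Your argument is correct and is essentially the standard proof of \cite[Proposition~5.3]{Bridgeland:Stab}, which the paper cites without reproducing. In one direction you use that $\cP((0,1])$ is a heart and that HN triangles with all terms in $\cA$ are short exact sequences; in the other you filter by the cohomology objects of the bounded t-structure and refine by HN filtrations in $\cA$. The one detail you skip is that each $\cP(\phi)$ must be closed under finite direct sums, since a slicing consists of additive subcategories; this follows from the same seesaw argument applied to a subobject $A \subset F\oplus G$, using the sequence $0\to A\cap F\to A\to \operatorname{im}(A\to G)\to 0$.
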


When we want to emphasize the heart $\cA$ instead of the slicing $\cP$, we use the notation $\sigma = (Z, \cA)$ for a (pre-)stability condition on $\cD$.

\subsection{Base change}\label{subsec:BaseChange}

Let $X$ be a projective scheme over the field $\KK$.
In this section we take $\cD=\Db(X)$, with its $\KK$-linear structure.
We set $\rK_0(X)\coloneqq \rK_0(\Db(X))$. 
Recall that we have the Euler pairing
\[
\chi_{\KK}(-,-)\colon \rK_0(\Dperf(X))\times \rK_0(X)\longrightarrow \ZZ,\quad \chi_\KK(F_1,F_2) = \sum_i (-1)^i \dim_\KK \Ext^i(F_1,F_2). 
\]
When the base field is clear from the context, we will omit it from the notation. 
We define the \emph{numerical Grothendieck group} $\Knum(X)$ of $X$ to be the quotient of $\rK_0(X)$ by the null space of $\chi$ on the right. 
By~\cite[Lemma 12.7]{stability-families}, it is a finite rank free abelian group. 

\begin{definition}
A homomorphism $v\colon\rK_0(X)\to\Lambda$ is called \emph{numerical} if $\Lambda$ is free of finite rank and $v$ factors via the canonical quotient map $\rK_0(X)\to\Knum(X)$.
Similarly, a stability condition on $\Db(X)$ with respect to $(\Lambda, v)$ is called \emph{numerical} if $v$ is numerical.
If $\Lambda=\Knum(X)$ and $v$ is the quotient map, we will use the notation $\Stab_\mathrm{num}(\Db(X))$ for the space of stability conditions with respect to $(\Lambda, v)$.
\end{definition}

Now fix a field extension $\KK \subset \ell$. 
We will recall how to base change numerical stability conditions on $X$ to those on $X_{\ell}$. 

First, by \cite[Proposition and Definition 12.15]{stability-families}, we have a homomorphism
\[
\eta^{\vee}_{\ell/\KK}\colon\Knum(X_{\ell})\longrightarrow\Knum(X)\otimes\QQ,
\]
such that the image 
\begin{equation}
\label{Knumell}
\Knum(X)_{\ell}\coloneqq\mathrm{im}(\eta^{\vee}_{\ell/\KK})
\end{equation}
contains $\Knum(X)$ as a subgroup of finite index, and is contained in $\Knum(X)_{\overline{\KK}}$, where $\overline{\KK}$ denotes the algebraic closure of $\KK$.

Let $v \colon \rK_0(X) \to \Lambda$ be a numerical homomorphism, which factors via a homomorphism $v_{\num} \colon \Knum(X) \to \Lambda$. 
We define $\Lambda_{\ell} \subset \Lambda \otimes {\QQ}$ as the subgroup generated by $\Lambda$ and the image of the map 
\begin{equation}
\label{KXell}
    \Knum(X_{\ell}) \xrightarrow{ \eta^{\vee}_{\ell/\KK}} \Knum(X) \otimes \QQ \xrightarrow{v_{\num}} \Lambda \otimes {\QQ}. 
\end{equation}

Then, by the previous paragraph, $\Lambda_{\ell}$ contains $\Lambda$ as a subgroup of finite index. 
Moreover, by construction there is an induced numerical homomorphism  
\begin{equation*}
    v_{\ell} \colon \rK_0(X_{\ell}) \to \Lambda_{\ell}
\end{equation*}
given by projection to $\rK_{\num}(X_{\ell})$ followed by the map~\eqref{KXell}. 
Finally, given a homomorphism $Z \colon \Lambda \to \CC$, we write
\begin{equation*}
    Z_{\ell} \colon \Lambda_{\ell} \to \CC 
\end{equation*}
for the canonical extension along the inclusion of the finite index $\Lambda \subset \Lambda_{\ell}$. 

The following result is a combination of~\cite[Theorem 5.3, Proposition 5.9, Theorem 12.17]{stability-families}. 

\begin{theorem}\label{thm:base-change-stab}
Let $X$ be a projective scheme over the field $\KK$. 
Let $\sigma=(Z,\cP)$ be a numerical stability condition on $\Db(X)$ with respect to $(\Lambda,v)$. Let $\KK \subset \ell$ be a field extension and let $\pi\colon X_{\ell}\to X$ denote the base change morphism.
\begin{enumerate}[{\rm (1)}]
\item\label{enum:base-change-stab1} 
There is a slicing $\cP_{\ell}$ on $\Db(X_{\ell})$ such that for any $\phi \in \RR$ we have 
\begin{align*}
    \cP_{\ell}(> \phi) & = \set{ F \in \Db(X_{\ell}) \st  \pi_*F \in \widehat{\cP}(> \phi)}, \\ 
    \cP_{\ell}(\leq \phi+1) & = \set{ F \in \Db(X_{\ell}) \st \pi_*F \in \widehat{\cP}(\leq \phi + 1)}, 
\end{align*}
where $\widehat{\cP}(> \phi)$ and $\widehat{\cP}(\leq \phi + 1)$ are as given in Definition~\ref{definition-Ind-slicing}. 
\item\label{enum:base-change-stab2} The pair $\sigma_{\ell}\coloneqq (Z_{\ell}, \cP_{\ell}) $ is a numerical stability condition on $\Db(X_{\ell})$ with respect to $(\Lambda_{\ell}, v_{\ell})$.
\item\label{enum:base-change-stab3} An object $F\in \Db(X)$ is $\sigma$-semistable of phase $\phi$ if and only if its pullback $F_{\ell}\coloneqq\pi^*F$ is $\sigma_{\ell}$-semistable of phase $\phi$. 
\item\label{enum:base-change-stab4} Let $F\in \Db(X)$. Let $\KK \subset \overline{\KK}$ and $\ell \subset \overline{\ell}$ be algebraic closures. 
Then $F_{\overline{\KK}}$ is $\sigma_{\overline{\KK}}$-stable if and only if $F_{\overline{\ell}}$ is $\sigma_{\overline{\ell}}$-stable; in particular, in this case $F_{\ell}$ is $\sigma_{\ell}$-stable. 
\end{enumerate}
\end{theorem}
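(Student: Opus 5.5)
This statement is presented in the paper as a combination of \cite[Theorem 5.3, Proposition 5.9, Theorem 12.17]{stability-families}. I therefore plan to reduce each part to one of these results, checking only that our definitions match. In particular, $\Knum(X)_\ell$, $\Lambda_\ell$, $v_\ell$ and $Z_\ell$ should be exactly the data used there.

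The argument would have three steps.

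\textbf{Step 1: the slicing.} For part~\ref{enum:base-change-stab1}, I would extend the slicing $\cP$ to $\Dqc(X)$, using Definition~\ref{definition-Ind-slicing}. The idea is to pass to Ind-completions, taking $\widehat{\cP}(>\phi)$ to be the cocompletion of $\cP(>\phi)$ under colimits. This gives t-structures on $\Dqc(X)$.

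The map $\pi\colon X_\ell\to X$ is affine, and $\pi_*$ is conservative and exact. So the condition $\pi_*F\in\widehat{\cP}(>\phi)$ defines a t-structure on $\Dqc(X_\ell)$. In effect, we view $\Dqc(X_\ell)$ as modules over the algebra $\pi_*\cO_{X_\ell}$, and the t-structure is compatible with the $\cO_X$-linear action.

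The main task is then to show that this t-structure restricts to $\Db(X_\ell)$. I would use the restriction criteria of Polishchuk, in the form recorded in Appendix~\ref{app:Inducing}, together with noetherian approximation of $\ell$ by finitely generated $\KK$-subalgebras. Varying $\phi$ then produces the slicing $\cP_\ell$. This is the content of \cite[Theorem 5.3]{stability-families}.

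\textbf{Step 2: compatibility and the support property.} For part~\ref{enum:base-change-stab3}, the key observation is that pullback $F\mapsto F_\ell$ sends $\cP(I)$ into $\cP_\ell(I)$. This holds because $\pi_*\pi^*F = F\otimes_\KK \ell$ is a filtered colimit of finite direct sums of copies of $F$.

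It follows that the HN filtration of $F$ pulls back to the HN filtration of $F_\ell$. In particular, $F$ is semistable of phase $\phi$ exactly when $F_\ell$ is. The central charge is compatible because $\eta^\vee_{\ell/\KK}$ is compatible with Euler pairings.

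For part~\ref{enum:base-change-stab2}, the delicate point is that objects of $\Db(X_\ell)$ need not be defined over $\KK$. I would handle this by spreading out each such object over a finitely generated extension. Over a finite extension, $\pi_*$ sends semistable objects to semistable objects, and it multiplies classes in $\Lambda_\ell$ by the degree. This multiplication preserves both the central charge condition and the quadratic form $Q$, up to a positive scalar. These are the arguments of \cite[Proposition 5.9, Theorem 12.17]{stability-families}.

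\textbf{Step 3: stability over algebraic closures.} For part~\ref{enum:base-change-stab4}, I would use that over an algebraically closed field, stability is detected by $\Hom(F,F)=\overline{\KK}$ together with semistability of the Jordan--Hölder factors, and that stable factors descend to finite extensions. Comparing $\overline{\KK}\subset\overline{\ell}$ then reduces to invariance of stability under extensions of algebraically closed fields. This can be proved by spreading a destabilizing subobject out over a finitely generated $\overline{\KK}$-algebra and specializing it to a $\overline{\KK}$-point.

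The main obstacle is Step 1, namely showing that the induced t-structures preserve $\Db(X_\ell)$ and assemble into a slicing with finite HN filtrations. Since the paper quotes this from \cite{stability-families}, I would cite it there rather than reprove it.
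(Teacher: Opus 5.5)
Your plan is exactly the paper's: the paper gives no argument of its own and simply records the theorem as a combination of \cite[Theorem 5.3, Proposition 5.9, Theorem 12.17]{stability-families}, so once you check that $\Knum(X)_\ell$, $\Lambda_\ell$, $v_\ell$ and $Z_\ell$ agree with the data used there, the citation is the whole proof. Do not rely on your sketches of Steps 2 and 3 as self-contained arguments, for three reasons: the degree trick for $\pi_*$ only works for finite extensions (for a transcendental finitely generated extension $\pi_*$ does not even land in $\Db(X)$); the passage from such an extension to a closed point (implicit in Step 2, explicit in your specialization argument for part (4)) needs semistability or heart membership to be preserved at general closed points, which does not follow formally from $\sigma$ being a numerical stability condition; and $\Hom(F,F)=\overline{\KK}$ does not detect stability, since a nonsplit extension of two non-isomorphic stable objects of the same phase can have only scalar endomorphisms.
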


\begin{definition}
\label{definition-geometrically-stable}
In the situation of Theorem~\ref{thm:base-change-stab}, we say $F\in \Db(X)$ is \emph{geometrically $\sigma$-stable} if $F_{\overline{\KK}}$ is $\sigma_{\overline{\KK}}$-stable.
\end{definition} 

 The following result was stated without proof  after the statement of~\cite[Theorem~12.17]{stability-families}.

\begin{lemma}\label{lem:base-change-stab-manifold}
In the above notation, we have a commutative diagram
\[\begin{tikzcd}
	{\Stab_{(\Lambda, v)}(\Db(X))} & {\Stab_{(\Lambda_{\ell}, v_{\ell})}(\Db(X_{\ell}))} \\
	{\Hom_{\ZZ}(\Lambda, \CC)} & {\Hom_{\ZZ}(\Lambda_{\ell}, \CC)}
    \arrow["\Pi", from=1-1, to=1-2]
	\arrow[from=1-1, to=2-1]
	\arrow[from=1-2, to=2-2]
	\arrow[from=2-1, to=2-2]
\end{tikzcd}\]
where $\Pi$ is an open embedding given by $\Pi(Z,\cP)=(Z_{\ell},\cP_{\ell})$, 
the vertical maps are the local homeomorphisms given by the forgetful maps, 
and the bottom map is the homeomorphism given by $Z \mapsto Z_{\ell}$. 

Moreover, if the cokernel of $v \colon \rK_0(X) \to \Lambda$ is finite, then $\Pi$ is a homeomorphism onto a union of connected components of $\Stab_{(\Lambda_{\ell}, v_{\ell})}(\Db(X_{\ell}))$.
\end{lemma}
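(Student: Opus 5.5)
The plan is to check four things in turn: that the diagram commutes, that the map $Z \mapsto Z_\ell$ is a homeomorphism, that $\Pi$ is an open embedding, and finally the statement about connected components. Commutativity is immediate from the definition $\Pi(Z,\cP) = (Z_\ell, \cP_\ell)$, which is well defined by Theorem~\ref{thm:base-change-stab}\ref{enum:base-change-stab2}. For the bottom map, recall that $\Lambda \subset \Lambda_\ell$ has finite index, so $\Lambda\otimes\RR = \Lambda_\ell\otimes\RR$. Restriction along the inclusion therefore gives a linear isomorphism $\Hom_\ZZ(\Lambda_\ell,\CC)\to\Hom_\ZZ(\Lambda,\CC)$, and $Z\mapsto Z_\ell$ is its inverse, hence a homeomorphism.

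\textbf{$\Pi$ is an open embedding.} I would first show injectivity. By Theorem~\ref{thm:base-change-stab}\ref{enum:base-change-stab3}, $F \in \cP(\phi)$ if and only if $\pi^*F \in \cP_\ell(\phi)$. So $\cP_\ell$ determines $\cP$, and $Z_\ell$ determines $Z$ by restriction. Next I would show continuity with respect to the metric. Theorem~\ref{thm:base-change-stab}\ref{enum:base-change-stab1} describes $\cP_\ell(>\phi)$ and $\cP_\ell(\le\phi+1)$ via $\pi_*$ and the Ind-slicing $\widehat{\cP}$. If $\mathrm{dist}(\cP_1,\cP_2)<\epsilon$, then $\widehat{\cP}_1(>\phi)\subset\widehat{\cP}_2(>\phi-\epsilon)$, because Ind-completion is compatible with these inclusions. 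Hence $\cP_{1,\ell}(>\phi)\subset \cP_{2,\ell}(>\phi-\epsilon)$, and the same holds for the other half of each t-structure. This gives $\mathrm{dist}(\cP_{1,\ell},\cP_{2,\ell})\le \mathrm{dist}(\cP_1,\cP_2)$. Since the two phase functions agree on pulled-back objects, the distances are in fact equal, so $\Pi$ is an isometry for the slicing part of the metric and a homeomorphism onto its image. For openness, let $\sigma\in\Stab_{(\Lambda,v)}(\Db(X))$. Bridgeland's theorem (Theorem~\ref{thm:bridgeland-deformation}) gives a neighborhood $U\ni\sigma$ mapping homeomorphically to an open set $W$ of central charges, and a neighborhood $U'\ni\sigma_\ell$ mapping homeomorphically to $W_\ell$. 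After shrinking, $\Pi(U)\subset U'$ by continuity. Then $\Pi|_U$ is identified with the homeomorphism $W\to W_\ell$ under these charts, so its image is open.

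\textbf{Connected components when $\mathrm{coker}(v)$ is finite.} Since the image is open, it suffices to show it is closed. Let $\tau=(Z',\cP')$ be a limit of points $\sigma^{(k)}_\ell$. Write $Z'=Z_\ell$ with $Z$ the restriction. Choose $k$ with $\mathrm{dist}(\sigma^{(k)}_\ell,\tau)$ small. The natural candidate preimage is the slicing $\cP(\phi) = \{F : \pi^*F\in\cP'(\phi)\}$. I would prove this is a slicing by showing that the HN filtration of $\pi^*F$ with respect to $\cP'$ descends to $X$. The idea is to use the same descent argument as in \cite{stability-families}: the slicing $\cP'$ is close to $\cP^{(k)}_\ell$, which is invariant under $\mathrm{Aut}(\ell/\KK)$ and is constructed from $X$. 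The cleaner alternative is to deform $\sigma^{(k)}$ on $X$ itself. Finite cokernel together with the support property for $\sigma^{(k)}$ lets Bridgeland's theorem produce $\sigma\in\Stab_{(\Lambda,v)}(\Db(X))$ with central charge $Z$ and $\mathrm{dist}(\sigma,\sigma^{(k)})$ small. The finite-cokernel hypothesis is what makes the support-property quadratic form and the deformation radius comparable on $\Lambda$ and $\Lambda_\ell$, since $\Lambda_\RR=(\Lambda_\ell)_\RR$. Then $\sigma_\ell$ and $\tau$ have the same central charge and are close to each other, so $\sigma_\ell=\tau$ by the uniqueness in Bridgeland's theorem.

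The main obstacle is this last step: the radius of the deformation has to be uniform enough that the point $k$ can be chosen before the deformation is performed. I would handle this by using the explicit quantitative form of Bridgeland's deformation via the support property, with the same quadratic form $Q$ on $\Lambda_\RR$ for both stability conditions. The semistable classes of $\sigma^{(k)}$ and $\sigma^{(k)}_\ell$ match up to the finite index, which is where the finiteness hypothesis is used.
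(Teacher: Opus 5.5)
The first three parts of your proposal agree with the paper. The paper proves $\mathrm{dist}(\sigma_1,\sigma_2)\le\mathrm{dist}(\Pi(\sigma_1),\Pi(\sigma_2))$ from Theorem~\ref{thm:base-change-stab}\ref{enum:base-change-stab3} and then invokes the local homeomorphisms. Your estimate through $\widehat{\cP}$ adds the continuity of $\Pi$, which the paper leaves implicit.

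For the last claim the paper argues differently. By \cite[Theorem 3.7]{woolf}, finiteness of the cokernel makes the components $S$ of the source and $S'\supset\Pi(S)$ of the target complete. The reverse inequality turns a sequence $\Pi(\sigma_k)\to\tau$ in $S'$ into a Cauchy sequence in $S$, whose limit maps to $\tau$. Hence $\Pi(S)$ is closed and open in $S'$, and this is the only place the hypothesis is used.

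In your route, the step you flag as the main obstacle is not actually resolved, and the role you give to the finite-cokernel hypothesis is wrong.
\begin{itemize}
\item Each $\sigma^{(k)}$ comes with its own support form $Q_k$, and the deformation radius around $\sigma^{(k)}$ is controlled by $Q_k$, so it may shrink as $k\to\infty$.
\item Using ``the same $Q$'' for $\sigma^{(k)}$ and $\sigma^{(k)}_\ell$ gives no uniformity in $k$.
\item Finite cokernel does not help here. $\Lambda_\RR=(\Lambda_\ell)_\RR$ holds for every $v$, because $\Lambda\subset\Lambda_\ell$ always has finite index.
\item A form giving the support property for $\sigma^{(k)}_\ell$ gives it for $\sigma^{(k)}$ simply because $\pi^*$ preserves semistability and classes; no hypothesis on $v$ is involved.
\end{itemize}

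The missing idea is to use the form $Q_\tau$ of the \emph{limit}. The argument then runs as follows.
\begin{itemize}
\item By \cite[Theorem~1.2]{bayer:short-proof}, every stability condition in an open neighbourhood of $\tau$ satisfies the support property with respect to $Q_\tau$. So for $k\gg0$ both $\sigma^{(k)}_\ell$ and $\sigma^{(k)}$ do.
\item For $k\gg0$ the segment from $Z^{(k)}$ to $Z$ stays in the region where $Q_\tau$ is negative definite on the kernel. The same theorem applied on $X$ lifts it to a path starting at $\sigma^{(k)}$.
\item Since $\Pi$ is continuous by your estimate, the image of this path under $\Pi$ is a lift of the segment from $Z^{(k)}_\ell$ to $Z'$ starting at $\sigma^{(k)}_\ell$.
\item Choose a small ball around $\tau$ on which the forgetful map is a homeomorphism onto a set containing that segment. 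Uniqueness of path lifts for a local homeomorphism into a Hausdorff space, compared with the lift inside this ball, shows that the endpoint maps to $\tau$.
\end{itemize}

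Completed this way, your route replaces Woolf's completeness theorem by a local argument at the limit point. This works because $\tau$ is assumed to exist and carries its own support property. As a result, your argument never uses that the cokernel of $v$ is finite.
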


\begin{proof}
The commutativity of the diagram holds by the construction of the maps involved. 
The fact that forgetful maps are local homeomorphisms is Theorem~\ref{thm:bridgeland-deformation}. 
Since $\Lambda$ is a finite index subgroup of $\Lambda_{\ell}$, the bottom map is a linear isomorphism, and we can fix compatible norms on both sides; in particular, it is a homeomorphism. 

By our choice of norms, the definition of $\mathrm{dist}(-,-)$, and Theorem \ref{thm:base-change-stab}\ref{enum:base-change-stab3}, we have $$\mathrm{dist}(\sigma_1,\sigma_2)\leq \mathrm{dist}(\Pi(\sigma_1), \Pi(\sigma_2))$$ for any $\sigma_1,\sigma_2\in \Stab_{(\Lambda, v)}(\Db(X))$. In particular, $\Pi$ is injective. 
As vertical maps are local homeomorphisms, we conclude that 
$\Pi$ is an open embedding.

Let $S \subset \Stab_{(\Lambda, v)}(\Db(X))$ and $S'\subset\Stab_{(\Lambda_{\ell}, v_{\ell})}(\Db(X_{\ell}))$ be connected components such that $\Pi(S)\subset S'$. If the cokernel of $v$ is finite, then by \cite[Theorem 3.7]{woolf}, $S$ and $S'$ are complete. It follows that $\Pi(S)$ is closed in $S'$. By the previous paragraph it is also open, so we conclude that $\Pi(S)=S'$.
\end{proof}

The following definition will be used throughout this paper.

\begin{definition}\label{def:GeometricSatbility}
A numerical stability condition $\sigma\in\Stab_{(\Lambda,v)}(\Db(X))$ is \emph{geometric} if $\sigma_{\overline{\KK}}$ has the property that all skyscraper sheaves $\cO_p$ of closed points $p\in X_{\overline{\KK}}$ are $\sigma_{\overline{\KK}}$-stable with the same phase. 
\end{definition}


\section{Pullback and pushforward of stability conditions}\label{sec:PullbackPushforward} 

In this section, we discuss the theory of pullback and pushforward of stability conditions. 
First, in Sections~\ref{subsection-pullback-stability} and~\ref{subsection-pushforward-stability}, we introduce the basic definitions, properties, and criteria for the existence of pullback and pushforward stability.  
In Section~\ref{section-base-change-pull-push-stability}, we discuss the compatibility of these operations with base field extension.
Finally, in Section~\ref{subsec:BayerConditions}, we introduce the Bayer properties, and use them to formulate particularly convenient criteria for the existence of pushforward and pullback stability conditions (Propositions~\ref{prop:PushforwardStability} and~\ref{prop:PullbackStability}). 

\subsection{Pullback of stability conditions} \label{subsection-pullback-stability}

We begin with the following:

\begin{definition}\label{def:Pullback}
Let $f\colon X\to Y$ be a proper morphism between noetherian schemes.
\begin{enumerate}[{\rm (1)}]
\item If $\cP$ is a slicing of $\Db(Y)$, then its \emph{pullback} $f^{\sharp}\cP$ is the $\RR$-indexed collection of subcategories of $\Db(X)$ given by 
\begin{equation*}
f^{\sharp}\cP(\phi)\coloneqq \set{F \in \Db(X) \st f_*F \in \cP(\phi)} \quad \text{for } \phi \in \RR. 
\end{equation*}
(Note that since $f$ is proper, $f_*F$ lies in $\Db(Y)$.)
\item If $v \colon \rK_0(Y) \to \Lambda$ is a homomorphism, then its \emph{pullback} $f^{\sharp}v \colon \rK_0(X) \to \Lambda$ is the homomorphism   given by 
\begin{equation*}
    f^{\sharp} v \coloneqq v \circ f_*.
\end{equation*}
\item If $\sigma = (Z,\cP)$ is a stability condition on $\Db(Y)$ with respect to $v \colon \rK_0(Y) \to \Lambda$, then its \emph{pullback} is the pair
\begin{equation*}
f^{\sharp} \sigma \coloneqq (Z,f^{\sharp}\cP). 
\end{equation*}
\end{enumerate}
\end{definition}

\begin{remark}
\label{remark-pullback-slicing-stability}
In the generality of Definition~\ref{def:Pullback}, we cannot always expect $f^{\sharp}\cP$ to be a slicing or $f^{\sharp}\sigma$ to be a stability condition. 
For instance, in order for $f^{\sharp}\cP$ to be a slicing, it is necessary that $f_* \colon \Db(X) \to \Db(Y)$ is conservative; for applications, we will be interested in the case where $f$ is finite, which implies this conservativity. 
On the other hand, if $f_*$ is conservative, then $f^\sharp\sigma$ is a stability condition with respect to $(\Lambda, f^{\sharp}v)$ as soon as $f^{\sharp}\cP$ is a slicing. 
Indeed, then the conditions in Definition~\ref{definition-stability-condition} for $f^{\sharp}\sigma$ follow directly from those for $\sigma$, with $f^{\sharp}\sigma$ satisfying the support property with respect to the same quadratic form on $\Lambda_{\RR}$ as $\sigma$. 
\end{remark}

The next result gives a useful criterion for the pullback of a slicing or stability condition to remain such. 
This result depends on the material discussed in Appendix~\ref{app:Inducing} on inducing t-structures and slicings. 
In particular, given a slicing $\cP$ of $\Db(Y)$ and $\phi \in \RR$, we denote by $\widehat{\cP}(\geq \phi)$ the subcategory of $\Dqc(Y)$ generated by $\cP(\geq \phi)$ under coproducts and extensions (see Definition~\ref{definition-Ind-slicing}). 

\begin{proposition}\label{proposition-pullback-stability}
Let $f \colon X \to Y$ be a finite morphism between noetherian schemes.
Let $\cP$ be a slicing of $\Db(Y)$ such that  
\begin{equation}\label{f_*OX-right-exact}
f_*\cO_X \otimes\cP(\phi)\subset\widehat{\cP}  (\geq\phi) \quad \text{for all } \phi \in \RR.
\end{equation}
Then the pullback $f^{\sharp}\cP$ is a slicing of $\Db(X)$. 
If $X$ and $Y$ are defined over a scheme $S$ which is quasi-projective over a noetherian affine scheme, then if $\cP$ is an $S$-local slicing, so is $f^{\sharp}\cP$. 

Moreover, if $\sigma=(Z,\cP)$ is a stability condition on $\Db(Y)$ with respect to $v \colon \rK_0(Y) \to \Lambda$ such that $\cP$ satisfies~\eqref{f_*OX-right-exact}, then $f^{\sharp}\sigma$ is a stability condition on $\Db(X)$ with respect to $(\Lambda,f^{\sharp} v)$.
\end{proposition}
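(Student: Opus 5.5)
The plan is to reduce to Polishchuk-type results on inducing t-structures along finite morphisms, as recalled and extended in Appendix~\ref{app:Inducing}. Since $f$ is finite (hence affine), $f_*\colon \Dqc(X)\to\Dqc(Y)$ is conservative, t-exact for the standard t-structures, and identifies $\Dqc(X)$ with modules over the algebra object $\cA\coloneqq f_*\cO_X$ in $\Dqc(Y)$. For each $\phi\in\RR$ consider the t-structure $\tau_\phi = (\widehat{\cP}(>\phi),\widehat{\cP}(\leq\phi+1))$ on $\Dqc(Y)$ induced from the bounded t-structure $(\cP(>\phi),\cP(\leq\phi+1))$ on $\Db(Y)$. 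Condition~\eqref{f_*OX-right-exact}, applied to all phases $\psi>\phi$ and closed under coproducts and extensions, gives
\[
\cA\otimes\widehat{\cP}(>\phi)\subset\widehat{\cP}(>\phi).
\]
In other words, $\cA\otimes-$ is right t-exact for $\tau_\phi$. This is exactly the hypothesis in Polishchuk's criterion (in the form proved in the appendix): there is then a t-structure on $\Dqc(X)$ whose aisle and coaisle are the preimages under $f_*$ of $\widehat{\cP}(>\phi)$ and $\widehat{\cP}(\leq\phi+1)$. Moreover, it restricts to a bounded t-structure on $\Db(X)$ with heart $\{F\in\Db(X) \st f_*F\in\cP((\phi,\phi+1])\}$, and the restriction to $\Db(X)$ preserves boundedness and noetherian coherence because $f$ is finite.

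Next I would assemble these t-structures into a slicing. For the preimage t-structures, the truncation triangles of $F$ push forward to truncation triangles of $f_*F$, since $f_*$ is exact and the preimage conditions are tautologically compatible. So the family of t-structures indexed by $\phi$ is monotone in $\phi$ and satisfies the shift condition $\phi\mapsto\phi+1$. The candidate pieces are
\[
f^\sharp\cP(\phi)=\bigcap_{\epsilon>0}\bigl(\text{aisle}_{\phi-\epsilon}\cap\text{coaisle}_{\phi}\bigr),
\]
which equals $\{F \st f_*F\in\cP(\phi)\}$. To obtain HN filtrations, take an object $F$ and the HN filtration of $f_*F$ with phases $\phi_1>\dots>\phi_m$. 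Truncating $F$ successively with respect to the t-structures at the phases $\phi_i$ (and just below them) gives a filtration of $F$ whose pushforward is the HN filtration of $f_*F$. This works because $f_*$ commutes with the truncations, and $f_*$ is conservative, which ensures that factors with zero pushforward vanish. The factors then lie in $f^\sharp\cP(\phi_i)$. The Hom-vanishing axiom follows from the t-structure orthogonality, and the shift axiom is immediate.

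For the stability condition statement, Remark~\ref{remark-pullback-slicing-stability} already shows that once $f^\sharp\cP$ is a slicing and $f_*$ is conservative, $f^\sharp\sigma$ satisfies the central charge compatibility and the support property with the same quadratic form, because $f^\sharp v=v\circ f_*$. For $S$-locality, I would check the defining condition from \cite{stability-families} directly: the induced t-structures on $\Dqc(X_U)$ for open $U\subset S$ should again be the preimages under the pushforward, and $f_*$ commutes with restriction to opens and with tensoring by pullbacks of ample bundles from $S$ via the projection formula. So the $S$-local conditions for $\cP$ should transfer.

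I expect the main obstacle to be the first step, namely that the preimage of the induced t-structure is really a t-structure on $\Db(X)$, i.e. that truncation triangles exist on $X$. Here one genuinely needs the right t-exactness of $\cA\otimes-$ and the module-category description of $\Dqc(X)$, with truncations built via a bar or colimit construction in $\Dqc$ and then shown to be coherent. I would rely on the appendix version of Polishchuk's theorem for this step rather than redoing it.
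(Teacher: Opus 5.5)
Your proposal is correct and follows essentially the paper's route. The paper applies Corollary~\ref{cor:Polishchuk} (the slicing version of the appendix's Polishchuk theorem) to $\Phi=f_*$, where hypothesis (v') is exactly the assumption of the proposition via $f_*f^*(-)\cong f_*\cO_X\otimes(-)$, and then uses Remark~\ref{remark-pullback-slicing-stability}; you re-derive that corollary inline with $\tau_\phi=(\cP(>\phi),\cP(\leq\phi+1))$ in place of the paper's $\tau'_\phi=(\cP(\geq\phi),\cP(<\phi+1))$, which is why you truncate just below each phase. One harmless index slip: with your conventions, the intersection formula for $f^\sharp\cP(\phi)$ should use the coaisle of $\tau_{\phi-1}$ (the preimage of $\cP(\leq\phi)$), since the coaisle of $\tau_\phi$ would give the preimage of $\cP([\phi,\phi+1])$.
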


\begin{proof}
This is an extension of \cite[Corollary 2.2.2]{P-t-structures}. 
Namely, the result for $f^{\sharp}\cP$ follows by applying Corollary~\ref{cor:Polishchuk} 
to the functor $\Phi=f_*$, which satisfies the hypotheses of the corollary due to Example~\ref{ex:PullPushShriek}\ref{enum:PullPushShriek1} and our assumption~\eqref{f_*OX-right-exact}. 
Then the result for $f^{\sharp}\sigma$ follows from Remark~\ref{remark-pullback-slicing-stability}.
\end{proof}

\begin{remark}
\label{remark-finite-tor-dimension-f_*OX-right-exact}
If $f$ has finite $\Tor$-dimension, then the condition~\eqref{f_*OX-right-exact} may be reformulated without the use of the Ind-completed category $\widehat{\cP}(\geq \phi)$; indeed, in this case $f_* \cO_X \otimes \cP(\phi)$ is contained in $\Db(Y)$, so the condition is equivalent to 
    \begin{equation*} 
f_*\cO_X \otimes\cP(\phi)\subset {\cP}  (\geq\phi) \quad \text{for all } \phi \in \RR.
\end{equation*}
In turn, it is easy to see that this condition is equivalent to 
\begin{equation*}
    f_*\cO_X \otimes\cP(>\phi) \subset \cP(> \phi) \quad \text{for all } \phi \in \RR. 
\end{equation*}
\end{remark}

We collect some easy compatibility properties of pullbacks of slicings. 
\begin{lemma}
\label{lemma-pullback-slicing-compatibility}
    Let $f \colon X \to Y$ be a proper morphism between noetherian schemes. 
    Let $\cP$ be a slicing of $\Db(Y)$ such that $f^{\sharp}\cP$ is a slicing of $\Db(X)$. 
    \begin{enumerate}[{\rm (1)}]
    \item \label{pullback-slicing-compatibility-tensorL}
    For all invertible sheaves $\cL$ on $Y$, $f^{\sharp}(\cP \otimes \cL)$ is a slicing of $\Db(X)$ which satisfies 
    \[
    f^\sharp\left(\cP\otimes\cL\right)=f^\sharp\cP\otimes f^*\cL.
    \]

    \item \label{pullback-slicing-compatibility-phipm}
    For any $0 \neq F \in \Db(X)$ we have 
    \begin{equation*} 
    \phi^{\pm}_{f^{\sharp}\cP}(F) = \phi^{\pm}_{\cP}(f_*F).
    \end{equation*} 

    \item \label{pullback-slicing-compatibility-dist}
If $\cP_1$ and $\cP_2$ are slicings of $\Db(Y)$ such that $f^\sharp\cP_1$ and $f^\sharp\cP_2$ are slicings of $\Db(X)$, then
\[
\mathrm{dist}(\cP_1,\cP_2)\geq\mathrm{dist}(f^\sharp\cP_1,f^\sharp\cP_2).
\]
    \end{enumerate}
\end{lemma}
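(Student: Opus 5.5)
The three parts are direct unwindings of Definition~\ref{def:Pullback}; the only inputs are the projection formula and uniqueness of HN filtrations. Throughout, $f^{\sharp}\cP$ is assumed to be a slicing.

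For \ref{pullback-slicing-compatibility-phipm}, fix $0 \neq F \in \Db(X)$ and take its HN filtration with respect to $f^{\sharp}\cP$,
\[
0 = F_0 \to F_1 \to \cdots \to F_m = F,
\]
with factors $A_i \in f^{\sharp}\cP(\phi_i)$ and $\phi_1 > \cdots > \phi_m$. Apply the exact functor $f_*$. This gives a sequence
\[
0 = f_*F_0 \to f_*F_1 \to \cdots \to f_*F_m = f_*F
\]
whose cones are $f_*A_i$. By the definition of $f^{\sharp}\cP(\phi_i)$, each $f_*A_i$ lies in $\cP(\phi_i)$. Because $f^{\sharp}\cP$ is a slicing, its nonzero semistable objects are nonzero, so $A_i \neq 0$, and hence $f_*A_i \neq 0$. (Alternatively, one can use conservativity of $f_*$, which Remark~\ref{remark-pullback-slicing-stability} shows is necessary.) The pushed-forward sequence therefore satisfies the axioms of an HN filtration of $f_*F$ with respect to $\cP$. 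By uniqueness of HN filtrations it is the HN filtration, so $\phi^{\pm}_{\cP}(f_*F) = \phi_1, \phi_m$, which are exactly $\phi^{\pm}_{f^{\sharp}\cP}(F)$.

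Part \ref{pullback-slicing-compatibility-dist} follows at once from \ref{pullback-slicing-compatibility-phipm}. For every $0 \neq F \in \Db(X)$ we have
\[
\left|\phi^{\pm}_{f^{\sharp}\cP_1}(F) - \phi^{\pm}_{f^{\sharp}\cP_2}(F)\right| = \left|\phi^{\pm}_{\cP_1}(f_*F) - \phi^{\pm}_{\cP_2}(f_*F)\right|.
\]
The right-hand side is bounded by $\mathrm{dist}(\cP_1,\cP_2)$, since $f_*F$ is a nonzero object of $\Db(Y)$. Taking the supremum over $F$ gives the inequality.

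For \ref{pullback-slicing-compatibility-tensorL}, recall that $\cP \otimes \cL$ denotes the slicing $(\cP\otimes\cL)(\phi) = \cP(\phi)\otimes\cL$; it is a slicing because $-\otimes\cL$ is an autoequivalence. The projection formula $f_*(F \otimes f^*\cL) \cong f_*F \otimes \cL$ gives, for each $\phi$, the chain of equivalences
\[
F \in f^{\sharp}(\cP\otimes\cL)(\phi) \iff f_*F \in \cP(\phi)\otimes\cL \iff f_*(F\otimes f^*\cL^{-1}) \in \cP(\phi) \iff F \in (f^{\sharp}\cP)(\phi)\otimes f^*\cL .
\]
Hence the collections $f^{\sharp}(\cP\otimes\cL)$ and $f^{\sharp}\cP\otimes f^*\cL$ coincide. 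The latter is a slicing, being the image of the slicing $f^{\sharp}\cP$ under the autoequivalence $-\otimes f^*\cL$, so the former is a slicing as well.

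I expect no real obstacle. The one point needing care is in \ref{pullback-slicing-compatibility-phipm}: the pushed-forward factors must be nonzero for uniqueness of HN filtrations to apply, and this is exactly where the hypothesis that $f^{\sharp}\cP$ is a slicing (equivalently, conservativity of $f_*$) is used.
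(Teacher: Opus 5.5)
Your proof is correct and follows the paper's argument: the projection formula gives (1), pushing the $f^{\sharp}\cP$-HN filtration forward along $f_*$ gives (2), and (3) follows from (2). One sentence in (2) is garbled (``its nonzero semistable objects are nonzero''). The actual justification is your alternative one: if $f_*A=0$, then $A\in f^{\sharp}\cP(\phi)$ for every $\phi$, so $\mathrm{id}_A=0$, hence $f_*$ is conservative and the pushed-forward factors are nonzero.
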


\begin{proof}
    Part~\ref{pullback-slicing-compatibility-tensorL} follows immediately from the definition of $f^{\sharp}$ and the projection formula. 
    Moreover, from the definition of $f^{\sharp}\cP$, the HN filtration of any $0 \neq F \in \Db(X)$ with respect to $f^{\sharp}\cP$ pushes forward to the HN filtration of $f_*F$ with respect to $\cP$; in particular,~\ref{pullback-slicing-compatibility-phipm} holds. 
    Finally,~\ref{pullback-slicing-compatibility-dist} follows from~\ref{pullback-slicing-compatibility-phipm}. 
\end{proof}

\subsection{Pushforward of stability conditions} 
\label{subsection-pushforward-stability}

The important construction in this section is the following.

\begin{definition}\label{def:Pushforward}
Let $f\colon X\to Y$ be a morphism of finite $\Tor$-dimension between noetherian schemes. 
\begin{enumerate}[{\rm (1)}]
\item If $\cP$ is a slicing of $\Db(X)$, then its \emph{pushforward} $f_{\sharp} \cP$ is the $\RR$-indexed collection of subcategories of $\Db(Y)$ given by 
\begin{equation*}
f_\sharp\cP(\phi)\coloneqq \set{F\in\Db(Y)\st f^*F\in\cP(\phi)} \quad\text{for }\phi \in\RR.
\end{equation*}
(Note that since $f$ has finite $\Tor$-dimension, $f^*F$ lies in $\Db(X)$.) 
\item If $v \colon \rK_0(X) \to \Lambda$ is a homomorphism, then its \emph{pushforward} $f_{\sharp}v \colon \rK_0(Y) \to \Lambda$ is the homomorphism given by 
\begin{equation*}
f_{\sharp} v \coloneqq v \circ f^*.
\end{equation*} 
\item If $\sigma = (Z,\cP)$ is a stability condition on $\Db(X)$ with respect to $v \colon \rK_0(X) \to \Lambda$, then its \emph{pushforward} is the pair 
\begin{equation*}
f_{\sharp} \sigma \coloneqq (Z,f_{\sharp}\cP). 
\end{equation*} 
\end{enumerate}    
\end{definition}

\begin{remark}
\label{remark-pushforward-slicing-stability}
As in the case of pullbacks, in the generality of Definition~\ref{def:Pushforward} we cannot always expect $f_{\sharp}\cP$ to be a slicing or $f_{\sharp}\sigma$ to be a stability condition. 
In particular, in order for $f_{\sharp}\cP$ to be a slicing, it is necessary that $f^* \colon \Db(Y) \to \Db(X)$ is conservative; for applications, we will be interested in the case where $f$ is faithfully flat, which implies this conservativity. 
On the other hand, if $f^*$ is conservative and $f_{\sharp}\cP$ is a slicing, then $f_{\sharp}\sigma$ is a stability condition with respect to $(\Lambda, f_{\sharp} v)$. 
\end{remark}

Similarly to the pullback case, we have the following criterion for $f_{\sharp} \cP$ to be a slicing or $f_{\sharp}\sigma$ to be a stability condition. 

\begin{proposition}\label{proposition-pushforward-stability}
Let $f \colon X \to Y$ be a proper faithfully flat morphism between noetherian schemes such that the relative dualizing complex $\omega_f^\bullet$ is contained in $\Dperf(X)$.
Let $\cP$ be a slicing on $\Db(X)$ such that 
\begin{equation}\label{eq:PushforwardStability}
    f^*f_*\cP(\phi) \subset \cP(\leq\phi) \quad\text{for all }\phi\in\RR. 
\end{equation}
Then the pushforward $f_{\sharp}\cP$ is a slicing on $\Db(Y)$. 
If $X$ and $Y$ are defined over a scheme $S$ which is quasi-projective over a noetherian affine scheme, then if $\cP$ is an $S$-local slicing, so is~$f_{\sharp}\cP$. 

Moreover, if $\sigma=(Z,\cP)$ is a stability condition on $\Db(X)$ with respect to $v\colon \rK_0(X)\to\Lambda$ such that $\cP$ satisfies~\eqref{eq:PushforwardStability}, then $f_{\sharp}\sigma$ is a stability condition on $\Db(Y)$ with respect to $(\Lambda,f_{\sharp}v)$.  
\end{proposition}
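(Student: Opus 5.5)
We will mirror the proof of Proposition~\ref{proposition-pullback-stability}, replacing the finite pushforward $f_*$ by the flat pullback $f^*$ and right adjoints by left adjoints. The goal is to apply the appendix criterion (the dual form of Corollary~\ref{cor:Polishchuk}) to the functor $\Phi = f^* \colon \Db(Y) \to \Db(X)$. Since $f$ is flat, $f^*$ is t-exact for the standard t-structures and preserves $\Db$. Since $f$ is proper, $f^*$ has the right adjoint $f_*$. Since $\omega_f^\bullet$ is perfect, Grothendieck duality gives a left adjoint $f_!(-) = f_*(-\otimes\omega_f^\bullet)$, and $f_!$ also preserves $\Db$. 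Faithful flatness makes $f^*$ conservative. These are the formal inputs that Example~\ref{ex:PullPushShriek} should record for $f^*$.

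The key steps are as follows.

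\emph{Step 1.} We check that the subcategories $f^*\Db(Y)^{\cdot}$ are compatible with $\cP$ in the appendix sense. The main point is to show that, for each $\phi$, the candidate pair
\[
\bigl(\{F : f^*F\in\cP(>\phi)\},\ \{F : f^*F\in\cP(\leq\phi+1)\}\bigr)
\]
is a t-structure on $\Db(Y)$. Orthogonality is immediate: $f^*$ is fully faithful onto nothing in particular, but $\Hom(F,G)\hookrightarrow\Hom(f^*F,f^*G)$ because $F\to f_*f^*F$ is split injective on Homs after faithful flatness. More precisely, we use that $\cO_Y\to f_*\cO_X$ is universally injective, or we argue via conservativity.

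\emph{Step 2.} The substantive part is existence of truncations. The Polishchuk-type argument constructs them via the monad $T = f_*f^*$ (or the comonad $f^*f_*$ on $X$). One needs $f^*f_*$ to be right t-exact with respect to $\cP$, and this is exactly hypothesis~\eqref{eq:PushforwardStability}: $f^*f_*\cP(\phi)\subset\cP(\leq\phi)$. Dually to~\eqref{f_*OX-right-exact}, this says the comonad preserves $\cP(\le\phi)$, equivalently, by adjunction, that $f_*$ carries $\cP(\le \phi)$ into a subcategory whose pullback remains in $\cP(\le\phi)$. With this, descent truncations in $\Db(X)$ to $\Db(Y)$. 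We would form the truncation $\tau_{\le}(f^*F)$ in $\Dqc(X)$, using the Ind-completion $\widehat{\cP}$, and show it is equipped with descent data relative to $f$. The hypothesis guarantees that the truncations are compatible with the comonad structure, so they descend to $\Dqc(Y)$. Properness and the perfectness of $\omega_f^\bullet$ then ensure that the descended object lies in $\Db(Y)$. \textbf{This descent step is the main obstacle.} I would first try to deduce it formally from the appendix corollary applied to $\Phi=f^*$ with left adjoint $f_!$, rather than doing descent by hand.

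\emph{Step 3.} Once all the t-structures exist, the collection $f_\sharp\cP(\phi)$ assembles into a slicing. The shift axiom is clear, and HN filtrations come from intersecting consecutive t-structures, exactly as in Corollary~\ref{cor:Polishchuk}. For the $S$-local statement, we check that the $S$-local property is preserved. By definition this means compatibility with $\otimes\pi^*\cL$ for ample $\cL$ on $S$ (or with restriction to opens), and it follows from the projection formula $f^*(F\otimes\cL_Y)=f^*F\otimes\cL_X$. Finally, the statement for $\sigma$ follows from Remark~\ref{remark-pushforward-slicing-stability}. Conservativity of $f^*$ yields the pre-stability condition with $Z\circ f_\sharp v$ centred correctly. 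The support property holds with the same quadratic form $Q$, since an $f_\sharp\sigma$-semistable object $F$ has $f^*F$ $\sigma$-semistable with $v(f^*F)=f_\sharp v(F)$.
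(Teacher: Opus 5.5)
Your framework matches the paper's: you apply Corollary~\ref{cor:Polishchuk} to $\Phi=f^*$, with left adjoint $f_!=f_*(-\otimes\omega_f^\bullet)$, which preserves $\Db$. However, the one nonformal step is missing, and your description of it is wrong. For $\Phi=f^*$, hypothesis (v') of the corollary reads
\[
f^*f_!\,\cP(\phi)\subset\widehat{\cP}(\geq\phi)\quad\text{for all }\phi\in\RR,
\]
i.e.\ right t-exactness of $\Phi\Phi^L=f^*f_!$. You claim instead that one needs right t-exactness of $f^*f_*$, and that this ``is exactly'' \eqref{eq:PushforwardStability}. Both parts are off. First, $f_*$ is the right adjoint of $f^*$, not the left one. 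Second, \eqref{eq:PushforwardStability} says that $f^*f_*$ preserves $\cP(\leq\phi)$, which is a statement about the coconnective side (left t-exactness).

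The missing idea is that adjunction swaps these two conditions. From $f_!\dashv f^*\dashv f_*$ you get $f^*f_!\dashv f^*f_*$. Take $F\in\cP(\phi)$ and $G\in\cP(<\phi)$, so $G\in\cP(\leq\phi')$ for some $\phi'<\phi$. Then
\[
\Hom(f^*f_!F,G)=\Hom(f_!F,f_*G)=\Hom(F,f^*f_*G)=0,
\]
since $f^*f_*G\in\cP(\leq\phi')\subset\cP(<\phi)$ by \eqref{eq:PushforwardStability}. Because $f^*f_!F\in\Db(X)$ (this is where perfectness of $\omega_f^\bullet$ enters), this gives $f^*f_!F\in\cP(\geq\phi)$. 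Corollary~\ref{cor:Polishchuk} then produces the slicing $f_\sharp\cP$ directly, with no descent at all.

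Your Step~2 instead rests on descending truncations of $f^*F$ along $f$. You do not carry this out, and you flag it yourself as the main obstacle. With only the one-sided hypothesis and no enhancement, it is unclear it can be made rigorous. So, as written, the existence of truncations is unproved.

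A secondary point: the orthogonality argument in Step~1 is incorrect. For faithfully flat $f$, the map $\Hom(F,G)\to\Hom(f^*F,f^*G)=\Hom(F,G\otimes f_*\cO_X)$ need not be injective. For instance, take a finite flat $f$ such that $\cO_Y\to f_*\cO_X$ is a nonsplit extension with cokernel $Q$. The connecting morphism $Q[-1]\to\cO_Y$ is then nonzero. But its pullback corresponds under adjunction to the composite $Q[-1]\to\cO_Y\to f_*\cO_X$, which vanishes. In the paper, Hom-vanishing does not come from faithfulness. It comes from Theorem~\ref{thm:Polishchuk}, where the induced pairs are restrictions of genuine compactly generated t-structures on $\Dqc(Y)$, combined with conservativity of $f^*$.

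The rest of your Step~3 is fine: $S$-locality via the projection formula, and the passage to $f_\sharp\sigma$ via Remark~\ref{remark-pushforward-slicing-stability} with the same quadratic form.
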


\begin{proof}
This is an extension of~\cite[Proposition 3.4]{chunyi-stability}.
For the claim about $f_{\sharp} \cP$, we want to apply Corollary~\ref{cor:Polishchuk} (with $S = \Spec(\ZZ)$) to the functor $\Phi=f^*$. 
By Example~\ref{ex:PullPushShriek}\ref{enum:PullPushShriek2}, in order to check that $f^*$ satisfies the assumptions of the corollary, it suffices to show that 
\[
f^*f_!\cP(\phi)\subset \widehat{\cP}(\geq\phi), 
\]
where $f_! \colon \Dqc(X) \to \Dqc(Y)$ is the left adjoint of $f^*$. 
Moreover, as noted in Example~\ref{ex:PullPushShriek}\ref{enum:PullPushShriek2}, the assumption that $\omega^{\bullet}_f$ is perfect implies that $f_!$ takes $\Db(X)$ to $\Db(Y)$. 
In particular, we see that $f^*f_!\cP(\phi) \subset \Db(X)$, and the above inclusion is equivalent to 
\begin{equation*}
    f^*f_! \cP(\phi) \subset \cP(\geq \phi). 
\end{equation*}

To show this, let $F\in\cP(\phi)$ and $G\in\cP(<\phi)$.
Then, since $\cP$ is a slicing, we have $G\in\cP(\leq\phi')$ for some $\phi'<\phi$.
By adjunction, we have
\[
\Hom(f^*f_!F,G)=\Hom(f_!F,f_*G)=\Hom(F,f^*f_*G)=0
\]
because $f^*f_*G\in\cP(\leq\phi')\subset\cP(<\phi)$ by the assumption~\eqref{eq:PushforwardStability}. 
Hence, $f^*f_!F\in\cP(\geq\phi)$, as we wanted.

Finally, the statement about $f_{\sharp}\sigma$ follows from the slicing case and Remark~\ref{remark-pushforward-slicing-stability}. 
\end{proof}

Now we record the analog of Lemma~\ref{lemma-pullback-slicing-compatibility} for pushforwards of slicings, which holds by the same argument. 

\begin{lemma}
    \label{lemma-pushforward-slicing-compatibility}
    Let $f \colon X \to Y$ be a morphism of finite Tor-dimension between noetherian schemes. 
    Let $\cP$ be a slicing of $\Db(X)$ such that $f_\sharp \cP$ is a slicing of $\Db(Y)$. 
    \begin{enumerate}[{\rm (1)}]
    \item \label{pushforward-slicing-compatibility-tensorL}
    For all invertible sheaves $\cL$ on $Y$, $f_{\sharp}(\cP \otimes f^*\cL)$ is a slicing of $\Db(Y)$ which satisfies
    \[
    f_\sharp\left(\cP\otimes f^*\cL\right)=f_\sharp\cP\otimes \cL.
    \]

    \item \label{pushforward-slicing-compatibility-phipm}
    For any $0 \neq F \in \Db(Y)$ we have 
    \begin{equation*} 
    \phi^{\pm}_{f_{\sharp}\cP}(F) = \phi^{\pm}_{\cP}(f^*F).
    \end{equation*} 

    \item \label{pushforward-slicing-compatibility-dist}
If $\cP_1$ and $\cP_2$ are slicings of $\Db(X)$ such that $f_\sharp\cP_1$ and $f_\sharp\cP_2$ are slicings of $\Db(Y)$, then
\[
\mathrm{dist}(\cP_1,\cP_2)\geq\mathrm{dist}(f_\sharp\cP_1,f_\sharp\cP_2).
\]
    \end{enumerate}
\end{lemma}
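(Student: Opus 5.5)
The argument mirrors the proof of Lemma~\ref{lemma-pullback-slicing-compatibility}, with the roles of $f_*$ and $f^*$ exchanged. The one input we use is that $f^*\colon \Db(Y)\to\Db(X)$ is exact and well-defined, since $f$ has finite Tor-dimension, and that $f_\sharp\cP$ is assumed to be a slicing.

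For part~\ref{pushforward-slicing-compatibility-phipm}, take $0\neq F\in\Db(Y)$ and its HN filtration with respect to $f_\sharp\cP$,
\[
0=F_0\to F_1\to\cdots\to F_m=F,
\]
with factors $A_i\in f_\sharp\cP(\phi_i)$ and $\phi_1>\cdots>\phi_m$. Apply the exact functor $f^*$. This gives a sequence of morphisms $f^*F_{i-1}\to f^*F_i$ whose cones are $f^*A_i$. By definition of $f_\sharp\cP$, each $f^*A_i$ lies in $\cP(\phi_i)$, and it is nonzero because $A_i\neq 0$ and $f^*$ is conservative. Conservativity holds since $f_\sharp\cP$ is a slicing (Remark~\ref{remark-pushforward-slicing-stability}). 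Indeed, if $f^*A=0$ then $A\in f_\sharp\cP(\phi)$ for every $\phi$, and the Hom-vanishing axiom gives $\Hom(A,A)=0$. So the pulled-back sequence is a filtration of $f^*F$ with semistable factors of strictly decreasing phases. By uniqueness of HN filtrations it is the HN filtration of $f^*F$, and therefore $\phi^\pm_{f_\sharp\cP}(F)=\phi_{1},\phi_m=\phi^\pm_\cP(f^*F)$.

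Part~\ref{pushforward-slicing-compatibility-dist} follows directly from~\ref{pushforward-slicing-compatibility-phipm}. For each $0\neq F\in\Db(Y)$ we have
\[
|\phi^\pm_{f_\sharp\cP_1}(F)-\phi^\pm_{f_\sharp\cP_2}(F)| = |\phi^\pm_{\cP_1}(f^*F)-\phi^\pm_{\cP_2}(f^*F)|,
\]
and $f^*F\neq 0$, so the right-hand side is bounded by $\mathrm{dist}(\cP_1,\cP_2)$. Taking the supremum over $F$ gives the claimed inequality.

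For part~\ref{pushforward-slicing-compatibility-tensorL}, first note that $\cP\otimes f^*\cL$, defined by $(\cP\otimes f^*\cL)(\phi)=\cP(\phi)\otimes f^*\cL$, is a slicing, because tensoring with a line bundle is an autoequivalence. The key identity is $f^*(G\otimes\cL)\cong f^*G\otimes f^*\cL$, which holds because derived pullback is monoidal. It gives
\[
G\in f_\sharp(\cP\otimes f^*\cL)(\phi)\iff f^*G\otimes f^*\cL^{-1}\in\cP(\phi)\iff f^*(G\otimes\cL^{-1})\in\cP(\phi)\iff G\otimes \cL^{-1}\in f_\sharp\cP(\phi).
\]
Hence $f_\sharp(\cP\otimes f^*\cL)=f_\sharp\cP\otimes\cL$ as collections of subcategories. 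The right-hand side is a slicing, being the image of the slicing $f_\sharp\cP$ under the autoequivalence $-\otimes\cL$, so the left-hand side is a slicing as well.

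None of these steps presents a real obstacle. The only point needing care is the conservativity of $f^*$ used in part~\ref{pushforward-slicing-compatibility-phipm}, which ensures the pulled-back factors are nonzero so that HN uniqueness applies.
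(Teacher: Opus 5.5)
Your proposal is correct and takes essentially the paper's approach. The paper just says the lemma "holds by the same argument" as the pullback version: (1) from the definition of $f_\sharp$ together with $f^*(G\otimes\cL)\cong f^*G\otimes f^*\cL$, (2) because $f^*$ carries the $f_\sharp\cP$-HN filtration of $F$ to the $\cP$-HN filtration of $f^*F$, and (3) as a consequence of (2). Your explicit derivation of the conservativity of $f^*$ from the slicing axioms, which ensures the pulled-back factors are nonzero, fills in a detail the paper leaves implicit.
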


\subsection{Base change} 
\label{section-base-change-pull-push-stability}
For later use, we discuss the compatibility of pullback and pushforward of stability conditions with field extensions.

\begin{lemma}\label{lem:base-change}
Let $f \colon X \to Y$ be a morphism between projective schemes over $\KK$, and let $\KK \subset \ell$ be a field extension.  
\begin{enumerate}[{\rm (1)}]
    \item\label{enum:BaseChange1} Assume that $f$ is finite of finite Tor-dimension. 
    Let $\sigma=(Z,\cP)$ be a numerical stability condition on $\Db(Y)$ with respect to $(\Lambda,v)$. 
    Let $\sigma_{\ell} = (Z_{\ell}, \cP_{\ell})$ be the base changed stability condition on $\Db(Y_{\ell})$ with respect to $(\Lambda_{\ell}, v_{\ell})$, as in Theorem~\ref{thm:base-change-stab}. 
    \begin{enumerate}[label={\rm(1.\alph*)}]
    \item \label{base-change-pullback-criterion} 
    We have 
    \begin{equation}
    \label{eq:push-f-k}
        f_* \cO_X \otimes \cP(\phi) \subset\cP(\geq\phi) \quad \text{for all } \phi \in \RR
    \end{equation}
    if and only if
    \begin{equation}\label{eq:push-l}
    f_{\ell*}\cO_{X_{\ell}} \otimes \cP_{\ell}(\phi) \subset 
    \cP_{\ell}(\geq \phi) \quad \text{for all } \phi \in \RR.
    \end{equation}
    \item \label{pullback-commute-w-bc}
    When the above equivalent conditions hold, the formation of pullback stability commutes with base change to $\ell$. 
    More precisely, by Proposition~\ref{proposition-pullback-stability} we can form the pullback stability condition $f^{\sharp}\sigma$ on $\Db(X)$ with respect to $(\Lambda, f^{\sharp}v)$, as well as $f_{\ell}^{\sharp} \sigma_{\ell}$ on $\Db(X_{\ell})$ with respect to $(\Lambda_{\ell}, f_{\ell}^{\sharp} v_{\ell})$. 
    Then $(\Lambda_{\ell}, (f^{\sharp}v)_{\ell}) = (\Lambda_{\ell}, f_{\ell}^{\sharp} v_{\ell})$ and the base change $(f^{\sharp}\sigma)_{\ell}$ of $f^{\sharp}\sigma$ coincides with  $f_{\ell}^{\sharp} \sigma_{\ell}$. 
    \end{enumerate} 
    \item\label{enum:BaseChange2} Assume that $f$ is faithfully flat and that the relative dualizing complex $\omega_f^{\bullet}$ is perfect. Let $\sigma=(Z,\cP)$ be a numerical stability condition on $\Db(X)$ with respect to $(\Lambda,v)$. 
    Let $\sigma_{\ell} = (Z_{\ell}, \cP_{\ell})$ be the base changed stability condition on $\Db(X_{\ell})$ with respect to $(\Lambda_{\ell}, v_{\ell})$, as in Theorem~\ref{thm:base-change-stab}. 
    \begin{enumerate}[label={\rm(2.\alph*)}]
        \item \label{base-change-pushforward-criterion} We have 
        \begin{equation}
        \label{eq:pull-f-k}
                f^*f_{*}(\cP(\phi))\subset \cP(\leq \phi) \quad \text{for all } \phi \in \RR
        \end{equation}
        if and only if 
        \begin{equation}\label{eq:pull-l}
                    f^*_{\ell}f_{\ell*}(\cP_{\ell}(\phi))\subset \cP_{\ell}(\leq \phi) \quad \text{for all } \phi \in \RR. 
        \end{equation}
        \item When the above equivalent conditions hold, the formation of pushforward stability commutes with base change to $\ell$. 
        More precisely, by Proposition~\ref{proposition-pushforward-stability} we can form the pushforward stability condition $f_{\sharp}\sigma$ on $\Db(Y)$ with respect to $(\Lambda, f_{\sharp} v)$, as well as $f_{\ell \sharp}\sigma_{\ell}$ on $\Db(Y_{\ell})$ with respect to $(\Lambda_{\ell}, f_{\ell \sharp} v_{\ell})$. 
        Then $(\Lambda_{\ell}, (f_{\sharp} v)_{\ell}) = (\Lambda_{\ell}, f_{\ell \sharp} v_{\ell})$ and the base change $(f_{\sharp} \sigma)_{\ell}$ of $f_{\sharp} \sigma$ coincides with $f_{\ell \sharp} \sigma_{\ell}$. 
    \end{enumerate}
\end{enumerate}
\end{lemma}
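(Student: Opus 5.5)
The plan is to reduce everything to the description of the base-changed slicing in Theorem~\ref{thm:base-change-stab}, combined with flat base change for the cartesian squares
\[
\begin{tikzcd}
X_\ell \arrow[r,"\pi_X"] \arrow[d,"f_\ell"] & X \arrow[d,"f"] \\
Y_\ell \arrow[r,"\pi_Y"] & Y
\end{tikzcd}
\]
where $\pi_X,\pi_Y$ are the (flat, affine) base change morphisms. Flat base change gives $\pi_Y^*f_*\cong f_{\ell*}\pi_X^*$ and $\pi_X^* f^*\cong f_\ell^*\pi_Y^*$, and also $f_{\ell*}\cO_{X_\ell}\cong\pi_Y^*f_*\cO_X$. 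In case~\ref{enum:BaseChange1}, the projection formula gives $\pi_{Y*}(f_{\ell*}\cO_{X_\ell}\otimes G)\cong f_*\cO_X\otimes \pi_{Y*}G$ for $G\in\Dqc(Y_\ell)$. In case~\ref{enum:BaseChange2}, we get $\pi_{X*}f_\ell^*f_{\ell*}G\cong f^*f_*\pi_{X*}G$, using flat base change twice and the affineness of $\pi$.

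For~\ref{base-change-pullback-criterion}, I would first note (as in Remark~\ref{remark-finite-tor-dimension-f_*OX-right-exact}) that~\eqref{eq:push-f-k} is equivalent to $f_*\cO_X\otimes\cP(>\phi)\subset\cP(>\phi)$ for all $\phi$, and similarly for~\eqref{eq:push-l}. Using the Ind-extension of Definition~\ref{definition-Ind-slicing}, which is compatible with coproducts since $f_*\cO_X$ is perfect, this upgrades to $f_*\cO_X\otimes\widehat\cP(>\phi)\subset\widehat\cP(>\phi)$.

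For the direction \eqref{eq:push-f-k}$\Rightarrow$\eqref{eq:push-l}, take $G\in\cP_\ell(>\phi)$. Then $\pi_{Y*}G\in\widehat\cP(>\phi)$, so $\pi_{Y*}(f_{\ell*}\cO\otimes G)=f_*\cO_X\otimes\pi_{Y*}G\in\widehat\cP(>\phi)$, and hence $f_{\ell*}\cO\otimes G\in\cP_\ell(>\phi)$ by Theorem~\ref{thm:base-change-stab}\ref{enum:base-change-stab1}. For the converse, take $F\in\cP(>\phi)$. Theorem~\ref{thm:base-change-stab}\ref{enum:base-change-stab3} gives $\pi_Y^*F\in\cP_\ell(>\phi)$, so $\pi_Y^*(f_*\cO_X\otimes F)\in\cP_\ell(>\phi)$. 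Since $\pi_Y^*$ preserves HN filtrations by~\ref{enum:base-change-stab3} and phases are detected after pullback, $\phi^-$ is preserved, and therefore $f_*\cO_X\otimes F\in\cP(>\phi)$.

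Case~\ref{base-change-pushforward-criterion} is identical, with $\cP(\leq\phi)$ in place of $\cP(>\phi)$. One uses the second description in Theorem~\ref{thm:base-change-stab}\ref{enum:base-change-stab1} together with the identity $\pi_{X*}f_\ell^*f_{\ell*}\cong f^*f_*\pi_{X*}$. One must also check that $f^*f_*$ preserves $\widehat\cP(\leq\phi)$; it commutes with coproducts, and the Ind-category $\widehat\cP(\le\phi)$ is the coaisle of a compactly generated t-structure, so it suffices to verify the condition on $\cP(\leq\phi)$.

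For the compatibility statements (1.b) and (2.b), the lattice identities follow because $v_\ell$ is defined through $\eta^\vee_{\ell/\KK}$. This map is compatible with $f_*$ and $f^*$, being built from $\pi_*$ and the Euler pairing, so the base-change maps commute with the pushforward and pullback on $\Knum$. The central charges then agree because $\Lambda\subset\Lambda_\ell$ has finite index. For the slicings, two slicings with the same central charge agree if the t-structures $\cP(>\phi)$ agree for all $\phi$, so I check these. In case~(1.b), $F\in(f^\sharp\cP)_\ell(>\phi)$ if and only if $\pi_{X*}F\in\widehat{f^\sharp\cP}(>\phi)$, if and only if $f_*\pi_{X*}F=\pi_{Y*}f_{\ell*}F\in\widehat\cP(>\phi)$, if and only if $f_{\ell*}F\in\cP_\ell(>\phi)$, that is, $F\in f_\ell^\sharp\cP_\ell(>\phi)$. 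Case~(2.b) is the same argument with $f^*$.

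The main obstacle is the middle equivalence in these chains: identifying $\widehat{f^\sharp\cP}(>\phi)$ with the set of objects $G\in\Dqc(X)$ satisfying $f_*G\in\widehat\cP(>\phi)$. I would obtain this from the construction of the induced t-structure in Appendix~\ref{app:Inducing} (Corollary~\ref{cor:Polishchuk}), since the induced Ind-t-structure is defined precisely by that condition.
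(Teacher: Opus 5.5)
Your part (1) is the paper's argument: the aisle reformulation of \eqref{eq:push-f-k}, its extension to $\widehat\cP(>\phi)=\Coprod(\cP(>\phi))$, the projection formula along $\pi_Y$, and the chain of equivalences for (1.b). The paper leaves part (2) to the reader as analogous.

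The step in your (2.a) that does not work as written is the claim that $f^*f_*\bigl(\widehat\cP(\le\phi)\bigr)\subset\widehat\cP(\le\phi)$ follows from $f^*f_*$ commuting with coproducts. Compatibility with coproducts and extensions only propagates inclusions on \emph{aisles}, because $\widehat\cP(>\phi)$ is by definition the closure of $\cP(>\phi)$ under coproducts and extensions. The coaisle $\widehat\cP(\le\phi)$ is a right orthogonal, and is not a priori generated by $\cP(\le\phi)$ in that way. So the inclusion on $\cP(\le\phi)$ plus coproduct-compatibility does not by itself give the inclusion on $\widehat\cP(\le\phi)$.

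What makes the claim true is adjunction. The functor $f^*f_*$ has the left adjoint $f^*f_!$ (Example~\ref{ex:PullPushShriek}\ref{enum:PullPushShriek2}), so Lemma~\ref{lemma-ind-extension-exact-functor}\ref{Phib-left-t-exact} upgrades left t-exactness from $\Db(X)$ to $\Dqc(X)$. Alternatively, you can copy (1.a) literally. The adjunction computation in the proof of Proposition~\ref{proposition-pushforward-stability} turns \eqref{eq:pull-f-k} into the aisle condition $f^*f_!\cP(>\phi)\subset\cP(>\phi)$; this uses that $f_!$ preserves $\Db$ because $\omega_f^\bullet$ is perfect, and the same holds over $\ell$. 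Then push forward along $\pi_X$ using $\pi_{X*}f_\ell^*f_{\ell!}\cong f^*f_!\pi_{X*}$, which holds because $\omega^\bullet_{f_\ell}\cong\pi_X^*\omega^\bullet_f$.

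Your justification of the middle equivalence in (1.b) also needs tightening. Step~1 of Theorem~\ref{thm:Polishchuk} only \emph{defines the coaisle} by a condition on $\Phi(F)$; the aisle there is $\Coprod(\Phi^L\cG_Y)$. To get $\widehat{f^\sharp\cP}(>\phi)=\{G\in\Dqc(X)\st f_*G\in\widehat\cP(>\phi)\}$ you need two things. First, $f_*$ must be t-exact for the Ind-extended t-structures (Lemma~\ref{lemma-ind-extension-exact-functor}, or Step~1). Second, $f_*$ must be conservative on all of $\Dqc(X)$. This holds because $f$ is affine, but it is stronger than the conservativity on $\Db(X)$ assumed in Theorem~\ref{thm:Polishchuk}. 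With both in hand, $f_*$ kills the $\widehat{f^\sharp\cP}(\le\phi)$-truncation of $G$, which therefore vanishes; this is exactly how the paper argues. In (2.b) the same reasoning applies to $f^*$: use its left adjoint $f_!$ for left t-exactness, and faithful flatness for conservativity on $\Dqc(Y)$.
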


\begin{proof}
We prove~\ref{enum:BaseChange1}; the proof of~\ref{enum:BaseChange2} is analogous and is left to the reader. 

First, we prove~\ref{base-change-pullback-criterion}. 
By base change and Theorem~\ref{thm:base-change-stab}\ref{enum:base-change-stab3}, it is clear that~\eqref{eq:push-l} implies~\eqref{eq:push-f-k}.
For the converse, we first note (cf. Remark~\ref{remark-finite-tor-dimension-f_*OX-right-exact}) that it is easy to see that the statement~\eqref{eq:push-f-k} is equivalent to 
\begin{equation}\label{eq:PullAlt-k}
f_*\cO_{X} \otimes \cP(>\phi) \subset \cP(> \phi) \quad \text{for all } \phi \in \RR. 
\end{equation}
Similarly, the statement \eqref{eq:push-l} is equivalent to
\begin{equation}\label{eq:PullAlt}
f_{\ell*}\cO_{X_{\ell}} \otimes \cP_{\ell}(>\phi) \subset \cP_{\ell}(> \phi) \quad \text{for all } \phi \in \RR. 
\end{equation}
Now, let $\pi_X\colon X_{\ell}\to X$ and $\pi_Y\colon Y_{\ell}\to Y$ denote the base change morphisms.
Then we have 
\begin{equation*}
    \pi_{Y*}(f_{\ell*}\cO_{X_{\ell}} \otimes \cP_{\ell}(>\phi)) 
    = f_*\cO_X \otimes \pi_{Y*}(\cP_{\ell}(> \phi)) \subset 
    f_*\cO_X \otimes \widehat{\cP}(> \phi), 
\end{equation*}
where the equality holds by the isomorphism $f_{\ell*}\cO_{X_{\ell}} \cong \pi_Y^*f_*\cO_X$ and the projection formula, and the inclusion holds by Theorem~\ref{thm:base-change-stab}\ref{enum:base-change-stab1}. 
On the other hand, if~\eqref{eq:push-f-k} holds, then by \eqref{eq:PullAlt-k}, the fact that $f_*\cO_X \otimes -$ preserves extensions and coproducts, and the definition of $\widehat{\cP}(> \phi)$, we find that 
$f_*\cO_X \otimes \widehat{\cP}(> \phi) \subset \widehat{\cP}(> \phi)$, and hence
\begin{equation*}
 \pi_{Y*}(f_{\ell*}\cO_{X_{\ell}} \otimes \cP_{\ell}(>\phi)) \subset \widehat{\cP}(> \phi). 
\end{equation*} 
Again by Theorem~\ref{thm:base-change-stab}\ref{enum:base-change-stab1}, this proves that~\eqref{eq:PullAlt} holds, which as noted above is equivalent to the desired statement~\eqref{eq:push-l}. 

Now we prove~\ref{pullback-commute-w-bc}. 
The claim  $(\Lambda_{\ell}, (f^{\sharp}v)_{\ell}) = (\Lambda_{\ell}, f_{\ell}^{\sharp} v_{\ell})$ follows easily from the definitions, so we focus on proving the equality of the slicings $(f^{\sharp}\cP)_{\ell}$ and $f^{\sharp}_{\ell}\cP_{\ell}$. 
For $\phi \in \RR$, by Theorem~\ref{thm:base-change-stab}\ref{enum:base-change-stab1} we have 
\begin{equation*}
    (f^{\sharp}\cP)_{\ell}(> \phi) = \set{F \in \Db(X_{\ell}) \st \pi_{X*}(F) \in \widehat{f^{\sharp}\cP}(> \phi)}.
\end{equation*}
On the other hand, by the definition of $f^{\sharp}\cP$, the functor $f_* \colon \Db(X) \to \Db(Y)$ is t-exact with respect to the t-structures $(f^{\sharp}\cP(> \phi), f^{\sharp}\cP( \leq \phi+1))$ and $(\cP(> \phi), \cP(\leq \phi+1))$. 
Thus, by Lemma~\ref{lemma-ind-extension-exact-functor}, the functor $f_* \colon \Dqc(X) \to \Dqc(Y)$ is t-exact with respect to the t-structures 
$(\widehat{f^{\sharp}\cP}(> \phi), \widehat{f^{\sharp}\cP}( \leq \phi+1))$ and $(\widehat{\cP}(> \phi), \widehat{\cP}(\leq \phi+1))$; alternatively, this follows from unwinding the proof that  $f^{\sharp}\cP$ is a slicing --- see in particular Step~\ref{polishchuk-step-1} in the proof of Theorem~\ref{thm:Polishchuk}. 
Since the functor $f_* \colon \Dqc(X) \to \Dqc(Y)$ is also conservative, it follows that 
\begin{equation*}
        (f^{\sharp}\cP)_{\ell}(> \phi) = \set{F \in \Db(X_{\ell}) \st f_*\pi_{X*}(F) \in \widehat{\cP}(> \phi)}. 
\end{equation*}
Since $f_*\pi_{X*} = \pi_{Y*}f_{\ell*}$, 
by Theorem~\ref{thm:base-change-stab}\ref{enum:base-change-stab1} this coincides with $f^{\sharp}_{\ell}\cP_{\ell}(>\phi)$. 
This completes the proof of the desired equality $(f^{\sharp}\cP)_{\ell} = f^{\sharp}_{\ell}\cP_{\ell}$. 
\end{proof}

\begin{remark}\label{rmk:GeometricPushPull}
In the context of Lemma~\ref{lem:base-change}, it is straightforward to see that geometric stability conditions are preserved under pullback and pushforward when $f$ is finite. 
\end{remark}

\subsection{The Bayer property}\label{subsec:BayerConditions}

Conditions~\eqref{f_*OX-right-exact} and~\eqref{eq:PushforwardStability} are not easy to verify directly.
We introduce two simplified conditions to address this problem.
To do this, we first recall a partial ordering between slicings, introduced in~\cite{Li:sb}.

\begin{definition}\label{def:Orderslicings}
Let $\cP_1$ and $\cP_2$ be slicings on a triangulated category $\cD$. We define
\begin{align*}
&\cP_1\prec\cP_2\quad\text{ if }\cP_1(\phi)\subset\cP_2(<\phi)\text{ for all }\phi\in\RR\\
&\cP_1\preceq\cP_2\quad\text{ if }\cP_1(\phi)\subset\cP_2(\leq\phi) \text{ for all }\phi\in\RR.
\end{align*}
For stability conditions $\sigma_1=(Z_1,\cP_1)$ and $\sigma_2=(Z_2,\cP_2)$ on $\cD$, we write
\[
\sigma_1\prec \sigma_2 \quad\text{ if } \quad 
\cP_1\prec \cP_2, 
\]
and similarly for $\preceq$.
\end{definition}

Concretely, the above definition can be rephrased as follows. 

\begin{lemma}[{\cite[Lemma~4.11]{Li:sb}}]
\label{lemma-preceq-concrete} 
    Let $\cP_1$ and $\cP_2$ be slicings on a triangulated category $\cD$. 
    Then the following are equivalent: 
    \begin{enumerate}[{\rm (1)}]
        \item $\cP_1 \prec \cP_2$. 
        \item For every $\phi \in \RR$ and $0 \neq F \in \cP_1(\phi)$, we have 
        $\phi^+_{\cP_2}(F) < \phi$. 
        \item For every $\phi \in \RR$ and $0 \neq F \in \cP_2(\phi)$, we have $\phi < \phi^-_{\cP_1}(F)$. 
        \item For every $0 \neq F \in \cD$, we have $\phi^+_{\cP_2}(F) < \phi^+_{\cP_1}(F)$ and $\phi^-_{\cP_2}(F) < \phi^-_{\cP_1}(F)$. 
    \end{enumerate}
    Similarly, the following are equivalent: 
    \begin{enumerate}[{\rm (1)}]
        \item $\cP_1 \preceq \cP_2$. 
        \item For every $\phi \in \RR$ and $0 \neq F \in \cP_1(\phi)$, we have 
        $\phi^+_{\cP_2}(F) \leq \phi$. 
        \item For every $\phi \in \RR$ and $0 \neq F \in \cP_2(\phi)$, we have $\phi \leq \phi^-_{\cP_1}(F)$. 
        \item For every $0 \neq F \in \cD$, we have $\phi^+_{\cP_2}(F) \leq  \phi^+_{\cP_1}(F)$ and $\phi^-_{\cP_2}(F) \leq \phi^-_{\cP_1}(F)$. 
    \end{enumerate}
\end{lemma}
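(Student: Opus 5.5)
We prove the two chains of equivalences in parallel; we treat the strict relation $\prec$, since for $\preceq$ one simply replaces $<$ by $\leq$ throughout (using $\cP_2(\leq\phi)$ in place of $\cP_2(<\phi)$). Throughout we use two standard facts about a slicing $\cP$. First, $\cP(<\phi)$ and $\cP(\leq \phi)$ consist exactly of the nonzero objects $F$ with $\phi^+_\cP(F)<\phi$, respectively $\leq\phi$, together with $0$. Second, by the HN filtration and Hom-vanishing, $\phi^+_\cP(F)$ is the supremum of the $\psi$ with $\Hom(A,F)\neq 0$ for some $A\in\cP(\psi)$. Dually, $\phi^-_\cP(F)$ is the infimum of the $\psi$ with $\Hom(F,B)\neq0$ for some $B\in\cP(\psi)$.

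\textbf{The equivalence (1)$\Leftrightarrow$(2).} This is immediate from the first fact applied to $\cP_2$. The condition $\cP_1(\phi)\subset \cP_2(<\phi)$ says precisely that every nonzero $F\in\cP_1(\phi)$ has $\phi^+_{\cP_2}(F)<\phi$.

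\textbf{The implication (2)$\Rightarrow$(4).} Let $0\neq F\in\cD$, and let $A_1,\dots,A_m$ be its $\cP_1$-HN factors, with $A_i\in\cP_1(\phi_i)$. By (2), each $A_i$ lies in $\cP_2(<\phi_i)\subset\cP_2(<\phi_1)$. Since $F$ is an iterated extension of the $A_i$ and $\cP_2(<\phi_1)$ is extension-closed, $F\in \cP_2(<\phi_1)$. Hence $\phi^+_{\cP_2}(F)<\phi^+_{\cP_1}(F)$.

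For the minimal phase, consider the last $\cP_2$-HN factor $B\in\cP_2(\psi)$ with $\psi=\phi^-_{\cP_2}(F)$. There is a nonzero map $F\to B$, so $\phi^-_{\cP_1}(F)\leq \phi^+_{\cP_1}(B)$. We need $\phi^+_{\cP_1}(B)\leq\psi$, which would give $\phi^-_{\cP_1}(F)\leq\psi$. This non-strict inequality is not enough, however: (4) asks for the strict $\phi^-_{\cP_2}(F)<\phi^-_{\cP_1}(F)$. So we argue directly instead.

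Let $A_m\in\cP_1(\phi_m)$ be the last $\cP_1$-HN factor of $F$, with the nonzero map $F\to A_m$. By (2), $A_m\in\cP_2(<\phi_m)$, so all its $\cP_2$-factors have phase $<\phi_m$. The nonzero map $F\to A_m$ then forces $F$ to have a $\cP_2$-HN factor of phase $<\phi_m$. Otherwise $F\in\cP_2(\geq\phi_m)$, and $\Hom(F,A_m)=0$ by Hom-vanishing between $\cP_2(\geq\phi_m)$ and $\cP_2(<\phi_m)$. Hence $\phi^-_{\cP_2}(F)<\phi_m=\phi^-_{\cP_1}(F)$.

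\textbf{The implication (4)$\Rightarrow$(3).} Take $F\in\cP_2(\phi)$, so that $\phi^-_{\cP_2}(F)=\phi$. Then (4) gives $\phi<\phi^-_{\cP_1}(F)$.

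\textbf{The implication (3)$\Rightarrow$(1).} This is the dual of the argument above. Let $0\neq F\in\cP_1(\phi)$, and suppose for contradiction that $\phi^+_{\cP_2}(F)\geq\phi$. Let $B\in\cP_2(\psi)$ be the first $\cP_2$-HN factor of $F$, so $\psi\geq\phi$ and there is a nonzero map $B\to F$. By (3), $B\in\cP_1(>\psi)\subset\cP_1(>\phi)$. But Hom-vanishing then gives $\Hom(B,F)=0$, since $F\in\cP_1(\phi)$. This is a contradiction, so $F\in\cP_2(<\phi)$.

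The non-strict versions follow by the same arguments with inequalities relaxed. The only delicate point, which I expect to be the main obstacle, is exactly the strict minimal-phase inequality in (2)$\Rightarrow$(4). It is handled by the argument via the last HN factor rather than by a naive comparison of phases.
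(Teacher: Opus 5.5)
Your proof is correct. The paper gives no proof of its own; it simply cites \cite[Lemma~4.11]{Li:sb}. Your cycle (1)$\Leftrightarrow$(2)$\Rightarrow$(4)$\Rightarrow$(3)$\Rightarrow$(1) is the standard direct verification. It uses the nonzero maps from the first HN factor into $F$ and from $F$ onto the last HN factor, together with Hom-vanishing for the pairs $(\cP(\geq\phi),\cP(<\phi))$ and $(\cP(>\phi),\cP(\leq\phi))$. For $\preceq$, the contrapositive hypotheses correctly become strict ($F\in\cP_2(>\phi_m)$ and $\phi^+_{\cP_2}(F)>\phi$).

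Delete the abandoned paragraph in (2)$\Rightarrow$(4). It is aimed at $\phi^-_{\cP_1}(F)\leq\phi^-_{\cP_2}(F)$, which is the \emph{reverse} of (4). Moreover, its intermediate goal $\phi^+_{\cP_1}(B)\leq\psi$ is actually false under (3), which gives $\psi<\phi^-_{\cP_1}(B)$. So the issue there was the direction of the inequality, not its strictness, and your closing remark about the "delicate point" should be adjusted accordingly.
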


\begin{remark} 
\label{rmk:properties-preceq}
Let us collect some easy observations about the relations $\prec$ and $\preceq$ on slicings. 
\begin{enumerate}[{\rm (1)}]
\item The relations $\prec$ and $\preceq$ are transitive. 
\item The relations $\prec$ and $\preceq$ are invariant under autoequivalences of $\cD$. 
\item 
    \label{enum:CompatibilitySlicings3} 
    Let $f\colon X\to Y$ be a morphism of finite $\Tor$-dimension between noetherian schemes, and let $\cP_1,\cP_2$ be slicings of $\Db(X)$ such that $f_\sharp\cP_1,f_\sharp\cP_2$ are slicings of $\Db(Y)$.
    If $\cP_1\preceq\cP_2$, then $f_\sharp\cP_1\preceq f_\sharp\cP_2$.
    The analogous statement holds for $\prec$, as well as for pullback $f^{\sharp}$ of slicings. 
    In fact, this is a general statement which holds in the context of Corollary~\ref{cor:Polishchuk}.
\end{enumerate} 
\end{remark}

Moreover, the relation $\preceq$ behaves well  with respect to base change. 

\begin{lemma}\label{lem:base-change-preserve-order-stab}
Let $X$ be a projective scheme over $\KK$. 
Let $\sigma_1 = (Z_1, \cP_1)$ and $\sigma_2 = (Z_2, \cP_2)$ be two numerical stability conditions on $\Db(X)$.  
Fix a field extension $\KK\subset\ell$. 
Then $\sigma_1 \preceq \sigma_2$ if and only if $(\sigma_1)_{\ell} \preceq (\sigma_2)_{\ell}$.
\end{lemma}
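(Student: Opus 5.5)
The plan is to use the characterization of $\preceq$ in Lemma~\ref{lemma-preceq-concrete}: $\sigma_1\preceq\sigma_2$ if and only if $\phi^+_{\cP_2}(F)\le\phi^+_{\cP_1}(F)$ and $\phi^-_{\cP_2}(F)\le\phi^-_{\cP_1}(F)$ for every $0\neq F$. Equivalently, it suffices to check that $\cP_1(\phi)\subset\cP_2(\le\phi)$, which is the same as $\cP_1(>\phi)\subset\cP_2(>\phi)$ for all $\phi\in\RR$. (If $\cP_1(>\phi')\subset\cP_2(>\phi')$ for all $\phi'<\phi$, then any $F\in\cP_1(\phi)$ has $\phi^-_{\cP_2}(F)\ge\phi$. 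Dually, one can use $\cP_1(\le\phi)\subset\cP_2(\le\phi)$.) I will work with the t-structure descriptions, because Theorem~\ref{thm:base-change-stab}\ref{enum:base-change-stab1} describes the base-changed slicings exactly in those terms.

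\emph{The direction ``if''.} Assume $(\sigma_1)_\ell\preceq(\sigma_2)_\ell$, and let $0\neq F\in\cP_1(\phi)$. By Theorem~\ref{thm:base-change-stab}\ref{enum:base-change-stab3}, $F_\ell\in\cP_{1,\ell}(\phi)$, so $F_\ell\in\cP_{2,\ell}(\le\phi)$. Take the $\cP_2$-HN filtration of $F$. Its pullback along the flat map $\pi$ has factors that are $\sigma_{2,\ell}$-semistable of the same phases, again by Theorem~\ref{thm:base-change-stab}\ref{enum:base-change-stab3}. So this pullback is the HN filtration of $F_\ell$, and $\phi^+_{\cP_{2,\ell}}(F_\ell)=\phi^+_{\cP_2}(F)$. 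Hence $\phi^+_{\cP_2}(F)\le\phi$, and $\sigma_1\preceq\sigma_2$ follows.

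\emph{The direction ``only if''.} Assume $\cP_1(>\phi)\subset\cP_2(>\phi)$ for all $\phi$. Since $\widehat{\cP}(>\phi)$ is generated by $\cP(>\phi)$ under coproducts and extensions (Definition~\ref{definition-Ind-slicing}), the inclusion passes to $\widehat{\cP}_1(>\phi)\subset\widehat{\cP}_2(>\phi)$. Now let $G\in\cP_{1,\ell}(>\phi)$. By Theorem~\ref{thm:base-change-stab}\ref{enum:base-change-stab1}, $\pi_*G\in\widehat{\cP}_1(>\phi)\subset\widehat{\cP}_2(>\phi)$, and the same theorem then gives $G\in\cP_{2,\ell}(>\phi)$. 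Thus $\cP_{1,\ell}(>\phi)\subset\cP_{2,\ell}(>\phi)$ for all $\phi$. By the reformulation above, this gives $(\sigma_1)_\ell\preceq(\sigma_2)_\ell$.

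The only point that needs care is the equivalence between $\cP_1\preceq\cP_2$ and the inclusions $\cP_1(>\phi)\subset\cP_2(>\phi)$ for all $\phi$. It follows from Lemma~\ref{lemma-preceq-concrete}(4), since $\cP(>\phi)$ consists exactly of the objects with $\phi^->\phi$. The reformulation in terms of $\le$ would be handled the same way through $\widehat{\cP}(\le\phi+1)$. Everything else is formal.
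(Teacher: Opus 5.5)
Your reformulation of $\preceq$ has the inclusion backwards. By item (4) of Lemma~\ref{lemma-preceq-concrete}, which you quote, $\sigma_1\preceq\sigma_2$ gives $\phi^-_{\cP_2}(F)\le\phi^-_{\cP_1}(F)$. So any object with $\phi^-_{\cP_2}(F)>\phi$ also has $\phi^-_{\cP_1}(F)>\phi$. In other words, $\sigma_1\preceq\sigma_2$ is equivalent to $\cP_2(>\phi)\subset\cP_1(>\phi)$ for all $\phi$. Your condition $\cP_1(>\phi)\subset\cP_2(>\phi)$ is instead equivalent to $\sigma_2\preceq\sigma_1$.

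Your parenthetical check shows the problem. It yields $\phi^-_{\cP_2}(F)\ge\phi$ for $F\in\cP_1(\phi)$, but $\sigma_1\preceq\sigma_2$ requires $\phi^+_{\cP_2}(F)\le\phi$. Your ``dual'' version $\cP_1(\le\phi)\subset\cP_2(\le\phi)$ is correct. However, it comes from $\cP_2(>\phi)\subset\cP_1(>\phi)$ by taking right orthogonals, so it is not equivalent to the inclusion you actually use. As written, the opening assumption of your ``only if'' paragraph does not follow from the hypothesis, and its final inclusion does not give the conclusion.

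The repair is immediate, because your ``only if'' argument is symmetric in the two stability conditions; just swap the indices. From $\cP_2(>\phi)\subset\cP_1(>\phi)$ you get $\widehat{\cP}_2(>\phi)\subset\widehat{\cP}_1(>\phi)$. Then Theorem~\ref{thm:base-change-stab}\ref{enum:base-change-stab1} gives $\cP_{2,\ell}(>\phi)\subset\cP_{1,\ell}(>\phi)$, which is exactly $(\sigma_1)_\ell\preceq(\sigma_2)_\ell$. With this correction, your ``only if'' direction is precisely the paper's proof.

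Your ``if'' direction is correct as it stands, since it works directly with the definition $\cP_1(\phi)\subset\cP_2(\le\phi)$ rather than the flawed reformulation. It differs from the paper, which uses part~\ref{enum:base-change-stab1} for both directions. You instead use part~\ref{enum:base-change-stab3} together with the fact that flat pullback sends $\cP_2$-HN filtrations to $\cP_{2,\ell}$-HN filtrations. This avoids the Ind-completion and Neeman's restriction lemma in that direction, and is a slightly more transparent way to descend the inequality.
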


\begin{proof}
Note that the condition $\sigma_1 \preceq \sigma_2$ is equivalent to $\cP_2(> \phi) \subset \cP_{1}(> \phi)$ for all $\phi \in \RR$, and similarly for $(\sigma_1)_{\ell} \preceq (\sigma_2)_{\ell}$. 
Hence the result follows from Theorem~\ref{thm:base-change-stab}\ref{enum:base-change-stab1}.
\end{proof}

\begin{definition}\label{def:BayerConditions}
Let $X$ be a noetherian scheme, let $\cL$ be an invertible sheaf on $X$, and let $l\in \ZZ$ be an integer.
We say that a slicing $\cP$ on $\Db(X)$ satisfies the \emph{Bayer property} with respect to $\cL$ and $l$  if $\cP\preceq\cP\otimes\cL[l]$.
\end{definition}

\begin{lemma}
    \label{lemma-BL-concrete}
Let $X$ be a noetherian scheme, $\cL$ an invertible sheaf on $X$, and $l \in \ZZ$ an integer.  
Let $\cP$ be a slicing on $\Db(X)$. 
Then the following are equivalent: 
\begin{enumerate}[{\rm (1)}]
    \item $\cP$ satisfies the Bayer property with respect to $\cL$ and $l$. 
    \item For every $\phi \in \RR$ and $0 \neq F \in \cP(\phi)$, we have $\phi^+_\cP(F\otimes\cL^{-1}[-l])\leq\phi$. 
    \item For every $\phi \in \RR$ and $0 \neq F \in \cP(\phi)$, we have $\phi^-_\cP(F\otimes\cL[l]) \geq \phi$. 
\end{enumerate}
\end{lemma}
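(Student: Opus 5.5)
The three conditions should follow by unwinding Definition~\ref{def:BayerConditions} through Lemma~\ref{lemma-preceq-concrete}, applied with $\cP_1 = \cP$ and $\cP_2 = \cP\otimes\cL[l]$. The one ingredient to record is how phases transform under the autoequivalence $\Psi \coloneqq -\otimes\cL[l]$. The slicing $\cP\otimes\cL[l]$ is by definition $\Psi\cdot\cP$, so $(\cP\otimes\cL[l])(\phi) = \Psi(\cP(\phi))$. Applying $\Psi$ to an HN filtration with respect to $\cP$ gives an HN filtration with respect to $\Psi\cdot\cP$. Hence for every $0\neq G\in\Db(X)$,
\[
\phi^{\pm}_{\cP\otimes\cL[l]}(G) = \phi^{\pm}_{\cP}\bigl(G\otimes\cL^{-1}[-l]\bigr).
\]

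For (1)$\Leftrightarrow$(2), use the equivalence between the first two items in the $\preceq$ part of Lemma~\ref{lemma-preceq-concrete}. It says $\cP\preceq\cP\otimes\cL[l]$ holds if and only if $\phi^+_{\cP\otimes\cL[l]}(F)\leq\phi$ for every $\phi$ and every $0\neq F\in\cP(\phi)$. By the displayed identity, this is exactly $\phi^+_{\cP}(F\otimes\cL^{-1}[-l])\leq\phi$, which is condition (2).

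For (1)$\Leftrightarrow$(3), use the equivalence between the first and third items of the same lemma. It says $\cP\preceq\cP\otimes\cL[l]$ holds if and only if $\phi\leq\phi^-_{\cP}(G)$ for every $\phi$ and every $0\neq G\in(\cP\otimes\cL[l])(\phi)$. The objects $G$ of $(\cP\otimes\cL[l])(\phi)$ are precisely those of the form $F\otimes\cL[l]$ with $F\in\cP(\phi)$. So the condition becomes $\phi^-_{\cP}(F\otimes\cL[l])\geq\phi$ for all $0\neq F\in\cP(\phi)$, which is condition (3).

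No step here is a real obstacle. The only point needing care is the direction of the twist: the first equivalence involves $\cL^{-1}[-l]$ and the second involves $\cL[l]$. Getting this right depends on the convention $(\Psi\cdot\cP)(\phi)=\Psi(\cP(\phi))$ used for the autoequivalence action in \eqref{AutLambdav}.
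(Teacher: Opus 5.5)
Your proposal is correct and is the paper's argument: the paper's proof is the single remark that the lemma follows easily from Lemma~\ref{lemma-preceq-concrete}. You carry out that unwinding with $\cP_1=\cP$ and $\cP_2=\cP\otimes\cL[l]$, and you correctly track the twist $\phi^{\pm}_{\cP\otimes\cL[l]}(G)=\phi^{\pm}_{\cP}(G\otimes\cL^{-1}[-l])$ in each of the two equivalences.
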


\begin{proof}
    This is an easy consequence of Lemma~\ref{lemma-preceq-concrete}. 
\end{proof}

\begin{remark}
    A particular case of the Bayer property that will be of interest to us is when $l=1$; in fact, in this case the condition can be checked in terms of distance between slicings. 
More precisely, if $\mathrm{dist}(\cP,\cP\otimes\cL)<1$, then $\cP\prec\cP\otimes\cL[1]$, as observed in~\eqref{eq:Distance}.
\end{remark}

We can finally prove the main results in this section, which are a simple extension of the techniques developed in~\cite{Li:sb,chunyi-stability}.
We start by introducing a technical condition on finite morphisms, which will be useful in the case of pushforward.

\begin{definition}\label{def:FiltrationProperty}
Let $f \colon X \to Y$ be a separated morphism of schemes.
Let $X\times_Y X$ be the fiber product and $p_1,p_2\colon X\times_Y X\to X$ its two projections.
Consider automorphisms $g_1,\dots,g_m\in\Aut_Y(X)$ of $X$ as a $Y$-scheme and $\cL_1,\dots,\cL_m$ invertible sheaves on $X$.
We say that $f$ has the \emph{filtration property} with respect to $g_1,\dots,g_m$ and $\cL_1,\dots,\cL_m$ if there exists a finite filtration
\[
0=F_{0}\subset F_1\subset\dots\subset F_{m-1}\subset F_m=\cO_{X\times_Y X},
\]
such that, for all $i=1,\dots,m$,
\[
F_i/F_{i-1} \cong p_2^*\cL_i^{-1}\otimes\cO_{\Gamma(g_i)},
\]
where $\Gamma(g_i)\subset X\times_Y X$ denotes the graph of $g_i$.    
\end{definition}

\begin{example}\label{ex:PushForward}
Three interesting situations are the following.
\begin{enumerate}
\item\label{enum:etale} Any finite \'etale Galois cover $f \colon X \to Y$ has the filtration property. 
More precisely, if $G = \Aut_Y(X)$ denotes the Galois group, then we have 
\begin{equation*}
    X \times_Y X = \bigsqcup_{g \in G} \Gamma(g). 
\end{equation*}
Hence, $f$ satisfies the filtration property with respect to $g_1, \dots,  g_m$ and $\cL_1, \dots, \cL_m$, where the $g_i$ are the elements of $G$ and $\cL_i = \cO_X$ for all $i$. 

\item\label{enum:ProductElliptic} Let $E$ be an elliptic curve over $\KK$. Consider the action of the multiplication by $-1$ and the quotient map $E\to\PP^1$. Then for any $\KK$-scheme $M$, the induced morphism $E\times M\to\PP^1\times M$ has the filtration property (see the proof of~\cite[Lemma 6.1]{chunyi-stability} in the case $\KK=\CC$; it holds over any field). In this case, $m=2$, $g_2=\id$, $g_1$ is the multiplication by $-1$ on $E$, $\cL_2=\cO_{E\times M}$, and $\cL_1=p_E^*\cO_E(R)$, where $R$ is the subscheme of points of order~2 in $E$ and $p_E\colon E\times M\to E$ is the projection.

\item\label{enum:ProductP1} The natural morphism $(\PP^1)^n\to\PP^n$ given by the quotient by the symmetric group has the filtration property (see~\cite[Corollary 4.2]{chunyi-stability} for the proof in the case $\KK=\CC$; it holds over any field). The filtration is quite complicated in this example, and we refer to~\cite[Section 4]{chunyi-stability} for all the details; we also provide a different argument  in Appendix~\ref{app:Filtration}.
For our purposes, it is enough to keep in mind that the automorphisms $g_1,\dots,g_m$ are given by the group action of $\fS_n$ on $(\PP^1)^n$ permuting the factors (here $m=n!$), and the line bundles $\cL_1,\dots,\cL_m$ are suitable effective line bundles on $(\PP^1)^n$.
\end{enumerate}
\end{example}

\begin{proposition}\label{prop:PushforwardStability}
Let $f\colon X\to Y$ be a finite faithfully flat morphism between noetherian schemes  such that the relative dualizing complex $\omega_f^\bullet$ is in $\Dperf(X)$. 
Assume further that $f$ satisfies the filtration property with respect to $g_1,\dots,g_m$ and $\cL_1,\dots,\cL_m$. 
Let $\cP$ be a slicing on $\Db(X)$ such that, for all $i=1,\dots,m$, 
\begin{enumerate}[{\rm (i)}]
\item\label{enum:BayerP} $\cP$ satisfies the Bayer property with respect to $\cL_i$ and $l=0$, i.e., $\cP\preceq\cP\otimes\cL_i$; and 
\item\label{enum:Push} for all $\phi \in \RR$, we have $g_{i*}\cP(\phi) = \cP(\phi)$.
\end{enumerate}
Then for all $\phi\in\RR$ we have 
\[
f^*f_*\cP(\phi) \subset \cP(\leq\phi).
\]
In particular, $f_\sharp\cP$ is a slicing on $\Db(Y)$.

Moreover, if $\sigma=(Z,\cP)$ is a stability condition on $\Db(X)$ with respect to $(\Lambda,v)$ such that $\cP$ satisfies~\ref{enum:BayerP} and~\ref{enum:Push}, then $f_\sharp\sigma$ is a stability condition on $\Db(Y)$ with respect to $(\Lambda,f_\sharp v)$.
\end{proposition}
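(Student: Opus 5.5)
The plan is to express $f^*f_*F$, for $F\in\cP(\phi)$, as an iterated extension of objects each lying in $\cP(\leq\phi)$; since $\cP(\leq\phi)$ is extension-closed, this gives the first claim. The remaining assertions then follow directly from Proposition~\ref{proposition-pushforward-stability}, whose hypotheses (finite hence proper, faithfully flat, $\omega_f^\bullet$ perfect) we are given.

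The first step is to rewrite $f^*f_*$ as a Fourier--Mukai type functor. By flat base change for the cartesian square with projections $p_1,p_2\colon X\times_Y X\to X$, we have $f^*f_*F\cong p_{1*}p_2^*F = p_{1*}(\cO_{X\times_YX}\otimes p_2^*F)$. Here $p_2$ is flat because it is a base change of $f$, so $p_2^*$ is exact on sheaves. The filtration property gives a filtration of $\cO_{X\times_YX}$ by sheaves $F_i$. Tensoring with $p_2^*F$ and pushing forward produces a chain of exact triangles. Each triangle has cone $p_{1*}\big(p_2^*(F\otimes\cL_i^{-1})\otimes\cO_{\Gamma(g_i)}\big)$, up to the direction and order of the graph convention.

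The key step is to identify these cones. The graph $\Gamma(g_i)$ is the image of $(\id,g_i)\colon X\to X\times_YX$, which is a closed immersion since $f$ is separated. We need the derived tensor product with $\cO_{\Gamma(g_i)}$ to agree with the underived one. This holds because $p_2^*$ of any complex is computed by flat pullback and $p_2|_{\Gamma(g_i)}$ is an isomorphism, so $\Gamma(g_i)$ is Tor-independent of the pulled-back object relative to $p_2$. Concretely, $p_2^*G\otimes\cO_{\Gamma(g_i)}\cong(\id,g_i)_*g_i^*G$. Applying $p_{1*}$ gives $g_i^*G=g_{i*}^{-1}G$, with $G=F\otimes\cL_i^{-1}$. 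So the $i$-th factor is $g_i^{-1*}$ or $g_{i*}$ applied to $F\otimes\cL_i^{-1}$; depending on conventions, $\cL_i$ may also be replaced by its pullback along $g_i$.

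To place these factors, apply the Bayer property~\ref{enum:BayerP} in the form of Lemma~\ref{lemma-BL-concrete} with $l=0$: for $F\in\cP(\phi)$ we get $\phi^+_\cP(F\otimes\cL_i^{-1})\leq\phi$, i.e.\ $F\otimes\cL_i^{-1}\in\cP(\leq\phi)$. Hypothesis~\ref{enum:Push} says $g_{i*}$, and hence $g_i^*=g_{i*}^{-1}$, preserves each $\cP(\psi)$ and therefore $\cP(\leq\phi)$. Thus every cone lies in $\cP(\leq\phi)$, and so does $f^*f_*F$. The main obstacle is the bookkeeping in the key step: checking that the derived tensor with $\cO_{\Gamma(g_i)}$ is harmless, and pinning down which of $p_1,p_2$ carries the twist. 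The convention $F_i/F_{i-1}\cong p_2^*\cL_i^{-1}\otimes\cO_{\Gamma(g_i)}$ is set up precisely so that the twist lands on the object $F\otimes\cL_i^{-1}$ before applying the automorphism. If the projections are swapped, one instead obtains $g_i^*F\otimes\cL_i^{-1}$, which is handled the same way.
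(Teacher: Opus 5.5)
Your proposal is correct and follows the paper's proof: flat base change expresses $f^*f_*$ as a push--pull on $X\times_Y X$, the filtration property splits $f^*f_*F$ into pieces obtained from $F$ by applying $g_i$ and twisting by $\cL_i^{-1}$, each piece lies in $\cP(\leq\phi)$ by (i) and (ii), and the remaining claims follow from Proposition~\ref{proposition-pushforward-stability}. The difference from the paper is cosmetic. The paper uses $f^*f_*\cong p_{2*}p_1^*$ and gets the factors $\cL_i^{-1}\otimes g_{i*}F$ directly, whereas your $p_{1*}p_2^*$ gives $g_i^*(F\otimes\cL_i^{-1})$; this is fine because the equality in (ii) also gives $g_i^*\cP(\phi)=\cP(\phi)$, and your Tor-independence concern is settled by the projection formula for $(\id,g_i)$.
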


\begin{proof}
Once we establish $f^*f_*\cP(\phi) \subset \cP(\leq \phi)$ for $\phi \in \RR$, the claims about $f_{\sharp}\cP$ and $f_{\sharp} \sigma$ follow from Proposition~\ref{proposition-pushforward-stability}. 

Let $F\in\cP(\phi)$. 
By base change, $f^*f_*=p_{2*}p_1^*$, where $p_1,p_2\colon X\times_Y X\to X$ are the projections, so it suffices to prove that $p_{2*}p_1^*F \in \cP(\leq \phi)$. 

Since $f$ has the filtration property, we can write $p_1^*F$ as an iterated extension of objects of the form
\[
p_2^*\cL_i^{-1}\otimes\cO_{\Gamma(g_i)}\otimes p_1^*F.
\]
Hence, we can write $p_{2*}p_1^*F$ as an iterated extension of objects of the form
\[
\cL_i^{-1}\otimes p_{2*}\left(\cO_{\Gamma(g_i)}\otimes p_1^*F\right)\cong\cL_i^{-1}\otimes g_{i*}F.
\]
By assumption~\ref{enum:Push} we have $g_{i*}F\in\cP(\phi)$, so by assumption~\ref{enum:BayerP} we find $\cL_i^{-1}\otimes g_{i*}F\in\cP(\leq\phi)$. 
This shows that $p_{2*}p_1^*F \in \cP(\leq \phi)$, as required. 
\end{proof}

Similarly to the pushforward, we first introduce a technical condition on morphisms before discussing the pullback.

\begin{definition}\label{def:CofiltrationProperty}
Let $f\colon X\to Y$ be a proper morphism of noetherian schemes.
Consider invertible sheaves $\cL_1,\dots,\cL_m$ on $Y$, and integers $l_1,\dots,l_m,N\in\ZZ$.
We say that $f$ has the \emph{cofiltration property} with respect to $\cL_1,\dots,\cL_m$ and $l_1,\dots,l_m,N$ if there exists a finite sequence of morphisms in $\Db(Y)$
\[
f_*\cO_X=G_0 \xlongrightarrow{s_1} G_1 \xlongrightarrow{s_2}\dots\xlongrightarrow{s_{m+1}} G_{m+1}=0,
\]
and integers $a_1, \dots, a_m \geq 1$ such that
\begin{equation*}
\begin{split}
&\cone(s_i)[-1]\cong\cL^{\oplus a_i}_i[l_i] \quad \text{ for } i=1,\dots,m,\\
&\cone(s_{m+1})[-1]\in \Db(Y)^{\leqslant -N}, 
\end{split}
\end{equation*}
where the last condition is with respect to the standard t-structure on $\Db(Y)$.

Moreover, when the above sequence can be chosen so that $\cone(s_{m+1}) = 0$, we will say that $f$ has the cofiltration property with respect to $\cL_1, \dots, \cL_m$, $l_1, \dots, l_m$, and $N = \infty$. 
\end{definition}

\begin{example}\label{ex:Pullback}
We will be interested in the following  examples.
\begin{enumerate}
\item\label{enum:etale2} Let $f\colon X\to Y$ be a finite morphism such that $f_*\cO_X\cong\cL_1\oplus\dots\oplus\cL_m$ for line bundles $\cL_1,\dots,\cL_m\in\Pic(Y)$. Then $f$ has the cofiltration property with respect to $\cL_1,\dots,\cL_m$, $l_1=\dots=l_m=0$, and $N = \infty$. 

\item \label{enum:regular-section}
Let $Y$ be a noetherian scheme, let $\cL$ be an invertible sheaf on $Y$, and let $s\in \rH^0(Y,\cL)$ be a regular section, i.e., a section such that the corresponding map $\cO_Y \to \cL$ is injective. 
Let $X \coloneqq V(s) \subset Y$ be the zero locus of $s$ and denote by $\iota\colon X\hookrightarrow Y$ the closed embedding. 
Then $\iota$ has the cofiltration property with respect to $\cO_Y, \cL^{\vee}$, $l_1 = 0, l_2=1$, and $N = \infty$. 
Indeed, this follows from the resolution 
\[
0\longrightarrow\cL^\vee\longrightarrow\cO_Y\longrightarrow\iota_*\cO_X\longrightarrow0. 
\]

\item\label{enum:immersions} Let $\iota\colon X\hookrightarrow Y$ be a closed immersion of projective schemes over $\KK$, and let $\cO_Y(1)$ be an ample line bundle on $Y$. 
Then we can look at a truncated resolution of $\iota_*\cO_X$: 
\[
0\longrightarrow \cF\longrightarrow\cO_Y(-b_m)^{\oplus a_m}\longrightarrow\dots\longrightarrow\cO_Y(-b_2)^{\oplus a_2}\longrightarrow\cO_Y\longrightarrow\iota_*\cO_X\longrightarrow 0,
\]
with $a_2,\dots,a_m,b_2,\dots,b_m\geq1$ and $\cF\in\Coh(Y)$.
Then $\iota$ has the cofiltration property with respect to $\cO_Y,\cO_Y(-b_2),\dots,\cO_Y(-b_m)$, $l_i=i-1$ (for $i=1,\dots,m$), and $N=m$. 

\item \label{enum:minimal-resolution} As a slight variant of the previous example, let $X$ be a projective scheme over $\KK$ with an embedding $\iota \colon X \hookrightarrow \PP^n$. Then we can consider the minimal resolution of $\iota_*\cO_X$: 
\[
0\longrightarrow \bigoplus_{j \geq n} \cO_{\PP^n}(-j)^{\oplus \beta_{n,j}}\longrightarrow\dots\longrightarrow\bigoplus_{j\geq 1} \cO_{\PP^n}(-j)^{\oplus \beta_{1,j}}\longrightarrow\cO_{\PP^n}\longrightarrow\iota_*\cO_X\longrightarrow 0. 
\]
Here, the integers $\beta_{i,j} \geq 0$ are canonically determined by the embedding $\iota \colon X \hookrightarrow \PP^n$, and all but finitely many vanish. 
The morphism $\iota$ has the cofiltration property with respect to the line bundles $\cL_{i,j} = \cO_{\PP^n}(-j)$ for $\beta_{i,j} \neq 0$ (where we define $\beta_{0,0} = 1$ and $\beta_{0,j} = 0$ for $j > 0$), the integers $l_{i,j} = i$, and $N = \infty$.
\end{enumerate}   
\end{example}

\begin{proposition}\label{prop:PullbackStability}
Let $f\colon X\to Y$ be a finite morphism between noetherian schemes.
Assume that $f$ has the cofiltration property with respect to $\cL_1,\dots,\cL_m$ and $l_1,\dots,l_m,N$.
Let $\cP$ be a slicing of $\Db(Y)$ such that 
\begin{enumerate}[{\rm (i)}]
\item\label{enum:LiP} $\cP$ satisfies the Bayer property with respect to $\cL_i$ and $l_i$, i.e., $\cP\preceq\cP\otimes\cL_i[l_i]$, for all $i=1,\dots,m$; and 
\item\label{enum:Pull} either $N = \infty$, or there exist integers $n_1 \leq n_2$ such that $N\geq n_2-n_1+1$ and 
\[
\Db(Y)^{\leqslant n_1} \subset\cP(>0)\subset
\Db(Y)^{\leqslant n_2}.
\]
\end{enumerate}
Then for all $\phi\in\RR$ we have 
\[
f_*\cO_X\otimes\cP(\phi) \subset \widehat{\cP}(\geq\phi).
\]
In particular, $f^\sharp\cP$ is a slicing of $\Db(X)$.

Moreover, if $\sigma=(Z,\cP)$ is a stability condition on $\Db(Y)$ with respect to $(\Lambda,v)$ such that $\cP$ satisfies~\ref{enum:LiP} and~\ref{enum:Pull}, then $f^\sharp\sigma$ is a stability condition on $\Db(X)$ with respect to $(\Lambda,f^\sharp v)$.
\end{proposition}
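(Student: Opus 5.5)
The plan mirrors the proof of Proposition~\ref{prop:PushforwardStability}. Once we have $f_*\cO_X\otimes\cP(\phi)\subset\widehat{\cP}(\geq\phi)$ for all $\phi$, the assertion that $f^\sharp\cP$ is a slicing follows from Proposition~\ref{proposition-pullback-stability}. The assertion about $f^\sharp\sigma$ then follows from that proposition together with Remark~\ref{remark-pullback-slicing-stability}, since $f$ is finite and so $f_*$ is conservative. So the whole task is to fix $F\in\cP(\phi)$ and show $f_*\cO_X\otimes F\in\widehat{\cP}(\geq\phi)$. Since $\widehat{\cP}(\geq\phi)$ is the extension- and coproduct-closure of $\cP(\geq\phi)$, it is closed under extensions. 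It is also left-orthogonal-characterized: it consists of the objects $A$ with $\Hom(A,G)=0$ for all $G\in\widehat{\cP}(<\phi)$, which is the aisle of the induced t-structure.

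Tensor the cofiltration $f_*\cO_X=G_0\to G_1\to\dots\to G_{m+1}=0$ with $F$. This gives exact triangles
\[
\cL_i^{\oplus a_i}\otimes F[l_i]\to G_{i-1}\otimes F\to G_i\otimes F
\]
for $i\le m$, and a last triangle involving $C\otimes F$, where $C=\cone(s_{m+1})[-1]\in\Db(Y)^{\leq -N}$. By Lemma~\ref{lemma-BL-concrete}(3), the Bayer property~\ref{enum:LiP} gives $\phi^-_\cP(F\otimes\cL_i[l_i])\geq\phi$, so each $\cL_i^{\oplus a_i}\otimes F[l_i]$ lies in $\cP(\geq\phi)$. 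Note that $F\otimes\cL_i$ lies in $\Db(Y)$ because $\cL_i$ is invertible. Inducting downward from $G_m$ through the triangles then gives $G_0\otimes F\in\widehat{\cP}(\geq\phi)$, provided the tail term $G_m\otimes F\simeq C\otimes F$ lies in $\widehat{\cP}(\geq\phi)$. (If the final term has a shift convention different from the one I assume, the induction runs the same way.) When $N=\infty$ the tail is zero and we are done.

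The main obstacle is the tail when $N<\infty$. Here $C$ need not be perfect, so $C\otimes F$ lives only in $\Dqc(Y)$. This is exactly why the statement uses $\widehat{\cP}$. The plan is as follows. After shifting, we may assume $\phi\in(0,1]$, so that $F\in\cP((0,1])\subset\cP(>0)\subset\Db(Y)^{\leq n_2}$. Since $C\in\Dqc^{\leq -N}$ and the derived tensor product is right t-exact for the standard t-structure, we get $C\otimes F\in\Dqc(Y)^{\leq n_2-N}\subset\Dqc(Y)^{\leq n_1-1}$, using $N\ge n_2-n_1+1$.

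The remaining step is to show that $\Dqc(Y)^{\leq n_1-1}\subset\widehat{\cP}(>1)$. From the hypothesis $\Db(Y)^{\leq n_1}\subset\cP(>0)$ we get $\Db(Y)^{\leq n_1-1}\subset\cP(>1)$. To extend this to $\Dqc$, I would invoke the appendix material on Ind-extension (Definition~\ref{definition-Ind-slicing} and Lemma~\ref{lemma-ind-extension-exact-functor}). Since $Y$ is noetherian, the standard aisle $\Dqc^{\leq n_1-1}$ is generated under coproducts and extensions by $\Coh(Y)[-(n_1-1)+k]$ for $k\ge0$, which is a compactly generated aisle argument. Therefore $\Dqc^{\leq n_1-1}\subset\widehat{\cP}(>1)\subset\widehat{\cP}(\geq\phi)$.

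I expect verifying this generation statement, and the compatibility of $\widehat{\cP}$ with it, to be the technical heart of the proof. The rest is the formal triangle induction described above.
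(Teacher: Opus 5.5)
Your proposal is correct and matches the paper's proof step for step. You shift to $\phi\in(0,1]$, place the pieces $\cL_i^{\oplus a_i}[l_i]\otimes F$ in $\cP(\geq\phi)$ via the Bayer property and Lemma~\ref{lemma-BL-concrete}, and put the tail in $\Dqc(Y)^{\leq n_2-N}\subset\Dqc(Y)^{\leq n_1-1}$ by right t-exactness of the tensor product. The generation step you flag as the technical heart is simply Example~\ref{example-ind-completion-tauX}, i.e.\ $\Dqc(Y)^{\leq n_1-1}=\Coprod(\Db(Y)^{\leq n_1-1})$, which the paper cites directly; it is not a compact-generation argument, since coherent sheaves need not be compact, and Lemma~\ref{lemma-ind-extension-exact-functor} is not needed.
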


\begin{proof}
Once we establish $f_*\cO_X \otimes \cP(\phi) \subset \widehat{\cP}(\geq \phi)$ for all $\phi \in \RR$, the claims about $f^{\sharp}\cP$ and $f^{\sharp}\sigma$ follow from Proposition~\ref{proposition-pullback-stability}. 

Let $0\neq F\in\cP(\phi)$. After shifting, we can assume $0<\phi\leq 1$. Since $f$ has the cofiltration property we can write $f_*\cO_X\otimes F$ as an iterated extension of objects of the form
\[
\cL_i^{\oplus a_i} [l_i]\otimes F\qquad\text{and}\qquad G\otimes F,
\]
where $G \in \Db(Y)^{\leqslant -N}$.
By assumption~\ref{enum:LiP} and Lemma~\ref{lemma-BL-concrete}, for all $i = 1, \dots, m$ we have $\phi^-_\cP(\cL_i[l_i]\otimes F)\geq\phi$, and hence 
\begin{equation*}
   \cL_i^{\oplus a_i}[l_i]\otimes F \in \cP(\geq \phi).  
\end{equation*}
To study $G\otimes F$, we use assumption~\ref{enum:Pull}. 
If $N = \infty$, then $G = 0$ and hence $G \otimes F = 0$. 
Otherwise, since $F\in\cP(>0)$, we have $F\in\mathrm{D}^{\leqslant n_2}(Y)$. 
Thus, since $N\geq n_2-n_1+1$, we get
\[
G\otimes F\in\mathrm{D}^{\leqslant n_2-N}_\mathrm{qc}(Y)
\subset\mathrm{D}^{\leqslant n_1-1}_\mathrm{qc}(Y).
\]
By Example~\ref{example-ind-completion-tauX}, $\Dqc(Y)^{\leq n_1-1}$ can be described as the subcategory of $\Dqc(Y)$ generated by $\Db(Y)^{\leq n_1 - 1}$ under coproducts and extensions. 
Since $\Db(Y)^{\leq n_1-1} \subset \cP(> 1) \subset \cP( \geq \phi)$, we conclude that 
\begin{equation*}
    G \otimes F \in \widehat{\cP}(\geq \phi). 
\end{equation*}
Altogether, this proves that $f_*\cO_X \otimes F$ is an iterated extension of objects contained in $\widehat{\cP}(\geq \phi)$, and hence is contained in $\widehat{\cP}(\geq \phi)$ itself. 
\end{proof}

A first case where Proposition~\ref{prop:PullbackStability} applies is suitable divisors: 

\begin{lemma}
\label{lem:RestCartierDiv}
Let $X$ be a noetherian scheme with an invertible sheaf $\cL$. 
Let $\cP$ be a slicing of $\Db(X)$ satisfying the Bayer property with respect to $\cL^{\vee}$ and $l = 1$. 
Let $\iota \colon Y \hookrightarrow X$ be the inclusion of the zero locus of a regular section of $\cL$.
Then $\iota^\sharp\cP$ is a slicing of $\Db(Y)$ and it satisfies the Bayer property with respect to $\iota^*\cL^\vee$ and $l=1$. 

Moreover, if $\cP$ underlies a stability condition $\sigma = (Z, \cP)$ on $\Db(X)$ with respect to $(\Lambda, v)$, then $\iota^{\sharp}\sigma$ is a stability condition on $\Db(Y)$ with respect to $(\Lambda, \iota^\sharp v)$. 
\end{lemma}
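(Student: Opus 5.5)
We apply Proposition~\ref{prop:PullbackStability} to the closed immersion $\iota$. By Example~\ref{ex:Pullback}\ref{enum:regular-section}, the resolution
\[
0\longrightarrow\cL^\vee\longrightarrow\cO_X\longrightarrow\iota_*\cO_Y\longrightarrow 0
\]
shows that $\iota$ has the cofiltration property with respect to $\cO_X,\cL^\vee$, the integers $l_1=0$, $l_2=1$, and $N=\infty$. Hypothesis~\ref{enum:Pull} of Proposition~\ref{prop:PullbackStability} is then vacuous. For hypothesis~\ref{enum:LiP} we need two Bayer properties:
\begin{itemize}
\item $\cP\preceq\cP\otimes\cO_X[0]=\cP$, which is trivial;
\item $\cP\preceq\cP\otimes\cL^\vee[1]$, which is exactly our assumption.
\end{itemize}
Hence $\iota^\sharp\cP$ is a slicing of $\Db(Y)$. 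The statement about $\iota^\sharp\sigma$ follows from the last part of Proposition~\ref{prop:PullbackStability}, or equivalently from Remark~\ref{remark-pullback-slicing-stability}, since $\iota$ is finite and so $\iota_*$ is conservative.

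It remains to prove that $\iota^\sharp\cP\preceq\iota^\sharp\cP\otimes\iota^*\cL^\vee[1]$. We use Lemma~\ref{lemma-pullback-slicing-compatibility}\ref{pullback-slicing-compatibility-tensorL}, which says that $\iota^\sharp(\cP\otimes\cL^\vee)=\iota^\sharp\cP\otimes\iota^*\cL^\vee$ is again a slicing. The shift $[1]$ clearly commutes with $\iota^\sharp$. So the right-hand side equals $\iota^\sharp(\cP\otimes\cL^\vee[1])$.

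Now Remark~\ref{rmk:properties-preceq}\ref{enum:CompatibilitySlicings3} says that $\iota^\sharp$ preserves $\preceq$ whenever both pullbacks are slicings. Applying it to $\cP\preceq\cP\otimes\cL^\vee[1]$ gives the claim.

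To avoid relying on the general version of that remark, we can also check it directly using Lemma~\ref{lemma-BL-concrete}. Let $F\in\iota^\sharp\cP(\phi)$, so that $\iota_*F\in\cP(\phi)$. By the projection formula,
\[
\iota_*(F\otimes\iota^*\cL^\vee[1])\cong\iota_*F\otimes\cL^\vee[1].
\]
Lemma~\ref{lemma-pullback-slicing-compatibility}\ref{pullback-slicing-compatibility-phipm} then gives
\[
\phi^-_{\iota^\sharp\cP}(F\otimes\iota^*\cL^\vee[1])=\phi^-_\cP(\iota_*F\otimes\cL^\vee[1])\geq\phi,
\]
where the inequality is the Bayer property of $\cP$. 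This is the required Bayer property for $\iota^\sharp\cP$.

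No step is a real obstacle. The only points needing care are matching the conventions in Example~\ref{ex:Pullback}\ref{enum:regular-section}, in particular which line bundle carries the shift $l=1$, and the fact that $\iota^\sharp$ commutes with tensoring by line bundles.
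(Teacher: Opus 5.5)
Your proposal is correct and follows the paper's proof. You use the cofiltration property from Example~\ref{ex:Pullback}\eqref{enum:regular-section} (with $\cO_X,\cL^\vee$, $l_1=0$, $l_2=1$, $N=\infty$) to apply Proposition~\ref{prop:PullbackStability}, and then get the Bayer property for $\iota^\sharp\cP$ from Lemma~\ref{lemma-pullback-slicing-compatibility}\ref{pullback-slicing-compatibility-tensorL} and Remark~\ref{rmk:properties-preceq}\ref{enum:CompatibilitySlicings3}; your extra direct check via the projection formula and Lemma~\ref{lemma-pullback-slicing-compatibility}\ref{pullback-slicing-compatibility-phipm} is also valid and just unpacks that remark in this case.
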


\begin{proof}
    By Example~\ref{ex:Pullback}\eqref{enum:regular-section}, $\iota \colon Y \to X$ has the cofiltration property with respect to $\cO_X, \cL^{\vee}$, $l_1 = 0, l_2 = 1$, and $N = \infty$. 
    Hence we may apply Proposition~\ref{prop:PullbackStability} to conclude that $\iota^{\sharp} \cP$ is a slicing and that $\iota^{\sharp} \sigma$ is a stability condition. 
    It follows from Lemma~\ref{lemma-pullback-slicing-compatibility}\ref{pullback-slicing-compatibility-tensorL} and Remark~\ref{rmk:properties-preceq}\ref{enum:CompatibilitySlicings3} that $\iota^{\sharp}\cP$ satisfies the Bayer property with respect to $\iota^*\cL^\vee$ and $l=1$.
\end{proof}

We can use Lemma~\ref{lem:RestCartierDiv} inductively to obtain a relation between the Bayer properties.

\begin{proposition}\label{prop:lvlimpliesbayer}
Let $X$ be a projective scheme over $\KK$, and let $\cL$ be a very ample invertible sheaf on $X$.
Let $\sigma=(Z, \cP)$ be a numerical stability condition on $\Db(X)$ such that $\cP$ satisfies the Bayer property with respect to $\cL^\vee$ and $l=1$, i.e.,
\[
\cP\preceq\cP\otimes\cL^\vee[1].
\]
Then $\cP$ satisfies the Bayer property with respect to $\cL$ and $l=0$, i.e., 
\[
\cP\preceq\cP\otimes\cL.
\]
\end{proposition}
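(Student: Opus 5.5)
The plan is to show directly the criterion of Lemma~\ref{lemma-BL-concrete}(3) for $\cL$ and $l=0$: for every $\phi\in\RR$ and every $0\neq F\in\cP(\phi)$ we want $\phi^-_{\cP}(F\otimes\cL)\geq\phi$, that is, $F\otimes\cL\in\cP(\geq\phi)$. The idea is to resolve $\cL$ by negative tensor powers of $\cL$ via a Koszul complex, and then control each piece by iterating the assumed Bayer property for $\cL^\vee$ and $l=1$. No induction on dimension or base change should be needed; only global generation of $\cL$ is used.

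\textbf{Step 1 (iterating the hypothesis).} From $\cP\preceq\cP\otimes\cL^\vee[1]$ we get $\cP\otimes\cL^{-j}[j]\preceq\cP\otimes\cL^{-j-1}[j+1]$ for every $j\geq 0$. This holds because $-\otimes\cL^{-j}[j]$ is an autoequivalence, and $\preceq$ is invariant under autoequivalences by Remark~\ref{rmk:properties-preceq}. By transitivity we conclude
\[
\cP\preceq\cP\otimes\cL^{-j}[j]\quad\text{for all } j\geq 0 .
\]
By Lemma~\ref{lemma-BL-concrete}(3), applied with $\cL^{-j}$ and $l=j$, this means $F\otimes\cL^{-j}[j]\in\cP(\geq\phi)$ for every $F\in\cP(\phi)$ and every $j\geq0$.

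\textbf{Step 2 (Koszul resolution).} Since $\cL$ is very ample, it is globally generated. Hence there are finitely many sections $s_0,\dots,s_N\in\rH^0(X,\cL)$ giving a surjection $\cO_X^{\oplus(N+1)}\twoheadrightarrow\cL$. Locally this surjection is split, so the associated Koszul complex
\[
0\to\Lambda^{N+1}\cO_X^{\oplus(N+1)}\otimes\cL^{-(N+1)}\to\cdots\to\cO_X^{\oplus(N+1)}\otimes\cL^{-1}\to\cO_X\to0
\]
is exact. Tensoring it with $\cL$ exhibits $\cL$ as quasi-isomorphic to a bounded complex. Its term in degree $-j$ is a direct sum of copies of $\cL^{-j}$, for $0\leq j\leq N$. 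Taking stupid truncations, $\cL$ is an iterated extension of objects $(\cL^{-j})^{\oplus c_j}[j]$ with $0\leq j\leq N$. Tensoring with $F$ (which is exact on triangles), $F\otimes\cL$ is an iterated extension of objects $(F\otimes\cL^{-j}[j])^{\oplus c_j}$.

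\textbf{Step 3 (conclusion).} By Step 1, each such piece lies in $\cP(\geq\phi)$. Since $\cP(\geq\phi)$ is extension-closed, we get $F\otimes\cL\in\cP(\geq\phi)$, and Lemma~\ref{lemma-BL-concrete} gives $\cP\preceq\cP\otimes\cL$.

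The only point requiring some care is Step 2: checking that the shifts in the Koszul complex work out so that every piece is exactly $\cL^{-j}[j]$ with nonnegative $j$, matching the iterated Bayer property. Everything else is formal. Note that the projectivity and numericity hypotheses are not really needed beyond ensuring global generation of $\cL$; the argument also avoids the divisor-restriction Lemma~\ref{lem:RestCartierDiv}, which one might otherwise try to use inductively.
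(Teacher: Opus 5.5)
Your argument is correct, and it takes a genuinely different route from the paper's.

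\textbf{The paper's route.} The paper argues by induction on $\dim X$. It first reduces to $\KK=\overline{\KK}$ via Theorem~\ref{thm:base-change-stab} and Lemma~\ref{lem:base-change-preserve-order-stab}; this is where numericity is used. It then cuts by a general section of $\cL$ with zero locus $\iota\colon Y\hookrightarrow X$. Lemma~\ref{lem:RestCartierDiv} transports the hypothesis to $\iota^\sharp\cP$, and induction gives $\iota^\sharp\cP\preceq\iota^\sharp\cP\otimes\iota^*\cL$. Finally it controls $F\otimes\cL$ through the triangles $F\to F\otimes\cL\to\iota_*\iota^*(F\otimes\cL)$ and $F\to\iota_*\iota^*F\to F\otimes\cL^\vee[1]$.

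\textbf{Your route.} You replace all of this with one finite resolution of $\cL$. This is the Koszul complex of the surjective cosection $\cO_X^{\oplus(N+1)}\otimes\cL^{-1}\twoheadrightarrow\cO_X$, twisted by $\cL$, whose degree $-j$ term is a sum of copies of $\cL^{-j}$ for $0\le j\le N$. You combine it with the iterated hypothesis $\cP\preceq\cP\otimes\cL^{-j}[j]$. This is the same mechanism as in Proposition~\ref{prop:PullbackStability} and in the proofs of Theorems~\ref{thm:ProjectiveScheme} and~\ref{thm:stabonlocal}. Your bookkeeping of shifts is right: every piece is $\cL^{-j}[j]$ with $j\geq 0$, and the $j=0$ piece is just $F^{\oplus(N+1)}\in\cP(\phi)$.

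\textbf{Comparison.} Your approach needs only that $\cL$ is generated by finitely many global sections. It works for an arbitrary slicing on $\Db(X)$ of a noetherian scheme over any field. It avoids base change, the choice of a regular dimension-cutting section, and the pullback-slicing machinery behind Lemma~\ref{lem:RestCartierDiv}. The paper's approach mainly buys uniformity: the same hyperplane-section induction is reused almost verbatim in Proposition~\ref{prop:BayerGeometric}. There, restricting to a smooth divisor through a given point is the essential step for showing semistability of skyscraper sheaves.
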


\begin{proof}
By Theorem~\ref{thm:base-change-stab} and Lemma~\ref{lem:base-change-preserve-order-stab}, we can assume $\KK=\overline{\KK}$.
We proceed by induction on the dimension $n$ of $X$, the statement being trivial for $n=0$.
Assume therefore that the statement is true for all projective schemes of dimensions smaller than $n$.

Let us choose a general section of $\cL$ such that its zero locus $\iota\colon Y\hookrightarrow X$ has dimension $n-1$.
By Lemma~\ref{lem:RestCartierDiv}, $\iota^\sharp\cP$ is a slicing of $\Db(Y)$ satisfying the Bayer property
\begin{equation*}\label{eq:lvlimpliesbayer3}
\iota^\sharp\cP\preceq\iota^\sharp\cP\otimes\iota^*\cL^\vee[1].    
\end{equation*}
By induction, we have
\begin{equation}\label{eq:lvlimpliesbayer4}
\iota^\sharp\cP\preceq\iota^\sharp\cP\otimes\iota^*\cL.    
\end{equation}

Let $0 \neq F\in\cP(\phi)$.
By Lemma~\ref{lemma-BL-concrete}, we must check that 
\[
\phi_{\cP}^-(F\otimes\cL)\geq\phi.
\]
In view of the distinguished triangle
\begin{equation*}
    F\longrightarrow F\otimes\cL \longrightarrow \iota_*\iota^*(F\otimes\cL), 
\end{equation*}
it suffices to prove that 
\begin{equation}
\label{phi-P-iotapushpull-FL} \phi^-_{\cP}(\iota_*\iota^*(F\otimes\cL))\geq\phi.
\end{equation}
We have 
\begin{equation*}
    \phi^{-}_{\cP}(\iota_*\iota^*(F\otimes\cL)) 
    = \phi^-_{\iota^{\sharp}\cP}(\iota^*(F\otimes\cL)) \geq \phi^-_{\iota^{\sharp} \cP \otimes \iota^*\cL}(\iota^*(F) \otimes \iota^*\cL) 
    = \phi^-_{\iota^\sharp\cP}(\iota^*F)  
    = \phi^-_{\cP}(\iota_*\iota^*F),
\end{equation*}
where the first and last equalities follow from Lemma~\ref{lemma-pullback-slicing-compatibility}\ref{pullback-slicing-compatibility-phipm}, the inequality follows from Lemma~\ref{lemma-preceq-concrete} and \eqref{eq:lvlimpliesbayer4}, and the second equality is clear as $-\otimes \iota^*\cL$ is an autoequivalence. 
On the other hand, using the exact triangle 
\begin{equation*}
    F \longrightarrow \iota_*\iota^* F\longrightarrow F\otimes\cL^\vee[1], 
\end{equation*}
Lemma~\ref{lemma-BL-concrete}
again, and the fact that $\cP$ satisfies the Bayer property with respect to $\cL^{\vee}$ and $l = 1$, we find that $\phi^-_{\cP}(\iota_*\iota^*F)\geq\phi$. 
Hence, the required inequality~\eqref{phi-P-iotapushpull-FL} holds.
\end{proof}

The same argument also gives semistability of skyscraper sheaves in the smooth case.

\begin{proposition}\label{prop:BayerGeometric}
Let $X$ be a smooth projective scheme over the field $\KK$, and let $\cL$ be a very ample invertible sheaf on $X$.
Let $\sigma=(Z,\cP)$ be a numerical stability condition on $\Db(X)$ such that $\cP$ satisfies the Bayer property with respect to $\cL^\vee$ and $l=1$, i.e.,
\[
\cP\preceq\cP\otimes\cL^\vee[1].
\]
Then for all closed points $x\in X$, the skyscraper sheaf  $\cO_x$ is $\sigma$-semistable. Moreover, for any closed points $x,y\in X$ that lie in the same connected component, we have $\phi_{\sigma}(\cO_x)=\phi_{\sigma}(\cO_y)$. 
\end{proposition}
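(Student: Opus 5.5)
Following the proof of Proposition~\ref{prop:lvlimpliesbayer}, I will argue by induction on $n=\dim X$ after reducing to $\KK=\overline{\KK}$. The reduction uses Theorem~\ref{thm:base-change-stab}\ref{enum:base-change-stab3}: semistability and phase of $\cO_x$ can be tested after base change. The Bayer property survives base change by Lemma~\ref{lem:base-change-preserve-order-stab}, since $\cL^\vee[1]$ commutes with $\pi^*$. Over $\overline{\KK}$, closed points of $X_{\overline{\KK}}$ over $x$ give skyscrapers whose direct sum up to extensions is $(\cO_x)_{\overline{\KK}}$. So if all of them are semistable of one common phase, $\cO_x$ is semistable.

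For $n=0$, $X$ is a finite disjoint union of points and each $\cO_x$ is semistable. It is a direct summand of a semistable object on a connected component; more precisely, an indecomposable object whose endomorphism algebra is a field is forced into one HN factor. For the inductive step, fix a closed point $x$. Since $X$ is smooth and $\cL$ very ample, I choose a general section of $\cL$ whose zero locus $\iota\colon Y\hookrightarrow X$ is smooth, of dimension $n-1$, and contains $x$ (Bertini with a base point, over $\overline{\KK}$). By Lemma~\ref{lem:RestCartierDiv}, $\iota^\sharp\sigma$ is a stability condition on $Y$ with the Bayer property for $\iota^*\cL^\vee$ and $l=1$. By induction, $\cO_x$ viewed on $Y$ is $\iota^\sharp\sigma$-semistable. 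Since $\iota_*$ sends $\iota^\sharp\cP(\phi)$ into $\cP(\phi)$ by definition, $\cO_x=\iota_*\cO_x$ is $\sigma$-semistable.

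For the phase statement, take $x,y$ in the same connected component of $X$. I connect them by a chain of points, each consecutive pair lying on a common smooth connected hyperplane section $Y$. This works for $n\geq 2$ by Bertini: sections through two given points are smooth and connected, since a connected component of a smooth projective variety is irreducible and hyperplane sections of irreducible varieties of dimension $\geq2$ are connected. Within such a $Y$, induction gives $\phi_{\iota^\sharp\sigma}(\cO_x)=\phi_{\iota^\sharp\sigma}(\cO_y)$, and phases agree under $\iota_*$ by Lemma~\ref{lemma-pullback-slicing-compatibility}\ref{pullback-slicing-compatibility-phipm}.

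The main obstacle is the base case $n=1$ for phases, where hyperplane sections are finite sets of points and give no connectivity. For a smooth connected curve $C$, I would argue directly from the Bayer property. Take $F=\cO_x\in\cP(\phi)$ and a section of $\cL$ vanishing at $x$ and $y$, with zero scheme $D\ni x,y$. The triangle $\cO_x\to\cO_D\otimes$ (restricted appropriately) must be adapted from the proof of Proposition~\ref{prop:lvlimpliesbayer}. The fallback is a deformation argument: skyscrapers form a connected family (the diagonal $\cO_\Delta$ over $C$), and semistable objects of fixed class with numerically constant $Z$ have locally constant phase in connected families. Phases of $\cO_x$ lie in $\phi_0+2\ZZ$ for a fixed $Z(\cO_x)$, and an upper semicontinuity argument for $\phi^\pm$ in families makes the phase locally constant, hence constant on $C$. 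I expect this continuity-in-families step to be the delicate part.
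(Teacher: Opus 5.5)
Your semistability argument is the paper's: reduce to $\overline{\KK}$, cut by a smooth member of $|\cL|$ through $x$, apply Lemma~\ref{lem:RestCartierDiv} and induction, then push forward. The gap is in the phase statement.

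The paper does not prove the phase statement by induction. Once every $\cO_x$ is semistable, it simply invokes \cite[Proposition~2.9]{FLZ:Albanese}. Your hyperplane-section induction reduces the phase statement to a smooth connected curve $C$, and there you give no proof. The triangle idea is left unspecified. The fallback needs semicontinuity of $\phi^{\pm}$ along the family $\cO_\Delta$, which is Lemma~\ref{lem:openness-flat}. That lemma is available only for stability conditions over a base, because it uses openness of semistability and HN structures over Dedekind schemes. For an arbitrary numerical $\sigma$ satisfying a Bayer property these are not known; the paper stresses that such a $\sigma$ is not a priori a stability condition over $\KK$. So the case $n=1$, and with it the whole phase claim, is unproved.

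The gap also reaches your semistability step. Your reduction to $\overline{\KK}$ deduces semistability of $\cO_x$ for non-rational $x$ from a common phase of the points above $x$. When $X$ is not geometrically connected, those points can lie in different components of $X_{\overline{\KK}}$, so you would also need invariance of $\sigma_{\overline{\KK}}$ under $\mathrm{Aut}(\overline{\KK}/\KK)$.

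The curve case does follow from the Bayer property, which completes your triangle idea.
\begin{itemize}
\item Let $F$ be $\sigma$-semistable of phase $\psi$ with $\rk\cH^i(F)>0$ for some $i$. Some HN factor of $\cO_C$ has this property, by additivity of rank.
\item For a closed point $z$, pick $D\in|\cL|$ through $z$ and use the triangle $F\otimes\cL^\vee\to F\to F\otimes\cO_D$.
\item The hypothesis gives $\phi^-(F\otimes\cL^\vee[1])\geq\psi$, and Proposition~\ref{prop:lvlimpliesbayer} gives $\phi^+(F\otimes\cL^\vee)\leq\psi$. Hence $F\otimes\cO_D\in\cP([\psi,\psi+1])$.
\item On a smooth curve, $F\otimes\cO_D$ is a direct sum of objects $T[-j]$, with $T$ a torsion sheaf supported at a point $z'\in D$. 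Each such summand is semistable of phase $\phi_\sigma(\cO_{z'})-j$, being an iterated extension of $\cO_{z'}[-j]$.
\item A nonzero summand with $z'=z$ and $j=i$ occurs, because $\cH^i(F)$ has full support. Hence $\phi_\sigma(\cO_z)\in[\psi+i,\psi+i+1]$ for every $z$.
\item Since $Z(\cO_z)$ is independent of $z$, all these phases lie in one coset of $2\ZZ$. Such a coset meets an interval of length $1$ at most once, so the phase is constant on $C$.
\end{itemize}

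Separately, for $n\geq 2$ your claim that members of $|\cL|$ through two given points are smooth is false. For two points on a line of a smooth quadric surface, every such section is a pair of lines. Your chain still works taking one point at a time. Smooth sections $Y_x\ni x$ and $Y_y\ni y$ are connected, and they meet because $\cL|_{Y_y}$ is ample. Any point of $Y_x\cap Y_y$ then links $x$ to $y$.
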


\begin{proof}
As above, by Theorem~\ref{thm:base-change-stab} and Lemma~\ref{lem:base-change-preserve-order-stab}, we can assume $\KK=\overline{\KK}$.
Then the statement is~\cite[Corollary 6.10]{Li:sb}, but we write the simple argument here.
We proceed again by induction on $n=\dim{X}$. Also, for the semistability of $\cO_x$, one can assume without loss of generality that $X$ is connected. If $n=0$, there is nothing to prove.
Therefore, let us assume $n>0$ and let $x\in X$ be a closed point.
In this case, we can choose a general section of $\cL$ such that its zero locus $\iota\colon Y\hookrightarrow X$ is smooth, has dimension $n-1$, and contains $x$. 

By Lemma~\ref{lem:RestCartierDiv}, $\iota^\sharp\cP$ is a slicing of $\Db(Y)$ satisfying the Bayer property with respect to $\iota^*\cL^\vee$ and $l=1$.
By induction, $\cO_x$ is semistable in $\iota^\sharp\cP$, and thus in $\cP$, as we wanted. 
The last statement follows from~\cite[Proposition 2.9]{FLZ:Albanese}.
\end{proof}

We end this section with the following uniqueness result.

\begin{lemma}\label{lem:unique-general}
Let $X$ be a projective scheme and $\cL$ be a very ample line bundle. 
Let $\sigma_1=(Z_1, \cP_1)$ and $\sigma_2=(Z_2, \cP_2)$ be two numerical stability conditions on $\Db(X)$ satisfying $Z_{1}=Z_{2}$. 
If 
\[
\cP_i\prec\cP_i\otimes\cL^{\vee}[1] \quad \text{and} \quad \cP_i\preceq\cP_i\otimes\cL \quad \text{for } i=1,2,
\]
and $\cO_x$ is $\sigma_i$-semistable with $\phi_{\sigma_1}(\cO_x)=\phi_{\sigma_2}(\cO_x)$ for any closed point $x\in X$, then $\sigma_1=\sigma_2$.
\end{lemma}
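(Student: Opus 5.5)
The plan is to show $\cP_1 \preceq \cP_2$; by symmetry this also gives $\cP_2\preceq\cP_1$, hence $\cP_1=\cP_2$ (each $\cP_i(\phi)$ is then contained in the other's $\cP(\leq\phi)$ and $\cP(\geq\phi)$). Since $Z_1=Z_2$, this gives $\sigma_1=\sigma_2$.

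Recall that a bounded t-structure is determined by its heart. So it suffices to prove, for every $\phi$, the inclusion
\[
\cP_1((\phi,\phi+1]) \subset \cP_2((\phi-1,\phi+1]),
\]
or a similar statement. We then combine it with the equality of central charges in the standard way: two stability conditions with the same $Z$ and $\mathrm{dist}(\cP_1,\cP_2)<1$ coincide, by Bridgeland's local injectivity in Theorem~\ref{thm:bridgeland-deformation}. Thus the real goal is the bound $\mathrm{dist}(\cP_1,\cP_2)<1$. After a $\widetilde{\mathrm{GL}}_2^+$-rotation applied to both, we may assume the common phase of the skyscrapers is $1$.

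The key step is to control phases of objects through their relation with skyscraper sheaves. Fix $F\in\cP_1(\phi)$ and consider $\Hom(F,\cO_x[j])$ and $\Hom(\cO_x[j],F)$. For this we use the Bayer properties. From $\cP_i\preceq\cP_i\otimes\cL$ and $\cP_i\prec\cP_i\otimes\cL^\vee[1]$, twisting by high powers $\cL^{k}$ moves phases in a controlled way: each twist by $\cL$ does not decrease $\phi^-$, while $\phi^\pm$ increases by strictly less than $1$. We also use that for $k\gg0$ the sheaf $F\otimes\cL^{k}$ is generated by global sections in each cohomology degree, so it admits a resolution by sums of $\cO$ twisted by $\cL^{-m}$. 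More concretely, I would follow the argument of Li~\cite{Li:sb}: the inequalities
\[
\phi^-_{\cP_i}(F)\leq\phi^-_{\cP_i}(F\otimes \cL)<\phi^-_{\cP_i}(F)+1
\]
imply that the heart $\cP_i((0,1])$ is sandwiched between tilts of $\Coh(X)$. Specifically, objects of $\cP_i((0,1])$ have cohomology sheaves in degrees $-1,0$, and the torsion pair is determined by skyscraper phases via $\Hom(\cO_x,-)$ and $\Hom(-,\cO_x)$. Here the condition that $\cO_x\in\cP_i(1)$ for all $x$ forces $\Coh_{0}(X)\subset\cP_i(1)$, and the Bayer property propagates this to $\cP_i(>0)\supset \Db(X)^{\leq -1}$ and $\cP_i(\leq 1)\supset \Db(X)^{\geq 0}\cap\cdots$.

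From this sandwiching, both hearts lie in $\langle \Coh(X)[1],\Coh(X)\rangle$. Both have the same central charge, and both have the same semistable skyscrapers of phase $1$. By a standard lemma (compare \cite[Lemma~10.1]{Bridgeland:K3}), two such hearts with equal $Z$ and one contained in the other's tilt-range are equal; alternatively, this puts the distance below $1$ and we apply Bridgeland's injectivity.

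The main obstacle is the sandwiching step. We must show that the two Bayer properties together with the skyscraper phases imply $\Db(X)^{\leq -1}\subset\cP_i(>0)\subset\Db(X)^{\leq 0}$, uniformly enough to give distance $<1$. I would try induction on $\dim X$ via Lemma~\ref{lem:RestCartierDiv}, restricting to hyperplane sections as in Proposition~\ref{prop:lvlimpliesbayer}, with the base case $\dim X=0$, where the heart is forced by the skyscrapers.
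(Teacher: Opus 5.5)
The step you flag as the main obstacle is false once $\dim X\geq 3$, even under all hypotheses of the lemma, so no induction on dimension can prove it. Your endgame (both hearts in $\langle\Coh[1],\Coh\rangle$, then equal central charges) is exactly what the paper does, but only on curves.

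Counterexample: take $X=E^3$ and $\cL=\cO_{E^3}(3H)$. For $a\gg0$, the stability conditions $\sigma^{a,0}$ satisfy both Bayer properties (Lemmas~\ref{lem:BayerPropertyEllipticCurves1} and~\ref{lem:BayerPropertyEllipticCurves2}), and all skyscrapers have phase $1$. By~\eqref{sigmab-otimes-L}, the $\sigma^{a,0}$-phase of the stable line bundle $\cO_{E^3}(-mH)$ equals the $\sigma^{a,m}$-phase of $\cO_{E^3}$. This is a continuous lift of $\frac{1}{\pi}\arg\,(m+ia)^3$, so for $m\in\ZZ$ these phases spread over an interval of length close to $3$. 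Concretely, $\cO_{E^3}(-mH)[2]$ lies in the heart for $m\gg0$. Your sandwich is equivalent to $\Coh(X)\subset\cP((-1,1])$, which would confine all these phases to an interval of length $2$. In general one only has $\cP((0,1])\subset\langle\Coh(X),\dots,\Coh(X)[\dim X-1]\rangle$, as in the proof of Theorem~\ref{thm:stabonlocal}.

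Even when a common range $\langle\Coh(X)[1],\Coh(X)\rangle$ is available, it only bounds $\mathrm{dist}(\cP_1,\cP_2)$ by $2$, not by $1$. The conclusion there comes from \cite[Lemma~8.11]{BMS:StabCY3s}, which uses $Z_1=Z_2$. It does not come from \cite[Lemma~10.1]{Bridgeland:K3} or from Bridgeland's local injectivity.

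The missing idea is that the reduction in dimension must carry the uniqueness statement itself, not a bound on hearts. The paper first reduces to $X$ connected and $\KK=\overline{\KK}$. It then runs the argument of \cite[Corollary~6.13]{Li:sb}, which uses the two Bayer properties (cf.\ Lemma~\ref{lem:RestCartierDiv} for restricting to hyperplane sections). This reduces the lemma to two stability conditions with equal central charge on a connected, possibly singular, one-dimensional projective scheme $C$, with every $\cO_p$ semistable of phase $1$.

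Only on such a $C$ is your sandwich true, and it has a short proof. Let $E\in\cP((0,1])$ have cohomology sheaves in degrees $[s,t]$.
\begin{itemize}
\item A nonzero map $E\to\cO_p[-t]$ forces $t\leq0$.
\item Choose $p$ and $k\in\{0,1\}$ with $\Ext^k(\cO_p,\cH^s(E))\neq0$; this is exactly where $\dim C=1$ is used. This yields a nonzero map $\cO_p[-s-k]\to E$, forcing $s=0$, or $s=-1$ and $k=1$.
\end{itemize}
Then \cite[Lemma~8.11]{BMS:StabCY3s} finishes. So your base case should be possibly singular curves, not points.

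Finally, rotating so that all skyscrapers have phase $1$ presupposes they share a common phase. The hypothesis only gives $\phi_{\sigma_1}(\cO_x)=\phi_{\sigma_2}(\cO_x)$ pointwise, so at least a reduction to connected $X$ is needed first.
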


\begin{proof}
We may assume that $X$ is connected and $\KK=\overline{\KK}$. Therefore, the proof of \cite[Corollary 6.13]{Li:sb} works verbatim once we know that two stability conditions $\tau_1$ and $\tau_2$ on a connected one-dimensional projective scheme $C$ with $Z_{\tau_1}=Z_{\tau_2}$, $\cO_p$ being $\tau_i$-semistable, and $\phi_{\tau_i}(\cO_p)=1$ for each $p\in C$ and $i=1,2$ satisfy $\tau_1=\tau_2$.

Using \cite[Lemma 8.11]{BMS:StabCY3s} and the assumption $Z_{\tau_1}=Z_{\tau_2}$, it suffices to show that
\[
\cP_{\tau_i}((0,1])\subset \langle \Coh(C)[1], \Coh(C)\rangle
\]
for each $i=1,2$. 
For any $E\in \cP_{\tau_i}((0,1])$, we set
\[
s\coloneqq \min \{i\mid \cH^i(E)\neq 0\}\quad \text{and}\quad t\coloneqq \max \{i\mid \cH^i(E)\neq 0\}.
\]
Since there always exists a non-zero map $E\to \cO_p[-t]$ for a closed point $p\in C$, from $E\in \cP_{\tau_i}((0,1])$, we see that $t\leq 0$. 
We may take a closed point $p\in C$ and $k\in \{0,1\}$ such that $\Ext^k(\cO_p, \cH^s(E))\neq 0$. 
Therefore, we get a non-zero map $\cO_p[-s-k]\to E$. By $E\in \cP_{\tau_i}((0,1])$, the only possibility is either $s=0$ or $s=-1$ and $k=1$. 
In both cases, we get $E\in \langle \Coh(C)[1], \Coh(C)\rangle$ as desired.
\end{proof}


\section{Stability conditions on powers of an elliptic curve}\label{sec:ProductEllipticCurves}

Let $\KK$ be a field, with algebraic closure $\overline{\KK}$.
Let $E$ be an elliptic curve over $\KK$; in particular, $E$ admits a $\KK$-rational point.
Let $n$ be a positive integer. 

In this section, we study stability conditions on the $n$-fold power of $E$. 
First, in Section~\ref{subsec:ContinuousFamilyElliptic}, we review the construction of a special family of stability conditions $\{\sigma^{a,b}\}_{(a,b)\in\RR_{>0}\times\RR}$ on $E^n$.
Then in Section~\ref{subsec:StableLineBundles}, we deal with the key property that suitable shifts of line bundles belong to the heart associated to the stability condition $\sigma^{a,b}$, for $b\in(1-\frac{1}{n},1)$ and $a>0$ sufficiently small.

\subsection{A continuous family of stability conditions}\label{subsec:ContinuousFamilyElliptic}

For notational convenience, we introduce an index for each of the factors of $E^n$, i.e., we write 
\begin{equation*}
    E^n = E_1 \times \dots \times E_n
\end{equation*}
where $E_i = E$ for all $i$. 
For each $1\le i\le n$, fix a $\KK$-rational point $q_i\in E_i$ and set
\[
H_i \coloneqq
E_1\times \dots \times E_{i-1}\times \{q_i\}\times E_{i+1}\times \dots
\times E_n
\subset E^n .
\]
By abuse of notation, we also denote by $H_i$ its numerical class and
\begin{equation}
\label{equation-H}
    H \coloneqq H_1+\dots+H_n .
\end{equation}
Moreover, for $a_1,\dots,a_n\in\ZZ$, we will use the notation
\begin{equation}\label{eq:LineBundleH}
\cO_{E^n}(a_1H_1+\dots+a_nH_n)
\end{equation}
for any line bundle whose numerical first Chern class is $a_1H_1+\dots+a_nH_n$.
We consider the free abelian group $\Lambda_n\cong\ZZ^{2^n}$ and define a numerical homomorphism $v_n\colon\rK_0(E^n)\to\Lambda_n$ by 
\begin{equation*}
\label{v-elliptic-curve}
v_n(F)\coloneqq\bigl(H_{j_1}\dots H_{j_s} \ch_{n-s}(F)\bigr)_{1\le j_1<\dots<j_s\le n}, 
\end{equation*}
as in~\cite[Section 4.2]{curves}. 

\begin{theorem}\label{thm:Eninj}
Let $\sigma$ be a stability condition on $\Db(E^n)$ with respect to $(\Lambda_n,v_n)$. 
Then:
\begin{enumerate}[{\rm (1)}]
\item\textup{({\cite[Theorem~1.1 and Proposition~2.9]{FLZ:Albanese}})}\label{enum:Eninj1}
All skyscraper sheaves $\{\cO_p\}_{p\in E^n}$ of closed points are $\sigma$-semistable with the same phase. Moreover, $\cO_p$ is geometrically $\sigma$-stable for any $\KK$-rational point $p\in E^n$.
\item\textup{({\cite[Theorem 1.1]{curves}})}\label{enum:Eninj2}
The map
\begin{equation*}\label{eq35}
\Stab_{(\Lambda_n,v_n)}(\Db(E^n))\longrightarrow
\Hom_\ZZ(\Lambda_n,\CC)\times \RR,
\qquad
\sigma=(Z,\cP)\longmapsto \bigl(Z,\phi_\sigma(\cO_p)\bigr)
\end{equation*}
is injective, where $p \in E^n$ is any fixed closed point. 
\end{enumerate}
\end{theorem}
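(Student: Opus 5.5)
The theorem is attributed to \cite{FLZ:Albanese} and \cite{curves}. Those papers work over $\CC$ or $\overline{\KK}$, so my plan is to reduce to the algebraically closed case by base change and then recall the arguments there.

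\emph{Reduction to $\overline{\KK}$.} Since $v_n$ is numerical, Theorem~\ref{thm:base-change-stab} gives a base-changed stability condition $\sigma_{\overline{\KK}}$ on $\Db(E^n_{\overline{\KK}})$ with respect to $((\Lambda_n)_{\overline{\KK}}, (v_n)_{\overline{\KK}})$. Here $(\Lambda_n)_{\overline{\KK}}\supset\Lambda_n$ has finite index, and by the formula for $v_n$ (intersections with the rational classes $H_j$) it is again the lattice $\Lambda_n$ built over $\overline{\KK}$, up to finite index. For a closed point $p\in E^n$, the object $(\cO_p)_{\overline{\KK}}$ is a direct sum of (nonreduced-length) skyscraper sheaves at the geometric points over $p$. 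Semistability descends by Theorem~\ref{thm:base-change-stab}\ref{enum:base-change-stab3}, so part~\ref{enum:Eninj1} over $\KK$ follows from the statement over $\overline{\KK}$. The only check needed is that an extension of semistable skyscrapers of equal phase is semistable of that phase. For a $\KK$-rational $p$, the pullback $(\cO_p)_{\overline{\KK}}$ is a single skyscraper, which gives geometric stability.

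\emph{Part~\ref{enum:Eninj1} over $\overline{\KK}$.} I would recall the argument of \cite{FLZ:Albanese}. Take a $\sigma$-stable factor $A$ of $\cO_p$. Translations $t_x$ of $E^n$ act trivially on $\Lambda_n$ and on numerical classes. Hence the deformation argument applies: the set of $x$ with $t_x^*A\cong A$ has positive dimension unless the support of $A$ is a point. More precisely, use the Albanese/Fourier--Mukai: $\Hom(A,A\otimes P)$ for $P\in\Pic^0$ together with the rigidity of stable objects under the abelian variety action shows $A$ is supported at $p$. Then $\Hom(\cO_p,\cO_p)=\overline{\KK}$ forces $A=\cO_p$, which gives stability. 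The phase is constant by \cite[Proposition~2.9]{FLZ:Albanese}: phases are locally constant in flat families of stable objects over the connected $E^n$.

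\emph{Part~\ref{enum:Eninj2}.} Injectivity follows \cite{curves}. Given $\sigma_1,\sigma_2$ with the same $Z$ and the same phase of $\cO_p$, part~\ref{enum:Eninj1} makes all skyscrapers stable of the same phase in both. The aim is to apply a uniqueness statement in the spirit of Lemma~\ref{lem:unique-general}, which requires the Bayer properties with respect to an ample $\cL$ and $\cL^\vee[1]$. These I would get on $E^n$ from the isogeny/translation-invariance techniques of \cite{curves}: pullback along multiplication $[m]$ and the fact that $\cP\otimes P=\cP$ for $P\in\Pic^0$. Base change again reduces the injectivity to $\overline{\KK}$ via Lemma~\ref{lem:base-change-stab-manifold}, because the map $\Pi$ is injective.

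\emph{Main obstacle.} The hard step is verifying that the cited results of \cite{FLZ:Albanese,curves}, proved over $\CC$, hold verbatim over an arbitrary $\overline{\KK}$, including positive characteristic. In particular, the use of isogenies $[m]$ must be checked when $\mathrm{char}\,\KK\mid m$, where $[m]$ is inseparable. I would handle this by only ever using $m$ prime to the characteristic, which suffices for the density arguments.
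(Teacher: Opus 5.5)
Your reduction to $\overline{\KK}$ is the paper's proof. You pull back along $\pi$, observe that $\pi^*\cO_p=\cO_{\pi^{-1}(p)}$ with $\pi^{-1}(p)$ zero-dimensional of length $[\kappa(p):\KK]$, and let semistability descend by Theorem~\ref{thm:base-change-stab}\ref{enum:base-change-stab3}. (When $\kappa(p)/\KK$ is inseparable, $\pi^*\cO_p$ is an iterated extension of skyscrapers at the geometric points over $p$, not a direct sum.) For \ref{enum:Eninj2}, injectivity is transported through the injective map $\Pi$ of Lemma~\ref{lem:base-change-stab-manifold}. The paper then simply invokes \cite{FLZ:Albanese} and \cite[Theorem~1.1]{curves} over $\overline{\KK}$.

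For the transfer in \ref{enum:Eninj2} you should also say why $(\sigma_1)_{\overline{\KK}}$ and $(\sigma_2)_{\overline{\KK}}$ give skyscrapers the same phase. If $\psi_i$ is the common skyscraper phase of $(\sigma_i)_{\overline{\KK}}$, then $\pi^*\cO_p$ is semistable of phase $\psi_i$. Hence $\psi_i=\phi_{\sigma_i}(\cO_p)$, and these agree by hypothesis.

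However, the mechanism you sketch for \ref{enum:Eninj2} has a genuine gap, so it cannot replace that citation. Lemma~\ref{lem:unique-general} requires $\cP_i\prec\cP_i\otimes\cL^\vee[1]$. An arbitrary $\sigma\in\Stab_{(\Lambda_n,v_n)}(\Db(E^n))$ need not satisfy this; the paper proves it only for $\sigma^{a,b}$ with $a>a_0(m_0)$ (Lemma~\ref{lem:BayerPropertyEllipticCurves2}).

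Here is a concrete counterexample on $E^2$. The heart of $\sigma^{a,b}$ is the tilt of $\Coh(E^2)$ at $\mu_H$-slope $b$, and line bundles are stable by \cite[Corollary~2.16]{FLZ:Albanese}. One computes $Z^{a,b}(\cO(dH))=\bigl(a+i(d-b)\bigr)^2$, so $\phi(\cO(dH))=\frac{2}{\pi}\arctan\frac{d-b}{a}$. For $b=\frac12$ and $0<a<\frac12$ this gives $\phi(\cO(H))-1>\phi(\cO)$, so $\cO$ violates $\cP\prec\cP\otimes\cO(-H)[1]$. The same computation with $\cO(dH)$, $d=\lfloor 1-\frac{k}{2}\rfloor$, rules out every $\cO(kH)$ with $k\geq 1$.

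Pulling back along $\pi_m$ does not repair this, because it replaces $\sigma$ by a different stability condition ($\pi_m^\sharp\sigma^{a,b}=\sigma^{m^2a,m^2b}$). You would additionally need that $\pi_m^\sharp\sigma$ has the Bayer property for arbitrary $\sigma$, and that $\pi_m^\sharp$ is injective; you supply neither. The paper obtains the Bayer property only inside the continuous family of Proposition~\ref{prop:sigmaabRonEn}.

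Your recollection of the argument for \ref{enum:Eninj1} is also unreliable. A point-supported object has trivial translation stabilizer, while $\cO_{E^n}$ is fixed by all translations, so the heuristic is reversed. Both parts should therefore rest on \cite{FLZ:Albanese} and \cite[Theorem~1.1]{curves} as black boxes over $\overline{\KK}$, exactly as in the paper.
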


\begin{proof}
Let $\pi\colon E^n_{\overline{\KK}}\to E^n$ be the base change morphism. For any closed point $p\in E^n$, its residue field $\kappa(p)$ is a finite extension of $\KK$. Therefore, $\pi^*(\cO_p)=\cO_{\pi^{-1}(p)}$, where $\pi^{-1}(p)$ is a zero-dimensional closed subscheme of $E^n_{\overline{\KK}}$ of length $[\kappa(p):\KK]$. Thus, \ref{enum:Eninj1} follows from \cite[Theorem~1.1, Proposition~2.9]{FLZ:Albanese} and Theorem \ref{thm:base-change-stab}. Now, \ref{enum:Eninj2} can be deduced from Lemma \ref{lem:base-change-stab-manifold} and \cite[Theorem~1.1]{curves} by base change to $\overline{\KK}$ as well.
\end{proof}

For every $(a,b)\in \RR_{>0}\times \RR$, define a group homomorphism
\[
Z^{a,b}\colon \Lambda_n\longrightarrow \CC
\]
by
\begin{equation}\label{eq:zab}
\begin{split}
Z^{a,b}(v_n(F))
&= -\int_{E^n} e^{-(b+ia)H}\,\ch(F) \\
&= -\ch_n(F)
+(b+ia)H\ch_{n-1}(F)
-\dots
+(-1)^{n+1}\frac{(b+ia)^n}{n!}H^n\rk(F).
\end{split}    
\end{equation}

The following result is an extension of~\cite[Theorem~5.9]{Liu:StabProd} and~\cite[Theorem~4.5]{curves}.

\begin{theorem}[Yucheng Liu]\label{thm:Yucheng}
For any $(a,b)\in \QQ_{>0}\times \QQ$, there exists a stability condition
\[
\sigma^{a,b}=(Z^{a,b},\cP^{a,b})\in\Stab_{(\Lambda_n,v_n)}(\Db(E^n))
\]
with central charge $Z^{a,b}$ given by~\eqref{eq:zab}, and satisfying
$\phi_{\sigma^{a,b}}(\cO_p)=1$ for any closed point $p\in E^n$. 
\end{theorem}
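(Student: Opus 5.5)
The plan is to reduce to the algebraically closed case, where the statement is \cite[Theorem~5.9]{Liu:StabProd} together with \cite[Theorem~4.5]{curves}, and then descend to $\KK$. The descent will go through the uniqueness of Theorem~\ref{thm:Eninj}\ref{enum:Eninj2} and the description of base-changed slicings in Theorem~\ref{thm:base-change-stab}.

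\textbf{Step 1: construction over $\overline{\KK}$.} Over $\overline{\KK}$, Liu's construction proceeds by induction on $n$. On $E$ one takes $\sigma^{a,b}$ to be a rotation of slope stability, so that $Z^{a,b}$ is the standard central charge and $\cO_p$ has phase $1$. On $E^{n}=E^{n-1}\times E$ one then tilts the heart obtained from the product construction. Over $\overline{\KK}$ the required Bogomolov-type inequalities are proven using only intersection theory of the classes $H_i$ and the structure of semi-homogeneous bundles. These inputs are characteristic-free, since abelian varieties and their isogenies behave the same in every characteristic, once one keeps $(a,b)$ rational. This yields $\overline{\sigma}^{a,b}=(Z^{a,b},\overline{\cP})$ on $\Db(E^n_{\overline{\KK}})$, with respect to $(\Lambda_{n,\overline{\KK}},v_{n,\overline{\KK}})$. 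Here $\Lambda_n$ and its base change agree, because the classes $H_i$ are defined over $\KK$. In this construction $\cO_p$ has phase $1$.

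\textbf{Step 2: Galois invariance.} For $g\in\Aut(\overline{\KK}/\KK)$, the conjugate $g\cdot\overline{\sigma}^{a,b}$ is again a stability condition with respect to $(\Lambda_n,v_n)$. The lattice is Galois-invariant because $v_n$ is defined through $\ch$ and the classes $H_i$, which are $\KK$-rational. The conjugate has the same central charge and still gives $\phi(\cO_p)=1$. By Theorem~\ref{thm:Eninj}\ref{enum:Eninj2} it therefore coincides with $\overline{\sigma}^{a,b}$.

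\textbf{Step 3: descent to $\KK$.} Define
\[
\cP(\phi)\coloneqq\set{F\in\Db(E^n)\st F_{\overline{\KK}}\in\overline{\cP}(\phi)}.
\]
The key point is to show that HN filtrations descend. One way is through the heart: set $\cA=\set{F \st F_{\overline{\KK}}\in\overline{\cP}((0,1])}$ and show that it is the heart of a bounded t-structure. For this one uses that the truncation triangles of $F_{\overline{\KK}}$ are unique and Galois-invariant, hence defined over a finite extension and then descended. This descent of t-structures along field extensions is exactly the content underlying Theorem~\ref{thm:base-change-stab}. Alternatively, one descends via the finite-extension argument used in \cite{stability-families}. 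After that, HN filtrations descend because they are unique, and the support property is inherited with the same quadratic form.

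A cleaner route, which I would try first, is to redo Liu's tilting construction directly over $\KK$. Its tilts are defined by torsion pairs built from slope stability, and slope stability is compatible with base field extension. One then verifies the needed inequalities after base change to $\overline{\KK}$, using that $\sigma$-semistability and phases are detected there by Theorem~\ref{thm:base-change-stab}\ref{enum:base-change-stab3}.

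\textbf{Main obstacle.} The hard step is Step 3, namely checking that each step of the inductive tilting over $\KK$ produces a heart whose base change is Liu's heart. This amounts to showing that the torsion pairs used at each stage are compatible with base change, which reduces to the fact that HN filtrations for the intermediate weak stability conditions are Galois-invariant and hence defined over $\KK$.
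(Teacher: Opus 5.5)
Your Step~3 has a genuine gap. Theorem~\ref{thm:base-change-stab} only goes \emph{upward}: it builds $\cP_\ell$ on $\Db(X_\ell)$ out of a slicing $\cP$ on $\Db(X)$ that is already given, via $\pi_*$ and $\widehat{\cP}$. It says nothing about descending a heart or slicing from $\Db(E^n_{\overline{\KK}})$ to $\Db(E^n)$, so calling such descent ``exactly the content underlying'' it is a misattribution.

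To show that your $\cA=\{F\mid F_{\overline{\KK}}\in\overline{\cP}((0,1])\}$ is a heart, you must descend the truncation triangles of $F_{\overline{\KK}}$. Uniqueness plus $\Aut(\overline{\KK}/\KK)$-invariance (your Step~2, which is itself fine) does not give this, for three reasons. First, the fixed field of $\Aut(\overline{\KK}/\KK)$ is the perfect closure of $\KK$, so for imperfect $\KK$ invariance says nothing about purely inseparable descent. Second, even along separable extensions, an invariant complex with canonical isomorphisms is only a descent datum in a derived category, not yet an object of $\Db(E^n)$. Third, you give no finite extension over which a t-structure is already known to exist. The inducing machinery of Appendix~\ref{app:Inducing} also does not apply directly to $\pi^*$, since $\pi\colon E^n_{\overline{\KK}}\to E^n$ is not of finite type. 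The same objection applies to ``HN filtrations descend because they are unique.''

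Your ``cleaner route'' is the one the paper takes, but the mechanism you give for it would not work as stated. Liu's inductive step does not tilt using slope stability of sheaves. On $E^{n}\times E$ one forms Polishchuk's constant-family heart $\cA_S$ from $\cA=\cP((0,1])$ \cite[Theorem~3.3.6]{P-t-structures}, equips it with the weak stability function $Z_S$, and tilts with respect to that. You also cannot invoke Theorem~\ref{thm:base-change-stab}\ref{enum:base-change-stab3} for these intermediate data, since they are not yet stability conditions.

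The paper supplies what is missing as follows:
\begin{enumerate}
\item[(a)] HN filtrations over $\KK$ come directly, not by descent, from the Noetherianity of $\cA_S$ \cite[Theorem~3.3.6(i)]{P-t-structures} together with the rationality of $Z_S$. This is where $(a,b)\in\QQ_{>0}\times\QQ$ is used.
\item[(b)] The pullback $\Db(X\times S)\to\Db(X_{\overline{\KK}}\times S_{\overline{\KK}})$ is t-exact with $(\cA_S)_{\overline{\KK}}=(\cA_{\overline{\KK}})_{S_{\overline{\KK}}}$. Moreover $Z_S(E)=(Z_{\overline{\KK}})_{S_{\overline{\KK}}}(E_{\overline{\KK}})$ by \citestacks{0AA7}. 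Hence the weak-stability property of $Z_S$ and \cite[Lemmas~4.1--4.3, 5.5]{Liu:StabProd} transfer from $\overline{\KK}$, so that \cite[Proposition~4.6]{Liu:StabProd} holds over $\KK$.
\item[(c)] The support property follows formally as in \cite[Lemmas~5.6--5.7]{Liu:StabProd}. The upgrade to $(\Lambda_n,v_n)$ is \cite[Theorem~4.5]{curves}, using Theorem~\ref{thm:Eninj} over $\KK$.
\end{enumerate}
With (a)--(b) in place of ``Galois-invariant hence defined over $\KK$,'' your fallback becomes a proof. The descent route, as written, is not one.
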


\begin{proof}
When $\KK=\overline{\KK}$, the result follows from~\cite{Liu:StabProd,curves}. 
For general $\KK$, we can argue by base change to $\overline{\KK}$. 
Indeed, it suffices to carry out the construction of~\cite[Theorem~5.9]{Liu:StabProd}, since we have Theorem~\ref{thm:Eninj} and the remaining argument of~\cite[Theorem~4.5]{curves} works without any change.

We give here a quick sketch of the argument, by following the presentation in~\cite{Liu:StabProd}; we will see in Section~\ref{subsec:StableLineBundles} a more explicit version of this.
Let $X, S$ be smooth projective varieties over $\KK$ and $\sigma=(Z,\cA)$ be a numerical stability condition on $\Db(X)$ such that $Z$ is rational and $\dim(S)=1$. Then $\cA$ and $\cA_{\overline{\KK}}$ are Noetherian. As in \cite[Theorem~3.3]{Liu:StabProd}, we consider the heart $\cA_S$ constructed in \cite[Theorem 3.3.6(ii)]{P-t-structures} and a central charge $Z_S\colon \rK_0(X\times_\KK S)\to \CC$. From Theorem \ref{thm:base-change-stab} and the construction, it is clear that $(\cA_S)_{\overline{\KK}}=(\cA_{\overline{\KK}})_{S_{\overline{\KK}}}$ and the pullback functor 
\[
\Db(X\times_\KK S)\to \Db(X_{\overline{\KK}}\times_{\overline{\KK}} S_{\overline{\KK}})
\]
is t-exact. Moreover, by \citestacks{0AA7}, we have 
\[
(Z_{\overline{\KK}})_{S_{\overline{\KK}}}(E_{\overline{\KK}})=Z_S(E),
\]
for any $E\in \Db(X\times_\KK S)$, so $Z_S$ is a weak stability function on $\cA_S$ as $(Z_{\overline{\KK}})_{S_{\overline{\KK}}}$ is a weak stability function on $(\cA_{\overline{\KK}})_{S_{\overline{\KK}}}$ by \cite[Theorem~3.3]{Liu:StabProd}. Note that $\cA_S$ is Noetherian by \cite[Theorem 3.3.6(i)]{P-t-structures} and $Z_S$ is rational. Therefore, any object in $\cA_S$ admits an HN filtration with respect to $Z_S$-semistability. Then as in \cite[Section~4]{Liu:StabProd}, for any $(t,s)\in \QQ_{>0}\times \QQ_{>0}$, we have the tilted heart $\cA^t_S$ with a central charge $Z^{s,t}_S$.

It is clear that \cite[Lemma~4.1]{Liu:StabProd} holds without any assumption on $\KK$. Moreover, the formulations of \cite[Lemma~4.2 and Lemma 4.3]{Liu:StabProd} are compatible with base change to $\overline{\KK}$, so they are also true over $\KK$. Therefore, the same argument in~\cite[Proposition 4.6]{Liu:StabProd} works and we see that $Z^{s,t}_S$ is a stability function on $\cA^t_S$.

By changing the base to $\overline{\KK}$, we know that \cite[Lemma~5.5]{Liu:StabProd} is valid over $\KK$. 
Therefore, using the formal arguments of \cite[Lemma~5.6 and Lemma 5.7]{Liu:StabProd}, we see that the pair $(Z^{s,t}_S,\cA^t_S)$ satisfies the support property. 
Thus, we can conclude that $(Z^{s,t}_S, \cA^t_S)$ is a stability condition on $\Db(X\times_\KK S)$ as in \cite[Theorem 4.7]{Liu:StabProd}.
\end{proof}

We now show that these stability conditions with rational coefficients extend naturally to a continuous family: in particular, they all belong to the same connected component of $\Stab_{(\Lambda_n,v_n)}(\Db(E^n))$. 
For a positive integer $m$, denote by
\[
\pi_m \colon E^n \longrightarrow E^n, \qquad z \longmapsto m z,
\]
the multiplication by~$m$ isogeny, which is finite and faithfully flat.

\begin{lemma}\label{lem:Multiplication}
Let $\sigma\in\Stab_{(\Lambda_n,v_n)}(\Db(E^n))$. 
Then for all $m\geq1$, the pullback and pushforward of $\sigma$ along $\pi_m$ are stability conditions on $E^n$, i.e., 
\[
\pi_m^\sharp\sigma\in\Stab_{(\Lambda_n,v_n)}(\Db(E^n)) \quad \text{and} \quad \pi_{m,\sharp}\sigma\in\Stab_{(\Lambda_n,v_n)}(\Db(E^n)).
\]
\end{lemma}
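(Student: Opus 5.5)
The plan is to use Theorem~\ref{thm:Eninj}\ref{enum:Eninj2} as a rigidity statement. The key observation is that $\sigma$ is invariant under every operation that preserves both the central charge and the phase of skyscraper sheaves. First, fix $\sigma=(Z,\cP)\in\Stab_{(\Lambda_n,v_n)}(\Db(E^n))$ and pass to $\overline{\KK}$ via Theorem~\ref{thm:base-change-stab}; by Lemma~\ref{lem:base-change} both criteria \eqref{eq:push-f-k} and \eqref{eq:pull-f-k} may be checked after this base change. Then record two invariances of $\sigma_{\overline{\KK}}$:
\begin{enumerate}[{\rm (a)}]
\item $\cP\otimes\cL=\cP$ for every $\cL\in\Pic^0$;
\item $t_x^*\cP=\cP$ for every translation $t_x$ by a closed point $x$.
\end{enumerate}
Both tensoring by $\cL\in\Pic^0$ and translating act trivially on $\Knum$, hence on $\Lambda_n$, so they preserve $Z$. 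Both also send $\cO_p$ to a skyscraper sheaf, whose phase is constant by Theorem~\ref{thm:Eninj}\ref{enum:Eninj1}. Hence the transformed stability condition has the same image as $\sigma$ under the injective map of Theorem~\ref{thm:Eninj}\ref{enum:Eninj2}, so it equals $\sigma$.

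For the pullback $\pi_m^\sharp\sigma$, I would apply Proposition~\ref{proposition-pullback-stability}. Since $\pi_m$ is flat, Remark~\ref{remark-finite-tor-dimension-f_*OX-right-exact} applies, and it suffices to show $\pi_{m*}\cO_{E^n}\otimes\cP(\phi)\subset\cP(\geq\phi)$. The bundle $\pi_{m*}\cO_{E^n}$ is homogeneous, being invariant under translations since $\pi_m$ is an isogeny. By Mukai's classification, a homogeneous bundle on an abelian variety over $\overline{\KK}$ is an iterated extension of line bundles in $\Pic^0$; in the \'etale case it is simply a direct sum of torsion line bundles. By (a), $\pi_{m*}\cO\otimes F$ is then an iterated extension of objects of $\cP(\phi)$, so it lies in $\cP(\phi)$.

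It remains to identify the lattice. Since $\pi_m^*H_i\equiv m^2H_i$ numerically, $\pi_m^\sharp v_n=v_n\circ\pi_{m*}$ equals $A\circ v_n$ for an injective diagonal map $A\colon\Lambda_n\to\Lambda_n$ whose entries are powers of $m$, obtained via Grothendieck--Riemann--Roch and the projection formula. Thus $\pi_m^\sharp\sigma$ is a stability condition with respect to $(\Lambda_n,v_n)$, with central charge $Z\circ A$ and quadratic form $Q\circ A$. The pushforward is handled the same way, using $\pi_{m\sharp}v_n=v_n\circ\pi_m^*$ and $\pi_m^*$ acting on $\Lambda_n$ by a similar diagonal map.

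For the pushforward $\pi_{m\sharp}\sigma$, I would apply Proposition~\ref{proposition-pushforward-stability}; here $\pi_m$ is finite, faithfully flat, and has trivial relative dualizing sheaf. The condition to verify is $\pi_m^*\pi_{m*}\cP(\phi)\subset\cP(\leq\phi)$. Writing $G=\ker\pi_m$, there is an identification $E^n\times_{\pi_m}E^n\cong E^n\times G$ under which $p_1$ becomes the action map. Hence
\[
\pi_m^*\pi_{m*}F\cong \mathrm{pr}_*\bigl(\mathrm{act}^*F\bigr)
\]
for the appropriate action and projection maps. Over $\overline{\KK}$, $\cO_G$ is an iterated extension of skyscraper sheaves $\cO_x$ of points $x\in G$, and each contributes $t_x^*F$. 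Therefore $\pi_m^*\pi_{m*}F$ is an iterated extension of translates $t_x^*F$, which lie in $\cP(\phi)$ by (b).

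I expect the main obstacle to be the non-\'etale case $\mathrm{char}\,\KK\mid m$, where $G$ is non-reduced and $\pi_{m*}\cO$ is not a direct sum of line bundles. There I would check carefully that the filtration of $\cO_G$ (respectively of $\pi_{m*}\cO$) induces the claimed filtrations compatibly with $\mathrm{pr}_*$. If this gets messy, I would first factor $\pi_m$ into prime-degree isogenies and treat Frobenius- and Verschiebung-type factors separately.
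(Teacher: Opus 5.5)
Your proposal is correct and is essentially the paper's proof: the pullback uses Mukai's description of the homogeneous bundle $\pi_{m*}\cO_{E^n}$ as an iterated extension of $\Pic^0$ line bundles, and the pushforward uses the identification $E^n\times_{E^n}E^n\cong E^n\times\ker(\pi_m)$ to write $\pi_m^*\pi_{m*}F$ as an iterated extension of translates. Both then conclude by $\Pic^0$- and translation-invariance of $\sigma$, which you deduce from Theorem~\ref{thm:Eninj}\ref{enum:Eninj2} rather than citing \cite[Theorem~3.5.1]{P-t-structures} and \cite[Theorem~2.23]{curves}, a harmless variation. Your check that $v_n\circ\pi_{m*}$ and $v_n\circ\pi_m^*$ factor through diagonal endomorphisms of $\Lambda_n$ supplies a point the paper leaves implicit. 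The inseparable case you flag is not an actual obstacle: filtering $\cO_{\ker(\pi_m)}$ by residue fields of its points and pulling back along the flat projection works verbatim for a non-reduced kernel, and the homogeneity argument for $\pi_{m*}\cO_{E^n}$ never uses separability.
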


\begin{proof}
This is an extension of~\cite[Lemma 5.3]{chunyi-stability} to general fields $\KK$. 
More precisely, first of all, by Lemma \ref{lem:base-change}, we may assume that $\KK=\overline{\KK}$. 
According to~\cite[Lemma~4.20]{mukai:semihomogeneous}, $\pi_{m*}\cO_{E^n}$ is a homogeneous bundle, and therefore  admits a filtration by subbundles, with each factor belonging to $\Pic^0(E^n)$ by~\cite[Theorem 4.17]{mukai:semihomogeneous}. 
This means that $\pi_m$ has the cofiltration property, with respect to the invertible sheaves $\cL_i\in\Pic^0(E^n)$ such that $\cL_i^m\cong\cO_{E^n}$, all $l_i = 0$, and $N = \infty$.
Since $\sigma$ is invariant under the action of $\Pic^0(E^{n})$ (see~\cite[Theorem 3.5.1]{P-t-structures} and~\cite[Theorem 2.23]{curves}), we may apply Proposition~\ref{prop:PullbackStability} to conclude that $\pi_m^\sharp\sigma\in\Stab_{(\Lambda_n,v_n)}(\Db(E^n))$. 

For the pushforward, by~\cite[Example 2.68]{milne:alg-group}, we have a Cartesian diagram
\[
\begin{tikzcd}
	{E^n\times_{\KK} \ker(\pi_m)} & {E^n} \\
	{E^n} & {E^n,}
	\arrow[from=1-1, to=1-2]
	\arrow[from=1-1, to=2-1]
	\arrow["{\pi_m}", from=1-2, to=2-2]
	\arrow["{\pi_m}", from=2-1, to=2-2]
\end{tikzcd}
\]
where the two morphisms $E^n\times_{\KK} \ker(\pi_m)\to E^n$ are the projection to the first factor and the action map, respectively. 
Since $\ker(\pi_m)$ is zero-dimensional, for any $F\in \Db(E^n)$, $\pi_m^*\pi_{m*}F$ is an iterated extension of $t_a^*F$ for $a\in \ker(\pi_m)$, where $t_a$ is the corresponding translation morphism. 
This shows that $\pi_m$ also has the filtration property.
Since $\sigma$ is also invariant under translations (again by~\cite[Theorem 3.5.1]{P-t-structures} and~\cite[Theorem 2.23]{curves}), we may apply Proposition~\ref{prop:PushforwardStability} to conclude that $\pi_{m,\sharp}\sigma\in\Stab_{(\Lambda_n,v_n)}(\Db(E^n))$. 
\end{proof}

\begin{remark}
The above result holds more generally for any abelian variety.
\end{remark}

\begin{proposition}\label{prop:sigmaabRonEn}
There exists a family of stability conditions
\[
\sigma^{a,b}=(Z^{a,b},\cP^{a,b})\in\Stab_{(\Lambda_n,v_n)}(\Db(E^n)),
\]
parameterized by $(a,b)\in \RR_{>0}\times \RR$, with central charge $Z^{a,b}$
given by~\eqref{eq:zab} and satisfying $\phi_{\sigma^{a,b}}(\cO_p)=1$ for any closed point $p \in E^n$. In particular, this family gives an injective continuous map
\[
\RR_{>0}\times\RR\longhookrightarrow\Stab_{(\Lambda_n,v_n)}(\Db(E^n)),\qquad (a,b)\longmapsto\sigma^{a,b}.
\]
\end{proposition}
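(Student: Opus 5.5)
The plan is to extend the rational family of Theorem~\ref{thm:Yucheng} to all real parameters using the multiplication isogenies $\pi_m$ of Lemma~\ref{lem:Multiplication}. These rescale the central charge, which makes the set of reachable parameters dense and lets the deformation theorem fill in everything else. Throughout, uniqueness comes from Theorem~\ref{thm:Eninj}\ref{enum:Eninj2}: a stability condition in $\Stab_{(\Lambda_n,v_n)}(\Db(E^n))$ is determined by its central charge together with the phase of a skyscraper sheaf.

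\textbf{Step 1: action of $\pi_m$ on central charges.} Since $\pi_m$ is finite flat of degree $m^{2n}$ and $\pi_m^*H_i\equiv m^2H_i$ numerically, the Chern characters transform by $\ch_k(\pi_m^*F)=m^{2k}\ch_k(F)$ paired against powers of $H$, up to the correct normalization. Hence $Z^{a,b}\circ v_n\circ\pi_m^*$ is a positive real multiple of $Z^{a/m^2,\,b/m^2}$, and similarly $Z^{a,b}\circ v_n\circ\pi_{m*}$ is a positive multiple of $Z^{m^2a,\,m^2b}$. I will record this as a short computation. The skyscraper sheaves behave simply under both operations: $\pi_m^*\cO_p$ is an extension of skyscrapers, and $\pi_{m*}\cO_p=\cO_{mp}$. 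Therefore $\pi_{m,\sharp}$ and $\pi_m^\sharp$ keep $\phi(\cO_p)=1$ after rescaling the central charge by the positive real in $\widetilde{\mathrm{GL}}_2^+(\RR)$. By the uniqueness above, $\pi_{m,\sharp}\sigma^{a,b}=\sigma^{a/m^2,b/m^2}$ and $\pi_m^\sharp\sigma^{a,b}=\sigma^{m^2a,m^2b}$ for rational $(a,b)$, up to this rescaling.

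\textbf{Step 2: local continuity near a rational point.} Fix a rational point $(a_0,b_0)$. By Theorem~\ref{thm:bridgeland-deformation}, there is an open neighbourhood $U$ of $Z^{a_0,b_0}$ over which $\sigma^{a_0,b_0}$ deforms uniquely. For $(a,b)$ close to $(a_0,b_0)$ this gives a stability condition $\tau$ with central charge $Z^{a,b}$ and, by continuity, $\phi_\tau(\cO_p)$ close to $1$. Skyscrapers stay semistable by Theorem~\ref{thm:Eninj}\ref{enum:Eninj1}. Their phase is congruent to $1$ modulo $2$ because $Z^{a,b}(\cO_p)=-1$, so it equals exactly $1$. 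For rational $(a,b)\in U$, injectivity then gives $\tau=\sigma^{a,b}$. We define $\sigma^{a,b}$ for real parameters near $(a_0,b_0)$ to be this deformation. Any two such local definitions agree on overlaps by Theorem~\ref{thm:Eninj}\ref{enum:Eninj2}, so the pieces glue.

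\textbf{Step 3: covering all of $\RR_{>0}\times\RR$ — the main obstacle.} The hard part is uniformity: the sizes of the Bridgeland neighbourhoods around rational points might shrink, so these neighbourhoods need not cover every real point. Here the $\pi_m$ are essential, since $\pi_m^\sharp$ and $\pi_{m,\sharp}$ give homeomorphisms between neighbourhoods of $(a,b)$ and of $(m^{\pm2}a,m^{\pm2}b)$. I would proceed as follows.
\begin{itemize}
\item[(a)] Show that there is a uniform $\epsilon$ working on a compact region. For instance, the support property quadratic form from Liu's construction can be chosen uniformly for $(a,b)$ in a compact set, and Bridgeland's theorem, in its quantitative version via the support property, gives a radius depending only on $Q$ and $Z$.
\item[(b)] Alternatively, take a real point $(a,b)$ and find rational $(a',b')$ with $(a,b)\in U_{(a',b')}$ by working at scale: pull back by $\pi_m$ so that the relevant neighbourhood becomes large. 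Lemma~\ref{lem:Multiplication} transports the resulting stability condition back to parameter $(a,b)$.
\end{itemize}
In either form, the conclusion is that every real point lies in the deformation domain of some rational one.

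\textbf{Step 4: continuity and injectivity.} Continuity of $(a,b)\mapsto\sigma^{a,b}$ is then automatic, since locally the map is the inverse of the local homeomorphism of Theorem~\ref{thm:bridgeland-deformation} composed with the continuous map $(a,b)\mapsto Z^{a,b}$. Injectivity holds because $Z^{a,b}$ determines $(a,b)$: its coefficients on $H\ch_{n-1}$ and $H^n\rk$ recover $b+ia$.
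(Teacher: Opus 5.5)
Your Steps 1, 2 and 4 are fine, and your transformation rules for $\pi_{m,\sharp}$ and $\pi_m^\sharp$ agree with \eqref{pim-pushforward-sigma} and \eqref{pim-pullback-sigma}. The gap is Step 3, which is where the content of the proposition lies, and neither of your options closes it.

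Option (b) cannot work as stated. The maps $\pi_m^\sharp$ and $\pi_{m,\sharp}$ act on parameters by $(a,b)\mapsto(m^{\pm2}a,m^{\pm2}b)$. They preserve the ratio $b/a$ and scale each deformation neighbourhood by the same factor as its centre. Transporting by them only reduces your two-dimensional uniformity problem to the same problem on the one-dimensional space of directions. A real point is reached only if some rational point's neighbourhood has angular width bounded below near that direction, which is exactly what you do not know.

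Option (a) asserts that the support-property forms from Liu's construction can be chosen uniformly on compact sets of rational $(a,b)$. This may be true, but proving it means reopening Liu's inductive construction. You would have to track how its constants (for instance the $C$ in $\eta\le a/C^2$ from \cite[Lemma 5.7]{Liu:StabProd}) depend on $(a,b)$, and you give no argument for this.

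The missing idea is a second symmetry: tensoring with a line bundle $\cL$ whose numerical first Chern class is $H$. One computes $Z^{a,b}(F\otimes\cL^{-1})=Z^{a,b+1}(F)$. Tensoring fixes the skyscraper sheaves, so Theorem~\ref{thm:Eninj}\ref{enum:Eninj2} gives $\sigma^{a,b}\otimes\cL=\sigma^{a,b+1}$.

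Conjugating by the isogeny, $\pi_{m,\sharp}\circ(-\otimes\cL)\circ\pi_m^\sharp$, followed by rescaling the central charge by $m^{-2n}$, translates $b$ by $1/m^2$. So you get arbitrarily small $b$-translations, in addition to the scalings by $s^2$ for all $s\in\QQ^\times$ (obtained as $\pi_p^\sharp\pi_{q,\sharp}$).

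With these, no uniformity is needed. You apply Theorem~\ref{thm:bridgeland-deformation} only once, at the rational point $(1,0)$ from Theorem~\ref{thm:Yucheng}, to get the family on a fixed square $S_{\delta_0}$ around $(1,0)$. The elementary Lemma~\ref{lem:deltasquare} then shows that the images of $S_{\delta_0}$ under the scalings and $b$-translations cover $\RR_{>0}\times\RR$. This is exactly the paper's argument. Uniqueness via Theorem~\ref{thm:Eninj}\ref{enum:Eninj2} makes the transported stability conditions well defined. Continuity follows because the transport operations are continuous, by Lemmas~\ref{lemma-pullback-slicing-compatibility} and~\ref{lemma-pushforward-slicing-compatibility}.
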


\begin{proof}
By Theorem~\ref{thm:bridgeland-deformation} together with Theorem~\ref{thm:Yucheng}, there exists
$\delta_0>0$ such that, for any $|a|,|b|\le \delta_0$, there exists a
stability condition $\sigma^{1+a,b}$ whose central charge is
$Z^{1+a,b}$ as in \eqref{eq:zab}.

For any $a\in \RR_{>0}$ and $b\in \RR$, the stability conditions from Lemma~\ref{lem:Multiplication} can be explicitly computed as follows: 
\begin{align}
\label{pim-pushforward-sigma} \pi_{m,\sharp}\sigma^{a,b} & = m^{2n}\,\sigma^{a/m^2,b/m^2}, \\
\label{pim-pullback-sigma} \pi_m^\sharp \sigma^{a,b} & = \sigma^{m^2a,m^2b},
\end{align}
where $m^{2n}$ denotes the $\CC$-action that rescales only the central charge.
In particular, for any $m\in \QQ^\times$, the existence of $\sigma^{a,b}$ implies the existence of $\sigma^{m^2a,m^2b}$.
Moreover, if $\cL$ is a line bundle whose numerical first Chern class
equals $H$, then
\begin{equation}
\label{sigmab-otimes-L}
\sigma^{a,b}\otimes \cL = \sigma^{a,b+1}.
\end{equation}

We claim that the operations $\pi_{m,\sharp}$, $\pi_m^\sharp$, the
$\CC$-action, and tensoring by $\cL$ allow one to transport the family
\begin{equation*}\label{eq:deltasquare}
\{\sigma^{1+a,b} \mid |a|,|b|\le \delta_0\}
\end{equation*}
to a stability condition $\sigma^{a,b}$ for any 
$(a,b)\in \RR_{>0}\times \RR$.
By Theorem~\ref{thm:Eninj} and the observations above, this reduces to the elementary Lemma~\ref{lem:deltasquare} below. 
\end{proof}

\begin{lemma}\label{lem:deltasquare}
Consider the following actions on the upper half-plane
$\RR_{>0}\times \RR$:
\begin{align*}
   \{R_m\}_{m\in\QQ^\times}\colon & \RR_{>0}\times \RR \longrightarrow \RR_{>0}\times \RR,
   \qquad (a,b)\longmapsto (m^2 a, m^2 b),\\
   L\colon & \RR_{>0}\times \RR \longrightarrow \RR_{>0}\times \RR,
   \qquad (a,b)\longmapsto (a, b+1).
\end{align*}
For any $\delta$-square
\[
S_\delta=\{(1+a,b)\mid |a|\le \delta,\, |b|\le \delta\} 
\]
where $0<\delta <1$, 
the actions of $R_m$ and $L$ translate $S_\delta$ so as to cover the entire
upper half-plane $\RR_{>0}\times \RR$.
\end{lemma}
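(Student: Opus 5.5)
Given a target point $(a_0,b_0)\in\RR_{>0}\times\RR$, I will exhibit an element of the group generated by the $R_m$ and $L$ that sends some point of $S_\delta$ to $(a_0,b_0)$. The natural composite is $R_{m}\circ L^{k}\circ R_{m'}$ with $m,m'\in\QQ^\times$ and $k\in\ZZ$. It acts by
\[
(a,b)\longmapsto \bigl(m^2m'^2a,\; m^2(m'^2b+k)\bigr).
\]
Write $r=m^2m'^2$ and $t=m^2$. We then need $(1+\alpha,\beta)\in S_\delta$ with
\[
r(1+\alpha)=a_0,\qquad t\,(m'^2\beta+k)=b_0 .
\]

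\textbf{Step 1: the first coordinate.} Squares of nonzero rationals are dense in $\RR_{>0}$. So I choose a rational square $r$ with $|a_0/r-1|\le\delta/2$, and set $\alpha\coloneqq a_0/r-1$. This fixes $r$; the factorization $r=t\,m'^2$ with $t$ a rational square is still free.

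\textbf{Step 2: the second coordinate.} Substituting $m'^2=r/t$, the condition becomes $r\beta+tk=b_0$, that is,
\[
\beta=\frac{b_0-tk}{r}.
\]
We need $|\beta|\le\delta$, i.e.\ $|b_0-tk|\le r\delta$. Since $k$ ranges over $\ZZ$, the values $tk$ form a lattice of mesh $t$. If I take $t=1/N^2$ with $N$ a large integer, some $k$ satisfies $|b_0-tk|\le t/2$. This is at most $r\delta$ as soon as $1/N^2\le 2r\delta$. Then $m=1/N$ and $m'=N\sqrt r$, which is rational because $r$ is a rational square. Hence $R_{1/N}\circ L^k\circ R_{N\sqrt r}$ maps $(1+\alpha,\beta)\in S_\delta$ to $(a_0,b_0)$, and the covering claim follows.

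\textbf{Main obstacle.} The only delicate point is that the image of $L$ is a unit translation, so a single conjugation by $R_m$ produces only translations by multiples of $m^2$. The remedy is to decouple the two coordinates: the outer $R_{1/N}$ refines the translation lattice, and the inner $R_{m'}$ compensates so that the total scaling $r$ is unchanged. Beyond that, the argument uses only density of rational squares in $\RR_{>0}$.
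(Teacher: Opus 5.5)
Your proof is correct and is essentially the paper's: the paper scales by $R_s$ with $s\in\QQ^\times$ to place the $a$-coordinate, then uses the conjugates $R_{1/m}\circ L\circ R_m$, i.e.\ translations $b\mapsto b+1/m^2$, to adjust $b$, and your composite $R_{1/N}\circ L^k\circ R_{N\sqrt r}=(R_{1/N}\circ L^k\circ R_N)\circ R_{\sqrt r}$ is exactly this. What you add is the explicit check that the scaled $b$-window of width $2r\delta$ exceeds the translation mesh $1/N^2$, which the paper leaves implicit; note that both arguments need $L^{-1}$ when $k<0$, which is available since one works in the group generated by these maps.
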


\begin{proof}
First observe that
\[
R_{1/m}\circ L \circ R_m (a,b)
= (a, b+1/m^2).
\]
Hence, by combining the actions of $R_m$ and $L$, we obtain translations of the $b$-coordinate by arbitrary small values.

Next, for any $x\in \RR_{>0}$, there exists $s\in \QQ^\times$ such that
\[
x \in \bigl(s^2(1-\delta),\, s^2(1+\delta)\bigr).
\]
Therefore, by applying a suitable $R_s$ to $S_\delta$, we can reach any
desired $a$-coordinate (with an open neighborhood along the $b$-coordinate). Together with the translations in the $b$-direction, this shows that the $R_m$- and $L$-actions move $S_\delta$ to cover the whole upper half-plane.
\end{proof}

We conclude this subsection with the following key results on the Bayer property for stability conditions on $\Db(E^n)$.
These are immediate consequences of analogous results in~\cite{chunyi-stability}, but we will need a more precise reformulation.

\begin{lemma}\label{lem:BayerPropertyEllipticCurves1}
Let $\sigma=(Z,\cP)\in\Stab_{(\Lambda_n,v_n)}(\Db(E^n))$.
For every line bundle $\cL$ whose numerical first Chern class is of the form
\[
a_1 H_1 + \dots + a_nH_n,
\]
with $a_1,\dots,a_n\in\ZZ_{\geq0}$, we have
\begin{equation*}\label{eq:BayerPropertyEllipticCurves1}
    \cP\preceq\cP\otimes\cL.
\end{equation*}
\end{lemma}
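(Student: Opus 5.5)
We reduce to the case of a single effective divisor of the form $H_i$, and then prove the Bayer property for it via a restriction argument in the spirit of Proposition~\ref{prop:lvlimpliesbayer}. By Lemma~\ref{lem:base-change-preserve-order-stab} and Theorem~\ref{thm:base-change-stab}, we may first base change to $\overline{\KK}$, so assume $\KK=\overline{\KK}$. Since $\sigma$ is invariant under tensoring by $\Pic^0(E^n)$ (\cite[Theorem 3.5.1]{P-t-structures}, \cite[Theorem 2.23]{curves}, as used in Lemma~\ref{lem:Multiplication}), $\cP\otimes\cL$ depends only on the numerical class of $\cL$, and we may take $\cL=\cO_{E^n}(a_1H_1+\dots+a_nH_n)$ to be the line bundle of the actual effective divisor $\sum a_iH_i$. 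By transitivity of $\preceq$ (Remark~\ref{rmk:properties-preceq}) and the fact that $\preceq$ is preserved under the autoequivalence $-\otimes\cM$, it then suffices to treat $\cL=\cO_{E^n}(H_i)$ for a single $i$. Indeed, $\cP\preceq\cP\otimes\cO(H_i)$ gives $\cP\otimes\cM\preceq\cP\otimes\cM(H_i)$ for every $\cM$, and these inequalities can be chained.

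Fix $i$, and let $\iota\colon H_i\hookrightarrow E^n$ be the inclusion of the divisor $E^{n-1}\times\{q_i\}$, the zero locus of a regular section of $\cO(H_i)$. Take $0\neq F\in\cP(\phi)$. By Lemma~\ref{lemma-BL-concrete} we must show $\phi^-_\cP(F(H_i))\geq\phi$. The triangle $F\to F(H_i)\to\iota_*\iota^*F(H_i)$ reduces this to bounding $\phi^-$ of $\iota_*\iota^*(F(H_i))=\iota_*\iota^*F\otimes\cO(H_i)$. Since $\cO(H_i)|_{H_i}$ is numerically trivial on $H_i\cong E^{n-1}$ (the normal bundle is trivial), the object $\iota_*\iota^*F\otimes\cO(H_i)$ agrees with $\iota_*\iota^*F$ up to twisting by an element of $\Pic^0$. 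More directly, choosing a different point $q_i'$ with $\cO(q_i')\cong\cO(q_i)\otimes\cN$, $\cN\in\Pic^0(E)$, we may use $\Pic^0$-invariance of $\cP$. Hence it remains to show $\phi^-_\cP(\iota_*\iota^*F)\geq\phi$.

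For this I would use the triangle $F\to\iota_*\iota^*F\to F(-H_i)[1]$, which reduces the claim to $\phi^-_\cP(F(-H_i)[1])\geq\phi$, i.e.\ the Bayer property $\cP\preceq\cP\otimes\cO(-H_i)[1]$. This is the main obstacle. For this I would rely on the analogous result of \cite{chunyi-stability} (the "$l=1$" Bayer property for the elliptic fibration $E^n\to E^{n-1}$, whose fibers are the curves $E_i$ and for which $\cO(H_i)$ is fiberwise of degree one), extended to arbitrary $\sigma$ in $\Stab_{(\Lambda_n,v_n)}$ via $\Pic^0$- and translation-invariance. An alternative route is to take the pushforward/pullback along $\pi_m$ (Lemma~\ref{lem:Multiplication}) to reduce $H_i$ to $H_i/m^2$ numerically, making the distance $\mathrm{dist}(\cP,\cP\otimes\cO(-H_i))$ small and hence $<1$, then apply \eqref{eq:Distance}. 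Concretely, $\pi_m^*\cO(H_i)$ is numerically $m^2H_i$, so $\cP\otimes\cO(H_i)$ corresponds under $\pi_m^\sharp$ to tensoring by a class of size $1/m^2$. By continuity of the $\Pic$-action on slicings (Bridgeland deformation, Theorem~\ref{thm:bridgeland-deformation}), this twist has distance less than $1$, giving $\cP\prec\cP\otimes\cO(-H_i)[1]$. I expect making this continuity and divisibility argument rigorous to be the delicate point.
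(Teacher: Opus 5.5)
Your preliminary reductions are fine: base change to $\overline{\KK}$, passing via $\Pic^0$-invariance to the divisor $\sum a_iH_i$, chaining single twists by $\cO(H_i)$, and $\iota^*\cO(H_i)\cong\cO_{H_i}$. But after them nothing has yet been proved. Read your two triangles in the other direction, namely $F(H_i)\to\iota_*\iota^*F\to F[1]$ and $\iota_*\iota^*F\to F(-H_i)[1]\to F[1]$. They show that the following three statements are mutually equivalent:
\begin{itemize}
\item $\cP\preceq\cP\otimes\cO(H_i)$;
\item $\phi^-_{\cP}(\iota_*\iota^*F)\geq\phi$ for all $F\in\cP(\phi)$;
\item $\cP\preceq\cP\otimes\cO(-H_i)[1]$.
\end{itemize}
So the step you call the main obstacle is the lemma itself in another guise, and the proposal gives no argument for it. Route (a) just defers to an unspecified result. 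The paper's own proof reduces to $\KK=\overline{\KK}$ by Lemma~\ref{lem:base-change-preserve-order-stab} and then cites \cite[Lemma~2.3]{chunyi-stability} for the statement directly. If you are going to cite, cite that; the detour through $\iota_*\iota^*F$ and the $l=1$ property buys nothing.

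Route (b) fails, for three reasons.
\begin{itemize}
\item Tensoring by line bundles has no continuous parameter here. $\Pic^0(E^n)$ acts trivially, by the very invariance you used, and the rest of $\Pic(E^n)$ is discrete. So Theorem~\ref{thm:bridgeland-deformation} gives no information on $\mathrm{dist}(\cP,\cP\otimes\cO(-H_i))$.
\item The isogenies do not produce a ``twist by $H_i/m^2$''. Lemmas~\ref{lemma-pullback-slicing-compatibility}\ref{pullback-slicing-compatibility-tensorL} and~\ref{lemma-pushforward-slicing-compatibility}\ref{pushforward-slicing-compatibility-tensorL} only give $\pi_m^\sharp(\cP\otimes\cM)=\pi_m^\sharp\cP\otimes\pi_m^*\cM$ and $\pi_{m,\sharp}(\cP\otimes\pi_m^*\cM)=\pi_{m,\sharp}\cP\otimes\cM$. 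These compare a twist by a class $c$ on one stability condition with a twist by $m^2c$ on a different one, and for arbitrary $\sigma$ those auxiliary stability conditions are not near any fixed point. The mechanism works in Lemma~\ref{lem:BayerPropertyEllipticCurves2} only because one stays inside the explicit family $\sigma^{a,b}$ and takes $a\gg0$.
\item The distance is genuinely not small in general. On $E^2$ with $0<b<\frac12$, \eqref{eq:PositivityZParis4}, \eqref{eq:PositivityZParis6} and \eqref{eq:PositivityZParis7} give $\cO[1],\cO(H_1)\in\cA^{a,b}_2$ with central charges $(b+ia)^2$ and $b(1-b)+a^2+ia(1-2b)$. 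Hence $\phi(\cO(H_1))-\phi(\cO)\to1$ as $a\to0$, and $\mathrm{dist}(\cP^{a,b},\cP^{a,b}\otimes\cO(-H_1))$ gets arbitrarily close to $1$.
\end{itemize}

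What is missing is an argument that uses $\Pic^0$-invariance in families rather than object by object. One way to supply it: take $q_i$ to be the origin and let $p_i\colon E^n\to E_i$ be the projection. Then $F\otimes\cO_{H_i}[-1]=F\otimes p_i^*\cO_{q_i}[-1]$ is, via the Poincar\'e bundle, the pushforward along $\widehat{E}$ of the family $\{F\otimes p_i^*\cN\}_{\cN\in\Pic^0(E)}$. All members of this family lie in $\cP(\phi)$. Since $\widehat{E}$ is a curve, this pushforward lies in $\cP([\phi-1,\phi])$, which is exactly the needed inequality $\phi^-_\cP(F\otimes\cO_{H_i})\geq\phi$.
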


\begin{proof}
This follows from \cite[Lemma 2.3]{chunyi-stability}. 
To be precise, the statement there was proven under the assumption $\KK=\CC$. 
However, the same proof works as soon as the field is algebraically closed, so that the curves have enough $\KK$-rational points, and 
we can reduce to this case by Lemma~\ref{lem:base-change-preserve-order-stab}.
\end{proof}

\begin{lemma}\label{lem:BayerPropertyEllipticCurves2}
Let us fix $m_0\in\ZZ_{>0}$.
Then there exists $a_0=a_0(m_0)>0$ such that for any $(a,b)\in\RR_{>0}\times\RR$ with $a>a_0$, we have
\begin{equation*}\label{eq:BayerPropertyEllipticCurves2}
    \cP^{a,b}\prec\cP^{a,b}\otimes\cL[1],
\end{equation*}
where $\cL$ is any line bundle whose numerical first Chern class is $-m_0H$.
\end{lemma}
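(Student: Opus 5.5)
The plan is to reduce to the case of integral (in fact rational) parameters via the scaling operations of Proposition~\ref{prop:sigmaabRonEn}, and then deduce the strict Bayer property from a distance estimate. By Lemma~\ref{lem:base-change-preserve-order-stab} (the same argument works for $\prec$, since the relevant inclusions of $\cP(>\phi)$ are detected after base change by Theorem~\ref{thm:base-change-stab}\ref{enum:base-change-stab1}) we may assume $\KK=\overline{\KK}$. Moreover, by~\eqref{sigmab-otimes-L}, tensoring by a line bundle of class $H$ sends $\sigma^{a,b}$ to $\sigma^{a,b+1}$, and $\prec$ is invariant under autoequivalences (Remark~\ref{rmk:properties-preceq}). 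So it suffices to treat $b\in[0,1]$. Also, $\cL$ is determined up to $\Pic^0(E^n)$, and $\sigma^{a,b}$ is $\Pic^0$-invariant, so the choice of $\cL$ is irrelevant.

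The key observation is the remark following Lemma~\ref{lemma-BL-concrete}: $\mathrm{dist}(\cP,\cP\otimes\cL)<1$ implies $\cP\prec\cP\otimes\cL[1]$. Since $\cP^{a,b}\otimes\cL=\cP^{a,b-m_0}$ by~\eqref{sigmab-otimes-L}, we need
\[
\mathrm{dist}\bigl(\cP^{a,b},\cP^{a,b-m_0}\bigr)<1\quad\text{for all } a>a_0,\ b\in\RR.
\]
To get this, use the scaling~\eqref{pim-pullback-sigma} and Lemma~\ref{lemma-pullback-slicing-compatibility}\ref{pullback-slicing-compatibility-dist}, or rather its pushforward analogue Lemma~\ref{lemma-pushforward-slicing-compatibility}\ref{pushforward-slicing-compatibility-dist} together with~\eqref{pim-pushforward-sigma}. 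These show that the distance between $\sigma^{a,b}$ and $\sigma^{a,b'}$ does not increase when both are rescaled to $(a/m^2,b/m^2)$ and $(a/m^2,b'/m^2)$ (the $\CC$-rescaling of central charges does not affect slicings). Thus large $a$ with fixed $b$-gap $m_0$ is comparable to $a\approx 1$ with $b$-gap $m_0/m^2\to 0$.

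Concretely, choose $a=s^2 a'$ with $a'\in[1-\delta,1+\delta]$ and $b=s^2b'$, as in Lemma~\ref{lem:deltasquare}; the gap becomes $m_0/s^2$, which is small when $a$ is large. On the compact set $\{(a',b'): a'\in[1-\delta,1+\delta],\ b'\in[0,1]\}$ (after translating $b'$ by integer multiples using $L$, which is an isometry since tensoring is an autoequivalence), the map $(a',b')\mapsto\sigma^{a',b'}$ is continuous into $\Stab$ by Proposition~\ref{prop:sigmaabRonEn}. By uniform continuity there is $\epsilon>0$ with $\mathrm{dist}(\cP^{a',b'},\cP^{a',b''})<1$ whenever $|b'-b''|<\epsilon$. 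Taking $a_0$ so that $m_0(1+\delta)/a_0<\epsilon$ finishes.

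The main obstacle is making the rescaling rigorous for irrational $s$. The operations $R_m$ are only available for $m\in\QQ^\times$ (indeed integers via $\pi_m$), so $s$ must be chosen rational with $a/s^2\in[1-\delta,1+\delta]$. This is possible for all large $a$, and the resulting $b$-gap is $m_0/s^2\le m_0(1+\delta)/a$. One must also check carefully that pushforward along $\pi_m$ is contracting for $\mathrm{dist}$, and that uniform continuity in the metric $\mathrm{dist}$ holds on compacts. This follows since Bridgeland's local homeomorphism gives $\mathrm{dist}$-balls around each point, and a Lebesgue-number argument applies.
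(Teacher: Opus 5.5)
Your proof is correct, but it handles the dependence on $m_0$ differently from the paper. The paper only proves the gap-one estimate $\mathrm{dist}(\cP^{a,b},\cP^{a,b+1})<1$ for large $a$. For this it uses just continuity of $(a,b)\mapsto\sigma^{a,b}$ at $(1,0)$ (a square $S_{2\delta_0}$ on which $\mathrm{dist}(\cP^{a,b},\cP^{1,0})<\tfrac12$) and the triangle inequality through $\cP^{m^2,0}$, which gives $\cP^{a,b}\otimes\cO_{E^n}(H)\prec\cP^{a,b}[1]$. It then applies $\pi_{m_0}^\sharp$, which turns $H$ into $m_0^2H$ and $a$ into $m_0^2a$, and uses Lemma~\ref{lem:BayerPropertyEllipticCurves1} to pass from $\cO_{E^n}(m_0^2H)$ down to $\cO_{E^n}(m_0H)$.

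You instead shrink the gap $m_0$ itself to $m_0/s^2$ by rational rescaling and conclude directly from the distance criterion. This bypasses the $\pi_{m_0}$-trick and Lemma~\ref{lem:BayerPropertyEllipticCurves1} (an input imported from \cite{chunyi-stability}), and it gives $a_0$ linear rather than quadratic in $m_0$. Your uniform-continuity step is more than you need. After reducing to $b\in[0,1]$ and choosing rational $s$ with $s^2$ close to $a$, both $(a/s^2,b/s^2)$ and $(a/s^2,(b-m_0)/s^2)$ lie in any prescribed square $S_{\delta_0}$ around $(1,0)$ once $a$ is large. So continuity at that single point plus the triangle inequality suffices, exactly as in the paper. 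This also sidesteps the minor bookkeeping issue that the shifted point need not lie in your compact set $[1-\delta,1+\delta]\times[0,1]$.

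One correction to your write-up: the inequality you actually use is $\mathrm{dist}(\cP^{a,b},\cP^{a,b-m_0})\le\mathrm{dist}(\cP^{a/s^2,b/s^2},\cP^{a/s^2,(b-m_0)/s^2})$, i.e., that distance does not increase when scaling \emph{up}. For $s=p/q$ this comes from $\cP^{a,b}=\pi_p^\sharp\pi_{q,\sharp}\cP^{a/s^2,b/s^2}$. It requires the pullback contraction, Lemma~\ref{lemma-pullback-slicing-compatibility}\ref{pullback-slicing-compatibility-dist}, composed with the pushforward one. The pushforward contraction alone, which you single out with ``or rather'', gives the opposite inequality. Since both are contractions, rational rescaling is in fact an isometry on the family $\{\cP^{a,b}\}$, so your argument goes through once this is stated correctly.
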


\begin{proof}
This is a slightly more precise version of~\cite[Proposition 5.4]{chunyi-stability}.
By Theorem~\ref{thm:bridgeland-deformation} and Proposition~\ref{prop:sigmaabRonEn}, there exists $\delta_0>0$ such that
\[
\mathrm{dist}(\sigma^{a,b},\sigma^{1,0})<\frac 12,
\]
for all $(a,b)\in S_{2\delta_0}$, where $S_\delta$ denotes the $\delta$-square centered at $(1,0)$, as in Lemma~\ref{lem:deltasquare}.
In particular, $\mathrm{dist}(\cP^{a,b},\cP^{1,0})<\frac 12$.

By~\eqref{pim-pushforward-sigma}, \eqref{pim-pullback-sigma}, and Lemmas~\ref{lemma-pullback-slicing-compatibility}\ref{pullback-slicing-compatibility-dist} and~\ref{lemma-pushforward-slicing-compatibility}\ref{pushforward-slicing-compatibility-dist}, for every rational number $m=\frac{p}{q}\in\QQ_{>0}$ with $p,q\in\ZZ_{>0}$ and all $(a,b),(a',b')\in\RR_{>0}\times\RR$, we have 
\[
\mathrm{dist}(\cP^{m^2a,m^2b},\cP^{m^2a',m^2b'})=\mathrm{dist}(\pi^\sharp_{p} \pi_{q,\sharp}\cP^{a,b},\pi^\sharp_{p} \pi_{q,\sharp}\cP^{a',b'})\leq\mathrm{dist}(\cP^{a,b},\cP^{a',b'}). 
\]
Let us take $m\in\ZZ_{>0}$ such that $m^2>\frac{1}{\delta_0}$. 
Then for all $(a,b)\in m^2S_{1/m^2}$, we have
\begin{equation}\label{eq:ProjectiveScheme1}
\mathrm{dist}(\cP^{a,b},\cP^{a,b+1})\leq\mathrm{dist}(\cP^{a,b},\cP^{m^2,0})+\mathrm{dist}(\cP^{m^2,0},\cP^{a,b+1})<1.    
\end{equation}
By using the two operations induced by tensoring with the line bundle $\cO_{E^n}(H)$ and $\pi^\sharp_{p} \pi_{q,\sharp}$, for $p,q\in\ZZ_{>0}$ (as in Lemma~\ref{lem:deltasquare}), we can then cover the whole half-plane with $a>m^2$.
More precisely, for $(a,b)\in\RR_{>m^2}\times\RR$, we have that~\eqref{eq:ProjectiveScheme1} holds; thus, by~\eqref{sigmab-otimes-L} and~\eqref{eq:Distance}, we deduce that
\[
\cP^{a,b}\otimes\cO_{E^n}(H)=\cP^{a,b+1}\prec\cP^{a,b}[1].
\]
Finally, by applying $\pi_{m_0}^\sharp$ and using Lemma~\ref{lem:BayerPropertyEllipticCurves1}, we get that there exists $a_0>0$ such that for $(a,b)\in\RR_{>a_0}\times\RR$, we have that
\[
\cP^{a,b}\otimes\cO_{E^n}(m_0H)\preceq\cP^{a,b}\otimes\cO_{E^n}(m^2_0H)\prec\cP^{a,b}[1],
\]
as we claimed.
\end{proof}

\subsection{Stable line bundles}\label{subsec:StableLineBundles}

Our next goal is to identify some stable objects in the heart of $\sigma^{a,b}$ for suitable parameters $(a,b)$. 
By \cite[Corollary~2.16]{FLZ:Albanese} (based on~\cite[Proposition~3.1.4]{Polishchuk:lagrangian}), all line bundles are $\sigma$-stable, for all $\sigma\in\Stab_{(\Lambda_n,v_n)}(\Db(E^n))$.
We are interested, for some special line bundles, in determining which shift of them is in the heart $\cA^{a,b}\coloneqq\cP^{a,b}((0,1])$.

\begin{notation}
We will need to work with several (but finitely many!) products $E^n$ at the same time.
For this purpose, we need a more precise notation than the previous subsection.
Let us fix an integer $n_0\geq1$.
For $n=1,\dots,n_0$,  we will use the subscript $n$ to distinguish objects associated to $E^n$ for different $n$.
For instance, we use $\sigma_n^{a,b}$ to denote the stability condition of Proposition~\ref{prop:sigmaabRonEn} on $\Db(E^n)$, $Z^{a,b}_n$ to denote the central charge of $\sigma_n^{a,b}$, and $\cA_n^{a,b}$ to denote its heart.

We denote by $\underline{n}$ the set $\{1,\dots,n\}$.
For $k=0,1,\dots,n$, we consider a subset $I_{k,n}\subset\underline{n}$ of cardinality $k$.
For $r\in\ZZ$, we use the notation $\cL_{I_{k,n}}^{(r)}$ to denote an invertible sheaf on $E^n$ whose numerical first Chern class is given by
\[
\ch_1(\cL_{I_{k,n}}^{(r)})=rH_{i_1}+\dots+rH_{i_k} \quad \text{where } I_{k,n}=\{i_1,\dots,i_k\}.
\]
For instance, $I_{0,n}=\emptyset$, and $\cL_{I_{0,n}}^{(r)}$ denotes an invertible sheaf in $\Pic^0(E^n)$, for all $r$. 
Similarly, $I_{n,n}=\underline{n}$, and $\cL_{I_{n,n}}^{(r)}$ denotes an invertible sheaf $\cO_{E^n}(rH)$ as in Section~\ref{subsec:ContinuousFamilyElliptic} (see~\eqref{eq:LineBundleH}).
\end{notation} 

\begin{proposition}\label{prop:LineBundleStability}
Let $n_0,r_0\geq1$ be positive integers, and fix a real number $b_0 \in \RR$ satisfying 
\begin{equation*}
r_0\frac{n_0-1}{n_0}<b_0<r_0. 
\end{equation*}
Then there exists a positive real number $a_0\in\RR_{>0}$, which depends only on $n_0$, $r_0$, and $b_0$, such that for all $0<a<a_0$ and all integers $n=1,\dots, n_0$, we have
\[
\cL_{I_{0,n}}^{(r_0)}[n-1],\cL_{I_{k,n}}^{(r_0)}[n-k]\in\cA_n^{a,b_0} 
\]
with
\[
0<\phi_{\sigma_n^{a,b_0}}(\cL_{I_{0,n}}^{(r_0)}[n-1])<\phi_{\sigma_n^{a,b_0}}(\cL_{I_{k,n}}^{(r_0)}[n-k])\leq 1,
\]
for all $k=1,\dots,n$ and $I_{k,n}\subset\underline{n}$.
\end{proposition}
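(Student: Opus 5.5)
Since every line bundle $\cL$ on $E^n$ is $\sigma_n^{a,b}$-stable for all $(a,b)$ (\cite[Corollary~2.16]{FLZ:Albanese}), the only question is which shift lies in $\cA_n^{a,b_0}$. Equivalently, we need to determine the phase $\phi_{\sigma_n^{a,b_0}}(\cL)$ as a real number, not just modulo $2$. My plan is to compute the central charge explicitly, and then fix the integer ambiguity by deforming along a path in the $(a,b)$-plane where the phase is known. Because the family $(a,b)\mapsto\sigma_n^{a,b}$ is continuous (Proposition~\ref{prop:sigmaabRonEn}) and $\cL$ stays stable along it, $\phi(\cL)$ varies continuously. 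It is therefore determined by the continuous branch of $\frac1\pi\arg Z^{a,b}_n(\cL)$ along the path, provided $Z$ never vanishes, which holds automatically for stable objects.

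For the computation, $Z_n^{a,b}(\cL)=-\int e^{-(b+ia)H}\ch(\cL)$ factors over the factors of $E^n$. For $\cL=\cL^{(r)}_{I_{k,n}}$ we have $\ch(\cL)=\prod_{i\in I}e^{rH_i}$, with $H_i^2=0$ and $H_1\cdots H_n=1$, so
\[
Z_n^{a,b}\bigl(\cL^{(r)}_{I_{k,n}}\bigr)=-(-1)^{n}\,(r-b-ia)^{k}\,(-(b+ia))^{n-k}\cdot(\text{sign adjustments})
\]
up to the sign conventions I will pin down. Concretely, $Z$ is $\pm(r-w)^k(-w)^{n-k}$ with $w=b+ia$. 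This is the product of the one-dimensional central charges, one per factor: $r-w$ for a degree-$r$ line bundle and $-w$ for a degree-$0$ one. On $E$ itself with $\sigma_1^{a,b}$ (the standard stability on a curve, with $\cO_p$ of phase $1$), a degree-$d$ line bundle lies in the heart with phase $\frac1\pi\arg(d-b+ia)\in(0,1)$. The natural guess is that on $E^n$ the phase of $\cL^{(r)}_{I}$ is the sum of the factorwise phases minus a shift. To justify this I would continuously deform from a region where the answer is known. The cleanest route is the limit $a\to\infty$ along $b$ fixed: by Lemma~\ref{lem:BayerPropertyEllipticCurves2} and the large-volume behaviour, the heart approaches a tilt of $\Coh$, and line bundles of slope above $b$ lie in it. Alternatively, one can use the rescaling identities \eqref{pim-pushforward-sigma}, \eqref{pim-pullback-sigma} and \eqref{sigmab-otimes-L} to reduce to a single normalization. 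Along the path, the continuous argument of $(r-w)^k(-w)^{n-k}$ is simply $k\arg(r-w)+(n-k)\arg(-w)$, each term varying in $(0,\pi)$ for $a>0$. Matching at one point fixes the phase formula
\[
\phi\bigl(\cL^{(r)}_{I_{k,n}}\bigr)=\tfrac{1}{\pi}\bigl(k\,\theta_r+(n-k)\,\theta_0\bigr)-(n-1),\qquad \theta_d\coloneqq\arg\bigl(d-b+ia\bigr)\in(0,\pi),
\]
possibly with a different constant shift that the matching will determine. Note that $\arg(-w)=\theta_0$.

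Next I take the limit $a\to0^+$ with $b=b_0\in(r_0\frac{n_0-1}{n_0},r_0)$. Since $0<b_0<r_0$, we get $\theta_0\to\pi$ and $\theta_{r_0}\to0$, more precisely $\theta_0=\pi-\frac{a}{b_0}+O(a^2)$ and $\theta_{r_0}=\frac{a}{r_0-b_0}+O(a^2)$. Hence the phase of $\cL^{(r_0)}_{I_{k,n}}[n-k]$ equals $(n-k)+\frac{k\theta_{r_0}+(n-k)\theta_0}{\pi}-(n-1)$ after the correct normalization. To first order in $a$ this is $1+\frac{a}{\pi}\bigl(\frac{k}{r_0-b_0}-\frac{n-k}{b_0}\bigr)$ in the normalization where the phase is $1-\epsilon$. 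I would check the intended normalization against the claimed inequalities: for $k=0$ the phase sits slightly below the value $1$ attained by $\cO_p$, and for $k\ge1$ it must be larger but at most $1$. The condition $b_0>r_0\frac{n_0-1}{n_0}$ is exactly $\frac{1}{r_0-b_0}>\frac{n-1}{b_0}$ for all $n\le n_0$. This gives the strict ordering between $k=0$ and $k\ge1$, and it also ensures that all phases lie in $(0,1]$ for $a<a_0$. Finiteness of the set of $(n,k)$ gives a uniform $a_0$ depending only on $n_0,r_0,b_0$.

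The main obstacle is the rigorous determination of the integer shift, i.e.\ proving the phase formula rather than only its value modulo $2$. I would handle it by continuity in $a$ over all of $(0,\infty)$ at fixed $b_0$: $Z\ne0$ along the path, and the explicit $\arg$ branch is continuous. So it suffices to identify the phase at one parameter. For that I would use $a$ large, where Lemma~\ref{lem:BayerPropertyEllipticCurves2} together with the construction in Theorem~\ref{thm:Yucheng} (an iterated tilt of $\Coh$ factor by factor) identifies which shift of a line bundle lies in the heart. If that identification proves awkward, I would instead argue by induction on $n$ through Liu's product construction $\Db(E^{n-1}\times E)$, where the heart is built from the heart on $E^{n-1}$ and the phases add.
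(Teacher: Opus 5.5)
Your overall plan (explicit $Z$, continuity in $(a,b)$, fix the integer at one point, then expand at $b=b_0$ as $a\to0$) is reasonable, but the phase bookkeeping is off by a complex conjugation. This is more than the undetermined constant you mention. On $E$ one has $Z_1^{a,b}(\cL)=b-d+ia$ for $\deg\cL=d$, so the phase is $\frac1\pi\arg(b-d+ia)=1-\frac1\pi\theta_d$, not $\frac1\pi\theta_d$. Likewise $r-w=r-b-ia$ and $-w=-b-ia$ lie in the lower half-plane, so $\arg(r-w)=-\theta_r$ and $\arg(-w)=-\theta_0$. Since $Z_n^{a,b}(\cL^{(r)}_{I_{k,n}})=-(r-w)^k(-w)^{n-k}$, the continuous branch of $\frac1\pi\arg Z$ is
\[
g=1-\tfrac1\pi\bigl(k\theta_r+(n-k)\theta_0\bigr)=\sum_{i=1}^n\tfrac1\pi\arg(b-d_i+ia)-(n-1),\qquad d_i\in\{0,r\},
\]
with a minus sign in front of your sum. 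Your formula differs from $g$ by $\frac2\pi(k\theta_r+(n-k)\theta_0)-n$, which is not an even integer, so no shift makes it the phase. This shows up in your small-$a$ analysis. Taken literally, your formula gives $\phi(\cL^{(r_0)}_{I_{k,n}}[n-k])=n-2k+1+\frac a\pi\bigl(\frac{k}{r_0-b_0}-\frac{n-k}{b_0}\bigr)+O(a^2)$, whose integer part depends on $k$. After you renormalize it to $1+\frac a\pi(\cdots)$, it exceeds $1$ for every $k\ge1$, precisely because $b_0>r_0\frac{n-1}{n}$, which is the opposite of what must be shown. The correct asymptotics are $\phi(\cL^{(r_0)}_{I_{0,n}}[n-1])=\frac{na}{\pi b_0}+O(a^2)$, near $0$ rather than slightly below $1$, and $\phi(\cL^{(r_0)}_{I_{k,n}}[n-k])=1-\frac a\pi\bigl(\frac{k}{r_0-b_0}-\frac{n-k}{b_0}\bigr)+O(a^2)$ for $k\ge1$. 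Choosing the normalization ``against the claimed inequalities'' is circular.

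Second, the step you flag as the main obstacle, fixing the integer, is not carried out, and your proposed anchor is wrong. For any fixed line bundle, each $\arg(b-d_i+ia)\to\frac\pi2$ as $a\to\infty$, so its phase tends to $1-\frac n2$. Hence for $n\ge3$ it is not in $\cA_n^{a,b}$ once $a\gg0$, whatever its slope. The paper also gives no description of the heart for large $a$; Lemma~\ref{lem:BayerPropertyEllipticCurves2} is only a Bayer-type inequality. The paper's proof is your fallback, made precise. It inducts on $n$ through Liu's construction using \eqref{eq:PositivityZParis6}--\eqref{eq:PositivityZParis7}: for a shifted line bundle $F\in\cA_n^{a,b}$, either $p_{\le n}^*F\otimes p_{n+1}^*\cO_E(r)$ or its shift $[1]$ lies in $\cA_{n+1}^{a,b}$, according to the sign of $\Im Z_{n+1}^{a,b}$. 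So only the signs of the leading terms in $a$ of $\Im Z$ and $\Re Z$ at $b=b_0$ are needed; no ``adding of phases'' is used.

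Your continuity route can be repaired. With the correct $g$, the difference $\phi_{\sigma_n^{a,b}}(\cL)-g$ is a locally constant even integer on the connected set $\RR_{>0}\times\RR$. Since $\Hom(\cL,\cO_p)\neq0$, $\Hom(\cO_p,\cL[n])\neq0$ (Serre duality, $\omega_{E^n}$ trivial) and $\phi(\cO_p)=1$, one has $1-n\le\phi(\cL)\le1$ for all $(a,b)$. Letting $b\to-\infty$ and $b\to+\infty$, where $g\to1$ and $g\to1-n$ respectively, forces that constant to be $0$. That would be a genuinely different proof, avoiding the structure of Liu's hearts. It needs both the corrected formula and this anchoring argument, and neither is in your proposal.
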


The proof of Proposition~\ref{prop:LineBundleStability} will take the rest of this section.
The argument is by induction on $n$.
We start by recalling the explicit construction in~\cite{Liu:StabProd} to pass from $n$ to $n+1$.

First of all, by Theorem~\ref{thm:base-change-stab}\ref{enum:base-change-stab3}, we can assume that the base field $\KK$ is algebraically closed; hence, we can use the original presentation in~\cite{Liu:StabProd}.
We write $E^{n+1}=E^n\times E$ and consider the two projections $p_{\leq n}\colon E^{n+1}\to E^{n}$ and $p_{n+1}\colon E^{n+1}\to E$.
Fix $(a,b)\in\RR_{>0}\times\RR$ and consider the stability condition $\sigma_n^{a,b}$ on $\Db(E^n)$.
We consider the abelian category
\[
\cC^{a,b}_{n+1}\coloneqq\left(\cA_n^{a,b}\right)_E=\left\{A\in\Db(E^{n+1})\st p_{\leq n,*}(A\otimes p_{n+1}^*\cO_E(m))\in\cA^{a,b}_n\textup{ for }m\gg0\right\},
\]
where $\cO_E(1)$ is an ample line bundle on $E$ of degree~1 such that $p_{n+1}^*\cO_E(1)=\cO_{E^{n+1}}(H_{n+1})$. Note that the abelian category $\cC^{a,b}_{n+1}$ is precisely the abelian category $\cA_S$ appearing in the proof of Theorem~\ref{thm:Yucheng} for $S=E$.
We define the functions $\alpha^{a,b}_{n+1},\beta^{a,b}_{n+1},\gamma^{a,b}_{n+1},\delta^{a,b}_{n+1}$ on $\cC^{a,b}_{n+1}$ by the equations
\[
\begin{split}
&\Re Z^{a,b}_n\left(p_{\leq n,*}(A\otimes p_{n+1}^*\cO_E(m))\right)=\alpha^{a,b}_{n+1}(A)\,m+\beta^{a,b}_{n+1}(A)\\
&\Im Z^{a,b}_n\left(p_{\leq n,*}(A\otimes p_{n+1}^*\cO_E(m))\right)= \gamma^{a,b}_{n+1}(A)\, m+\delta^{a,b}_{n+1}(A),
\end{split}
\]
for $m\gg0$.

Notice that the objects of the form $p_{\leq n}^*F\otimes p_{n+1}^*\cO_E(r)$ are in $\cC^{a,b}_{n+1}$ for any $r\in\ZZ$ and any $F\in\cA_n^{a,b}$.
An easy computation shows the following:
\begin{equation}\label{eq:PositivityZParis2}
\begin{split}
	&\alpha^{a,b}_{n+1}\left(p_{\leq n}^*F\otimes p_{n+1}^*\cO_E(r)\right)=\Re Z^{a,b}_n(F)\\
	&\beta^{a,b}_{n+1}\left(p_{\leq n}^*F\otimes p_{n+1}^*\cO_E(r)\right)=r\, \Re Z^{a,b}_n(F)\\
	&\gamma^{a,b}_{n+1}\left(p_{\leq n}^*F\otimes p_{n+1}^*\cO_E(r)\right)=\Im Z^{a,b}_n(F)\\
	&\delta^{a,b}_{n+1}\left(p_{\leq n}^*F\otimes p_{n+1}^*\cO_E(r)\right)=r\,\Im Z^{a,b}_n(F).
\end{split}    
\end{equation}

By~\cite[Proposition 4.6]{Liu:StabProd} and~\cite[Section 4.1]{curves}, the weak stability function 
\begin{equation*}
'Z_{n+1}^{a,b}\coloneqq \left(a\alpha^{a,b}_{n+1}+b\gamma^{a,b}_{n+1}-\delta^{a,b}_{n+1}\right)+ia\gamma^{a,b}_{n+1}
\end{equation*}
defines the torsion pair 
\begin{align*}
&\cF_{n+1}^{a,b}\coloneqq\left\{F\in\cC^{a,b}_{n+1}~\left|~\begin{array}{l}\text{for every injection }0\neq G\hookrightarrow F\text{ in }\cC^{a,b}_{n+1},\\ \gamma^{a,b}_{n+1}(G)\neq 0\text{ and }\Re\, {'Z_{n+1}^{a,b}(G)}\geq0\end{array}\right. \right\} \\
&\cT_{n+1}^{a,b}\coloneqq\left\{F\in\cC^{a,b}_{n+1}~\left|~\begin{array}{l}\text{for every surjection }F\twoheadrightarrow G \text{ in }\cC^{a,b}_{n+1},\,\text{either}\\ \gamma^{a,b}_{n+1}(G)=0, \text{ or } \gamma^{a,b}_{n+1}(G)\neq 0\text{ and }\Re\, {'Z_{n+1}^{a,b}(G)}<0\end{array}\right. \right\} 
\end{align*}
on $\cC^{a,b}_{n+1}$, whose tilted heart is exactly $\cA_{n+1}^{a,b}$.

Moreover,
\begin{equation}\label{eq:PositivityZParis3}
Z^{a,b}_{n+1}=\left(-b\alpha^{a,b}_{n+1}+\beta^{a,b}_{n+1}+a\gamma^{a,b}_{n+1}\right)+i\left(-a\alpha^{a,b}_{n+1}-b\gamma^{a,b}_{n+1}+\delta^{a,b}_{n+1}\right).    
\end{equation}

This is the key point where we will use the induction assumption.
In fact, given an object $F\in\cA_n^{a,b}$ (in our case this will be an appropriate shift of one of the line bundles $\cL^{(r_0)}_{I_{k,n}}$ as in Proposition~\ref{prop:LineBundleStability}), we consider $p_{\leq n}^*F\otimes p_{n+1}^*\cO_E(r)\in\cC_{n+1}^{a,b}$.
A priori, there is a nontrivial condition for $p_{\leq n}^*F\otimes p_{n+1}^*\cO_E(r)$ to belong to either $\cF_{n+1}^{a,b}$ or $\cT_{n+1}^{a,b}$.
In the case of line bundles, we do know that a certain shift of them is in the tilted category, which is $\cA_{n+1}^{a,b}$.
Therefore, we deduce that, if $F$ is such a shift of a line bundle, $p_{\leq n}^*F\otimes p_{n+1}^*\cO_E(r)$ must belong to either $\cF_{n+1}^{a,b}$ or $\cT_{n+1}^{a,b}$, and which case holds is  uniquely determined by the sign of
\[
\Re\, {'Z_{n+1}^{a,b}}(p_{\leq n}^*F\otimes p_{n+1}^*\cO_E(r))=-\Im Z_{n+1}^{a,b}(p_{\leq n}^*F\otimes p_{n+1}^*\cO_E(r)).
\]
To summarize: if $F\in\cA_n^{a,b}$ is a shift of a line bundle, then
\begin{equation}\label{eq:PositivityZParis6}
p_{\leq n}^*F\otimes p_{n+1}^*\cO_E(r)\in\cA_{n+1}^{a,b}\quad\text{ if and only if }\quad \Im Z_{n+1}^{a,b}(p_{\leq n}^*F\otimes p_{n+1}^*\cO_E(r))>0,    
\end{equation}
and
\begin{equation}\label{eq:PositivityZParis7}
p_{\leq n}^*F\otimes p_{n+1}^*\cO_E(r)[1]\in\cA_{n+1}^{a,b}\quad\text{ if and only if }\quad \Im Z_{n+1}^{a,b}(p_{\leq n}^*F\otimes p_{n+1}^*\cO_E(r))\leq0.
\end{equation}

\begin{proof}[Proof of Proposition~\ref{prop:LineBundleStability}.]
By Theorem~\ref{thm:Eninj}\ref{enum:Eninj2}, since $Z^{a,b}_{n}$ is $\fS_{n}$-invariant by definition, we get that $\sigma^{a,b}_{n}$ is also $\fS_{n}$-invariant.
Thus, it is enough to consider subsets of the form
\[
I_{k,n}=\underline{k}\subset\underline{n},
\]
for $k=0,\dots,n$. 
We therefore simplify notation and write $\cL_{k,n}$ for a line bundle $\cL_{I_{k,n}}^{(r_0)}$ on $E^n$, associated to $\underline{k}$. 
Since $\sigma^{a,b}_{n}$ is also invariant under the action of $\Pic^0(E^{n})$ (see~\cite[Theorem 3.5.1]{P-t-structures} and~\cite[Theorem 2.23]{curves}), we can further assume
\begin{equation}\label{eq:PositivityZParis5}
\begin{split}
&\cL_{0,n}=\cO_{E^{n}},\\
&\cL_{k,n}=p_{\leq n-1}^*\cL_{k,n-1},\quad \text{ for } k=1,\dots,n-1,\\
&\cL_{n,n}=p_{\leq n-1}^*\cL_{n-1,n-1}\otimes p_{n}^*\cO_E(r_0).
\end{split}
\end{equation}

We can now use~\eqref{eq:PositivityZParis2} and~\eqref{eq:PositivityZParis3} iteratively, to compute:
\begin{equation}\label{eq:PositivityZParis4}
Z_{n}^{a,b}(\cL_{k,n})=(-1)^{n-k+1}(b+ia)^{n-k}(r_0-b-ia)^{k}.    
\end{equation}
We fix $b=b_0$ and expand the imaginary part of~\eqref{eq:PositivityZParis4} in $a$ when $k=1,\dots,n$:
\[
\Im\, Z_{n}^{a,b_0}(\cL_{k,n})=(-1)^{n-k+1}a(r_0-b_0)^{k-1}b_0^{n-k-1}((n-k)r_0-nb_0)+O(a^2).
\]
By our assumptions on $b_0$, we get that the leading term in $a$ is exactly $(-1)^{n-k}$ multiplied by a strictly positive number, if $k=1,\dots,n$.
For $k=0$, the leading term in $a$ is instead $(-1)^{n-1}nab^{n-1}_0$.
Since $n_0$, $r_0$, and $b_0$ are fixed and everything depends only on finitely many choices in $k$ and $n$, we can find $a_0'>0$ such that, for all $a$ in the open interval $(0,a_0')$, the sign of $\Im\, Z_{n}^{a,b_0}(\cL_{k,n})$ is exactly $(-1)^{n-k}$, when $k=1,\dots,n$, and $(-1)^{n-1}$ for $k=0$.

We can now prove the first part of the statement by induction on $n=1,\dots,n_0$.
For $n=1$, $\cL_{0,1},\cL_{1,1}\in\cA^{a,b}_1=\Coh(E)$, for all $a\in\RR_{>0}$ and $b\in\RR$.
If the statement is true for $n-1\geq1$ (with our choice of $a$ and $b_0$), we get
\[
\cL_{0,n-1}[n-2],\cL_{k,n-1}[n-k-1]\in\cA_{n-1}^{a,b_0}.
\]
We now use~\eqref{eq:PositivityZParis5}, together with~\eqref{eq:PositivityZParis6} and~\eqref{eq:PositivityZParis7}.
Since the sign of $\Im\, Z_{n}^{a,b_0}(\cL_{0,n})$ is $(-1)^{n-1}$, we get that $\cL_{0,n}[n-1]\in\cA_n^{a,b_0}$.
Similarly, for $k=1,\dots,n$, since the sign of $\Im\, Z_{n}^{a,b_0}(\cL_{k,n})$ is $(-1)^{n-k}$, we get that $\cL_{k,n}[n-k]\in\cA_n^{a,b_0}$, which is what we wanted to prove.

To prove the second part of the statement, we only need to check the condition on the phases. 
To do this, we expand the real part of~\eqref{eq:PositivityZParis4} in $a$ and obtain:
\[
\Re\, Z_{n}^{a,b_0}(\cL_{k,n})=(-1)^{n-k+1}b_0^{n-k}(r_0-b_0)^k+O(a).
\]
We can choose $0<a_0\leq a_0'$ such that, for all $a$ in the open interval $(0,a_0)$, the sign of $\Re\, Z_{n}^{a,b_0}(\cL_{k,n})$ is exactly $(-1)^{n-k+1}$.
In particular, we get
\[
0<\phi_{\sigma_n^{a,b_0}}(\cL_{0,n}[n-1])<\frac 12<\phi_{\sigma_n^{a,b_0}}(\cL_{k,n}[n-k])\leq1,
\]
for $k=1,\dots,n$, which is what we wanted.
\end{proof}


\section{Stability conditions on projective space}\label{sec:ProjectiveSpace}

In this section, we use the results on powers of elliptic curves to identify a special connected component of the space of numerical stability conditions on $\PP^n$.
We follow~\cite{chunyi-stability}: we first induce stability conditions on $\Db((\PP^1)^n)$ and then move to $\Db(\PP^n)$. The main technique is pushforward of stability conditions, studied in detail in Section~\ref{sec:PullbackPushforward}.
The new result is Theorem~\ref{thm:AlgebraicP1n}, which shows that on $\Db((\PP^1)^n)$ the stability conditions we construct are \emph{algebraic}, namely associated to a full strong exceptional collection. This will be used in Part~\ref{part:Relative} to prove the existence of stability conditions over any base, endowed with proper moduli spaces.

We continue to work over a fixed base field $\KK$.
Since we are interested in pushing forward stability conditions along faithfully flat morphisms between smooth projective varieties, we can use Lemma~\ref{lem:base-change}\ref{enum:BaseChange2} and assume that $\KK$ is algebraically closed, when needed.

\subsection{Powers of the projective line}\label{subsec:P1n} 

Let $E$ be an elliptic curve over $\KK$ and $n\geq1$ an integer.
Consider the action of the multiplication by $-1$ on $E$ and the induced morphism
\[
f\colon E^n\longrightarrow (\PP^1)^n.
\]

\begin{proposition}\label{prop:Elliptic2P1n}
The pushforward map gives a continuous map
\[
f_\sharp\colon\Stab_{(\Lambda_n,v_n)}(\Db(E^n))\longrightarrow\Stab_{\mathrm{num}}(\Db((\PP^1)^n)).
\]
In particular, we get an injective continuous map
\[
\RR_{>0}\times\RR\longhookrightarrow\Stab_{\mathrm{num}}(\Db((\PP^1)^n)),\qquad (a,b)\longmapsto f_\sharp\sigma^{a,b}.
\]
Moreover, for all $\sigma=(Z,\cP)\in\Stab_{(\Lambda_n,v_n)}(\Db(E^n))$, we have
\begin{equation}\label{eq:BayerPropertyP1n}
f_\sharp\cP\preceq f_\sharp\cP\otimes\cO_{(\PP^1)^n}(a_1,\dots,a_n),    
\end{equation}
for all $a_1,\dots,a_n\in\ZZ_{\geq0}$.
\end{proposition}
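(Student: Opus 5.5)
The plan is to apply Proposition~\ref{prop:PushforwardStability} to $f$. First, $f = f_1\times\dots\times f_n$ with $f_i\colon E\to\PP^1$ the quotient by $-1$. It is finite and flat (a finite morphism between smooth varieties of the same dimension is flat by miracle flatness), surjective, hence faithfully flat. Its relative dualizing complex is a line bundle, so it lies in $\Dperf$. For the filtration property, I will factor $f$ as the composition of the maps $E^{j}\times(\PP^1)^{n-j}\to E^{j-1}\times(\PP^1)^{n-j+1}$. Each of these is of the form $E\times M\to\PP^1\times M$, so it has the filtration property by Example~\ref{ex:PushForward}\eqref{enum:ProductElliptic}. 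The fibre product $E^n\times_{(\PP^1)^n}E^n$ is the product of the $n$ factors $E\times_{\PP^1}E$. Taking external tensor products of the one-dimensional filtrations of $\cO_{E\times_{\PP^1}E}$ therefore gives a filtration of $\cO_{E^n\times_{(\PP^1)^n}E^n}$. Its graded pieces are $p_2^*\cL^{-1}\otimes\cO_{\Gamma(g)}$, where $g$ ranges over the elements $(\pm1,\dots,\pm1)$ of $(\ZZ/2)^n$ and $\cL$ is a line bundle of class $\sum_{i\in I}2H_i$. Here $I$ is the set of coordinates on which $g$ acts by $-1$, since $\cO_E(R)$ has degree $4$, i.e.\ is numerically $4q$ — in any case the class is a nonnegative combination of the $H_i$. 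One has to check that the pieces have the right shape, but this is the routine product of the $n=1$ statement.

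Next I verify hypotheses \ref{enum:BayerP} and \ref{enum:Push} for every $\sigma=(Z,\cP)\in\Stab_{(\Lambda_n,v_n)}(\Db(E^n))$. Hypothesis \ref{enum:BayerP} is Lemma~\ref{lem:BayerPropertyEllipticCurves1}, because the relevant line bundles have nonnegative coefficients in the $H_i$. For hypothesis \ref{enum:Push}, take $g\in(\ZZ/2)^n$ acting by $-1$ on some factors. Then $g_*$ preserves $v_n$, since the classes $H_i$ are $g$-invariant numerically. Moreover $g_*$ fixes the phase of skyscraper sheaves, because it maps skyscrapers to skyscrapers and these all have a common phase by Theorem~\ref{thm:Eninj}\ref{enum:Eninj1}. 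Hence $g_*\sigma$ and $\sigma$ have the same central charge and the same phase of $\cO_p$. Theorem~\ref{thm:Eninj}\ref{enum:Eninj2} then gives $g_*\sigma=\sigma$, so $g_*\cP(\phi)=\cP(\phi)$. To apply the injectivity statement I have to reduce to the case where $\KK$ is algebraically closed, or where the relevant skyscrapers are rational. Lemma~\ref{lem:base-change}\ref{enum:BaseChange2} makes this harmless.

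Proposition~\ref{prop:PushforwardStability} then shows that $f_\sharp\sigma$ is a stability condition with respect to $(\Lambda_n,f_\sharp v_n)$. Since $f_\sharp v_n=v_n\circ f^*$ is numerical on $(\PP^1)^n$, $f_\sharp\sigma$ is a numerical stability condition; when the rank is not full, one composes with $\Knum((\PP^1)^n)\to\Lambda_n$, so that it lands in $\Stab_{\num}$. Continuity follows from Lemma~\ref{lemma-pushforward-slicing-compatibility}\ref{pushforward-slicing-compatibility-dist} together with the continuity of $Z\mapsto Z\circ f^*$. For injectivity on the family $\sigma^{a,b}$, it is enough that the central charges $Z^{a,b}\circ f^*$ determine $(a,b)$. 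This holds by evaluating on $\cO$ and $\cO(1,\dots,1)$: $f^*\cO(1,\dots,1)$ has class $2H$, and these evaluations give polynomials in $b+ia$ from which $(a,b)$ can be recovered.

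Finally, for \eqref{eq:BayerPropertyP1n}: by Lemma~\ref{lemma-pushforward-slicing-compatibility}\ref{pushforward-slicing-compatibility-tensorL},
\[
f_\sharp\cP\otimes\cO(a_1,\dots,a_n)=f_\sharp(\cP\otimes f^*\cO(a_1,\dots,a_n)),
\]
and $f^*\cO(a_1,\dots,a_n)$ has class $\sum 2a_iH_i$. Lemma~\ref{lem:BayerPropertyEllipticCurves1} gives $\cP\preceq\cP\otimes f^*\cO(a_1,\dots,a_n)$, and this relation is preserved by $f_\sharp$ by Remark~\ref{rmk:properties-preceq}\ref{enum:CompatibilitySlicings3}. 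I expect the main obstacle to be the careful construction of the product filtration on $E^n\times_{(\PP^1)^n}E^n$; the fallback is to iterate Proposition~\ref{prop:PushforwardStability} one factor at a time, on the intermediate spaces $E^j\times(\PP^1)^{n-j}$.
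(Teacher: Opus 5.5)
Your argument is correct and uses the same ingredients as the paper: Proposition~\ref{prop:PushforwardStability}, with hypothesis~\ref{enum:BayerP} from Lemma~\ref{lem:BayerPropertyEllipticCurves1}, hypothesis~\ref{enum:Push} from Theorem~\ref{thm:Eninj}\ref{enum:Eninj2}, and the final Bayer property from Lemma~\ref{lemma-pushforward-slicing-compatibility}\ref{pushforward-slicing-compatibility-tensorL}. The decomposition is different, though.

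The paper never builds a filtration for $f$ itself. It applies Proposition~\ref{prop:PushforwardStability} $n$ times, to $f_r\colon(\PP^1)^r\times E^{n-r}\to(\PP^1)^{r+1}\times E^{n-r-1}$. This way Example~\ref{ex:PushForward}\eqref{enum:ProductElliptic} is used verbatim with $M=(\PP^1)^r\times E^{n-r-1}$; this is exactly your fallback. The price is that hypotheses~\ref{enum:BayerP} and~\ref{enum:Push} must also be known for the intermediate slicings $f_{r-1,\sharp}\cdots f_{0,\sharp}\cP$. One gets them by transporting back to $E^n$: tensoring by a line bundle pulled back from the intermediate space commutes with the earlier pushforwards, and so does the action of $-1$ on a not-yet-collapsed $E$-factor.

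Your one-shot version instead needs the product filtration, and it does exist. We have $E^n\times_{(\PP^1)^n}E^n=\prod_i(E\times_{\PP^1}E)$, and exterior products over $\KK$ are exact. Refining lexicographically gives a filtration of length $2^n$ with graded pieces $p_2^*\cL_I^{-1}\otimes\cO_{\Gamma(g_I)}$. Here $g_I\in(\ZZ/2)^n$ acts by $-1$ exactly on the factors in $I$, and $\cL_I=\boxtimes_{i\in I}\cO_E(R)$. In exchange, both hypotheses are checked once, directly on $E^n$ for the given $\sigma$, with no bookkeeping on intermediate spaces.

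Some minor slips, none affecting the argument:
\begin{itemize}
\item $\cL_I$ has class $\sum_{i\in I}4H_i$, not $\sum_{i\in I}2H_i$. Only nonnegativity is used, so this is harmless.
\item No base change is needed before invoking Theorem~\ref{thm:Eninj}. It is already stated over an arbitrary $\KK$ and for any closed point.
\item For landing in $\Stab_{\num}(\Db((\PP^1)^n))$, the relevant fact is not ``when the rank is not full''. What matters is that $v_n\circ f^*$ induces an injective, indeed full-rank, map $\Knum((\PP^1)^n)\to\Lambda_n$. Its $J$-coordinate is $2^{n-|J|}$ times the intersection number of $\ch_{n-|J|}$ with the product of the hyperplane classes of the factors in $J$. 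Hence both $Z$ and the quadratic form transfer.
\item Your injectivity claim for $(a,b)\mapsto f_\sharp\sigma^{a,b}$ is fine. Concretely, $|Z^{a,b}(f^*\cO_{(\PP^1)^n})|=|b+ia|^n$ and $|Z^{a,b}(f^*\cO_{(\PP^1)^n}(1,\dots,1))|=|2-b-ia|^n$. These two moduli already determine $b+ia$ in the upper half-plane.
\end{itemize}
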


\begin{proof}
We apply Proposition~\ref{prop:PushforwardStability} inductively to the morphism
\[
f_r\colon\underbrace{\PP^1\times\dots\times\PP^1}_{r\text{-times}}\times\underbrace{E\times\dots\times E}_{(n-r)\text{-times}}\longrightarrow\underbrace{\PP^1\times\dots\times\PP^1}_{(r+1)\text{-times}}\times\underbrace{E\times\dots\times E}_{(n-r-1)\text{-times}},
\]
for $r=0,\dots,n-1$.

Indeed, the morphism $f_r$ is finite, faithfully flat, and the condition on the dualizing complex is satisfied, since we are in the smooth setting.
Moreover, as already remarked in Example~\ref{ex:PushForward}\eqref{enum:ProductElliptic}, $f_r$ has the filtration property.
We need to check properties~\ref{enum:BayerP} and~\ref{enum:Push} of Proposition~\ref{prop:PushforwardStability}.
Property~\ref{enum:BayerP} follows from Lemma~\ref{lem:BayerPropertyEllipticCurves1}.
To show~\ref{enum:Push}, we simply observe that by Theorem~\ref{thm:Eninj}\ref{enum:Eninj2}, since the multiplication by $-1$ preserves the class of skyscraper sheaves, we only need to check that the central charge $Z$ is invariant under multiplication by $-1$.
This follows immediately, since the abelian group $\Lambda_n$ is invariant.

In view of Lemma~\ref{lemma-pushforward-slicing-compatibility}\ref{pushforward-slicing-compatibility-tensorL}, the last claim follows from Lemma~\ref{lem:BayerPropertyEllipticCurves1}.
\end{proof}

For $(a,b)\in\RR_{>0}\times\RR$, we define
\begin{equation}\label{eq:StabonP1n}
\sigma^{a,b}_{(\PP^1)^n}\coloneqq 2^{-n} f_\sharp\sigma^{2a,2b}.    
\end{equation}
This is motivated by the following observation.
We consider the line bundle $\cO_{(\PP^1)^n}(1,\dots,1)$ on $(\PP^1)^n$, and denote by $H_{(\PP^1)^n}$ its numerical first Chern class.
Then $f^*H_{(\PP^1)^n}=2H$ (where $H$ was defined in~\eqref{equation-H}).
Hence, it follows from~\eqref{eq:zab} that the central charge $Z^{a,b}_{(\PP^1)^n}$ associated to $\sigma^{a,b}_{(\PP^1)^n}$ is exactly
\begin{equation*}\label{eq:DefOfSigmaabP1n}
Z^{a,b}_{(\PP^1)^n}=-\int_{(\PP^1)^n} e^{-(b+ia)H_{(\PP^1)^n}}\ch.    
\end{equation*}
We denote by $\cA^{a,b}_{(\PP^1)^n}$ the heart of the stability condition $\sigma^{a,b}_{(\PP^1)^n}$. 

We apply Proposition~\ref{prop:Elliptic2P1n} to obtain a version of Proposition~\ref{prop:LineBundleStability} for $(\PP^1)^n$.
For a cardinality $k$ subset $I_{k}=\{i_1,\dots,i_k\}\subset \underline{n} = \{1,\dots,n\}$ (we drop here the subscript $n$ on $I_{k}$ which appeared in the notation in Section~\ref{subsec:StableLineBundles}, since we do not need it anymore), we consider the line bundle on $(\PP^1)^n$ given by
\[
\cO_{(\PP^1)^n}(I_{k})\coloneqq p_{i_1}^*\cO_{\PP^1}(1)\otimes\dots\otimes p_{i_k}^*\cO_{\PP^1}(1),
\]
where, as usual, $p_i\colon(\PP^1)^n\to\PP^1$ denotes the projection onto the $i$-th factor.
For instance, $\cO_{(\PP^1)^n}(I_{0})=\cO_{(\PP^1)^n}$ and $\cO_{(\PP^1)^n}(I_{n})=\cO_{(\PP^1)^n}(1,\dots,1)$.

\begin{proposition}\label{prop:LineBundleStabilityP1n}
Let $n\geq1$ be a positive integer, and fix a real number $b_0 \in \RR$ satisfying 
\begin{equation*}
    \frac{n-1}{n}<b_0<1. 
\end{equation*}
Then there exists a positive real number $a_0\coloneqq a(n,b_0)\in\RR_{>0}$, which depends only on $n$ and $b_0$, such that for all $0<a<a_0$, we have
\[
\cO_{(\PP^1)^n}[n-1],\cO_{(\PP^1)^n}(I_{k})[n-k]\in\cA_{(\PP^1)^n}^{a,b_0} 
\]
with
\[
0<\phi_{\sigma_{(\PP^1)^n}^{a,b_0}}(\cO_{(\PP^1)^n}[n-1])<\phi_{\sigma_{(\PP^1)^n}^{a,b_0}}(\cO_{(\PP^1)^n}(I_{k})[n-k])\leq 1,
\]
for all $k=1,\dots,n$ and $I_{k}\subset\{1,\dots,n\}$.
\end{proposition}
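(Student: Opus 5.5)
The plan is to deduce the statement from Proposition~\ref{prop:LineBundleStability} applied on $E^n$ with $n_0=n$, $r_0=2$ and parameter $2b_0$. By definition~\eqref{eq:StabonP1n} we have $\sigma^{a,b_0}_{(\PP^1)^n}=2^{-n}f_\sharp\sigma^{2a,2b_0}$. The rescaling by $2^{-n}$ acts only on the central charge by a positive real number, so it changes neither the slicing nor the phases. By Lemma~\ref{lemma-pushforward-slicing-compatibility}\ref{pushforward-slicing-compatibility-phipm}, an object $G\in\Db((\PP^1)^n)$ lies in $f_\sharp\cP^{2a,2b_0}(\phi)$ if and only if $f^*G\in\cP^{2a,2b_0}(\phi)$. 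Moreover, $G$ lies in the heart $\cA^{a,b_0}_{(\PP^1)^n}$ with phase $\phi$ if and only if $f^*G$ is $\sigma^{2a,2b_0}$-semistable of phase $\phi\in(0,1]$. It therefore suffices to compute $f^*$ of the relevant line bundles and read off their phases on $E^n$.

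First compute the pullbacks. The map $E\to\PP^1$ is the quotient by $-1$ and has degree $2$, so $f^*p_i^*\cO_{\PP^1}(1)$ is a line bundle on $E^n$ with numerical first Chern class $2H_i$. Hence $f^*\cO_{(\PP^1)^n}(I_k)$ is a line bundle of the form $\cL^{(2)}_{I_{k,n}}$, and $f^*\cO_{(\PP^1)^n}=\cO_{E^n}$ is of the form $\cL^{(2)}_{I_{0,n}}$. Next check the hypothesis of Proposition~\ref{prop:LineBundleStability} with $r_0=2$ and $b_0'=2b_0$. We need $2\frac{n-1}{n}<2b_0<2$, which is exactly the assumption $\frac{n-1}{n}<b_0<1$.

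Proposition~\ref{prop:LineBundleStability} then gives a constant $a_0'$, depending only on $n$ and $b_0$, with the following property. For $0<2a<a_0'$, the objects $f^*\cO[n-1]$ and $f^*\cO(I_k)[n-k]$ lie in $\cA_n^{2a,2b_0}$, and their phases satisfy the required strict inequalities. These line bundles are $\sigma^{2a,2b_0}$-stable, since all line bundles are stable for such stability conditions by \cite[Corollary~2.16]{FLZ:Albanese}. So each pullback is semistable of a single phase in $(0,1]$. By the criterion above, $\cO_{(\PP^1)^n}[n-1]$ and $\cO_{(\PP^1)^n}(I_k)[n-k]$ are then $\sigma^{a,b_0}_{(\PP^1)^n}$-semistable with the same phases, and in particular lie in $\cA^{a,b_0}_{(\PP^1)^n}$. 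This gives the statement with $a_0\coloneqq a_0'/2$.

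The only delicate point is the stability of line bundles. Membership of $f^*G$ in the heart alone does not give semistability of $G$, so we genuinely need the line bundles to be semistable on $E^n$, not merely to lie in the heart. If one prefers to avoid citing that result, the phase comparison can instead be made directly from the central charges $Z^{a,b_0}_{(\PP^1)^n}(G)=2^{-n}Z^{2a,2b_0}_n(f^*G)$, which are computed in~\eqref{eq:PositivityZParis4}.
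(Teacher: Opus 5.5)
Your proposal is correct and follows essentially the paper's argument: identify $f^*\cO_{(\PP^1)^n}(I_k)$ with a line bundle of the form $\cL^{(2)}_{I_{k,n}}$, then apply Proposition~\ref{prop:LineBundleStability} with $r_0=2$ and parameters $(2a,2b_0)$, using the definition~\eqref{eq:StabonP1n} of $\sigma^{a,b}_{(\PP^1)^n}$ as a rescaled pushforward. Your version spells out details the paper leaves implicit: the choice $a_0=a_0'/2$, the fact that the positive rescaling by $2^{-n}$ leaves phases unchanged, and the use of semistability of line bundles on $E^n$ to transfer phases through $f_\sharp$.
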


\begin{proof}
Note that, in the notation of Section~\ref{subsec:StableLineBundles},  we have $f^*\cO_{(\PP^1)^n}(I_{k})\cong\cL_{I_{k,n}}^{(2)}$. 
Hence, the proposition follows immediately from Proposition~\ref{prop:LineBundleStability} and the definition~\eqref{eq:StabonP1n} of $\sigma^{a,b}_{(\PP^1)^n}$ as a pushforward stability condition.
\end{proof}

We notice that the line bundles $\cO_{(\PP^1)^n}(I_{k})$ for $I_k \subset \underline{n}$ form a \emph{full strong exceptional collection}. More precisely, they have the following properties.
\begin{itemize}
\item For all $I_{k}\subset\underline{n}$, we have
\[
\Hom\left(\cO_{(\PP^1)^n}(I_{k}),\cO_{(\PP^1)^n}(I_{k})[p]\right)\cong\begin{cases}\KK&\text{if }p=0\\0&\text{otherwise.}\end{cases}
\]
\item For all $k>k'$,  $I_{k},I_{k'}\subset\underline{n}$ with $|I_{k}| = k$ and $|I_{k'}| = k'$, and all $p\in\ZZ$, we have
\[
\Hom\left(\cO_{(\PP^1)^n}(I_{k}),\cO_{(\PP^1)^n}(I_{k'})[p]\right)=0.
\]
\item For all $I_{k}\neq I_{k}'\subset\underline{n}$ and all $0 \neq p\in\ZZ$, we have
\[
\Hom\left(\cO_{(\PP^1)^n}(I_{k}),\cO_{(\PP^1)^n}(I_{k}')[p]\right)=0.
\]
\item $\Db((\PP^1)^n)$ is generated as a triangulated category by the objects $\cO_{(\PP^1)^n}(I_{k})$ for $I_k \subset \underline{n}$.
\end{itemize}

As a consequence, the category generated by extensions
\[
\cA_\mathrm{alg}\coloneqq\langle\cO_{(\PP^1)^n}(I_{0})[n],\dots,\cO_{(\PP^1)^n}(I_{k})[n-k],\dots,\cO_{(\PP^1)^n}(I_{n})\rangle
\]
is the heart of a bounded t-structure on $\Db((\PP^1)^n)$, which is equivalent to the abelian category mod-$A$ of (right) finitely generated modules over a finite-dimensional $\KK$-algebra $A$ (see, e.g.,~\cite[Theorem 6.2]{Bondal}).

For each $I_k \subset \underline{n}$, we choose an element $z_{I_k} \in \mathbb{H}\cup\RR_{<0}$ in the union of the complex upper half plane and the negative real line.
We define a numerical central charge $Z_{\mathrm{alg}} \colon \Knum((\PP^1)^n) \to \CC$ by setting
\[
Z_\mathrm{alg}\left(\cO_{(\PP^1)^n}(I_{k})[n-k]\right)\coloneqq z_{I_k}.
\]
Then the pair $(Z_\mathrm{alg},\cA_\mathrm{alg})$ is a numerical stability condition on $\Db((\PP^1)^n)$, which is called \emph{algebraic} with respect to $\{\cO_{(\PP^1)^n}(I_{k})\}_{I_k\subset\underline{n}}$ (see, e.g., \cite[Section 3.3]{macricurves}).

\begin{remark}\label{rmk:AlgebraicSatbilityMod}
Algebraic stability conditions have the property that (semi)stable objects in $\cA_\mathrm{alg}$ are $\theta$-(semi)stable in the sense of King (\cite{King}; see, e.g.,~\cite[Remark 3.6]{macricurves}).
In particular, proper moduli spaces of semistable objects exist for these stability conditions.
\end{remark}

\begin{theorem}\label{thm:AlgebraicP1n}
Let $\frac{n-1}{n}<b_0<1$ and let $0<a<a(n,b_0)$, with $a(n,b_0)$ as in Proposition~\ref{prop:LineBundleStabilityP1n}.
Then there exists $G\in\widetilde{\GL}_2^+(\RR)$ such that $G\cdot\sigma^{a,b_0}_{(\PP^1)^n}$ is algebraic with respect to $\{\cO_{(\PP^1)^n}(I_{k})\}_{I_k\subset\underline{n}}$.
\end{theorem}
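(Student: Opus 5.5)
The plan is to show that, after a suitable rotation, the heart $\cA^{a,b_0}_{(\PP^1)^n}$ coincides with $\cA_\mathrm{alg}$. A bounded heart is determined by containing another bounded heart, so it suffices to prove containment of $\cA_\mathrm{alg}$ in the rotated heart. Since $\cA_\mathrm{alg}$ is the extension closure of the exceptional objects $E_{I_k}\coloneqq\cO_{(\PP^1)^n}(I_k)[n-k]$, we only need all these generators to lie in a single rotated heart.

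By Proposition~\ref{prop:LineBundleStabilityP1n}, every $E_{I_k}$ lies in $\cA^{a,b_0}_{(\PP^1)^n}=\cP^{a,b_0}((0,1])$ with phase in $(0,1]$. This holds for $k=0$ as well: $\cO[n-1]$ lies there, so $\cO[n]=E_{I_0}$ has phase in $(1,2]$. Write $\psi\coloneqq \phi(\cO[n-1])\in(0,\tfrac12)$ and let $\psi_{I_k}$ be the phases of the $E_{I_k}$ for $k\geq 1$. By the proof of Proposition~\ref{prop:LineBundleStability}, these satisfy $\psi_{I_k}\in(\tfrac12,1]$. All the $E_{I_k}$ are stable, since line bundles are stable (\cite[Corollary~2.16]{FLZ:Albanese} on $E^n$; stability is inherited under $f_\sharp$, or at least semistability is, and semistability suffices). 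The phase of $E_{I_0}$ is $\psi+1$.

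All generators therefore have phases in the interval $(\psi_{\min},\psi+1]$, where $\psi_{\min}=\min_{k\ge1}\psi_{I_k}>\tfrac12>\psi$. This interval has length $\psi+1-\psi_{\min}<1$. Choose $\theta$ with $\psi<\theta<\psi_{\min}$, for instance $\theta=\tfrac12$. Then every $E_{I_k}$ lies in $\cP^{a,b_0}((\theta,\theta+1])$. Let $G\in\widetilde{\GL}_2^+(\RR)$ be the rotation by $-\theta$, so that $G\cdot\sigma$ has heart $\cP((\theta,\theta+1])$. That heart contains every generator, hence contains $\cA_\mathrm{alg}$. Since both are hearts of bounded t-structures and one contains the other, they are equal.

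Under the identification of the hearts, the central charge $Z\circ G^{-1}$ sends each $E_{I_k}$ to a value in $\mathbb{H}\cup\RR_{<0}$, because the $E_{I_k}$ are nonzero objects of the heart. So it has exactly the form $Z_\mathrm{alg}$ with $z_{I_k}\coloneqq (Z\circ G^{-1})(E_{I_k})$. The slicing is the one determined by the heart and the central charge, so $G\cdot\sigma^{a,b_0}_{(\PP^1)^n}=(Z_\mathrm{alg},\cA_\mathrm{alg})$ as stability conditions.

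The main obstacle is the phase bookkeeping. One must ensure that the shift of $\cO$ is consistent: the statement places $\cO[n-1]$ in the heart, while $\cA_\mathrm{alg}$ uses $\cO[n]$. One must also ensure that all $E_{I_k}$ for $k\ge1$ have phase strictly greater than $\phi(\cO[n-1])$. This is exactly the inequality $\psi<\psi_{I_k}$ supplied by Proposition~\ref{prop:LineBundleStabilityP1n}, and it guarantees the total phase spread is less than $1$. Everything else is the formal fact that nested bounded hearts coincide.
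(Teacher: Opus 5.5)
Your proposal is correct and follows the paper's own route. Like the paper, you choose a phase strictly between $\phi(\cO_{(\PP^1)^n}[n-1])$ and the phases of the $\cO_{(\PP^1)^n}(I_k)[n-k]$ for $k\geq 1$ (your $\theta$, the paper's $\phi_0$), place all generators of $\cA_\mathrm{alg}$ in $\cP^{a,b_0}_{(\PP^1)^n}((\phi_0,\phi_0+1])$, conclude that the two bounded hearts coincide (the paper cites \cite[Lemma 2.3]{macricurves}), and then rotate; your specific choice $\theta=\tfrac12$ draws on the finer sign computation in the proof of Proposition~\ref{prop:LineBundleStability}, which is correct but not needed.
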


\begin{proof}
We use Proposition~\ref{prop:LineBundleStabilityP1n}.
Let us choose a phase $\phi_0\in\RR$ such that
\[
0<\phi_{\sigma_{(\PP^1)^n}^{a,b_0}}(\cO_{(\PP^1)^n}[n-1])<\phi_0<\phi_{\sigma_{(\PP^1)^n}^{a,b_0}}(\cO_{(\PP^1)^n}(I_{k})[n-k])\leq 1,
\]
for all $k=1,\dots,n$ and $I_{k}\subset\{1,\dots,n\}$, and consider the heart $\cP^{a,b_0}_{(\PP^1)^n}((\phi_0,\phi_0+1])$.
Then
\[
\cO_{(\PP^1)^n}(I_{0})[n],\dots,\cO_{(\PP^1)^n}(I_{k})[n-k],\dots,\cO_{(\PP^1)^n}(I_{n})\in\cP^{a,b_0}_{(\PP^1)^n}((\phi_0,\phi_0+1]).
\]
By~\cite[Lemma 2.3]{macricurves}, we get that 
\[
\cP^{a,b_0}_{(\PP^1)^n}((\phi_0,\phi_0+1])=\cA_\mathrm{alg},
\]
and thus, by acting with the rotation by $\phi_0$, we get that $\sigma_{(\PP^1)^n}^{a,b_0}$ is algebraic, as we wanted.
\end{proof}

\begin{remark}\label{rmK:SupportField}
As a last comment, which will be useful later, we notice that the support property for the stability conditions $\sigma_{(\PP^1)^n}^{a,b}$ does not depend on the field $\KK$.
To construct stability conditions over an arbitrary base, we will first consider algebraic stability conditions $\sigma_{(\PP^1)^n}^{a,b_0}$ (the positive real number $a(n,b_0)$ does not depend on $\KK$ either) and then deform back to arbitrary $(a,b)\in\RR_{>0}\times\RR$ by using the support property.
\end{remark}

\subsection{Projective space}\label{subsec:Pn}

In this subsection, we complete the induction procedure from $E^n$ to $\PP^n$.
We need to restrict the abelian group $\Lambda_n$ first.
We recall the notation $H$ for the ample numerical divisor class on $E^n$ defined in~\eqref{equation-H}.
We define the abelian group $\Lambda_{H,n}\cong\ZZ^{n+1}$ as a quotient of $\Lambda_n$ together with the numerical homomorphism $v_{H,n}\colon\rK_0(E^n)\to\Lambda_{H,n}$ given by
\[
v_{H,n}(F)\coloneqq\bigl(H^s\ch_{n-s}(F)\bigr)_{0\leq s\leq n}.
\]
We consider the space of stability conditions
\[
\Stab_{(\Lambda_{H,n},v_{H,n})}(\Db(E^n))
\]
with respect to $(\Lambda_{H,n},v_{H,n})$.
Note that the stability conditions $\sigma^{a,b}$ on $\Db(E^n)$ from Theorem~\ref{thm:Yucheng} give rise to stability conditions with respect to $(\Lambda_{H,n},v_{H,n})$, since the central charge $Z^{a,b}$ given by~\eqref{eq:zab} factors via $\Lambda_{H,n}$; in particular, $\Stab_{(\Lambda_{H,n},v_{H,n})}(\Db(E^n))$ is nonempty.

The key property we are going to use is that stability conditions in  $\Stab_{(\Lambda_{H,n},v_{H,n})}(\Db(E^n))$ are invariant with respect to the Weyl group $W(B_n)\coloneqq(\ZZ/2)^n\rtimes\fS_n$.
This group acts naturally on $E^n$, where the $i$-th copy of $\ZZ/2$ acts by $-1$ on the $i$-th factor of $E^n$ and $\fS_n$ acts by permutation of the factors. 
The quotient map 
\begin{equation*}
g\colon E^n\to E^n/W(B_n)\cong\PP^n 
\end{equation*}
factors through the map $f\colon E^n\to(\PP^1)^n$ of Section~\ref{subsec:P1n}.

\begin{theorem}[{\cite{chunyi-stability}}]\label{theorem-chunyi-Pn}
The pushforward map gives a continuous map
\[
g_\sharp\colon\Stab_{(\Lambda_{H,n},v_{H,n})}(\Db(E^n))\longrightarrow\Stab_\mathrm{num}(\Db(\PP^n)).
\]
In particular, we get an injective continuous map
\[
\RR_{>0}\times\RR\longhookrightarrow\Stab_{\mathrm{num}}(\Db(\PP^n)),\qquad (a,b)\longmapsto g_\sharp\sigma^{a,b}.
\]
Moreover, for all $\sigma=(Z,\cP)\in\Stab_{(\Lambda_{H,n},v_{H,n})}(\Db(E^n))$, we have
\begin{equation}\label{eq:BayerPropertyPn}
g_\sharp\cP\preceq g_\sharp\cP\otimes\cO_{\PP^n}(1).    
\end{equation}
\end{theorem}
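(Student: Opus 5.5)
We factor $g$ as $g = h\circ f$, with $f\colon E^n\to(\PP^1)^n$ as in Section~\ref{subsec:P1n} and $h\colon(\PP^1)^n\to\PP^n$ the quotient by $\fS_n$. Then $g_\sharp = h_\sharp\circ f_\sharp$, because $g^* = f^*h^*$ and the definition of pushforward slicings is given by membership of pullbacks. The argument proceeds in three steps.

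\emph{Step 1: push forward along $f$.} Since $(\Lambda_{H,n},v_{H,n})$ is a quotient of $(\Lambda_n,v_n)$, every $\sigma\in\Stab_{(\Lambda_{H,n},v_{H,n})}(\Db(E^n))$ is also a stability condition with respect to $(\Lambda_n,v_n)$. The support property persists, because a quadratic form on the quotient pulls back to a form that is still negative definite on $\ker Z_\RR$. Proposition~\ref{prop:Elliptic2P1n} then shows that $f_\sharp\sigma$ is a numerical stability condition on $(\PP^1)^n$. It also gives $f_\sharp\cP\preceq f_\sharp\cP\otimes\cO(a_1,\dots,a_n)$ for all $a_i\ge 0$.

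\emph{Step 2: push forward along $h$.} We apply Proposition~\ref{prop:PushforwardStability} to $h$. The map $h$ is finite and flat between smooth varieties, so it is faithfully flat with perfect dualizing complex. By Example~\ref{ex:PushForward}\eqref{enum:ProductP1}, $h$ has the filtration property with respect to the elements of $\fS_n$ and certain effective line bundles $\cL_i=\cO(a_1,\dots,a_n)$ with $a_j\ge0$.
\begin{itemize}
\item Condition~\ref{enum:BayerP} is exactly the inequality from Step~1.
\item Condition~\ref{enum:Push} requires $\tau_*f_\sharp\cP=f_\sharp\cP$ for every $\tau\in\fS_n$. Lifting $\tau$ to the permutation $\tilde\tau$ of $E^n$, we have $f\circ\tilde\tau=\tau\circ f$. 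So it suffices that $\tilde\tau_*\cP=\cP$.
\end{itemize}
To prove $\tilde\tau_*\cP=\cP$, we use Theorem~\ref{thm:Eninj}\ref{enum:Eninj2}, applied to $\tilde\tau\cdot\sigma$ and $\sigma$ regarded as $(\Lambda_n,v_n)$-stability conditions. Skyscrapers go to skyscrapers, so the phases of $\cO_p$ agree. The central charges also agree: $Z$ factors through $v_{H,n}$, and $H=\sum H_i$ is $\fS_n$-invariant, so $v_{H,n}\circ\tilde\tau_*=v_{H,n}$. Injectivity then gives $\tilde\tau\cdot\sigma=\sigma$.

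This is the point where restricting from $\Lambda_n$ to $\Lambda_{H,n}$ matters. A general $Z$ on $\Lambda_n$ need not be $\fS_n$-invariant, whereas $-1$-invariance was automatic in Proposition~\ref{prop:Elliptic2P1n}. Granting this, $h_\sharp f_\sharp\sigma$ is a stability condition on $\PP^n$ with respect to $(\Lambda_{H,n}, g_\sharp v_{H,n})$. Since $g^*$ preserves numerical equivalence up to the degree factor (projection formula for $\chi$), this lattice map factors through $\Knum(\PP^n)$ up to finite index. After passing to $\Knum(\PP^n)\otimes\QQ$, which has the same rank $n+1$, this yields a numerical stability condition.

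\emph{Step 3: continuity, injectivity, and the Bayer property.} Continuity of $g_\sharp$ follows from Lemma~\ref{lemma-pushforward-slicing-compatibility}\ref{pushforward-slicing-compatibility-dist}, which says distances do not increase, together with the fact that $Z$ is unchanged. Injectivity on the $(a,b)$-family follows because the central charges $Z^{a,b}$ pull back to distinct functions on $\Knum(\PP^n)$, as is visible on the classes of $\cO_{\PP^n}(j)$. Finally, $g^*\cO_{\PP^n}(1)$ has class $2H$ (or a positive multiple of $H$). Lemma~\ref{lem:BayerPropertyEllipticCurves1} gives $\cP\preceq\cP\otimes g^*\cO(1)$, and then Remark~\ref{rmk:properties-preceq}\eqref{enum:CompatibilitySlicings3} together with Lemma~\ref{lemma-pushforward-slicing-compatibility}\ref{pushforward-slicing-compatibility-tensorL} yields \eqref{eq:BayerPropertyPn}.

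The main obstacle is Step~2: verifying the filtration property of $h$ over an arbitrary field. The plan is to cite Example~\ref{ex:PushForward}\eqref{enum:ProductP1} and Appendix~\ref{app:Filtration}, or else to reduce to $\overline{\KK}$ via Lemma~\ref{lem:base-change}\ref{enum:BaseChange2}.
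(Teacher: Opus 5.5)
Your route is the paper's: factor $g=h\circ f$, use Proposition~\ref{prop:Elliptic2P1n} for $f$, and apply Proposition~\ref{prop:PushforwardStability} to $h$ via the filtration property. The Bayer hypothesis comes from effective line bundles, and $\fS_n$-invariance comes from lifting to $E^n$. The problem is the bridge you build in Step~1.

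Your claim that a quadratic form $Q$ on $(\Lambda_{H,n})_\RR$ pulls back to one that is negative definite on the kernel of the central charge on $(\Lambda_n)_\RR$ is false. Write $q\colon(\Lambda_n)_\RR\to(\Lambda_{H,n})_\RR$ for the quotient. Then $Q\circ q$ vanishes identically on $\ker q$. Also $\ker q\subset\ker(Z\circ q)$, because $Z$ factors through $\Lambda_{H,n}$. For $n\geq 2$ one has $\rk\Lambda_n=2^n>n+1=\rk\Lambda_{H,n}$, so $\ker q\neq 0$.

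In general, the support property for a finer lattice is the stronger condition. It implies the one for a quotient lattice, which is how the paper gets $\sigma^{a,b}\in\Stab_{(\Lambda_{H,n},v_{H,n})}(\Db(E^n))$; the converse does not hold. So you have not shown that an arbitrary $\sigma\in\Stab_{(\Lambda_{H,n},v_{H,n})}(\Db(E^n))$ lies in $\Stab_{(\Lambda_n,v_n)}(\Db(E^n))$. This invalidates your appeals to Proposition~\ref{prop:Elliptic2P1n}, Lemma~\ref{lem:BayerPropertyEllipticCurves1} and Theorem~\ref{thm:Eninj}\ref{enum:Eninj2}, all of which are stated for $(\Lambda_n,v_n)$.

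Contrast this with Step~2, where the transfer to $\Knum(\PP^n)$ is legitimate. There the map $\Knum(\PP^n)_\RR\to(\Lambda_{H,n})_\RR$, $[F]\mapsto v_{H,n}(g^*F)$, is an isomorphism, namely $(c_j)\mapsto(2^j n!\,c_j)$ in terms of $\ch_j=c_jH_{\PP^n}^j$. That injectivity is exactly what Step~1 lacks.

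As written, your argument proves the statement only for those $\sigma$ that are genuinely $(\Lambda_n,v_n)$-stability conditions whose central charge factors through $\Lambda_{H,n}$. This includes the family $\sigma^{a,b}$, which suffices for the \emph{in particular} part and for \eqref{eq:BayerPropertyPn} along that family.

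To define $g_\sharp$ on all of $\Stab_{(\Lambda_{H,n},v_{H,n})}(\Db(E^n))$, do not pass to $\Lambda_n$. Instead you need the $E^n$ inputs directly for $(\Lambda_{H,n},v_{H,n})$-stability conditions:
\begin{itemize}
\item stability of skyscraper sheaves with a common phase;
\item the Bayer property $\cP\preceq\cP\otimes\cL$ for line bundles of class $\sum a_iH_i$ with $a_i\geq 0$;
\item an injectivity statement for $\sigma\mapsto(Z,\phi_\sigma(\cO_p))$, which gives $W(B_n)$-invariance.
\end{itemize}
The last of these is precisely the key property the paper invokes in its proof: that all stability conditions in $\Stab_{(\Lambda_{H,n},v_{H,n})}(\Db(E^n))$ are $W(B_n)$-invariant. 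The paper takes this as the input, rather than deducing it from the $\Lambda_n$ case as you attempt.
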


For all $(a,b)\in\RR_{>0}\times\RR$, we define
\begin{equation}\label{eq:stabonPn}
\sigma^{a,b}_{\PP^n}\coloneqq \frac{1}{2^n n!} g_\sharp\sigma^{2a,2b}.    
\end{equation}
As in the case of $(\PP^1)^n$, this is motivated by the following observation: if we denote by $H_{\PP^n}$ the numerical first Chern class of $\cO_{\PP^n}(1)$, then $g^*H_{\PP^n}=2H$ and $\deg(g)=2^nn!$; thus, the central charge $Z^{a,b}_{\PP^n}$ of $\sigma^{a,b}_{\PP^n}$ is given by 
\begin{equation}\label{eq:ZstabonPn}
Z^{a,b}_{\PP^n}=-\int_{\PP^n} e^{-(b+ia)H_{\PP^n}}\ch.
\end{equation}

\begin{remark}
\label{sigmaab-Pn-properties} 
By Theorem~\ref{thm:Eninj}\ref{enum:Eninj1} and Remark~\ref{rmk:GeometricPushPull}, for all $(a,b)\in\RR_{>0}\times\RR$ the stability conditions $\sigma_{\PP^n}^{a,b}$ are geometric, and skyscraper sheaves of closed points have phase~$1$. 
We also note that~\eqref{eq:BayerPropertyPn} implies 
\begin{equation}
\label{eq:BayerPropertyPn-ab}
    \sigma^{a,b}_{\PP^n} \preceq \sigma^{a,b}_{\PP^n} \otimes \cO_{\PP^n}(1). 
\end{equation}
\end{remark}

\begin{proof}
This is~\cite[Theorem 6.3]{chunyi-stability}.
By Proposition~\ref{prop:Elliptic2P1n}, we only have to analyze the morphism $h\colon(\PP^1)^n\to\PP^n$ given by the quotient by $\fS_n$, and apply Proposition~\ref{prop:PushforwardStability} to $f_{\sharp} \sigma$, where $\sigma\in \Stab_{(\Lambda_{H,n},v_{H,n})}(\Db(E^n))$.
By Example~\ref{ex:PushForward}\eqref{enum:ProductP1}, $h$ has the filtration property with respect to automorphisms given by the group action of $\fS_n$ and effective invertible sheaves.
The Bayer property then follows from~\eqref{eq:BayerPropertyP1n}.
The invariance property can be proved by reducing to the case of $\Db(E^n)$, where it amounts to the fact remarked above that stability conditions in $\Stab_{(\Lambda_{H,n},v_{H,n})}(\Db(E^n))$ are 
invariant by the action of $W(B_n)$.

Finally, in view of Lemma~\ref{lemma-pushforward-slicing-compatibility}\ref{pushforward-slicing-compatibility-tensorL}, the last claim follows from~\eqref{eq:BayerPropertyP1n}.
\end{proof}

We will need the following observation about the Bayer property.
For $(a,b) \in \RR_{>0} \times \RR$, we write $\cP_{\PP^n}^{a,b}$ for the slicing underlying the stability condition $\sigma_{\PP^n}^{a,b}$. 

\begin{lemma}\label{lem:BayerPropertyPn}
Given $m_0\in\ZZ_{>0}$, there exists $a_0=a_0(m_0)>0$ such that for any $(a,b)\in\RR_{>0}\times\RR$ with $a>a_0$, we have 
\[
\cP^{a,b}_{\PP^n}\prec\cP^{a,b}_{\PP^n}\otimes\cO_{\PP^n}(-m_0)[1].
\]
\end{lemma}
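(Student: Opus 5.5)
The plan is to transfer Lemma~\ref{lem:BayerPropertyEllipticCurves2} from $E^n$ to $\PP^n$ along the pushforward $g_\sharp$. Recall that $\sigma^{a,b}_{\PP^n}=\frac{1}{2^n n!}\,g_\sharp\sigma^{2a,2b}$. The rescaling of the central charge does not affect slicings, so we have
\[
\cP^{a,b}_{\PP^n}=g_\sharp\cP^{2a,2b}.
\]
The line bundle $g^*\cO_{\PP^n}(-m_0)$ is a line bundle on $E^n$ with numerical first Chern class $-2m_0H$, since $g^*H_{\PP^n}=2H$.

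First I apply Lemma~\ref{lem:BayerPropertyEllipticCurves2} with the integer $2m_0$ in place of $m_0$. This gives $a_0'=a_0'(2m_0)>0$ such that for all $a'>a_0'$ and all $b'\in\RR$,
\[
\cP^{a',b'}\prec\cP^{a',b'}\otimes g^*\cO_{\PP^n}(-m_0)[1].
\]
Here I use that the lemma applies to any line bundle with the given numerical class. I then set $a_0\coloneqq a_0'/2$. For $a>a_0$ we have $2a>a_0'$, so the relation above holds for $\cP^{2a,2b}$.

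Next I push this relation forward along $g$. By Lemma~\ref{lemma-pushforward-slicing-compatibility}\ref{pushforward-slicing-compatibility-tensorL}, combined with compatibility of shifts, we get
\[
g_\sharp\bigl(\cP^{2a,2b}\otimes g^*\cO_{\PP^n}(-m_0)[1]\bigr)=g_\sharp\cP^{2a,2b}\otimes\cO_{\PP^n}(-m_0)[1],
\]
and the right-hand side is a slicing. Then Remark~\ref{rmk:properties-preceq}\ref{enum:CompatibilitySlicings3}, stated there for $\prec$ as well, turns the strict relation on $E^n$ into $\cP^{a,b}_{\PP^n}\prec\cP^{a,b}_{\PP^n}\otimes\cO_{\PP^n}(-m_0)[1]$.

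The only point needing care is the compatibility of $\prec$ with $g_\sharp$. It can be checked directly from Lemma~\ref{lemma-preceq-concrete} together with Lemma~\ref{lemma-pushforward-slicing-compatibility}\ref{pushforward-slicing-compatibility-phipm}. For $0\neq F\in\Db(\PP^n)$, the quantities $\phi^\pm$ of $F$ with respect to $g_\sharp\cP$ equal $\phi^\pm_{\cP}(g^*F)$. Likewise, $\phi^\pm$ of $F$ with respect to $g_\sharp\cP\otimes\cO(-m_0)[1]$ equal $\phi^\pm$ of $g^*F$ with respect to $\cP\otimes g^*\cO(-m_0)[1]$, by the projection formula $g^*(F\otimes\cO(m_0)[-1])=g^*F\otimes g^*\cO(m_0)[-1]$. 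So the strict inequalities in Lemma~\ref{lemma-preceq-concrete}(4) transfer verbatim from $g^*F$ on $E^n$ to $F$ on $\PP^n$.
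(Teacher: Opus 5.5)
Your proposal is correct and follows the paper's proof. Both apply Lemma~\ref{lem:BayerPropertyEllipticCurves2} to a line bundle of class $-2m_0H$ on $E^n$, then push the strict relation forward along $g$ using Lemma~\ref{lemma-pushforward-slicing-compatibility}\ref{pushforward-slicing-compatibility-tensorL} and $\cP^{a,b}_{\PP^n}=g_\sharp\cP^{2a,2b}$. Your extra steps, taking $a_0=a_0'(2m_0)/2$ and checking via Lemma~\ref{lemma-preceq-concrete} that $\prec$ is preserved by $g_\sharp$, only make explicit what the paper leaves implicit.
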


\begin{proof}
We first work on $E^n$.
By Lemma~\ref{lem:BayerPropertyEllipticCurves2}, given $m_0$, there exists $a_0>0$ such that for any $(a,b)\in\RR_{>0}\times\RR$ with $a>a_0$, we have:
\[
\cP^{a,b}\prec\cP^{a,b} \otimes \cO_{E^n}(-2m_0H)[1].
\]
By Lemma~\ref{lemma-pushforward-slicing-compatibility}\ref{pushforward-slicing-compatibility-tensorL} and the definition of the induced stability condition $\sigma^{a,b}_{\PP^n}$, the result follows.
\end{proof}

We use Theorem~\ref{theorem-chunyi-Pn} to identify a distinguished connected component of $\Stab_\mathrm{num}(\Db(\PP^n))$.

\begin{definition}
The \emph{distinguished component} $\Stab^\dagger(\Db(\PP^n))\subset\Stab_\mathrm{num}(\Db(\PP^n))$ is the connected component containing the stability conditions $\sigma_{\PP^n}^{a,b}$.
\end{definition}

Note that, as remarked before, $\Stab^\dagger(\Db(\PP^n))$ contains a nonempty open subset consisting of geometric stability conditions.


\section{Stability conditions on projective schemes}\label{sec:ProjectiveSchemes}

In this section, we finish reviewing the construction in \cite{chunyi-stability} by inducing stability conditions from $\PP^n$ to any projective scheme $X$ (Theorem~\ref{thm:ProjectiveScheme}). 
Using this, we define a distinguished connected component of the space of stability conditions, whose properties are studied in Section~\ref{subsec:ConnectedComponent}.

We continue to work over a base field $\KK$. 
When needed, by Lemma~\ref{lem:base-change} we can assume that $\KK$ is algebraically closed. 

\subsection{Projective schemes}\label{subsec:ProjectiveSchemes}
Let $X$ be a projective scheme over $\KK$ of dimension $d\geq0$.
To construct stability conditions on $\Db(X)$, we first fix a closed embedding $\iota\colon X\hookrightarrow\PP^n$ over $\KK$ and let $\cO_X(1)\coloneqq\iota^*\cO_{\PP^n}(1)$. 
We define $\Lambda_{\iota}$ as the image of the composition 
\begin{equation*}
    \rK_0(X) \xlongrightarrow{\iota_*} \rK_0(\PP^n) \xlongrightarrow{v} \Knum(\PP^n) \cong \ZZ^{n+1},
\end{equation*}
where $v$ is the quotient map (an isomorphism in this case). 
By taking an alteration (see, e.g.,~\cite[Lemma~12.7]{stability-families}) and using Grothendieck--Riemann--Roch, one finds that $\Lambda_{\iota} \cong \ZZ^{d+1}$ (see also Example~\ref{ex:SmoothAbelianGroup} below). 
We define
\[
v_{\iota}\colon\rK_0(X)\longrightarrow\Lambda_{\iota}, \quad v_{\iota}(F)\coloneqq v(\iota_*F). 
\]

\begin{example}\label{ex:SmoothAbelianGroup}
If $X$ is smooth geometrically integral of dimension~$d$, then $v_{\iota}$ is, up to an automorphism coming from the Hirzebruch--Riemann--Roch Theorem, the same as the more familiar morphism $(H_X^s\ch^\mathrm{td}_{d-s})_{0\leq s\leq d}$, where $H_X=c_1(\cO_X(1))$ and $\ch^\mathrm{td}\coloneqq\ch.\td_X$; in particular, in such a case, stability conditions with respect to $(\Lambda_{\iota},v_{\iota})$ coincide if we choose either morphism.    
\end{example}

\begin{theorem}[{\cite{chunyi-stability}}]\label{thm:ProjectiveScheme}
Let $X$ be a projective scheme over $\KK$ with a closed embedding $\iota \colon X \hookrightarrow \PP^n$. 
Then 
\[
\Stab_{(\Lambda_{\iota},v_{\iota})}(\Db(X))\neq\emptyset.
\]
More precisely, there exists $a_0>0$, which only depends on the Hilbert polynomial of $X$ in $\PP^n$, such that for all $(a,b) \in \RR_{> 0} \times \RR$ with $a > a_0$, the pullback 
\begin{equation}\label{eq:Paris20260702}
\widetilde{\sigma}_X^{a,b}=(\widetilde{Z}_X^{a,b},\widetilde{\cP}_X^{a,b})\coloneqq\iota^\sharp\sigma_{\PP^n}^{a,b}\in\Stab_{(\Lambda_{\iota},v_{\iota})}(\Db(X))    
\end{equation}
is a geometric stability condition for which skyscraper sheaves of closed points have phase $1$ and 
\begin{equation}\label{eq:BayerPropertyX1}
\widetilde{\cP}^{a,b}_{X}\preceq\widetilde{\cP}^{a,b}_{X}\otimes\cO_{X}(1) .
\end{equation}
Moreover, given $m\in\ZZ_{>0}$, there exists $a_1=a_1(m)\geq a_0$ such that, for any $(a,b)\in\RR_{>0}\times\RR$ with $a>a_1$, we have 
\begin{equation}\label{eq:BayerPropertyX2}
\widetilde{\cP}^{a,b}_{X}\prec\widetilde{\cP}^{a,b}_{X}\otimes\cO_{X}(-m)[1].
\end{equation}
\end{theorem}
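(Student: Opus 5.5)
The plan is to apply Proposition~\ref{prop:PullbackStability} to the closed immersion $\iota\colon X\hookrightarrow\PP^n$, using the cofiltration coming from the minimal free resolution of $\iota_*\cO_X$ (Example~\ref{ex:Pullback}\eqref{enum:minimal-resolution}). That example shows $\iota$ has the cofiltration property with respect to the line bundles $\cO_{\PP^n}(-j)$ for $\beta_{i,j}\neq 0$, the integers $l_{i,j}=i$, and $N=\infty$. Since $\iota$ is finite, once hypothesis~\ref{enum:LiP} of Proposition~\ref{prop:PullbackStability} is checked for $\cP^{a,b}_{\PP^n}$, we obtain that $\iota^\sharp\sigma^{a,b}_{\PP^n}$ is a stability condition with respect to $(\Lambda_{\PP^n},\iota^\sharp v)$. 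Its central charge factors through $\Lambda_\iota$, by the very definition of $\Lambda_\iota$ as the image of $\iota_*$, so it is a stability condition with respect to $(\Lambda_\iota,v_\iota)$. The support property can be taken from the restriction of the quadratic form on $\Lambda_{\PP^n,\RR}$ to $\Lambda_{\iota,\RR}$, as in Remark~\ref{remark-pullback-slicing-stability}.

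\textbf{Bayer properties.} The key step is to verify $\cP^{a,b}_{\PP^n}\preceq \cP^{a,b}_{\PP^n}\otimes\cO(-j)[i]$ for all pairs with $\beta_{i,j}\neq0$.
\begin{itemize}
\item For $(i,j)=(0,0)$ this is trivial.
\item For $i\geq1$ we have $j\geq i$, and by the resolution bounds $j\leq m_0$, where $m_0$ is the maximal twist appearing. By Castelnuovo--Mumford regularity, $m_0$ is bounded in terms of the Hilbert polynomial (Gotzmann/Mumford regularity bound for the ideal sheaf).
\item Lemma~\ref{lem:BayerPropertyPn} gives $a_0=a_0(m_0)$ with $\cP\prec\cP\otimes\cO(-m)[1]$ for all $1\le m\le m_0$ and $a>a_0$. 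A single $a_0$ suffices by taking the maximum over finitely many $m$; alternatively, the $m_0$ case combined with \eqref{eq:BayerPropertyPn-ab} tensored appropriately handles smaller $m$.
\item To get shift $i$ with twist $j$, write $\cO(-j)[i]=\bigotimes$ of $i$ factors $\cO(-j_t)[1]$ with $j_t\geq1$ and $\sum j_t=j$, each $j_t\le m_0$. Then use transitivity and the invariance of $\preceq$ under the autoequivalences $-\otimes\cL[k]$ (Remark~\ref{rmk:properties-preceq}).
\end{itemize}
This yields $a_0$ depending only on the Hilbert polynomial.

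\textbf{Remaining properties.}
\begin{itemize}
\item Geometricity and phase $1$ of skyscrapers follow from Remark~\ref{sigmaab-Pn-properties}, since $\iota_*\cO_x=\cO_{\iota(x)}$ and pullback preserves geometricity (Remark~\ref{rmk:GeometricPushPull}). Stability is preserved because $\iota_*$ is fully faithful on subobjects supported on the closed image: a destabilizing subobject of $\cO_x$ in $\iota^\sharp\cP(1)$ would push forward to one of $\cO_{\iota(x)}$. Working over $\overline{\KK}$ via Lemma~\ref{lem:base-change} handles this.
\item \eqref{eq:BayerPropertyX1} follows from \eqref{eq:BayerPropertyPn-ab} via Lemma~\ref{lemma-pullback-slicing-compatibility}\ref{pullback-slicing-compatibility-tensorL} and Remark~\ref{rmk:properties-preceq}\ref{enum:CompatibilitySlicings3}.
\item \eqref{eq:BayerPropertyX2} follows the same way from Lemma~\ref{lem:BayerPropertyPn} with $m_0=m$, after enlarging to $a_1=\max(a_0,a_0(m))$.
\end{itemize}

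The main obstacle is the uniformity of $a_0$ in terms of the Hilbert polynomial. This needs an a priori bound on the degrees $j$ with $\beta_{i,j}\neq0$, via a regularity bound for all subschemes with given Hilbert polynomial. I would invoke Gotzmann's regularity theorem: the regularity of $\iota_*\cO_X$ is bounded by the Gotzmann number, so $\beta_{i,j}=0$ for $j>i+r$ with $r$ depending only on the Hilbert polynomial, and $i\le n$. This gives $m_0\le n+r$.
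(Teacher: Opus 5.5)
Your proposal is correct and follows essentially the same route as the paper. It applies Proposition~\ref{prop:PullbackStability} to the minimal-resolution cofiltration with $N=\infty$. It combines the twists $\cO_{\PP^n}(-m)[1]$ from Lemma~\ref{lem:BayerPropertyPn}; the paper instead takes $m_0\ge\max j/i$, iterates $\cO(-m_0)[1]$ $i$ times and then uses \eqref{eq:BayerPropertyPn-ab} to pass from $-im_0$ to $-j$, which is equivalent to your splitting $j=j_1+\dots+j_i$. Uniformity comes from a Hilbert-polynomial bound on the Betti numbers, and your Gotzmann regularity argument is a more explicit version of the paper's remark that only finitely many Betti tables occur.

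One small correction: for stability of skyscraper sheaves you need only that $\iota_*\colon\iota^\sharp\cP(1)\to\cP(1)$ is exact and conservative, not fully faithful. That is exactly what your destabilizing-subobject argument uses.
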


\begin{proof}
As we already pointed out, we can use Lemma~\ref{lem:base-change}\ref{enum:BaseChange1} to reduce to the case $\KK=\overline{\KK}$.
In this case, we argue as in~\cite[Theorem 6.5]{chunyi-stability}, using Proposition~\ref{prop:PullbackStability}. 
As in Example~\ref{ex:Pullback}\eqref{enum:minimal-resolution}, if we consider the minimal resolution  
\[
0\longrightarrow \bigoplus_{j \geq n} \cO_{\PP^n}(-j)^{\oplus \beta_{n,j}}\longrightarrow\dots\longrightarrow\bigoplus_{j\geq 1} \cO_{\PP^n}(-j)^{\oplus \beta_{1,j}}\longrightarrow\cO_{\PP^n}\longrightarrow\iota_*\cO_X\longrightarrow 0, 
\]
then $\iota$ has the cofiltration property with respect to $\cL_{i,j} = \cO_{\PP^n}(-j)$ for $\beta_{i,j} \neq 0$, $l_{i,j} = i$, and $N = \infty$.
In particular, hypothesis~\ref{enum:Pull} in Proposition~\ref{prop:PullbackStability} is automatically satisfied. 

Now we show that hypothesis~\ref{enum:LiP} on the Bayer property holds for suitably large parameters $a$.  
Choose a positive integer $m_0$ such that
\[
m_0\geq\max_{1\leq i \leq n}\left\{\frac{j}{i}~\Bigg|~\beta_{i,j}\neq 0\right\}.
\]
By Lemma~\ref{lem:BayerPropertyPn}, there exists $a_0=a_0(m_0)>0$ such that for
all $(a,b)\in\RR_{>a_0}\times\RR$, we have
\[
\sigma_{\PP^n}^{a,b}\prec\sigma_{\PP^n}^{a,b}\otimes\cO_{\PP^n}(-m_0)[1].
\]
Iterating this, for such $(a,b)$ and any $i \geq 1$ we find 
\begin{equation*}
    \sigma_{\PP^n}^{a,b} \prec \sigma_{\PP^n}^{a,b}\otimes\cO_{\PP^n}(-im_0)[i]. 
\end{equation*}
If $\beta_{i,j} \neq 0$, then by our choice of $m_0$ we have $im_0 \geq j$, so by~\eqref{eq:BayerPropertyPn-ab} we obtain  
\begin{equation*}
\sigma_{\PP^n}^{a,b}\otimes\cO_{\PP^n}(-im_0)[i] \preceq 
\sigma_{\PP^n}^{a,b}\otimes\cO_{\PP^n}(-j)[i]. 
\end{equation*} 
Altogether, this shows
\begin{equation*}
\sigma_{\PP^n}^{a,b} \preceq 
\sigma_{\PP^n}^{a,b}\otimes\cO_{\PP^n}(-j)[i], 
\end{equation*}
which verifies hypothesis~\ref{enum:LiP} in Proposition~\ref{prop:PullbackStability}. 

Fixing the Hilbert polynomial of $X$ only gives finitely many possibilities for the Betti numbers $\beta_{i,j}$ in the minimal resolution of $\iota_*\cO_X$; hence, for a sufficiently large $m_0$ depending only on the Hilbert polynomial of $X$, we can ensure that the above statements hold. 

We conclude that $\widetilde{\sigma}_X^{a,b} \coloneqq\iota^\sharp\sigma^{a,b}_{\PP^n}$ is a stability condition for all $(a,b) \in \RR_{>a_0} \times \RR$.
It follows from Remarks~\ref{sigmaab-Pn-properties} and~\ref{rmk:GeometricPushPull} that $\widetilde{\sigma}_X^{a,b}$ is a geometric stability condition for which skyscraper sheaves have phase $1$.  
In view of Lemma~\ref{lemma-pullback-slicing-compatibility}\ref{pullback-slicing-compatibility-tensorL}, the Bayer properties~\eqref{eq:BayerPropertyX1} and~\eqref{eq:BayerPropertyX2} follow again from \eqref{eq:BayerPropertyPn-ab} and Lemma~\ref{lem:BayerPropertyPn}.
\end{proof}

\begin{remark}\label{rmk:CompatibilityEn}
A word of caution about notation.
Due to the presence of the Todd class, the central charge $\widetilde{Z}_{X}^{a,b}$ is in general not of the form
\[
Z_X^{a,b}=-\int_X e^{-(b+ia)H_X}\ch,
\]
where $H_X$ denotes the numerical first Chern class of $\cO_X(1)$.
In particular, when $X=E^n$, it is not a priori clear that the stability conditions $\sigma^{a,b}$ are in the connected component which contains $\widetilde{\sigma}_{X}^{a,b}$.
Similarly, for $X=(\PP^1)^n$.
For smooth surfaces and threefolds over $\CC$, we will see results in this direction in Proposition~\ref{prop:MassHomBoundSurfaces} and Proposition~\ref{prop:BMTmassbound}.
\end{remark}

\subsection{The distinguished connected component}\label{subsec:ConnectedComponent}

The construction of Section~\ref{subsec:ProjectiveSchemes} depends a priori on the choice of the closed embedding $\iota \colon X \hookrightarrow \PP^n$.
The goal of this section is to explain that the construction in fact only depends on the choice of an ample numerical class. 
To this end, we make the following definition.

\begin{definition}\label{def:StabdaggerAbsolute}
Let $X$ be a projective scheme over $\KK$ and $H_X\in N^1(X)$ be an ample numerical class.
We choose a closed embedding $\iota\colon X\hookrightarrow\PP^n$ such that $c_1(\iota^*\cO_{\PP^n}(1))=mH_X$, for some $m\in\ZZ_{>0}$.
We define
\[
(\Lambda_{H_X},v_{H_X})\coloneqq(\Lambda_{\iota},v_{\iota}),
\]
and
\[
\Stabdagger_{H_X}(\Db(X))\subset\Stab_{(\Lambda_{H_X},v_{H_X})}(\Db(X))
\]
as the connected component containing the stability conditions $\widetilde{\sigma}_{X}^{a,b}=\iota^\sharp\sigma_{\PP^n}^{a,b}$, for $a\gg0$.
\end{definition}

We call $\Stabdagger_{H_X}(\Db(X))$ the \emph{distinguished component associated to $H_X$}.
Note that by Theorem~\ref{thm:ProjectiveScheme}, $\Stabdagger_{H_X}(\Db(X))$ contains a nonempty open subset consisting of geometric stability conditions. 

Our goal is to show that Definition~\ref{def:StabdaggerAbsolute} does not depend on the choice of the embedding $\iota$. 
We start with the numerical data $(\Lambda_{H_X},v_{H_X})$; this is the content of the following elementary result.

\begin{lemma}\label{lem:NumericalData Equal}
Let $\iota_1\colon X\hookrightarrow\PP^{n_1}$ and $\iota_2\colon X\hookrightarrow\PP^{n_2}$ be two closed embeddings so that
\[
c_1(\iota^*_{1}\cO_{\PP^{n_1}}(1))=m_1 H_X,\qquad c_1(\iota^*_{2}\cO_{\PP^{n_2}}(1))=m_2 H_X,
\]
for an ample divisor class $H_X$ and some $m_1,m_2\in \ZZ_{>0}$. 
Then there exists an isomorphism $u\colon\Lambda_{\iota_1}\xrightarrow{\cong}\Lambda_{\iota_2}$ such that $v_{\iota_2}=u\circ v_{\iota_1}$.
\end{lemma}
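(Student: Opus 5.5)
The plan is to show that both $v_{\iota_1}$ and $v_{\iota_2}$ have the same kernel in $\rK_0(X)$, namely the set of classes $F$ with $\chi(\cO_X(kH'), F)=0$ for all $k\in\ZZ$, where $H'$ is a suitable common line bundle. Since $\Lambda_{\iota_j}$ is by definition the image of $v_{\iota_j}$, equal kernels produce a canonical isomorphism $u$ with $v_{\iota_2}=u\circ v_{\iota_1}$.

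\textbf{Step 1: describe the kernel of a single $v_\iota$.} For $\iota\colon X\hookrightarrow\PP^n$, the map $v\colon \rK_0(\PP^n)\to\Knum(\PP^n)\cong\ZZ^{n+1}$ is detected by the pairings $\chi(\cO_{\PP^n}(-k),-)$ for $k=0,\dots,n$. Indeed, the $\cO_{\PP^n}(-k)$ generate $\rK_0(\PP^n)$, and $\chi$ is perfect on $\Knum$. Moreover, for any $G\in\rK_0(\PP^n)$, the function $k\mapsto\chi(\cO_{\PP^n}(-k),G)$ is a polynomial in $k$ of degree $\le n$. Hence $v(G)=0$ if and only if $\chi(G(k))=0$ for all $k\in\ZZ$. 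By the projection formula, $\chi(\iota_*F\otimes\cO_{\PP^n}(k))=\chi(F\otimes\cO_X(k))$ with $\cO_X(1)=\iota^*\cO_{\PP^n}(1)$. So
\[
\ker v_\iota=\set{F\in\rK_0(X)\st P_F(k)\coloneqq\chi(F\otimes\cO_X(k))=0 \text{ for all } k\in\ZZ}.
\]

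\textbf{Step 2: compare the two embeddings.} Set $\cL_j=\iota_j^*\cO(1)$, so that $\cL_1^{m_2}$ and $\cL_2^{m_1}$ have the same numerical class $m_1m_2H_X$. Two facts are needed.
\begin{enumerate}[(a)]
\item The Hilbert-type function $\chi(F\otimes\cL^k)$ depends only on the numerical class of $\cL$. I would prove this via Snapper's theorem: $\chi(F\otimes\cL_1^{a}\otimes\cM^{b})$ is polynomial in $(a,b)$, and for a numerically trivial $\cM$ this polynomial has vanishing intersection numbers and is therefore constant in $b$. Alternatively, one can use that numerically trivial line bundles act trivially on $\Knum$, i.e.\ $\chi(F\otimes\cM\otimes\cL^k)$ agrees with $\chi(F\otimes\cL^k)$, which is the kind of statement in \cite[Section 12]{stability-families}.
\item A polynomial $P(k)=\chi(F\otimes\cL^k)$ vanishes identically if and only if $P(mk)$ vanishes identically for a fixed $m>0$. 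This is clear, since a polynomial with infinitely many zeros is zero.
\end{enumerate}
Combining these, $\chi(F\otimes\cL_1^k)\equiv0$ if and only if $\chi(F\otimes\cL_1^{m_2k})\equiv0$, if and only if $\chi(F\otimes\cL_2^{m_1k})\equiv0$, if and only if $\chi(F\otimes\cL_2^{k})\equiv0$. Therefore $\ker v_{\iota_1}=\ker v_{\iota_2}$, and $u$ is defined as the induced isomorphism $\Lambda_{\iota_1}\cong\rK_0(X)/\ker\cong\Lambda_{\iota_2}$.

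The main obstacle is fact (a), the independence of Euler characteristics from the choice of line bundle within a numerical class on a possibly singular projective scheme over an arbitrary field. I would handle this with Snapper polynomials after base change to $\overline{\KK}$, where $\chi$ is unchanged. In the multivariable Snapper polynomial, the coefficients of monomials involving $b$ are intersection numbers with $\cM$, and these vanish because $\cM$ is numerically trivial.
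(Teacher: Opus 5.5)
Your proposal is correct and is essentially the paper's proof. Both show $\ker v_{\iota_1}=\ker v_{\iota_2}$ by combining two facts: a class on $\PP^n$ vanishes if and only if its Euler pairings with $\cO_{\PP^n}(rk)$, $k\in\ZZ$, all vanish (the paper uses the multiples $r=m_2,m_1$ directly, whereas you reach them via polynomiality); and numerically equivalent line bundles have the same Euler pairings, which the paper simply states as an observation about classes in $\Knum(\Dperf(X))$. One caution on your justification of (a): Snapper's theorem only identifies the top-degree coefficients as intersection numbers, so showing that the lower-order coefficients in $b$ vanish needs an extra input. For example, Riemann--Roch for singular schemes turns them into degrees of $c_1(\cM)$ on $1$-cycle classes, which vanish by numerical triviality.
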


\begin{proof}
First of all, we observe that two line bundles have the same numerical first Chern class if and only if their classes in $\Knum(\Dperf(X))$ are the same.
Also, if we fix a positive integer $r\in\ZZ_{>0}$, a class $\beta\in\rK_0(\PP^n)$ is trivial if and only if $\chi(\cO_{\PP^n}(rk),\beta)=0$, for all $k\in\ZZ$. 

We use these two observations to show that $\ker(\iota_{1,*})=\ker(\iota_{2,*})\subset\rK_0(X)$.
Indeed, let $\alpha\in\ker(\iota_{1,*})$.
For all $k\in\ZZ$, we have
\begin{equation*}
\begin{split}
0=\chi(\cO_{\PP^{n_1}}(m_2k),\iota_{1,*}\alpha)&=\chi(\iota_1^*\cO_{\PP^{n_1}}(m_2k),\alpha)\\
&=\chi(\iota_2^*\cO_{\PP^{n_2}}(m_1k),\alpha)\\
&=\chi(\cO_{\PP^{n_2}}(m_1k),\iota_{2,*}\alpha)=0,
\end{split}
\end{equation*}
and so $\alpha\in\ker(\iota_{2,*})$, as we wanted.

Since $\Lambda_{\iota_1}$ and $\Lambda_{\iota_2}$ are free abelian groups of finite rank, the conclusion follows.
\end{proof}

We now pass to the distinguished connected component.
Before stating the result, we need a few preliminary results.
We start with the following description of the support property for $\sigma^{a,0}_{\PP^n}$. 
Denote by $H_{\PP^n}$ the hyperplane class of $\PP^n$.

\begin{lemma}\label{lem:formula-support}
For each $n\geq 1$, there exists a quadratic form $q_n$ on $\Knum(\PP^n)_{\RR}$ such that for any $a>0$, 
\[
Q_a(-)\coloneqq q_n(v_{aH_{\PP^n}}(-))
\]
is a quadratic form on $\Knum(\PP^n)_{\RR}$ and gives the support property for $\sigma_{\PP^n}^{a,0}$.
\end{lemma}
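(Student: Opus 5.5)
The plan is a scaling argument. On $E^n$ the isogenies $\pi_m$ relate $\sigma^{a,b}$ to $\sigma^{m^2a,m^2b}$ via \eqref{pim-pushforward-sigma} and \eqref{pim-pullback-sigma}, and this descends to $\PP^n$ through $g$. In the coordinates $v_{aH_{\PP^n}}(F)=\bigl(a^sH_{\PP^n}^s\ch_{n-s}(F)\bigr)_s$ (up to the fixed Todd/HRR change of basis of Example~\ref{ex:SmoothAbelianGroup}), the central charge \eqref{eq:ZstabonPn} at $b=0$ reads
\[
Z^{a,0}_{\PP^n}=z\circ v_{aH_{\PP^n}}, \qquad z \text{ a fixed linear form independent of } a .
\]
So the kernel condition on $q_n$ is the $a$-independent requirement that $q_n$ be negative definite on $\ker z$. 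Only the nonnegativity on semistable classes needs an $a$-uniform argument.

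First, fix the cone of semistable classes. Let $C_a\subset\RR^{n+1}$ be the set of vectors $v_{aH_{\PP^n}}(F)$ for $F$ $\sigma^{a,0}_{\PP^n}$-semistable. By definition of pushforward, $F$ is semistable if and only if $g^*F$ is $\sigma^{2a,0}$-semistable on $E^n$. By \eqref{pim-pullback-sigma}, $G$ is $\sigma^{m^2a,0}$-semistable if and only if $\pi_{m*}G$ is $\sigma^{a,0}$-semistable. Since $\pi_m^*H\equiv m^2H$, projection formula and Grothendieck--Riemann--Roch for the étale map $\pi_m$ give $H^s\ch_{n-s}(\pi_{m*}G)=m^{2n-2s}H^s\ch_{n-s}(G)$, up to an overall positive constant. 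Hence the rescaled vectors $v_{aH}$ for $\sigma^{m^2a,0}$ coincide, up to positive scalars, with those for $\sigma^{a,0}$. The argument then transfers to $\PP^n$, using $g_*g^*$ and the fact that $\pi_m$ commutes with $W(B_n)$. Combining pullback and pushforward with $m=p/q\in\QQ_{>0}$ gives $C_{m^2a}=C_a$ up to $\RR_{>0}$. So the family $\{C_a\}$ is determined by its values for $a$ in a fixed interval $[1-\delta,1+\delta]$, since every $a>0$ equals $m^2t$ with $m\in\QQ_{>0}$ and $t$ in that interval.

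Next, treat the compact interval. Let $Q$ be a support form for $\sigma^{1,0}_{\PP^n}$ on $\Knum(\PP^n)_\RR$, and write $Q=q\circ v_{H_{\PP^n}}$. By the deformation theorem in the form of \cite[Appendix~A]{BMS:StabCY3s}, the same $Q$ is a support form for every stability condition in the component whose central charge has kernel on which $Q$ is negative definite. This holds for $Z^{t,0}$ with $|t-1|\le\delta$ if $\delta$ is small. In particular, $C_t\subset D_tD_1^{-1}\{q\ge 0\}$, where $D_t=\mathrm{diag}(t^s)$.

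The main obstacle is to produce a single $q_n$ with two properties: it is nonnegative on $D_t\{q\ge0\}$ for all $|t-1|\le\delta$, and it is still negative definite on $\ker z$. My first attempt is $q_n\coloneqq q-\epsilon\,\|\,\cdot\,\|^2_{\ker}$-type modifications. Concretely, I would write $q=-\|w_{\ker}\|^2+c\,|z(w)|^2$ for a large $c$; this normal form is available for any support form up to replacement (\cite[Lemma~A.4]{BMS:StabCY3s}). Then I would enlarge $c$ slightly. Since $z\circ D_t$ is close to $z$ and the negative-definite part moves continuously in $t$, a compactness argument on the unit sphere should show that $\{q\ge0\}$ is mapped by all $D_t$ into $\{q_n\ge 0\}$ once $\delta$ is small. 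If this fails, the fallback is to shrink $\delta$ further and use that rational squares are dense, so that every $a$ is $m^2t$ with $t$ arbitrarily close to $1$. Finally, set $Q_a\coloneqq q_n\circ v_{aH_{\PP^n}}$. By the two steps above, $Q_a$ is nonnegative on $\sigma^{a,0}_{\PP^n}$-semistable classes and negative definite on $\ker Z^{a,0}_{\PP^n}=v_{aH_{\PP^n}}^{-1}(\ker z)$.
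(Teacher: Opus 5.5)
Your overall strategy (isogeny scaling, one deformation from $a=1$ over a short interval, then a compactness estimate to make the form uniform) is viable. However, Step~1 as you state it is false, and the rest of the argument rests on it. The sets $C_a$ of $v_{aH_{\PP^n}}$-classes of $\sigma^{a,0}_{\PP^n}$-semistable objects are \emph{not} invariant under $a\mapsto m^2a$, even up to $\RR_{>0}$.

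A counterexample already occurs for $n=1$. The heart of $\sigma^{a,0}_{\PP^1}=\tfrac12 g_\sharp\sigma^{2a,0}$ is $\Coh(\PP^1)$, and its semistable objects are shifts of $\cO(k)^{\oplus r}$ and of torsion sheaves. So in the coordinates $v_{aH_{\PP^1}}=(\ch_1,a\rk)$, the set $\RR_{>0}C_a$ consists of the rays through $\pm(k,a)$, $k\in\ZZ$, and $\pm(1,0)$. The class $(1,4a)$ of $\cO(1)$ lies in $C_{4a}$ but on none of these rays.

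The transfer ``via $g_*g^*$'' cannot work, for two reasons.
\begin{itemize}
\item The endomorphism $\bar\pi_m$ of $\PP^n$ induced by $\pi_m$ satisfies $\bar\pi_m^*\cO(1)\cong\cO(m^2)$. But Grothendieck--Riemann--Roch for $\bar\pi_m$ carries the nontrivial factor $\td_{\PP^n}/\bar\pi_m^*\td_{\PP^n}$, so $Z^{a,0}_{\PP^n}\circ\bar\pi_{m*}$ is not proportional to $Z^{m^2a,0}_{\PP^n}$.
\item $\pi_{m*}g^*F$ is in general not of the form $g^*F'$, because the square $g\circ\pi_m=\bar\pi_m\circ g$ is not Cartesian.
\end{itemize}
There is also a slip on $E^n$. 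For $\pi_{m*}$ the factor is $m^{2s}$, since $\int H^s\cdot\pi_{m*}\alpha=\int\pi_m^*H^s\cdot\alpha$; this gives $v_{aH}(\pi_{m*}G)=v_{m^2aH}(G)$ exactly. The factor $m^{2n-2s}$ belongs to $\pi_m^*$, and with it your conclusion would not follow.

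The repair is to stay on $E^n$ until the very end, which is also where the paper's proof starts. Work with respect to $(\Lambda_{H,n},v_{H,n})$, or the full $\Lambda_n$ with the rescaling $M_{n,a}$. There $\td_{E^n}=1$, and \eqref{pim-pushforward-sigma}--\eqref{pim-pullback-sigma} do give $\RR_{>0}C^{E^n}_{m^2a}=\RR_{>0}C^{E^n}_a$ for $m\in\QQ_{>0}$. Your Steps~2--3 then go through verbatim on $E^n$:
\begin{itemize}
\item the deformation theorem plus continuity of $a\mapsto\sigma^{a,0}$ (Proposition~\ref{prop:sigmaabRonEn}) keep one form $Q$ valid for $|t-1|\le\delta$;
\item the bound $\|w\|\le C|z(w)|$ on $\{Q\ge0\}$ gives $\|D_tw\|\le C'|z(D_tw)|$ uniformly for $\delta$ small;
\item hence $q_n\coloneqq C'^2|z|^2-\|\cdot\|^2$ works for every $a>0$.
\end{itemize}
For $\PP^n$ only a one-sided inclusion is then needed. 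If $F$ is $\sigma^{a,0}_{\PP^n}$-semistable, then $g^*F$ is $\sigma^{2a,0}$-semistable and $v_{2aH}(g^*F)=\deg(g)\,v_{aH_{\PP^n}}(F)$. Moreover $Z^{a,0}_{\PP^n}=z\circ v_{aH_{\PP^n}}$ and $Z^{2a,0}=z\circ v_{2aH}$ for the same $z$. This uses $\ch$-coordinates, not Todd-twisted ones; otherwise $z$ depends on $a$.

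Repaired in this way, your proof is genuinely different from the paper's. The paper builds $q^E_n$ explicitly by induction on $n$ from the shape $\beta\gamma-\alpha\delta+\eta Q^E_{n,a}$ of Liu's form. It chooses $\eta=\lambda a$ so that all $a$-dependence is absorbed into $M_{n+1,a}$, symmetrizes over the factors, and invokes \cite[Lemma~7.5]{stab-E3}; this yields an explicit form on the full lattice. Your version needs only one support form at $a=1$, continuity of the family, and the isogeny identities, without opening up Liu's construction.
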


\begin{proof}
By Lemma~\ref{lem:base-change-stab-manifold} and Lemma~\ref{lem:base-change}, we may assume in the proof that $\KK=\overline{\KK}$. 
We may also assume that $a\in \QQ_{>0}$, as the extension to all $a>0$ follows from the standard deformation argument. 
Moreover, from the construction of $\sigma_{\PP^n}^{a,0}$, it is easy to see that we only need to prove a corresponding statement on $E^n$, as described in the following.

Let
\[
E^n=E_1\times\dots\times E_n
\]
be the product of $n$ copies of an elliptic curve $E$ and let
$H_i$ be the pullback of a degree-one divisor class on $E_i$.
Following the notation of \cite[Section~4.2]{curves}, we write
\[
\Lambda_n\cong
\bigoplus_{I\subset\{1,\dots,n\}}\ZZ e_I
\]
and
\[
v_n(F)=
\left(H_{i_1}\cdots H_{i_k}\ch_{n-k}(F)\right)_
{I=\{i_1,\dots,i_k\}\subset\underline{n}}.
\]
For $a>0$ and $(x_I)_{I\subset \underline{n}}\in (\Lambda_n)_{\RR}$, we define a linear transformation
\[
M_{n,a}\cdot (x_I)_{I\subset \underline{n}}\coloneqq (a^{|I|}x_I)_{I\subset \underline{n}}.
\]
Then the central charge
\[
Z_n^{a,0}(-)=-\int_{E^n}e^{-ia(H_1+\dots+H_n)}\ch(-)
\]
can be written as $Z_n^{a,0}=W_n\circ M_{n,a}$ for a fixed real-linear map $W_n\colon(\Lambda_n)_\RR\to\CC$, independent of $a$, defined by
\[
W_n((x_I)_{I\subset \underline{n}})=-\sum_{I\subset\underline n}(-i)^{|I|}x_I.
\]

We claim that there is a quadratic form $q^E_n$ on $(\Lambda_n)_{\RR}$,
independent of $a$, such that
\[
Q^E_{n,a}(-)\coloneqq q^E_n(M_{n,a}\cdot -)
\]
gives the support property for $\sigma_n^{a,0}$.
We prove this by induction on $n$. 
For $n=1$, we may take
\[
q^E_1(x_0,x_1)=x_0^2+x_1^2.
\]
Assume the claim is known for $n\geq1$. 
We prove it for $n+1$. 
Write
\[
E^{n+1}=E^n\times E
\]
with projections $p_{\leq n}$ and $p_{n+1}$ as in Section~\ref{subsec:StableLineBundles}. 
For $x=(x_I)_{I\subset\underline{n+1}}\in(\Lambda_{n+1})_{\RR}$, write
\[
x=(x',x'')
\]
where
\[
x'_I=x_{I\cup\{n+1\}},
\qquad
x''_I=x_I,
\qquad I\subset\underline{n}.
\]
Then
\[
M_{n+1,a}\cdot x=(aM_{n,a}\cdot x',\,M_{n,a}\cdot x'').
\]

Fix an object $F\in \Db(E^{n+1})$ and take $x= v_{n+1}(F)$. 
By the definition of the functions
$\alpha^{a,0}_{n+1},\beta^{a,0}_{n+1},\gamma^{a,0}_{n+1},\delta^{a,0}_{n+1}$ in Section~\ref{subsec:StableLineBundles}, we have
\[
\alpha^{a,0}_{n+1}(F)+i\gamma^{a,0}_{n+1}(F)=W_n(M_{n,a}\cdot x')
\]
and
\[
\beta^{a,0}_{n+1}(F)+i\delta^{a,0}_{n+1}(F)=W_n(M_{n,a}\cdot x'').
\]
Equivalently, for $y=(y',y'')\in(\Lambda_{n+1})_{\RR}$, define fixed
real-linear functions
\[
\widehat\alpha_{n+1}(y)+i\widehat\gamma_{n+1}(y)\coloneqq W_n(y'),
\]
and
\[
\widehat\beta_{n+1}(y)+i\widehat\delta_{n+1}(y)
\coloneqq W_n(y'').
\]
If $y=M_{n+1,a}\cdot x$, then
\[\widehat\alpha_{n+1}(y)=a\alpha^{a,0}_{n+1}(F),
\qquad
\widehat\gamma_{n+1}(y)=a\gamma^{a,0}_{n+1}(F),\]
and
\[\widehat\beta_{n+1}(y)=\beta^{a,0}_{n+1}(F),
\qquad
\widehat\delta_{n+1}(y)=\delta^{a,0}_{n+1}(F).\]
Using \eqref{eq:PositivityZParis3} with $b=0$, we obtain
\[
Z_{n+1}^{a,0}(F)=\beta^{a,0}_{n+1}(F)+a\gamma^{a,0}_{n+1}(F)
+i\left(-a\alpha^{a,0}_{n+1}(F)+\delta^{a,0}_{n+1}(F)\right).
\]
Thus
\[
W_{n+1}(y)=\widehat\beta_{n+1}(y)+\widehat\gamma_{n+1}(y)
+i\left(\widehat\delta_{n+1}(y)-\widehat\alpha_{n+1}(y)\right).
\]

In the notation above, the quadratic
form in \cite[Lemma 5.7]{Liu:StabProd} has the shape
\[
\beta^{a,0}_{n+1}(F)\gamma^{a,0}_{n+1}(F)-\alpha^{a,0}_{n+1}(F)\delta^{a,0}_{n+1}(F)+\eta Q^E_{n,a}(x')
\]
for 
\[
0< \eta\leq \frac{a}{C^2}
\]
with a constant $C>0$ independent of $a$, which gives the support property for $\sigma^{a,0}_{n+1}$ with respect to a quotient lattice of $\Lambda_{n+1}$; see \cite[Remark 5.8]{Liu:StabProd}. Fix a real number
$0<\lambda<1/C^2$ and set $\eta=\lambda a.$
Then, writing $y=M_{n+1,a}\cdot x$, we have
\[
\beta^{a,0}_{n+1}(F)\gamma^{a,0}_{n+1}(F)-\alpha^{a,0}_{n+1}(F)\delta^{a,0}_{n+1}(F)=\frac{1}{a}
\left(
\widehat\beta_{n+1}\widehat\gamma_{n+1}-\widehat\alpha_{n+1}\widehat\delta_{n+1}
\right)(y),
\]
and, by the induction hypothesis,
\[
Q^E_{n,a}(x')=q^E_n(M_{n,a}x')=q^E_n(y'/a)=\frac{1}{a^2}q^E_n(y').
\]
We define a quadratic form
\[
q^{(n+1)}_{n+1}(y)
\coloneqq
\widehat\beta_{n+1}(y)\widehat\gamma_{n+1}(y)-\widehat\alpha_{n+1}(y)\widehat\delta_{n+1}(y)+\lambda q^E_n(y')
\]
which is independent of $a$. Repeating the same construction after permuting the factors, we obtain
quadratic forms $q^{(r)}_{n+1}$ for each $r\in \underline{n+1}$. Define a quadratic form on $(\Lambda_{n+1})_{\RR}$ by
\[
q^E_{n+1}\coloneqq\sum_{r=1}^{n+1}q^{(r)}_{n+1},
\]
which is independent of $a$. Then, as in the proof of \cite[Theorem 4.5]{curves}, the quadratic form
\[
Q^E_{n+1,a}(-)\coloneqq q^E_{n+1}(M_{n+1,a}\cdot -)
\]
on $(\Lambda_{n+1})_{\RR}$ gives the support property for $\sigma_{n+1}^{a,0}$ by \cite[Lemma 7.5]{stab-E3} and the induction hypothesis, as claimed.
\end{proof}

By Proposition~\ref{prop:lvlimpliesbayer}, Lemma \ref{lem:unique-general}, and Proposition \ref{prop:BayerGeometric}, we deduce the following uniqueness result; see also \cite[Corollary 6.13]{Li:sb}.

\begin{lemma}\label{lem:unique-Bayer}
Let $X$ be a smooth projective scheme and $\cL$ be a very ample line bundle. 
Let $\sigma_1=(Z_1, \cP_1)$ and $\sigma_2=(Z_2, \cP_2)$ be two numerical stability conditions on $\Db(X)$ satisfying $Z_{1}=Z_{2}$. 
If 
\[
\cP_i\prec\cP_i\otimes\cL^{\vee}[1] \quad \text{for } i=1,2,
\]
and $\phi_{\sigma_1}(\cO_x)=\phi_{\sigma_2}(\cO_x)$ for any closed point $x\in X$, then $\sigma_1=\sigma_2$.
\end{lemma}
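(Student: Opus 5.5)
The plan is to reduce Lemma~\ref{lem:unique-Bayer} to Lemma~\ref{lem:unique-general}. That lemma has the same conclusion, but it needs two extra hypotheses for each $i=1,2$. First, the Bayer property $\cP_i\preceq\cP_i\otimes\cL$. Second, $\sigma_i$-semistability of every skyscraper sheaf $\cO_x$. The equality of phases $\phi_{\sigma_1}(\cO_x)=\phi_{\sigma_2}(\cO_x)$ is already assumed. So the whole proof amounts to checking these two hypotheses from $\cP_i\prec\cP_i\otimes\cL^\vee[1]$ together with the smoothness of $X$.

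For the first hypothesis, the strict relation $\cP_i\prec\cP_i\otimes\cL^\vee[1]$ gives the weak one $\cP_i\preceq\cP_i\otimes\cL^\vee[1]$. This is because $\cP(<\phi)\subset\cP(\leq\phi)$ for every $\phi$. Since $\cL$ is very ample, Proposition~\ref{prop:lvlimpliesbayer} then applies and yields $\cP_i\preceq\cP_i\otimes\cL$. Note that this proposition needs no smoothness.

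For the second hypothesis, I use smoothness of $X$. Proposition~\ref{prop:BayerGeometric}, applied with the same weak Bayer property $\cP_i\preceq\cP_i\otimes\cL^\vee[1]$, shows that every $\cO_x$ is $\sigma_i$-semistable for each closed point $x$. That proposition also shows the phase is constant on connected components. This constancy is not needed here, since equality of phases between $\sigma_1$ and $\sigma_2$ is part of the hypotheses.

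With both extra hypotheses verified, Lemma~\ref{lem:unique-general} gives $\sigma_1=\sigma_2$. I do not expect a real obstacle. The only points to watch are that both earlier propositions require $\sigma_i$ to be numerical, which holds by assumption, and the bookkeeping that turns the strict Bayer inequality into the weak form those propositions use, while the strict form is what Lemma~\ref{lem:unique-general} itself requires.
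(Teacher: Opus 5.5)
Your proposal is correct and matches the paper's own argument: the paper likewise deduces this lemma by using Proposition~\ref{prop:lvlimpliesbayer} to get $\cP_i\preceq\cP_i\otimes\cL$ and Proposition~\ref{prop:BayerGeometric} to get semistability of skyscraper sheaves, and then applying Lemma~\ref{lem:unique-general}. Your remark that the strict relation $\prec$ implies the weak relation $\preceq$ needed by those two propositions is exactly the small bookkeeping step involved.
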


We can now prove the main result for this section.

\begin{theorem}\label{thm:canonical-component}
Let $X$ be a projective scheme over $\KK$ equipped with an ample numerical class $H_X \in N^1(X)$. 
Let $\iota_1\colon X\hookrightarrow \PP^{n_1}$ and $\iota_2\colon X\hookrightarrow \PP^{n_2}$ be two closed embeddings such that 
\[
c_1(\iota^*_{1}\cO_{\PP^{n_1}}(1))=m_1 H_X,\qquad c_1(\iota^*_{2}\cO_{\PP^{n_2}}(1))=m_2 H_X,
\]
for some $m_1,m_2\in \ZZ_{>0}$. 
Then for any $(a_1,b_1),(a_2,b_2)\in \RR_{>0}\times \RR$ with $a_1,a_2\gg 0$, the stability conditions $\iota_1^{\sharp}\sigma_{\PP^{n_1}}^{a_1,b_1}$ and $\iota_2^{\sharp}\sigma_{\PP^{n_2}}^{a_2,b_2}$ lie in the same connected component of $\Stab_{(\Lambda_{H_X},v_{H_X})}(\Db(X))$.
\end{theorem}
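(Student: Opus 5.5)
The strategy is to connect the two families through a common central charge and then invoke the uniqueness statement Lemma~\ref{lem:unique-general}, which does not need smoothness. First, for a fixed embedding $\iota$, the stability conditions $\iota^\sharp\sigma^{a,b}_{\PP^n}$ with $a>a_0$ (as in Theorem~\ref{thm:ProjectiveScheme}) all lie in one connected component. By Proposition~\ref{prop:sigmaabRonEn}, Theorem~\ref{theorem-chunyi-Pn} and Lemma~\ref{lemma-pullback-slicing-compatibility}\ref{pullback-slicing-compatibility-dist}, the map $(a,b)\mapsto\iota^\sharp\sigma^{a,b}_{\PP^n}$ is continuous on the connected region $\RR_{>a_0}\times\RR$. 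Hence it suffices to compare a single pair of parameters $(a_1,b_1)$ and $(a_2,b_2)$, which we are free to choose conveniently.

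Second, I will identify $\Lambda_{\iota_1}$ with $\Lambda_{\iota_2}$ by Lemma~\ref{lem:NumericalData Equal}, and compare the central charges. By Grothendieck--Riemann--Roch, $\widetilde Z^{a,b}_{X,\iota_j}(F)=-\int_{\PP^{n_j}}e^{-(b+ia)H}\ch(\iota_{j*}F)$ is determined by the Hilbert-polynomial-type data $\chi(\cO_X(km_jH_X)\otimes F)$. Rescaling $a\mapsto a/m_j$ and $b\mapsto b/m_j$ therefore puts both charges in the form $Z=\sum_s c_s(a)\,\chi$-moments of $F$ with respect to $H_X$. Their leading-order behaviour as $a\to\infty$ agrees, and the lower-order terms differ by the Todd-type corrections of the two embeddings. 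I will join $\widetilde Z_{X,\iota_1}^{a_1,b_1}$ to $G\cdot\widetilde Z_{X,\iota_2}^{a_2,b_2}$ by a straight-line path, for a suitable $G\in\widetilde{\GL}_2^+(\RR)$. Along this path I will bound the difference relative to the support form obtained by pulling back $Q_a=q_n(v_{aH}(-))$ from Lemma~\ref{lem:formula-support}. The scaling in Lemma~\ref{lem:formula-support} should show that, in the $a$-rescaled coordinates, this difference tends to $0$ as $a\to\infty$. Bridgeland's deformation theorem with a uniform support constant (Theorem~\ref{thm:bridgeland-deformation}) then lifts the path. This produces $\sigma'$ in the component of $\iota_1^\sharp\sigma^{a_1,b_1}_{\PP^{n_1}}$ with central charge equal to that of $G\cdot\iota_2^\sharp\sigma^{a_2,b_2}_{\PP^{n_2}}$.

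Third, I will show $\sigma'=G\cdot\iota_2^\sharp\sigma^{a_2,b_2}_{\PP^{n_2}}$ by applying Lemma~\ref{lem:unique-general} with a common very ample $\cL$, say $\cO_X(m_1m_2H_X)$ or a suitable power of it. For the target this is Theorem~\ref{thm:ProjectiveScheme}: skyscraper sheaves have phase $1$, and \eqref{eq:BayerPropertyX1} and \eqref{eq:BayerPropertyX2} hold for $a$ large. For $\sigma'$ I need the same properties, and this is the main obstacle. The deformation has small size, so I aim for the following. Skyscraper sheaves are stable of phase $1$ at the start and have constant class, so they should stay stable of phase $1$, and by Lemma~\ref{lem:unique-general}'s setup only semistability with equal phase is needed. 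The strict property $\prec\otimes\cL^\vee[1]$ should be an open condition: it holds with a definite margin for $a\gg0$ by iterating Lemma~\ref{lem:BayerPropertyPn}, and small perturbations in $\mathrm{dist}$ preserve it, using \eqref{eq:Distance}. The non-strict property $\preceq\otimes\cL$ is harder. If it is not preserved, I would fall back on Proposition~\ref{prop:lvlimpliesbayer}, which derives $\preceq\otimes\cL$ from $\preceq\otimes\cL^\vee[1]$. So only the open strict condition with margin needs to be controlled, and I expect that quantitative margin estimate to be the delicate step.
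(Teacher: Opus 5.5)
\textbf{The gap.} Your third step claims that skyscraper sheaves stay semistable of phase $1$ for the deformed stability condition $\sigma'$ on $X$, and this is not justified. Being stable at the start of the path with constant class does not prevent $\cO_x$ from being destabilized at a wall along a path of definite length. Openness of the stable locus is only a local statement, and for the infinite family $\{\cO_x\}$ it is not even uniform. At the endpoint you only know two things: $Z(\cO_x)=-1$, and the slicing has moved by less than some $\epsilon<1$. Together these confine the HN phases of $\cO_x$ to $(1-\epsilon,1+\epsilon)$, but they still allow $\cO_x$ to split into several HN factors. Without semistability of $\cO_x$ for $\sigma'$, Lemma~\ref{lem:unique-general} cannot be applied.

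The only tool in the paper that produces semistable skyscrapers from a Bayer property is Proposition~\ref{prop:BayerGeometric}, and it is proved only for smooth $X$. Its induction cuts by sections of $\cL$ through $x$ that must be regular, which fails for instance when $x$ is an embedded point. Since $X$ is an arbitrary projective scheme and your whole deformation takes place in $\Db(X)$, this step breaks down. Your fallback to Proposition~\ref{prop:lvlimpliesbayer} for the non-strict property $\preceq\otimes\cL$ is fine, since that proposition does not need smoothness.

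\textbf{The fix.} Deform on a smooth projective space and only pull back to $X$.

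The paper first reduces, via a common larger embedding, to nested embeddings $X\xrightarrow{f}\PP^r\xrightarrow{\iota}\PP^s$ with $\iota^*\cO(1)=\cO(k)$. On $\PP^r$ it joins $Z^{a,0}_{\PP^r}$ to the central charge of $\iota^\sharp\sigma^{a/k,b_\iota}_{\PP^s}$ and lifts the path with \cite[Theorem~1.2]{bayer:short-proof}, keeping $\mathrm{dist}(\cP_t,\cP_0)<\tfrac18$. It then identifies the endpoint on the smooth $\PP^r$ using Proposition~\ref{prop:BayerGeometric} and Lemma~\ref{lem:unique-Bayer}.

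On $X$ no uniqueness statement is ever needed. The uniform property $\cP_t\prec\cP_t\otimes\cO(-l)[1]$ along the path makes every $f^\sharp\sigma_t$ a stability condition, by Proposition~\ref{prop:PullbackStability}. Lemma~\ref{lemma-pullback-slicing-compatibility}\ref{pullback-slicing-compatibility-dist} then makes $t\mapsto f^\sharp\sigma_t$ continuous.

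If you want to keep your direct comparison on $X$, you can do the following instead:
\begin{enumerate}
\item Write your correction series as a central charge defined on all of $\Knum(\PP^{n_1})$.
\item Lift the path on $\PP^{n_1}$ rather than on $X$.
\item Pull the lifted path back by $\iota_1^\sharp$.
\end{enumerate}
Then $\iota_{1*}\cO_x$ is a skyscraper on the smooth $\PP^{n_1}$, so Proposition~\ref{prop:BayerGeometric} supplies exactly the hypothesis Lemma~\ref{lem:unique-general} needs on $X$.

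\textbf{Two smaller points.} First, the margin you flag does not come from iterating Lemma~\ref{lem:BayerPropertyPn}, which gives $\prec$ but no quantitative margin. It comes from $\mathrm{dist}\bigl(\cP^{a,0}_{\PP^r},\cP^{a,0}_{\PP^r}\otimes\cO(-l)\bigr)\to0$ as $a\to\infty$, which the paper proves by rescaling on $E^r$ via $\pi_m^\sharp$, combined with the $\tfrac18$ bound. Second, Theorem~\ref{thm:bridgeland-deformation} by itself only gives local lifts, so lifting the whole path needs the quantitative deformation result cited above.
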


\begin{proof}
By Lemma~\ref{lem:base-change-stab-manifold} and Lemma~\ref{lem:base-change}, we may assume in the proof that $\KK=\overline{\KK}$.

According to Theorem~\ref{thm:ProjectiveScheme}, to prove the statement, it suffices to show that for some pairs $(a_1,b_1),(a_2,b_2)\in \RR_{>0}\times\RR$ with $a_i\gg 0$, the stability conditions $\iota_1^{\sharp}\sigma_{\PP^{n_1}}^{a_1,b_1}$ and $\iota_2^{\sharp}\sigma_{\PP^{n_2}}^{a_2,b_2}$ are connected by a path in $\Stab_{(\Lambda_{H_X},v_{H_X})}(\Db(X))$.

We first treat the case where $X=\PP^r$ and $\iota_1$ is an isomorphism, namely we are in the following situation: $\iota\colon \PP^r\hookrightarrow\PP^s$ is a closed embedding such that $\iota^*\cO_{\PP^s}(1)\cong\cO_{\PP^r}(k)$. 
The statement becomes then the following claim: for $a\gg 0$, $\sigma_{\PP^r}^{a,0}$ and $\iota^{\sharp}\sigma_{\PP^s}^{\frac{a}{k},b_{\iota}}$ can be connected by a path in $\Stab_{\num}(\Db(\PP^r))$, where
\begin{equation*}\label{eq:Canonicitybalpha}
	b_{\iota}\coloneqq\frac{r+1-(s+1)k}{2k}.
\end{equation*}

Set $T(2x):=x/\sinh(x)$, then Grothendieck--Riemann--Roch
and the choice of $b_\iota$ give
\begin{equation*}\label{eq:CanonicityThetaCharge}
	\widetilde Z_{\PP^r}^{a/k,b_\iota}(E)
	\coloneqq Z_{\PP^s}^{a/k,b_\iota}(\iota_*E)
	=-\int_{\PP^r}e^{-iaH_{\PP^r}}\ch(E)\frac{T(H_{\PP^r})^{r+1}}{T(k H_{\PP^r})^{s+1}}.
\end{equation*}
as $\td(\cO_{\PP^r}(m))=e^{mH_{\PP^r}/2}T(mH_{\PP^r})$. We therefore consider the path
\begin{equation*}\label{eq:CanonicityCentralChargePath}
Z_{t,a}(E)
\coloneqq
-\int_{\PP^r}e^{-iaH_{\PP^r}}\ch(E)\left(1+t\frac{T(H_{\PP^r})^{r+1}}{T(k H_{\PP^r})^{s+1}}-t\right).
\end{equation*}
for $t\in[0,1]$. Its endpoints are $Z_{\PP^r}^{a,0}$ and
$\widetilde Z_{\PP^r}^{a/k,b_\iota}$.

Put $M_a\coloneqq\operatorname{diag}(1,a,\ldots,a^r)$. Since $T(x)=1+O(x^2)$, after fixing a norm on the space
$V\coloneqq\Knum(\PP^r)_{\RR}$, there is a constant $C>0$, independent
of $a$ and $t$, such that
\begin{equation}\label{eq:diff-Zt}
	\left|(Z_{t,a}-Z_{0,a})(v)\right|
	\leq\frac{C}{a^2}\|M_av\|
\end{equation}
for all $v\in V$ and $t\in[0,1]$.

By Lemma~\ref{lem:formula-support}, there is a quadratic form $q_r$,
independent of $a$, such that
\[
Q_a(v)=q_r(M_av)
\]
gives the support property for $\sigma_{\PP^r}^{a,0}$. Moreover, $q_r$
is negative definite on $\ker W_0$, where
\[
W_0\coloneqq Z_{0,a}\circ M_a^{-1}
\]
is independent of $a$. Hence $\{u\in V\mid q_r(u)\geq0\}\cap\ker W_0=\{0\}$.

By homogeneity and compactness, there exists $c_0>0$ such that
\[
\|u\|\leq c_0|W_0(u)|
\]
whenever $q_r(u)\geq0$. Combining this with \eqref{eq:diff-Zt}, we obtain
\begin{equation}\label{eq:diff-Zt-2}
	\left|(Z_{t,a}-Z_{0,a})(v)\right|
	\leq\frac{c_0C}{a^2}|Z_{0,a}(v)|
\end{equation}
whenever $Q_a(v)\geq0$.

If $c_0C/a^2<1$, then $Q_a$ is negative definite on
$\ker Z_{t,a}$ for every $t\in[0,1]$. Indeed, if
$Z_{t,a}(v)=0$ and $Q_a(v)\geq0$, then \eqref{eq:diff-Zt-2} gives
\[
|Z_{0,a}(v)|\leq\frac{c_0C}{a^2}|Z_{0,a}(v)|,
\]
and hence $Z_{0,a}(v)=0$. Thus
$M_av\in\ker W_0$ and $q_r(M_av)\geq0$, which forces $v=0$.

Now, we consider the Bayer property along this path. Let $
m_a=\lfloor\sqrt{2a}\rfloor,\alpha_a=2a / m_a^2$ and set
\[
d_{a,l}\coloneqq
\mathrm{dist}\bigl(
\cP_{\PP^r}^{a,0},
\cP_{\PP^r}^{a,0}\otimes\cO_{\PP^r}(-l)
\bigr)
\]
for $l\in\ZZ_{>0}$. Using~\eqref{eq:stabonPn}, \eqref{pim-pullback-sigma}, the tensor-shift
and $\Pic^0(E^r)$-invariance from Proposition~\ref{prop:sigmaabRonEn},
and the pushforward and pullback slicing-distance compatibilities, we
obtain
\[
\begin{aligned}
	d_{a,l}
	&\leq
	\mathrm{dist}\bigl(
	\cP_{E^r}^{2a,0},\cP_{E^r}^{2a,-2l}
	\bigr)\\
	&\leq
	\mathrm{dist}\left(
	\cP_{E^r}^{\alpha_a,0},
	\cP_{E^r}^{\alpha_a,-2l/m_a^2}
	\right)
	\longrightarrow0,
\end{aligned}
\]
where the limit follows from
Proposition~\ref{prop:sigmaabRonEn}, since $\alpha_a\to1$ and
$m_a\to\infty$ when $a\to \infty$.

Choose $a\gg0$ such that
\[
d_{a,k}<\frac12\quad\textup{and}\quad
\frac{c_0C}{a^2}<\sin\left(\frac{\pi}{8}\right).
\]
By~\cite[Theorem~1.2]{bayer:short-proof}, the path $Z_{t,a}$ lifts from
$\sigma_{\PP^r}^{a,0}$ to a path
$\sigma_t=(Z_{t,a},\cP_t)$. Equation~\eqref{eq:diff-Zt-2} and the standard evaluation argument using~\cite[Lemma~2.9]{bayer:short-proof} give
\[
\mathrm{dist}(\cP_t,\cP_0)<\frac18
\]
for all $t\in[0,1]$. Consequently, one has
\[
\mathrm{dist}\bigl(
\cP_t,\cP_t\otimes\cO_{\PP^r}(-k)
\bigr)
\leq2\mathrm{dist}(\cP_t,\cP_0)+d_{a,k}
<\frac34<1
\]
for any $t\in[0,1]$. Thus~\eqref{eq:Distance} yields $\cP_t\prec\cP_t\otimes\cO_{\PP^r}(-k)[1]$.

The stability condition
$\iota^\sharp\sigma_{\PP^s}^{a/k,b_\iota}$ has central charge
$Z_{1,a}$ and, by Theorem~\ref{thm:ProjectiveScheme}, satisfies the
same Bayer property. Both endpoint stability conditions assign phase $1$ to skyscraper sheaves: for $\sigma_1$, this follows from Proposition~\ref{prop:BayerGeometric}, the equality
$Z_{1,a}(\cO_x)=-1$, and the above distance bound. In particular,
Lemma~\ref{lem:unique-Bayer} yields
$\sigma_1=\iota^{\sharp}\sigma_{\PP^s}^{\frac{a}{k},b_{\iota}}$. This proves the claim and thus the theorem when $X=\PP^r$ and $\iota_1$ is an isomorphism.

It is not difficult now to deduce the general case from this. 
Indeed, since we can take a larger embedding of $X$ that factors through both $\iota_1$ and $\iota_2$ up to a $\Pic^0$-action which is harmless by \cite[Theorem~2.23(1)]{curves}, it suffices to show that if $f\colon X\hookrightarrow \PP^r$ and $\iota\colon \PP^r\hookrightarrow \PP^s$ are closed embeddings, then $(\iota\circ f)^{\sharp}\sigma_{\PP^s}^{a/k,b_{\iota}}$ and $f^{\sharp}\sigma_{\PP^r}^{a,0}$ lie in the same connected component. 

To this end, let $\{\sigma_t=(Z_t, \cP_t)\}_{t\in [0,1]}$ be the path in $\Stab_{\num}(\Db(\PP^r))$ constructed above. By taking $l$ and $a$ sufficiently large, the same argument as in Theorem \ref{thm:ProjectiveScheme} shows that $f^{\sharp}\sigma_t$ is a stability condition on $\Db(X)$ for any $t\in [0,1]$. Therefore, $\{f^{\sharp}\sigma_t\}_{t\in [0,1]}$ gives a continuous path in $\Stab_{(\Lambda_{H_X}, v_{H_X})}(\Db(X))$ by Lemma~\ref{lemma-pullback-slicing-compatibility}\ref{pullback-slicing-compatibility-dist}. This completes the proof.
\end{proof}


\section{Complements}\label{sec:Complements}

In this section, we discuss some complements to the above results. 
First, in Section~\ref{subsec:MassHomBounds} we recall the notion of a mass-Hom bound from~\cite{DHL-robotis}, and show that it is satisfied by stability conditions in the distinguished component $\Stab^\dagger_{H_X}(\Db(X))$ of a polarized scheme $(X, H_X)$. 
Next, in Section~\ref{subsec:Threefolds}, we compare the connected component $\Stab^\dagger_{H_X}(\Db(X))$ with known constructions of stability conditions in the case of complex surfaces and threefolds.
Finally, in Section~\ref{subsec:LocalFano}, we use the stability conditions on $\Db(X)$ from Theorem~\ref{thm:ProjectiveScheme} to induce stability conditions on the supported derived category of the total space of certain vector bundles, including all local Calabi--Yau varieties.

We continue to work over a base field $\KK$; in Section~\ref{subsec:Threefolds}, we restrict to the case $\KK=\CC$.

\subsection{Mass-Hom bounds}\label{subsec:MassHomBounds} 

In this section, we discuss the notion of mass-Hom bound, which was introduced in \cite[Definition 2.7]{DHL-robotis}.
For simplicity, as in the rest of the paper, we work with derived categories of schemes, although the basic definitions make sense for more general triangulated categories.
This will be related (at least when $X$ is smooth and projective, and $\KK$ has characteristic~$0$) to the existence of proper moduli spaces for stability conditions in the connected component $\Stabdagger_{H_X}(\Db(X))$, which will be the subject of Part~\ref{part:Relative} of this paper (see Theorem~\ref{theorem-DHL-robotis}).

\begin{definition}\label{definition-mass-hom}
Let $X$ be a proper scheme over a field $\KK$. 
A pre-stability condition $\sigma=(Z,\cP)$ on $\Db(X)$ has a \emph{mass-Hom bound} if for every $A\in \Dperf(X)$, there exists a constant $C_A > 0$, which depends on $A$ and $\sigma$, such that for all $F\in\Db(X)$ we have 
\begin{equation}\label{mass-hom-inequality}
\dim_\KK\Hom(A,F)\leq C_A m_{\sigma}(F). 
\end{equation}
\end{definition}

In the above definition, recall that the mass was defined in~\eqref{eq:DefMass}.


\begin{remark}\label{remark-mass-hom-criterion}
According to \cite[Lemma 2.8]{DHL-robotis}, the condition of having a mass-Hom bound is equivalent to the following a priori weaker condition. 
Let $\cG$ be a set of objects which classically generate $\Dperf(X)$, or in other words, such that $\Dperf(X)$ is the smallest idempotent complete triangulated subcategory which contains $\cG$. 
Let $\sigma$ be a stability condition on $\Db(X)$ satisfying the following property: for every $G \in \cG$, there exists a constant $C_G>0$ such that the mass-Hom inequality~\eqref{mass-hom-inequality} holds for all $\sigma$-stable objects $F\in\Db(X)$.  
Then $\sigma$ has a mass-Hom bound.
\end{remark}

\begin{remark}\label{remark-mass-hom-connected}
The existence of a mass-Hom bound only depends on the connected component of $\Stab_{(\Lambda, v)}(\Db(X))$ in which a stability condition lies.
In fact, let $\sigma,\sigma'\in\Stab_{(\Lambda, v)}(\Db(X))$ be two stability conditions in the same connected component. 
Then by~\cite[Corollary~2.14]{DHL-robotis}, $\sigma$ has a mass-Hom bound if and only if $\sigma'$ does: this follows easily from the fact that $\sigma,\sigma'$ being in the same connected component, there exists a constant $C_{\sigma,\sigma'}>0$ such that $m_{\sigma}(F)\leq C_{\sigma,\sigma'} m_{\sigma'}(F)$, for all $F\in\Db(X)$.
\end{remark}

The existence of a mass-Hom bound is preserved under pullback and pushforward of stability conditions:

\begin{lemma}\label{lemma-pullback-mass-hom-bound}
Let $f \colon X \to Y$ be a morphism between proper noetherian schemes over $\KK$.
\begin{enumerate}[{\rm (1)}]
\item\label{enum:MassPullback} Assume that $f$ is finite. Let $\sigma\in\Stab_{(\Lambda,v)}(\Db(Y))$ be a stability condition with a mass-Hom bound. If $f^\sharp\sigma$ is a stability condition, then it has a mass-Hom bound. 

\item\label{enum:MassPushforward} Assume that $f$ is faithfully flat and that the relative dualizing complex $\omega_f^\bullet$ belongs to $\Dperf(X)$. Let $\sigma\in\Stab_{(\Lambda,v)}(\Db(X))$ be a stability condition with a mass-Hom bound. If $f_\sharp\sigma$ is a stability condition, then it has a mass-Hom bound.
\end{enumerate}
\end{lemma}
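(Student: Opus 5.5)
For both parts the plan is to move the Hom through adjunction and then compare masses on the two sides, using that HN filtrations are transported by the relevant functor.

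For part (1), fix $A\in\Dperf(X)$ and $F\in\Db(X)$. Since $f$ is finite, $f_*$ sends $\Db(X)$ to $\Db(Y)$, and by Lemma~\ref{lemma-pullback-slicing-compatibility} the HN filtration of $F$ with respect to $f^\sharp\cP$ pushes forward to the HN filtration of $f_*F$ with respect to $\cP$. The central charge of $f^\sharp\sigma$ is $Z\circ v\circ f_*$, so the HN factors $A_i$ of $F$ satisfy $|Z(f_*A_i)| = |Z^{f^\sharp\sigma}(A_i)|$. Hence
\[
m_{f^\sharp\sigma}(F)=m_\sigma(f_*F).
\]
It then suffices to bound $\dim\Hom(A,F)$ by $\dim\Hom(B,f_*F)$ for a fixed perfect $B$ on $Y$. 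By Remark~\ref{remark-mass-hom-criterion} we may test $A$ against a set of classical generators of $\Dperf(X)$. Choose an $f$-ample line bundle, for example $\cO_X(1)$ pulled back from an ample line bundle on $Y$ (which is ample on $X$ since $f$ is finite). The objects $\cO_X(-k)=f^*\cO_Y(-k)$, $k\ge 0$ in a suitable finite range, then classically generate $\Dperf(X)$ by Orlov's theorem; for a general proper $Y$ one should instead use that $f^*$ of a generator of $\Dperf(Y)$ generates $\Dperf(X)$, since $f$ is affine. For $A=f^*B$, adjunction gives $\Hom(f^*B,F)=\Hom(B,f_*F)$, so
\[
\dim\Hom(A,F)\le C_B\, m_\sigma(f_*F)=C_B\, m_{f^\sharp\sigma}(F).
\]
The main point to check is the generation claim. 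For $f$ affine, $f^*G$ with $G$ a compact generator of $\Dqc(Y)$ is a compact generator of $\Dqc(X)$: if $\Hom(f^*G,M[i])=\Hom(G,f_*M[i])=0$ for all $i$, then $f_*M=0$, hence $M=0$. Neeman's theorem then shows that $f^*G$ classically generates $\Dperf(X)$.

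For part (2), the symmetric argument uses $f^*$ and its left adjoint $f_!$. By Lemma~\ref{lemma-pushforward-slicing-compatibility}, HN filtrations for $f_\sharp\cP$ pull back to HN filtrations for $\cP$, and the central charge of $f_\sharp\sigma$ is $Z\circ v\circ f^*$. Therefore
\[
m_{f_\sharp\sigma}(F)=m_\sigma(f^*F).
\]
For $A\in\Dperf(Y)$, we have $\Hom(A,F)\hookrightarrow\Hom(f^*A,f^*F)$ provided $F\to f_*f^*F$ is split injective, which we cannot assume in general. Instead, flatness gives $\Hom_X(f^*A,f^*F)=\Hom_Y(A,F\otimes f_*\cO_X)$. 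Since $f$ is faithfully flat and finite-type proper, $f_*\cO_X$ is not obviously split, so I would use the cleaner route: the generator criterion of Remark~\ref{remark-mass-hom-criterion} with generators of the form $f_!P$ for $P\in\Dperf(X)$. These are perfect because $\omega_f^\bullet$ is perfect, and they classically generate $\Dperf(Y)$ because $f^*$ is conservative on $\Dqc(Y)$ by faithful flatness. Then $\Hom(f_!P,F)=\Hom(P,f^*F)$, whose dimension is at most
\[
C_P\, m_\sigma(f^*F)=C_P\, m_{f_\sharp\sigma}(F).
\]
I expect the main obstacle to be verifying that $f_!$ preserves perfect complexes and that its image generates; the former follows from $f_!=f_*(-\otimes\omega_f^\bullet)$ together with $f$ being flat, proper and perfect.
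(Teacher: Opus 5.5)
Your proposal is correct and takes essentially the same route as the paper. In (1) you test the bound on $f^*G$, which classically generates $\Dperf(X)$ because $f_*$ is conservative for finite $f$ (Neeman--Ravenel), and then combine adjunction with $m_{f^\sharp\sigma}(F)=m_\sigma(f_*F)$; in (2) you run the same argument with the adjoint pair $(f_!,f^*)$ and generators $f_!P$, which is exactly the paper's ``replace $f_*$ by $f^*$'' argument, and the exploratory detours (Orlov's theorem, an ample bundle on the merely proper $Y$, splitting $\cO_Y\to f_*\cO_X$) can simply be dropped.
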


\begin{proof}
Let us start with~\ref{enum:MassPullback}.
Recall that by the well-known Neeman--Ravenel criterion~\citestacks{09SR}, the condition that a perfect complex is a classical generator for the category of perfect complexes is equivalent to it being a generator of the category of quasi-coherent complexes.
Let $G$ be a classical generator of $\Dperf(Y)$, which exists for instance by~\cite[Proposition 2.5]{neeman-Grothendieck-duality}. 
Then its pullback $f^*G \in \Dperf(X)$ is a classical generator. 
Indeed, to see this, we must show that if $F \in \Dqc(X)$ is an object such that $\RHom(f^*G, F) = 0$, then $F = 0$. 
By adjunction, we find $\RHom(G, f_*F) = 0$, which implies that $f_*F = 0$ by the same criterion mentioned above applied on $Y$; but then $F = 0$ because $f$ is finite, and thus the functor $f_* \colon \Dqc(X) \to \Dqc(Y)$ is conservative.

Now, for a $f^\sharp\sigma$-stable object $F\in\Db(X)$, we have the inequalities
\[
\dim_\KK \Hom(f^*G, F) = \dim_\KK \Hom(G, f_*F) \leq C_G m_{\sigma}(f_*F) = C_G m_{f^\sharp\sigma}(F).
\]
Here the first equality is by adjunction, the inequality comes from the fact that $\sigma$ has a mass-Hom bound, and the last equality follows since $f_*F$ is $\sigma$-semistable with $Z(v(f_*F))=Z((f^\sharp v)(F))$.

The proof of~\ref{enum:MassPushforward} is analogous, by replacing $f_*$ with $f^*$: the only assumptions we need are~\ref{enum:Polishchuk1}--\ref{enum:Polishchuk3} in Theorem~\ref{thm:Polishchuk}, which are satisfied in our case, as remarked in Example~\ref{ex:PullPushShriek}\ref{enum:PullPushShriek2}.
\end{proof}

The main result is then the following result, which is a restatement of Theorem~\ref{thm:MassHomBoundMain} from the introduction. 

\begin{theorem}\label{thm:MassHomBoundProjective}
Let $X$ be a projective scheme over a field $\KK$ equipped with an ample numerical class $H_X$.  
Then any stability condition $\sigma \in \Stabdagger_{H_X}(\Db(X))$ admits a mass-Hom bound.
\end{theorem}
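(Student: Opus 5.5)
By Remark~\ref{remark-mass-hom-connected}, having a mass-Hom bound is constant on connected components of $\Stab_{(\Lambda_{H_X},v_{H_X})}(\Db(X))$. So it suffices to exhibit a single stability condition in $\Stabdagger_{H_X}(\Db(X))$ with a mass-Hom bound. We take $\widetilde{\sigma}_X^{a,b}=\iota^\sharp\sigma^{a,b}_{\PP^n}$ for $a\gg0$, as in Definition~\ref{def:StabdaggerAbsolute}. Since $\iota$ is finite and $\iota^\sharp\sigma^{a,b}_{\PP^n}$ is a stability condition by Theorem~\ref{thm:ProjectiveScheme}, Lemma~\ref{lemma-pullback-mass-hom-bound}\ref{enum:MassPullback} reduces the problem to showing that $\sigma^{a,b}_{\PP^n}$ has a mass-Hom bound for some $(a,b)$.

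Applying Remark~\ref{remark-mass-hom-connected} again, now on $\PP^n$, the family $\sigma^{a,b}_{\PP^n}$ lies in the single component $\Stabdagger(\Db(\PP^n))$. Hence we may change parameters freely: it is enough to find one $(a,b)$ with a mass-Hom bound.

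The construction gives $\sigma^{a,b}_{\PP^n}=\frac{1}{2^nn!}\,h_\sharp f_\sharp\sigma^{2a,2b}$, with
\[
\sigma^{a,b}_{(\PP^1)^n}=2^{-n}f_\sharp\sigma^{2a,2b}
\quad\text{and}\quad
\sigma^{a,b}_{\PP^n}=\tfrac{1}{n!}\,h_\sharp\sigma^{a,b}_{(\PP^1)^n}.
\]
The morphism $h\colon(\PP^1)^n\to\PP^n$ is finite, faithfully flat and between smooth varieties, so $\omega_h^\bullet$ is perfect. Rescaling the central charge multiplies mass by a constant and so does not affect mass-Hom bounds. Lemma~\ref{lemma-pullback-mass-hom-bound}\ref{enum:MassPushforward} therefore reduces us to proving a mass-Hom bound for $\sigma^{a,b_0}_{(\PP^1)^n}$ with $\frac{n-1}{n}<b_0<1$ and $0<a<a(n,b_0)$.

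By Theorem~\ref{thm:AlgebraicP1n}, some $G\in\widetilde{\GL}_2^+(\RR)$ makes $G\cdot\sigma^{a,b_0}_{(\PP^1)^n}$ algebraic, with heart $\cA_\mathrm{alg}$ generated by the exceptional objects $E_I\coloneqq\cO_{(\PP^1)^n}(I_k)[n-k]$. The $\widetilde{\GL}_2^+(\RR)$-action changes masses only up to a bounded factor, so it suffices to treat the algebraic stability condition. Here we use Remark~\ref{remark-mass-hom-criterion} with the generating set $\cG=\{\cO_{(\PP^1)^n}(I_k)\}$, checking the bound only on stable $F$. Up to shift, a stable $F$ lies in $\cA_\mathrm{alg}\simeq\text{mod-}A$.

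For $M\in\text{mod-}A$, the total dimension $\sum_i\dim_\KK\Hom(G,M[i])$ is bounded by a constant $C_G$ times $\dim_\KK M$. This holds because $\RHom(G,-)$ is exact on the heart and each simple module, one per $E_I$ (the $E_I$ here are the simple objects), contributes a fixed finite amount. Moreover $\dim_\KK M=\sum_I d_I$, where $d_I$ is the multiplicity of the simple $S_I$ in $M$. Since $Z(M)=\sum_I d_I z_I$ with every $z_I$ in the strict sector $\mathbb{H}\cup\RR_{<0}$, we get $|Z(M)|\geq c\sum_I d_I$ for some $c>0$ depending only on the finitely many $z_I$. For stable $M$ the mass is $|Z(M)|$, which gives the required inequality.

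The step needing the most care is this final algebraic estimate. One must check that $\Hom(G,F)$ is controlled by $\dim_\KK$ of the module, including all shifts: $\Hom(G,F)$ picks up Ext-groups of $G$ against the heart, and these are finite because $A$ has finite global dimension. One must also confirm that the central charges $z_I$ of the simples all lie in a common strict half-plane after the rotation $G$. The rest is formal bookkeeping with Remarks~\ref{remark-mass-hom-criterion}, \ref{remark-mass-hom-connected} and Lemma~\ref{lemma-pullback-mass-hom-bound}.
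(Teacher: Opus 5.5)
Your proposal is correct and follows the paper's proof. You reduce from $X$ to $\PP^n$ via Lemma~\ref{lemma-pullback-mass-hom-bound}\ref{enum:MassPullback}, then from $\PP^n$ to $(\PP^1)^n$ via Lemma~\ref{lemma-pullback-mass-hom-bound}\ref{enum:MassPushforward}, using Remark~\ref{remark-mass-hom-connected} to move freely within connected components, and you finish with the algebraic stability conditions of Theorem~\ref{thm:AlgebraicP1n}. The only difference is that you prove the mass-Hom bound for the algebraic stability condition by hand, via subadditivity along Jordan--H\"older filtrations and $|Z(M)|\geq c\sum_I d_I$; the paper instead cites this from \cite[Example 2.9]{DHL-robotis}. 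Your identification $\dim_\KK M=\sum_I d_I$ is unnecessary, since the multiplicities $d_I$ alone carry the estimate.
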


\begin{proof}
By Remark~\ref{remark-mass-hom-connected}, it is enough to produce a stability condition $\sigma\in\Stabdagger_{H_X}(\Db(X))$ with a mass-Hom bound. 
This can be achieved on $(\PP^1)^n$ by Theorem~\ref{thm:AlgebraicP1n} together with~\cite[Example 2.9]{DHL-robotis}. 
By Theorem~\ref{theorem-chunyi-Pn} and Lemma~\ref{lemma-pullback-mass-hom-bound}\ref{enum:MassPushforward}, all stability conditions in $\Stabdagger(\Db(\PP^n))$ have a mass-Hom bound.
Finally, we conclude by Theorem~\ref{thm:ProjectiveScheme} and Lemma~\ref{lemma-pullback-mass-hom-bound}\ref{enum:MassPullback}.
\end{proof}

\subsection{Tilt-stability}
\label{subsec:Threefolds}

In this section we specialize our base field $\KK=\CC$ to be the complex numbers and focus on surfaces and threefolds. 
We can apply Theorem~\ref{thm:MassHomBoundProjective} to these examples to show that all stability conditions constructed by tilting (see~\cite{BMT:BG,BMS:StabCY3s}) are in the distinguished connected component, and thus have a mass-Hom bound.

First of all, we deal with the easy case of surfaces.
Let $X$ be a smooth projective surface over $\CC$.
For a geometric stability condition $\sigma$ in $\Stabdagger_{H_X}(\Db(X))$, we can use~\cite[Theorem 5.10]{hannah:stabonabquot}\footnote{In fact, to be precise, this follows from the proof in~\emph{loc.~cit.}, applied to geometric stability conditions with respect to $(\Lambda_{H_X},v_{H_X})$.} to deduce that, up to the action of $\widetilde{\GL}_2^+(\RR)$, $\sigma$ is as in~\cite[Proposition 5.15]{hannah:stabonabquot}, and it satisfies the full support property with respect to $\Knum(X)$ (see~\cite[Proposition 5.35]{hannah:stabonabquot}).
Since geometric stability conditions with respect to $\Knum(X)$ are connected (see~\cite[Theorem 5.36]{hannah:stabonabquot}), we can then define $\Stab^\dagger_{\mathrm{num}}(\Db(X))$ as the connected component containing all geometric stability conditions with respect to $\Knum(X)$.
By Theorem~\ref{thm:MassHomBoundProjective} (and Remark~\ref{remark-mass-hom-connected}), we immediately deduce the following result:

\begin{proposition}\label{prop:MassHomBoundSurfaces}
Let $X$ be a smooth projective surface over $\CC$.
Then the stability conditions in $\Stab^\dagger_{\mathrm{num}}(\Db(X))$ have a mass-Hom bound; in particular, this applies to all geometric stability conditions on $\Db(X)$ with respect to $\Knum(X)$.
\end{proposition}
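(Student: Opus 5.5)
The goal is to show that $\Stab^\dagger_{\mathrm{num}}(\Db(X))$ meets the distinguished component $\Stabdagger_{H_X}(\Db(X))$ in a suitable sense. Then Theorem~\ref{thm:MassHomBoundProjective} and Remark~\ref{remark-mass-hom-connected} transport the mass-Hom bound to all of $\Stab^\dagger_{\mathrm{num}}(\Db(X))$. The comparison is complicated by the two spaces using different lattices, $\Knum(X)$ versus the quotient $\Lambda_{H_X}$.

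The plan is to pick a geometric stability condition $\sigma=\widetilde{\sigma}^{a,b}_X\in\Stabdagger_{H_X}(\Db(X))$, which exists by Theorem~\ref{thm:ProjectiveScheme}. By the cited result \cite[Theorem 5.10, Propositions 5.15 and 5.35]{hannah:stabonabquot}, applied to geometric stability conditions with respect to $(\Lambda_{H_X},v_{H_X})$, this $\sigma$ agrees up to $\widetilde{\GL}_2^+(\RR)$ with a tilt stability condition $\sigma_{\omega,B}$. That tilt condition also satisfies the support property with respect to $\Knum(X)$. Thus the same pair $(Z\circ(\text{quotient}),\cP)$ defines a point of $\Stab_{\num}(\Db(X))$. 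Since it is geometric, it lies in $\Stab^\dagger_{\mathrm{num}}(\Db(X))$ by \cite[Theorem 5.36]{hannah:stabonabquot}.

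Next, the mass-Hom bound depends only on the slicing and on the masses $|Z(A_i)|$ of HN factors. So $\sigma$, viewed in $\Stab_{\num}$, has a mass-Hom bound by Theorem~\ref{thm:MassHomBoundProjective}; note that the $\widetilde{\GL}_2^+(\RR)$-action changes masses only by a bounded factor. Finally, Remark~\ref{remark-mass-hom-connected}, applied inside $\Stab_{\num}(\Db(X))$, propagates the bound to the whole connected component $\Stab^\dagger_{\mathrm{num}}(\Db(X))$. This includes all geometric stability conditions with respect to $\Knum(X)$, by connectedness of the geometric locus.

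The main obstacle is the first step: checking that the classification of geometric stability conditions in \cite{hannah:stabonabquot} goes through when the central charge only factors through $\Lambda_{H_X}$. This amounts to checking that skyscraper sheaves being stable of one phase forces the heart to be a tilt of $\Coh(X)$, and that the support property with respect to $\Lambda_{H_X}$ suffices for the Bogomolov-type arguments used there. I would handle this by rerunning that proof with $\Lambda_{H_X}$ in place of $\Knum(X)$, as indicated in the footnote.
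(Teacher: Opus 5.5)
Your proposal is correct and follows essentially the same route as the paper. Both arguments use Dell's classification of geometric stability conditions on surfaces, rerun for $(\Lambda_{H_X},v_{H_X})$ as in the paper's footnote, to see that a geometric $\widetilde{\sigma}^{a,b}_X\in\Stabdagger_{H_X}(\Db(X))$ satisfies the full support property with respect to $\Knum(X)$ and hence lies in $\Stab^\dagger_{\mathrm{num}}(\Db(X))$. Both then combine Theorem~\ref{thm:MassHomBoundProjective} with Remark~\ref{remark-mass-hom-connected}.

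One minor simplification: the $\widetilde{\GL}_2^+(\RR)$ bookkeeping for masses is unnecessary. The $\widetilde{\GL}_2^+(\RR)$-action preserves $\ker Z$ and the set of semistable objects, so $\sigma$ itself already satisfies the support property with respect to $\Knum(X)$. It then has literally the same HN factors and masses whether viewed in $\Stabdagger_{H_X}(\Db(X))$ or in $\Stab_{\mathrm{num}}(\Db(X))$.
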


We now pass to the threefold case.
We keep the notation as in Sections~\ref{subsec:ProjectiveSchemes} and~\ref{subsec:ConnectedComponent}.
Let $X$ be a smooth projective threefold over $\CC$, and let $H_X$ denote the numerical class of an ample divisor.
We fix a closed embedding $\iota\colon X\hookrightarrow\PP^n$ such that $c_1(\iota^*\cO_{\PP^n}(1))=mH_X$, for some $m\in\ZZ_{>0}$.
By Theorem~\ref{thm:canonical-component} and Remark~\ref{remark-mass-hom-connected}, we can assume $m=1$.
We assume the following:
\begin{enumerate}[{\rm (i)}]
\item\label{enum:MassHomBoundProjective1} the pair $(\Lambda_{H_X},v_{H_X})$ is isomorphic to $(\Lambda_{H_X}',v_{H_X}')$ as quotients of $\Knum(X)$, where
\[
v_{H_X}'=\left(H_X^3\rk,H_X^2\ch_1,H_X\ch_2,\ch_3\right)
\]
and $\Lambda_{H_X}\coloneqq \mathrm{im}(v_{H_X}')\subset \QQ^4$;

\item\label{enum:MassHomBoundProjective2} $(X,H_X)$ satisfies~\cite[Conjecture 4.1]{BMS:StabCY3s}.
\end{enumerate}

In view of Example~\ref{ex:SmoothAbelianGroup}, Assumption~\ref{enum:MassHomBoundProjective1} means that the Todd classes $\td_{X,1}$, resp.~$\td_{X,2}$, are multiples of $H_X$, resp.~$H^2_X$.
In particular, this holds if $X$ has Picard rank~1.
Since the morphisms $v_{H_X}$ and $v_{H_X}'$ differ by multiplication by the Todd classes of $X$ and $\PP^n$, the space of stability conditions $\Stab_{(\Lambda_{H_X},v_{H_X})}(\Db(X))$ and $\Stab_{(\Lambda_{H_X},v_{H_X}')}(\Db(X))$ are the same.

By~\cite[Theorem 8.2]{BMS:StabCY3s}, there exists a connected open subset of geometric stability conditions
\[
\widetilde{\mathfrak P}\subset\Stab_{(\Lambda_{H_X},v_{H_X})}(\Db(X)),
\]
whose central charge is explicitly described in~\cite[Definition 8.1]{BMS:StabCY3s}.

\begin{proposition}\label{prop:BMTmassbound}
In the above notation, assume that~\ref{enum:MassHomBoundProjective1} and~\ref{enum:MassHomBoundProjective2} hold. 
Then
\[
\widetilde{\mathfrak P}\subset\Stabdagger_{H_X}(\Db(X)).
\]
In particular, any stability condition in $\widetilde{\mathfrak P}$ has a mass-Hom bound.
\end{proposition}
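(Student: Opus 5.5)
Since $\widetilde{\mathfrak P}$ is connected and $\Stabdagger_{H_X}(\Db(X))$ is a connected component, it suffices to find a single stability condition lying in both. The mass-Hom statement then follows at once from Theorem~\ref{thm:MassHomBoundProjective}. By Theorem~\ref{thm:ProjectiveScheme}, for $a\gg0$ the stability conditions $\widetilde{\sigma}^{a,b}_X=\iota^\sharp\sigma^{a,b}_{\PP^n}$ lie in $\Stabdagger_{H_X}(\Db(X))$. They are geometric, skyscraper sheaves have phase $1$, and they satisfy the Bayer properties \eqref{eq:BayerPropertyX1} and \eqref{eq:BayerPropertyX2}. The plan is to show that some $\widetilde{\sigma}^{a,b}_X$, possibly after acting by $\widetilde{\GL}_2^+(\RR)$ or moving along a short path in $\Stabdagger_{H_X}(\Db(X))$, coincides with an element of $\widetilde{\mathfrak P}$. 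We identify the two using the uniqueness Lemma~\ref{lem:unique-Bayer}.

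First I would compute the central charge $\widetilde{Z}^{a,b}_X(E)=Z^{a,b}_{\PP^n}(\iota_*E)$ via Grothendieck--Riemann--Roch, as in the proof of Theorem~\ref{thm:canonical-component}. This gives $-\int_X e^{-(b+ia)H_X}\ch(E)\,\td(N)^{-1}$-type corrections, which by assumption~\ref{enum:MassHomBoundProjective1} are polynomials in $H_X$. Rescaling $a$ and using $T(x)=1+O(x^2)$, I expect this charge to have the shape of the BMS central charge of \cite[Definition 8.1]{BMS:StabCY3s}, with parameters $(\alpha,\beta)=(a,b)$ and some $a$-dependent coefficients in front of the $H_X\ch_2$ and $H_X^3\rk$ terms. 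Concretely, I would check that for $a\gg0$ the tuple lies in the region $\widetilde{\mathfrak P}$ described there; this region is open and contains charges of the form $-\ch_3^\beta+bH\ch_2^\beta+aH^2\ch_1^\beta+i(H\ch_2^\beta-\tfrac{\alpha^2}{2}H^3\ch_0)$ with $a>\tfrac{\alpha^2}{6}+\tfrac{|b|\alpha}{2}$. If the charges do not literally lie in that region, I would connect them by a straight-line path of central charges, as in Theorem~\ref{thm:canonical-component}. I would use the support quadratic form of Lemma~\ref{lem:formula-support} rescaled by $M_a$, together with \cite[Theorem~1.2]{bayer:short-proof}, to lift the path while staying at slicing distance $<\tfrac18$. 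This preserves the Bayer property $\cP\prec\cP\otimes\cO_X(-1)[1]$ along it, exactly as in that proof.

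The key step is then to verify that stability conditions $\sigma\in\widetilde{\mathfrak P}$ with $\alpha$ large also satisfy the hypotheses of Lemma~\ref{lem:unique-Bayer} for $\cL=\cO_X(1)$ (after replacing by a multiple if needed): namely $\cP_\sigma\prec\cP_\sigma\otimes\cO_X(-1)[1]$, with skyscrapers of phase $1$. Phase $1$ holds since elements of $\widetilde{\mathfrak P}$ are geometric with $Z(\cO_x)=-1$. For the Bayer property, I would use that $-\otimes\cO_X(-1)$ acts on $\widetilde{\mathfrak P}$ by shifting $\beta\mapsto\beta-1$. By the explicit continuity of the BMS family and its scaling behavior as $\alpha\to\infty$, where the shift $\beta\mapsto\beta-1$ becomes a small deformation in rescaled coordinates, I would show $\mathrm{dist}(\cP_{\alpha,\beta},\cP_{\alpha,\beta-1})<1$ for $\alpha\gg0$. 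Then \eqref{eq:Distance} gives the strict Bayer property. Lemma~\ref{lem:unique-Bayer} then forces the endpoint of our path to equal the BMS stability condition with the same central charge.

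I expect the main obstacle to be this distance estimate inside $\widetilde{\mathfrak P}$. There is no elliptic-curve rescaling trick available on $X$, so one must control slicings of the tilt-constructed family uniformly as $\alpha\to\infty$. My first attempt would be to deform within $\widetilde{\mathfrak P}$ using the support property from \cite[Theorem 8.2]{BMS:StabCY3s}, written in the $M_\alpha$-rescaled coordinates. I would then argue that the quadratic form is asymptotically independent of $\alpha$, mimicking Lemma~\ref{lem:formula-support}, and invoke Bayer's lemma to bound the slicing distance by the relative change of central charge, which is $O(1/\alpha)$.
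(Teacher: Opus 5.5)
Your outline is the paper's proof: reduce to $\widetilde\sigma^{a,0}_X\in\widetilde{\mathfrak P}$ for $a\gg0$, realize $\widetilde Z^{a,0}_X$ as the central charge of some $\sigma^a\in\widetilde{\mathfrak P}$, normalize so skyscrapers have phase $1$, check $\sigma^a\prec\sigma^a\otimes\cO_X(-1)[1]$, and conclude by \eqref{eq:BayerPropertyX2} and Lemma~\ref{lem:unique-Bayer}. The normalization needs \cite[Lemma 8.3]{BMS:StabCY3s}, since $Z(\cO_x)=-1$ only fixes the phase modulo $2$.

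\textbf{The gap.} It is the step you yourself call the main obstacle: the strict Bayer property for BMS stability conditions with $\alpha\gg0$. You do not prove it, and it is not a routine transcription of Lemma~\ref{lem:formula-support}. Write $Z^{a',b}_{\alpha,\beta}$ for the BMS charge, to avoid clashing with the $\PP^n$ parameter $a$. In coordinates $r=H_X^3\ch_0$, $c=H_X^2\ch_1^\beta$ on $\ker Z^{a',b}_{\alpha,\beta}$, the BMT form $\alpha^2\overline{\Delta}_H+4(H_X\ch_2^\beta)^2-6H_X^2\ch_1^\beta\ch_3^\beta$ becomes $(\alpha^2-6a')c^2-3b\alpha^2rc$. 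This is degenerate (the direction $c=0$ is isotropic), so it is never negative definite on the kernel.

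To obtain an $M_\alpha$-rescaled form that is asymptotically $\alpha$-independent and negative definite on the limiting kernel, you would have to add $K\overline{\Delta}_H$ with $K$ of order $\alpha^2$. You would then need the support property of \cite[Theorem 8.2]{BMS:StabCY3s} to hold for that form uniformly in $\alpha$ and along the whole path $\beta\mapsto\beta-t$, $t\in[0,1]$. This must cover Bridgeland-semistable objects of the double-tilted heart, not only tilt-semistable ones. Only then would Bayer's lemma give $\mathrm{dist}(\cP_{\alpha,\beta},\cP_{\alpha,\beta-1})<1$.

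The paper does not carry this out. It imports exactly the needed statement, $\sigma^a\prec\sigma^a\otimes\cO_X(-1)[1]$ for $a\gg0$, from \cite[Proposition 8.5(4)]{Li:sb}. Without that input, or a complete proof of your estimate, the argument is not closed.

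\textbf{The central-charge step.} Your guess $(\alpha,\beta)=(a,b)$ is wrong, but a direct comparison does work, so the fallback path-lifting is unnecessary. By assumption~\ref{enum:MassHomBoundProjective1}, write $\td_X\cdot\iota^*\td_{\PP^n}^{-1}=\exp(\gamma_1H_X+\gamma_2H_X^2+\gamma_3H_X^3)$. Then, with $\beta=-\gamma_1$,
\[
\widetilde Z^{a,0}_X=-\ch_3^\beta+\Bigl(\frac{a^2}{2}-\gamma_2\Bigr)H_X^2\ch_1^\beta-\gamma_3H_X^3\ch_0+ia\Bigl(H_X\ch_2^\beta-\Bigl(\frac{a^2}{6}-\gamma_2\Bigr)H_X^3\ch_0\Bigr).
\]
Hence $\alpha^2=\frac{a^2}{3}-2\gamma_2$. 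Rescale the imaginary part and apply a shear in $\widetilde{\GL}_2^+(\RR)$ absorbing the term $-\gamma_3H_X^3\ch_0$. This gives BMS parameters $a'=\frac32\alpha^2+2\gamma_2$ and $b=-2\gamma_3/\alpha^2$, which satisfy $a'>\frac{\alpha^2}{6}+\frac12|b|\alpha$ for $a\gg0$. The paper reaches the same conclusion differently: it uses the interlacing of the roots of $\Re\widetilde Z^{a,0}_X(\cO_X(t))$ and $\Im\widetilde Z^{a,0}_X(\cO_X(t))$ together with \cite[Section 7]{Li:sb}.
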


\begin{proof}
For the first part of the statement, since $\widetilde{\mathfrak P}$ is a connected open subset, it is enough to show that for $a\gg 0$, the pullback stability condition $\widetilde{\sigma}_X^{a,0}=\iota^\sharp\sigma^{a,0}_{\PP^n}$ belongs to $\widetilde{\mathfrak P}$.
We first prove that the central charge $\widetilde{Z}_X^{a,0}$ of $\widetilde{\sigma}_X^{a,0}$ is the same as the central charge of a stability condition in $\widetilde{\mathfrak P}$.

Recall, by~\eqref{eq:ZstabonPn}, that
\[
Z^{a,0}_{\PP^n}(F)=-\ch_n(F)+ia\ch_{n-1}(F)+\dots+(-1)^{n+1}\frac{(ia)^n}{n!}\rk(F),
\]
where as usual we abuse notation and identify $\ch_j$ with $H_{\PP^n}^{n-j}.\ch_j$.
Consider the polynomials
\begin{align*}
&\Re \widetilde{Z}^{a,0}_X(\cO_X(t))=\Re Z^{a,0}_{\PP^n}(\iota_*\cO_X(t))=-\ch_n(\iota_*\cO_X(t))+\frac{a^2}{2}\ch_{n-2}(\iota_*\cO_X(t)),\\
&\Im \widetilde{Z}^{a,0}_X(\cO_X(t))=\Im Z^{a,0}_{\PP^n}(\iota_*\cO_X(t))=a\Bigl(\ch_{n-1}(\iota_*\cO_X(t))-\frac{a^2}{6}H^3\Bigr),
\end{align*}
with variable $t\in\mathbb R$.
For $a\gg 0$, the equation
$\Re\widetilde{Z}^{a,0}_X(\cO_X(t))=0$ has three real roots, asymptotic to
$\pm\sqrt{3}\,a+O(1)$ and $O(1)$, respectively. 
The equation
$\Im \widetilde{Z}^{a,0}_X(\cO_X(t))=0$ has two real roots, asymptotic to
$\pm\sqrt{1/3}\,a+O(1)$.
In particular, the roots of the two polynomials interlace. 
We deduce, by~\cite[Section 7]{Li:sb}, that for $a\gg0$, the central charge $\widetilde{Z}_X^{a,0}$ is the central charge of a stability condition $\sigma^a\in\widetilde{\mathfrak P}$, as we wanted.

Since all stability conditions in $\widetilde{\mathfrak P}$ are geometric, we can use~\cite[Lemma 8.3]{BMS:StabCY3s} and choose $\sigma^a$ in a unique way such that $\phi_{\sigma^a}(\cO_p)=1$, for all closed points $p\in X$.
We claim that $\sigma^a=\widetilde{\sigma}_X^{a,0}$, for $a\gg0$.
Indeed, one can see
\[ \sigma^a\prec\sigma^a\otimes\cO_X(-1)[1]. \]
for $a\gg0$ from \cite[Proposition 8.5(4)]{Li:sb}. On the other hand, \eqref{eq:BayerPropertyX2} with $m=1$ gives \[ \widetilde{\sigma}_X^{a,0} \prec \widetilde{\sigma}_X^{a,0}\otimes\cO_X(-1)[1]. \]
so Lemma~\ref{lem:unique-Bayer} implies $\sigma^a=\widetilde{\sigma}_X^{a,0}$. 

The second part of the statement follows from Theorem~\ref{thm:MassHomBoundProjective}.
\end{proof}

\subsection{Supported derived categories of total spaces of vector bundles}
\label{subsec:LocalFano}

As an application of the construction in Section~\ref{subsec:ProjectiveSchemes}, we show that stability conditions also exist in the supported case.
Recall first that if $Y$ is a noetherian scheme and $X\subset Y$ is a closed subset, then $\mathrm{D}_X^{\mathrm{b}}(Y)$ is the full triangulated subcategory of $\Db(Y)$ consisting of those objects with (set-theoretic) support on $X$.
For us, $Y$ will be the total space $\mathrm{Tot}(\cE)$ of a vector bundle $\cE$ on a projective scheme $X$ over $\KK$, and the closed embedding
\[
\xymatrix{
X\ar@{^{(}->}[r]^-{s}\ar[dr]_-{\mathrm{id}}&\mathrm{Tot}(\cE)\ar[d]^-{p}\\
&X
}
\]
is given by the $0$-section.
In particular, we have the natural identification:
\begin{equation*}
s_*\colon \rK_0(X)\xlongrightarrow{\cong}\rK_0\left(\mathrm{D}_X^{\mathrm{b}}(\mathrm{Tot}(\cE))\right).
\end{equation*}

Let $H_X$ be the numerical first Chern class of a very ample line bundle $\cO_X(1)$ on $X$.
We consider $(\Lambda_{H_X},v_{H_X})$ as in Section~\ref{subsec:ProjectiveSchemes}.
We then have the following result.

\begin{theorem}\label{thm:stabonlocal}
Let $X$ be a projective scheme over $\KK$ and $\cE$ a vector bundle on $X$. 
If $\cE^{\vee}$ is globally generated, then
\[
\Stab_{(\Lambda_{H_X},v_{H_X}\circ s_*^{-1})}\left(\mathrm{D}_X^{\mathrm{b}}(\mathrm{Tot}(\cE))\right)\neq\emptyset.
\]
\end{theorem}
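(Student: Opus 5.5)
We will construct the stability condition on $\mathrm{D}_X^{\mathrm{b}}(\mathrm{Tot}(\cE))$ by pulling back a stability condition on $\Db(X)$ along the functor $s_* \colon \mathrm{D}_X^{\mathrm{b}}(\mathrm{Tot}(\cE)) \to \Db(X)$, or more precisely along $p_*$. Every object of $\mathrm{D}_X^{\mathrm{b}}(\mathrm{Tot}(\cE))$ is supported on a finite-order thickening of the zero section, so $p_*$ sends it to $\Db(X)$. The functor $p_*$ is conservative on this subcategory, because $p$ is affine. Moreover $p_* s_* = \id$, which is compatible with the identification $s_*$ on $\rK_0$, and the central charge $\widetilde{Z}_X^{a,b}\circ v_{H_X}$ composed with $p_*$ recovers $v_{H_X}\circ s_*^{-1}$ on $\rK_0$. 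We therefore define $p^\sharp\cP(\phi) \coloneqq \{F : p_*F \in \widetilde{\cP}_X^{a,b}(\phi)\}$, with $a \gg 0$ as in Theorem~\ref{thm:ProjectiveScheme}. Once this is a slicing, the support property follows verbatim as in Remark~\ref{remark-pullback-slicing-stability}.

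To check that $p^\sharp\cP$ is a slicing, we apply the Polishchuk-type criterion (Corollary~\ref{cor:Polishchuk}) to $\Phi = p_*$, restricted to $\mathrm{D}_X^{\mathrm{b}}$, in the same way as in Proposition~\ref{proposition-pullback-stability}. Here $p_*\cO_{\mathrm{Tot}(\cE)} = \mathrm{Sym}^\bullet\cE^\vee$ plays the role of $f_*\cO_X$. The monad is $p_*p^*(-) = \mathrm{Sym}^\bullet\cE^\vee \otimes (-)$, so the needed condition becomes
\[
\mathrm{Sym}^j\cE^\vee \otimes \cP(\phi) \subset \widehat{\cP}(\geq \phi) \quad \text{for all } j\ge 0.
\]
Taking the coproduct over $j$ is harmless because $\widehat{\cP}(\ge\phi)$ is closed under coproducts. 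It therefore suffices to show $\cP \preceq \cP\otimes \cE^\vee$, in the sense that $F\in\cP(\phi)$ implies $\cE^\vee\otimes F \in \cP(\ge\phi)$. Iterating this gives the tensor powers, and $\mathrm{Sym}^j$ is a direct summand of $(\cE^\vee)^{\otimes j}$ in characteristic $0$. In general characteristic we would instead use that $\mathrm{Sym}^j\cE^\vee$ is a quotient of $(\cE^\vee)^{\otimes j}$ with a Koszul-type resolution, or more simply that $\mathrm{Sym}^j\cE^\vee$ is itself globally generated, and apply the argument below directly to it.

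The key step is the Bayer-type property: if $\cG$ is a globally generated vector bundle on $X$ and $F\in\cP(\phi)$, then $\cG\otimes F\in\cP(\geq\phi)$. The plan is to resolve $\cG$ using the surjection $\cO_X^{\oplus N}\twoheadrightarrow \cG$. Its kernel $\cK$ is again a vector bundle, but it need not be globally generated, so instead we take the full resolution
\[
\cdots \to \cO_X(-j)^{\oplus c_j} \to \cdots \to \cO_X^{\oplus N} \to \cG \to 0
\]
by sums of negative twists. Equivalently, we pull back from the Grassmannian via the classifying map $X\to \mathrm{Gr}$ and use an explicit Koszul/Eagon--Northcott-type resolution. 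This expresses $\cG$ as a left iterated extension of objects $\cO_X(-j_i)^{\oplus c_i}[i]$ together with a truncation tail lying in $\Db(X)^{\le -N'}$.

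The argument then mirrors Proposition~\ref{prop:PullbackStability}. By \eqref{eq:BayerPropertyX1} and \eqref{eq:BayerPropertyX2}, for $a$ large relative to $\max j_i/i$ we have $\cP\preceq\cP\otimes\cO_X(-j)[i]$, exactly as in the proof of Theorem~\ref{thm:ProjectiveScheme}. The tail term is handled as in condition~\ref{enum:Pull}, using that $\cP(>0)$ lies between two shifts of the standard t-structure; this holds for the geometric slicing $\widetilde{\cP}_X^{a,b}$. The choice of $a$ depends only on finitely many twists determined by $\cG$, which is the main obstacle: for the symmetric powers we need uniformity in $j$. We will therefore avoid treating each $\mathrm{Sym}^j$ separately and only prove the single inequality for $\cE^\vee$, then iterate via tensor powers. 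Where the summand argument fails in positive characteristic, we filter $\mathrm{Sym}^j$ as a quotient of a tensor power and use that $\cP(\ge\phi)$ is closed under quotients in the relevant exact triangles with kernels also in $\cP(\ge\phi)$, which again follows from the same one-step inequality.
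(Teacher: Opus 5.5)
\textbf{There is a genuine gap in the step where you prove that $p^\sharp\cP$ is a slicing.} Corollary~\ref{cor:Polishchuk} does not apply to $p_*$, and the analogy with Proposition~\ref{proposition-pullback-stability} fails because $p$ is not finite.

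\emph{Source $\Dqc(\mathrm{Tot}(\cE))$.} Hypothesis~\ref{enum:Polishchuk3} of Theorem~\ref{thm:Polishchuk} is false here. On one side, $p_*\cO_{\mathrm{Tot}(\cE)}=\mathrm{Sym}^\bullet\cE^\vee$ is not coherent. On the other, coherence of $p_*F$ only tells you that $F$ is coherent with support finite over $X$ (e.g.\ $\cO_{\{t=1\}}$ on $\mathbb{A}^1$), not that $F$ is supported on the zero section. So nothing shows that the truncations of the induced t-structure preserve $\mathrm{D}_X^{\mathrm{b}}(\mathrm{Tot}(\cE))$.

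\emph{Source the quasi-coherent complexes supported on $X$.} The restriction of $p_*$ does not commute with products (test on $\prod_n\KK[t]/t^n$ over $\mathbb{A}^1$), so it has no left adjoint. Then neither the monad $p_*p^*$ nor the generating set $\Phi^L(\cG_Y)$ is defined there.

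\emph{Direct transport.} You also cannot move Hom-vanishing from $\Db(X)$ across $p_*$, because $p_*$ is not faithful: for $\mathbb{A}^1\to\Spec\KK$, the nonzero class in $\Ext^1(\cO_0,\cO_0)$ maps to $0$.

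\emph{Uniformity in $j$.} The theorem is over an arbitrary $\KK$. In positive characteristic, writing $\mathrm{Sym}^j\cE^\vee$ as a quotient of $(\cE^\vee)^{\otimes j}$ requires $K_j\otimes F\in\cP(\geq\phi-1)$ for the kernels $K_j$. These are infinitely many bundles, and the one-step inequality for $\cE^\vee$ gives no control over them.

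\textbf{The missing idea is to induce in the opposite direction, along $s_*$, using Proposition~\ref{prop:extendstabcond}.} Since $s_*\Db(X)$ generates $\mathrm{D}_X^{\mathrm{b}}(\mathrm{Tot}(\cE))$ by extensions, it suffices that $s_*$ induces $\Hom(F,F')\cong\Hom(s_*F,s_*F')$ for $F\in\widetilde{\cP}^{a,b}_X(\phi)$ and $F'\in\widetilde{\cP}^{a,b}_X(<\phi+1)$.
\begin{itemize}
\item By adjunction the right side is $\Hom(s^*s_*F,F')$, and $s^*s_*F\cong F\otimes\bigoplus_{i=0}^{\rk\cE}\bigwedge^i\cE^\vee[i]$.
\item Your estimate for globally generated bundles is fine and essentially matches the paper's. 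Applied to the finitely many $\bigwedge^i\cE^\vee$, it gives, for a single $a\gg0$, $F\otimes\bigwedge^i\cE^\vee[i]\in\widetilde{\cP}^{a,b}_X(\geq\phi+i)$.
\item Hence the summands with $i\geq1$ admit no maps to $F'$.
\end{itemize}
The comonad $s^*s_*$ involves only finitely many exterior powers, with positive shifts, rather than all symmetric powers. So there is no uniformity or characteristic issue.

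A posteriori, the resulting slicing $\langle s_*F\st F\in\widetilde{\cP}^{a,b}_X(\phi)\rangle$ coincides with your $p^\sharp\cP$. It is contained in $p^\sharp\cP$, and conservativity of $p_*$ together with uniqueness of HN filtrations forces equality. So your definition is the right object; what is missing is a valid argument that it is a slicing.
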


\begin{proof}
We want to apply Proposition~\ref{prop:extendstabcond} to the functor $s_*\colon\Db(X)\to \mathrm{D}_X^{\mathrm{b}}(\operatorname{Tot}(\cE))$.
Since condition~\ref{enum:extendslicing1} of Proposition~\ref{prop:extendslicing} is automatic in our setting, we only need to check~\ref{enum:extendslicing2}.

We consider the stability conditions $\widetilde{\sigma}^{a,b}_X$ in Theorem~\ref{thm:ProjectiveScheme} for $a\gg 0$.
We claim that, for any globally generated locally free sheaf $\cF$ on $X$, we have
\begin{equation}\label{eq:stabonlocal1}
\widetilde{\cP}^{a,b}_X(\phi)\otimes\cF\subset\widetilde{\cP}^{a,b}_X(\geq\phi).    
\end{equation}
Indeed, we may assume that $\phi\in(0,1]$.
Let us consider the embedding $\iota\colon X\hookrightarrow\PP^n$ associated to the complete linear system of $\cO_X(1)$.
As remarked in the proof of~\cite[Theorem 6.5]{chunyi-stability}, since $\sigma^{a,b}_{\PP^n}$ is geometric, \cite[Lemma 10.1]{Bridgeland:K3} implies that
\[
\Coh(\PP^n)\subset\cP^{a,b}_{\PP^n}((1-n, 1])\quad\text{ and }\quad \cP^{a,b}_{\PP^n}((0,1])\subset\langle\Coh(\PP^n),\dots,\Coh(\PP^n)[n-1]\rangle,
\]
where, as before, $\langle-\rangle$ denotes the extension-closure.
Hence, since $\widetilde{\sigma}^{a,b}_X=\iota^\sharp\sigma^{a,b}_{\PP^n}$ and $\iota_*$ is conservative, we deduce that
\[
\Coh(X)\subset\widetilde{\cP}^{a,b}_X((1-n, 1])\quad\text{ and }\quad \widetilde{\cP}^{a,b}_{X}((0,1])\subset\langle\Coh(X),\dots,\Coh(X)[n-1]\rangle.
\]
In particular, for all locally free sheaves $\cG$ on $X$, we have
\[
\widetilde{\cP}^{a,b}_X((0,1])\otimes\cG[n]\subset\widetilde{\cP}^{a,b}_X(>1).
\]
Since $\cF$ is globally generated, we can consider a truncated resolution of the form
\[
0\longrightarrow \cG\longrightarrow\cO_X(-b_n)^{\oplus a_n}\longrightarrow\dots\longrightarrow\cO_X(-b_2)^{\oplus a_2}\longrightarrow\cO_X^{\oplus a_1}\longrightarrow\cF\longrightarrow 0,
\]
with $a_1,\dots,a_n,b_2,\dots,b_n\geq1$, and $\cG$ locally free on $X$.
By arguing as in the proof of Theorem~\ref{thm:ProjectiveScheme}, if we choose $a\gg 0$, for all $\widetilde{\sigma}^{a,b}_X$-semistable object $F\in\widetilde{\cP}^{a,b}_X((0,1])$, we get
\[
\phi_{\widetilde{\sigma}^{a,b}_X}^-(F\otimes\cF)\geq\min\left\{\underbrace{\phi^{\phantom{-}}_{\widetilde{\sigma}^{a,b}_X}(F)}_{\leq 1},\underbrace{\phi^-_{\widetilde{\sigma}^{a,b}_X}(F\otimes\cG[n])}_{>1}\right\}\geq\phi^{\phantom{-}}_{\widetilde{\sigma}^{a,b}_X}(F),
\]
and thus~\eqref{eq:stabonlocal1} follows.

Now, we notice that 
\[
s^*s_*F=s^*(p^*F\otimes s_*\cO_X)=F\otimes s^*s_*\cO_X=F\otimes\left(\bigoplus_{i=0}^{\rk(\cE)} \bigwedge^i\cE^\vee[i]\right).
\]
Since each $\bigwedge^i\cE^\vee$ is globally generated (and there are finitely many of them), we can find $a\gg0$ which works for all of them, and the above claim gives 
\[
\Hom(s_*F,s_*F')=\Hom(s^*s_*F, F')=\Hom(F,F')
\]
for all $F\in\widetilde{\cP}_X^{a,b}(\phi)$, $F'\in \widetilde{\cP}_X^{a,b}\left(<\phi+1\right)$, and $\phi\in\RR$. 
This completes the proof.
\end{proof}

Analogously, we have the following variant, which includes all local Calabi--Yau varieties.

\begin{theorem}\label{thm:local-cy}
Let $X$ be a projective scheme over $\KK$ and $\cL$ be a line bundle on $X$. 
If $\cL^{\vee}$ is ample, then
\[
\Stab_{(\Lambda_{H_X},v_{H_X}\circ s_*^{-1})}\left(\mathrm{D}_X^{\mathrm{b}}(\mathrm{Tot}(\cL))\right)\neq\emptyset
\]
where $H_X\coloneqq c_1(\cL^{\vee})$.
\end{theorem}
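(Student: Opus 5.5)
The plan follows the proof of Theorem~\ref{thm:stabonlocal}: we apply Proposition~\ref{prop:extendstabcond} to $s_*\colon\Db(X)\to\mathrm{D}_X^{\mathrm{b}}(\mathrm{Tot}(\cL))$. Since condition~\ref{enum:extendslicing1} of Proposition~\ref{prop:extendslicing} is automatic, everything reduces to~\ref{enum:extendslicing2}. That condition amounts to the vanishing $\Hom(s^*s_*F,F')=\Hom(F,F')$ for $F\in\widetilde{\cP}^{a,b}_X(\phi)$ and $F'\in\widetilde{\cP}^{a,b}_X(<\phi+1)$. Here $\widetilde{\sigma}^{a,b}_X=\iota^\sharp\sigma^{a,b}_{\PP^n}$ is built from an embedding $\iota$ given by a very ample power $(\cL^\vee)^{\otimes m}$. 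Because $H_X=c_1(\cL^\vee)$, Definition~\ref{def:StabdaggerAbsolute} lets us use the lattice $(\Lambda_{H_X},v_{H_X})$.

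Since $\cL$ is a line bundle, the Koszul description gives
\begin{equation*}
s^*s_*F\cong F\otimes(\cO_X\oplus\cL^\vee[1]).
\end{equation*}
The summand $F$ contributes exactly $\Hom(F,F')$. It therefore suffices to show $\Hom(F\otimes\cL^\vee[1],F')=0$, i.e.\ $F\otimes\cL^\vee[1]\in\widetilde{\cP}^{a,b}_X(\geq\phi+1)$. This follows from the Bayer-type inclusion
\begin{equation*}
\widetilde{\cP}^{a,b}_X(\phi)\otimes\cL^\vee\subset\widetilde{\cP}^{a,b}_X(\geq\phi),\qquad\text{i.e.}\qquad \widetilde{\cP}^{a,b}_X\preceq\widetilde{\cP}^{a,b}_X\otimes\cL^\vee .
\end{equation*}
In the globally generated case this came from~\eqref{eq:stabonlocal1}, but $\cL^\vee$ is now only ample.

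To prove the inclusion, write $\cO_X(1)=(\cL^\vee)^{m}$, so~\eqref{eq:BayerPropertyX1} gives $\widetilde{\cP}\preceq\widetilde{\cP}\otimes(\cL^\vee)^{m}$. Enlarging $m$ if necessary, we may assume $(\cL^\vee)^{m-1}$ is globally generated; actually I would choose $m$ so that $(\cL^\vee)^{j}$ is globally generated for all $j\ge m_1$. The clean route is the following. Choose $k$ large so that $\cL^{\vee}\otimes\cO_X(-1)^{\otimes(-k)}$-type twists become globally generated, and deduce by the argument giving~\eqref{eq:stabonlocal1} that $\widetilde{\cP}\otimes\cM\subset\widetilde{\cP}(\geq\cdot)$ for $\cM=\cL^\vee\otimes\cO_X(k)$. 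However, this alone yields $\widetilde{\cP}\preceq\widetilde{\cP}\otimes\cL^\vee(k)$, not the statement for $\cL^\vee$. A better approach is to embed using $\cL^\vee$ itself. If $\cL^\vee$ is not very ample, use the Castelnuovo--Mumford-style resolution of $\cL^\vee$ by sums of $\cO_X(-j)$, where $\cO_X(1)=(\cL^\vee)^m$, as in Example~\ref{ex:Pullback}\eqref{enum:immersions}:
\begin{equation*}
\cdots\to\cO_X(-b_2)^{\oplus a_2}\to\cO_X(-b_1)^{\oplus a_1}\to\cL^\vee\to0 .
\end{equation*}
Here $b_1$ may be negative, since $\cL^\vee\otimes\cO_X(b)$ is globally generated for $b\geq0$, so we take $b_1=0$ when $m\ge1$... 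In practice, the simplest choice is: $\cL^\vee$ is ample, so some power $(\cL^{\vee})^{m}$ is very ample and $(\cL^\vee)^{j}$ is globally generated for $j\geq m_0$. Then replace the embedding by the one given by $(\cL^\vee)^{m}$ with $m\ge m_0$, and observe that $\cL^\vee\otimes(\cL^\vee)^{m}$ is globally generated. Applying~\eqref{eq:stabonlocal1} to $\cF=(\cL^\vee)^{m+1}$ and to $\cO_X(1)=(\cL^\vee)^m$ only gives Bayer properties for $(\cL^\vee)^{m}$ and $(\cL^\vee)^{m+1}$. The Bayer property for $\cL^\vee$ itself should then come from an argument in the style of Proposition~\ref{prop:lvlimpliesbayer}: restrict to a divisor in $|(\cL^\vee)^m|$ and induct on dimension, using the triangle $F\otimes\cL^{\vee}\otimes(\cL)^{m}\to F\otimes\cL^\vee\to\iota_*\iota^*(F\otimes\cL^\vee)$ together with~\eqref{eq:BayerPropertyX2} for $a\gg0$. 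The base case $\dim X=0$ is trivial.

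I expect this last step to be the main obstacle: upgrading Bayer properties for $\cO_X(1)=(\cL^\vee)^m$ and for globally generated bundles to the property $\widetilde{\cP}\preceq\widetilde{\cP}\otimes\cL^\vee$ for the ample but not globally generated $\cL^\vee$. The first thing I would try is the inductive divisor-restriction argument of Proposition~\ref{prop:lvlimpliesbayer}, with $a$ chosen large uniformly along the induction. Once the inclusion holds, Proposition~\ref{prop:extendstabcond} produces the stability condition on $\mathrm{D}_X^{\mathrm{b}}(\mathrm{Tot}(\cL))$ with respect to $(\Lambda_{H_X},v_{H_X}\circ s_*^{-1})$.
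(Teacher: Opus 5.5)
\textbf{There is a genuine gap: the inclusion everything rests on is never proved in the ample, non-very-ample case, and the route you suggest for it fails.}

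Your reduction is correct. By Proposition~\ref{prop:extendstabcond} and $s^*s_*F\cong F\oplus F\otimes\cL^\vee[1]$, everything comes down to the inclusion $\widetilde{\cP}^{a,b}_X\preceq\widetilde{\cP}^{a,b}_X\otimes\cL^\vee$. When $\cL^\vee$ is itself very ample, this is just \eqref{eq:BayerPropertyX1} for the embedding given by $\cL^\vee$.

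Your proposed inductive step uses $Y\in|(\cL^\vee)^{m}|$ and the triangle $F\otimes\cL^{m-1}\to F\otimes\cL^\vee\to\iota_*\iota^*(F\otimes\cL^\vee)$. To conclude $\phi^-(F\otimes\cL^\vee)\geq\phi$ from it, you need $\phi^-(F\otimes\cL^{m-1})\geq\phi$. For $m\geq 2$ that is a lower bound after twisting by an anti-ample bundle. Nothing you have provides it: \eqref{eq:BayerPropertyX2} only gives $F\otimes\cO_X(-1)\in\widetilde{\cP}(>\phi-1)$. It is also false in general; already for slope stability on a curve, twisting a stable bundle by a negative line bundle lowers its phase.

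Proposition~\ref{prop:lvlimpliesbayer} works only because the divisor is cut out by a section of the very bundle one twists by, so the first vertex of the triangle is $F$ itself. For an ample $\cL^\vee$ with few or no sections there is no such divisor, and the induction on $Y$ hits the same obstruction. Your other attempts (globally generated twists, resolutions) only give Bayer properties for $(\cL^\vee)^m$, $(\cL^\vee)^{m+1}$ or $\cL^\vee\otimes\cO_X(k)$, as you note yourself.

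\textbf{The missing idea, used in the paper, is to turn the twist by the $m$-th root $\cL^\vee$ of $\cO_X(1)$ into a move inside the family $\widetilde{\sigma}^{a,b}_X$.}
\begin{itemize}
\item Since $c_1(\cL^\vee)=\frac1m c_1(\cO_X(1))$, the central charge of $\widetilde{\sigma}^{a,b}_X\otimes\cL$ is $\widetilde{Z}^{a,b-1/m}_X$.
\item Tensoring preserves \eqref{eq:BayerPropertyX1} and \eqref{eq:BayerPropertyX2} and fixes skyscraper sheaves. Lemma~\ref{lem:unique-general} therefore gives $\widetilde{\sigma}^{a,b}_X\otimes\cL=\widetilde{\sigma}^{a,b-1/m}_X$.
\item The paper then argues as in Lemma~\ref{lem:BayerPropertyEllipticCurves2} that $\mathrm{dist}(\widetilde{\sigma}^{a,b}_X,\widetilde{\sigma}^{a,b}_X\otimes\cL)<1$ for $a\gg0$, and invokes \eqref{eq:Distance}.
\end{itemize}
This distance bound yields $\widetilde{\cP}\prec\widetilde{\cP}\otimes\cL[1]$. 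That only places $F\otimes\cL^\vee$ in $\widetilde{\cP}((\phi-1,\phi+1))$, so it is not literally the inclusion you correctly isolated. Passing from one to the other uses Proposition~\ref{prop:lvlimpliesbayer}, which again wants $\cL^\vee$ very ample.

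Once you have the identification, you can get exactly your inclusion from monotonicity in $b$:
\begin{itemize}
\item On $E^n$, Lemma~\ref{lem:BayerPropertyEllipticCurves1} and \eqref{sigmab-otimes-L} give $\cP^{a,b}\preceq\cP^{a,b+1}$.
\item Pushing forward along $\pi_q$, which preserves $\preceq$ by Remark~\ref{rmk:properties-preceq}, and using \eqref{pim-pushforward-sigma}, gives $\cP^{a,b}\preceq\cP^{a,b+1/q^2}$ for all $(a,b)$ and $q\geq 1$.
\item By transitivity, $\cP^{a,b}\preceq\cP^{a,b+r}$ for every rational $r>0$.
\item This survives $g_\sharp$ and $\iota^\sharp$. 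So for $a\gg0$ you get $\widetilde{\cP}^{a,b}_X\preceq\widetilde{\cP}^{a,b+1/m}_X=\widetilde{\cP}^{a,b}_X\otimes\cL^\vee$.
\end{itemize}
This is exactly what Proposition~\ref{prop:extendstabcond} needs, with no very-ampleness hypothesis.
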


\begin{proof}
As in Theorem~\ref{thm:stabonlocal}, it suffices to find a stability condition $\sigma\in \Stab_{(\Lambda_{H_X},v_{H_X})}(\Db(X))$ such that $\sigma \preceq \sigma\otimes \cL[1]$. To this end, we take $m\gg 0$ such that $(\cL^{\vee})^{\otimes m}$ induces a closed embedding $\iota\colon X\hookrightarrow \PP^n$. Consider $\widetilde{\sigma}_X^{a,b}$ constructed in \eqref{eq:Paris20260702}. Then by Lemma \ref{lem:unique-general}, we have $\widetilde{\sigma}_X^{a,b}\otimes \cL=\widetilde{\sigma}_X^{a,b-\frac{1}{m}}$. As in the proof of Lemma \ref{lem:BayerPropertyEllipticCurves2} (see also Proposition \ref{prop:sigmaabRonEn}), we can further take $a\gg 0$ such that
\[
\mathrm{dist}(\widetilde{\sigma}_X^{a,b},\widetilde{\sigma}_X^{a,b}\otimes \cL)<1,
\]
and the result follows from Remark \ref{eq:Distance}.
\end{proof}

\begin{remark}
\label{remark-inducing-TotL}
An interesting case where the nonemptiness of the space of stability conditions has been studied is when $X$ is a Fano manifold and we consider the total space $\mathrm{Tot}(\omega_X)$; for instance $\mathrm{Tot}(\omega_{\PP^2})$, the \emph{local $\PP^2$} (see~\cite{BMLocalP2}).
Actually, in the situation when $\cE=\cL$ is an invertible sheaf on $X$, as a byproduct of the proof of Theorem~\ref{thm:stabonlocal}, we can write the condition we need in terms of the Bayer property.
Namely, given a slicing $\cP$ of $\Db(X)$ such that $\cP\preceq\cP\otimes\cL[1]$, we have an induced slicing on $\mathrm{D}^\mathrm{b}_X(\mathrm{Tot}(\cL))$; similarly, for stability conditions. We are informed that a similar result for stability conditions on local Fano manifolds has been obtained via global dimension arguments in \cite{WuZhang:LocalFano}.
\end{remark} 


\newpage 

\part{Relative case}\label{part:Relative}


The goal for the second part of the paper is to handle the general situation, where we consider projective families over a base scheme, as well as moduli spaces of semistable objects. 

Section~\ref{sec:StabilityFamily} is broadly devoted to setting the stage to work with families of stability conditions and moduli space of semistable objects. 
In Section~\ref{sec:PullPushRelativeStability}, we discuss the key generalization to the relative setting of the results from Section~\ref{sec:PullbackPushforward} on pullback and pushforward of stability conditions.
In Section~\ref{sec:ProjectiveFamilies}, we prove our main theorem, Theorem~\ref{main-theorem}.
Finally, in Section~\ref{sec:BondalOrlov}, we study in more detail the stability of objects numerically equivalent to skyscraper sheaves and their moduli, and as an application give a new proof of the Bondal--Orlov reconstruction theorem.


\section{Stability conditions in families}\label{sec:StabilityFamily}

The goal of this section is to present the definition of stability condition over a base, following~\cite{stability-families}, with a few technical improvements which we will need in the following sections. 
When considering relative stability conditions, we will work in the following setup:  

\begin{setup}
\label{assum-stab}
    Let $\pi \colon X\to S$ be a flat projective morphism, where $S$ is a noetherian Nagata scheme of finite Krull dimension which is quasi-projective over a noetherian affine scheme. 
\end{setup}

In Section~\ref{subsec:PerfectStacks} we introduce universally gluable, relatively perfect objects, and use them to define the moduli functor, and in Section~\ref{subsec:RelativeKnum} we recall the technical notion of the relative numerical Grothendieck group. 
Sections~\ref{subsec:HNstructure} and~\ref{subsec:StabilityBase} are the key parts, where we give the definition of HN structure and use this to define stability conditions over an arbitrary base.
Finally, in Section~\ref{subsection-moduli-spaces}, we explain that for a smooth projective scheme $X$ over a field $\KK$, a stability condition on $\Db(X)$ with a mass-Hom bound is necessarily a stability condition over~$\KK$. 
In view of our earlier results, this yields a proof of Theorem~\ref{main-theorem} in the case where $X$ is smooth and projective over a field; the general case will be treated with a different argument in Section~\ref{sec:ProjectiveFamilies}. 

\subsection{Relative perfect objects and moduli stacks}\label{subsec:PerfectStacks}

In this section we briefly review the moduli functor for objects in the bounded derived category of coherent sheaves.
Relative perfect complexes are the first ingredient in the definition of the moduli functor, since they generalize the notion of a flat family of coherent sheaves over a base to bounded complexes.
We recall here the definition (see~\citestacks{0DI0}) and a few of their basic properties.

\begin{definition}\label{def:RelativePerfect}
Let $g\colon X\to S$ be a morphism which is flat and locally of finite presentation. An object $F\in\Dqc(X)$ is \emph{perfect relative to $S$}, or \emph{$S$-perfect}, if $F$ is pseudo-coherent and locally of finite Tor-dimension over $g^{-1}\cO_S$.
\end{definition}

We need the following flattening stratification of relative perfect objects.

\begin{lemma}\label{lem:flatten}
Let $X\to S$ be a flat projective morphism between noetherian schemes and $F\in \Dqc(X)$ be an $S$-perfect object. 
\begin{enumerate}[{\rm (1)}]
\item\label{enum:flatten1} There exists an interval $[a,b]\subset\RR$ so that $\cH^i(F)=\cH^i(F_s)=0$ for any $i\notin [a,b]$ and any $s\in S$.
\item\label{enum:flatten2} We can find a finite set of locally closed subschemes $\{S_j\}_{j\in J}$ of $S$ so that
\begin{itemize}
\item $\cH^i(F_{S_j})=0$ for $i\notin [a,b]$,
\item $\cH^i(F_{S_j})$ is flat over $S_j$ for $i\in [a,b]$, and
\item $S=\bigcup_{j\in J}S_j$ as a set.
\end{itemize}
\end{enumerate}
\end{lemma}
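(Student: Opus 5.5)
For part~(1), I will reduce to the affine case. Since $S$ is noetherian it is quasi-compact, so I cover it by finitely many affine opens $U=\Spec R$; it suffices to find an interval on each $U$ and take the union. Over an affine base, $X_U\to U$ is flat and projective, and $F_U$ is $U$-perfect. By \citestacks{0DI0} and the related results on relatively perfect complexes, $X_U$ is quasi-compact. Pseudo-coherence together with finite Tor-dimension over $\cO_U$ then gives a uniform bound $F_U\in \Dqc(X_U)^{[a',b']}$ locally on $X_U$. To make this explicit, I cover $X_U$ by finitely many affines $V_k=\Spec A_k$, where $A_k$ is flat over $R$. On each $V_k$, $F$ is a pseudo-coherent $A_k$-complex of finite Tor-dimension $[c_k,d_k]$ over $R$. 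Since it is pseudo-coherent, its cohomology is bounded above; I expect finite Tor-dimension over $R$ to give a lower bound as well, because $F|_{V_k}\otimes_R^L R\simeq F|_{V_k}$ has Tor-amplitude in $[c_k,d_k]$.

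For the fibres, $F_s=F\otimes^L_{\cO_S}\kappa(s)$ is computed on each $V_k$ as $F|_{V_k}\otimes^L_R\kappa(s)$. Its cohomology therefore lies in degrees $[c_k,d_k]$ by the definition of Tor-amplitude. I then take $[a,b]$ to be the convex hull of all the $[c_k,d_k]$, which also contains the cohomological range of $F$ itself.

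For part~(2), I will use noetherian induction on $S$. By generic flatness, I want to find a dense open subscheme $U$ of each irreducible component, with its reduced structure, over which all $\cH^i(F_U)$ are flat. The delicate point is that restricting $F$ to a subscheme $T\subset S$ uses the derived pullback $F_T$, and this does not commute with taking cohomology unless the cohomology is flat. My plan is to pass to $T=S_{\mathrm{red}}$, or to an integral closed subscheme, with generic point $\eta$. The coherent sheaves $\cH^i(F_T)$, finitely many of them for $i\in[a,b]$, are flat over a dense open $T^\circ\subset T$ by generic flatness for the projective morphism $X_T\to T$ (EGA IV 6.9.1). Over $T^\circ$ we still have $\cH^i(F_{T^\circ})=0$ for $i\notin[a,b]$, by part~(1) applied to $F_T$, which is again $T$-perfect.

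I then set $S_1=T^\circ$ and continue by noetherian induction on the closed complement $T\setminus T^\circ$, applying the same construction to $F_{T\setminus T^\circ}$, which is perfect over that complement. Because $S$ is noetherian the process terminates, and it yields finitely many locally closed $S_j$ covering $S$ set-theoretically.

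The main obstacle is the generic flatness step for complexes. I do not need the cohomology sheaves of $F_T$ to commute with base change; I only need them to be flat over $T^\circ$, which generic flatness provides directly. The same interval $[a,b]$ persists because every $S_j$ is obtained by base change of $F$.
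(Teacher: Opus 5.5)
Your proposal is correct. For part (2) it matches the paper's argument: reduce to $S$ reduced, apply generic flatness to the finitely many coherent sheaves $\cH^i(F_T)$ for $i\in[a,b]$, and iterate on the closed complement, which terminates by noetherianity. The genuine difference is in part (1), and in how you control the interval after base change.

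The paper first reduces to $S$ affine. It then cites Lieblich's result that $F$ is quasi-isomorphic to a bounded complex $P^\bullet$ of $S$-flat coherent sheaves with $P^i=0$ for $i\notin[a,b]$. Every derived fiber and every derived restriction, such as $F_{S\setminus S_1}\cong P^\bullet_{S\setminus S_1}$, is then computed termwise. So the bound holds for $F$, for all $F_s$, and for all base changes at once, at the cost of a global resolution result for projective families over an affine base.

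You instead work locally on $X$ with the Tor-amplitude description of $S$-perfectness. On finitely many affines $\Spec A_k$, the corresponding complex $M_k$ has Tor amplitude $[c_k,d_k]$ over $R$. Since $A_k$ is $R$-flat, $F_T$ is locally $M_k\otimes^L_R R'$. Its cohomology therefore vanishes outside $[c_k,d_k]$ by the definition of Tor amplitude, applied with the module $R'$; this covers $R'=R$ and $R'=\kappa(s)$ as special cases. This route is more elementary and never uses projectivity, only quasi-compactness of $X$.

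One point should be stated precisely. The same $[a,b]$ works on every $S_j$ because Tor amplitude is stable under $-\otimes^L_R R'$. It does not follow from ``part (1) applied to $F_T$'', which on its own would only give some possibly different interval. Also, quasi-compactness of $X_U$ comes from $X_U$ being projective over a noetherian affine scheme, not from results on relatively perfect complexes.
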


\begin{proof}
By covering $S$ by finitely many affine open subsets, we may assume that $S$ is affine. 
Then by~\cite[Corollary 2.1.7]{lieblich:moduli-of-complex}, we have $F\cong P^{\bullet}$, where $P^{\bullet}$ is a bounded complex of coherent sheaves on $X$ such that each $P^i$ is flat over $S$ and $P^i=0$ for $i\notin [a,b]$, and part~\ref{enum:flatten1} follows.

For part~\ref{enum:flatten2}, after base change to its reduction, we may assume that $S$ is reduced. 
Applying~\citestacks{052B} to $\bigoplus_{i\in [a,b]} \cH^i(F)$, we can find a nonempty open subset $S_1$ of $S$ so that $(\cH^i(F))_{S_1}=\cH^i(F_{S_1})$ is flat over $S_1$ for each $i\in [a,b]$. 
Note that since $F_{S\setminus S_1}\cong P^{\bullet}_{S\setminus S_1}$, we also have $\cH^i(F_{S\setminus S_1})=0$ for $i\notin [a,b]$. 
Using~\citestacks{052B} again, we find an open subset $S_2$ of $S\setminus S_1$ so that $\cH^i(F_{S_2})$ is flat over $S_2$ for each $i\in [a,b]$. 
We continue this process: for each $k\geq 1$, we get an open subset $S_k\subset S\setminus \bigcup_{t=1}^{k-1} S_t$ so that
\[
\cH^i(F_{S_k})=0 \text{ for } i\notin [a,b] \quad \text{ and } \quad \cH^i(F_{S_k}) \text{ is flat over } S_k \text{ for } i\in [a,b].
\]
Then we obtain a sequence of closed subschemes
\[
\dots \subset S\setminus \bigcup_{t=1}^{k-1} S_t\subset S\setminus \bigcup_{t=1}^{k-2} S_t\subset \dots \subset S\setminus S_1\subset S.
\]
Note that by~\citestacks{052B}, if $S\setminus\bigcup_{t=1}^{k-1} S_t$ is nonempty, then $S_k$ is also nonempty. 
So by the noetherian property of $S$, the above sequence stabilizes at the empty set, i.e., $S=\bigcup_{j\in J} S_j$ for a finite index set $J$. 
This proves~\ref{enum:flatten2}.
\end{proof}

The next step is universally gluable objects.

\begin{definition}
Let $X\to S$ be a flat, proper, finitely presented morphism of schemes. 
An $S$-perfect object $F\in\Dqc(X)$ is \emph{universally gluable} if, for every $s\in S$, we have $\Ext^i(F_s,F_s)=0$ for all $i<0$.
\end{definition}

We denote by $\Dpug(X/S)\subset \Dqc(X)$ the full subcategory of universally gluable $S$-perfect objects.
We can now define the moduli functor.
Let $\mathrm{Sch}/S$ denote the category of all $S$-schemes and $\mathrm{Gpds}$ the (2-)category of groupoids.

\begin{definition}\label{def:Mpug}
We let
\[
\cM_{\mathrm{pug}}(X/S)\colon (\mathrm{Sch}/S)^{\mathrm{op}}\to \mathrm{Gpds}
\]
be the functor whose value on $T\in(\mathrm{Sch}/S)$ consists of all $E\in \Dpug(X_T/T)$.
\end{definition}

According to \cite{lieblich:moduli-of-complex}, $\cM_{\mathrm{pug}}(X/S)$ is an algebraic stack locally of finite presentation over $S$.
For a stability condition over $S$, the subfunctor parameterizing semistable objects will be open and bounded. To conclude this section, we recall the notion of boundedness.

\begin{definition}
A subfunctor $\cM\subset\cM_{\mathrm{pug}}(X/S)$ is \emph{bounded} if there is a pair $(B,\cF)$ where $B$ is a scheme of finite type over $S$ and $\cF\in \cM(B)$ is an object such that for every geometric point $\bar{s}$ over $S$ and $F\in \cM(\kappa(\bar{s}))$, there exists a $\kappa(\bar{s})$-rational point $b$ of $B\times_S \Spec(\kappa(\bar{s}))$ such that $\cF_b\cong F$. 
\end{definition}

We will say that $(B,\cF)$ \emph{witnesses the boundedness of $\cM$.}
If $\cM\subset \cM_{\mathrm{pug}}(X/S)$ is an open substack, then it is easy to see that $\cM$ is bounded if and only if there is a surjective morphism $B\to \cM$, where $B$ is a scheme of finite type over $S$ (see, e.g.,~\cite[Lemma 9.5]{stability-families}).

\subsection{The relative numerical Grothendieck group}\label{subsec:RelativeKnum}

In this section, we define the numerical K-group in the relative setting (see~\cite[Definition 21.1]{stability-families}). 
This is slightly technical, but it is useful for comparing different fibers and will be employed throughout Part~\ref{part:Relative} of the paper. 
Recall from~\eqref{Knumell} that for any projective scheme over $\KK$ and a field extension $\KK \subset \ell$, we have an overlattice $\Knum(X) \subset \Knum(X)_{\ell}$; below, we use this in the case $\ell = \overline{\KK}$. 

\begin{definition}\label{def:RelativeGrothendieckGroup}
Let $\pi \colon X\to S$ be a morphism as in Setup~\ref{assum-stab}. 
We define the \emph{relative numerical K-group} $\Knum(X/S)$ as the quotient of $\bigoplus_{s\in S} \Knum(X_s)_{\overline{s}}$ by the saturation of the subgroup generated by the elements of the form
\begin{equation}\label{eq:RelativeGrothendieckGroup}
\eta^{\vee}_{t_1/u(t_1)}(F_{t_1})-\eta^{\vee}_{t_2/u(t_2)}(F_{t_2})
\end{equation}
for all tuples $(u,F,t_1,t_2)$ where $u\colon T\to S$ is a morphism from a connected scheme $T$, \mbox{$F\in\Dqc(X_T)$} is a $T$-perfect object, and $t_1,t_2\in T$.
\end{definition}

As observed in~\cite[Remark 21.2]{stability-families}, in Definition~\ref{def:RelativeGrothendieckGroup} it is enough to consider morphisms $u\colon T\to S$ of finite type, from a connected affine scheme $T$.
Also, the following homomorphism of groups will play an important role for us:
\begin{equation}\label{eq:KnumClassIndependent}
[-]_t\colon\rK_0(X_t)\longrightarrow\Knum(X_t)\longrightarrow\Knum(X_{u(t)})_t\longhookrightarrow\Knum(X_{u(t)})_{\overline{u(t)}}\longrightarrow\Knum(X/S),    
\end{equation}
for all $t\in T$.

\begin{remark}\label{rmk:KnumClassIndependent}
For $F\in\Dqc(X_T)$ as in the definition, the image of $[F_t]\in\rK_0(X_t)$ under the morphism~\eqref{eq:KnumClassIndependent} is independent of $t\in T$. 
In this case, we denote this image by $[F]\in\Knum(X/S)$.
\end{remark}

If $\Lambda$ denotes an abelian group, and $v\colon\Knum(X/S)\to\Lambda$ is a group homomorphism, we denote by $v_t$ the composition
\begin{equation}\label{eq:DefOfVt}
v_t\colon\rK_0(X_t)\xlongrightarrow{[-]_t}\Knum(X/S)\xlongrightarrow{v}\Lambda.
\end{equation}

\begin{example}\label{ex:UniformlyNumericalGrothendieck}
In this paper, we will sometimes pick $\Lambda$ to be a quotient of the~\emph{uniformly numerical Grothendieck group} $\cN(X/S)$ defined in~\cite[Proposition and Definition 21.5]{stability-families}.
More precisely, $\cN(X/S)$ is the quotient $\Knum(X/S)/\ker(\chi)$, where
\[
\chi\colon\rK_0(\Dperf(X))\times\Knum(X/S)\longrightarrow\ZZ,\qquad\chi([A],[B])\coloneqq\chi(A_t,B),
\]
for some point $t$ over $S$ and objects $B\in\Db(X_t)$ and $A\in\Dperf(X)$.
\end{example}

The functoriality of the relative numerical Grothendieck group is slightly delicate.
The following result addresses it in a special case.

\begin{proposition}\label{prop:FunctorialityKnum}
Let $X\to S$ and $Y\to S$ be two morphisms satisfying the hypotheses in Setup~\ref{assum-stab}.
Let $f\colon X\to Y$ be a proper morphism over $S$. 
\begin{enumerate}[{\rm (1)}]
\item\label{enum:FunctorialityKnum1} The pushforward $f_*$ induces a morphism
\[
f_*\colon\Knum(X/S)\longrightarrow\Knum(Y/S).
\]
\item\label{enum:FunctorialityKnum2} 
If $f$ has finite Tor dimension and the relative dualizing complex $\omega_f^\bullet$ is contained in $\Dperf(X)$, then the pullback $f^*$ induces a morphism
\[
f^*\colon\Knum(Y/S)\longrightarrow\Knum(X/S).
\]
\end{enumerate}
\end{proposition}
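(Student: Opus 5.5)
The plan is to define both maps first on the free group $\bigoplus_{s\in S}\Knum(X_s)_{\overline{s}}$, fiber by fiber, and then check that they preserve the saturated subgroup of relations from Definition~\ref{def:RelativeGrothendieckGroup}. For each $s\in S$, the morphism $f_s\colon X_s\to Y_s$ is proper, so $f_{s*}\colon\rK_0(X_s)\to\rK_0(Y_s)$ is defined. By the projection formula $\chi(A,f_{s*}B)=\chi(f_s^*A,B)$ for $A\in\Dperf(Y_s)$, it descends to $\Knum(X_s)\to\Knum(Y_s)$. Numerical triviality on $Y_s$ is tested against perfect complexes, and $f_s^*$ preserves perfect complexes, so classes that are numerically trivial on $X_s$ are sent to numerically trivial classes. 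I then extend $\QQ$-linearly and check that this map sends the overlattice $\Knum(X_s)_{\overline{s}}$ into $\Knum(Y_s)_{\overline{s}}$. For this I would verify compatibility of $f_*$ with $\eta^\vee_{\ell/\kappa(s)}$, which should follow from flat base change $f_{\ell*}\pi^*\cong\pi^*f_*$ together with the construction in \cite[Proposition and Definition 12.15]{stability-families}.

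For part~\ref{enum:FunctorialityKnum2}, I would do the same with $f_s^*$. Finite Tor dimension makes $f_s^*\colon\Db(Y_s)\to\Db(X_s)$ well defined; this uses flat base change of the Tor-dimension condition, which is fine since $X,Y$ are flat over $S$. To descend to numerical classes I would use the left adjoint $f_{s!}=f_{s*}(-\otimes\omega^\bullet_{f_s})$, which is available because $\omega^\bullet_f$ is perfect and is compatible with base change. This gives $\chi(A,f_s^*B)=\chi(f_{s!}A,B)$, and $f_{s!}$ sends $\Dperf(X_s)$ to $\Dperf(Y_s)$: $f$ is proper of finite Tor dimension and hence perfect, so pushforward preserves perfectness.

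Next, compatibility with relations. Take a tuple $(u,F,t_1,t_2)$ with $F\in\Dqc(X_T)$ a $T$-perfect object. Then $f_{T*}F$ is $T$-perfect on $Y_T$, since properness together with $T$-perfectness is preserved under pushforward for flat proper $Y_T\to T$ (\citestacks{0DI0} and neighbouring results), and $(f_{T*}F)_{t}\cong f_{t*}(F_t)$ by base change, using Tor-independence from flatness of $Y$ over $S$. Hence $f_*$ sends the generator~\eqref{eq:RelativeGrothendieckGroup} for $(u,F,t_1,t_2)$ to the generator for $(u,f_{T*}F,t_1,t_2)$, provided $f_*$ commutes with $\eta^\vee$, which was established above. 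Similarly, $f_T^*F$ is $T$-perfect because $f$ has finite Tor dimension, and $(f_T^*F)_t\cong f_t^*F_t$. A linear map that sends the subgroup of relations into the subgroup of relations also sends its saturation into the saturation, so the maps descend to $\Knum(X/S)\to\Knum(Y/S)$.

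I expect the main obstacle to be the compatibility of $f_*$ and $f^*$ with the maps $\eta^\vee_{\ell/\KK}$ to the rational overlattices. This requires unwinding their definition in \cite{stability-families}, which passes through the pairing against perfect complexes defined over $\ell$. My approach would be to characterize $\eta^\vee$ by the identity $\chi_\KK(A,\eta^\vee(B))=\frac{1}{?}\chi_\ell(A_\ell,B)$, suitably normalized, and then check the needed compatibility by applying the two adjunction identities above over $\ell$.
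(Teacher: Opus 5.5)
Your proposal is correct and follows essentially the same route as the paper: you define $f_*$ (resp.\ $f^*$) fiberwise on the overlattices $\Knum(X_s)_{\overline{s}}$ via compatibility with $\eta^\vee$, which comes from adjunction against perfect complexes, with $f_!=f_*(-\otimes\omega_f^\bullet)$ and base change of $\omega_f^\bullet$ in part (2). You then check that generators of the relation subgroup go to generators, using $T$-perfectness of $f_{T*}F$ (resp.\ $f_T^*F$) and base change to fibers. One small correction: finite Tor dimension of $f_s$ comes from Tor-independence of $X$ and $Y_s$ over $Y$ (which uses flatness of both $X$ and $Y$ over $S$, as in Lemma~\ref{lem:PushforwardTperfect}), not from flat base change, since $Y_s\to Y$ is not flat.
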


\begin{proof}
We prove~\ref{enum:FunctorialityKnum1}.
Let us first fix a point $s\in S$ and let $\KK$ denote its residue field.
Let $\ell$ be a field extension of $\KK$.
An explicit computation using the definition of $\eta^\vee_{\ell/\KK}$ in \cite[(12.2)]{stability-families} shows that $f_{s,*}$ induces a morphism
\[
f_{s,*}\colon\Knum(X_s)_\ell\longrightarrow\Knum(Y_s)_\ell.
\]
Indeed, under our assumption, $f_{s,*}$ (and its base change to $\ell$) preserves bounded coherent complexes, while $f^*_s$ (and its base change to $\ell$) preserves perfect complexes.
Hence, we have a morphism
\[
f_*\colon\bigoplus_{s\in S}\Knum(X_s)_{\overline{s}}\longrightarrow\bigoplus_{s\in S}\Knum(Y_s)_{\overline{s}}\longrightarrow\Knum(Y/S).
\]

We are left to check that $f_*$ factors through $\Knum(X/S)$; namely,  that any element as in \eqref{eq:RelativeGrothendieckGroup} is sent to $0$ by $f_*$.
Hence, let $u\colon T\to S$ be a morphism of finite type from a connected affine scheme $T$, $F\in\Dqc(X_T)$ be a $T$-perfect object, and $t_1,t_2\in T$.
First of all, $f_{T,*}F\in\Dqc(Y_T)$ is also $T$-perfect, by Lemma~\ref{lem:PushforwardTperfect} below.
Now, by the compatibility of $\eta^\vee$ with pushforward we have just observed, we have
\[
f_*\left(\eta^{\vee}_{t_1/u(t_1),X}(F_{t_1})-\eta^{\vee}_{t_2/u(t_2),X}(F_{t_2})\right)=\eta^{\vee}_{t_1/u(t_1),Y}((f_{T,*}F)_{t_1})-\eta^{\vee}_{t_2/u(t_2),Y}((f_{T,*}F)_{t_2})=0.
\]
This is what we need.

The argument for~\ref{enum:FunctorialityKnum2} is analogous by observing that, under our assumptions, $f_{s,!}$ (and its base change to $\ell$) preserves relatively perfect complexes (see Example~\ref{ex:PullPushShriek}\ref{enum:PullPushShriek2}), and the dualizing complex of the base change to $\ell$ is the pullback of the dualizing complex to $\ell$ (see, e.g.,~\cite[Theorem 2.7]{LN}).
\end{proof}

\begin{lemma}\label{lem:PushforwardTperfect}
Let $X\to S$ and $Y\to S$ be flat morphisms of finite presentation between noetherian schemes, and let $f\colon X\to Y$ be an $S$-morphism.
Let $F\in\Dqc(X)$.
Then, for all $s\in S$, we have
\[
(f_*F)_s=f_{s,*}(F_s).
\]
In particular, if $f$ is proper and $F$ is $S$-perfect, then $f_*F$ is also $S$-perfect.
\end{lemma}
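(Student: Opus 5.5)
The first claim, $(f_*F)_s=f_{s,*}(F_s)$, is a base change statement for the inclusion of the fiber $j_s\colon X_s\to X$ over $i_s\colon \Spec\kappa(s)\to S$. I plan to reduce it to the standard fact that derived pushforward commutes with derived pullback along Tor-independent morphisms. Consider the cartesian square formed by $f_s\colon X_s\to Y_s$, $f\colon X\to Y$, and the base change maps $j_{X,s}\colon X_s\to X$ and $j_{Y,s}\colon Y_s\to Y$; it is the base change of $f$ along $Y_s\to Y$, and also $X_s=X\times_S \Spec\kappa(s)$. Since $Y\to S$ is flat, the square
\[
\begin{tikzcd}
	X_s & X \\
	Y_s & Y
	\arrow[from=1-1, to=1-2]
	\arrow["f_s", from=1-1, to=2-1]
	\arrow["f", from=1-2, to=2-2]
	\arrow[from=2-1, to=2-2]
\end{tikzcd}
\]
is Tor-independent. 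More precisely, $\cO_{Y_s}=\cO_Y\otimes^{L}_{\cO_S}\kappa(s)$ and $\cO_{X_s}=\cO_X\otimes^L_{\cO_S}\kappa(s)$ by flatness of $X$ and $Y$ over $S$, so
\[
\cO_X\otimes^L_{\cO_Y}\cO_{Y_s}=\cO_X\otimes^L_{\cO_S}\kappa(s)=\cO_{X_s}.
\]
Tor-independent base change for quasi-compact quasi-separated morphisms (\citestacks{08IB}) then gives $j_{Y,s}^*f_*F\cong f_{s,*}j_{X,s}^*F$ for all $F\in\Dqc(X)$. This is the claim, since $(-)_s$ denotes derived pullback to the fiber. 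The only point to verify is quasi-compactness and quasi-separatedness of $f$, which holds because all schemes are noetherian.

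For the second claim, assume $f$ proper and $F$ $S$-perfect. I first show $f_*F$ is pseudo-coherent: on noetherian schemes this means coherent cohomology that is bounded above, and it follows from properness of $f$ together with \citestacks{0CSD}, since $F$ is pseudo-coherent. Next I need finite Tor-dimension over $S$. Here I will use the criterion \citestacks{0DI0}/\citestacks{0DHZ}: a pseudo-coherent object on $Y$ flat of finite presentation over noetherian $S$ is $S$-perfect iff each derived fiber $(f_*F)_s$ lies in $\Db(Y_s)$, uniformly. By the first part, $(f_*F)_s=f_{s,*}F_s$. By Lemma~\ref{lem:flatten}\ref{enum:flatten1}, the fibers $F_s$ have cohomology in a fixed interval $[a,b]$. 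Then $f_{s,*}F_s$ has cohomology in $[a,b+d]$, where $d$ bounds the fiber dimensions of $f$; such a bound exists because $f$ is proper and $X$ is noetherian.

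The step I expect to be the main obstacle is citing the right fiberwise criterion for relative perfectness. Alternatively, one can argue directly: locally on $S$, write $F$ as a bounded complex of $S$-flat coherent sheaves, as in the proof of Lemma~\ref{lem:flatten}, and compute $f_*$ via a finite Čech complex on an affine cover of $X$. This yields a bounded complex of quasi-coherent $S$-flat modules quasi-isomorphic to $f_*F$, hence finite Tor-dimension over $S$. Combined with pseudo-coherence, this gives $S$-perfectness.
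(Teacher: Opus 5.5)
Your proposal is correct and follows the paper's route. The first part combines Tor-independence of $X$ and $Y_s$ over $Y$ (the paper cites Tag 0CTA, you verify it directly) with Tag 08IB; the second uses properness to get $f_*F\in\Db(Y)$ plus a fiberwise criterion for relative perfectness (the paper cites Tag 0GEH) applied to $(f_*F)_s=f_{s,*}F_s$. The one slip is that Lemma~\ref{lem:flatten} is stated only for projective $X\to S$, so for your uniform bound argue instead that an $S$-perfect complex on the noetherian (hence quasi-compact) $X$ has tor amplitude in a fixed $[a,b]$ over $S$, whence $F_s=F\otimes^{L}_{\cO_S}\kappa(s)$ has cohomology in $[a,b]$.
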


\begin{proof}
We can assume that $S$ is affine.
By~\citestacks{0CTA}, for any $s \in S$, $Y_s$ and $X$ are Tor-independent over $Y$. 
Thus $(f_*F)_s=f_{s,*}(F_s)$, by~\citestacks{08IB}.
For the second part of the statement, if $F$ is $S$-perfect, then $F\in\Db(X)$.
Thus, $f_*F\in\Db(Y)$, since $f$ is proper. 
The conclusion then follows from the first part of the statement, and~\citestacks{0GEH}.
\end{proof}

\subsection{Harder--Narasimhan structures over curves}\label{subsec:HNstructure}

We fix a morphism $X\to S$ as in Setup~\ref{assum-stab}.
In this section, we specialize to the case when $S=C$ is a \emph{Dedekind scheme}, i.e., $C$ is furthermore an integral, regular, Noetherian scheme of dimension one. 
We write $p\in C$ for a closed point, $c\in C$ for an arbitrary point, and $K$ for the fraction field of $C$.

We have the following notions of torsion and torsion-free objects relative to $C$.

\begin{definition}
An object in $\Db(X)$ is called \emph{$C$-torsion} if it is the pushforward of an object in $\Db(X_W)$, for some proper closed subscheme $W\subset C$. 
\end{definition}

We denote by $\Db(X)_{\Ctor}$ the subcategory of $C$-torsion objects in $\Db(X)$.
By~\cite[Lemma 6.4]{stability-families}, an object $F\in\Db(X)$ is $C$-torsion if and only if $F_K=0$. 
Moreover, we have an exact triangle of triangulated categories:
\[
\Db(X)_{\Ctor}\longrightarrow\Db(X)\longrightarrow\Db(X_{K}).
\]

The theory of Harder--Narasimhan structures over curves was introduced in \cite[Section 13]{stability-families} in order to do semistable reduction and verify the valuative criterion of properness of moduli spaces.
We review the basic theory here in the setting of numerical stability conditions.
First of all, we consider the relative numerical Grothendieck group $\Knum(X/C)$ (introduced in Definition~\ref{def:RelativeGrothendieckGroup}).
Notice that under our assumption, all complexes in $\Db(X)$ are $C$-perfect.
Hence, for any $F\in\Db(X)$, we can consider its class $[F]\in\Knum(X/C)$ (as in Remark~\ref{rmk:KnumClassIndependent}).
Moreover, by~\eqref{eq:KnumClassIndependent}, we obtain the following two natural morphisms
\[
[-]_K\colon\rK_0(X_K)\longrightarrow\Knum(X/C),\qquad[-]_{\Ctor}\colon\rK_0(\Db(X)_{\Ctor})\longrightarrow\Knum(X/C).
\]
Remark~\ref{rmk:KnumClassIndependent} can then be rephrased as the following compatibility: for any $F\in\Db(X)$, we have
\[
[F]=[F_K]_K=\frac{1}{\mathrm{length}(W)}[\iota_{W*}\iota_{W}^*F]_{\Ctor},
\]
for all proper closed subschemes $W\subset C$ and $\iota_W\colon X_W\hookrightarrow X$.

\begin{definition}\label{def:HNstructure}
Let $\pi \colon X\to C$ be a morphism as in Setup~\ref{assum-stab}, where $C$ is a Dedekind scheme. 
Let $v \colon \Knum(X/C) \to \Lambda$ be a homomorphism to an abelian group $\Lambda$. 
A \emph{Harder--Narasimhan (HN) structure with respect to $(\Lambda,v)$} on $\Db(X)$ over $C$ is a pair $\sigma_C=(Z,\cP)$ where
\begin{itemize}
\item $Z\colon\Lambda\to\CC$ is a homomorphism, and
\item $\cP$ is a $C$-local slicing\footnote{See Definition~\ref{def:SlocalSlicing} in Appendix~\ref{app:Inducing}.} of $\Db(X)$
\end{itemize}
with the following property: for all $\phi\in\RR$ and $0\neq F\in\cP(\phi)$, we have
\begin{itemize}
\item either $F_K\neq0$ and $Z(v([F_K]_K))\in\RR_{>0}\cdot e^{i\pi\phi}$,
\item or $F\in\Db(X)_{\Ctor}$ and $Z(v([F]_{\Ctor}))\in\RR_{>0}\cdot e^{i\pi\phi}$.
\end{itemize}
\end{definition}

\begin{warning}
    The above terminology is slightly different from that in~\cite[Definition 13.3]{stability-families}. 
    More precisely, there the notion of a HN structure on $\Db(X)$ over $C$ is defined without reference to a pair $(\Lambda, v)$. 
    However, a HN structure with respect to a pair $(\Lambda, v)$ in our sense gives in particular a HN structure on $\Db(X)$ over $C$ in the sense of~\cite[Definition 13.3]{stability-families}. 
    All HN structures on $\Db(X)$ over $C$ that arise in practice are with respect to a pair $(\Lambda, v)$ (for instance, $(\Knum(X/C), \id)$), so this is just convenient language.
    Finally, a word on notation: in~\cite{stability-families}, the composition $Z\circ v\circ [-]_K$ was denoted by $Z_K$ and $Z\circ v\circ [-]_{\Ctor}$ by $Z_{\Ctor}$.
\end{warning}

Given a HN structure as above, we set $\cA_C\coloneqq\cP((0,1])$; it is the heart of a bounded $C$-local t-structure\footnote{See Definition~\ref{def:SlocalTstructure} in Appendix~\ref{app:Inducing}.} on $\Db(X)$.
Recall that, for the heart $\cA_C\subset \Db(X)$ of a $C$-local t-structure, we have an induced heart $\cA_c\subset \Db(X_c)$, for any $c\in C$ (see, e.g.,~\cite[Theorem 5.7]{stability-families}). 
The following result is an extension to pre-stability conditions; see~\cite[Lemma 13.11]{stability-families} and recall the notation $v_c$ from~\eqref{eq:DefOfVt}.

\begin{lemma}\label{lem-HN-induce}
A HN structure $\sigma_C$ with respect to $(\Lambda,v)$ on $\Db(X)$ over $C$ induces a pre-stability condition $\sigma_c=(Z,\cA_c)$ with respect to $(\Lambda,v_c)$ on $\Db(X_c)$, for every $c\in C$.
\end{lemma}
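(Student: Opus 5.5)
The plan is to follow \cite[Lemma 13.11]{stability-families}, which gives the induced heart $\cA_c\subset\Db(X_c)$ from the $C$-local heart $\cA_C=\cP((0,1])$. What remains is to check that $Z$, composed with $v_c$, is a stability function on $\cA_c$ satisfying the Harder--Narasimhan property. The argument naturally splits into the closed points $p\in C$ and the generic point $K$. An arbitrary $c\in C$ is one of these, since $C$ is a Dedekind scheme.

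\textbf{Generic point.} Here I will use the exact triangle $\Db(X)_{\Ctor}\to\Db(X)\to\Db(X_K)$. Because $\cP$ is $C$-local, each $\cP(\phi)$ should be stable under the Verdier quotient, and its image $\cP_K(\phi)$ should give a slicing of $\Db(X_K)$ with heart $\cA_K$. To see this, I take an object $G\in\Db(X_K)$, lift it to some $F\in\Db(X)$, and restrict the HN filtration of $F$ to $X_K$. The factors that are $C$-torsion restrict to zero. The remaining factors $A_i$ satisfy $(A_i)_K\neq0$, and by Definition~\ref{def:HNstructure} we have $Z(v([ (A_i)_K]_K))\in\RR_{>0}e^{i\pi\phi_i}$. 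Collapsing the zero factors then produces an HN filtration of $G$. Hom-vanishing between phases also passes to the quotient. The reason is that morphisms in the quotient are computed as colimits over $C$-torsion modifications, and these respect the local slicing; this is where $C$-locality is used. The identity $v_K=v\circ[-]_K$ then shows that $\sigma_K=(Z,\cP_K)$ is a pre-stability condition.

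\textbf{Closed points.} For a closed point $p$, the heart $\cA_p$ is characterised by: $E\in\cA_p$ if and only if $\iota_{p*}E\in\cA_C$, where $\iota_p\colon X_p\hookrightarrow X$ is the inclusion (see \cite[Theorem 5.7]{stability-families}). Accordingly, I will define $\cP_p(\phi)=\{E\st\iota_{p*}E\in\cP(\phi)\}$, which is the pullback $\iota_p^\sharp\cP$. To see that this is a slicing, I apply the Polishchuk-type criterion (Corollary~\ref{cor:Polishchuk}) to $\iota_{p*}$. The hypothesis to verify is that $\iota_{p*}\iota_p^*$ preserves $\widehat{\cP}(\geq\phi)$. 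On a neighbourhood where $p$ is cut out by a uniformiser, $\iota_{p*}\iota_p^*F$ is the cone of multiplication by that uniformiser, $F\to F$. This functor preserves the $C$-local slicing because tensoring with $\pi^*$ of a line bundle does, and that property is part of the definition of $C$-local. For the central charge, $\iota_{p*}E$ is $C$-torsion and $[\iota_{p*}E]_{\Ctor}$ agrees with $v_p(E)$ under the identification in Remark~\ref{rmk:KnumClassIndependent}, so the second bullet of Definition~\ref{def:HNstructure} gives the required phase condition.

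\textbf{Main obstacle.} I expect the generic point to be the hardest step, specifically the claim that every object of $\Db(X_K)$ has an HN filtration obtained by lifting, and that this filtration is independent of the chosen lift. The plan is to handle it using \cite[Lemma 6.4]{stability-families} and the fact that $\cA_C$ is compatible with the torsion subcategory. Concretely, I would show that $\cA_C\cap\Db(X)_{\Ctor}$ is a Serre subcategory and that $\cA_K$ is the corresponding abelian quotient. Once that is established, uniqueness of HN filtrations in the quotient follows from uniqueness in $\cA_C$.
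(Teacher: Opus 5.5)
Your proposal is correct and is essentially the paper's argument made explicit. The paper simply cites \cite[Lemma 13.11]{stability-families} together with the definition $v_c=v\circ[-]_c$, and your two cases unpack exactly that citation: closed points via pullback along $\iota_{p*}$ and Corollary~\ref{cor:Polishchuk}, with the central charge read off from $[\iota_{p*}E]_{\Ctor}=[E]_p$; the generic point via images under the Verdier quotient $\Db(X)\to\Db(X_K)$, with the central charge read off from $[F_K]_K$. The ``main obstacle'' you flag is not really one: once Hom-vanishing in $\Db(X_K)$ is established, which you can make precise via the cofinal system $A\otimes\pi^*\cO_C(-D)\to A$ and $C$-locality, HN filtrations are automatically unique, so independence of the lift is free and no Serre-quotient argument is needed.
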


We say that $\sigma_c$ as above is \emph{induced by a HN structure} on $\Db(X)$, for all $c\in C$.

\subsection{Stability conditions over a base}\label{subsec:StabilityBase}

We use the previous notion of HN structure to define stability conditions over a base.
Let $\pi \colon X\to S$ be a morphism as in Setup~\ref{assum-stab}.
We fix a free abelian group of finite rank $\Lambda$ and a morphism $v\colon\Knum(X/S)\to\Lambda$.
By~\eqref{eq:DefOfVt}, we have induced morphisms $v_t\colon\rK_0(X_t)\to\Lambda$, for all points $t$ over $S$.
We will consider the following subfunctors of $\cM_{\mathrm{pug}}(X/S)$ (see Definition~\ref{def:Mpug}).

\begin{definition}
Let ${\sigma}=(\sigma_s=(Z,\cP_s))_{s\in S}$ be a collection of numerical stability conditions on $\Db(X_s)$ with respect to $(\Lambda,v_s)$, for each $s\in S$. 
Fix a vector $\bv\in\Lambda$ and $\phi\in\RR$ such that $Z(\bv)\in\RR_{>0}\cdot e^{i\pi\phi}$. 
\begin{enumerate}[{\rm(1)}]
\item We denote by
\[
\cM^{\mathrm{st}}_{{\sigma}}(\bv,\phi)\colon (\mathrm{Sch}/S)^{\mathrm{op}}\longrightarrow \mathrm{Gpds}
\]
the subfunctor of $\cM_{\mathrm{pug}}(X/S)$ whose value on $T\in (\mathrm{Sch}/S)$ consists of all $T$-perfect objects $F\in \Dpug(X_T/T)$ such that $F_t$ is geometrically $\sigma_t$-stable of phase $\phi$ and $v_t(F_t)=\bv$ in $\Lambda$ for all $t\in T$. 
Here, $\sigma_t$ denotes the base change of $\sigma_s$ given by Theorem~\ref{thm:base-change-stab}, where $s \in S$ is the image of $t$.
\item We denote by
\[
\cM_{{\sigma}}(\bv,\phi)\colon(\mathrm{Sch}/S)^{\mathrm{op}}\longrightarrow \mathrm{Gpds}
\]
the subfunctor of $\cM_{\mathrm{pug}}(X/S)$ whose value on $T\in (\mathrm{Sch}/S)$ consists of all $T$-perfect objects $F\in \Dpug(X_T/T)$ such that $F_t$ is $\sigma_t$-semistable of phase $\phi$ and $v_t(F_t)=\bv$ in $\Lambda$ for all $t\in T$.
\item  For an interval $I\subset\RR$, we denote by
\[
\cP_{{\sigma}}(I;\bv)\colon(\mathrm{Sch}/S)^{\mathrm{op}}\longrightarrow \mathrm{Gpds}
\]
the subfunctor of $\cM_{\mathrm{pug}}(X/S)$ whose value on $T\in(\mathrm{Sch}/S)$ consists of all $T$-perfect objects $F\in \Dpug(X_T/T)$ such that $F_t\in \cP_t(I)$ and $v_t(F_t)=\bv$ in $\Lambda$ for all $t\in T$.
\end{enumerate}
\end{definition}

This is the key definition in Part~\ref{part:Relative},~\cite[Definition 21.15]{stability-families}:

\begin{definition}\label{def:FamilyStabilityConditions}
Let $X\to S$ be a morphism as in Setup~\ref{assum-stab}. 
Let $v\colon\Knum(X/S)\to\Lambda$ be a homomorphism to a finite rank free abelian group. 
Let ${\sigma}=(\sigma_s=(Z,\cP_s))_{s\in S}$ be a collection of numerical stability conditions on $\Db(X_s)$ with respect to $(\Lambda,v_s)$, for each $s\in S$. 
We say that ${\sigma}$ is a \emph{stability condition on $\Db(X)$ over $S$} with respect to $(\Lambda,v)$ if the following conditions hold:
\begin{enumerate}[{\rm (i)}]
\item\label{c1} There exists a quadratic form $Q$ on $\Lambda_\RR$ for which $\sigma_s$ is a stability condition with respect to $Q$, for all $s\in S$, i.e.,
\begin{itemize}
    \item $Q$ is negative definite on $\ker(Z_\RR)$;
    \item $Q(v_s(F_s))\geq0$, for all $\sigma_s$-semistable objects $F_s\in\Db(X_s)$ and all $s\in S$.
\end{itemize}
\item\label{c2} For every morphism $T\to S$ and every $T$-perfect object $F\in \Dqc(X_T)$, the set 
\[
\{t\in T \st F_t \text{ is geometrically }\sigma_t\text{-stable}\}
\]
is open in $T$.
\item\label{c3} For any morphism essentially of finite type\footnote{For the definition, see \cite[Definition 11.16]{stability-families}.}  $C\to S$ from a Dedekind scheme $C$, the stability conditions $\sigma_c$ for $c \in C$ are  induced by an HN structure $\sigma_C$ with respect to $(\Lambda,v_C)$ on $\Db(X_C)$ over $C$, where $v_C \colon\Knum(X_C/C)\to\Knum(X/S)\xrightarrow{v}\Lambda$ is the homomorphism induced by $v$. 
\item\label{c4} $\cM^{\mathrm{st}}_{{\sigma}}(\bv,\phi)$ is bounded, for every $\bv\in \Lambda$ and $\phi\in \RR$ such that $Z(\bv)\in\RR_{>0}\cdot e^{i\pi\phi}$.
\end{enumerate}
\end{definition}

We record some facts about this definition. 
First of all, as a consequence of Remark~\ref{rmk:KnumClassIndependent}, for every morphism $T\to S$ and every $T$-perfect object $F\in\Dqc(X_T)$, the function $T\to\CC$ given by $t\mapsto Z(v_t(F_t))$ is locally constant.
Moreover, by~\cite[Lemma 20.3]{stability-families}, to check~\ref{c2}, we only need to consider finite type morphisms $T\to S$ from an affine scheme $T$.

By~\cite[Proposition 20.8]{stability-families},~\ref{c2} implies the following semistable version:
\begin{enumerate}[label={\rm (ii')}]
    \item\label{sc2} For every morphism $T\to S$ and every $T$-perfect object $F\in \Dqc(X_T)$, the set 
    \[
    \{t\in T \st F_t \text{ is }\sigma_t\text{-semistable}\}
    \]
    is open in $T$.
\end{enumerate}
Similarly, by~\cite[Lemma~21.20]{stability-families},~\ref{c4} implies the following semistable version: 
\begin{enumerate}[label={\rm(iv')}]
    \item \label{sc4} 
    $\cM_{{\sigma}}(\bv,\phi)$ is bounded, for every $\bv\in \Lambda$ and $\phi\in \RR$ such that $Z(\bv)\in\RR_{>0}\cdot e^{i\pi\phi}$.
\end{enumerate}

\begin{remark}\label{rmk:FamilyStabilityConditions}
The following properties hold for the moduli stack $\cM_{{\sigma}}(\bv,\phi)$ of semistable objects.
\begin{enumerate}
\item\label{enum:FamilyStabilityConditions1} It is clear that~\ref{c2} implies that $\cM^{\mathrm{st}}_{{\sigma}}(\bv,\phi)$ is an open substack of $\cM_{\mathrm{pug}}(X/S)$. Similarly,~\ref{sc2} gives that $\cM_{{\sigma}}(\bv,\phi)$ is an open substack of $\cM_{\mathrm{pug}}(X/S)$. 
Further, by~\ref{c4} and~\ref{sc4}, both of these substacks are bounded.
\item\label{enum:FamilyStabilityConditions3} By~\cite[Lemma 21.22]{stability-families},~\ref{c3} shows that the morphism $\cM_{{\sigma}}(\bv,\phi)\to S$ satisfies the strong existence part of the valuative criterion, for any DVR essentially of finite type over $S$.
Hence, according to~\cite[Theorem 21.24]{stability-families}, if $S$ is of characteristic zero and ${\sigma}$ is a stability condition on $X$ over $S$, then the moduli stack $\cM_{{\sigma}}(\bv,\phi)$ admits a proper good moduli space $M_{{\sigma}}(\bv,\phi)$ over $S$, for any $\bv\in\Lambda$ and $\phi\in \RR$.   
\end{enumerate}
\end{remark}

The reason to include openness and boundedness in the definition of stability condition over a base is that Bridgeland's deformation theorem (Theorem~\ref{thm:bridgeland-deformation}) holds in this setting as well.
More precisely, we write $\Stab_{(\Lambda,v)}(\Db(X)/S)$ for the set of all stability conditions on $\Db(X)$ over $S$ with respect to $(\Lambda,v)$.
We endow it with the coarsest topology so that the maps 
\[
\Stab_{(\Lambda,v)}(\Db(X)/S)\longrightarrow\Stab_{(\Lambda,v_s)}(\Db(X_s)), \qquad {\sigma}\longmapsto\sigma_s
\]
are continuous for all $s\in S$.
Then~\cite[Theorem 22.2]{stability-families} gives:

\begin{theorem}\label{thm:BridgelandDeformationFamilyVersion}
The forgetful map
\[
\cZ\colon\Stab_{(\Lambda,v)}(\Db(X)/S)\longrightarrow \Hom_{\ZZ}(\Lambda,\CC),\quad {\sigma}\longmapsto Z
\]
is a local homeomorphism.
\end{theorem}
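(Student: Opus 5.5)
This is \cite[Theorem~22.2]{stability-families}. If I had to reprove it, I would reduce every property of the family $\sigma$ to the fiberwise Bridgeland deformation theorem, Theorem~\ref{thm:bridgeland-deformation}, using that the quadratic form $Q$ in condition~\ref{c1} of Definition~\ref{def:FamilyStabilityConditions} is uniform in $s\in S$.

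\emph{Continuity.} $\cZ$ is continuous, because the topology on $\Stab_{(\Lambda,v)}(\Db(X)/S)$ is the coarsest one making every restriction $\sigma\mapsto\sigma_s$ continuous, and $\cZ$ factors through any one fiber.

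\emph{Local injectivity.} Suppose $\sigma$ and $\sigma'$ have the same central charge and are close, say $\mathrm{dist}(\sigma_s,\sigma'_s)<1$ for all $s$. The absolute uniqueness part of Bridgeland's theorem gives $\sigma_s=\sigma'_s$ for each $s$, hence $\sigma=\sigma'$.

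\emph{Local surjectivity and openness.} Fix $\sigma$ over $S$ with support form $Q$, and let $W\in\Hom_\ZZ(\Lambda,\CC)$ be small relative to $Q$. By this I mean $|W(\bw)-Z(\bw)|<\sin(\pi\epsilon)|Z(\bw)|$ for every $\bw$ with $Q(\bw)\geq0$; note that this condition does not depend on $s$. Bayer's form of Bridgeland's theorem then deforms every $\sigma_s$ to a stability condition $\tau_s=(W,\cP'_s)$. Each $\tau_s$ satisfies $\mathrm{dist}(\cP_s,\cP'_s)<\epsilon$, and all of them satisfy the support property for one fixed deformed form $Q'$, which gives condition~\ref{c1}. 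It remains to verify conditions~\ref{c2}--\ref{c4} for $\tau=(\tau_s)_s$.

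\emph{Boundedness~\ref{c4}.} A $\tau_t$-semistable object of class $\bw$ and phase $\psi$ lies in $\cP_t((\psi-\epsilon,\psi+\epsilon))$. By the support property there are only finitely many possible classes of its $\sigma_t$-HN factors. Each such factor ranges over a bounded family by~\ref{sc4}. Iterated extensions of bounded families remain bounded, by parametrizing cones over relative $\Ext$-stacks, which are of finite type. Hence $\cP_\sigma((\psi-\epsilon,\psi+\epsilon);\bw)$ is bounded, and so is $\cM_\tau(\bw,\psi)$.

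\emph{HN structures over curves~\ref{c3}.} For a Dedekind $C\to S$, I would apply the same deformation to the $C$-local slicing $\sigma_C$. Concretely, tilt the $C$-local heart inside $\cP_C((\psi-\epsilon,\psi+1+\epsilon])$. HN filtrations for $W$ should exist by the finiteness just established, exactly as in the absolute proof. The point to check is that the result is again $C$-local and induces the $\tau_c$; this follows from the compatibility of induced hearts with base change in \cite[Theorem 5.7]{stability-families}.

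\emph{Openness~\ref{c2}: the main obstacle.} For a $T$-perfect $F$, $\tau_t$-stability of $F_t$ must be detected by a constructible condition on finite type data. My approach is as follows. First, the locus where $F_t\in\cP_{\sigma_t}((\psi-\epsilon,\psi+\epsilon))$ is open, using the HN-openness results of \cite{stability-families} behind~\ref{sc2}. On that locus, $\tau_t$-destabilizing subobjects of $F_t$ lie in a bounded family by the boundedness argument above. So the unstable locus is the image of a finite type Quot-type space, and is therefore constructible. Finally, I would use~\ref{c3} for $\tau$, via the valuative or specialization argument of \cite[Lemma~20.3]{stability-families}, to show that this locus is stable under specialization, hence closed.

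\emph{Conclusion.} Given these steps, $W\mapsto\tau$ is a continuous local inverse of $\cZ$, since it is continuous fiber by fiber. Therefore $\cZ$ is a local homeomorphism.
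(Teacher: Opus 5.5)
Your proposal matches the paper, which gives no argument of its own for this statement and simply invokes \cite[Theorem~22.2]{stability-families}, exactly as you do. If you pursue the reproof sketch, note one gap: the paper gives $\Stab_{(\Lambda,v)}(\Db(X)/S)$ the coarsest topology making every map $\sigma\mapsto\sigma_s$ continuous, so a neighbourhood of $\sigma$ constrains only finitely many fibres, and your local-injectivity step (which assumes $\mathrm{dist}(\sigma_s,\sigma'_s)<1$ for all $s\in S$ at once), as well as the openness of the image of $W\mapsto\tau$, would need an additional argument that controls all fibres in terms of finitely many of them.
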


\begin{remark}\label{rmk:BridgelandDeformationFamilyVersion}
For later use, we will need the precise version of Theorem~\ref{thm:BridgelandDeformationFamilyVersion}.
In fact, if we consider the uniform quadratic form $Q$ in~\ref{c1}, and we write $P_Z\subset\Hom_{\ZZ}(\Lambda,\CC)$ for the connected component containing $Z$ of the set of central charges whose kernel is negative definite with respect to $Q$, then there is an open neighborhood ${\sigma}\in U\subset\Stab_{(\Lambda,v)}(\Db(X)/S)$ such that $\cZ|_U\colon U\to P_Z$ is a covering.    
\end{remark}

We conclude the section with two further results.
The first one is useful to check several openness properties.

\begin{lemma}\label{lem:openness-flat}
Let $X\to S$ be a morphism as in Setup~\ref{assum-stab}. 
Let $v\colon\Knum(X/S)\to\Lambda$ be a homomorphism to a finite rank free abelian group. 
Let ${\sigma}=(\sigma_s=(Z,\cP_s))_{s\in S}$ be a collection of numerical stability conditions on $\Db(X_s)$ with respect to $(\Lambda,v_s)$, for each $s\in S$. 
Assume that ${\sigma}$ satisfies~\ref{sc2} and~\ref{c3}.
Then, for any morphism $T\to S$ of finite type and any $T$-perfect object $F$ on $X_T$, the functions
\[
\phi^+_F\colon T\to \RR\cup\{-\infty\},\quad t\mapsto \phi^+_{t}(F_t)
\]
and
\[
\phi^-_F\colon T\to \RR\cup\{+\infty\},\quad t\mapsto \phi^-_{t}(F_t)
\]
are, respectively, upper and lower semicontinuous constructible functions on $T$. 
Here we set $\phi^{\pm}$ of the zero object to be $\mp\infty$.
\end{lemma}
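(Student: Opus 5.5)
We prove upper semicontinuity and constructibility of $\phi^+_F$; the statement for $\phi^-_F$ then follows by the same argument, or by applying it to a derived dual, or by reversing inequalities. By \cite[Lemma 20.3]{stability-families}-type reductions we may assume $T$ is affine and of finite type over $S$, and replace $T$ by its reduction. The first step is to reduce constructibility to generic constancy. By noetherian induction on $T$, it suffices to show: for every integral closed subscheme $T'\subset T$, the function $\phi^+_F$ restricted to $T'$ is constant on a dense open subset of $T'$. Since $F_{T'}$ is $T'$-perfect, we may replace $T$ by $T'$.

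To get generic constancy, we take the HN filtration of $F_\eta$ at the generic point $\eta$ of $T$ with respect to $\sigma_\eta$. The semistable factors, together with the maps in the filtration, are defined over $\eta$, so after shrinking $T$ to a dense open subset they spread out to $T$-perfect objects $A_1,\dots,A_m$ on $X_T$ and a filtration of $F$ with these cones. By \ref{sc2}, after further shrinking, each $(A_i)_t$ is $\sigma_t$-semistable. Its phase is $\phi_i$, because $Z(v_t((A_i)_t))$ is locally constant by Remark~\ref{rmk:KnumClassIndependent} and the phase is pinned in a small interval by continuity from $\eta$. The phases are therefore strictly decreasing, so the restricted filtration is the HN filtration of $F_t$. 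Hence $\phi^\pm_F$ is constant near $\eta$, which gives constructibility.

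For upper semicontinuity, since $\phi^+_F$ is constructible, it suffices to check specializations. For $t_0$ in the closure of $t_1$, we must show $\phi^+(F_{t_1})\leq\phi^+(F_{t_0})$. Any specialization in a finite type scheme can be realized by a morphism from a Dedekind scheme $C\to T$, essentially of finite type, sending the generic point to $t_1$ (after a field extension, which does not change phases by Theorem~\ref{thm:base-change-stab}) and a closed point $p$ to $t_0$. So it is enough to prove the inequality for $F_C$ over a Dedekind scheme $C$.

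This is the main obstacle, and here we use \ref{c3}. The HN structure $\sigma_C=(Z,\cP_C)$ is a $C$-local slicing inducing $\sigma_K$ and $\sigma_p$. We let $\psi\coloneqq\phi^+_{\sigma_p}(F_p)$ and take the HN filtration of $F_C$ with respect to $\cP_C$. We consider its part $F'\in\cP_C(>\psi)$ and want to show $F'_K=0$. The plan is to argue that the induced t-structures are compatible with restriction: if $G\in\cP_C(>\psi)$ and $G$ is not $C$-torsion, then $\iota_p^*G$ has a nonzero component in $\cP_p(>\psi)$, giving a nonzero map $F_p\to$ something of higher phase. This contradicts the choice of $\psi$, using adjunction $\Hom(G,F_C)\to\Hom(G_p,F_p)$ and the fact that the $\cP_C(>\psi)$-truncation commutes with derived restriction up to torsion. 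The torsion part only affects $F_p$, not $F_K$. Concluding $\phi^+_{\sigma_K}(F_K)\leq\psi$ is the delicate point, and I would rely on \cite[Section 13]{stability-families} for the compatibility of $C$-local slicings with base change to fibers.
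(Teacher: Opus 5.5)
You have the right overall architecture (spread out the generic HN filtration, then control specializations through Dedekind schemes using \ref{c3}). However, there is a genuine gap in the generic-constancy step. There you assert that the spread-out factor $(A_i)_t$ has phase exactly $\phi_i$, because $Z(v_t((A_i)_t))$ is constant and the phase is ``pinned by continuity from $\eta$''.

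No continuity of phases is available at that point. The slicings $\cP_s$ on different fibers are a priori unrelated, and semistability plus constancy of $Z$ only give $\phi_{\sigma_t}((A_i)_t)\in\phi_i+2\ZZ$. Condition \ref{sc2} cannot resolve this. Start from a stability condition over a curve $S$ of finite type over a field, and replace $\cP_s$ by the slicing $\phi\mapsto\cP_s(\phi-2)$ at every closed point $s$. This leaves $Z$ and the semistable objects in every fiber unchanged, so \ref{sc2} still holds. Yet for any $F$ with $F_\eta\neq0$, the functions $\phi^{\pm}_F$ now jump by $2$ at every closed point and are constant on no dense open subset. Your generic-constancy argument uses only \ref{sc2} and the constancy of $Z$, so it would prove a false statement; the input that rules this out is \ref{c3}.

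The repair is to reorder the steps:
\begin{enumerate}
\item For arbitrary $T$-perfect $F$ and $t_0\in\overline{\{t_1\}}$, first prove the specialization inequalities $\phi^+(F_{t_1})\le\phi^+(F_{t_0})$ and $\phi^-(F_{t_0})\le\phi^-(F_{t_1})$. This needs no constructibility.
\item Apply them to each semistable $A_i$ along $\eta\rightsquigarrow t$ to get $\phi_i\le\phi((A_i)_t)\le\phi_i$.
\item Only then do generic constancy, noetherian induction, and the passage from monotonicity under specialization to semicontinuity go through.
\end{enumerate}

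Second, you leave the specialization step as a plan, and the mechanism you sketch points the wrong way. A nonzero map $F'\to F_C$ need not restrict to a nonzero map at $p$. What contradicts $\phi^+(F_p)=\psi$ is a nonzero map from an object of $\cP_p(>\psi)$ into $F_p$, not out of it.

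The step can be done directly. Let $i_p\colon X_p\hookrightarrow X_C$. The fiber slicing induced by $\sigma_C$ makes $i_{p*}$ t-exact, so $i_p^*$ is right t-exact and $i_p^!=i_p^*[-1]$ is left t-exact. Restrict to $p$ the truncation triangle $F'\to F_C\to F''$, where $F'\in\cP_C(>\psi)$ and $F''\in\cP_C(\le\psi)$. Then $F'_p\in\cP_p(>\psi)$. On the other hand, $F'_p$ is an extension of $F_p$ by $F''_p[-1]=i_p^!F''$, and both lie in $\cP_p(\le\psi)$. Hence $F'_p=0$, so $F'$ vanishes near $X_p$ by Nakayama. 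Since restriction to $K$ is t-exact, this gives $\phi^+(F_K)\le\psi$.

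For $\phi^-$, use the mirror argument. Set $\psi'=\phi^-(F_p)$ and let $F''$ be the $\cP_C(<\psi')$-quotient of $F_C$. One gets $F''_p\in\cP_p(\ge\psi')$, so $\Hom(F''_p,F''_p)=\Hom(F'',i_{p*}F''_p)=0$. A derived dual is not an option here, since duality does not respect $\sigma$.

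For comparison, the paper gives no direct argument. It reuses the proof of \cite[Lemma~20.9]{stability-families}, observing that \ref{c2} enters there only through \cite[Lemma~20.4]{stability-families}, whose conclusion already follows from \ref{sc2}.
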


\begin{proof}
The statement can be found in~\cite[Lemma 20.9]{stability-families} under additional assumption~\ref{c2}.
However, in the proof there, \ref{c2} is only used to apply \cite[Lemma 20.4]{stability-families}, whose conclusion is implied by \ref{sc2}. 
Therefore, the same proof of~\cite[Lemma 20.9]{stability-families} works in our setting.
\end{proof}

Finally, we will also need a criterion for stability conditions over a base by replacing~\ref{c2} and~\ref{c4} with the corresponding conditions for moduli stacks of semistable objects.

\begin{lemma}\label{lem:open-geo-stab}
Let $X\to S$ be a morphism as in Setup~\ref{assum-stab}. 
Let $v\colon\Knum(X/S)\to\Lambda$ be a homomorphism to a finite rank free abelian group. 
Let ${\sigma}=(\sigma_s=(Z,\cP_s))_{s\in S}$ be a collection of numerical stability conditions on $\Db(X_s)$ with respect to $(\Lambda,v_s)$, for each $s\in S$. 
Assume that ${\sigma}$ satisfies~\ref{c1},~\ref{sc2},~\ref{c3}, and~\ref{sc4}.
Then ${\sigma}$ is a stability condition on $\Db(X)$ over $S$ with respect to $(\Lambda,v)$.
\end{lemma}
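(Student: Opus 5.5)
We must verify \ref{c2} and \ref{c4}; \ref{c1} and \ref{c3} are assumed. The plan is to derive both from the semistable versions \ref{sc2} and \ref{sc4}, using the semicontinuity of Lemma~\ref{lem:openness-flat} and the fact that stable objects are semistable with extra constraints.

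\textbf{Boundedness \ref{c4}.} For any $T$, $\cM^{\mathrm{st}}_{\sigma}(\bv,\phi)(T)\subset\cM_{\sigma}(\bv,\phi)(T)$, because geometrically stable objects are in particular semistable by Theorem~\ref{thm:base-change-stab}. So a pair $(B,\cF)$ witnessing the boundedness of $\cM_\sigma(\bv,\phi)$ from \ref{sc4} also witnesses it for the stable subfunctor. The definition of boundedness only asks that every geometric-point object be realised by some rational point of $B$. It does not ask that all of $\cF$ lie in the subfunctor, so no restriction to an open is needed here.

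\textbf{Openness \ref{c2}.} By \cite[Lemma 20.3]{stability-families} we may take $T\to S$ of finite type with $T$ affine, and $F$ a $T$-perfect object. Because $t\mapsto Z(v_t(F_t))$ is locally constant, we may work on one connected component at a time, with fixed class $\bv$ and fixed phase $\phi$.

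By \ref{sc2}, the semistable locus $U\subset T$ is open. Replacing $T$ by $U$, every $F_t$ is semistable of phase $\phi$. Over $U$, $F$ defines a morphism $U\to\cM_\sigma(\bv,\phi)$. The remaining task is to show that the geometrically stable locus is open inside $U$.

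Take a geometric point $t$ where $F_t$ is strictly semistable. Then there is a nontrivial destabilising subobject $A\hookrightarrow F_{\bar t}$ in $\cP_{\bar t}(\phi)$, of some class $\bw$ with $Z(\bw)$ in the same ray as $Z(\bv)$ and $|Z(\bw)|<|Z(\bv)|$. The support property \ref{c1}, which holds with a uniform $Q$, leaves only finitely many such $\bw$. For each such $\bw$, the moduli $\cM_\sigma(\bw,\phi)$ is bounded by \ref{sc4}. Choose a finite-type witness $(B_\bw,\cG_\bw)$ and restrict it to the open locus of $B_\bw$ where it is semistable, again by \ref{sc2}.

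Now consider the relative $\Hom$ scheme, or rather the locus in $U\times_S B_\bw$ where $\Hom(\cG_{\bw,b},F_t)\neq0$. This locus is closed by upper semicontinuity of $\Hom$ for relatively perfect complexes. Project it to $U$. Any nonzero map between semistable objects of the same phase $\phi$ whose source has class $\bw$ smaller than $\bv$ shows that $F_t$ is not stable. Conversely, if $F_{\bar t}$ is strictly semistable, a Jordan--Hölder factor gives such a map. Hence the non-stable locus is the union over the finitely many $\bw$ of these projected images.

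The projected images are constructible. The main obstacle is to show they are closed. For this we use the valuative criterion along DVRs given by \ref{c3} (\cite[Lemma 21.22]{stability-families} type semistable reduction): a destabilising subobject over the generic point of a DVR specialises to a destabilising subobject at the closed point. Concretely, take the HN structure over $C$, extend the subobject via the $C$-local heart, and use \ref{c3} to see that its special fibre has phase at least $\phi$. It must then be a nonzero proper subobject in $\cP(\phi)$, using Lemma~\ref{lem:openness-flat} on the phases. Constructible plus stable under specialisation gives closed, so the stable locus is open, which is \ref{c2}.
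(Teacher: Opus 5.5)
Your argument for \ref{c4} does not work as written. In the paper's definition of boundedness, the witness must satisfy $\cF\in\cM(B)$. So a witness $(B,\cF)$ for $\cM_{\sigma}(\bv,\phi)$ witnesses $\cM^{\mathrm{st}}_{\sigma}(\bv,\phi)$ only if every fibre $\cF_b$ is geometrically stable, and that need not hold.

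The fix is to reverse the order. Your proof of \ref{c2} never uses \ref{c4}, so prove \ref{c2} first. Then replace $B$ by the open subscheme where $\cF_b$ is geometrically stable. This subscheme is still of finite type over $S$. It also contains the image of every rational point realising a stable object, because geometric stability is insensitive to field extension (Theorem~\ref{thm:base-change-stab}). This is exactly how the paper obtains \ref{c4}.

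For \ref{c2} your route is genuinely different from the paper's. The paper proceeds in three steps:
\begin{itemize}
\item it shows that $\cP_{\sigma}(I;\bw)$ is bounded of finite type for intervals $I$ of length less than $1$;
\item it shows that $\Quot^{\leq\phi}_T(F)$ is an algebraic space of finite type (via \cite[Lemma 21.21]{stability-families}) and universally closed (via \cite[Proposition 11.11]{stability-families}, which is where \ref{c3} enters);
\item it concludes that the non-stable locus is closed, being the image of a union of components.
\end{itemize}
You instead restrict to the semistable locus and write the non-stable locus as a finite union of projections of $\Hom$-nonvanishing loci in $U\times_S B_{\bw}$. You get constructibility from Chevalley and prove closedness under specialisation by hand along DVRs. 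This is in effect the existence half of the valuative criterion behind \cite[Proposition 11.11]{stability-families}, done pointwise. It buys a more elementary argument: you only need boundedness of the finitely many $\cM_{\sigma}(\bw,\phi)$, and no finite-type Quot space.

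The price is that your specialisation step must do by hand what the Quot machinery absorbs:
\begin{itemize}
\item The destabilising subobject exists only over $\overline{K}$. You must pass to a finite extension $K'$ and a DVR of $K'$ dominating $R$; this DVR is still essentially of finite type over $S$, since $S$ is Nagata.
\item Extending $A_{K'}\subset F_{K'}$ to $A_C\subset F_C$ with $C$-torsion-free quotient needs a $C$-torsion theory on $\cA_C$. Compare the appeal to \cite[Corollary 20.10, Proposition 17.6]{stability-families} in the proof of Theorem~\ref{thm:main-criterion}.
\item You need that $C$-torsion-free objects of $\cA_C$ restrict into $\cA_p$, so that $A_p\subset F_p$ remains a nonzero proper subobject. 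Then $Z(A_p)=Z(\bw)$ forces $A_p\in\cP_p(\phi)$.
\item Closedness of the $\Hom$-locus should be justified by representing $\RHom$ between a pseudo-coherent and a relatively perfect object by a pseudo-coherent complex on the base.
\end{itemize}
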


\begin{proof}
Note that we only need to verify~\ref{c2}, since~\ref{c4} follows from~\ref{c2} and the boundedness of $\cM_{{\sigma}}(\bv,\phi)$.

First, we show that $\cP_{{\sigma}}(I;\bw)$ is bounded for each $\bw\in \Lambda$ and interval $I\subset \RR$ of length less than 1. By Lemma \ref{lem:openness-flat}, we know that $\cP_{{\sigma}}(I;\bw)$ is an open substack of $\cM_{\mathrm{pug}}(X/S)$. Let $\phi_0$ and $\phi_1$ be the end points of the closure of $I$. Consider the set
\[
S_{\bw}\coloneqq \{v(A)\in \Lambda\colon A\text{ is an HN factor of } F_t\in \cP_t(I) \text{ with respect to } \sigma_t \text{ and } v(F_t)=\bw\}.
\]
Then it is clear that $Z(S_{\bw})\subset \CC$ is contained in the parallelogram with angles $\pi\phi_0$ and $\pi\phi_1$ and with $0$ and $Z(\bw)$ as opposite vertices. Using \ref{c1}, we know that $S_{\bw}$ is a finite set. 
Since $\cM_{{\sigma}}(\bv,\phi)$ is bounded for every $\bv\in \Lambda$ and $\phi\in \RR$, $\cP_{{\sigma}}(I;\bw)$ is bounded and is an algebraic stack of finite type over $S$ by~\cite[Lemma 9.5 and Lemma 9.6]{stability-families}.

Now, let $T\to S$ be a morphism of finite type from an affine scheme $T$ and $F\in \Dqc(X_T)$ be a $T$-perfect object such that $F_{t_0}$ is geometrically $\sigma_{t_0}$-stable of class $\bv$ and phase $\phi$ for a point $t_0\in T$. 
By Lemma \ref{lem:openness-flat}, after replacing $T$ with an open neighborhood of $t_0$ and shift $F$ if necessary, we may assume that $\phi\in (0,1]$, $F_t\in \cA_{t}$ for each $t\in T$, and $T$ is connected.

Similar to~\cite[Lemma 21.21]{stability-families}, we define a functor
\[
\Quot_T^{\leq \phi}(F)\colon (\mathrm{Sch}/T)^{\mathrm{op}}\to \mathrm{Sets}
\]
whose value on $B\in (\mathrm{Sch}/T)$ is the set of all morphisms $F_B\to Q$ in $\Dqc(X_B)$ where $Q$ is $B$-perfect and $F_b\to Q_b$ is a surjection in $\cA_b$ with $\phi_{\sigma_b}(Q_b)\leq \phi$ for each $b\in B$. 
By Lemma~\ref{lem:openness-flat}, \cite[Proposition 11.6]{stability-families}, and \ref{c1}, we know that $$\Quot_T^{\leq \phi}(F)\to T$$ is an algebraic space locally of finite type. 
Since $\cP_{{\sigma}}(I;\bw)\to S$ is of finite type for each $\bw\in \Lambda$ and interval $I$ of length less than 1, the argument of \cite[Lemma 21.21]{stability-families} implies that $\Quot_T^{\leq \phi}(F)\to T$ is of finite type. Moreover, by Lemma \ref{lem:openness-flat}, \ref{c3}, and \cite[Theorem 17.1]{stability-families}, ${\sigma}$ meets the assumption of \cite[Proposition 11.11]{stability-families}. 
Thus, we can conclude that $\Quot_T^{\leq \phi}(F)\to T$ is universally closed. 
Since the non-geometrically stable locus is the image of the union of those connected components of $\Quot_T^{\leq \phi}(F)$ parameterizing objects of classes not equal to $v(F)$, this image is closed by universal closedness, hence the geometrically stable locus is open. 
This verifies the condition \ref{c2} and the result follows.
\end{proof}

\subsection{Mass-Hom bounds}\label{subsection-moduli-spaces}
The main importance of a mass-Hom bound for a stability condition is that it implies the existence of proper moduli spaces of semistable objects, at least in the smooth case.
More precisely, we have the following version of~\cite[Theorem 2.35]{DHL-robotis}; we also expect that there is a suitable extension of this result to the singular case, but we have not pursued this.

\begin{theorem}[Halpern-Leistner, Robotis]\label{theorem-DHL-robotis} 
Let $X$ be a smooth projective scheme over a field $\KK$ and $\sigma$ be a numerical stability condition on $\Db(X)$ with respect to $(\Lambda,v)$.
Assume that $\sigma$ has a mass-Hom bound. 
Then $\sigma$ is a stability condition on $\Db(X)$ over $\KK$.
In particular, if $\mathrm{char}(\KK)=0$, then for any $\bv\in\Lambda$ and $\phi\in\RR$ such that $Z(\bv)\in\RR_{>0}\cdot e^{i\pi\phi}$, the stack $\cM_{\sigma}(\bv,\phi)$ admits a proper good moduli space $M_{\sigma}(\bv,\phi)$ over $\KK$. 
\end{theorem}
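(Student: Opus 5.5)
We verify the four conditions of Definition~\ref{def:FamilyStabilityConditions} for $S=\Spec(\KK)$, where $\sigma$ is the only fiber stability condition. Because a field has only one point, several conditions either become automatic or reduce to known facts. First, \ref{c1} is exactly the support property of $\sigma$. Second, \ref{c3} concerns Dedekind schemes $C$ essentially of finite type over $\KK$. For these, I would construct the HN structure over $C$ by the standard method of \cite[Section 13]{stability-families}. This method builds a $C$-local slicing on $\Db(X_C)$ from the constant family $\sigma$: one base changes $\sigma$ to the generic point $K$ and to the closed points using Theorem~\ref{thm:base-change-stab}, and then glues these via Polishchuk's construction of constant families of t-structures \cite{P-t-structures}. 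The central charge $Z$ is compatible with these base changes by construction. Since the family is constant, \ref{c3} should follow from the general results of \cite{stability-families} on constant families, and I expect this step to cause no trouble.

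The content of the theorem is therefore openness and boundedness. By Lemma~\ref{lem:open-geo-stab}, it suffices to prove the semistable versions \ref{sc2} and \ref{sc4}. This is where the mass-Hom bound enters, and I would follow the argument of \cite[Theorem 2.35]{DHL-robotis}. For boundedness \ref{sc4}, fix $\bv$ and $\phi$. Every $F\in\cM_\sigma(\bv,\phi)$ over a geometric point has mass $m_\sigma(F)=|Z(\bv)|$. The mass-Hom bound persists after base change, because masses and Hom dimensions are preserved by Theorem~\ref{thm:base-change-stab}. It then gives $\dim\Hom(A,F)\le C_A|Z(\bv)|$ for every perfect $A$. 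Since $X$ is smooth, $\Db(X)=\Dperf(X)$. Taking $A=\cO_X(-m)[i]$ over the finitely many relevant $m$ and $i$ gives uniform bounds on the dimensions $h^i(F(m))$.

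These cohomology bounds must be upgraded to boundedness of a family of complexes. For this I would use the Toda--Lieblich style criterion, which bounds the amplitude of the cohomology sheaves and the regularity of each $\cH^j(F)$ via Castelnuovo--Mumford. A uniform amplitude follows because $\sigma$ is a stability condition on $\Db(X)$ with a fixed heart: semistable objects of a fixed phase lie in $\cP((\phi-1,\phi])$, and this subcategory sits inside a fixed range of standard t-structure degrees. Concretely, I would take a generator $G=\bigoplus_{j=0}^{\dim X}\cO_X(j)$ and regard objects as modules over the finite-dimensional algebra $\mathrm{End}(G)$. The mass-Hom bound then bounds the total dimension of $\RHom(G,F)$, so $F$ corresponds to a point of a finite-type moduli stack of complexes of modules of bounded dimension. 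This yields a finite-type $B$ witnessing boundedness.

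For openness \ref{sc2}, I would argue through semicontinuity of $\phi^\pm$. Given a $T$-perfect object $F$, the mass-Hom bound together with the support property shows that only finitely many classes can appear as HN factors in a family. I would then use the Quot-space properness argument of Lemma~\ref{lem:open-geo-stab}, with \ref{c3} supplying the valuative criterion, to conclude that the destabilized locus is closed. I expect this step, openness, to be the main obstacle, because the available boundedness must be fed into a Langton-type closedness argument; this is precisely the part of \cite{DHL-robotis} I would cite and adapt. The final assertion, existence of a proper good moduli space when $\mathrm{char}(\KK)=0$, then follows from Remark~\ref{rmk:FamilyStabilityConditions}\eqref{enum:FamilyStabilityConditions3}.
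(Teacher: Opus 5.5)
Your skeleton matches the paper's: reduce to \ref{c1}, \ref{sc2}, \ref{c3}, \ref{sc4} and conclude with Lemma~\ref{lem:open-geo-stab}. The order in which you obtain these conditions, however, does not work.

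You treat \ref{c3} as automatic for a constant family, and it is not. An HN structure over a Dedekind scheme $C$ already encodes a form of generic openness. If $E\in\Db(X_C)$ has semistable generic fiber, its HN filtration over $C$ has exactly one factor with nonzero generic fiber and finitely many $C$-torsion factors. So away from finitely many closed points $E$ agrees with a $C$-torsion-free object of $\cP_C(\phi)$, and $E_p$ is semistable for all but finitely many $p\in C$ (compare the use of \cite[Lemma 13.12]{stability-families} in the proof of Theorem~\ref{thm:main-criterion}). Accordingly, the criterion \cite[Theorem 18.7]{stability-families} that produces HN structures takes openness of semistability and semicontinuity of $\phi^\pm$ as input. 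The paper gets \ref{c3} only after openness, from \cite[Theorems 18.7 and 5.7]{stability-families} combined with \cite[Proposition 2.38]{DHL-robotis}.

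Your route to \ref{sc2} is then circular. Semicontinuity of $\phi^\pm$ is Lemma~\ref{lem:openness-flat}, which assumes \ref{sc2} and \ref{c3}. The Quot-space argument in Lemma~\ref{lem:open-geo-stab} needs Lemma~\ref{lem:openness-flat} to know that $\cP_\sigma(I;\bw)$ is open and that $\Quot_T^{\leq\phi}(F)$ is a locally finite type algebraic space, and it needs \ref{c3} for universal closedness. So it upgrades \ref{sc2} to \ref{c2} but cannot produce \ref{sc2}. The working order is: uniform mass-Hom bound at finite type points $\Rightarrow$ \cite[Theorem 2.35(2)]{DHL-robotis} $\Rightarrow$ \ref{sc2} by \cite[Proposition 2.38]{DHL-robotis} $\Rightarrow$ \ref{c3}; then \ref{sc4} from the proof of \cite[Proposition 2.41]{DHL-robotis}; then Lemma~\ref{lem:open-geo-stab}.

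You also skip the step the paper proves by hand to enter the framework of \cite{DHL-robotis}, namely the hypothesis of \cite[Theorem 2.35(1)]{DHL-robotis}. This is a single constant $C_G$ with $\dim_\ell\Hom(G_\ell,F)\leq C_G\,m_{\sigma_\ell}(F)$ for every finite type point $\Spec(\ell)\to\Spec(\KK)$ and every $F\in\Db(X_\ell)$. Your justification via Theorem~\ref{thm:base-change-stab} only covers objects $F_\ell$ with $F$ defined over $\KK$, whereas the objects to control are arbitrary objects over $\ell$. The paper argues as follows:
\begin{itemize}
\item Zariski's lemma gives $[\ell:\KK]<\infty$.
\item Adjunction gives $[\ell:\KK]\dim_\ell\Hom(G_\ell,F)=\dim_\KK\Hom(G,\pi_*F)$.
\item $\pi_*$ preserves HN filtrations, so $m_\sigma(\pi_*F)=[\ell:\KK]\,m_{\sigma_\ell}(F)$.
\end{itemize}

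Finally, your boundedness sketch fails as written, for two reasons. First, bounds on $\dim\Hom(\cO_X(-m)[i],F)$ for finitely many $m$ do not control the regularity of the individual cohomology sheaves of a complex, since the hypercohomology spectral sequence can cancel. Second, $\Db(X)$ is not a module category over the finite-dimensional algebra $\mathrm{End}(G)$ for $G=\bigoplus_j\cO_X(j)$, because $G$ is not tilting in general (e.g.\ $\Ext^1(\cO_X,\cO_X)=\rH^1(\cO_X)$ can be nonzero). One must work with dg-modules over $\RHom(G,G)$ and use smoothness. The paper simply invokes the boundedness argument in the proof of \cite[Proposition 2.41]{DHL-robotis}.
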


By Theorem~\ref{thm:MassHomBoundProjective}, when $(X, H_X)$ is a smooth polarized scheme over $\KK$, Theorem~\ref{theorem-DHL-robotis} applies to all stability conditions in the connected component $\Stabdagger_{H_X}(\Db(X))$. 
In Section~\ref{sec:ProjectiveFamilies}, we will give a different argument that applies to possibly singular schemes (as well as over a general base scheme). 

\begin{proof}
Since $X$ is smooth projective over $\KK$, we know that the category $\Db(X)$ is smooth and proper over $\KK$. 
Therefore, the theory of \cite{DHL-robotis} is applicable by \cite[Example 2.29]{DHL-robotis}. 

Fix a classical generator $G$ of $\Db(X)$. 
We claim that there exists a constant $C_G>0$, such that for any field $\ell$ with a finite type morphism $\Spec(\ell)\to\Spec(\KK)$ and any object $F\in\Db(X_{\ell})$, we have
\begin{equation}\label{eq:mass-hom-extension}
\dim_{\ell} \Hom(G_{\ell},F)\leq C_Gm_{\sigma_{\ell}}(F).
\end{equation}
Indeed, by Zariski's lemma, we know that $\ell/\KK$ is a finite extension. 
Denote by $\pi\colon X_{\ell}\to X$ the base change morphism. 
Then by adjunction, we obtain
\[
[\ell:k]\cdot \dim_{\ell} \Hom(G_{\ell},F)=\dim_\KK \Hom(G,\pi_*F).
\]
From Step 1 of~\cite[Theorem 12.17]{stability-families}, we know that $\pi_*$ preserves HN filtrations. 
Thus, by the mass-Hom bound of $\sigma$, there exists a constant $C_G>0$ so that
\[
\dim_\KK \Hom(G,\pi_*F)\leq C_Gm_{\sigma}(\pi_*F)=C_G\cdot [\ell:\KK]\cdot m_{\sigma_{\ell}}(F).
\]
This verifies~\eqref{eq:mass-hom-extension}. 

In particular, $\sigma$ satisfies~\cite[Theorem 2.35(1)]{DHL-robotis}. 
Now, from the same proof of the implication (1) to (2) in \cite[Theorem 2.35]{DHL-robotis}, $\sigma$ also satisfies \cite[Theorem 2.35(2)]{DHL-robotis}. 
Therefore, the condition \ref{sc2} holds for $\sigma$ by \cite[Proposition 2.38]{DHL-robotis}. Furthermore, the combination of \cite[Theorem 18.7, 5.7]{stability-families} with \cite[Proposition 2.38]{DHL-robotis} verifies \ref{c3} for $\sigma$. 
Finally, as explained in the second paragraph of the proof of \cite[Proposition 2.41]{DHL-robotis}, $\cM_{\sigma}(\bv,\phi)$ is bounded for any $\bv\in \Lambda$ and $\phi\in \RR$. 
Thus, we can conclude from Lemma~\ref{lem:open-geo-stab} that $\sigma$ is a stability condition on $\Db(X)$ over $\KK$. 
When $\mathrm{char}(\KK)=0$, the existence of a proper good moduli space directly follows from~\eqref{eq:mass-hom-extension} and \cite[Theorem 2.35]{DHL-robotis}.
\end{proof}


\section{Pullback and pushforward of stability conditions in families}\label{sec:PullPushRelativeStability}

The goal of this section is to extend Section~\ref{sec:PullbackPushforward} to the context of stability conditions over a base (introduced in Section~\ref{subsec:StabilityBase}); in particular, to include moduli spaces.

We fix a free abelian group $\Lambda$ of finite rank.
The fundamental result is the following: once the pushforward (or pullback) of a numerical stability condition exists on each point of the base, then it exists over the base. 
The proof will take the whole section.

\begin{theorem}\label{thm:main-criterion}
Let $X\to S$ and $Y\to S$ be two morphisms as in Setup~\ref{assum-stab}.
Suppose that $f\colon X\to Y$ is a finite morphism over $S$.
\begin{enumerate}[{\rm (1)}]
\item\label{enum:main-criterion1} Assume that $f$ is a closed embedding of finite Tor dimension. Let ${\sigma}=(\sigma_s=(Z, \cP_s))_{s\in S}$ be a stability condition on $\Db(Y)$ over $S$ with respect to $(\Lambda,v)$ such that
\begin{equation}\label{eq:push}
    f_{s*} \cO_{X_s} \otimes \cP_{s}(\phi)\subset\cP_{s}(\geq \phi),
\end{equation}
for each $s\in S$ and $\phi\in\RR$. 
Then $f^{\sharp}{\sigma}\coloneqq(f_s^{\sharp}\sigma_s)_{s\in S}$ is a stability condition on $\Db(X)$ over $S$ with respect to $(\Lambda,f^{\sharp}v\coloneqq v\circ f_*)$.
\item\label{enum:main-criterion2} 
Assume that $f$ is faithfully flat and the relative dualizing complex $\omega_{f}^{\bullet}$ is contained in $\Dperf(X)$. 
Let ${\sigma}=(\sigma_s=(Z,\cP_s))_{s\in S}$ be a stability condition on $\Db(X)$ over $S$ with respect to $(\Lambda,v)$ such that 
\begin{equation}\label{eq:pullback}
    f^*_s f_{s*}(\cP_{s}(\phi))\subset\cP_{s}(\leq \phi),
\end{equation}
for each $s\in S$ and $\phi\in\RR$.
Then $f_{\sharp}{\sigma}\coloneqq(f_{s,\sharp}\sigma_s)_{s\in S}$ is a stability condition on $\Db(Y)$ over $S$ with respect to $(\Lambda,f_{\sharp}v\coloneqq v\circ f^*)$.
\end{enumerate}
\end{theorem}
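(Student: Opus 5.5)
We verify the four conditions of Definition~\ref{def:FamilyStabilityConditions} for $f^{\sharp}\sigma$ (resp.\ $f_{\sharp}\sigma$). By Lemma~\ref{lem:open-geo-stab} it is enough to check \ref{c1}, \ref{sc2}, \ref{c3} and \ref{sc4}. Throughout, write $\Phi$ for the relevant functor: $\Phi=f_*$ in case~\ref{enum:main-criterion1} and $\Phi=f^*$ in case~\ref{enum:main-criterion2}.

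First we set up the fiberwise input. By Proposition~\ref{proposition-pullback-stability} (resp.\ Proposition~\ref{proposition-pushforward-stability}), hypothesis~\eqref{eq:push} (resp.~\eqref{eq:pullback}) shows that each $f_s^{\sharp}\sigma_s$ (resp.\ $f_{s,\sharp}\sigma_s$) is a numerical stability condition. The lattice maps $f^{\sharp}v$ and $f_{\sharp}v$ factor through $\Knum$ by Proposition~\ref{prop:FunctorialityKnum}. Lemma~\ref{lem:base-change} identifies the base change of the induced stability condition with the induced stability condition on the base change. Consequently an object $F_t$ is (geometrically) semistable or stable of phase $\phi$ for the induced condition exactly when $\Phi(F_t)$ is so for $\sigma_t$, and $\phi^{\pm}$ are preserved by Lemmas~\ref{lemma-pullback-slicing-compatibility} and~\ref{lemma-pushforward-slicing-compatibility}. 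There is one caveat: for geometric stability this requires care, since stability is not simply transported (pushforward of a stable object need only be semistable), so we use only semistability wherever possible.

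\textbf{Condition \ref{c1}} follows with the same quadratic form $Q$, exactly as in Remark~\ref{remark-pullback-slicing-stability}.

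\textbf{Condition \ref{sc2}.} For $T\to S$ and a $T$-perfect $F$ on $X_T$ (resp.\ $Y_T$), the object $\Phi_T F$ is $T$-perfect: use Lemma~\ref{lem:PushforwardTperfect} for $f_*$, and flatness of $f$ for $f^*$. Moreover $(\Phi_T F)_t=\Phi_t(F_t)$, so the semistable locus of $F$ equals the semistable locus of $\Phi_T F$ for $\sigma$. This locus is open by \ref{sc2} for $\sigma$.

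\textbf{Condition \ref{sc4}.} Consider the morphism $F\mapsto \Phi F$ from $\cM_{f^{\sharp}\sigma}(\bv,\phi)$ to $\cM_\sigma(\bv,\phi)$. We must deduce boundedness of the source from boundedness of the target. In case~\ref{enum:main-criterion1}, a witness family $(B,\cG)$ for $\cM_\sigma$ can be pulled back to the closed substack where $\cG$ is supported on $X$ with an $\cO_X$-module structure; more concretely, objects $E$ on $X_{\bar s}$ are determined by $f_*E$ together with the $f_*\cO_X$-action, and these actions are parameterized by a finite-type scheme over $B$ (a Hom-space scheme). In case~\ref{enum:main-criterion2}, $F$ is a direct summand of $f_*f^*F$ after twisting by $\omega_f$ via the trace, or more simply $F$ is recovered from $f^*F$ with descent data. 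Again this gives a finite-type parameter space over the witness for $\sigma$. I expect this step to need the most care, along with \ref{c3}.

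\textbf{Condition \ref{c3}, the main obstacle.} Given an essentially finite type Dedekind scheme $C\to S$, we have an HN structure $\sigma_C=(Z,\cP_C)$ for $\sigma$, and we take $f_C^{\sharp}\cP_C$ (resp.\ $f_{C,\sharp}\cP_C$). We must show this is a $C$-local slicing. For this we apply Proposition~\ref{proposition-pullback-stability} or~\ref{proposition-pushforward-stability} in their $S$-local form with $S=C$, which requires the global conditions
\[
f_{C*}\cO_{X_C}\otimes\cP_C(\phi)\subset\widehat{\cP}_C(\ge\phi)
\qquad\text{resp.}\qquad
f_C^*f_{C*}\cP_C(\phi)\subset\cP_C(\le\phi).
\]
These must be deduced from the fiberwise hypotheses. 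The plan is as follows. Take $F\in\cP_C(\phi)$. If $F$ is $C$-torsion it is supported on finitely many fibers, and we reduce to the fiberwise statement by filtering by pushforwards from the closed fibers. If $F$ is not $C$-torsion, then $F_K\in\cP_K(\phi)$, and the condition at the generic point handles the $K$-part. The $C$-torsion discrepancy is controlled using the fact that the HN filtration of an object of $\Db(X_C)$ with respect to $\cP_C$ restricts to the fibers, through the $C$-local t-structures and the induced hearts $\cA_c$ (cf.\ \cite[Theorem 5.7]{stability-families}); the phase inequalities then hold fiberwise, and hence globally by the $C$-local criterion for membership in $\cP_C(\le\phi)$. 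The HN-structure compatibility of $Z$ with the $K$-class and the torsion class is automatic, because $\Phi$ commutes with restriction to $K$ and to closed fibers. Finally, Lemma~\ref{lem-HN-induce} shows that the fiber stability conditions are the induced ones.
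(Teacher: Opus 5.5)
Your overall architecture matches the paper's: reduce to \ref{c1}, \ref{sc2}, \ref{c3}, \ref{sc4} via Lemma~\ref{lem:open-geo-stab}, transport \ref{c1} and \ref{sc2} through $f_*$ or $f^*$ using Lemma~\ref{lem:base-change}, and use descent data for boundedness in case~\ref{enum:main-criterion2}. The genuine gap is boundedness in case~\ref{enum:main-criterion1}.

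From a witness $(B,\cG)$ for $\cM_\sigma(\bv,\phi)$, you propose to parametrize ``$f_*\cO_X$-actions'' on $\cG_b$ by a Hom-scheme and read off objects of $\Db(X_b)$. This does not work in a triangulated setting. A map $f_*\cO_X\otimes\cG_b\to\cG_b$ that is associative up to homotopy is not a dg-module structure. Nothing produces from it an object of $\Db(X_b)$, let alone a family in $\Dpug(X_{B'}/B')$, since Eilenberg--Moore reconstruction for the monad $f_*f^*$ fails on homotopy categories.

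There is also no evident closed locus to restrict to, because lying in the essential image of $f_*$ is not a support condition. For a point $x$ on a smooth surface $Y$, a non-split extension of $\cO_x$ by $\cO_x[1]$ has cohomology supported at $x$ but is not $f_*$ of anything. Tellingly, your sketch never uses that $f$ is a closed embedding.

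The paper instead works with cohomology sheaves, where the closed embedding makes everything abelian.
\begin{itemize}
\item Stratify $T$ via Lemma~\ref{lem:flatten} so that all $\cH^i(\cF)$ and all $\cH^j(f_T^*\cH^i(\cF))$ are $T$-flat. Finite Tor dimension makes $f_T^*\cH^i(\cF)$ $T$-perfect.
\item Set $\cF^i\coloneqq\cH^0(f_T^*\cH^i(\cF))$. Since $\cH^0(f^*f_*G)\cong G$ for a sheaf $G$ on $X$, you get $\cF^i_t\cong\cH^i(F')$ whenever $\cF_t\cong f_{t*}F'$.
\item So every $f^\sharp\sigma$-semistable object of class $\bv$ and phase $\phi$ is an iterated extension of members of finitely many bounded families of shifted sheaves, and \cite[Lemma 9.6]{stability-families} gives boundedness.
\end{itemize}

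There are further soft spots.

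\textbf{Condition \ref{c3}.} Your claim that $\cP_C$-HN filtrations restrict to the fibers is false. The derived restriction of a $C$-torsion object $i_{p*}F'$ to $X_p$ is an extension of $F'$ by $F'[1]$. Also, ``not $C$-torsion'' is not the same as ``$C$-torsion-free''. The paper first uses that $\sigma_C$ has a $C$-torsion theory \cite[Corollary 20.10, Proposition 17.6]{stability-families}. This lets one assume $F\in\cP_C(\phi)$ is either $i_{p*}F'$ with $F'\in\cP_p(\phi)$, or $C$-torsion-free; in the latter case $F_c\in\cP_c(\phi)$ for every $c\in C$ by \cite[Lemma 13.12]{stability-families}. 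The fiberwise hypothesis and \cite[Lemma 15.6]{stability-families} then give the global inclusion.

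Moreover, Lemma~\ref{lem-HN-induce} does not identify the fiber conditions induced by $(Z,f_C^\sharp\cP_C)$ with the given ones. The paper gets the HN structure from \cite[Theorem 18.7]{stability-families}, whose hypotheses include \ref{sc2} and the semicontinuity conclusion of Lemma~\ref{lem:openness-flat} for the new collection.

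\textbf{Boundedness in case~\ref{enum:main-criterion2}.} Your descent route is the paper's, but you omit where \eqref{eq:pullback} enters. The scheme $\underline{\mathrm{Isom}}_T(p_T^*\cF,q_T^*\cF)$ is affine and of finite presentation over $T$ because $\Hom(p_t^*\cF_t,q_t^*\cF_t[i])=\Hom(\cF_t,f_t^*f_{t*}\cF_t[i])=0$ for $i<0$. The cocycle and unit conditions then cut out a closed subscheme over which $\cF$ descends. Your trace-splitting alternative would fail wherever $\deg f$ is not invertible on $S$.
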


Note that in the statement of Theorem~\ref{thm:main-criterion}, $f^*$ and $f_*$ are well-defined at the level of relative numerical Grothendieck groups, in view of Proposition~\ref{prop:FunctorialityKnum}.

\begin{proof}
We prove~\ref{enum:main-criterion1}. 
First of all, by Proposition~\ref{proposition-pullback-stability}, $f^{\sharp}{\sigma}=(f_s^{\sharp}\sigma_s)_{s\in S}$ is a collection of stability conditions on $\Db(X_s)$ with respect to $(\Lambda, v_s\circ f_{s,*})$.
Moreover, condition~\ref{c1} in Definition~\ref{def:FamilyStabilityConditions} holds.

We want to show that~\ref{sc2} holds for $f^\sharp{\sigma}$.
Let $T\to S$ be a finite type morphism, with $T$ affine.
If $F\in\Dqc(X_T)$ is a $T$-perfect complex, then $f_{T,*}F\in\Dqc(Y_T)$ is $T$-perfect as well by Lemma~\ref{lem:PushforwardTperfect}.
Since~\ref{sc2} holds for ${\sigma}$, we can conclude by using Lemma~\ref{lem:base-change}\ref{enum:BaseChange1}.
Moreover, since Lemma \ref{lem:openness-flat} applies to ${\sigma}$, the conclusion of Lemma \ref{lem:openness-flat} also holds for $f^{\sharp}{\sigma}$ by Lemma \ref{lemma-pullback-slicing-compatibility}\ref{pullback-slicing-compatibility-phipm}.

Next, we verify~\ref{c3}. 
Let $C\to S$ be a morphism which is essentially of finite type from a Dedekind scheme $C$ and $\sigma_C=(Z,\cP)$ be the HN structure with respect to $(\Lambda,v_C)$ on $\Db(Y_C)$ over $C$ corresponding to ${\sigma}$.
We claim that
\begin{equation}\label{eq:PushHN}
f_{C,*}f_C^*(F)\in\cP(\geq\phi),
\end{equation}
for each $\phi\in\RR$ and $F\in\cP(\phi)$. 
Indeed, first of all we observe that, by \cite[Corollary 20.10 and Proposition 17.6]{stability-families}, $\sigma_C$ has a $C$-torsion theory, in the sense of~\cite[Definition 17.4]{stability-families}.
Hence, to show \eqref{eq:PushHN}, we can assume that $F$ is either in $\cP(\phi)_{\Ctor}\coloneqq\cP(\phi)\cap\Db(Y_C)_{\Ctor}$, or in $\cP(\phi)_{\Ctf}\coloneqq\cP(\phi)_{\Ctor}^\perp$.
In the latter case, by~\cite[Lemma 13.12]{stability-families}, we have 
\[
(f_{C,*}f_C^*(F))_c=f_{c,*}f_c^*(F_c)\in \cP_c(\geq \phi),
\]
for any point $c\in C$ by \eqref{eq:push}. 
So \eqref{eq:PushHN} can be deduced from \cite[Lemma 15.6 and Lemma 13.12]{stability-families}.
When $F$ is $C$-torsion, by \cite[Lemma 6.11]{stability-families}, we may assume that $F=i_{p*}F'$ for a closed point $p\in C$ and $F'\in\cP_p(\phi)$, where $i_p \colon Y_p \hookrightarrow Y_C$ is the inclusion. 
Then by \eqref{eq:push} we have
\[
f_{C,*}f_C^*(F)=i_{p,*}(f_{p,*}f_p^*(F'))\in i_{p,*}(\cP_p(\geq \phi)), 
\]
and the claim again follows from~\cite[Lemma 15.6]{stability-families}.

In view of~\eqref{eq:PushHN}, we conclude by Proposition~\ref{proposition-pullback-stability} that the pullback $f^{\sharp}_{C} \cP$ is a $C$-local slicing of $\Db(X_C)$. 
Combining with the property \ref{sc2} and the conclusion of Lemma \ref{lem:openness-flat} for $f^{\sharp}{\sigma}$, 
we see that the pair $(Z, f_C^{\sharp}\cP)$ meets the assumptions in~\cite[Theorem 18.7]{stability-families}, which gives the desired HN structure on $\Db(X_C)$ over $C$. This verifies~\ref{c3}.

Finally, we will prove~\ref{sc4}. 
By \ref{sc2} for $f^{\sharp}{\sigma}$, the moduli stack $\cM_{f^{\sharp}{\sigma}}(\bv,\phi)$ is an open substack of $\cM_{\mathrm{pug}}(X/S)$ for any $\bv\in \Lambda$ and $Z(\bv)\in \RR_{>0}\cdot e^{i\pi\phi}$. 
We claim that $\cM_{f^{\sharp}{\sigma}}(\bv,\phi)$ is bounded. 
By Remark~\ref{rmk:FamilyStabilityConditions}\eqref{enum:FamilyStabilityConditions1}, we can take a pair $(T, \cF)$ witnessing the boundedness of $\cM_{{\sigma}}(\bv,\phi)$. 
Since $T$ is of finite type over $S$, by replacing $T$ with the disjoint union of the strata in Lemma \ref{lem:flatten}, we may assume that each $\cH^i(\cF)$ is flat over $T$ and $\cH^i(\cF)=0$ for $i\notin [a,b]$, where $[a,b]\subset \RR$ is an interval. 
Therefore $f^*_T\cH^{i}(\cF)$ is $T$-perfect by~\citestacks{0GEH} as $f_t$ is of finite Tor dimension for each $t\in T$. 
Applying Lemma \ref{lem:flatten} to
\[
\bigoplus_{i\in [a,b]}f^*_T\cH^{i}(\cF),
\]
we may also assume that $\cH^j(f^*_T\cH^{i}(\cF))$ is flat over $T$ and $\cH^j(f^*_T\cH^{i}(\cF))=0$ for each $i\in [a,b]$ and $j\notin [x,y]$, where $[x,y]\subset \RR_{\leq 0}$ is an interval. 
In particular, $\cF^i\coloneqq \cH^0(f_T^*\cH^i(\cF))$ is flat over $T$ and we have $\cF^i_t=\cH^0(f_t^*\cH^i(\cF_t))=\cH^0((f_T^*\cH^i(\cF))_t)$ for any $i$ and $t\in T$. 

If we define a subfunctor
\[
\cM_i(\bv,\phi)\colon (\mathrm{Sch}/S)^{\mathrm{op}}\to \mathrm{Gpds}
\]
of $\cM_{\mathrm{pug}}(X/S)$ whose value on $B\in (\mathrm{Sch}/S)$ consists of all $F^i\in\Dpug(X_B/B)$ such that $F_b^i\cong\cH^0(f_b^*\cH^i(F))[-i]$ for some $\sigma_b$-semistable object $F$ of class $\bv$ and phase $\phi$ for each $b\in B$, then the pair $(T,\cF^i[-i])$ witnesses the boundedness of $\cM_i(\bv,\phi)$, for each $i\in [a,b]$. 
Note that if $\cF_t=f_{t,*}F_t'$ for $F_t'\in \Db(X_t)$, then $\cF^i_t=\cH^i(F'_t)$. 
Therefore, the object corresponding to each geometric point of $\cM_{f^{\sharp}{\sigma}}(\bv,\phi)$ can be written as an extension of objects associated with geometric points in $\cM_i(\bv,\phi)$, so $\cM_{f^{\sharp}{\sigma}}(\bv,\phi)$ is bounded by~\cite[Lemma 9.6]{stability-families}. 
This verifies~\ref{sc4} for $f^{\sharp}\sigma$. 
Thus, we can conclude that $f^{\sharp}{\sigma}$ is a stability condition on $\Db(X)$ over $S$ by Lemma~\ref{lem:open-geo-stab}.

We now prove \ref{enum:main-criterion2}. 
Note that for $s \in S$, the dualizing complex $\omega^{\bullet}_{f_s}$ is perfect, since it is the pullback of $\omega^{\bullet}_f$. 
Thus, by Proposition~\ref{proposition-pushforward-stability}, $f_\sharp{\sigma}=(f_{s,\sharp}\sigma_s)_{s\in S}$ is a collection of stability conditions with respect to $(\Lambda,v_s\circ f_s^*)$ on $\Db(Y_s)$.
Moreover, condition \ref{c1} in Definition~\ref{def:FamilyStabilityConditions} holds for $f_{\sharp} \sigma$, and \ref{sc2} follows from the corresponding property for ${\sigma}$.  
Therefore, for any $\bv\in\Lambda$ and $Z(\bv)\in\RR_{>0}\cdot e^{i\pi\phi}$, the moduli stack $\cM_{f_{\sharp}{\sigma}}(\bv,\phi)$ is an open substack of $\cM_{\mathrm{pug}}(Y/S)$. 

The condition~\ref{c3} for $f_{\sharp}\sigma$ follows similarly to~\ref{enum:main-criterion1}. 
More precisely, let $C \to S$ be a morphism which is essentially of finite type from a Dedekind scheme $C$, and let $\sigma_C = (Z, \cP)$ be the HN structure with respect to $(\Lambda, v_C)$ on $\Db(X_C)$ over $C$ corresponding to $\sigma$. 
Then arguing as in~\ref{enum:main-criterion1}, we find that 
\begin{equation*}
    f_{C}^*f_{C,*}(\cP(\phi)) \subset \cP(\leq \phi). 
\end{equation*}
By Proposition~\ref{proposition-pushforward-stability}, we conclude that $f_{C, \sharp}\cP$ is a $C$-local slicing of $\Db(Y_C)$, and then as above that $(Z, f_{C, \sharp}\cP)$ is the desired HN structure on $\Db(Y_C)$ over $C$. 

By Lemma \ref{lem:open-geo-stab}, it remains to show that $f_{\sharp}\sigma$ satisfies~\ref{sc4}, i.e. that $\cM_{f_{\sharp}{\sigma}}(\bv,\phi)$ is bounded.
Set $Z\coloneqq X\times_Y X$ and $W\coloneqq X\times_Y X\times_Y X$. 
Let $p,q\colon Z\to X$ be the projections to the first and second factors, respectively. 
Similarly, we have morphisms $p_{12}, p_{13}, p_{23}\colon W\to Z$. 
We define a moduli functor of descent datum
\[
\mathcal{D}_{{\sigma}}(\bv,\phi)\colon (\mathrm{Sch}/S)^{\mathrm{op}}\to \mathrm{Gpds}
\]
whose value on $T\in (\mathrm{Sch}/S)$ consists of all pairs $(F,\alpha)$, where $F\in\Dpug(X_T/T)$ with $F_t$ $\sigma_t$-semistable of class $\bv$ and phase $\phi$, and $\alpha\colon p_{T}^*F\xrightarrow{\cong}q_{T}^*F$ is an isomorphism satisfying the cocycle and unit conditions:
\[
p_{23,T}^*\alpha\circ p_{12,T}^*\alpha= p_{13,T}^*\alpha
\]
on $W_T$ and
\[
\Delta_{X_T/Y_T}^*(\alpha)=\mathrm{id}_F
\]
on $X_T$.

Then we have a natural morphism
\[
\psi\colon\cM_{f_{\sharp}{\sigma}}(\bv,\phi)\longrightarrow \mathcal{D}_{{\sigma}}(\bv,\phi), \qquad \psi(T)\colon F'\longmapsto (f^*_TF', \alpha_{F'}),
\]
where $(f^*_TF',\alpha_{F'})$ is the canonical descent data of $F'\in\Dpug(Y_T/T)$. 
By the definition of $f_{\sharp}{\sigma}$, Lemma~\ref{lem:base-change}, and~\citestacks{0DLH}, $\psi$ is an equivalence and we have a commutative diagram
\[
\begin{tikzcd}
	{\cM_{f_{\sharp}{\sigma}}(\bv,\phi)} & {\cM_{{\sigma}}(\bv,\phi)} \\
	{\mathcal{D}_{{\sigma}}(\bv,\phi)}
	\arrow[from=1-1, to=1-2]
	\arrow["\psi"', from=1-1, to=2-1]
	\arrow["\xi"', from=2-1, to=1-2]
\end{tikzcd}
\]
where $\xi$ is the forgetful functor. 

By Remark~\ref{rmk:FamilyStabilityConditions}\eqref{enum:FamilyStabilityConditions1}, we can take a pair $(T,\cF)$ witnessing the boundedness of $\cM_{{\sigma}}(\bv,\phi)$. 
We define a functor 
\[
\underline{\mathrm{Isom}}_T(p_T^*\cF, q_T^*\cF)\colon (\mathrm{Sch}/T)^{\mathrm{op}}\longrightarrow \mathrm{Sets} 
\]
which maps $B\in (\mathrm{Sch}/T)$ to the set of isomorphisms between $p_B^*\cF_B$ and $q_B^*\cF_B$. 
Note that for any $t\in T$ and $i\in \ZZ$, we have
\[
\Hom(p_t^*\cF_t, q_t^*\cF_t[i])=\Hom(\cF_t, p_{t*}q_t^*\cF_t[i])=\Hom(\cF_t, f_t^*f_{t*}\cF_t[i]),
\]
so it vanishes for any $i<0$ by~\eqref{eq:pullback}. 
Therefore, by~\citestacks{0DLC} and the proof of~\citestacks{0DPW}, $\underline{\mathrm{Isom}}_T(p_T^*\cF, q_T^*\cF)$ is a scheme which is affine and of finite presentation over $T$. 
Let $u$ be the universal isomorphism on $Z_T\times_T \underline{\mathrm{Isom}}_T(p_T^*\cF, q_T^*\cF)$. 
Then $\Delta_{X_T/Y_T}^*(u)$ induces a section $s_1$ of 
\[
\underline{\mathrm{Isom}}_T(\cF, \cF)\times_T\underline{\mathrm{Isom}}_T(p_T^*\cF, q_T^*\cF)\longrightarrow\underline{\mathrm{Isom}}_T(p_T^*\cF, q_T^*\cF).
\]
Moreover, we also have a trivial section $s_2$ induced by $\mathrm{id}_{\cF}$. 
Therefore, from the definition, an isomorphism from $p_B^*\cF_B$ to $q_B^*\cF_B$ satisfies the unit condition if and only if the corresponding morphism $B\to \underline{\mathrm{Isom}}_T(p_T^*\cF, q_T^*\cF)$ factors through the locus $H_1$ where $s_1$ and $s_2$ coincide. 
Since $\cF\in\Dpug(X_T/T)$, we know that $\underline{\mathrm{Isom}}_T(\cF,\cF)$ is a scheme which is affine and of finite presentation over $T$ by~\citestacks{0DLC}. 
Therefore, $H_1$ is a closed subscheme of $\underline{\mathrm{Isom}}_T(p_T^*\cF, q_T^*\cF)$. 
Similarly, the cocycle condition corresponds to the equality locus of two sections of
\[
\underline{\mathrm{Isom}}_T(q_1^*\cF, q_3^*\cF)\times_T\underline{\mathrm{Isom}}_T(p_T^*\cF, q_T^*\cF)\longrightarrow\underline{\mathrm{Isom}}_T(p_T^*\cF, q_T^*\cF),
\]
where $q_i\colon W_T\to X_T$ is the projection to the $i$-th factor. 
Since $\underline{\mathrm{Isom}}_T(q_1^*\cF, q_3^*\cF)$ is also a scheme which is affine and of finite presentation over $T$ by~\citestacks{0DLC} and~\eqref{eq:pullback}, we get a closed subscheme $H_2\subset \underline{\mathrm{Isom}}_T(p_T^*\cF, q_T^*\cF)$.

By definition, we have a morphism $\mathcal{D}_{{\sigma}}(\bv,\phi)\times_{\cM_{{\sigma}}(\bv,\phi)} T \to \underline{\mathrm{Isom}}_T(p_T^*\cF, q_T^*\cF)$ given by 
\[
\left(\mathcal{D}_{{\sigma}}(\bv,\phi)\times_{\cM_{{\sigma}}(\bv,\phi)} T\right)(B)\longrightarrow\underline{\mathrm{Isom}}_T(p_T^*\cF, q_T^*\cF)(B),\qquad (\cF_B, \alpha)\longmapsto \alpha
\]
for any $B\in (\mathrm{Sch}/T)$. 
Moreover, its image is exactly those $\alpha$ satisfying the cocycle and unit condition. 
Therefore, we get a factorization 
\[
\mathcal{D}_{{\sigma}}(\bv,\phi)\times_{\cM_{{\sigma}}(\bv,\phi)} T\longtwoheadrightarrow H_1\cap H_2\longhookrightarrow \underline{\mathrm{Isom}}_T(p_T^*\cF, q_T^*\cF).
\]
By descending $\cF_{H_1\cap H_2}$ along $f_{H_1\cap H_2}$, we then get an object $\cF'\in \Dqc(Y_{H_1\cap H_2})$, which induces a surjection $H_1\cap H_2\to \cM_{f_{\sharp}{\sigma}}(\bv,\phi)$. 
Since $\underline{\mathrm{Isom}}_T(p_T^*\cF, q_T^*\cF)\to T$ is of finite type, so is $H_1\cap H_2$. 
Therefore, the pair $(H_1\cap H_2, \cF')$ witnesses the boundedness of $\cM_{f_{\sharp}{\sigma}}(\bv,\phi)$.
This completes the proof of \ref{enum:main-criterion2}.
\end{proof}


\section{Stability conditions on projective families}\label{sec:ProjectiveFamilies}

In this section we complete the proof of the main result of the paper, Theorem~\ref{main-theorem}.
In Section~\ref{subsec:FamiliesStabilityP1n}, we consider the case of the product of $n$ copies of $\PP^1_S$. 
Then in Section~\ref{subsec:ProofMainTheorem} we handle the general case, by first passing to $\PP^n_S$ and then to all projective schemes over~$S$.
An interesting consequence is the existence of proper moduli spaces of semistable objects for stability conditions in $\Stabdagger_{H_X}(\Db(X))$, when $X$ is a projective scheme over a field $\KK$ of characteristic zero (see Corollary~\ref{cor:Stabdagger=Stabdagger}).

\subsection{Families of stability conditions on $(\PP^1_S)^n$}\label{subsec:FamiliesStabilityP1n}

Let $S$ be a noetherian Nagata scheme of finite Krull dimension which is quasi-projective over a noetherian affine scheme. 
The goal of this section is to construct stability conditions on
\[
\pi \colon (\PP^1_S)^n=\underbrace{\PP^1_S\times_S\dots\times_S\PP^1_S}_{n\text{-times}}\longrightarrow S.
\]
Note that the assumptions on $S$ imply that $\pi$ is as in Setup~\ref{assum-stab}.

Let $\Lambda\coloneqq\cN((\PP^1_S)^n/S)$ be as in Example~\ref{ex:UniformlyNumericalGrothendieck} and $v\colon\Knum((\PP^1_S)^n/S)\twoheadrightarrow\Lambda$ be the quotient morphism. 
In this context, $\Lambda$ is a free abelian group of rank~$2^n$ such that, for each point $t$ over $S$, the composition $v_t$ of~\eqref{eq:DefOfVt} can be identified with the quotient map
\[
\rK_0((\PP^1_t)^n)\longtwoheadrightarrow\Knum((\PP^1_t)^n)=\Lambda, 
\]
which is in fact an isomorphism in this case. 

\begin{proposition}\label{prop:AlgebraicP1nFamily}
In the above notation, for any $(a,b)\in\RR_{>0}\times\RR$, the collection 
\[
{\sigma}^{a,b}_{(\PP^1_S)^n}\coloneqq\left(\sigma_{(\PP^1_s)^n}^{a,b}=\left(Z^{a,b}_{(\PP^1_s)^n},\cP_{(\PP^1_s)^n}^{a,b}\right)\right)_{s\in S}
\]
of numerical stability conditions on $\Db((\PP^1_s)^n)$ for $s\in S$ given by \eqref{eq:StabonP1n}  is a stability condition on $\Db((\PP^1_S)^n)$ over $S$ with respect to $(\Lambda,v)$.
This gives an injective continuous map
\[
\RR_{>0}\times\RR\longhookrightarrow\Stab_{(\Lambda,v)}\left(\Db((\PP^1_S)^n)/S\right).
\]
\end{proposition}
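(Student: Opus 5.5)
Following the outline in the introduction, I will first treat a single algebraic parameter $(a,b_0)$ and then deform. Fix $b_0$ with $\frac{n-1}{n}<b_0<1$ and $0<a<a(n,b_0)$. By Theorem~\ref{thm:AlgebraicP1n}, for each $s\in S$ there is $G\in\widetilde{\GL}_2^+(\RR)$ such that $G\cdot\sigma^{a,b_0}_{(\PP^1_s)^n}$ is algebraic with respect to the full strong exceptional collection $\{\cO(I_k)\}$. Remark~\ref{rmK:SupportField} gives that both the constant $a(n,b_0)$ and this $G$ are independent of the field $\kappa(s)$, since the phases of the $\cO(I_k)[n-k]$ are computed from $Z$ alone. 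The exceptional collection is defined over $S$, so the heart $\cA_{\mathrm{alg},S}=\langle\cO(I_k)[n-k]\rangle$ on $\Db((\PP^1_S)^n)$ corresponds to modules over the $\cO_S$-algebra $A_S=\mathrm{End}(\bigoplus\cO(I_k))$, which is finite locally free. In other words, $\Db((\PP^1_S)^n)\simeq\Db(A_S)$ compatibly with base change.

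Next I verify the conditions of Definition~\ref{def:FamilyStabilityConditions} for the algebraic family. For this I will use Lemma~\ref{lem:open-geo-stab}, which requires \ref{c1}, \ref{sc2}, \ref{c3} and \ref{sc4}.

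\begin{itemize}
\item \textbf{\ref{c1}.} The support property is automatic with a uniform $Q$: the lattice is the same rank $2^n$ lattice for every $s$, and $Z$ is the same.
\item \textbf{\ref{sc2}.} Under the equivalence, a $T$-perfect object $F$ whose fibers lie in the heart corresponds to a family of $A$-modules, i.e.\ a flat family of representations of a quiver with relations with fixed dimension vector. Openness of semistability is then King's openness of $\theta$-semistability in families: the destabilizing locus is the image of a projective Quot/Grassmannian-type scheme, hence closed. To reduce to objects in the heart, I will use Lemma~\ref{lem:openness-flat}-style arguments. Concretely, $F_t$ lies in the heart if and only if its cohomology with respect to the algebraic t-structure is concentrated in degree $0$, and this is an open condition by semicontinuity of $\Ext^i(\bigoplus\cO(I_k),F_t)$.
\item \textbf{\ref{sc4}.} Boundedness holds because semistable objects of class $\bv$ are modules of fixed dimension vector. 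These are parametrized by an affine representation space of finite type over $S$, with a universal object.
\item \textbf{\ref{c3}.} Over a Dedekind $C$, I will define the HN structure via the $C$-local heart of $A_C$-modules. The HN property follows from King/Rudakov HN filtrations on generic and torsion objects, as in the quiver analogue of \cite[Section 13]{stability-families}. Alternatively I could invoke \cite[Theorem 18.7]{stability-families} once a $C$-local slicing is exhibited.
\end{itemize}

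So the collection is a stability condition over $S$ at $(a,b_0)$; rotating by $G^{-1}$ preserves this.

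Finally I deform. By Theorem~\ref{thm:BridgelandDeformationFamilyVersion} and Remark~\ref{rmk:BridgelandDeformationFamilyVersion}, I will lift the path of central charges $Z^{a',b'}_{(\PP^1)^n}$ from $(a,b_0)$ to arbitrary $(a',b')\in\RR_{>0}\times\RR$. To make the lift possible, I need a uniform quadratic form $Q$ whose negative-definiteness on $\ker Z^{a',b'}$ holds along the whole path, and Lemma~\ref{lem:formula-support}-type arguments should provide this. Each fiber of the lift deforms $\sigma^{a,b_0}_{(\PP^1_s)^n}$ along the same path, so by uniqueness of deformations (Theorem~\ref{thm:bridgeland-deformation}) and continuity of the family from Proposition~\ref{prop:Elliptic2P1n}, the fibers agree with $\sigma^{a',b'}_{(\PP^1_s)^n}$. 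Injectivity and continuity of the map $(a,b)\mapsto\sigma^{a,b}_{(\PP^1_S)^n}$ follow from the fiberwise statements.

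I expect the main obstacle to be exactly this uniform support property: it must hold along a path covering the whole half-plane with one $Q$, which may require covering by compact subpaths and iterating the covering statement.
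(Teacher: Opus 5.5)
Your architecture matches the paper's: an algebraic parameter from Theorem~\ref{thm:AlgebraicP1n}, then deformation via Theorem~\ref{thm:BridgelandDeformationFamilyVersion}. However, the deformation step, which you yourself flag, is left open, and the tool you point to is not the one that closes it. Remark~\ref{rmk:BridgelandDeformationFamilyVersion} only lets you lift inside the region $P_Z$ cut out by the form $Q$ you currently have for~\ref{c1}. A single $Q$ controlling a whole path across the half-plane is not needed, and nothing in the paper provides one: Lemma~\ref{lem:formula-support} concerns $\PP^n$, and even there the form $Q_a$ moves with $a$.

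To restart the lifting at an intermediate parameter you must re-establish~\ref{c1} there. That is, you need, for \emph{every} $(a,b)$, a quadratic form $Q^{a,b}$ independent of $s$ giving the support property of all fibers $\sigma^{a,b}_{(\PP^1_s)^n}$. This is precisely the field-independence of the support property in Remark~\ref{rmK:SupportField}, which you invoked only for $a(n,b_0)$ and $G$. With it, the paper runs an open--closed argument on the connected half-plane:
\begin{itemize}
\item The set $U_0$ of parameters for which the collection is a stability condition over $S$ is open by Theorem~\ref{thm:BridgelandDeformationFamilyVersion}.
\item If $(a,b)\in\overline{U_0}$, pick $(a',b')\in U_0$ so close that $Q^{a,b}$ is negative definite on $\ker Z^{a',b'}$ and still gives the support property of every $\sigma^{a',b'}_{(\PP^1_s)^n}$. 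This uses the absolute deformation theorem plus uniqueness, with a neighbourhood depending only on $Z^{a,b}$ and $Q^{a,b}$, hence uniform in $s$.
\item The relative deformation from $(a',b')$ with respect to $Q^{a,b}$ then reaches $(a,b)$, and fiberwise uniqueness identifies the result with $\sigma^{a,b}_{(\PP^1_S)^n}$.
\end{itemize}
Your ``compact subpaths and iteration'' idea is the chain version of this, but it cannot be started without that per-parameter, $s$-uniform form.

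At the algebraic point, your route through Lemma~\ref{lem:open-geo-stab} and King's theory is a reasonable variant of the paper's. The paper cites the finite-dimensional-algebra statements for~\ref{c2} and~\ref{c4}, and \cite[Theorem~18.7]{stability-families} for~\ref{c3}. Two of your justifications need repair, though.
\begin{itemize}
\item For~\ref{c1}, ``same lattice and same $Z$'' is not a reason. What works is that every semistable object is a shift of an object of $\cA_{\mathrm{alg}}$. Its class lies in the cone spanned by the $2^n$ classes of the simples $\cO(I)[n-|I|]$, a basis of $\Lambda$ independent of $s$ that $Z$ sends into an open half-plane. Hence $|Z(v)|\geq c\|v\|$ on that cone, uniformly in $s$.
\item For heart membership, $\bigoplus_I\cO(I)$ is the wrong tilting object, since its module heart is a different heart. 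On $\PP^1$, $\Ext^{-1}(\cO,\cO[1])\neq0$, so $\cO[1]$ is not in it. Instead, $\cA_{\mathrm{alg}}$ is the module heart of $T'=\bigoplus_I\cO(I)\otimes\cO(1,\dots,1)$, for which the $\cO(I)[n-|I|]$ are the simple modules. Semicontinuity of $\Ext^i(T',F_t)$ then gives the openness you want.
\end{itemize}
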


\begin{proof}
We need to check the conditions in Definition~\ref{def:FamilyStabilityConditions}.
By Remark~\ref{rmK:SupportField},~\ref{c1} follows since the quadratic form $Q^{a,b}$ giving the support property is independent of $s\in S$.

We actually have a more precise statement.
Let $(a_0,b_0)\in\RR_{>0}\times\RR$, and assume that we know that ${\sigma}^{a_0,b_0}_{(\PP^1_S)^n}$ is a stability condition on $\Db((\PP^1_S)^n)$ over $S$ with respect to $(\Lambda,v)$.
By Theorem~\ref{thm:BridgelandDeformationFamilyVersion} and Remark~\ref{rmk:BridgelandDeformationFamilyVersion}, we can deform ${\sigma}^{a_0,b_0}_{(\PP^1_S)^n}$ to ${\sigma}^{a,b}_{(\PP^1_S)^n}$ as long as the quadratic form giving the support property is negative definite on $\ker(Z^{a,b}_{(\PP^1_s)^n})_\RR$.
This provides a connected open subset $U_0\subset\RR_{>0}\times\RR$ containing $(a_0,b_0)$ such that for any $(a,b)\in U_0$, ${\sigma}^{a,b}_{(\PP^1_S)^n}$ is a stability condition on $\Db((\PP^1_S)^n)$ over $S$ with respect to $(\Lambda,v)$.
We claim that $U_0$ is also closed and thus equal to $\RR_{>0}\times\RR$.
Indeed, if $(a,b)$ is in the closure of $U_0$, then there exists $(a',b')\in U_0$ such that $Q^{a,b}$ is negative definite on $\ker(Z^{a',b'}_{(\PP^1_s)^n})_\RR$ and it provides the support property for $\sigma^{a',b'}_{(\PP^1_s)^n}$, for all $s\in S$.
Hence, by Theorem~\ref{thm:BridgelandDeformationFamilyVersion}, $(a,b)\in U_0$, as observed before.

Hence, we only need to show that there exists $(a,b)$ such that ${\sigma}^{a,b}_{(\PP^1_S)^n}$ is a stability condition on $\Db((\PP^1_S)^n)$ over $S$ with respect to $(\Lambda,v)$.
Let us choose $(a,b_0)\in\RR_{>0}\times\RR$ as in Theorem~\ref{thm:AlgebraicP1n}.
Then there exists $G\in\widetilde{\GL}_2^+(\RR)$, independent of $s\in S$, such that $G\cdot\sigma^{a,b_0}_{(\PP^1_s)^n}$ is algebraic.
In view of Remark~\ref{rmk:AlgebraicSatbilityMod}, conditions \ref{c2} and \ref{c4} are then a consequence of the corresponding statements for finite-dimensional algebras.
Finally, to show~\ref{c3}, given a morphism $C\to S$ essentially of finite type from a Dedekind scheme $C$, 
we obtain a $C$-local bounded t-structure on $\Db((\PP^1_C)^n)$ associated to the relative full strong exceptional collection $\cO_{(\PP^1_C)^n}(I_{k,n})$. 
Then by \cite[Theorem~18.7]{stability-families}, we obtain the required HN structure on $\Db((\PP^1_C)^n)$. 
\end{proof}

\subsection{Proof of the main theorem}\label{subsec:ProofMainTheorem}

We let $S$ be as in the previous section, i.e., $S$ is a noetherian Nagata scheme of finite Krull dimension which is quasi-projective over a noetherian affine scheme. 
We proceed as in the absolute case, by first pushing forward the stability conditions in Section~\ref{subsec:FamiliesStabilityP1n} to $\PP^n_S$ and then restricting to an arbitrary projective scheme over $S$.

Let us start with $\PP^n_S$.
Again, the morphism $\pi \colon\PP^n_S\rightarrow S$ satisfies the hypotheses of Setup~\ref{assum-stab}.
Let $\Lambda\coloneqq\cN(\PP^n_S/S)\cong\ZZ^{n+1}$ and let $v$ be the quotient morphism.
Again, for each point $t$ over $S$, the morphism $v_t$ can be identified with the quotient map onto the numerical Grothendieck group $\Knum(\PP^n_t)$.

\begin{proposition}\label{prop:AlgebraicPnFamily}
In the above notation, for any $(a,b)\in\RR_{>0}\times\RR$, the collection 
\[
{\sigma}^{a,b}_{\PP^n_S}\coloneqq\left(\sigma_{\PP^n_s}^{a,b}=\left(Z^{a,b}_{\PP^n_s},\cP_{\PP^n_s}^{a,b}\right)\right)_{s\in S}
\]
of numerical stability conditions on $\Db(\PP^n_s)$ in \eqref{eq:stabonPn}, for each $s\in S$, is a stability condition on $\Db(\PP^n_S)$ over $S$ with respect to $(\Lambda,v)$.
This gives an injective continuous map
\[
\RR_{>0}\times\RR\longhookrightarrow\Stab_{(\Lambda,v)}\left(\Db(\PP^n_S)/S\right).
\]
\end{proposition}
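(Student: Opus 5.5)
The plan is to apply Theorem~\ref{thm:main-criterion}\ref{enum:main-criterion2} to the $\fS_n$-quotient morphism $h_S\colon (\PP^1_S)^n\to\PP^n_S$, starting from the relative stability condition ${\sigma}^{a,b}_{(\PP^1_S)^n}$ of Proposition~\ref{prop:AlgebraicP1nFamily}. The morphism $h_S$ is the base change to $S$ of the morphism $h\colon(\PP^1_\ZZ)^n\to\PP^n_\ZZ$ given by elementary symmetric functions. Since it is a finite morphism between smooth $S$-schemes of the same relative dimension, it is flat by miracle flatness, applied fiberwise together with the fiberwise flatness criterion. It is also surjective, hence faithfully flat. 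Its relative dualizing complex is an invertible sheaf, namely $\omega_{(\PP^1_S)^n/S}\otimes h_S^*\omega_{\PP^n_S/S}^{\vee}$, so it lies in $\Dperf$.

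One subtlety concerns the numerical data. Theorem~\ref{thm:main-criterion} is stated for a stability condition with respect to some $(\Lambda',v')$. Here one must take the pair on $(\PP^1_S)^n$ to be the quotient $\Lambda_{H}$ through which the central charges $Z^{a,b}_{(\PP^1_s)^n}$ factor, which is the image of $\Lambda_{H,n}$. One then checks that $(\Lambda', v'\circ h_S^*)$ identifies, up to finite index and rescaling, with $(\cN(\PP^n_S/S),v)$. This holds fiberwise because $h_s^*\colon\Knum(\PP^n_s)\to\Knum((\PP^1_s)^n)$ is injective, as $h_s$ is finite flat of degree $n!$ and $h_{s*}h_s^*$ is multiplication by $\cO\oplus\cdots$ of positive rank. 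The support property passes to the quotient lattice because the quadratic form can be chosen on the $H$-lattice, exactly as in Lemma~\ref{lem:formula-support} and Remark~\ref{rmK:SupportField}. I would also note the normalization factor $1/n!$ in \eqref{eq:stabonPn} compared with \eqref{eq:StabonP1n}, which is a $\CC$-rescaling and is harmless.

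It then remains to verify the fiberwise condition \eqref{eq:pullback}, namely $h_s^*h_{s*}\cP^{a,b}_{(\PP^1_s)^n}(\phi)\subset\cP^{a,b}_{(\PP^1_s)^n}(\leq\phi)$ for every $s\in S$. This is exactly what the proof of Theorem~\ref{theorem-chunyi-Pn} establishes over the residue field $\kappa(s)$, via Proposition~\ref{prop:PushforwardStability}. The ingredients are the filtration property of $h_s$ (Example~\ref{ex:PushForward}\eqref{enum:ProductP1}, valid over any field, see Appendix~\ref{app:Filtration}), the Bayer property \eqref{eq:BayerPropertyP1n} for effective line bundles, and $\fS_n$-invariance of $\cP^{a,b}_{(\PP^1_s)^n}$. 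The invariance follows by reduction to $E^n$ and Theorem~\ref{thm:Eninj}\ref{enum:Eninj2}, using that $\sigma^{2a,2b}$ lies in $\Stab_{(\Lambda_{H,n},v_{H,n})}$ and is $W(B_n)$-invariant. If needed one passes to $\overline{\kappa(s)}$ using Lemma~\ref{lem:base-change}\ref{enum:BaseChange2}. By construction the fiberwise pushforwards $h_{s,\sharp}\sigma^{a,b}_{(\PP^1_s)^n}$ agree with $\sigma^{a,b}_{\PP^n_s}$, since $g=h\circ f$ and pushforward is functorial. Theorem~\ref{thm:main-criterion}\ref{enum:main-criterion2} then gives the claim.

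Injectivity and continuity of $(a,b)\mapsto{\sigma}^{a,b}_{\PP^n_S}$ follow from the absolute statement of Theorem~\ref{theorem-chunyi-Pn} on each fiber. They also follow from Lemma~\ref{lemma-pushforward-slicing-compatibility}\ref{pushforward-slicing-compatibility-dist} together with the definition of the topology by the fiber maps. The main obstacle I expect is the bookkeeping of the lattices, that is, matching $(\Lambda',v'\circ h^*)$ with $\cN(\PP^n_S/S)$ compatibly with Proposition~\ref{prop:FunctorialityKnum}. The geometric input is already available.
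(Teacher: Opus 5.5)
Your proposal is correct and follows the paper's route. The paper's proof is exactly the combination of Proposition~\ref{prop:AlgebraicP1nFamily}, the fiberwise inclusion \eqref{eq:pullback} supplied by the proof of Theorem~\ref{theorem-chunyi-Pn}, and Theorem~\ref{thm:main-criterion}\ref{enum:main-criterion2} applied to the $\fS_n$-quotient $(\PP^1_S)^n\to\PP^n_S$.

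Your extra lattice bookkeeping, which the paper leaves implicit, is fine. Passing to the $H$-quotient is not actually required: because $h^*$ is injective, you can equally well restrict the central charge and the quadratic form from $\cN((\PP^1_S)^n/S)$ to the image of $h^*$.
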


\begin{proof}
This follows immediately from Theorem~\ref{theorem-chunyi-Pn}, Proposition~\ref{prop:AlgebraicP1nFamily}, and Theorem~\ref{thm:main-criterion}\ref{enum:main-criterion2}.
\end{proof}

Let us now consider a morphism $\pi \colon X\to S$ as in Setup~\ref{assum-stab}.
Since $S$ is quasi-projective over an affine scheme, there exist $n>0$ and a closed embedding over $S$
\[
\iota\colon X\longhookrightarrow\PP^n_S
\]
(see~\citestacks{087S}).
We let $H_{X/S}$ be the relative numerical first Chern class of the relative ample line bundle $\cO_{X/S}(1)=\iota^*\cO_{\PP^n_S/S}(1)$.
We consider the abelian group $\Lambda_{H_{X/S}}$, which is the image of the composite
\[
\Knum(X/S)\xlongrightarrow{\iota_*}\Knum(\PP^n_S/S)\longrightarrow\cN(\PP^n_S/S),
\]
where the first morphism is induced by $\iota_*$ in view of Proposition~\ref{prop:FunctorialityKnum}, and 
\begin{equation}
\label{definition-vHXS}
v_{H_{X/S}}\colon\Knum(X/S)\longrightarrow\Lambda_{H_{X/S}}
\end{equation}
is the quotient map.
Notice that $v_{H_{X/S}}$ factors through $\cN(X/S)$.

The precise form of (the nonemptiness part of) Theorem~\ref{main-theorem} is then the following.

\begin{theorem}\label{thm:MainSecondVersion}
In the above notation, we have
\[
\Stab_{\left(\Lambda_{H_{X/S}},v_{H_{X/S}}\right)}(\Db(X)/S)\neq\emptyset.
\]
More precisely, there exists $a_0>0$ such that, for any $(a,b)\in\RR_{>a_0}\times\RR$, the collection 
\[
{\widetilde{\sigma}}^{a,b}_{X/S}\coloneqq\iota^\sharp{\sigma}^{a,b}_{\PP^n_S}=\left(\widetilde{\sigma}_{X_s}^{a,b}=\left(\widetilde{Z}^{a,b}_{X_s},\widetilde{\cP}_{X_s}^{a,b}\right)\right)_{s\in S}
\]
of numerical stability conditions on $\Db(X_s)$ in \eqref{eq:Paris20260702}, for each $s\in S$, is a stability condition on $\Db(X)$ over $S$ with respect to $(\Lambda_{H_{X/S}},v_{H_{X/S}})$.
\end{theorem}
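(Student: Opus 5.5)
The plan is to deduce the statement from Proposition~\ref{prop:AlgebraicPnFamily} together with Theorem~\ref{thm:main-criterion}\ref{enum:main-criterion1}, applied to the closed embedding $\iota\colon X\hookrightarrow\PP^n_S$. The fiberwise input comes from the absolute Theorem~\ref{thm:ProjectiveScheme}.

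First, I will check that $\iota$ meets the hypotheses of Theorem~\ref{thm:main-criterion}\ref{enum:main-criterion1}. It is a closed embedding over $S$ between schemes as in Setup~\ref{assum-stab}. It has finite Tor dimension because $\PP^n_S\to S$ is smooth and $X\to S$ is flat. Indeed, for a flat $S$-scheme $X$ and a smooth $S$-scheme $P$, the module $\cO_X$ has finite Tor dimension over $\cO_P$ by the fiberwise criterion: on each fiber $\PP^n_s$, which is regular of dimension $n$, every coherent sheaf has Tor dimension at most $n$, and flatness over $S$ lets this bound propagate. The source $\sigma^{a,b}_{\PP^n_S}$ is a stability condition over $S$ with respect to $(\cN(\PP^n_S/S),v)$ for all $(a,b)$ by Proposition~\ref{prop:AlgebraicPnFamily}.

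The key step is to verify the fiberwise condition~\eqref{eq:push} uniformly in $s$, namely
\[
\iota_{s*}\cO_{X_s}\otimes\cP^{a,b}_{\PP^n_s}(\phi)\subset\cP^{a,b}_{\PP^n_s}(\geq\phi)
\]
for all $s\in S$, all $\phi\in\RR$, and all $(a,b)$ with $a>a_0$, where $a_0$ does not depend on $s$. For a fixed $s$, the proof of Theorem~\ref{thm:ProjectiveScheme} already gives this through Proposition~\ref{prop:PullbackStability}, using the minimal resolution of $\iota_{s*}\cO_{X_s}$ and Lemma~\ref{lem:BayerPropertyPn}. That argument yields a threshold $a_0$ which depends only on the Hilbert polynomial of $X_s\subset\PP^n_s$. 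Since $X\to S$ is flat and $S$ is noetherian, only finitely many Hilbert polynomials occur, one per connected component of $S$. I therefore take $a_0$ to be the maximum of these thresholds.

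The statement of Theorem~\ref{thm:ProjectiveScheme} is made after base change to $\overline{\KK}$. The condition~\eqref{eq:push} descends to $\kappa(s)$ by Lemma~\ref{lem:base-change}\ref{base-change-pullback-criterion}, and the Bayer inequalities of Lemma~\ref{lem:BayerPropertyPn} do not depend on the field, since they are derived from the fixed constants on $E^n$. The output of Proposition~\ref{prop:PullbackStability} is stated in terms of $\widehat{\cP}(\geq\phi)$. Because $\iota_s$ has finite Tor dimension, Remark~\ref{remark-finite-tor-dimension-f_*OX-right-exact} converts this into the stated form~\eqref{eq:push}.

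With these in hand, Theorem~\ref{thm:main-criterion}\ref{enum:main-criterion1} shows that $\iota^\sharp\sigma^{a,b}_{\PP^n_S}$ is a stability condition on $\Db(X)$ over $S$ with respect to $(\cN(\PP^n_S/S),v\circ\iota_*)$. Its fibers are $\widetilde{\sigma}^{a,b}_{X_s}$ by definition. The last step replaces the lattice by its image $\Lambda_{H_{X/S}}$. The central charge factors through $\Lambda_{H_{X/S}}$, and a support-property quadratic form restricts to a subgroup while staying negative definite on the smaller kernel. So the collection is also a stability condition over $S$ with respect to $(\Lambda_{H_{X/S}},v_{H_{X/S}})$, since conditions~\ref{c2}--\ref{c4} do not depend on the lattice.

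I expect the main obstacle to be the uniformity in $s$: one must make sure that the Betti numbers $\beta_{i,j}$ of the fibers are bounded in terms of the Hilbert polynomial alone, independently of the residue field. If a clean reference for this is unavailable, I would fall back on a truncated resolution of $\iota_*\cO_X$ by sums of $\cO_{\PP^n_S}(-j)$ over affine pieces of $S$. Restricted to each fiber, this gives the cofiltration property of Example~\ref{ex:Pullback}\eqref{enum:immersions} with data that are uniform in $s$, because $\iota_*\cO_X$ is flat over $S$. The bound $N\ge n_2-n_1+1$ in Proposition~\ref{prop:PullbackStability} would then be obtained by truncating at length larger than $n+1$, using the inclusions $\Coh\subset\cP((1-n,1])$ coming from geometricity, as in the proof of Theorem~\ref{thm:stabonlocal}.
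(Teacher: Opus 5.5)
Your proposal is correct and follows the paper's proof: a threshold $a_0$ that is uniform in $s$ comes from Theorem~\ref{thm:ProjectiveScheme}, since only finitely many Hilbert polynomials occur. This is combined with Proposition~\ref{prop:AlgebraicPnFamily} and Theorem~\ref{thm:main-criterion}\ref{enum:main-criterion1}, with finite Tor dimension of $\iota$ coming from flatness of $X\to S$ and smoothness of $\PP^n_S\to S$. Your additional remarks (descending \eqref{eq:push} to $\kappa(s)$, and restricting the support-property form to the sublattice $\Lambda_{H_{X/S}}$) only make explicit points that the paper leaves implicit.
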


\begin{proof}
Since there are only finitely many choices for the Hilbert polynomial of $X_s$, we deduce the statement from Theorem~\ref{thm:ProjectiveScheme}, together with Proposition~\ref{prop:AlgebraicPnFamily} and Theorem~\ref{thm:main-criterion}\ref{enum:main-criterion1}. 
Note that since $X \to S$ is flat and finitely presented and $\PP^n_S \to S$ is smooth, the closed immersion $\iota \colon X \hookrightarrow \PP^n_S$ is of finite Tor-dimension by \cite[\href{https://stacks.math.columbia.edu/tag/068D}{Tag 068D}]{stacks-project}, so Theorem~\ref{thm:main-criterion}\ref{enum:main-criterion1} does indeed apply. 
\end{proof}

\begin{remark}\label{rmk:LineBundle}
Since $v_{H_{X/S}}$ factors through $\cN(X/S)$, by~\cite[Theorem 21.25]{stability-families}, there is a relative real numerical Cartier divisor class $\ell_{{\sigma}}\in N^1(\cM_{{\sigma}}(\bv,\phi)/S)$ that
is relatively nef.
If either $S$ is of characteristic~$0$ or $\cM_{{\sigma}}(\bv,\phi)=\cM_{{\sigma}}^{\mathrm{st}}(\bv,\phi)$, this class descends to a relative numerical Cartier divisor class $l_{{\sigma}}\in N^1(M_{{\sigma}}(\bv,\phi)/S)$ that is relatively strictly nef, where 
$M_{{\sigma}}(\bv,\phi)$ denotes the associated good moduli space, which is an algebraic space proper over $S$ (see Remark~\ref{rmk:FamilyStabilityConditions}\eqref{enum:FamilyStabilityConditions3}).
\end{remark}

As in the absolute case, if we denote by $H_{X/S}$ the relative numerical class of a relatively ample line bundle, and we choose a relatively very ample line bundle $\cL)$ on $X$ over $S$ such that its numerical first Chern class is $mH_{X/S}$, we can define the connected component
\[
\Stabdagger_{H_{X/S}}(\Db(X)/S)\subset\Stab_{\left(\Lambda_{H_{X/S}},v_{H_{X/S}}\right)}(\Db(X)/S)
\]
containing the stability conditions $\widetilde{{\sigma}}^{a,b}_{X/S}\coloneqq\iota^\sharp{\sigma}^{a,b}_{\PP^n_S}$, for $a\gg0$. Note that Theorem~\ref{thm:canonical-component} implies that $\Stabdagger_{H_{X/S}}(\Db(X)/S)$ does not depend on the choice of $\cO_{X/S}(1)$, nor on the integer $m$.

\begin{corollary}\label{cor:Stabdagger=Stabdagger}
Let $X$ be a projective scheme over a field $\KK$ equipped with an ample numerical class $H_X$.  
Then
\[
\Stabdagger_{H_X}(\Db(X))=\Stabdagger_{H_{X}}(\Db(X)/\Spec(\KK)).
\]
\end{corollary}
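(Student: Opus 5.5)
We compare the two components inside the same topological space. When $S=\Spec(\KK)$ there is one point, so a stability condition over $\Spec(\KK)$ is a single numerical stability condition $\sigma$ on $\Db(X)$ that also satisfies conditions~\ref{c2}--\ref{c4} of Definition~\ref{def:FamilyStabilityConditions}. The topology on $\Stab_{(\Lambda_{H_X},v_{H_X})}(\Db(X)/\Spec(\KK))$ is by definition the one induced from $\Stab_{(\Lambda_{H_X},v_{H_X})}(\Db(X))$. We also need the lattices to agree: $\Knum(X/\Spec\KK)$ is a finite-index overlattice of $\Knum(X)$, and $v_{H_X}$ in both settings comes from $\iota_*$ to $\cN(\PP^n/\KK)=\Knum(\PP^n)$. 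Hence the pairs $(\Lambda_{H_X},v_{H_X})$ agree up to the canonical identification, and the forgetful map is a continuous injection $j\colon\Stab(\Db(X)/\KK)\hookrightarrow\Stab(\Db(X))$ commuting with the central charge maps.

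\textbf{Step 1: $j$ is an open embedding.} By Theorem~\ref{thm:BridgelandDeformationFamilyVersion} and Theorem~\ref{thm:bridgeland-deformation}, both spaces are locally homeomorphic to $\Hom(\Lambda,\CC)$ via $Z$, compatibly with $j$. A continuous injection that commutes with these local homeomorphisms is itself a local homeomorphism, and hence an open embedding.

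\textbf{Step 2: matching basepoints.} By Theorem~\ref{thm:MainSecondVersion} with $S=\Spec\KK$, the stability conditions $\widetilde\sigma^{a,b}_X$ with $a\gg0$ lie in the image of $j$. So $j$ maps $\Stabdagger_{H_X}(\Db(X)/\Spec\KK)$ into $\Stabdagger_{H_X}(\Db(X))$, and the image is an open connected subset containing these basepoints.

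\textbf{Step 3: the image is closed.} This is the main point: the image of that component must be closed inside $\Stabdagger_{H_X}(\Db(X))$. Take $\sigma$ in the closure and $\sigma'=j(\tau)$ close to $\sigma$. Fix a quadratic form $Q$ giving the support property for $\sigma$; it is negative definite on $\ker Z_\sigma$, hence on $\ker Z_{\sigma'}$ for $\sigma'$ close enough. The first task is to make $Q$ a support form for $\sigma'$ as well. For this I would use the standard fact that near $\sigma$ the stability conditions are controlled by $Q$ (Bayer's short proof~\cite{bayer:short-proof}). More concretely, I would apply the continuity argument of Proposition~\ref{prop:AlgebraicP1nFamily}: replace $Q$ by one valid on a whole small ball around $\sigma$, which is possible by the deformation theorem's precise form. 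Remark~\ref{rmk:BridgelandDeformationFamilyVersion} then gives that the neighbourhood of $\tau$ covers the entire set of central charges whose kernels are $Q$-negative definite, including $Z_\sigma$. This produces $\tau''$ over $\KK$ with central charge $Z_\sigma$ lying in the same sheet. Since the local homeomorphisms are compatible, $j(\tau'')=\sigma$. Alternatively, if that is delicate, one can argue via Theorem~\ref{thm:MassHomBoundProjective} and Theorem~\ref{theorem-DHL-robotis} in the smooth case, but the deformation route also covers singular $X$.

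\textbf{Conclusion.} Connectedness of $\Stabdagger_{H_X}(\Db(X))$ together with Steps 1--3 yields the equality. The expected obstacle is the uniformity of the support form in Step 3. I would handle it with the covering statement of Remark~\ref{rmk:BridgelandDeformationFamilyVersion}, which only needs one quadratic form working along the path.
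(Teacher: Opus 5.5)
Your proposal is correct and follows essentially the paper's argument. The paper compresses your Steps 1--3 into three observations: the lattice data agree, the basepoints $\widetilde{\sigma}^{a,b}_X$ agree, and condition~\ref{c1} over $\Spec(\KK)$ is just the absolute support property with the same quadratic form, so Remark~\ref{rmk:BridgelandDeformationFamilyVersion} matches the precise absolute deformation theorem. Two small points in Step~3: no replacement of $Q$ is needed, since by \cite[Theorem~1.2]{bayer:short-proof} the same $Q$ is a support form on the whole connected component of $\cZ^{-1}(P_Z)$ through $\sigma$; and $j(\tau'')=\sigma$, with $\tau''$ in the component of $\tau$, follows by lifting a path in $P_Z$ through both coverings and using uniqueness of lifts.
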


In particular, all stability conditions in $\Stabdagger_{H_X}(\Db(X))$ have proper moduli spaces of semistable objects, if $\mathrm{char}(\KK)=0$.

\begin{proof}
We can apply the above results to $S=\Spec(\KK)$.
Then, we have
\[
\Big(\Lambda_{H_{X/S}},v_{H_{X/S}}\Big)=\Big(\Lambda_{H_{X}},v_{H_{X}}\Big),
\]
both connected components contain
\[
\widetilde{{\sigma}}^{a,b}_{X/S}=\widetilde{\sigma}^{a,b}_{X}
\]
for $a\gg0$, and the support property is defined in both cases by the same quadratic form.
\end{proof}

\begin{remark}\label{rmk:ModuliSurfacesThreefolds}
By the results in Section~\ref{subsec:Threefolds}, we then immediately deduce the following consequences (either of Corollary~\ref{cor:Stabdagger=Stabdagger}, or Theorem~\ref{theorem-DHL-robotis}):
\begin{enumerate}[{\rm (1)}]
\item\label{enum:ModuliSurfacesThreefolds1} If $X$ is a smooth projective surface over $\CC$, then all geometric stability conditions have proper moduli spaces.
\item\label{enum:ModuliSurfacesThreefolds2} If $X$ is a smooth projective threefold over $\CC$ satisfying the assumptions of Proposition~\ref{prop:BMTmassbound}, then all stability conditions in $\widetilde{\mathfrak{P}}$ have proper moduli spaces.
\end{enumerate}
This recovers all the existing results for moduli spaces on surfaces and threefolds (e.g.,~\cite{TodaK3,PT:Moduli}).
\end{remark}


\section{Stability of skyscraper sheaves and the Bondal--Orlov theorem}
\label{sec:BondalOrlov}

In this section we characterize skyscraper sheaves of points, once we have a stability condition satisfying a Bayer property.
The main result is Proposition~\ref{prop:Mpt=X}. 
As an application, in Theorem~\ref{thm:BOreconstruction} we give a new proof of the Bondal--Orlov reconstruction theorem.

Throughout, we work over an algebraically closed field $\KK$.

\subsection{Skyscraper sheaves}
We start with an easy observation.

\begin{lemma}\label{lem:AuxiliaryBayerSkyscraper1}
Let $X$ be a projective scheme over $\KK=\overline{\KK}$, and let $\cL$ be an ample invertible sheaf on $X$.
Let $F\in\Db(X)$ be such that $F\cong F\otimes\cL$.
Then $\dim\operatorname{Supp}(F)=0$.
Moreover:
\begin{enumerate}[{\rm (1)}]
    \item\label{enum:AuxiliaryBayerSkyscraper1i} If $F$ is indecomposable, then its set-theoretic support must be a single closed point $\{x\}$ in $X$.
    \item\label{enum:AuxiliaryBayerSkyscraper1ii} If $F$ is indecomposable and $\Hom^{<0}(F,F)=0$, then $F\cong T_x[m]$, for some $m\in\ZZ$, where $T_x$ is a torsion sheaf on $X$ set-theoretically supported at a closed point $x$.
    \item\label{enum:AuxiliaryBayerSkyscraper1iii} If $\Hom^{<0}(F,F)=0$ and $\Hom(F,F)=\KK$, then $F\cong\cO_x[m]$, for some $m\in\ZZ$ and a closed point $x\in X$.
\end{enumerate}
\end{lemma}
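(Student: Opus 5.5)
The plan is to work with cohomology sheaves. Tensoring by a line bundle is exact, so $F\cong F\otimes\cL$ gives $\cH^i(F)\cong\cH^i(F)\otimes\cL$ for every $i$. For a coherent sheaf $G$ on projective $X$ with $G\cong G\otimes\cL$, I will compare Hilbert polynomials: $\chi(G\otimes\cL^{k})=\chi(G)$ for all $k$. Since $\cL$ is ample, $k\mapsto\chi(G\otimes\cL^k)$ is a polynomial of degree $\dim\operatorname{Supp}G$ (one can replace $\cL$ by a very ample power $\cL^r$ and use the usual Hilbert polynomial). Constancy then forces $\dim\operatorname{Supp}G\le 0$. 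Applying this to each $\cH^i(F)$ shows $\dim\operatorname{Supp}(F)=0$.

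For (1), the support is a finite set of closed points. Since $\Db(X)$ decomposes along disjoint closed supports, $F$ splits as $\bigoplus_x F_x$, where $F_x$ is supported at $x$; for instance, use that $F$ is the pushforward from a zero-dimensional closed subscheme, or use idempotents of the semilocal ring. Indecomposability therefore leaves a single point.

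For (2), with support at $x$, I will work in the local ring and show that an object with zero-dimensional support, indecomposable and with $\Hom^{<0}(F,F)=0$, has only one nonzero cohomology sheaf. Let $a<b$ be the lowest and highest degrees. Because the sheaves $\cH^b(F)$ and $\cH^a(F)$ are supported at $x$, there is a nonzero composite
\[
F\to\cH^b(F)[-b]\twoheadrightarrow\cO_x[-b]\hookrightarrow\cH^a(F)[-b]\to F[a-b],
\]
using $\cO_x\subset\cH^a(F)$, which holds since $\cH^a(F)$ is zero-dimensional and nonzero. This gives a degree $a-b<0$ endomorphism. The main obstacle is verifying that this composite is nonzero. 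I would check it by looking on $\cH^b$ and using that the last map $\cH^a(F)[-b]\to F[a-b]$ is the inclusion of the lowest cohomology, which is injective on $\cH^a$. The first map is surjective on $\cH^b$. So the composite induces the nonzero map $\cH^b(F)\to\cO_x\to\cH^a(F)$ on cohomology after the shift, and is therefore nonzero. Hence $a=b$ and $F\cong T_x[m]$.

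For (3), $\Hom(F,F)=\KK$ makes $F$ indecomposable, so (2) gives $F=T_x[m]$ with $\operatorname{End}(T_x)=\KK$. Since $T_x$ is a finite-length module over $\cO_{X,x}$, the maximal ideal $\mathfrak m_x$ acts nilpotently. A nonzero element of $\mathfrak m_x$ acting nonzero would give a non-scalar endomorphism, so $\mathfrak m_x$ kills $T_x$. Then $T_x$ is a $\KK$-vector space with endomorphism ring $\operatorname{End}_\KK(T_x)=\KK$, so $T_x\cong\cO_x$.
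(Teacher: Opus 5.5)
Your proposal is correct and follows essentially the same route as the paper. The support statement comes from cohomology sheaves and ampleness, which you make precise via the Hilbert polynomial. Part (1) is the splitting along the finite support. Part (2) uses the nonzero map $\cH^b(F)\twoheadrightarrow\cO_x\hookrightarrow\cH^a(F)$ to produce a forbidden negative-degree endomorphism. Part (3) is a dimension count on $\operatorname{End}(T_x)$.

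The only cosmetic difference is in (2). The paper uses the truncation triangle $F'\to F\to\cH^l(F)[-l]$ and the long exact $\Hom$-sequence to show that $\Hom(\cH^l(F),F)$ would have to vanish. You instead build the element of $\Hom(F,F[a-b])$ directly and detect its nonvanishing on $\cH^b$; this is the same argument.
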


\begin{proof}
The first part of the statement follows immediately from the fact that $\cL$ is ample, by looking at the cohomology sheaves of $F$.
Similarly,~\ref{enum:AuxiliaryBayerSkyscraper1i} is immediate.
To see~\ref{enum:AuxiliaryBayerSkyscraper1ii}, up to shift we can assume that $F$ has nonzero cohomology sheaves in degrees between $0$ and $l\geq0$.
Suppose, for a contradiction, that $l>0$.
We write
\[
F'\longrightarrow F\longrightarrow \cH^l(F)[-l],
\]
where $\cH^l(F)\cong T_x'$, for some torsion sheaf $T_x'$ topologically supported at $x$, and $F'$ has cohomology sheaves in degrees between $0$ and $l-1$.
Then, we have an exact sequence
\[
\Hom(F'[1],F[-l])\longrightarrow\Hom(T_x'[-l],F[-l])\longrightarrow\Hom(F,F[-l]). 
\]
But $\Hom(F'[1],F[-l])=0$, and $\Hom(F,F[-l])=0$ by assumption.
Hence, $\Hom(T_x',F)=0$.
But $\cH^0(F)\cong T_x$, for some other torsion sheaf $T_x$ topologically supported at $x$. In particular, we have a nonzero morphism $T_x'\twoheadrightarrow\cO_x\hookrightarrow T_x$, which then induces a nonzero morphism in $\Hom(T_x',F)$, a contradiction.

Finally, to show~\ref{enum:AuxiliaryBayerSkyscraper1iii}, we observe that since $\Hom(F,F)=\KK$, $F$ is indecomposable. Hence, by~\ref{enum:AuxiliaryBayerSkyscraper1ii}, we have that $F\cong T_x[m]$, for some $m$. If $T_x\not\cong\cO_x$, we have $\dim_\KK\Hom(T_x,T_x)>1$, a contradiction. 
\end{proof}

As a consequence, we have:

\begin{lemma}\label{lem:AuxiliaryBayerSkyscraper2}
Let $X$ be a projective scheme over $\KK=\overline{\KK}$, and let $\cL$ be a very ample invertible sheaf on $X$.
Let $\sigma=(Z,\cP)$ be a stability condition on $\Db(X)$ with respect to $(\Lambda,v)$.
Let $F\in\cP(\phi)$ be a stable object, for some $\phi\in\RR$.
If $F\otimes\cL^\vee\in\cP(\phi)$ and $v(F)=v(F\otimes\cL^\vee)$, then $F\cong\cO_x[m]$, for some closed point $x\in X$ and $m\in\ZZ$.
\end{lemma}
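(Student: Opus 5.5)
The plan is to show $F\cong F\otimes\cL^\vee$ and then invoke Lemma~\ref{lem:AuxiliaryBayerSkyscraper1}\ref{enum:AuxiliaryBayerSkyscraper1iii} with the ample sheaf $\cL^\vee$ replaced by $\cL$; note that $F\cong F\otimes\cL^\vee$ is equivalent to $F\cong F\otimes\cL$. Since $F$ is stable, it is simple in the abelian category $\cP(\phi)$, and since $\KK=\overline{\KK}$ this gives $\Hom(F,F)=\KK$. It also gives $\Hom^{<0}(F,F)=0$, because $F[i]\in\cP(\phi+i)$ with $\phi+i>\phi$ for $i>0$, and there are no maps from higher to lower phase. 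So it suffices to produce an isomorphism $F\otimes\cL^\vee\cong F$.

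First we check that $F\otimes\cL^\vee$ is itself stable. Tensoring by $\cL^\vee$ is an autoequivalence, but it does not preserve $\cP$, so stability of $F\otimes\cL^\vee$ in $\cP(\phi)$ needs an argument. We use a regular section $s$ of $\cL$ with zero locus $\iota\colon Y\hookrightarrow X$, which gives a triangle
\[
F\otimes\cL^\vee\longrightarrow F\longrightarrow F\otimes\iota_*\cO_Y.
\]
Such a section exists for general $s$ because $\cL$ is very ample and $\KK$ is infinite. The first map is multiplication by $s$, and the aim is to show it is an isomorphism.

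Both $F\otimes\cL^\vee$ and $F$ lie in $\cP(\phi)$, so the map $F\otimes\cL^\vee\to F$ is a morphism in the abelian category $\cP(\phi)$. Its image is a subobject of the simple object $F$, hence is $0$ or $F$.

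\textbf{Image zero.} Then the map itself is zero. Choose $s$ so that it avoids the associated points of all cohomology sheaves of $F$ (finitely many points). Then multiplication by $s$ is injective on each $\cH^i(F)\otimes\cL^\vee\to\cH^i(F)$, so the map is nonzero on cohomology. This contradicts its vanishing, unless $F=0$, which is excluded.

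\textbf{Image $F$.} Then the map is an epimorphism in $\cP(\phi)$, so there is a short exact sequence
\[
0\to K\to F\otimes\cL^\vee\to F\to 0
\]
in $\cP(\phi)$. Applying $Z\circ v$, which is additive and nonzero on nonzero objects of $\cP(\phi)$, and using $v(F)=v(F\otimes\cL^\vee)$, we get $Z(v(K))=0$. Hence $K=0$, and $F\otimes\cL^\vee\cong F$.

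With this isomorphism, Lemma~\ref{lem:AuxiliaryBayerSkyscraper1}\ref{enum:AuxiliaryBayerSkyscraper1iii} gives $F\cong\cO_x[m]$ for a closed point $x\in X$ and some $m\in\ZZ$.

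The step I expect to be the main obstacle is excluding the zero map: one has to make sure multiplication by a general section is nonzero on a complex. This is handled by choosing $s$ to avoid the associated points of the cohomology sheaves of $F$, and then observing that a morphism in $\Db(X)$ inducing a nonzero map on some cohomology sheaf is itself nonzero.
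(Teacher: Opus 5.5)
Your proof is correct and follows the paper's argument: a nonzero map $F\otimes\cL^\vee\to F$, simplicity of $F$ in $\cP(\phi)$ forcing surjectivity, equality of classes forcing the kernel to vanish, and then Lemma~\ref{lem:AuxiliaryBayerSkyscraper1}\ref{enum:AuxiliaryBayerSkyscraper1iii}. The only difference is that you actually justify the existence of the nonzero map, via a section avoiding the associated points of the $\cH^i(F)$, which the paper simply asserts from very ampleness; the announced check that $F\otimes\cL^\vee$ is stable and the triangle with $\iota_*\cO_Y$ are never used and can be dropped.
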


\begin{proof}
Since $\cL$ is very ample, there exists a nonzero morphism
\[
f\colon F\otimes\cL^\vee\longrightarrow F.
\]
Since $F$ is a simple object in $\cP(\phi)$ and $F\otimes\cL^\vee\in\cP(\phi)$, $f$ must be surjective.
Since $v(F)=v(F\otimes\cL^\vee)$, $f$ must then be an isomorphism.
Since $F$ is $\sigma$-stable, we must have $\Hom^{<0}(F,F)=0$ and $\Hom(F,F)=\KK$.
The conclusion then follows from Lemma~\ref{lem:AuxiliaryBayerSkyscraper1}\ref{enum:AuxiliaryBayerSkyscraper1iii}.
\end{proof}

We apply the above lemma with the Bayer property, as follows.

\begin{lemma}\label{lem:BayerCharacterizeStabilitySkyscraper}
Let $X$ be a projective scheme over $\KK=\overline{\KK}$, and let $\cL$ be a very ample invertible sheaf on $X$.
Let $\sigma=(Z,\cP)$ be a numerical stability condition on $\Db(X)$ with respect to $(\Lambda,v)$ which satisfies 
$\cP\prec\cP\otimes\cL^\vee[1]$. Let $F\in\Db(X)$ be a $\sigma$-stable object such that $v(F)=v(F\otimes\cL^\vee)$.
Then $F\cong\cO_x[m]$, for some closed point $x\in X$ and $m\in\ZZ$.
\end{lemma}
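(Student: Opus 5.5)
The plan is to reduce to Lemma~\ref{lem:AuxiliaryBayerSkyscraper2}. That lemma requires $F\otimes\cL^\vee\in\cP(\phi)$, where $\phi=\phi_\sigma(F)$. So the work is to show that $F\otimes\cL^\vee$ is semistable of the same phase as $F$, using the strict Bayer property together with the equality of classes.

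\emph{Step 1 (phase window).} Shift $F$ so that $F\in\cP(\phi)$. By Lemma~\ref{lemma-preceq-concrete} applied to $\cP\prec\cP\otimes\cL^\vee[1]$, in its third form with $\cP_1=\cP$ and $\cP_2=\cP\otimes\cL^\vee[1]$, every object of $\cP(\phi)\otimes\cL^\vee[1]$ has $\phi^-_\cP>\phi$. Hence
\[
\phi^-_\cP(F\otimes\cL^\vee)>\phi-1.
\]
On the other hand, $F\otimes\cL^\vee$ is $(\cP\otimes\cL^\vee)$-semistable of phase $\phi$, and the relation $\cP\prec\cP\otimes\cL^\vee[1]$ read the other way bounds its top phase:
\[
\phi^+_\cP(F\otimes\cL^\vee)<\phi+1.
\]
So all HN factors of $F\otimes\cL^\vee$ lie in $\cP((\phi-1,\phi+1))$.

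\emph{Step 2 (pin down the phase).} Since $\cL$ is very ample, there is a nonzero map $F\otimes\cL^\vee\to F$, obtained by tensoring a section $\cO\to\cL$ with $F\otimes\cL^\vee$. Because $\Hom$ from $\cP(>\phi)$ to $F$ vanishes, such a map forces $\phi^-_\cP(F\otimes\cL^\vee)\le\phi$. Now set $G=F\otimes\cL^\vee$ and use that its class equals $v(F)$, so $Z(G)=Z(F)$ lies on the ray of phase $\phi$. All HN factors of $G$ have phases in an interval of length less than $2$ containing $\phi$. Moreover $Z(G)$ is the sum of the central charges of these factors, each lying in the open sector of phases $(\phi-1,\phi+1)$. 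A sum of vectors in that open sector of angle $<2\pi$ lies on the ray of phase $\phi$ only if the factors on either side cancel. Cancellation in the imaginary direction relative to $\phi$ is possible, so I will not argue by convexity alone. Instead I will use the dual map: tensoring $\cO\to\cL$ with $F$ gives a nonzero map $F\to F\otimes\cL$, i.e.\ $F\otimes\cL^\vee\to F$ dualized via the adjoint statement $\Hom(F,G\otimes\cL)\ne0$. Applying the Bayer relation to the other twist, with $\cP\otimes\cL^\vee[1]\succ\cP$ replaced by the equivalent relation $\cP\otimes\cL[-1]\prec\cP$ obtained by tensoring with $\cL[-1]$, should then yield $\phi^+_\cP(G)\ge\phi$.

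\emph{Step 3 (main obstacle).} The real difficulty is converting these bounds into genuine semistability of phase exactly $\phi$. My first attempt is to show $\phi^-(G)\ge\phi$ via the first map and Step 1 iterated: replace $F$ by its twists $F\otimes\cL^{-k}$, which all share the class $v(F)$ by induction on $k$, since $v$ of the twist is invariant. This gives phase bounds uniform in $k$. If $G$ were not semistable, some HN factor would have phase strictly off $\phi$, and iterating the twist would push phases monotonically. Combined with the strict inequality $\prec$ and the finiteness of masses, I expect this to produce a contradiction: the mass $m_\sigma(F\otimes\cL^{-k})$ would grow while $|Z|$ stays fixed, clashing with the support property. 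Once $G\in\cP(\phi)$, Lemma~\ref{lem:AuxiliaryBayerSkyscraper2} gives $F\cong\cO_x[m]$.
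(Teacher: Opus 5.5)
There is a genuine gap: you never get the upper bound on $\phi^+_\cP(F\otimes\cL^\vee)$ that the argument needs. The bound you claim in Step 1 does not follow from the hypothesis. By item (4) of Lemma~\ref{lemma-preceq-concrete}, the relation $\cP\prec\cP\otimes\cL^\vee[1]$ says exactly that $-\otimes\cL^\vee[1]$ strictly raises both $\phi^+_\cP$ and $\phi^-_\cP$. For $F\in\cP(\phi)$ this gives $\phi^-_\cP(F\otimes\cL^\vee)>\phi-1$, or equivalently $\phi^+_\cP(F\otimes\cL)<\phi+1$, and nothing more. No reading of it bounds $\phi^+_\cP(F\otimes\cL^\vee)$ from above.

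Steps 2 and 3 do not repair this. In Step 2, the "dual map" amounts to $\Hom(F,F)\neq 0$, since $G\otimes\cL=F$. The inequalities you aim for there, $\phi^-_\cP(G)\le\phi$ and $\phi^+_\cP(G)\ge\phi$, point the wrong way and cannot pin down the phase. Step 3 is a heuristic, not an argument, for two reasons. First, the support property gives no bound on the mass of an object in terms of $|Z|$ once its HN phases spread over an interval of length at least one. Second, $v(F\otimes\cL^{-k})=v(F)$ for $k\ge 2$ is not available: the lemma only assumes $v(F\otimes\cL^\vee)=v(F)$. Compatibility of $-\otimes\cL$ with $(\Lambda,v)$ is an extra hypothesis, added only in Proposition~\ref{prop:BayerCharacterizeStabilitySkyscraper}.

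The missing ingredient is Proposition~\ref{prop:lvlimpliesbayer}. For very ample $\cL$, the Bayer property $\cP\preceq\cP\otimes\cL^\vee[1]$ implies $\cP\preceq\cP\otimes\cL$. This is a genuinely geometric statement, proved by induction on dimension by restricting to hyperplane sections via Lemma~\ref{lem:RestCartierDiv}. By Lemma~\ref{lemma-BL-concrete} it is precisely the bound $\phi^+_\cP(F\otimes\cL^\vee)\le\phi$.

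Combined with the strict lower bound coming from $\prec$, this puts $G=F\otimes\cL^\vee$ in the heart $\cP((\phi-1,\phi])$, and now the convexity argument you abandoned does work. After rotating by $e^{-i\pi\phi}$, every HN factor of $G$ has central charge with $\Im\le 0$, with equality only for factors of phase $\phi$. Meanwhile $Z(G)=Z(F)$ rotates into $\RR_{>0}$. So every factor has phase $\phi$, hence $G\in\cP(\phi)$, and Lemma~\ref{lem:AuxiliaryBayerSkyscraper2} finishes the proof.

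The point is that a half-open window of length one, closed exactly at $\phi$, is what lets the central charge force semistability. Your window $(\phi-1,\phi+1)$ cannot do this.
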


\begin{proof}
Let $\phi\in\RR$ be such that $F\in\cP(\phi)$.
By Proposition~\ref{prop:lvlimpliesbayer}, we have
\[
F\otimes\cL^\vee\in\cP((\phi-1,\phi]).
\]
By assumption, $v(F\otimes\cL^\vee)=v(F)$; hence, we deduce that $F\otimes\cL^\vee\in\cP(\phi)$.
We can then conclude by Lemma~\ref{lem:AuxiliaryBayerSkyscraper2}.
\end{proof}

We can now characterize skyscraper sheaves:

\begin{proposition}\label{prop:BayerCharacterizeStabilitySkyscraper}
Let $X$ be a smooth connected projective scheme over $\KK=\overline{\KK}$, and let $\cL$ be a very ample invertible sheaf on $X$.
Let $\sigma=(Z,\cP)$ be a numerical stability condition on $\Db(X)$ with respect to $(\Lambda,v)$ which satisfies $\cP\prec\cP\otimes\cL^\vee[1]$.
We assume that the autoequivalence $(-)\otimes\cL$ of $\Db(X)$ preserves $(\Lambda,v)$.
Let $\bv\coloneqq kv(\cO_{x_0})\in\Lambda$, where $x_0\in X$ is a closed point and $k\in\ZZ$.
Then an object $F\in\Db(X)$ such that $v(F)=\bv$ is $\sigma$-stable if and only if $k=\pm1$ and $F\cong\cO_x[m]$, for some closed point $x\in X$ and $m\in\ZZ$.
In such a case, $m$ is even if and only if $k=1$.
\end{proposition}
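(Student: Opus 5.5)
Both directions reduce to Lemma~\ref{lem:BayerCharacterizeStabilitySkyscraper} and Proposition~\ref{prop:BayerGeometric}.

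\emph{Forward direction.} Suppose $F$ is $\sigma$-stable with $v(F)=\bv=kv(\cO_{x_0})$. We first check that $v(F\otimes\cL^\vee)=v(F)$. Since $(-)\otimes\cL$ preserves $(\Lambda,v)$, it induces an automorphism $a$ of $\Lambda$ with $v(G\otimes\cL)=a(v(G))$ for every $G$. As $\cO_{x_0}\otimes\cL^\vee\cong\cO_{x_0}$, the element $v(\cO_{x_0})$ is fixed by $a^{-1}$. Hence
\[
v(F\otimes\cL^\vee)=a^{-1}(kv(\cO_{x_0}))=kv(\cO_{x_0})=v(F).
\]
Lemma~\ref{lem:BayerCharacterizeStabilitySkyscraper} then gives $F\cong\cO_x[m]$ for some closed point $x$ and some $m\in\ZZ$, so $v(F)=(-1)^m v(\cO_x)$.

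\emph{Determining $k$ and the parity of $m$.} Since $X$ is connected, $\cO_x$ and $\cO_{x_0}$ are numerically equivalent: $\chi(A,\cO_x)=\chi(A,\cO_{x_0})$ for every perfect $A$. This holds because $\chi(A,\cO_x)$ equals the rank of $A$ at $x$, which is locally constant. Therefore $v(\cO_x)=v(\cO_{x_0})$, and so $kv(\cO_{x_0})=(-1)^m v(\cO_{x_0})$. Moreover $v(\cO_{x_0})\neq 0$: by Proposition~\ref{prop:BayerGeometric} (applicable since $X$ is smooth, $\cP\prec\cP\otimes\cL^\vee[1]$ implies the $\preceq$ version, and we may take $\KK$ algebraically closed), $\cO_{x_0}$ is $\sigma$-semistable, hence $Z(v(\cO_{x_0}))\neq 0$. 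Since $\Lambda$ is torsion free, we conclude $k=(-1)^m$. Thus $k=\pm1$, and $m$ is even exactly when $k=1$.

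\emph{Converse direction.} Suppose $k=\pm1$ and $F\cong\cO_x[m]$. We must show $\cO_x$ is $\sigma$-stable, not merely semistable. By Proposition~\ref{prop:BayerGeometric}, all $\cO_x$ are semistable of a common phase $\phi$. Take a proper subobject $A\hookrightarrow\cO_x$ in the abelian category $\cP(\phi)$ and pass to a stable factor, obtaining a stable object $G\in\cP(\phi)$ with a nonzero map $G\to\cO_x$.

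The plan for this step is to show that every $\sigma$-stable object of phase $\phi$ admitting a nonzero map to some $\cO_x$ has class a positive multiple of $v(\cO_{x_0})$; Lemma~\ref{lem:BayerCharacterizeStabilitySkyscraper} then identifies it as a skyscraper sheaf. To this end, use the Bayer property: $G\otimes\cL^\vee\in\cP((\phi-1,\phi])$ by Proposition~\ref{prop:lvlimpliesbayer}. Following the argument of Lemma~\ref{lem:AuxiliaryBayerSkyscraper2}, consider the evaluation map $G\otimes\cL^\vee\to G$, or its composite with $G\to\cO_x$, and compare phases and classes. An alternative route is to take $G$ to be a stable factor of minimal $|Z|$ in the Jordan--H\"older filtration of $\cO_x$, and show that all Jordan--H\"older factors of all points satisfy $v(G\otimes\cL^\vee)=v(G)$ by a finiteness argument on classes under iterating $a$.

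Once $G\cong\cO_y[2j]$, the nonzero map $G\to\cO_x$ together with equality of classes forces $G\cong\cO_x$. Hence $\cO_x$ has no proper subobject in $\cP(\phi)$, so it is stable.

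\emph{Main obstacle.} The converse direction is the hard part, specifically showing that Jordan--H\"older factors of $\cO_x$ are $\cL$-invariant in class. I expect this to need the full strength of the $\prec$ Bayer property combined with smoothness.
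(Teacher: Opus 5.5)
Your forward direction is correct and matches the paper's argument; you additionally write out the numerical bookkeeping ($v(\cO_x)=v(\cO_{x_0})\neq 0$, torsion-freeness of $\Lambda$) that the paper leaves implicit.

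The converse has a genuine gap, exactly where you place the ``main obstacle''. Neither of your routes proves that a stable subobject $A\hookrightarrow\cO_x$ in $\cP(\phi)$ (your $G$) is a skyscraper sheaf.
\begin{itemize}
\item The evaluation-map route only shows that a nonzero map $A\otimes\cL^\vee\to A$ is surjective in the heart $\cP((\phi-1,\phi])$. This holds because $A$ is simple in $\cP(\phi)$ and $\cP(\phi)$ is closed under subobjects in that heart. It gives no equality of classes, and equality of classes is what Lemma~\ref{lem:AuxiliaryBayerSkyscraper2} needs.
\item A ``finiteness argument on classes under iterating $a$'' has nothing to run on. The map $a$ is an automorphism of $\Lambda$ whose orbits are in general infinite, so classes alone give no pigeonhole.
\end{itemize}

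The missing idea is to twist the subobject itself, using $\cO_x\otimes\cL^\vee\cong\cO_x$.
\begin{itemize}
\item Start from $0\to A\to\cO_x\to B\to 0$ in $\cP(\phi)$ with $A$ stable. The Bayer property (Proposition~\ref{prop:lvlimpliesbayer} together with $\cP\prec\cP\otimes\cL^\vee[1]$) puts $A\otimes\cL^\vee$ and $B\otimes\cL^\vee$ in $\cP((\phi-1,\phi])$.
\item Hence $0\to A\otimes\cL^\vee\to\cO_x\to B\otimes\cL^\vee\to 0$ is a short exact sequence in that heart. Its middle term has the maximal phase $\phi$, so both outer terms lie in $\cP(\phi)$.
\item Iterating, every $A\otimes(\cL^\vee)^{\otimes l}$ with $l\geq 0$ is a subobject of $\cO_x$ in $\cP(\phi)$. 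Since $\cP(\phi)$ has finite length, these subobjects have only finitely many classes.
\item Pigeonhole, together with the invariance of $(\Lambda,v)$ under $-\otimes\cL$, gives $v(A)=v(A\otimes(\cL^\vee)^{\otimes l})$ for some $l\geq 1$.
\item Lemma~\ref{lem:AuxiliaryBayerSkyscraper2}, applied to the very ample sheaf $\cL^{\otimes l}$, yields $A\cong\cO_y[m]$. The nonzero map $A\to\cO_x$ inside $\cP(\phi)$ then forces $m=0$ and $y=x$, so $A\cong\cO_x$ and $B=0$.
\end{itemize}

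Note that this only gives invariance under some power $\cL^{\otimes l}$. Your target ``$v(G\otimes\cL^\vee)=v(G)$ for all Jordan--H\"older factors'' is therefore stronger than what the argument directly produces. The precise form of your alternative route is this: the same twisting trick applied to a Jordan--H\"older filtration of $\cO_x$ shows that $-\otimes\cL^\vee$ permutes its factors up to isomorphism. Some power of $\cL$ then fixes each factor, and the lemma applies to that power.
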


\begin{proof}
Let $F$ be a $\sigma$-stable object with $v(F)=\bv$.
Since $(-)\otimes\cL$ preserves $(\Lambda,v)$, we deduce that $v(F\otimes\cL^\vee)=\bv=v(F)$.
Hence, by Lemma~\ref{lem:BayerCharacterizeStabilitySkyscraper}, since $X$ is connected and $v$ numerical, we get immediately that $k=\pm1$ and $F\cong\cO_x[m]$, for some closed point $x\in X$ and $m\in\ZZ$.
If $v(F)=v(\cO_x)$, the shift must be even.

Conversely, by Proposition~\ref{prop:BayerGeometric}, all skyscraper sheaves are $\sigma$-semistable; we need to show they are $\sigma$-stable.
Let $x\in X$ and let $\phi\in\RR$ such that $\cO_x\in\cP(\phi)$.
Assume that there exists a short exact sequence:
\begin{equation}\label{eq:414}
0\longrightarrow A\longrightarrow\mathcal{O}_x\longrightarrow B\longrightarrow 0,
\end{equation}
with $A,B\in\cP(\phi)$, $A$ $\sigma$-stable.
Again, by Proposition~\ref{prop:lvlimpliesbayer}, we have
\[
A\otimes\cL^\vee,B\otimes\cL^\vee\in\cP((\phi-1,\phi]).
\]
Thus, tensoring~\eqref{eq:414} with $\cL^\vee$ yields an exact sequence in $\cP((\phi-1,\phi])$
\[
0\longrightarrow A\otimes\cL^\vee\longrightarrow\mathcal{O}_x\longrightarrow B\otimes\cL^\vee\longrightarrow 0. 
\]
Since $\mathcal{O}_x\in \cP_\sigma(\phi)$, we deduce that
\[
A\otimes\cL^\vee,B\otimes\cL^\vee\in\cP(\phi).
\]

Iterating, we obtain short exact sequences in $\cP(\phi)$
\[
0\longrightarrow A\otimes(\cL^\vee)^{\otimes l}\longrightarrow\mathcal{O}_x\longrightarrow B\otimes(\cL^\vee)^{\otimes l}\longrightarrow 0,
\]
for all $l\geq0$.
Since $\cP(\phi)$ is of finite length, there are only finitely many possible  classes in $\Lambda$ of subobjects of a given semistable object. 
Hence, there exist $l_1,l_2\geq0$, $l_1\neq l_2$, such that
\[
v(A\otimes(\cL^\vee)^{\otimes l_1})=v(A\otimes(\cL^\vee)^{\otimes l_2}).
\]
Since $(-)\otimes\cL$ preserves $(\Lambda,v)$, we deduce that
\[
v(A)=v(A\otimes(\cL^\vee)^{\otimes l}),
\]
for some $l\geq1$.
By applying Lemma~\ref{lem:AuxiliaryBayerSkyscraper2} to $\cL^{\otimes l}$, we deduce that $A\cong\cO_y[m]$, for some closed point $y\in X$.
Since there is a nonzero morphism $A\to\cO_x$, we must have $y=x$, and so $\cO_x$ is $\sigma$-stable, as we wanted. 
\end{proof}

The previous proposition applies in particular when $\Lambda=\Knum(X)$ or $\Lambda=\Lambda_{H}$ of Section~\ref{subsec:ProjectiveSchemes}, where $H$ denotes the numerical first Chern class of $\cL$: in this case, $(-)\otimes\cL$ preserves indeed $(\Lambda,v)$.

A small issue with Proposition~\ref{prop:BayerCharacterizeStabilitySkyscraper} is that in this generality we have a priori no control on the phases of skyscraper sheaves; for instance, if $X$ is not connected.
In the smooth connected case, when $v$ is numerical, if all skyscraper sheaves are $\sigma$-stable, then they have the same phase, by~\cite[Proposition 2.9]{FLZ:Albanese}\footnote{In \emph{loc.~cit.}, the result is proved in the assumption that $\KK=\CC$, but the proof does not use this.}.
More precisely, we can rephrase this as follows:

\begin{lemma}\label{lem:MptContX}
Let $X$ be a smooth connected projective scheme over $\KK=\overline{\KK}$.
Let $\sigma=(Z,\cP)\in\Stab_{(\Lambda,v)}(\Db(X)/\KK)$ be a numerical stability condition on $\Db(X)$ over $\Spec(\KK)$ with respect to $(\Lambda,v)$.
Assume that all skyscraper sheaves are $\sigma$-stable.
Let $\bv=v(\cO_{x_0})\in\Lambda$, for some closed point $x_0\in X$, and $\phi\in\RR$ such that $Z(\bv)\in\RR_{>0}\cdot e^{i\pi\phi}$.
Then the moduli space $M^{\mathrm{st}}_{\sigma}(\bv,\phi)$ has a connected component isomorphic to $X$.
\end{lemma}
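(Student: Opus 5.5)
The plan is to exhibit a morphism $X\to M^{\mathrm{st}}_{\sigma}(\bv,\phi)$ given by the family of skyscraper sheaves, and to show that it is an open and closed immersion. Let $\Delta\subset X\times X$ be the diagonal. We view $\cO_\Delta$, with its $[m]$-shift normalized so that its fibers have phase $\phi$, as an $X$-perfect object on $X\times X$ over the base $X$ via the first projection. Its fiber over $x$ is $\cO_x$, which is geometrically $\sigma$-stable by hypothesis; over $\KK=\overline{\KK}$, base change to further algebraically closed fields preserves stability by Theorem~\ref{thm:base-change-stab}\ref{enum:base-change-stab4}. By~\cite[Proposition~2.9]{FLZ:Albanese}, all $\cO_x$ have the same phase, and this phase is $\phi$ up to an even shift because $Z(v(\cO_x))=Z(\bv)$ and $X$ is connected. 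Since $\Hom^{<0}(\cO_x,\cO_x)=0$, the family is universally gluable, so it defines a morphism $\Phi\colon X\to\cM^{\mathrm{st}}_\sigma(\bv,\phi)$. Composing with the map to the good moduli space, which for stable objects is a $\mathbb{G}_m$-gerbe over an algebraic space $M^{\mathrm{st}}$, gives a morphism $\bar\Phi\colon X\to M^{\mathrm{st}}_\sigma(\bv,\phi)$.

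Next we show that $\bar\Phi$ is a monomorphism and unramified. Injectivity on $\KK$-points holds because $\cO_x\not\cong\cO_y$ for $x\neq y$. The tangent map at $x$ is the Kodaira--Spencer map $T_xX\to\Ext^1(\cO_x,\cO_x)$, which is an isomorphism for the diagonal family on a smooth $X$; this is the standard computation of the deformation theory of the Hilbert scheme of points. It follows that $\bar\Phi$ is étale at every point: its target has tangent space $\Ext^1(\cO_x,\cO_x)$ of dimension $\dim X$, and the source is smooth of the same dimension. We would argue étaleness via the infinitesimal lifting criterion, using that deformations of $\cO_x$ as an object are unobstructed in the sense that every deformation over an Artinian ring is again a skyscraper-type family. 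This follows from Lemma~\ref{lem:AuxiliaryBayerSkyscraper1}-type arguments, or more simply from the fact that any deformation of $\cO_x$ over an Artinian local $A$ is a flat $A$-family of length-one sheaves, hence a point of $X(A)$, by the local Hilbert scheme representability. Consequently $\bar\Phi$ is an étale monomorphism, hence an open immersion.

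Finally, closedness follows because $X$ is proper over $\KK$ and $M^{\mathrm{st}}_\sigma(\bv,\phi)$ is separated. To get separatedness we would rather avoid it: the image $\bar\Phi(X)$ is closed under specialization in $M^{\mathrm{st}}$ by properness of $X$ and the valuative criterion applied to $X\to M^{\mathrm{st}}$, together with the fact that $M^{\mathrm{st}}$ is locally noetherian. A subset that is open and closed under specialization in a locally noetherian algebraic space is a union of connected components. Since $X$ is connected, its image is exactly one connected component, isomorphic to $X$.

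The main obstacle is the étaleness step, namely identifying deformations of $\cO_x$ in $\cM_{\mathrm{pug}}$ with deformations of the point. The concern is that a deformation in the derived sense might not remain a sheaf. We would handle this by noting that any $A$-perfect object whose closed fiber is the sheaf $\cO_x$ is itself a sheaf flat over $A$; this follows from semicontinuity of cohomology sheaves, since the only cohomology sheaf of the fiber sits in degree $0$. Such a sheaf is then a quotient of $\cO_{X_A}$ of length one over $A$, giving an $A$-point of $\mathrm{Hilb}^1(X)=X$.
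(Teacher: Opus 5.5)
Your strategy is the paper's. The shifted diagonal family gives a map $X\to M^{\mathrm{st}}_\sigma(\bv,\phi)$. The deformation functor of $\cO_x$ in the stack coincides with that of $x\in X$; this is what the paper means by ``represents the same functor at a skyscraper sheaf''. So the map is an open immersion, and $X$ is then a connected component. Your open-immersion half is fine. Your tangent-space count alone already gives it: an unramified map with trivial residue field extension, from a smooth germ to a germ of the same embedding dimension, induces an isomorphism of completed local rings. The gap is in the closedness step.

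You assert that $M^{\mathrm{st}}_\sigma(\bv,\phi)$ is separated but give no argument. You then propose to avoid separatedness by deducing that $\bar\Phi(X)$ is closed under specialization from properness of $X$, the valuative criterion, and local noetherianity. That deduction is false. Properness of $X$ over $\KK$ makes $X\to M$ universally closed only when $M$ is separated; otherwise the limit you produce in $X$ maps to \emph{some} limit in $M$, not necessarily the given specialization.

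For example, let $M$ be the projective line with a doubled origin. The inclusion of one copy of $\PP^1$ is an open immersion of a proper connected scheme into a locally noetherian scheme. Its image misses the second origin, which is a specialization of the generic point, so the image is not closed. Your argument would nonetheless conclude that this image is a connected component.

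So you genuinely need uniqueness of limits of stable families over a DVR. This holds and is elementary.
\begin{itemize}
\item Let $R$ be a DVR, taken strictly henselian so that $R$-points of the $\mathbb{G}_m$-gerbe lift to the stack, with uniformizer $t$, fraction field $K$ and residue field $k$. Let $F,F'$ be $R$-families with $\sigma$-stable fibres of phase $\phi$ and $F_K\cong F'_K$.
\item $\Hom(F,F')$ is finitely generated over $R$ and $\Hom(F,F')\otimes_R K=\Hom(F_K,F'_K)$. Take $f\in\Hom(F,F')$ mapping to $t^N$ times the generic isomorphism, with $N$ minimal; then $f\notin t\Hom(F,F')$.
\item The triangle $F'\xrightarrow{t}F'\to F'_k$ then shows $f_k\neq 0$. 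A nonzero map between stable objects of the same phase is an isomorphism.
\item Hence $\cone(f)$ has vanishing derived restriction to $X_k$. Since $X_R$ is proper over $R$, every point of $X_R$ specializes into $X_k$, so $\cone(f)=0$ by Nakayama.
\end{itemize}
Apply this with $F'$ the skyscraper family of the $R$-point of $X$ extending a given $K$-point, which exists by properness of $X$. This shows that every specialization of a point of $\bar\Phi(X)$ lies in $\bar\Phi(X)$. Together with openness, this gives the connected component.

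A smaller point concerns non-closed points. For the diagonal family to be an $X$-point of $\cM^{\mathrm{st}}_\sigma(\bv,\phi)$, its fibres over non-closed points of $X$ must also be geometrically stable of phase $\phi$. Your hypothesis only covers closed points. To fix this, use that $\cM^{\mathrm{st}}_\sigma(\bv,\phi)$ is open in $\cM_{\mathrm{pug}}(X/\KK)$, by condition (ii) of Definition~\ref{def:FamilyStabilityConditions}. Its preimage under $X\to\cM_{\mathrm{pug}}(X/\KK)$ is then an open subset containing every closed point, hence all of $X$.
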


\begin{proof}
By~\cite[Proposition 2.9]{FLZ:Albanese}, under our assumptions all skyscraper sheaves have the same phase.
Moreover, their moduli space is smooth and projective and isomorphic to $X$.
On the other hand, the moduli stack $\cM^{\mathrm{st}}_{\sigma}(\bv,\phi)$ represents the same functor at a skyscraper sheaf; thus $X$ identifies with a connected component of the associated moduli space.
\end{proof}

As a consequence of Proposition~\ref{prop:BayerCharacterizeStabilitySkyscraper}, we can say more if the Bayer property is satisfied.

\begin{proposition}\label{prop:Mpt=X}
Let $X$ be a smooth connected projective scheme over $\KK=\overline{\KK}$, and let $\cL$ be a very ample invertible sheaf on $X$.
Let $\sigma=(Z,\cP)\in\Stab_{(\Lambda,v)}(\Db(X)/\KK)$ be a numerical stability condition on $\Db(X)$ over $\Spec(\KK)$ with respect to $(\Lambda,v)$.
We assume that $\sigma$ satisfies $\cP\prec\cP\otimes\cL^\vee[1]$, and that the autoequivalence $(-)\otimes\cL$ of $\Db(X)$ preserves $(\Lambda,v)$.
Let $\bv=v(\cO_{x_0})\in\Lambda$, for some closed point $x_0\in X$, and $\phi\in\RR$ such that $Z(\bv)\in\RR_{>0}\cdot e^{i\pi\phi}$.
Then
\[
M^{\mathrm{st}}_{\sigma}(\bv,\phi)\cong X.
\]    
\end{proposition}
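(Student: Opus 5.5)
The proof combines Proposition~\ref{prop:BayerCharacterizeStabilitySkyscraper} with Lemma~\ref{lem:MptContX}. The key point is that, under the Bayer property, the stable locus for the class $\bv=v(\cO_{x_0})$ and phase $\phi$ consists exactly of skyscraper sheaves. The component of $M^{\mathrm{st}}_{\sigma}(\bv,\phi)$ produced by Lemma~\ref{lem:MptContX} is then the whole moduli space.

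First, by Proposition~\ref{prop:BayerCharacterizeStabilitySkyscraper} with $k=1$, every skyscraper sheaf $\cO_x$ is $\sigma$-stable. Conversely, any $\sigma$-stable $F$ with $v(F)=\bv$ is of the form $\cO_x[m]$ with $m$ even. Since $X$ is connected and $v$ is numerical, \cite[Proposition 2.9]{FLZ:Albanese} says all $\cO_x$ share one phase $\phi_0$. Because $Z(\cO_x[m])=Z(\bv)$ for $m$ even, we have $\phi_0\equiv\phi \pmod 2$. So the $\KK$-points of $\cM^{\mathrm{st}}_{\sigma}(\bv,\phi)$ are exactly the objects $\cO_x[\phi-\phi_0]$, one for each closed point $x$. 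After applying the shift autoequivalence, which identifies the moduli stacks for phases $\phi_0$ and $\phi$, we may assume $\phi=\phi_0$, so these points are the $\cO_x$ themselves.

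Second, by Lemma~\ref{lem:MptContX}, $M^{\mathrm{st}}_{\sigma}(\bv,\phi)$ contains a connected component $X'$ isomorphic to $X$, given by the family $\cO_\Delta$ on $X\times X$. The functor sends $x\mapsto\cO_x$, and this is bijective on $\KK$-points onto the full set of $\KK$-points just described.

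Third, we show $X'$ is everything. The moduli space $M^{\mathrm{st}}_{\sigma}(\bv,\phi)$ is an algebraic space of finite type over $\KK$: the stack is open and bounded because $\sigma$ is a stability condition over $\KK$, and stable objects have automorphism group $\mathbb{G}_m$, so the stack is a $\mathbb{G}_m$-gerbe over its moduli space. Any other connected component would be a nonempty algebraic space of finite type over $\KK=\overline{\KK}$, hence would have a $\KK$-point. That point would be a stable object of class $\bv$ and phase $\phi$, hence some $\cO_x$, which already lies in $X'$. Points of an algebraic space lie in a unique connected component, so there is no other component, and $M^{\mathrm{st}}_{\sigma}(\bv,\phi)=X'\cong X$.

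The main obstacle I expect is the phase bookkeeping in the first step: ruling out stable objects $\cO_x[m]$ with $m\neq 0$ landing in phase $\phi$ when $\phi$ is not the common phase of the skyscrapers. I will handle this as above, using the parity statement of Proposition~\ref{prop:BayerCharacterizeStabilitySkyscraper} together with the uniqueness of phases from \cite[Proposition 2.9]{FLZ:Albanese}. A secondary point to check is that the identification in Lemma~\ref{lem:MptContX} is an isomorphism onto an open-and-closed subspace and not merely a bijection on points. I will take this from that lemma's statement, which rests on the smooth projective moduli space of skyscraper sheaves representing the same functor.
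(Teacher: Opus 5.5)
Your proposal is correct and follows the paper's proof. Proposition~\ref{prop:BayerCharacterizeStabilitySkyscraper} shows that the stable objects of class $\bv$ and phase $\phi$ are exactly even shifts of skyscraper sheaves, all of which are stable, and Lemma~\ref{lem:MptContX} then identifies the moduli space with $X$; your phase bookkeeping via \cite[Proposition 2.9]{FLZ:Albanese} and your finite-type argument ruling out further connected components spell out details the paper leaves implicit.
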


\begin{proof}
Let $F\in\cP(\phi)$ be a $\sigma$-stable object with $v(F)=\bv$.
By Proposition~\ref{prop:BayerCharacterizeStabilitySkyscraper}, $F\cong\cO_x[2m]$, for some closed point $x\in X$ and $m\in\ZZ$.
Conversely, all skyscraper sheaves are $\sigma$-stable.
The conclusion follows from Lemma~\ref{lem:MptContX}.
\end{proof}

\subsection{The Bondal--Orlov theorem} 
Proposition~\ref{prop:Mpt=X} is closely related to the Bondal--Orlov reconstruction theorem~\cite{BO}.
For instance, we can immediately get back a weaker form of it, by using stability conditions: if $X$ is smooth and projective over $\KK=\overline{\KK}$ with canonical bundle which is either ample or antiample, then $X$ can be uniquely recovered from $\Db(X)$.
Indeed, the canonical bundle $\omega_X$ can be recovered from the Serre functor of $X$ and then we can apply Proposition~\ref{prop:Mpt=X} with $\cL$ a suitable power of $\omega_X$ or of $\omega_X^\vee$, where $\sigma$ is a stability condition from Theorem~\ref{thm:ProjectiveScheme} (here we are using Corollary~\ref{cor:Stabdagger=Stabdagger}).

Actually, the whole Bondal--Orlov Theorem can be deduced from Lemma~\ref{lem:MptContX}, together with standard arguments; the only new input here is that we can use the theory of moduli spaces. 

\begin{theorem}[Bondal--Orlov]\label{thm:BOreconstruction}
Let $X$ be a smooth connected projective scheme over an algebraically closed field $\KK=\overline{\KK}$.
Assume that either the canonical or the anticanonical line bundle is ample.
Let $X'$ be a projective scheme over $\KK$ such that we have a $\KK$-linear exact equivalence $\Db(X')\cong\Db(X)$.
Then $X'$ is isomorphic to $X$ over $\KK$.
\end{theorem}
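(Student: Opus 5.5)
Fix an exact $\KK$-linear equivalence $\Phi\colon\Db(X')\xrightarrow{\sim}\Db(X)$. The plan is to transport a stability condition with good moduli from $X$ to $X'$, and then recover $X$ as a moduli space of stable objects on $X'$ that, after a suitable shift, are skyscraper sheaves.

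First, $X'$ inherits the relevant structure. By Orlov's representability theorem (or a direct Serre-functor argument, since $\Db(X)$ is smooth and proper), $X'$ is smooth, and $\Phi$ is a Fourier--Mukai transform. Connectedness passes across because indecomposability of the category is preserved. The Serre functor commutes with $\Phi$, so $\Phi\circ(-\otimes\omega_{X'})\cong(-\otimes\omega_X)\circ\Phi$ and $\dim X'=\dim X=:d$. In the standard way (Bondal--Orlov), point-like objects are characterised intrinsically: $P$ is point-like if $S(P)\cong P[d]$, $\Hom^{<0}(P,P)=0$, and $\Hom(P,P)=\KK$. When $\pm K_X$ is ample, Lemma~\ref{lem:AuxiliaryBayerSkyscraper1}\ref{enum:AuxiliaryBayerSkyscraper1iii} applied with $\cL=\omega_X^{\pm1}$ shows that the point-like objects of $\Db(X)$ are exactly the shifted skyscraper sheaves $\cO_x[m]$. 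Every skyscraper sheaf $\cO_{x'}$ on $X'$ is point-like, so $\Phi(\cO_{x'})\cong\cO_{x}[m]$ for some $x\in X$ and $m\in\ZZ$.

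Second, I use moduli. Choose $\sigma\in\Stab^\dagger_{H_X}(\Db(X))$ as in Theorem~\ref{thm:ProjectiveScheme}, with $H_X=\pm K_X$ and $a\gg0$. By Corollary~\ref{cor:Stabdagger=Stabdagger}, $\sigma$ is a stability condition over $\KK$, and by Proposition~\ref{prop:Mpt=X} we have $M^{\mathrm{st}}_\sigma(\bv,\phi)\cong X$ for $\bv=v(\cO_{x_0})$. Pull this back to $\sigma'=\Phi^{-1}\cdot\sigma$ on $X'$ with lattice $v\circ\Phi_*$; this is again a stability condition over $\KK$, because $\Phi$ is Fourier--Mukai with a kernel that is perfect on $X'\times X$, so it induces an isomorphism of moduli stacks $\cM_{\mathrm{pug}}(X')\cong\cM_{\mathrm{pug}}(X)$ compatible with base change. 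Under this isomorphism the stack of $\sigma'$-stable objects of class $\Phi^{-1}\bv$ and phase $\phi$ is identified with that for $\sigma$, hence its moduli space is $X$.

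Third, I locate $X'$ inside this moduli space. All $\cO_{x'}$ map to shifted skyscrapers. Over the connected $X'$ the family $\Phi(\cO_{\Delta'})$ is $X'$-perfect, so the shift $m$ is locally constant, hence constant. After composing $\Phi$ with a shift, every $\Phi(\cO_{x'})$ is some $\cO_x$ and is therefore $\sigma$-stable of class $\bv$ (possibly up to sign, which fixes the parity of the shift). This yields a morphism $X'\to M^{\mathrm{st}}_\sigma(\bv,\phi)\cong X$. It is injective on points and on tangent spaces, since $\Ext^1$ is preserved by $\Phi$, so it is a closed immersion. It is an open immersion of smooth connected schemes of the same dimension $d$ (essentially the argument of Lemma~\ref{lem:MptContX}). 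An open and closed immersion into the connected $X$ is an isomorphism, giving $X'\cong X$.

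The main obstacle is the first step: establishing that $X'$ is smooth, connected, of the same dimension, and that $\Phi$ is of Fourier--Mukai type, so that the moduli functors correspond over $\KK$ and not merely on closed points. I would handle this by quoting Orlov's representability theorem and the standard Serre-functor arguments rather than reproving them.
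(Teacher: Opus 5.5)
Your proof is correct and follows the paper's argument. Lemma~\ref{lem:AuxiliaryBayerSkyscraper1}\ref{enum:AuxiliaryBayerSkyscraper1iii} forces point-like objects to be shifted skyscrapers, and a stability condition from Theorem~\ref{thm:ProjectiveScheme} with the Bayer property, together with Corollary~\ref{cor:Stabdagger=Stabdagger} and Proposition~\ref{prop:Mpt=X}, identifies the relevant moduli space with $X$. The only difference is the last step: you embed $X'$ into $M^{\mathrm{st}}_\sigma(\bv,\phi)\cong X$ by hand (constant shift by semicontinuity, then a closed and open immersion), whereas the paper applies Lemma~\ref{lem:MptContX} to the transported stability condition $\sigma'$ on $X'$. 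As a result, your transported $\sigma'$ is never actually used.
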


\begin{proof}
Since $X$ is smooth over $\KK$, it is a general fact (see, e.g.,~\cite[Lemma 4.9(6),(7)]{NCHPD}) that $X'$ is also smooth. 
Thus, if $\Phi\colon\Db(X)\xrightarrow{\cong}\Db(X')$ is a triangulated equivalence, then by~\cite{Orlov}, it is of Fourier--Mukai type.
The connectedness of $X'$ follows from that of $X$, e.g. by considering Hochschild cohomology.
Since the Serre functor is invariant with respect to $\Phi$, i.e., we have $\Phi\circ\mathsf{S}_X=\mathsf{S}_{X'}\circ\Phi$, we also deduce that $\dim(X)=\dim(X')$.
Moreover, if $\cL$ denotes either $\omega_X^{\otimes m}$ or $(\omega_X^\vee)^{\otimes m}$, for $m\gg1$, so that $\cL$ is very ample on $X$, since $\dim(X)=\dim(X')$, we then have
\[
\Phi\circ((-)\otimes\cL)\circ\Phi^{-1}=(-)\otimes\cL',
\]
where $\cL'$ is an invertible sheaf on $X'$.
For any closed point $x'\in X'$, we consider the object $F\coloneqq\Phi^{-1}(\cO_{x'})\in\Db(X)$.
Since $\cO_{x'}\otimes\cL'\cong\cO_{x'}$, we deduce that $F\cong F\otimes\cL$.
Moreover, from the corresponding property for $\cO_{x'}$, we have that $\Hom^{<0}(F,F)=0$ and $\Hom(F,F)=\KK$.
By Lemma~\ref{lem:AuxiliaryBayerSkyscraper1}\ref{enum:AuxiliaryBayerSkyscraper1iii}, we deduce that $F\cong\cO_x[m]$, for some $m\in\ZZ$.

Let $H_X$ be the numerical first Chern class of $\cL$. 
By Theorem~\ref{thm:ProjectiveScheme}, there exists a geometric stability condition $\sigma\in\Stabdagger_{H_X}(\Db(X))$ such that $\sigma$ satisfies the Bayer property with respect to $\cL^\vee$ and $l=1$.
Moreover, as observed before, the autoequivalence $(-)\otimes\cL$ preserves $(\Lambda_{H_X},v_{H_X})$. 
Let us consider $\sigma' \coloneqq \Phi(\sigma)$.
By Corollary~\ref{cor:Stabdagger=Stabdagger} and since exact equivalences preserve moduli spaces, $\sigma'$ is a numerical stability condition on $\Db(X')$ over $\KK$ with respect to $(\Lambda_{H_X},v_{H_X}\circ\Phi^{-1}_*)$.
By the previous argument, skyscraper sheaves of closed points in $X'$ are $\sigma'$-stable.
By Lemma~\ref{lem:MptContX}, if we let $\bv'=v_{H_X} \circ\Phi^{-1}_*(\cO_{x'})$ and $\phi'$ be the phase of skyscraper sheaves of closed points on $X'$ with respect to $\sigma'$, then the moduli space $M_{\sigma'}(\bv',\phi')$ has a connected component isomorphic to $X'$. 
On the other hand, by Proposition~\ref{prop:Mpt=X}, this moduli space is isomorphic to $X$. 
This proves the theorem. 
\end{proof}


\newpage 

\part{Appendices}\label{part:Appendices}

\appendix 


\section{Inducing t-structures and slicings}\label{app:Inducing}

In this appendix, we review some results on inducing t-structures and slicings. 
We mostly follow~\cite[Section 2]{P-t-structures}, but with some clarifications and improvements to cover the situations of interest in this paper. 

\subsection{Preliminaries on t-structures} 
\label{app-t-structures-preliminaries}
We begin by briefly reviewing some basic definitions. 

\begin{definition}
    Let $\cD$ be a triangulated category. 
    A \emph{t-structure} on $\cD$ is a pair $\tau = (\cD^{\leqslant 0}, \cD^{\geqslant 0})$ of strictly full subcategories satisfying: 
    \begin{enumerate}
        \item There are inclusions $\cD^{\leqslant 0}[1] \subset \cD^{\leqslant 0}$ and 
        $\cD^{\geqslant 0}[-1] \subset \cD^{\geqslant 0}$. 
        \item \label{t-structure-Hom-vanishing} If $F \in \cD^{\leqslant 0}$ and $F' \in \cD^{\geqslant 0}[-1]$, then $\Hom(F, F') = 0$. 
        \item For every object $F \in \cD$, there exists a distinguished triangle 
        \begin{equation}
        \label{truncation-triangle} 
            F^{\leqslant 0} \to F \to F^{\geqslant 1} 
        \end{equation}
    such that $F^{\leqslant 0}\in \cD^{\leqslant 0}$ and $F^{\geqslant 1}\in \cD^{\geqslant 1}$.
    \end{enumerate}
\end{definition}

It follows from condition~\eqref{t-structure-Hom-vanishing} that the distinguished triangle~\eqref{truncation-triangle} is unique and functorial, and thus that there exist \emph{truncation functors} $\tau^{\leqslant 0} , \tau^{\geqslant 1} \colon \cD \to \cD$ given objectwise by 
\begin{equation*}
\tau^{\leqslant 0}(F) = F^{\leqslant 0} \quad \text{and} \quad 
\tau^{\geqslant 1}(F) = F^{\geqslant 1}. 
\end{equation*}
Further, for any $n \in \bZ$ we define truncation functors $\tau^{\leqslant n}, \tau^{\geqslant n} \colon \cD \to \cD$ by 
\begin{equation*}
    \tau^{\leqslant n} \coloneqq [-n] \circ \tau^{\leqslant 0} \circ [n] \quad \text{and} 
    \quad 
    \tau^{\geqslant n} \coloneqq [-(n-1)] \circ \tau^{\geqslant 1} \circ [n-1]. 
\end{equation*}
These functors take values in the subcategories 
\begin{equation*}
    \cD^{\leqslant n} = \cD^{\leqslant 0}[-n] \quad \text{and} \quad 
    \cD^{\geqslant n} = \cD^{\geqslant 0}[-n]. 
\end{equation*}
The \emph{heart} of $\tau$ is the intersection 
\begin{equation*}
    \cD^{\heartsuit} = \cD^{\leqslant 0} \cap \cD^{\geqslant 0}, 
\end{equation*}
which is an abelian category \cite{BBDG}. 
For $n \in \bZ$ we have cohomology functors 
\begin{equation*}
    {^\tau}\cH^{n} \coloneqq \tau^{\leqslant 0} \circ \tau^{\geqslant 0} \circ [n] \colon \cD \to \cD^{\heartsuit}, 
\end{equation*}
such that distinguished triangles in $\cD$ are transformed into long exact sequences on cohomology objects in $\cD^{\heartsuit}$. 

The following finiteness conditions play an important role. 
\begin{definition}
    Let $\tau = (\cD^{\leqslant 0}, \cD^{\geqslant 0})$ be a t-structure on a triangulated category $\cD$. 
    \begin{enumerate}
    \item $\tau$ is \emph{nondegenerate} if $\bigcap_{n\in\bZ}\cD^{\geqslant n}= \bigcap_{n\in\bZ}\cD^{\leqslant n}=\{0\}$. 
    \item $\tau$ is \emph{bounded} if $\cD = \bigcup_{m,n \in \bZ} \cD^{\geqslant m} \cap \cD^{\leqslant n}$. 
    \end{enumerate}
\end{definition}

\begin{remark}
\label{remark-bounded-t-structures}
    It follows easily from the definitions that $\tau$ is bounded if and only if it is nondegenerate and for any $F \in \cD$, the cohomology objects ${^\tau}\cH^n(F)$ vanish for all but finitely many $n \in \bZ$. 
\end{remark}

\begin{definition}
    Let $\cD$ and $\cD'$ be triangulated categories with t-structures $\tau = (\cD^{\leqslant 0}, \cD^{\geqslant 0})$ and $\tau'  = (\cD'^{\leqslant 0}, \cD'^{\geqslant 0})$. 
    Let $\Phi \colon \cD \to \cD'$ be a triangulated functor. 
    \begin{enumerate}
        \item $\Phi$ is \emph{left t-exact} with respect to $\tau$ and $\tau'$ if $\Phi(\cD^{ \geqslant 0}) \subset \cD'^{\geqslant 0}$. 
        \item $\Phi$ is \emph{right t-exact} with respect to $\tau$ and $\tau'$ if $\Phi(\cD^{ \leqslant 0}) \subset \cD'^{\leqslant 0}$.  
        \item $\Phi$ is \emph{t-exact} with respect to $\tau$ and $\tau'$ if it is both left and right t-exact. 
    \end{enumerate}
\end{definition}

Given a family of schemes over a base, it is natural to consider t-structures on the derived category of the total space which are suitably compatible with the structure morphism to the base. 
This is formalized by the notion of local t-structure, first introduced in~\cite[Section 2.3]{P-t-structures}.

\begin{definition}\label{def:SlocalTstructure}
Let $\pi \colon X\to S$ be a morphism of schemes.
A t-structure $\tau$ on $\Db(X)$ is \emph{$S$-local} if for every quasi-compact open subset $U\subset S$, there exists a t-structure $\tau_U$ on $\Db(X_U)$ such that the restriction functor $\Db(X)\to\Db(X_U)$ is t-exact with respect to $\tau$ and $\tau_U$.  
\end{definition}

\begin{remark}
\label{remark-local-t-structures}
Let us collect some observations about the above notion: 
\begin{enumerate}
\item When it exists, the t-structure $\tau_U$ on $\Db(X_U)$ above is uniquely determined. 
If $X$ is noetherian, then this follows from the essential surjectivity of the restriction functor $\Db(X) \to \Db(X_U)$. 
In general, this can be shown using the compactly generated t-structures $\widehat{\tau}$ on $\Dqc(X)$ and $\widehat{\tau_U}$ on $\Dqc(X_U)$ given by Lemma~\ref{lem:Neeman} below, together with the essential surjectivity of the restriction functor $\Dqc(X) \to \Dqc(X_U)$ \cite[\href{https://stacks.math.columbia.edu/tag/08ED}{Tag 08ED}]{stacks-project}. 

\item  \label{remark-local-over-affine}  There is a simple criterion for $S$-locality of a bounded t-structure when $S$ is quasi-projective over an affine scheme. 
Namely, in this case, by~\cite[Theorem 2.3.2]{P-t-structures} a bounded t-structure $\tau$ on $\Db(X)$ is $S$-local if and only if there exists an ample line bundle $\cL$ on $S$ such that $- \otimes \pi^*\cL \colon \Db(X) \to \Db(X)$ is left t-exact with respect to $\tau$; moreover, in this case $-\otimes\pi^* \cL$ is in fact t-exact for every ample line bundle $\cL$ on $S$. 
In particular, if $S$ is affine, then any bounded t-structure on $\Db(X)$ is $S$-local. 

\item    There are more general versions of the notion of a local t-structure, which apply to triangulated categories $\cD$ that are not necessarily of the form $\Db(X)$ for a morphism $X \to S$. For instance, the case where $\cD$ is a suitable semiorthogonal component of $\Db(X)$ is treated in \cite[Section~4.2]{stability-families}. 
\end{enumerate} 
\end{remark}

Below, we will discuss a strategy for constructing t-structures on small categories by restriction from t-structures on a large category; here, following the terminology of \cite[Section 3]{KLP}, ``restriction'' is meant in the following precise sense. 

\begin{definition}
Let $\cD$ be a triangulated category with a t-structure $\tau = (\cD^{\leqslant 0}, \cD^{\geqslant 0})$. 
Let $\cC \subset \cD$ be a full triangulated subcategory. 
We say $\tau$ \emph{restricts to a t-structure} on $\cC$ if the pair $\tau \vert_{\cC} = (\cC^{\leqslant 0}, \cC^{\geqslant 0})$ with 
\begin{equation*}
    \cC^{\leqslant 0} = \cD^{\leqslant 0}\cap \cC \quad \text{and} \quad 
    \cC^{\geqslant 0} = \cD^{\geqslant0} \cap \cC
\end{equation*}
defines a t-structure on $\cC$. 
\end{definition}

\begin{remark}
It is easy to see that $\tau$ restricts to a t-structure on $\cC$ if and only if either of the truncation functors $\tau^{\leqslant 0}$ or $\tau^{\geqslant0}$ preserves $\cC$, in which case all of the truncation functors preserve $\cC$. 
\end{remark}

\subsection{Inducing t-structures} 

Our next goal is to describe a powerful method for constructing t-structures; this relies on the notion of a t-structure generated by a set of compact objects.

\begin{definition}
    Let $\cD$ be a triangulated category which admits small coproducts. 
    An object $F \in \cD$ is called \emph{compact} if for any collection $\set{ G_i }$ of objects in $\cD$, the natural map 
\begin{equation*}
    \bigoplus_{i \in I} \Hom(F, G_i) \to 
    \Hom \left(F, \bigoplus_{i \in I} G_i \right) 
\end{equation*}
is an isomorphism. 
\end{definition}

\begin{example}
    By~\cite{BVDB}, if $X$ is a quasi-compact quasi-separated scheme, then $\Dperf(X)$ is the subcategory of compact objects in $\Dqc(X)$, and it generates $\Dqc(X)$.
\end{example}

\begin{definition}
\label{definition-CG-t-structure}
    Let $\cD$ be a triangulated category which admits small coproducts. 
    A t-structure $\tau = (\cD^{\leqslant 0}, \cD^{\geqslant 0})$ on $\cD$ is called \emph{compactly generated} if there exists a set $\cG$ of compact objects in $\cD$ such that 
    \begin{equation*}
    \cD^{\leqslant 0} = \Coprod(\cG), 
    \end{equation*} 
    where $\Coprod(\cG)$ denotes the smallest full subcategory of $\cD$ containing $\cG$ and closed under the formation of coproducts and extensions. 
\end{definition}

\begin{remark}
\label{remark-CG-t-structures}
    Let us record a few observations about compactly generated t-structures: 
    \begin{enumerate}
    \item In the above definition, 
    we may always assume that $\cG$ satisfies $\cG[1] \subset \cG$, by adding in shifts of objects in $\cG$ if necessary. 
    \item The subcategory of coconnective objects is given explicitly by 
    \begin{equation}
    \label{Dgeq-generated-G}
            \cD^{\geqslant 0} = \set{ F \in \cD \st \Hom(G, F) = 0 \textup{ for all } G \in \cG[1]}, 
    \end{equation}
    because the property $\Hom(G, F) = 0$ is stable under coproducts and extensions in the variable~$G$. 
    \item \label{Dgeq-coproduct-closed} The subcategory $\cD^{\geqslant 0}$ is closed under coproducts, due to the formula~\eqref{Dgeq-generated-G} and the fact that $\cG$ consists of compact objects. 
    \end{enumerate}
\end{remark}

The importance of this notion is due to the following fundamental existence result. 

\begin{theorem}[{\cite[Theorem A.1]{AJT:tstructures} and~\cite[Theorem 2.3.3]{CNS:Passage}}]
\label{theorem-generating-t-structures}
Let $\cD$ be a triangulated category which admits small coproducts. 
Let $\cG$ be a set of compact objects in $\cD$ which satisfies $\cG[1] \subset \cG$. 
Then there exists a t-structure $\tau = (\cD^{\leqslant 0}, \cD^{\geqslant 0})$ on $\cD$ which is compactly generated by $\cG$, i.e., $\cD^{\leqslant 0} = \Coprod(\cG)$.
\end{theorem}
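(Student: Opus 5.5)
This is a cited foundational result (Alonso--Jerem\'ias--Souto, Neeman), and I would reprove it by the standard small-object (transfinite cone) construction. Set $\cD^{\leqslant 0}\coloneqq\Coprod(\cG)$ and define $\cD^{\geqslant 1}$ as the right orthogonal $\set{F \st \Hom(G,F)=0 \text{ for all } G\in\cG}$, matching \eqref{Dgeq-generated-G} after the index shift. Two axioms are then formal. First, $\Coprod(\cG)[1]\subset\Coprod(\cG)$ because $\cG[1]\subset\cG$ and shifting commutes with coproducts and extensions. Second, $\Hom(\cD^{\leqslant 0},\cD^{\geqslant 1})=0$, since vanishing of $\Hom(-,F)$ is stable under coproducts and extensions in the first variable. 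The real work is the truncation triangle.

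Fix $F\in\cD$ and build a sequence $F=F_0\to F_1\to F_2\to\cdots$ as follows. Given $F_n$, let $\Sigma_n\coloneqq\bigoplus_{G\in\cG}\bigoplus_{\alpha\in\Hom(G,F_n)} G$ with its tautological map $\Sigma_n\to F_n$, and let $F_{n+1}$ be its cone. Put $F_\infty\coloneqq\operatorname{hocolim}F_n$, the cone of $1-\mathrm{shift}\colon\bigoplus F_n\to\bigoplus F_n$.

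Next I check that $F_\infty\in\cD^{\geqslant1}$. For $G\in\cG$, compactness gives
\[
\Hom(G,F_\infty)=\operatorname{colim}_n\Hom(G,F_n),
\]
where I use $G[-1]\in\Coprod(\cG)$ up to shift; more precisely I apply compactness of $G$ and of $G[1]$ to the Milnor triangle. By construction every map $G\to F_n$ dies in $F_{n+1}$, so the colimit vanishes.

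Then I check that the fibre $A\coloneqq\operatorname{fib}(F\to F_\infty)$ lies in $\Coprod(\cG)$. The fibre $A_n$ of $F\to F_n$ is an iterated extension of the objects $\Sigma_k$, $k<n$, by the octahedral axiom: $A_{n+1}$ is an extension of $A_n$ by $\Sigma_n$. So $A_n\in\Coprod(\cG)$. Moreover $A=\operatorname{hocolim}A_n$, because homotopy colimits commute with fibres of maps from the constant system $F$. A homotopy colimit is a cone of a map between coproducts of objects of $\Coprod(\cG)$, so it sits in a triangle $\bigoplus A_n\to\bigoplus A_n\to A$. Since the cone shifts the first term by $[1]$, this exhibits $A$ as an extension of $\bigoplus A_n$ and $(\bigoplus A_n)[1]$, both of which lie in $\Coprod(\cG)$ because $\cG[1]\subset\cG$. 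Hence $A\in\Coprod(\cG)$, and $A\to F\to F_\infty$ is the required triangle.

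The main obstacle is the index bookkeeping in this last step: checking carefully that the Milnor sequence and the hocolim of the $A_n$ land in $\Coprod(\cG)$ with the correct shifts. This is exactly where the hypothesis $\cG[1]\subset\cG$ is used. A smaller point is that $\cG$ must be a set, not merely a class, so that the coproduct $\Sigma_n$ exists. If needed, I would cite Neeman's version of Brown representability for the hocolim formalism rather than redo it.
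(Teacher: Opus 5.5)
Your argument is correct and is the standard small-object construction behind the cited result. The paper gives no proof of its own; it only refers to \cite[Theorem A.1]{AJT:tstructures} and \cite[Theorem 2.3.3]{CNS:Passage}, where essentially this argument is used.

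The one step whose justification does not hold as written is ``$A=\operatorname{hocolim}A_n$ because homotopy colimits commute with fibres''. Cones are not functorial in a triangulated category, so a morphism of sequences of triangles does not formally produce a triangle of homotopy colimits. The fix is to apply the $3\times 3$ lemma to the commutative square formed by $1-\mathrm{sh}$ on $\bigoplus_n F$ and on $\bigoplus_n F_n$, with both vertical maps equal to $\bigoplus_n(F\to F_n)$:
\begin{itemize}
\item the vertical fibres are $\bigoplus_n A_n$, since a coproduct of distinguished triangles is distinguished;
\item the top row completes to the split triangle $\bigoplus_n F\to\bigoplus_n F\xrightarrow{\nabla}F$;
\item commutativity of the square forces the induced map $F\to F_\infty$ to be the canonical one.
\end{itemize}
Hence the fibre of $F\to F_\infty$ is the cone of \emph{some} map $\bigoplus_n A_n\to\bigoplus_n A_n$. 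That is all your extension argument needs.

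Also, in the compactness step you only use that shifts of compact objects are compact. Membership of $G[-1]$ in $\Coprod(\cG)$ plays no role.
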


In the geometric setting where $X$ is a noetherian scheme, this leads to the following strategy for constructing t-structures on $\Db(X)$: first, for a wisely chosen $\cG$ use the above theorem to obtain a compactly generated t-structure on $\Dqc(X)$; then, prove that this t-structure restricts to one on $\Db(X)$. 
Before employing this strategy to prove an inducing result for t-structures, we explain conversely how to pass from bounded t-structures on $\Db(X)$ to compactly generated ones on $\Dqc(X)$. 

\begin{lemma}[Neeman]\label{lem:Neeman}
Let $X$ be a noetherian scheme. Let $\tau=(\cD^{\leqslant0},\cD^{\geqslant0})$ be a bounded t-structure on $\Db(X)$.
Then there exists a t-structure $\widehat{\tau}=(\widehat{\cD}^{\leqslant0},\widehat{\cD}^{\geqslant0})$ on $\Dqc(X)$ such that
\begin{enumerate}[{\rm (i)}]
    \item\label{enum:Neeman1} $\widehat{\cD}^{\leqslant0}=\Coprod(\cD^{\leqslant0})$;
    \item\label{enum:Neeman2} $\widehat{\tau}$ is compactly generated; and 
    \item\label{enum:Neeman3} $\widehat{\tau}$ restricts to the t-structure $\tau$ on $\cD$, i.e.,  $\cD^{\leqslant0}=\widehat{\cD}^{\leqslant0}\cap\Db(X)$ and $\cD^{\geqslant1}=\widehat{\cD}^{\geqslant1}\cap\Db(X)$.
\end{enumerate}
\end{lemma}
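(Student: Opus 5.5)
The plan is to take $\widehat{\cD}^{\leqslant 0}\coloneqq\Coprod(\cD^{\leqslant 0})$ and check in turn that it is the aisle of a compactly generated t-structure, and that this t-structure restricts to $\tau$ on $\Db(X)$. The issue is that objects of $\Db(X)$ are in general not compact in $\Dqc(X)$; only perfect complexes are. So I would not apply Theorem~\ref{theorem-generating-t-structures} to $\cG=\cD^{\leqslant 0}$ directly. Instead I would first show that $\Coprod(\cD^{\leqslant 0})=\Coprod(\cG)$, where
\[
\cG\coloneqq\cD^{\leqslant 0}\cap\Dperf(X)
\]
or a suitable set of perfect objects. This set is closed under $[1]$, and it is essentially small after choosing representatives.

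The reduction rests on the boundedness of $\tau$ together with Neeman's approximation results for $\Dqc(X)$ on a noetherian scheme. The standard t-structure is in the preferred equivalence class, and every object of $\Db(X)$ is a homotopy colimit of perfect approximations. Since $\tau$ is bounded, there is an $N$ with
\[
\Db(X)^{\leqslant -N}_{\mathrm{std}}\subset\cD^{\leqslant 0}\subset\Db(X)^{\leqslant N}_{\mathrm{std}}.
\]
Given $F\in\cD^{\leqslant 0}$, I would use pseudo-coherence to write $F$ as a cone involving a perfect $P$ and a term in $\Db(X)^{\leqslant -M}_{\mathrm{std}}$ for $M\gg N$, through a triangle $P\to F\to G$. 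I would then truncate $P$ with respect to $\tau$ to land in $\cD^{\leqslant 0}$. This step is where the argument is not formal: the $\tau$-truncation of a perfect complex need not be perfect. To handle it I would rely on Neeman's theorem that bounded t-structures on $\Db(X)$ are in the preferred equivalence class and are compatible with the $\Dperf$ approximation. This is the key input from Neeman (\emph{bounded t-structures on $\Db$ and approximability}), and I expect it to be the main obstacle.

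Once $\cD^{\leqslant 0}\subset\Coprod(\cG)$ is established, the reverse inclusion is clear. Theorem~\ref{theorem-generating-t-structures} then produces the compactly generated t-structure $\widehat\tau$, which gives \ref{enum:Neeman1} and \ref{enum:Neeman2}.

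For \ref{enum:Neeman3}, one inclusion of the aisle is immediate: $\cD^{\leqslant 0}\subset\widehat{\cD}^{\leqslant 0}\cap\Db(X)$. For the coaisles, if $F\in\cD^{\geqslant 1}$ then $\Hom(G,F)=0$ for all $G\in\cD^{\leqslant 0}$. This vanishing is preserved under extensions in $G$. Since $\cG$ is compact and $F$ lies in the right orthogonal, it is also preserved under coproducts, so $F\in\widehat{\cD}^{\geqslant 1}$. Conversely, I would take $F\in\Db(X)$ and use its $\tau$-truncation triangle $F^{\leqslant 0}\to F\to F^{\geqslant 1}$. By what was just shown, this is also a $\widehat\tau$-truncation triangle, so uniqueness of truncations forces $\widehat\tau$-truncations to agree with $\tau$-truncations on $\Db(X)$. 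Both equalities in \ref{enum:Neeman3} follow from this.
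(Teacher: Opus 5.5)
Your argument for (iii) is fine and is essentially the paper's. The paper shows $\Hom(\widehat{\cD}^{\leqslant0},\cD^{\geqslant1})=0$ and then identifies each side of the two equalities as a Hom-orthogonal. That is equivalent to your observation that the $\tau$-truncation triangle of $F\in\Db(X)$ is already a $\widehat\tau$-truncation triangle. Compactness plays no role in your coproduct step, since $\Hom(\bigoplus G_i,F)=\prod\Hom(G_i,F)$ for any coproduct.

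For (i) and (ii) the paper gives no argument of its own; it quotes \cite[Lemma~4.1 and Remark~4.2]{Neeman:BoundedTstructPerfect}. Your plan to prove them by showing $\Coprod(\cD^{\leqslant0})=\Coprod(\cG)$, with $\cG=\cD^{\leqslant0}\cap\Dperf(X)$, is sensible. However, the inclusion $\cD^{\leqslant0}\subset\Coprod(\cG)$, which carries all the content, is not established, for two reasons.
\begin{itemize}
\item The two-sided comparison with the standard t-structure does not follow from boundedness of $\tau$. Boundedness only bounds each object individually and gives no uniform $N$.
\item The mechanism you propose, truncating the perfect approximation $P$ with respect to $\tau$, fails for the reason you yourself give. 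An appeal to an unspecified ``compatibility with $\Dperf$ approximation'' does not replace it.
\end{itemize}

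The missing idea is that only one inclusion is needed, it is cheap, and once you have it no truncation is needed.
\begin{itemize}
\item Take a compact generator $G_0$ of $\Dqc(X)$. Boundedness applied to this single object gives $b$ with $G_0[k]\in\cD^{\leqslant0}$ for all $k\geq b$.
\item Hence every $Y\in\cD^{\geqslant1}$ satisfies $\Hom(G_0[k],Y)=0$ for $k\geq b$. Since the standard t-structure on $\Dqc(X)$ lies in the preferred equivalence class (Neeman), this forces $Y\in\Dqc(X)^{\geqslant 1-b-A}_{\mathrm{std}}$ for some constant $A$.
\item By orthogonality inside $\Db(X)$, you get $\Db(X)^{\leqslant -N}_{\mathrm{std}}\subset\cD^{\leqslant0}$ with $N=A+b$.
\end{itemize}

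Now let $F\in\cD^{\leqslant0}$ and use pseudo-coherence to pick a perfect $P\to F$ whose cone $E$ lies in $\Db(X)^{\leqslant -N-1}_{\mathrm{std}}\subset\cD^{\leqslant-1}$. Then $P$ is an extension of $E[-1]$ and $F$, both of which lie in $\cD^{\leqslant0}$, so $P\in\cG$ automatically. Approximating $E$ in the same way and applying the octahedral axiom produces a sequence $Q_1\to Q_2\to\cdots$ in $\cG$ over $F$. Its cones tend to $-\infty$ in the standard t-structure, so $F\cong\mathrm{hocolim}\,Q_n\in\Coprod(\cG)$. Theorem~\ref{theorem-generating-t-structures} then gives (i) and (ii).

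The other inclusion, $\cD^{\leqslant0}\subset\Db(X)^{\leqslant N}_{\mathrm{std}}$, is the hard direction of Neeman's comparison theorem for bounded t-structures on $\Db(X)$, and it is never used.
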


We will refer to $\widehat{\tau}$ as the \emph{Ind-extended t-structure} associated to $\tau$.

\begin{proof}
Properties~\ref{enum:Neeman1} and~\ref{enum:Neeman2} follow directly from~\cite[Lemma 4.1 and Remark 4.2]{Neeman:BoundedTstructPerfect} (which in particular rely on Theorem~\ref{theorem-generating-t-structures}).

To prove~\ref{enum:Neeman3}, we first observe that
$\Hom(F,F') = 0$ for all $F \in \widehat{\cD}^{\leqslant 0}$ and $F' \in \cD^{\geqslant 1}$. Indeed, the claim holds for $F \in \cD^{\leqslant 0}$ because $\tau$ is a t-structure, and hence for all $F \in \widehat{\cD}^{\leqslant 0}$ because the property is stable under coproducts and extensions. 

Now we prove $\cD^{\leqslant0}=\widehat{\cD}^{\leqslant0}\cap\Db(X)$. The forward inclusion is clear by definition. 
On the other hand, by the above observation, $\widehat{\cD}^{\leqslant0}\cap\Db(X)$ is contained in the left $\Hom$-orthogonal to $\cD^{\geqslant 1} \subset \Db(X)$, which is equal to $\cD^{\leqslant 0}$ because $\tau$ is a t-structure. 

The equality $\cD^{\geqslant1}=\widehat{\cD}^{\geqslant1}\cap\Db(X)$ follows similarly. 
Namely, by the above orthogonality observation, $\cD^{\geqslant1}$ is contained in the right $\Hom$-orthogonal to $\widehat{\cD}^{\leqslant 0} \subset \widehat{\cD}$, which is equal to $\widehat{\cD}^{\geqslant1}$ because $\widehat{\tau}$ is a t-structure. 
On the other hand, $\widehat{\cD}^{\geqslant1}\cap\Db(X)$ is contained in the right $\Hom$-orthogonal to $\cD^{\leqslant 0} \subset \Db(X)$ (because $\cD^{\leqslant 0} \subset \widehat{\cD}^{\leqslant 0}$), which is equal to $\cD^{\geqslant 1}$ because $\tau$ is a t-structure. 
\end{proof}

\begin{example}
\label{example-ind-completion-tauX}
    In the situation of Lemma~\ref{lem:Neeman}, if $\tau = \tau_X$ is the standard t-structure on $\Db(X)$, then $\widehat{\tau}$ is the standard t-structure on $\Dqc(X)$. 
\end{example}

The formation of Ind-extended t-structures has good functorial behavior. 
\begin{lemma}
\label{lemma-ind-extension-exact-functor}
    Let $X$ and $Y$ be noetherian schemes. 
    Let $\tau_{X} = (\cD_{X}^{\leq 0}, \cD_{X}^{\geq 0})$ be a bounded t-structure on $\Db(X)$ and \mbox{$\tau_Y = (\cD_{Y}^{\leq 0}, \cD_{Y}^{\geq 0})$} be bounded t-structure  on $\Db(Y)$, and let $\widehat{\tau}_X$ and $\widehat{\tau}_Y$ be their Ind-extensions to t-structures on $\Dqc(X)$ and $\Dqc(Y)$. 
    Let $\Phi \colon \Dqc(X) \to \Dqc(Y)$ be a triangulated functor satisfying the following assumptions: 
    \begin{enumerate}[{\rm (i)}]
        \item $\Phi$ commutes with coproducts. 
        \item $\Phi$ restricts to a triangulated functor $\Phi^{\mathrm{b}} \colon \Db(X) \to \Db(Y)$ on bounded derived categories of coherent sheaves. 
    \end{enumerate}  
    Then 
    the following hold: 
    \begin{enumerate}[{\rm (1)}]
        \item \label{Phib-right-t-exact} If $\Phi^b$ is right t-exact with respect to $\tau_X$ and $\tau_Y$, then $\Phi$ is right t-exact with respect to  $\widehat{\tau}_X$ and $\widehat{\tau}_Y$.
        \item \label{Phib-left-t-exact} If $\Phi$ admits a left adjoint $\Phi^L \colon \Dqc(Y) \to \Dqc(X)$ and $\Phi^b$ is left t-exact with respect to $\tau_X$ and $\tau_Y$, then $\Phi$ is left t-exact with respect to  $\widehat{\tau}_X$ and~$\widehat{\tau}_Y$.
    \end{enumerate}
\end{lemma}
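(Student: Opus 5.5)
For part (1), I would reduce to the generators of $\widehat{\cD}_X^{\leqslant 0}$ and use that $\Phi$ commutes with coproducts and preserves extensions. By Lemma~\ref{lem:Neeman}\ref{enum:Neeman1}, $\widehat{\cD}_X^{\leqslant 0}=\Coprod(\cD_X^{\leqslant 0})$. Right t-exactness of $\Phi^{\mathrm b}$ gives $\Phi(\cD_X^{\leqslant 0})\subset \cD_Y^{\leqslant 0}\subset \widehat{\cD}_Y^{\leqslant 0}$. Now consider the full subcategory $\set{F\in\Dqc(X)\st \Phi(F)\in\widehat{\cD}_Y^{\leqslant 0}}$. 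It is closed under extensions, because $\Phi$ is triangulated and $\widehat{\cD}_Y^{\leqslant 0}$ is extension-closed. It is closed under coproducts, because $\Phi$ commutes with coproducts and $\widehat{\cD}_Y^{\leqslant 0}=\Coprod(\cD_Y^{\leqslant 0})$ is coproduct-closed. Since this subcategory contains $\cD_X^{\leqslant 0}$, it contains $\Coprod(\cD_X^{\leqslant 0})=\widehat{\cD}_X^{\leqslant 0}$, which proves (1).

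For part (2), I would first move the problem to the left adjoint. Left t-exactness of $\Phi$ means $\Phi(\widehat{\cD}_X^{\geqslant 1})\subset\widehat{\cD}_Y^{\geqslant 1}$. Since $\widehat{\cD}_Y^{\geqslant 1}$ is the right orthogonal of $\widehat{\cD}_Y^{\leqslant 0}=\Coprod(\cD_Y^{\leqslant 0})$, and $\Hom(-,G)$-vanishing is stable under coproducts and extensions, it suffices to show $\Hom(A,\Phi F)=0$ for $A\in\cD_Y^{\leqslant 0}$ and $F\in\widehat{\cD}_X^{\geqslant 1}$. By adjunction this is $\Hom(\Phi^L A,F)$. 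So the key claim is
\[
\Phi^L(\cD_Y^{\leqslant 0})\subset\widehat{\cD}_X^{\leqslant 0}.
\]

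To prove the claim, I would test $\Phi^L A$ against $\widehat{\cD}_X^{\geqslant 1}$. The hard point is that $\Phi^L A$ need not lie in $\Db(X)$, and $\widehat{\cD}_X^{\geqslant 1}$ is not generated by bounded objects. I would use the fact that compactly generated t-structures on $\Dqc(X)$, for noetherian $X$, are well behaved under filtered homotopy colimits. Concretely, every $F\in\widehat{\cD}_X^{\geqslant 1}$ should be a homotopy colimit of objects of $\cD_X^{\geqslant 1}$, using that $\Dqc(X)$ is compactly generated by $\Dperf(X)\subset\Db(X)$ and that $\widehat{\cD}_X^{\geqslant 1}$ is closed under coproducts (Remark~\ref{remark-CG-t-structures}\ref{Dgeq-coproduct-closed}). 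I would apply this to $\widehat\tau_X$ truncations of the compact approximations, which requires knowing that the truncations of bounded objects stay in $\Db(X)$; that holds by Lemma~\ref{lem:Neeman}\ref{enum:Neeman3}.

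Once such a description is available, for $F\in\cD_X^{\geqslant 1}$ adjunction and left t-exactness of $\Phi^{\mathrm b}$ give
\[
\Hom(\Phi^L A,F)=\Hom(A,\Phi^{\mathrm b}F)=0.
\]
Passing to homotopy colimits then needs a Milnor $\lim^1$-type sequence, together with vanishing of the shifted Homs, which are also covered since $\cD_X^{\geqslant 1}[-1]\subset\cD_X^{\geqslant 1}$. Showing that $\widehat{\cD}_X^{\geqslant 1}$ is generated from $\cD_X^{\geqslant 1}$ in this way is the step I expect to be the main obstacle.

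If that route fails, my fallback is to check whether the paper's $\Phi^L$ (for instance $f_!$ in the intended applications) preserves $\Db$ and reduce directly to the bounded case. Note that without such an extra hypothesis, (2) would probably need Neeman's results on approximability of $\Dqc(X)$.
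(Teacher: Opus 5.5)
Your part (1) is correct and is the paper's argument. Part (2), however, is not proved. Your reduction to the statement $\Phi^L(\cD_Y^{\leqslant 0})\subset\widehat{\cD}_X^{\leqslant 0}$ is valid. The trouble is that testing against \emph{all} of $\cD_Y^{\leqslant 0}$ is exactly what creates the difficulty.

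For $A\in\cD_Y^{\leqslant 0}$, the object $\Phi^L A$ is in general neither bounded nor compact. This pushes you into the claim that every object of $\widehat{\cD}_X^{\geqslant 1}$ is a homotopy colimit of objects of $\cD_X^{\geqslant 1}$. You leave that claim unproven, and it is far from formal for an arbitrary bounded t-structure $\tau_X$.

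The passage to the limit would also not work as you describe it. A Milnor $\lim^1$ sequence computes $\Hom$ \emph{out of} a homotopy colimit (or into a homotopy limit). $\Hom(C,-)$ commutes with homotopy colimits only for compact $C$, and $\Phi^L A$ is not compact in general. Your fallback, assuming $\Phi^L$ preserves $\Db$, adds a hypothesis the lemma does not have. In fact no approximability results are needed.

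The missing idea is to test only against compact generators of $\widehat{\tau}_Y$. By Lemma~\ref{lem:Neeman}\ref{enum:Neeman2} there is a set $\cG_Y$ of compact objects, which you may take with $\cG_Y[1]\subset\cG_Y$, such that $\widehat{\cD}_Y^{\leqslant 0}=\Coprod(\cG_Y)$. Compact objects are perfect, hence bounded, so $\cG_Y\subset\widehat{\cD}_Y^{\leqslant 0}\cap\Db(Y)=\cD_Y^{\leqslant 0}$ by Lemma~\ref{lem:Neeman}\ref{enum:Neeman3}.

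By \eqref{Dgeq-generated-G}, $\widehat{\cD}_Y^{\geqslant 1}$ is the right orthogonal of $\cG_Y$. So by adjunction it suffices to show $\Hom(\Phi^L G,F)=0$ for $G\in\cG_Y$ and $F\in\widehat{\cD}_X^{\geqslant 1}$.

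Since $\Phi$ commutes with coproducts, its left adjoint $\Phi^L$ preserves compact objects, so $\Phi^L G\in\Dperf(X)\subset\Db(X)$. For $F'\in\cD_X^{\geqslant 1}$, left t-exactness of $\Phi^{\mathrm b}$ gives $\Hom(\Phi^L G,F')=\Hom(G,\Phi^{\mathrm b}F')=0$. As $\Phi^L G$ is bounded, this forces $\Phi^L G\in\cD_X^{\leqslant 0}\subset\widehat{\cD}_X^{\leqslant 0}$, which is left orthogonal to $\widehat{\cD}_X^{\geqslant 1}$.

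This is exactly the paper's argument. Your key claim for all of $\cD_Y^{\leqslant 0}$ then follows a posteriori, since $\cD_Y^{\leqslant 0}\subset\Coprod(\cG_Y)$ and $\Phi^L$ preserves coproducts and extensions.
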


\begin{proof}
    Part~\ref{Phib-right-t-exact} follows immediately from Lemma~\ref{lem:Neeman}\ref{enum:Neeman1} and the assumption that $\Phi$ commutes with coproducts. 

    For~\ref{Phib-left-t-exact}, 
    let $\cG_Y \subset \cD_Y^{\leq 0}$ be a set of compact objects in $\Dqc(Y)$ which generate $\widehat{\tau}_Y$. 
    The functor $\Phi^L$ preserves compact objects, being the left adjoint of a coproduct preserving functor; in particular, $\Phi^L(\cG_Y) \subset \Dperf(X) \subset \Db(X)$. 
    Moreover, $\Phi^L(\cG_Y) \subset \cD_X^{\leq 0}$; indeed, by adjunction this is equivalent to the vanishing $\Hom(G, \Phi(F)) = 0$ for all $G \in \cG_Y$ and $F \in \cD_X^{ \geq 1}$, which holds because $\cG_Y \subset \cD_Y^{\leq 0}$ and $\Phi(F) \in \cD_Y^{\geq 1}$ by left t-exactness of $\Phi^b$. 
    
    Now the left t-exactness of $\Phi$ follows by a similar argument. Namely, by~\eqref{Dgeq-generated-G} and adjunction, the claim is equivalent to the vanishing  $\Hom(\Phi^L(G), F) = 0$ for all $G \in \cG_Y[1]$ and $F \in \widehat{\cD}_X^{\geq 0}$, which holds because by the previous paragraph $\Phi^L(G) \in \cD_X^{\leq -1}$.     
\end{proof}

Now we can state the key result about inducing t-structures on the source of a suitable functor from one on the target. 

\begin{theorem}[Polishchuk]\label{thm:Polishchuk}
Let $S$ be a quasi-projective scheme over a noetherian affine scheme.
Let $\pi_X \colon X \to S$ and $\pi_Y \colon Y \to S$ be noetherian schemes over $S$. 
Let $\tau_Y=(\cD_Y^{\leqslant0},\cD_Y^{\geqslant0})$ be a bounded t-structure on $\Db(Y)$ which is $S$-local, and let  $\widehat{\tau}_Y$ be its Ind-extension to $\Dqc(Y)$.
Let $\Phi\colon\Dqc(X)\to\Dqc(Y)$ be a triangulated functor satisfying the following assumptions: 
\begin{enumerate}[{\rm (i)}]
    \item \label{enum:Polishchuk0} $\Phi$ is $S$-linear in the sense that for every $F\in\Dqc(X)$ and $C\in\Dperf(S)$ there exists an isomorphism $\Phi(F\otimes \pi_X^*C)\cong\Phi(F)\otimes \pi_Y^*C$;\footnote{In the definition of $S$-linearity, one typically requires that the isomorphism is suitably functorial. This stronger condition always holds in practice, but we do not need it for the result; in fact, as the proof shows, we only need the existence of such an isomorphism for $C$ a fixed ample line bundle on $S$.}
    \item\label{enum:Polishchuk1} $\Phi$ commutes with coproducts and admits a left adjoint $\Phi^L\colon\Dqc(Y)\to\Dqc(X)$;
    \item\label{enum:Polishchuk2} $\Phi$ is conservative on $\Db(X)$, namely for $F\in\Db(X)$, $\Phi(F)=0$ implies $F=0$;
    \item\label{enum:Polishchuk3} for $F\in\Dqc(X)$, we have $\Phi(F)\in\Db(Y)$ if and only if $F\in\Db(X)$; and 
    \item\label{enum:Polishchuk4} $\Phi\circ\Phi^L$ is right t-exact with respect to $\widehat{\tau}_Y$. 
\end{enumerate}
Then there exists an $S$-local bounded t-structure $\tau_X=(\cD_X^{\leqslant0},\cD_X^{\geqslant0})$ on $\Db(X)$ such that
\begin{align}
\label{eq:Polishchuk-leq} \cD_X^{\leqslant0} & =\left\{F\in\Db(X)\st \Phi(F)\in\cD_Y^{\leqslant0}\right\} ,\\ 
\label{eq:Polishchuk-geq} \cD_X^{\geqslant0} & =\left\{F\in\Db(X)\st \Phi(F)\in\cD_Y^{\geqslant0}\right\}.
\end{align} 
In particular, $\Phi$ restricts to a t-exact functor $\Db(X) \to \Db(Y)$ with respect to $\tau_X$ and $\tau_Y$.
\end{theorem}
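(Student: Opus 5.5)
We construct $\tau_X$ by the compact-generation strategy described before Lemma~\ref{lem:Neeman}: build a compactly generated t-structure on $\Dqc(X)$, then show it restricts to $\Db(X)$. Let $\cG_Y\subset\cD_Y^{\leqslant0}$ be a set of compact objects generating $\widehat{\tau}_Y$ with $\cG_Y[1]\subset\cG_Y$; these exist by Lemma~\ref{lem:Neeman}\ref{enum:Neeman2}. Since $\Phi$ commutes with coproducts by \ref{enum:Polishchuk1}, its left adjoint $\Phi^L$ preserves compact objects. Hence $\cG_X\coloneqq\Phi^L(\cG_Y)$ is a set of compact objects in $\Dqc(X)$, and Theorem~\ref{theorem-generating-t-structures} gives a compactly generated t-structure $\widehat{\tau}_X$ with $\widehat{\cD}_X^{\leqslant0}=\Coprod(\cG_X)$. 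By \eqref{Dgeq-generated-G} and adjunction, $F\in\widehat{\cD}_X^{\geqslant0}$ iff $\Hom(G,\Phi F)=0$ for all $G\in\cG_Y[1]$, i.e.\ iff $\Phi F\in\widehat{\cD}_Y^{\geqslant0}$. So $\Phi$ is left t-exact and in fact detects coconnectivity. Hypothesis \ref{enum:Polishchuk4} gives $\Phi\Phi^L(\cG_Y)\subset\widehat{\cD}_Y^{\leqslant0}$; since $\Phi$ preserves coproducts and extensions, $\Phi(\widehat{\cD}_X^{\leqslant0})\subset\widehat{\cD}_Y^{\leqslant0}$. Thus $\Phi$ is t-exact for $\widehat{\tau}_X$ and $\widehat{\tau}_Y$.

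Next we show that the truncation functors of $\widehat{\tau}_X$ preserve $\Db(X)$. Take $F\in\Db(X)$ and its truncation triangle $\tau^{\leqslant0}F\to F\to\tau^{\geqslant1}F$. Because $\Phi$ is t-exact, applying $\Phi$ yields the $\widehat{\tau}_Y$-truncation triangle of $\Phi F\in\Db(Y)$. By Lemma~\ref{lem:Neeman}\ref{enum:Neeman3}, $\widehat{\tau}_Y$ restricts to $\tau_Y$, so $\Phi(\tau^{\leqslant0}F)\in\Db(Y)$. Hypothesis \ref{enum:Polishchuk3} then gives $\tau^{\leqslant0}F\in\Db(X)$. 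Hence $\widehat{\tau}_X$ restricts to a t-structure $\tau_X$ on $\Db(X)$.

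The descriptions \eqref{eq:Polishchuk-leq}–\eqref{eq:Polishchuk-geq} follow from three facts: $\Phi$ detects coconnectivity, $\Phi$ is right t-exact, and conservativity \ref{enum:Polishchuk2} applied to truncations. Concretely, if $\Phi F\in\cD_Y^{\leqslant0}$, then $\Phi(\tau^{\geqslant1}F)=\tau^{\geqslant1}\Phi F=0$, so $\tau^{\geqslant1}F=0$. Boundedness works the same way: $\Phi F$ has only finitely many nonzero $\tau_Y$-cohomology objects, $\Phi$ commutes with cohomology, and conservativity then bounds the cohomology of $F$ (Remark~\ref{remark-bounded-t-structures}); nondegeneracy also follows from conservativity.

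For $S$-locality, we use Remark~\ref{remark-local-t-structures}\eqref{remark-local-over-affine}. Fix an ample line bundle $\cL$ on $S$. Since $\tau_Y$ is $S$-local, $-\otimes\pi_Y^*\cL$ is t-exact on $\Db(Y)$. By \ref{enum:Polishchuk0}, $\Phi(F\otimes\pi_X^*\cL)\cong\Phi(F)\otimes\pi_Y^*\cL$, and the descriptions above then show that $-\otimes\pi_X^*\cL$ is left t-exact for $\tau_X$, so $\tau_X$ is $S$-local. The main obstacle is the restriction step: it needs t-exactness of $\Phi$ on the whole of $\Dqc$, not just on $\Db$, which is why the right t-exactness of $\Phi\circ\Phi^L$ on $\widehat{\tau}_Y$ is essential. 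I would check carefully that it propagates through $\Coprod$ and that no unbounded issue arises when passing through $\Phi^L$.
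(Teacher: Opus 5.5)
Your proposal is correct and follows essentially the same route as the paper's proof. You generate $\widehat{\tau}_X$ compactly by $\Phi^L(\cG_Y)$, obtain the coconnective description and t-exactness of $\Phi$ on $\Dqc$ from adjunction and hypothesis (v), restrict to $\Db(X)$ via hypothesis (iv) and Lemma~\ref{lem:Neeman}, and then derive the two formulas, boundedness and $S$-locality from conservativity and $S$-linearity, checking $S$-locality through left t-exactness of $-\otimes\pi_X^*\cL$ as in Remark~\ref{remark-local-t-structures}.
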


\begin{proof}
The proof follows along the lines of~\cite[Theorem 2.1.2]{P-t-structures}. 
We break the argument into several steps. 

\begin{step}{1}
\label{polishchuk-step-1} 
There is a compactly generated t-structure $\widehat{\tau}_X = (\widehat{\cD}_X^{\leqslant 0}, \widehat{\cD}_X^{\geqslant 0})$ on $\Dqc(X)$ such that 
\begin{equation}
\label{widehatDX-geq0}
    \widehat{\cD}_X^{\geqslant0}=\left\{F\in\Dqc(X)\st \Phi(F)\in\widehat{\cD}_Y^{\geqslant0}\right\},   
\end{equation}
and $\Phi$ is  t-exact with respect to $\widehat{\tau}_X$ and $\widehat{\tau}_Y$. 
\end{step}

By Lemma~\ref{lem:Neeman}, the t-structure $\widehat{\tau}_Y$ is compactly generated by a set $\cG_Y$ of objects in $\Dperf(Y)$, which we may assume satisfies $\cG_Y[1] \subset \cG_Y$. 
As $\Phi^L$ is the left adjoint of a coproduct preserving functor, it must send compact objects to compact objects. 
Thus $\cG_X = \Phi^L(\cG_Y)$ is a set of compact objects in $\Dqc(X)$ satisfying $\cG_X[1] \subset \cG_X$, so by  Theorem~\ref{theorem-generating-t-structures} it compactly generates  a t-structure  $\widehat{\tau}_X = (\widehat{\cD}_X^{\leqslant 0}, \widehat{\cD}_X^{\geqslant 0})$ on $\Dqc(X)$. 

The formula~\eqref{widehatDX-geq0} follows from adjunction and the formula~\eqref{Dgeq-generated-G} for the coconnective part of a compactly generated t-structure. 

On the other hand, since $\Phi$ commutes with coproducts, we have 
\begin{equation*}
    \Phi(\widehat{\cD}_X^{\leqslant 0}) 
    \subset 
    \Coprod(\Phi(\cG_X)). 
\end{equation*}
By our assumption~\ref{enum:Polishchuk4} that $\Phi \circ \Phi^L$ is right t-exact, we have $\Phi(\cG_X) \subset \widehat{\cD}_Y^{\leqslant 0}$. Since $\widehat{\cD}_Y^{\leqslant 0}$ is closed under coproducts and extensions, 
we conclude that 
\begin{equation*}
    \Phi(\widehat{\cD}_X^{\leqslant 0}) \subset \widehat{\cD}_Y^{\leqslant 0}. 
\end{equation*}
Together with~\eqref{widehatDX-geq0}, this shows that $\Phi$ is t-exact with respect to $\widehat{\tau}_X$ and $\widehat{\tau}_Y$. 

\begin{step}{2}
The t-structure $\widehat{\tau}_{X}$ restricts to a t-structure on $\Db(X)$, i.e., there is a t-structure $\tau_X=(\cD_X^{\leqslant0},\cD_X^{\geqslant0})$ on $\Db(X)$ given by 
\begin{equation*}
\cD_X^{\leqslant0}=\widehat{\cD}_X^{\leqslant0}\cap\Db(X) 
\quad \textup{and} \quad \cD_X^{\geqslant0}=\widehat{\cD}_X^{\geqslant0}\cap\Db(X).
\end{equation*}
\end{step} 

It suffices to prove that the truncation functor $\widehat{\tau}_X^{\leqslant 0}$ preserves the subcategory $\Db(X)$. 
Thus, by assumption~\ref{enum:Polishchuk3}, for $F \in \Db(X)$ it suffices to prove that $\Phi(\widehat{\tau}_X^{\leqslant 0}(F))$ is contained in $\Db(Y)$. 

By the t-exactness of $\Phi$ with respect to $\widehat{\tau}_X$ and $\widehat{\tau}_Y$, we have 
$$\Phi(\widehat{\tau}_X^{\leqslant 0}(F)) \cong \widehat{\tau}_Y^{\leqslant 0}(\Phi(F)).$$
By assumption~\ref{enum:Polishchuk3} again, the object $\Phi(F)$ is contained in $\Db(Y)$. 
By Lemma~\ref{lem:Neeman}\ref{enum:Neeman3}, the truncation functor $\widehat{\tau}_Y^{\leqslant 0}$ coincides with $\tau_Y^{\leqslant 0}$ on the subcategory $\Db(Y)$. 
Altogether, this proves that $\Phi(\widehat{\tau}_X^{\leqslant 0}(F))$ is contained in $\Db(Y)$, as needed. 

\begin{step}{3}
The formulas~\eqref{eq:Polishchuk-leq} and~\eqref{eq:Polishchuk-geq} for $\cD_X^{\leqslant 0}$ and $\cD_X^{\geqslant 0}$ hold. 
\end{step} 

Note that by assumption~\ref{enum:Polishchuk3}, $\Phi$ restricts to a functor $\Db(X) \to \Db(Y)$. 
Moreover, this restricted functor is t-exact with respect to $\tau_X$ and $\tau_Y$; indeed, since $\tau_X$ and $\tau_Y$ are the restrictions of $\widehat{\tau}_X$ and $\widehat{\tau}_Y$, this follows from the t-exactness of $\Phi$ with respect to $\widehat{\tau}_X$ and $\widehat{\tau}_Y$ proved in Step~\ref{polishchuk-step-1}.    

Now we prove~\eqref{eq:Polishchuk-leq}. 
By construction and the t-exactness of the restriction $\Db(X) \to \Db(Y)$ of $\Phi$, we have an inclusion 
\begin{equation*}
\cD_X^{\leqslant0} \subset \left\{F\in\Db(X)\st \Phi(F)\in\cD_Y^{\leqslant0}\right\}. 
\end{equation*} 
For the reverse inclusion, assume that $F$ is an object contained in the right-hand side. 
Then again by t-exactness, we have 
\begin{equation*}
    \Phi(\tau_X^{\geqslant 1}(F)) \cong \tau_Y^{\geqslant 1}(\Phi(F)) = 0,
\end{equation*}
which by~\ref{enum:Polishchuk2} implies $\tau_X^{\geqslant 1}(F) = 0$, i.e., $F \in \cD_X^{\leqslant 0}$. 

The formula~\eqref{eq:Polishchuk-geq} follows similarly.

\begin{step}{4}
The t-structure $\tau_X$ is $S$-local and bounded. 
\end{step} 

First, we observe that $\tau_X$ is nondegenerate. 
Indeed, since by assumption~\ref{enum:Polishchuk2} the functor $\Phi$ is conservative on $\Db(X)$, this follows from the formulas~\eqref{eq:Polishchuk-leq} and~\eqref{eq:Polishchuk-geq} and the nondegeneracy of $\tau_Y$. 
Similarly, we deduce that any object $F \in \Db(X)$ has finitely many nonzero cohomology objects with respect to $\tau_X$, so that $\tau_X$ is bounded by Remark~\ref{remark-bounded-t-structures}. 
Finally, by our assumption~\ref{enum:Polishchuk0} that $\Phi$ is $S$-linear and the criterion for $S$-locality of a t-structure from Remark~\ref{remark-local-t-structures}\eqref{remark-local-over-affine}, the $S$-locality of $\tau_X$ follows from that of $\tau_Y$ and the formula~\eqref{eq:Polishchuk-leq}.
\end{proof}

\begin{example}\label{ex:PullPushShriek}
Let $S$, $\pi_X \colon X \to S$, $\pi_Y \colon Y \to S$, and $\tau_Y$ be as in Theorem~\ref{thm:Polishchuk}. 
\begin{enumerate}[{\rm (1)}]
\item\label{enum:PullPushShriek1} Let  $f\colon X\to Y$ be a finite $S$-morphism. Then the pushforward functor $\Phi = f_*$ satisfies assumptions~\ref{enum:Polishchuk0}--\ref{enum:Polishchuk3} of Theorem~\ref{thm:Polishchuk}. 
Indeed,~\ref{enum:Polishchuk0} holds for any $S$-morphism $f$; \ref{enum:Polishchuk1} holds for any quasi-separated quasi-compact morphism $f$ \cite[\href{https://stacks.math.columbia.edu/tag/08DZ}{Tag 08DZ}]{stacks-project};~\ref{enum:Polishchuk2} holds for any affine morphism $f$; and~\ref{enum:Polishchuk3} is a standard property of pushforward along a finite morphism. 
Moreover, by the projection formula, the assumption~\ref{enum:Polishchuk4} amounts to the condition that $- \otimes f_*\cO_X \colon \Dqc(Y) \to \Dqc(Y)$ is right t-exact with respect to $\widehat{\tau}_Y$. 
\item\label{enum:PullPushShriek2} Let $f\colon X\to Y$ be a proper faithfully flat $S$-morphism. 
Then the pullback functor $\Phi = f^*$ satisfies assumptions~\ref{enum:Polishchuk0}--\ref{enum:Polishchuk3} of Theorem~\ref{thm:Polishchuk}. 
Indeed,~\ref{enum:Polishchuk0} again holds for any $S$-morphism, while \ref{enum:Polishchuk2} and~\ref{enum:Polishchuk3} are standard properties of pullback along a faithfully flat morphism. 
It remains to verify~\ref{enum:Polishchuk1}. 
Since $f^*$ is a left adjoint, it commutes with coproducts. 
Since $f$ is proper and flat, the pushforward functor $f_* \colon \Dqc(X) \to \Dqc(Y)$ takes compact objects (i.e., perfect complexes) to compact objects \cite[\href{https://stacks.math.columbia.edu/tag/0B6G}{Tag 0B6G}]{stacks-project}. 
Thus by \cite[Lemma~4.2 and Remark 4.3]{neeman-derived-cats-grothendieck-duality}, we find that $f^*$ admits a left adjoint $f_! \colon \Dqc(X) \to \Dqc(Y)$. 
In fact, by \cite[Theorem~3.3]{balmer}, the left adjoint $f_!$ is given explicitly by the formula 
\begin{equation*}
    f_{!}=f_*\circ(-\otimes\omega_f^{\bullet}),  
\end{equation*}
where $\omega_f^{\bullet}$ is the relative dualizing complex. 
In particular, we note that when $\omega_f^\bullet$ is a perfect complex, then $f_!$ restricts to a functor $\Db(X) \to \Db(Y)$ between bounded derived categories of coherent sheaves. 
\end{enumerate}
\end{example}

\subsection{Inducing slicings} 
Our next goal is to deduce a version of Theorem~\ref{thm:Polishchuk} for slicings. 
The formulation of the result uses the notion of a local slicing, which is an analog of Definition~\ref{def:SlocalTstructure}. 

\begin{definition}\label{def:SlocalSlicing}
Let $\pi \colon X\to S$ be a morphism of schemes.   
A slicing $\cP$ of $\Db(X)$ is \emph{$S$-local} if for every quasi-compact open subset $U\subset S$, there exists a slicing $\cP_U$ of $\Db(X_U)$ such that the restriction functor $\Db(X) \to \Db(X_U)$ sends $\cP(\phi)$ to $\cP_U(\phi)$ for every $\phi\in \mathbb{R}$.
\end{definition}

\begin{remark}
The analog of Remark~\ref{remark-local-t-structures} holds for local slicings. 
In particular, if $S$ is quasi-projective over an affine scheme, then 
$\cP$ is $S$-local if and only if for an ample line bundle $\cL$ on $S$ we have 
\begin{equation*}
\cP(\phi)\otimes \pi^*\cL\subset\cP(\leq\phi)
\end{equation*}
for all $\phi\in\RR$, and in this case we in fact have equality 
\begin{equation*}
    \cP(\phi)\otimes \pi^*\cL=\cP(\phi)
\end{equation*}
for all ample line bundles $\cL$ on $S$ and $\phi \in \RR$. 
As a special case, we see that if $S$ is affine, then any slicing of $\Db(X)$ is automatically $S$-local.
\end{remark}

\begin{remark}
As noted in~\eqref{tau-phi-in-text}, for any slicing $\cP$ of $\Db(X)$ and $\phi \in \RR$ there is an associated bounded t-structure on $\Db(X)$ given by 
\begin{equation*}
\tau_{\phi} \coloneqq  (\cP(>\phi),\cP(\leq \phi+1)). 
\end{equation*}
Swapping which inequalities are strict, there is also a variant bounded t-structure on $\Db(X)$ given by  
\begin{equation}\label{tau-prime}
\tau'_{\phi}\coloneqq\left(\cP(\geq\phi),\cP(<\phi+1)\right),
\end{equation}
which is sometimes convenient. 
It is easy to see that if $\cP$ is $S$-local, then so are $\tau_{\phi}$ and $\tau'_{\phi}$ for all $\phi \in \RR$. 
\end{remark}

\begin{definition}\label{definition-Ind-slicing}
    Let $X$ be a noetherian scheme. 
    Let $\cP$ be a slicing of $\Db(X)$. 
    Then for any $\phi \in \RR$, we denote by 
    \begin{equation*}
        \widehat{\tau}_{\phi} = (\widehat{\cP}(> \phi), \widehat{\cP}(\leq \phi+1)) \quad \text{and} \quad 
        \widehat{\tau'_{\phi}} = (\widehat{\cP}( \geq \phi), \widehat{\cP}(< \phi +1)) 
    \end{equation*}
    the Ind-extensions of $\tau_{\phi}$ and $\tau'_{\phi}$, as given by Lemma~\ref{lem:Neeman}. 
    Explicitly, we have 
    \begin{align*}
        \widehat{\cP}(> \phi) & = \Coprod(\cP(> \phi)) , \\ 
        \widehat{\cP}(\leq \phi+1) & = \set{F \in \Dqc(X) \st \Hom(G, F) = 0 \text{ for all } G \in \cP(>\phi +1)}, \\ 
        \widehat{\cP}(\geq \phi) & = \Coprod( \cP(\geq \phi)), \\ 
        \widehat{\cP}(< \phi + 1) & = \set{ F \in \Dqc(X) \st \Hom(G, F) = 0 \text{ for all } G \in \cP(\geq \phi + 1)}, 
    \end{align*}
where $\Coprod$ is the construction introduced in Definition~\ref{definition-CG-t-structure}. 
\end{definition}

Now we can state the slicing analog of Theorem~\ref{thm:Polishchuk}. 

\begin{corollary}\label{cor:Polishchuk}
Let $S$ be a quasi-projective scheme over a noetherian affine scheme. 
Let $\pi_X \colon X \to S$ and $\pi_Y \colon Y \to S$ be noetherian schemes over $S$.
Let $\cP_Y$ be an $S$-local slicing of $\Db(Y)$.
Let $\Phi \colon \Dqc(X) \to \Dqc(Y)$ be a triangulated functor satisfying the assumptions~\ref{enum:Polishchuk0}--\ref{enum:Polishchuk3} of Theorem~\ref{thm:Polishchuk}, as well as the following assumption: 
\begin{enumerate}[{\rm (v')}]
    \item \label{enum:PolishchukSlicing} For all $\phi \in \RR$, we have $\Phi\Phi^L(\cP_Y(\phi))\subset\widehat{\cP}_{Y}(\geq\phi)$. 
\end{enumerate}
Then the collection
\begin{equation}\label{eq:PolishchukSlicing2}
\cP_X(\phi)\coloneqq\left\{F\in\Db(X)\st\Phi(F)\in\cP_Y(\phi)\right\}    
\end{equation}
for $\phi\in\RR$ defines an $S$-local slicing $\cP_X$ of $\Db(X)$.
\end{corollary}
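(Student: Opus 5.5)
The plan is to apply Theorem~\ref{thm:Polishchuk} to the bounded $S$-local t-structures $\tau'_\phi=(\cP_Y(\geq\phi),\cP_Y(<\phi+1))$ and $\tau_\phi=(\cP_Y(>\phi),\cP_Y(\leq\phi+1))$ on $\Db(Y)$, for every $\phi\in\RR$. Assumptions~\ref{enum:Polishchuk0}--\ref{enum:Polishchuk3} hold by hypothesis, so we only need assumption~\ref{enum:Polishchuk4}: that $\Phi\Phi^L$ is right t-exact for the Ind-extensions. For $\widehat{\tau'_\phi}$, the connective part $\widehat{\cP}_Y(\geq\phi)=\Coprod(\cP_Y(\geq\phi))$ is generated under coproducts and extensions by the $\cP_Y(\psi)$ with $\psi\geq\phi$. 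The functor $\Phi\Phi^L$ commutes with coproducts: $\Phi$ does by~\ref{enum:Polishchuk1}, and $\Phi^L$ does as a left adjoint. By (v') it sends $\cP_Y(\psi)$ into $\widehat{\cP}_Y(\geq\psi)\subset\widehat{\cP}_Y(\geq\phi)$, which is therefore enough. For $\widehat\tau_\phi$, write $\cP_Y(>\phi)$ as the union of the $\cP_Y(\geq\phi+\epsilon)$. The same argument gives $\Phi\Phi^L(\cP_Y(\psi))\subset\widehat{\cP}_Y(\geq\psi)\subset\widehat{\cP}_Y(>\phi)$ whenever $\psi>\phi$, since $\cP_Y(\geq\psi)\subset\cP_Y(>\phi)$.

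Theorem~\ref{thm:Polishchuk} then yields $S$-local bounded t-structures on $\Db(X)$ whose aisles are the preimages under $\Phi$. Set
\[
\cP_X(\geq\phi):=\{F\st\Phi(F)\in\cP_Y(\geq\phi)\},\qquad \cP_X(>\phi):=\{F\st\Phi(F)\in\cP_Y(>\phi)\},
\]
and similarly for $\cP_X(<\phi+1)$ and $\cP_X(\leq\phi+1)$. The next step is to assemble these into a slicing, with $\cP_X(\phi)$ as in~\eqref{eq:PolishchukSlicing2}. This amounts to $\cP_X(\phi)=\cP_X(\geq\phi)\cap\cP_X(\leq\phi)$, and since $\cP_Y(\phi)=\cP_Y(\geq\phi)\cap\cP_Y(\leq\phi)$ the equality is immediate from the preimage formulas.

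It remains to check the slicing axioms for $\cP_X$. Compatibility with shifts is clear. Hom-vanishing for $\phi_1>\phi_2$ follows because $\cP_X(\phi_1)\subset\cP_X(>\phi_2)$ while $\cP_X(\phi_2)\subset\cP_X(\leq\phi_2)$, and these lie in the aisle and the shifted coaisle of the t-structure $\tau_{\phi_2}$ on $X$.

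The HN filtrations are the main obstacle. For $F\in\Db(X)$, the truncation functors are compatible with $\Phi$ by t-exactness: $\Phi(\tau_{X,\geq\psi}F)=\tau_{Y,\geq\psi}\Phi(F)$. The phases of $\Phi(F)$ form a finite set $\phi_1>\cdots>\phi_m$. I would filter $F$ by the successive truncations $\tau'_{\phi_i}$ of $F$. Each graded piece $A_i$ has $\Phi(A_i)$ equal to the $i$-th HN factor of $\Phi(F)$, which lies in $\cP_Y(\phi_i)$, so $A_i\in\cP_X(\phi_i)$. Conservativity~\ref{enum:Polishchuk2} guarantees that no extra nonzero pieces appear in between. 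The delicate point is checking that the truncation triangles for different $\phi$ nest correctly, i.e.\ $\cP_X(\geq\phi)\subset\cP_X(>\phi')$ for $\phi'<\phi$; this again follows from the preimage description. Finally, $S$-locality of $\cP_X$ follows from that of the $\tau_\phi$ via the criterion with $-\otimes\pi^*\cL$ and $S$-linearity~\ref{enum:Polishchuk0}.
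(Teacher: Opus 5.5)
Your proposal is correct and follows essentially the same route as the paper: you apply Theorem~\ref{thm:Polishchuk} to the t-structures $\tau'_{\phi}$, where your coproducts-and-extensions argument spells out why (v') gives right t-exactness of $\Phi\Phi^L$ on $\widehat{\cP}_Y(\geq\phi)$, a step the paper leaves implicit. You then read off Hom-vanishing and HN filtrations from the preimage description and the t-exactness of $\Phi$. The extra application to $\tau_\phi$ is harmless but unnecessary, since $\cP_X(\phi_1)\subset\cD^{\leqslant 0}_{X,\phi_1}$ and $\cP_X(\phi_2)\subset\cD^{\geqslant 1}_{X,\phi_1}$ for the $\tau'_{\phi_1}$-induced t-structure already give the Hom-vanishing.
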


\begin{proof}
We may apply Theorem~\ref{thm:Polishchuk} to the family of t-structures $\tau'_{Y,\phi}$ associated to $\cP_Y$ as in~\eqref{tau-prime}.
Indeed, assumption \ref{enum:PolishchukSlicing} precisely corresponds to assumption~\ref{enum:Polishchuk4} of Theorem~\ref{thm:Polishchuk}.  
We obtain a family of $S$-local bounded t-structures $\tau'_{X,\phi} = (\cD_{X,\phi}^{\leq 0}, \cD_{X,\phi}^{\geq 1})$ on $\Db(X)$ for all $\phi\in\RR$, given by
\begin{equation}\label{eq:PolishchukSlicing3}
\begin{split}
&\cD_{X,\phi}^{\leqslant0}=\left\{F\in\Db(X)\st \Phi(F)\in\cP_{Y}(\geq\phi)\right\}\\
&\cD_{X,\phi}^{\geqslant1}=\left\{F\in\Db(X)\st \Phi(F)\in\cP_{Y}(<\phi)\right\}.  
\end{split}
\end{equation}

We want to check the conditions in Definition~\ref{def:slicing} for $\cP_X$ defined in~\eqref{eq:PolishchukSlicing2}.
First of all,~\ref{enum:slicing1} is immediate.
To prove~\ref{enum:slicing2}, let us take $\phi_1>\phi_2$ and $F_1\in\cP_X(\phi_1)$, $F_2\in\cP_X(\phi_2)$.
Since, by~\eqref{eq:PolishchukSlicing3}, $\cP_X(\phi_1)\subset\cD_{X,\phi_1}^{\leqslant0}$ and $\cP_X(\phi_2)\subset\cD_{X,\phi_1}^{\geqslant1}$, we get $\Hom(F_1,F_2)=0$, as needed. 

Finally, to show~\ref{enum:slicing3}, let $F\in\Db(X)$ be an object.
Since $\cP_Y$ is a slicing of $\Db(Y)$, the object $\Phi(F) \in \Db(Y)$ has a HN filtration with factors $A_1,\dots,A_m\in\Db(Y)$, where $A_i\in\cP_Y(\phi_i)$ for some $\phi_1>\dots>\phi_m$. 
In the bounded t-structure $\tau'_{X,\phi_1}$, we consider the truncation triangle 
\[
F_{\geq\phi_1}\longrightarrow F\longrightarrow F_{<\phi_1},
\]
with $F_{\geq\phi_1}\in\cD_{X,\phi_1}^{\leqslant0}$ and $F_{<\phi_1}\in\cD_{X,\phi_1}^{\geqslant1}$.
Applying $\Phi$ gives a distinguished triangle 
\[
\Phi(F_{\geq\phi_1})\longrightarrow\Phi(F)\longrightarrow\Phi(F_{<\phi_1}),
\]
with $\Phi(F_{\geq\phi_1})\in\cP_Y(\geq\phi_1)$ and $\Phi(F_{<\phi_1})\in\cP_Y(<\phi_1)$. 
But since $\Phi(F)\in\cP_Y([\phi_m,\phi_1])$, this implies that in fact $\Phi(F_{\geq \phi_1}) \in \cP_Y(\phi_1)$.
Iterating the argument, in a finite number of steps we get the result.
\end{proof}

We finish the section by proving a variant of Corollary~\ref{cor:Polishchuk}.
In the main part of the paper it has been applied to supported categories (see Section~\ref{subsec:LocalFano}). 
It is the version for slicings of~\cite[Lemma 2.11]{MMS:Inducing}.

\begin{proposition}\label{prop:extendslicing}
Let $\Phi\colon\cD\to\cD'$ be an exact functor between triangulated categories, and let $\cP$ be a slicing of $\cD$. 
We assume:
\begin{enumerate}[{\rm (i)}]
    \item\label{enum:extendslicing1} $\cD'$ is generated by extensions by $\Phi(\cD)$, i.e., $\langle\Phi(F)\st F\in\cD\rangle=\cD'$;
    \item\label{enum:extendslicing2} the induced map
    \[
    \Phi\colon\Hom(F_1,F_2)\longrightarrow\Hom\left(\Phi(F_1),\Phi(F_2)\right)
    \]
    is an isomorphism, for any $\phi\in\RR$, $F_1\in\cP(\phi)$, and
    $F_2\in \cP(<\phi+1)$.
\end{enumerate}
Then the collection
\begin{equation}\label{eq:cP'}
\cP'(\phi)\coloneqq\langle\Phi(F)\st F\in\cP(\phi)\rangle,
\end{equation}
for $\phi\in\RR$, defines a slicing $\cP'$ of $\cD'$. 
\end{proposition}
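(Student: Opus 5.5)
The plan is to mirror the proof of \cite[Lemma 2.11]{MMS:Inducing}, working one phase at a time. By~\ref{enum:extendslicing2}, $\Phi$ is fully faithful on the extension-closed subcategory $\cP(I)$ for any interval $I$ of length less than $1$. Note that if $F_1\in\cP(\phi)$ and $F_2\in\cP(<\phi+1)$, then $F_2[-1]\in\cP(<\phi)$. Applying~\ref{enum:extendslicing2} to $F_1$ and to $F_2$ and its shifts, together with the five lemma, extends the Hom-isomorphism to $\Hom(F_1,F_2[k])$ for all $k\leq 0$. In particular $\Phi$ is injective on $\Ext^1$ in a suitable range.

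Axiom~(1) of Definition~\ref{def:slicing} is immediate from~\eqref{eq:cP'}. For axiom~(2), take $\phi_1>\phi_2$, $G_1\in\cP'(\phi_1)$ and $G_2\in\cP'(\phi_2)$. Since $G_1$ and $G_2$ are iterated extensions of images $\Phi(F_1)$ and $\Phi(F_2)$ with $F_i\in\cP(\phi_i)$, and the vanishing of $\Hom$ is stable under extensions in each variable, it suffices to treat $G_i=\Phi(F_i)$. We have $F_2\in\cP(<\phi_1)\subset\cP(<\phi_1+1)$, so~\ref{enum:extendslicing2} gives $\Hom(\Phi F_1,\Phi F_2)=\Hom(F_1,F_2)=0$.

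For axiom~(3), define $\cD'_{\mathrm{HN}}$ as the full subcategory of objects of $\cD'$ admitting a filtration with factors in the $\cP'(\phi)$ of strictly decreasing phases. Each $\Phi(F)$ lies in $\cD'_{\mathrm{HN}}$, because applying $\Phi$ to the HN filtration of $F$ gives factors $\Phi(A_i)\in\cP'(\phi_i)$. By~\ref{enum:extendslicing1}, it then suffices to show that $\cD'_{\mathrm{HN}}$ is closed under extensions. The standard route is Bridgeland's criterion: the subcategories $\cP'(\phi)$ satisfy Hom-vanishing, and the HN-filtered objects form a triangulated subcategory provided each $\cP'(\phi)$ is closed under extensions (true by definition) and the truncations behave. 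Concretely, for a triangle $G'\to G\to G''$ with $G',G''\in\cD'_{\mathrm{HN}}$, I will argue by induction on the total number of HN factors. First split off the top-phase factor of $G'$ or $G''$, then reorder adjacent factors whose phases are out of order using the octahedral axiom. This reduction is only formal if I can control extensions of the form $\Phi(B)\to E\to\Phi(A)$ with $A\in\cP(\phi)$ and $B\in\cP(\psi)$. For such a class, $\Hom(\Phi A,\Phi B[1])$, when $\psi>\phi$ and $\psi<\phi+1$, equals $\Hom(A,B[1])$ by~\ref{enum:extendslicing2}, since $B[1]\in\cP(<\phi+2)$. Handling this requires the shifted version, so I will first upgrade~\ref{enum:extendslicing2} by shifting $\phi$. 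Given the equality, $E\cong\Phi(C)$ for the corresponding extension $C$ in $\cD$, and $C$ has an HN filtration in $\cP$. When $\psi\geq\phi+1$, the object $E$ is already HN-filtered in the right order, and when $\psi\leq\phi$ the filtration is as desired.

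The main obstacle is this rearrangement step: showing that an arbitrary extension in $\cD'$ between HN-filtered objects is again HN-filtered, which relies on the extended Hom-isomorphism to lift the obstructing extensions back into $\cD$. Once the top-phase truncation functors exist on $\cD'_{\mathrm{HN}}$ and are shown compatible with triangles, uniqueness and the slicing axioms follow.
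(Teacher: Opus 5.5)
Your treatment of axioms (1) and (2) is fine. The existence of HN filtrations, however, has a genuine gap.

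The first problem is that your use of hypothesis (ii) is inverted. Take a triangle $\Phi(B)\to E\to\Phi(A)$ with $A\in\cP(\phi)$ and $B\in\cP(\psi)$.
\begin{itemize}
\item Hypothesis (ii) identifies $\Hom(\Phi(A),\Phi(B)[1])$ with $\Hom(A,B[1])$ exactly when $B[1]\in\cP(<\phi+1)$, i.e.\ when $\psi<\phi$. For $\phi<\psi<\phi+1$ it gives nothing; $B[1]\in\cP(<\phi+2)$ is not the relevant condition.
\item No ``shifted upgrade'' can help. Hypothesis (ii) is invariant under shifting both arguments, and your $k\leq 0$ statement is already contained in it.
\item Your cases are swapped. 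For $\psi>\phi$ the filtration $0\to\Phi(B)\to E$ is already in HN order, while $\psi<\phi$ is precisely the out-of-order case.
\end{itemize}
Since $\psi<\phi$ is exactly the case (ii) covers, this part is repairable.

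The more serious problem is that the reduction to such two-term extensions, and the induction, do not work as stated.
\begin{itemize}
\item The HN factors of $G'$ and $G''$ lie in $\cP'(\phi)$. This is only the extension closure of $\Phi(\cP(\phi))$, and in general it is larger than the essential image. Hypothesis (ii) says nothing about $\Ext^1$ between objects of the same phase; think of $s_*$ in Section~\ref{subsec:LocalFano} with $\cE$ trivial. So you must first break everything into $\Phi$-images of semistable objects.
\item Each lift then replaces an out-of-order pair by $\Phi(C)$. The $\cP$-HN filtration of $C$ can have arbitrarily many factors with new phases, so the total number of HN factors can increase and your reordering need not terminate.
\item Appealing to a ``Bridgeland criterion'' does not close this. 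Closure of HN-filtered objects under extensions is not a formal consequence of axioms (1) and (2); it is exactly what has to be proved. You also never construct the truncations you say are ``compatible with triangles''.
\end{itemize}

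The missing idea, used in the paper, is to allow non-semistable pieces and to merge rather than split.
\begin{itemize}
\item Consider filtrations of $F'$ whose factors are $\Phi(A_i)$ with $A_i\in\cD$ arbitrary. Minimize $(m,k_m,\dots,k_1)$ lexicographically, where $k_i$ is the number of $\cP$-HN factors of $A_i$.
\item Minimality forces $\phi^+_\cP(A_{i-1})\geq\phi^+_\cP(A_i)$. Otherwise, apply (ii) to $A_i^+[-1]$ and the non-semistable object $A_{i-1}\in\cP(<\phi_\cP(A_i^+))$. This lifts the map $\Phi(A_i^+)[-1]\to\Phi(A_{i-1})$ to $\cD$, and the octahedral axiom gives a filtration in which $k_i$ drops. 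Because the order lists $k_i$ before $k_{i-1}$, any growth of the merged factor is harmless.
\item Split off $\Phi(A_1^+)$. The cone $C'$ has smaller complexity, so induction gives its HN filtration.
\item The inequality above together with (ii) gives $\Hom(\Phi(D),C')=0$ for all $D\in\cP(>\phi_\cP(A_1^+))$. This bounds the top $\cP'$-phase of $C'$ by $\phi_\cP(A_1^+)$, so the two filtrations glue.
\end{itemize}
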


\begin{proof}
Condition~\ref{enum:slicing1} in Definition~\ref{def:slicing} is clear.
We notice that our assumption~\ref{enum:extendslicing2} in the statement implies that
\[
\Hom(\Phi(C_1),\Phi(C_2))=0
\]
for any $C_1\in\cP(\phi_1)$ and $C_2\in\cP(\phi_2)$, with $\phi_1>\phi_2$. 
Hence, condition~\ref{enum:slicing2} in Definition~\ref{def:slicing} follows as well.
It suffices to show that every nonzero object $F'\in\cD'$ admits a HN filtration with respect to $\cP'$.

By~\ref{enum:extendslicing1} in the statement, any nonzero $F'\in\cD'$ admits a filtration
\begin{equation}\label{eq:5.2}
0=F'_0\xlongrightarrow{f'_1}F'_1\xlongrightarrow{f'_2}\dots\xlongrightarrow{f'_m}F'_m=F'
\end{equation}
such that $A'_i\coloneqq\cone(f_i')=\Phi(A_i)$, for some $A_i\in\cD$.
Each $A_i$ admits a HN filtration with respect to $\cP$. 
Let $k_i\in\ZZ_{>0}$ denote the number of HN factors of $A_i$. 
We associate to the filtration $F_\bullet'$ the tuple
\[
N(F'_\bullet)\coloneqq (m,k_m,\dots,k_1).
\]
We let
\[
N(F')\coloneqq\min\left\{N(F'_\bullet)\st F'_\bullet \text{ is a filtration of $F$ of the form \eqref{eq:5.2}}\right\}.
\]
Here we are considering the minimum with respect to the lexicographic order, and this exists since all tuples consist of positive numbers.

To show the existence of the HN filtration for $F'$, we proceed by induction on $N(F')$.
Suppose first that $N(F')=(1,k_1)$.
Then $F'=\Phi(F)$, for some $F\in\cD$. 
Since $F$ admits a HN filtration with respect to $\cP$, applying $\Phi$ yields a HN filtration of $F'$ with respect to $\cP'$.

Assume now that $N(F')=(m,k_m,\dots,k_1)$, with $m\ge 2$.
Let $F'_\bullet$ be a filtration of the form~\eqref{eq:5.2} satisfying $N(F')=N(F'_\bullet)$.

\begin{claim*}
For every $i=2,\dots,m$, we have
\begin{equation}\label{eq:5.3}
\phi^+_\cP(A_{i-1})\geq\phi^+_\cP(A_i).
\end{equation}
\end{claim*}

\begin{proof}[Proof of the claim.]
Let $A_i^+$ denote the first HN factor of $A_i$, so that we have a distinguished triangle
\[
A_i^+\xlongrightarrow{g_i} A_i\longrightarrow \widetilde{A}_i.
\]
Applying $\Phi$, we obtain a distinguished triangle
\[
\Phi(A_i^+)\xlongrightarrow{g'_i=\Phi(g_i)} A'_i=\Phi(A_i)\longrightarrow \Phi(\widetilde{A}_i).
\]
In the filtration~\eqref{eq:5.2}, let
\[
F_{i-1}'\xlongrightarrow{f_i'}F_i'\longrightarrow A_i'\xlongrightarrow{h'_i} F'_{i-1}[1]
\]
be the associated exact triangle. 
We set
\[
B'_{i-1}\coloneqq\cone(h'_i\circ g'_i)[-1].
\]

Applying the octahedral axiom to the composition of $h_i'$ and $g_i'$, we get the exact triangle
\begin{equation}\label{eq5.5}
B_{i-1}'\longrightarrow F'_i\longrightarrow\Phi(\widetilde{A}_i).
\end{equation}
Applying it to the composition
\[
l'_i\colon\Phi(A_i^+)[-1]\xlongrightarrow{h'_i\circ g'_i}F'_{i-1}\longrightarrow A'_{i-1}=\Phi(A_{i-1}),
\]
we obtain the exact triangle
\begin{equation}\label{eq5.6}
F'_{i-2}\longrightarrow F'_{i-1}\longrightarrow \cone(l_i').
\end{equation}

Suppose that~\eqref{eq:5.3} fails, namely
\[
\phi^+_\cP(A_{i-1})<\phi_\cP(A^+_i)=\phi_\cP(A^+_i[-1])+1.
\]
Assumption~\ref{enum:extendslicing2} in the statement implies that $l_i'=\Phi(l_i)$ for some $l_i\in\Hom(A^+_i[-1],A_{i-1})$. 
In particular, we have
\[
\cone(l_i')\cong\Phi(\cone(l_i)).
\]

Replacing $F'_{i-1}$ by $B_{i-1}'$, equations \eqref{eq5.5} and \eqref{eq5.6} produce a new filtration $B'_\bullet$ of the form \eqref{eq:5.2}.
Moreover, either the total length of the filtration decreases (if $\Phi(\widetilde{A}_i)=0$), or the $i$-th factor has fewer HN factors.
In either case, we have $N(B_\bullet')<N(F'_\bullet)$, contradicting the minimality of $N(F'_\bullet)$. 
Therefore \eqref{eq:5.3} holds.
\end{proof}

Returning to the proof of the proposition, for each $i=2,\dots,m$, we consider the composition
\[
\Phi(A_1^+)\xlongrightarrow{g'_1} \Phi(A_1)=A'_1=F'_1\longrightarrow F'_i
\]
and we let $C_i'$ denote its cone. 
We set $C'=C_m'$.
We then have the following diagram:
\begin{equation}\label{eq:BigD1}
\begin{tikzcd}[column sep=large,row sep=large]
\Phi(A_1^+) \arrow[d]
    & \Phi(A_1^+) \arrow[l,equal] \arrow[d]
    & \dots \arrow[l,equal]
    & \Phi(A_1^+) \arrow[l,equal] \arrow[d]
    & \Phi(A_1^+) \arrow[l,equal] \arrow[d]
\\
F'_1 \arrow[r]\arrow[d]
    & F'_2 \arrow[r]\arrow[d]
    & \dots \arrow[r]
    & F'_{m-1} \arrow[r]\arrow[d]
    & F'\arrow[d]
\\
C_1' \arrow[r]
    & C_2' \arrow[r]
    & \dots \arrow[r]
    & C_{m-1}' \arrow[r]
    & C'.
\end{tikzcd}
\end{equation}
By the octahedral axiom, the objects $C_i'$ define a filtration of $C'$ whose factors satisfy
\begin{equation}\label{eq:5.11}
\begin{split}
&C_1'\cong\Phi(\widetilde{A}_1),\\
&\cone(C_{i-1}'\to C_i')\cong  A_i',\qquad i=2,\dots,m.
\end{split}
\end{equation}
In particular,
\[
N(C')\leq\begin{cases}
(m-1,k_m,\dots,k_2),\quad\text{ if } k_1=1,\\
(m,k_m,\dots,k_1-1),\quad\text{ otherwise.}
\end{cases}
\]
In either case, we have $N(C')<N(F')$.
By induction, the object $C'$ admits a HN filtration
\[
0=W_0'\xlongrightarrow{u_1'}W_1'\xlongrightarrow{u_2'}\dots\xlongrightarrow{u_r'} W_r'=C'
\]
such that $V_i'\coloneqq\cone(u_i')\in\cP'(\phi_i)$, for $\phi_1>\dots>\phi_r.$
By the claim above,~\eqref{eq:5.11}, and assumption~\ref{enum:extendslicing2} in the statement, we have
\[
\Hom(\Phi(D),C')=0,\qquad \text{for all } D\in\cP\left(>\phi_\cP(A_1^+)\right).
\]
Consequently, we have
\begin{equation}\label{eq:Phases1}
\phi_1\leq\phi_\cP(A_1^+).
\end{equation}
Finally, let $K'_i[1]$ denote the cone of the composition of morphisms
\[
W_i'\xlongrightarrow{u_r'\circ\dots\circ u_{i+1}'} C'\longrightarrow \Phi(A_1^+)[1],
\]
where the rightmost morphism is the one induced by the rightmost vertical exact triangle in~\eqref{eq:BigD1}.
Notice that $K'_r=F'$.
We have the following diagram:
\[
\begin{tikzcd}[column sep=large,row sep=large]
K_1' \arrow[r]\arrow[d]
    & K'_2 \arrow[r]\arrow[d]
    & \dots \arrow[r]
    & K'_{r-1} \arrow[r]\arrow[d]
    & F'\arrow[d]
\\
W_1' \arrow[r]\arrow[d]
    & W_2' \arrow[r]\arrow[d]
    & \dots \arrow[r]
    & W_{r-1}' \arrow[r]\arrow[d]
    & C'\arrow[d]
\\
\Phi(A_1^+)[1]
    & \Phi(A_1^+)[1] \arrow[l,equal]
    & \dots \arrow[l,equal]
    & \Phi(A_1^+)[1] \arrow[l,equal]
    & \Phi(A_1^+)[1]. \arrow[l,equal]
\end{tikzcd}
\]
By applying the octahedral axiom again, we have
\[
\cone(K'_{i-1}\to K'_i)\cong V_i'
\]
for all $i=2,\dots,r$.

We look at the leftmost vertical exact triangle
\[
\Phi(A_1^+)\longrightarrow K_1'\longrightarrow W_1'=V_1',
\]
where $\Phi(A_1^+)\in\cP'(\phi_\cP(A_1^+))$ and $V_1'\in\cP'(\phi_1)$.
By using~\eqref{eq:Phases1}, we have that either $\phi_\cP(A_1^+)=\phi_1$, in which case 
$K_1'\in\cP'(\phi_1)$, and so $K_\bullet'$ is a HN filtration with respect to $\cP'$ of $F'$, or $\phi_\cP(A_1^+)>\phi_1$, in which case the HN filtration of $F'$ with respect to $\cP'$ is
\[
0\longrightarrow\Phi(A_1^+)\longrightarrow K_1'\longrightarrow\dots\longrightarrow K_{r}'=F'.
\]
This completes the proof.
\end{proof}

It is immediate then to obtain a version of Proposition~\ref{prop:extendslicing} for (pre-)stability conditions.
In fact, let us fix $(\Lambda,v)$, where $v\colon\rK_0(\cD)\to\Lambda$.
We consider an exact functor $\Phi\colon\cD\to\cD'$ and the induced morphism
$\Phi_*\colon\rK_0(\cD)\to\rK_0(\cD')$.
Let us consider
\[
\Lambda'\coloneqq \Lambda/(v(\ker(\Phi_*)))_{\mathrm{sat}},\qquad v'\colon\rK_0(\cD')\longrightarrow\Lambda',
\]
where $(-)_{\mathrm{sat}}$ denotes the saturation, and $v'$ is the induced morphism.
For a group homomorphism $Z\colon\Lambda\to\CC$ such that $Z|_{v(\ker(\Phi_*))}=0$, we then have an induced group homomorphism
\begin{equation}\label{eq:DefOfZ'}
Z'\colon\Lambda'\longrightarrow\CC.
\end{equation}

\begin{proposition}\label{prop:extendstabcond}
In the above setting, let $\sigma=(Z,\cP)$ be a (pre-)stability condition on $\cD$ with respect to $(\Lambda,v)$ such that
\begin{itemize}
    \item $\Phi$ and $\cP$ satisfy assumptions~\ref{enum:extendslicing1} and~\ref{enum:extendslicing2} of Proposition~\ref{prop:extendslicing},
    \item $Z|_{v(\ker(\Phi_*))}=0$.
\end{itemize}
Then $\sigma'=(Z',\cP')$ is a (pre-)stability condition on $\cD'$ with respect to $(\Lambda',v')$, where $Z'$ and $\cP'$ are defined in~\eqref{eq:DefOfZ'} and~\eqref{eq:cP'}, respectively. 
\end{proposition}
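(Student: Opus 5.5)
By Proposition~\ref{prop:extendslicing}, the collection $\cP'$ defined in~\eqref{eq:cP'} is already a slicing of $\cD'$. What remains is to check two things: the compatibility between $Z'$ and $\cP'$, and, in the stability case, the support property with respect to $(\Lambda',v')$.

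\emph{Compatibility.} First I will check that $v'$ is well defined. The composition $\rK_0(\cD)\xrightarrow{v}\Lambda\to\Lambda'$ kills $\ker(\Phi_*)$, so it factors through $\Phi_*(\rK_0(\cD))$. By assumption~\ref{enum:extendslicing1}, every object of $\cD'$ is an iterated extension of objects $\Phi(F)$, so $\Phi_*$ is surjective. Hence $v'$ is determined by $v'(\Phi(F))=\overline{v(F)}$. Since $Z$ vanishes on $v(\ker\Phi_*)$, it also vanishes on its saturation (because $\CC$ is torsion-free), so $Z'$ is well defined and satisfies $Z'(v'(\Phi(F)))=Z(v(F))$.

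Now take a nonzero $E\in\cP'(\phi)$. It is an iterated extension of objects $\Phi(F_j)$ with $F_j\in\cP(\phi)$. Additivity gives $Z'(v'(E))=\sum_j Z(v(F_j))$, and each term of the sum lies in $\RR_{\geq0}e^{i\pi\phi}$. To get a strictly positive multiple I need some $\Phi(F_j)\neq0$. This holds: if every $\Phi(F_j)$ were zero then $E$ would be zero. Any $F_j$ with $\Phi(F_j)\neq0$ is itself nonzero, so its term is in $\RR_{>0}e^{i\pi\phi}$. This proves the pre-stability statement.

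\emph{Support property.} Let $Q$ be the quadratic form on $\Lambda_\RR$ for $\sigma$. Write $K=v(\ker\Phi_*)_\RR\subset\ker Z_\RR$, so that $\Lambda'_\RR=\Lambda_\RR/K$. My plan is to define $Q'$ on $\Lambda_\RR/K$ as follows. Choose a complement $W$ of $K$ inside $\ker Z_\RR$; then $Q$ is negative definite on $W$. Since $Q$ is also negative definite on $K$, the standard deformation trick of adding $-CZ\overline{Z}$ lets me arrange that $Q$ is negative definite on all of $\ker Z_\RR$ while remaining nonnegative on semistable classes. I then identify $\Lambda_\RR/K$ with a complement $U\supset W$ of $K$ in $\Lambda_\RR$ and set $Q'(x)=Q(\tilde x)$, where $\tilde x\in U$ is the lift of $x$. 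With this definition $Q'$ is negative definite on $\ker Z'_\RR\cong W$.

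Nonnegativity is the problematic point. Every $\sigma'$-stable object is the image of some $\Phi(F)$ with $F$ $\sigma$-stable; I would prove this by showing that $\cP'(\phi)$ has simple objects coming from $\Phi(\cP(\phi))$, using the isomorphism on Homs in~\ref{enum:extendslicing2}. But $v(F)$ need not lie in $U$. Its lift differs from $v(F)$ by an element $k\in K$, and then $Q(v(F)+k)$ need not be $\geq0$.

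\emph{Main obstacle.} This is the step I expect to be hardest. The fix I will try first is to use a cleaner form of the quotient: define
\[
Q'(x)=\sup\bigl\{Q(y)\st y\mapsto x\bigr\}
\]
when this supremum is finite. More concretely, I will arrange $Q$ so that $K$ is $Q$-orthogonal to a complement $U$, which is possible because $Q|_K$ is negative definite. Then $Q(u+k)=Q(u)+Q(k)\leq Q(u)$ for all $u\in U$ and $k\in K$. So $Q'(x)\coloneqq Q(u)\geq Q(v(F))\geq0$ whenever $x=\overline{v(F)}$ with $F$ semistable, which gives the required nonnegativity. The negative definiteness of $Q'$ on $\ker Z'_\RR$ then reduces to that of $Q$ on $U\cap\ker Z_\RR$, which holds.
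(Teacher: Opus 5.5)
Your proposal is correct. It is the argument the paper leaves implicit when it calls this statement an immediate consequence of Proposition~\ref{prop:extendslicing}: the slicing comes from that proposition, compatibility with $Z'$ follows by additivity over the defining extensions, and the support property follows by setting $Q'(\bar x)=\max_{y\mapsto \bar x}Q(y)$, i.e.\ restricting $Q$ to the $Q$-orthogonal complement of $K$. The ``deformation trick'' is unnecessary, since $Q$ is already negative definite on $\ker Z_\RR\supset K$.

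You also do not need the claim about simple objects of $\cP'(\phi)$, which in any case only covers stable objects. Every $0\neq E\in\cP'(\phi)$ is by definition an iterated extension of objects $\Phi(F_j)$ with $F_j\in\cP(\phi)$, so $v'(E)$ is the image of $v(\bigoplus_j F_j)$ with $\bigoplus_j F_j$ $\sigma$-semistable. That is exactly the input your inequality $Q'(\overline{v(F)})\geq Q(v(F))\geq 0$ requires, for all semistable objects.
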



\section{Almost disconnected morphisms}\label{app:Filtration}

In this appendix, we show that the morphism $h\colon(\PP^1)^n\to\PP^n$ given by the quotient by the symmetric group has the filtration property (see Example~\ref{ex:PushForward}\eqref{enum:ProductP1}). 
This provides an alternative argument to~\cite[Section 4]{chunyi-stability}.

We begin with the following property of a morphism.

\begin{definition}\label{def:AlmostDisconnected}
A morphism of schemes $p\colon X\to Y$ is \emph{almost disconnected} if there exists a filtration
\[
0=F_0 \subset F_1 \subset\dots\subset F_{m-1}\subset F_m=\cO_X,
\]
such that, for each $i=1,\dots,m$,
\[
F_i/F_{i-1}\cong \iota_{i,*}\cL_i^{-1},
\]
where $\iota_i\colon X_i\hookrightarrow X$ is a closed subscheme such that $p|_{X_i}\colon X_i\xrightarrow{\cong}Y$ is an isomorphism, and $\cL_i$ is an invertible sheaf on $X_i$.
Furthermore, if $Y$ is proper over a field $\KK$, we say that $p$ is \emph{nef almost disconnected} if it is almost disconnected and the invertible sheaves $\cL_i$ are nef.
\end{definition}

We have the following basic properties.

\begin{lemma}\label{lemma-filt-basic}
The property of being almost disconnected is preserved under flat base change and composition.
\end{lemma}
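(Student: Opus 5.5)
Both claims should follow directly from Definition~\ref{def:AlmostDisconnected}, using only flat pullback for base change and pushforward along a closed immersion for composition.

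\emph{Flat base change.} Let $p\colon X\to Y$ be almost disconnected with filtration $F_\bullet$ of $\cO_X$ and closed subschemes $\iota_i\colon X_i\hookrightarrow X$. Let $g\colon Y'\to Y$ be flat, and write $X'=X\times_Y Y'$ with projections $g'\colon X'\to X$ and $p'\colon X'\to Y'$. Since $g'$ is flat, $g'^*$ is exact on quasi-coherent sheaves. Hence $g'^*F_\bullet$ is a filtration of $g'^*\cO_X=\cO_{X'}$ by subsheaves. Its graded pieces are
\[
g'^*(\iota_{i,*}\cL_i^{-1})\cong \iota'_{i,*}(g_i'^*\cL_i)^{-1},
\]
by flat base change for the affine (closed) morphism $\iota_i$. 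Here $X_i'=X_i\times_Y Y'\hookrightarrow X'$ is the base-changed closed subscheme and $g_i'\colon X_i'\to X_i$ is the projection. Moreover, $p'|_{X_i'}$ is the base change of the isomorphism $p|_{X_i}$, hence is itself an isomorphism $X_i'\to Y'$.

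\emph{Composition.} Let $p\colon X\to Y$ and $q\colon Y\to W$ be almost disconnected. Take filtrations $F_\bullet$ of $\cO_X$, with pieces $\iota_{i,*}\cL_i^{-1}$ for $i=1,\dots,m$, and $G_\bullet$ of $\cO_Y$, with pieces $j_{k,*}\cM_k^{-1}$ for $Y_k\subset Y$ and $k=1,\dots,r$. The plan is to refine each step $F_{i-1}\subset F_i$ using $G_\bullet$.

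Since $p|_{X_i}\colon X_i\to Y$ is an isomorphism, pull $G_\bullet$ back along it. This gives a filtration of $\cO_{X_i}$ whose pieces are $\cM_k^{-1}$ restricted to the subscheme $X_{ik}\coloneqq (p|_{X_i})^{-1}(Y_k)\subset X_i$. The isomorphism ensures this is again a filtration by subsheaves. Tensoring by the line bundle $\cL_i^{-1}$ on $X_i$ is exact, so we obtain a filtration of $\cL_i^{-1}$ with graded pieces $(\iota_{X_{ik}})_*(\cL_i|_{X_{ik}}\otimes \cM_k|_{X_{ik}})^{-1}$. Pushing forward along the closed immersion $\iota_i$, which is exact, gives a filtration of $F_i/F_{i-1}$.

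Taking preimages of this filtration in $F_i$ refines $F_\bullet$ into a filtration of $\cO_X$ with $mr$ steps. Its graded pieces are pushforwards of inverse line bundles from the closed subschemes $X_{ik}\subset X$. Finally, $q\circ p$ restricted to $X_{ik}$ factors as
\[
X_{ik}\xrightarrow{\;\cong\;}Y_k\xrightarrow{\;\cong\;}W,
\]
so it is an isomorphism.

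The only delicate point is checking that the pulled-back and pushed-forward sequences remain filtrations with exactly the stated subquotients. This is handled by exactness of flat pullback, of tensoring with line bundles, and of pushforward along closed immersions, so I expect no real obstacle.
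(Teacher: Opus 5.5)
Your proposal is correct and matches the paper's argument. For base change you pull the filtration back along a flat morphism. For composition you refine each step $F_{i-1}\subset F_i$ by the filtration of $\cO_Y$, transported along $X_i\cong Y$ and twisted by $\cL_i^{-1}$. Compared with the paper, you only spell out the exactness and projection-formula details that it leaves to the reader.
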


\begin{proof}
The first statement is clear as flat pullback preserves exactness and isomorphisms. 

For the second statement, let $X\to Y$ and $Y\to Z$ be two almost disconnected morphisms. 
Then we have a filtration $F_\bullet$ of $\cO_X$ such that $F_i/F_{i-1}$ is the dual of an invertible sheaf $\cL_i$ on $X_i\cong Y$.
By tensoring the filtration $G_\bullet$ of $\cO_Y$ with $\cL_i^{-1}$, this can be lifted to a refinement of $F_{i-1}\subset F_{i}$. 
It is easy to see that this refinement satisfies the required properties.
\end{proof}

\begin{remark}\label{rmk:FiltBasicNefDisconnected}
Lemma~\ref{lemma-filt-basic} is true for nef almost disconnected morphisms, if the schemes involved are proper over $\KK$.
Indeed, nefness is preserved under pullback and tensor products.
\end{remark}

In these terms, the filtration property for a morphism, introduced in Definition~\ref{def:FiltrationProperty}, admits the following description. 
\begin{lemma}\label{lem:FiltrationVSAlmost} 
Let $f\colon X\to Y$ be a separated morphism.    
Then $f$ has the filtration property if and only if the projection $p_1\colon X\times_Y X\to X$ is almost disconnected and the corresponding subschemes $X_i$ have the property that $p_2|_{X_i}$ is also an isomorphism. 

More precisely, the automorphisms $g_i\in\Aut_Y(X)$ in Definition~\ref{def:FiltrationProperty} correspond to the compositions $p_2\circ(p_1)|_{X_i}^{-1}$ in Definition~\ref{def:AlmostDisconnected}, and the invertible sheaves in Definition~\ref{def:FiltrationProperty} correspond to $g_i^*\cL_i$ for $\cL_i$ as in Definition~\ref{def:AlmostDisconnected}. 
\end{lemma}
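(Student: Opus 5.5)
The statement to prove is Lemma~\ref{lem:FiltrationVSAlmost}. The plan is to match the two definitions term by term. The dictionary is the graph: for $g\in\Aut_Y(X)$, the graph $\Gamma(g)\subset X\times_Y X$ is the image of the section $(\id,g)\colon X\to X\times_Y X$. It is a closed subscheme because $f$ is separated.

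We have $p_1\circ(\id,g)=\id$ and $p_2\circ(\id,g)=g$. Hence both $p_1|_{\Gamma(g)}$ and $p_2|_{\Gamma(g)}$ are isomorphisms onto $X$. Conversely, suppose $X_i\subset X\times_Y X$ is a closed subscheme with both $p_1|_{X_i}$ and $p_2|_{X_i}$ isomorphisms. Set $g_i\coloneqq p_2\circ(p_1|_{X_i})^{-1}$. This is an automorphism of $X$, and it is a $Y$-morphism because $f\circ p_1=f\circ p_2$. Moreover $X_i$ is the image of $(\id,g_i)$, so $X_i=\Gamma(g_i)$. This gives a bijection between graphs of $Y$-automorphisms and closed subschemes on which both projections are isomorphisms.

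It remains to compare the line bundles on the graded pieces. Take an invertible sheaf $\cL_i$ on $X_i=\Gamma(g_i)$, as in Definition~\ref{def:AlmostDisconnected}. Identify $X_i$ with $X$ via $p_1$, i.e.\ via the section $s_i=(\id,g_i)$. Then $\iota_{i,*}\cL_i^{-1}$ corresponds to $s_{i,*}\cM_i^{-1}$, where $\cM_i\coloneqq s_i^*\cL_i$ is a line bundle on $X$. By the projection formula,
\[
p_2^*\cN^{-1}\otimes\cO_{\Gamma(g_i)}\cong s_{i,*}\bigl(s_i^*p_2^*\cN^{-1}\bigr)=s_{i,*}\bigl(g_i^*\cN^{-1}\bigr)
\]
for any line bundle $\cN$ on $X$, since $p_2\circ s_i=g_i$.

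So the quotient $p_2^*\cN_i^{-1}\otimes\cO_{\Gamma(g_i)}$ in Definition~\ref{def:FiltrationProperty} is a pushforward of a line bundle from $X_i$. Conversely, a quotient $\iota_{i,*}\cL_i^{-1}$ has the form required in Definition~\ref{def:FiltrationProperty} once we take $\cN_i$ with $g_i^*\cN_i\cong\cM_i$. Since $g_i$ is an automorphism, we may take $\cN_i=(g_i^{-1})^*\cM_i$. Under the identifications of $X_i$ with $X$, this is the line bundle the statement writes as $g_i^*\cL_i$, up to the choice of which projection identifies $X_i$ with $X$.

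Assembling the pieces, a filtration of $\cO_{X\times_YX}$ as in one definition is literally a filtration as in the other; only the description of the graded pieces changes. Both implications then follow.

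The main obstacle is purely bookkeeping: keeping straight which projection is used to identify $X_i$ with $X$ (via $p_1$ versus via $p_2$), so that the pullback $g_i^*$ versus $(g_i^{-1})^*$ appears correctly in the stated correspondence. I would fix the convention $X_i\cong X$ via $p_2$ when matching the bundles $\cL_i$, and verify the formula against Example~\ref{ex:PushForward}\eqref{enum:etale}, where all bundles are trivial.
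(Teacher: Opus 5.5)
Your proposal is correct and is the same argument as the paper's. The paper also just unwinds the two definitions via $g_i=p_2\circ(p_1|_{X_i})^{-1}$, $X_i=\Gamma(g_i)$ and the projection formula; you simply write out the details it leaves implicit, namely closedness of $\Gamma(g)$ from separatedness and the bijection between $\Aut_Y(X)$ and closed subschemes on which both projections are isomorphisms.

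One clarification on your last remark. Your own computation $\cN_i\cong(g_i^{-1})^*\cM_i$, i.e.\ $\cM_i\cong g_i^*\cN_i$, is the precise correspondence, and no choice of projection turns it into $\cN_i\cong g_i^*\cM_i$; identifying $X_i$ with $X$ via $p_2$ gives no twist at all. So the statement's ``$g_i^*\cL_i$'' should be read in that direction. This is harmless for the paper, since only nefness of these bundles is used later.
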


\begin{proof}
By definition, if $f$ has the filtration property, then both projections are almost disconnected with the condition in the statement.
Conversely, if $p_1$ is almost disconnected and $p_2|_{X_i}$ is an isomorphism, then by defining $g_i\coloneqq p_2\circ(p_1)|_{X_i}^{-1}$ as above and considering $g_i^*\cL_i$, it is immediate to see that the filtration that makes $p_1$ an almost disconnected morphism provides the filtration that makes $f$ a morphism with the filtration property.
\end{proof}

As a consequence, we deduce the following result.

\begin{lemma}\label{lem:FiltrationBaseChange}
Let $f \colon X \to Y$ be a separated morphism. 
Then the property that $f$ has the filtration property is preserved under flat base change. 
\end{lemma}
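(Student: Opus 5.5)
The plan is to reduce to Lemma~\ref{lem:FiltrationVSAlmost} and then use Lemma~\ref{lemma-filt-basic}. Let $u \colon Y' \to Y$ be flat, $X' \coloneqq X \times_Y Y'$, and $f' \colon X' \to Y'$ the base change. Then $f'$ is separated, and there is a canonical isomorphism $X' \times_{Y'} X' \cong (X \times_Y X) \times_Y Y'$. Under this identification the two projections $p_1', p_2' \colon X'\times_{Y'}X' \to X'$ are the base changes of $p_1, p_2$ along the flat morphism $X' \to X$.

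By Lemma~\ref{lem:FiltrationVSAlmost}, the filtration property for $f$ means two things: $p_1$ is almost disconnected, with closed subschemes $\iota_i \colon (X\times_Y X)_i \hookrightarrow X\times_Y X$ and invertible sheaves $\cL_i$; and each $p_2|_{(X\times_YX)_i}$ is an isomorphism onto $X$. Since $X' \to X$ is flat, Lemma~\ref{lemma-filt-basic} shows that $p_1'$ is almost disconnected. The filtration of $\cO_{X'\times_{Y'}X'}$ is the pullback $v^*F_\bullet$, where $v \colon X'\times_{Y'}X' \to X\times_Y X$ is flat. The subschemes are $(X\times_YX)_i \times_X X' \cong (X\times_YX)_i\times_Y Y'$, and the sheaves are the pulled-back $\cL_i$. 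For the graded pieces I use flat base change for the closed immersion $\iota_i$: this gives $v^*\iota_{i*}\cL_i^{-1} \cong \iota'_{i*}v_i^*\cL_i^{-1}$, and it preserves the filtration because $v^*$ is exact.

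It remains to check the condition on $p_2'$. The restriction of $p_2'$ to $(X\times_YX)_i\times_Y Y'$ is the base change along $Y' \to Y$ of the isomorphism $p_2|_{(X\times_YX)_i}\colon (X\times_YX)_i \to X$. Base change of an isomorphism is an isomorphism, onto $X\times_Y Y' = X'$, so $p_2'|_{X'_i}$ is an isomorphism. Applying Lemma~\ref{lem:FiltrationVSAlmost} in the converse direction then gives the filtration property for $f'$, with automorphisms $g_i' = g_i \times_Y Y'$ and line bundles equal to the pullbacks of the $g_i^*\cL_i$.

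The only point needing care is the bookkeeping of identifications. One must check that the base change of $p_1$ along $X'\to X$ agrees with the base change of the whole square along $Y'\to Y$, so that the subschemes produced by Lemma~\ref{lemma-filt-basic} are exactly the base changes of the graphs $\Gamma(g_i)$. This is a formal compatibility of fiber products, and I expect no real obstacle.
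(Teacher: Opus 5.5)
Your proposal is correct and follows the paper's proof essentially step for step. You identify $p_1'$ as the flat base change of $p_1$ along $X'\to X$, invoke Lemma~\ref{lemma-filt-basic} through Lemma~\ref{lem:FiltrationVSAlmost}, note that the supporting subschemes become $X_i\times_Y Y'$ so that $p_2'$ restricts to isomorphisms on them, and conclude with Lemma~\ref{lem:FiltrationVSAlmost} again; your extra details (base change along the closed immersions $\iota_i$ for the graded pieces, with exactness of $v^*$ preserving the filtration, and $g_i'=g_i\times_Y Y'$) are accurate and just make explicit what the paper leaves implicit.
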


\begin{proof}
Let $Y'\to Y$ be a flat morphism. 
Let $f'\colon X'\to Y'$ be the base change of $f$.
Then the projection $p_1'\colon X'\times_{Y'}X'\to X'$ is the base change of $p_1\colon X\times_Y X\to X$ by the flat morphism $X'\to X$; hence, $p_1'$ is almost disconnected, by Lemma~\ref{lemma-filt-basic} and Lemma~\ref{lem:FiltrationVSAlmost}.
Then, for all $i$, the supporting closed subscheme $X_i'$ in the base change filtration $F_\bullet'$ of $F_\bullet$ is just the base change $X_i\times_Y Y'$, and thus the fact that $p_2'|_{X_i'}$ is an isomorphism follows immediately.
We can then use Lemma~\ref{lem:FiltrationVSAlmost} again to conclude.
\end{proof}

We are now ready to show our result.

\begin{proposition}\label{prop:P1ntoPnHasFiltrationProperty}
Over an arbitrary field $\KK$, 
let $h\colon(\PP^1)^n\to\PP^n$ be the morphism given by the quotient by the action of the symmetric group $\fS_n$ which permutes the factors. 
Then $h$ 
has the filtration property with respect to the automorphisms $f_1,\dots,f_m$, $m = n!$, given by the elements of $\fS_n$ and invertible sheaves $\cL_1, \dots, \cL_m$ on $(\PP^1)^n$ which are effective. 
\end{proposition}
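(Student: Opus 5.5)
By Lemma~\ref{lem:FiltrationVSAlmost}, it suffices to show that the first projection $p_1\colon (\PP^1)^n\times_{\PP^n}(\PP^1)^n\to(\PP^1)^n$ is nef almost disconnected. We also need the closed subschemes $X_i$ in the filtration to be the graphs $\Gamma(g)$ for $g\in\fS_n$, since then $p_2|_{X_i}$ is automatically an isomorphism. The plan is to factor $h$ into a tower of simpler maps, show that each step induces an almost disconnected first projection, and assemble the pieces with Lemma~\ref{lemma-filt-basic} and Remark~\ref{rmk:FiltBasicNefDisconnected}.

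The key structural input is a classical identification. Write $\PP^n=\mathrm{Sym}^n\PP^1$, viewed as degree-$n$ binary forms up to scalars. Let $W\coloneqq\PP^1\times\PP^{n-1}$, with $(x,D)$ mapped to $x+D$ under $W\to\PP^n$. Then $W$ is the universal divisor $\{(x,D)\st x\in D\}\subset\PP^1\times\PP^n$, which is a divisor of bidegree $(n,1)$, and the map $W\to\PP^n$ is finite flat of degree $n$. The map $h$ factors as
\[
(\PP^1)^n\to\PP^1\times(\PP^1)^{n-1}\to\PP^1\times\PP^{n-1}=W\to\PP^n,
\]
where the middle map is $\id\times h_{n-1}$. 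By induction on $n$ and flat base change (Lemma~\ref{lemma-filt-basic}), the almost disconnected property for the first projection of $h_{n-1}$ transfers to $\id\times h_{n-1}$. So the core step is to show that the first projection $q_1\colon W\times_{\PP^n}W\to W$ is nef almost disconnected.

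For this step, note that $W\times_{\PP^n}W$ consists of triples $(x,y,D')$ with $x+D=y+D'$. It contains the diagonal copy of $W$, where $y=x$, and a residual component $R=\{(x,y,E)\st E\in\PP^{n-2}\}$, where $D=y+E$. Concretely, $W\times_{\PP^n}W\subset W\times\PP^1$ is cut out by the pullback of the universal divisor equation $y\in x+D$, and this equation factors as $(y-x)\cdot(\text{equation of }y\in D)$. This factorization gives a short exact sequence
\[
0\to\cO_R(-\Delta)\to\cO_{W\times_{\PP^n}W}\to\cO_{\Delta}\to 0,
\]
up to a twist by a line bundle pulled back from $\PP^1$. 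The quotient $\cO_\Delta$ gives the identity-graph factor. The residual $R\to W$ is isomorphic to the map $(\PP^1)\times W_{n-1}\to\PP^1\times\PP^{n-1}$, which is itself of the same shape one dimension lower, so the induction applies to it. Iterating produces a filtration whose graded pieces are structure sheaves of sections twisted by line bundles of the form $\cO(a_1,\dots,a_n)^{-1}$ with $a_i\ge0$. Those line bundles come from the divisors $y-x$, namely diagonals of class $\cO(1,1)$ on $\PP^1\times\PP^1$, so they are effective, hence nef on $(\PP^1)^n$.

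The last check is that the final subschemes $X_i\subset(\PP^1)^n\times_{\PP^n}(\PP^1)^n$ are exactly the $n!$ permutation graphs. That holds on the generic, reduced locus, and the count of pieces equals the degree $n!=\deg h$, which matches the rank of $p_{1*}\cO$. The main obstacle I expect is making the factorization of the equation on $W\times_{\PP^n}W$ scheme-theoretically correct over an arbitrary field, in particular identifying the kernel as $\cO_R$ twisted by the correct line bundle, with no embedded components. My first attempt would be to work with coordinates of binary forms, $F(s,t)=\ell_x\cdot G_D$, and verify flatness and a Cartier divisor statement using that $W\times_{\PP^n}W\to W$ is finite flat of degree $n$: compare ranks $1+(n-1)$, and use that $R$ is a Cartier divisor in a Cohen--Macaulay scheme.
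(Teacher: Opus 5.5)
Your factorization $h=v\circ(\id\times h_{n-1})$ through $W=\PP^1\times\PP^{n-1}$ is the same one the paper uses, but your core step, that $q_1\colon W\times_{\PP^n}W\to W$ is nef almost disconnected, is false for $n\geq 3$.

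Your own computation shows that $W\times_{\PP^n}W$ is the union of the diagonal $\Delta$ and the residual divisor $R\cong\PP^1\times\PP^1\times\PP^{n-2}$. It also shows that $q_1|_R\colon R\to W$ is $\id_{\PP^1}\times v_{n-1}$, where $v_{n-1}\colon\PP^1\times\PP^{n-2}\to\PP^{n-1}$, $(y,E)\mapsto y+E$, is finite of degree $n-1\geq 2$. Suppose $\cO_{W\times_{\PP^n}W}$ had a filtration whose graded pieces are supported on closed subschemes $X_i$ with $q_1|_{X_i}$ an isomorphism. Then the irreducible component $R$ would lie in some $X_i$, and by dimension would equal it set-theoretically. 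That forces $R\to W$ to be birational, which it is not. Geometrically, over $(x,D)$ the other $n-1$ preimages are the points of the \emph{unordered} divisor $D$, and they undergo monodromy.

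The induction you invoke for $R$ does not apply. Your hypothesis concerns projections from self-fiber products, whereas $R\to W$ is essentially the non-Galois cover $v_{n-1}$ itself, so the recursion does not close.

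The assembly is also off. With $A=(\PP^1)^n$, the projection $p_1$ factors as $A\times_{\PP^n}A\to A\times_{\PP^n}W\to A$. The first arrow is the base change of $u=\id\times h_{n-1}$ along the second projection $A\times_{\PP^n}W\to W$, not along $u$. So knowing that $A\times_W A\to A$ is almost disconnected only helps on pieces of $A\times_{\PP^n}W$ whose map to $W$ is $u$ composed with an automorphism of $A$.

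The missing idea is to base change $v\colon W\to\PP^n$ along $h$ rather than along $v$: once the points are ordered, the sections exist. The paper works with $N\coloneqq W\times_{\PP^n}(\PP^1)^n$, which is the incidence divisor $\{x=y_i \text{ for some } i\}\subset\PP^1\times(\PP^1)^n$. Hence $N=N_1+\dots+N_n$ as Cartier divisors, with each $N_i\cong(\PP^1)^n$ via the projection. Filtering $\cO_N$ by $\cO(-N_{i+1}-\dots-N_n)/\cO(-N)$ gives graded pieces $\iota_{i*}A_i^{-1}$ with $A_i=\cO(N_{i+1}+\dots+N_n)|_{N_i}$ nef.

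Each $N_i\to W$ is $u$ precomposed with a permutation of coordinates. So the preimage $M_i$ of $N_i$ in $M=(\PP^1)^n\times_{\PP^n}(\PP^1)^n$ is $\PP^1\times\big((\PP^1)^{n-1}\times_{\PP^{n-1}}(\PP^1)^{n-1}\big)$, mapping to $N_i$ by $\id\times p_2$, where the induction hypothesis applies. Pulling back the filtration along the flat map $M\to N$ and refining on each $M_i$ produces the $n!$ permutation-graph pieces by construction. No generic-count argument is needed.

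Your $\Delta/R$ sequence can be salvaged in the same spirit. After base change along $u$, the map $R\times_W(\PP^1)^n\to(\PP^1)^n$ becomes $\id_{\PP^1}$ times the projection $(\PP^1\times\PP^{n-2})\times_{\PP^{n-1}}(\PP^1)^{n-1}\to(\PP^1)^{n-1}$. So an induction on the statement that $(\PP^1\times\PP^{k-1})\times_{\PP^k}(\PP^1)^k\to(\PP^1)^k$ is nef almost disconnected recovers the splitting of $N$.
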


Of course, for an invertible sheaf on $(\PP^1)^n$, being effective is equivalent to being nef, which is also equivalent to being of the form $\cO_{(\PP^1)^n}(a_1,\dots,a_n)$, with $a_1,\dots,a_n\in\ZZ_{\geq0}$.

\begin{proof}
We proceed by induction on $n$.
The case $n=1$ is clear.
We therefore assume $n\geq2$, and factor $h$ as
\[
(\PP^1)^n\xlongrightarrow{u}\PP^1\times\PP^{n-1}\xlongrightarrow{v}\PP^n,
\]
where $u$ is the quotient by $\fS_{n-1}$ on the last $n-1$ factors. 
Explicitly, thinking of $\PP^{n-1}$ as parameterizing effective degree $n-1$ divisors on $\PP^1$ and similarly for $\PP^n$, we have 
\begin{equation*}
u(x_1, \dots, x_n) = (x_1, x_2 + \cdots + x_n) \quad \text{and} \quad 
v(y, D) = y + D. 
\end{equation*}

Let 
\[
M = (\PP^1)^n\times_{\PP^n} (\PP^1)^n, 
\]
and consider the diagram of fiber products
\[
\xymatrix{
M\ar[d]_{p_1}\ar[r]^-{u'}\ar[d]& N \ar[r]^-{v'}\ar[d] & (\PP^1)^n \ar[d]\ar[d]^{h}\\
(\PP^1)^n \ar[r]^-{u} & \PP^1\times \PP^{n-1} \ar[r]^-{v}                     & \PP^n.
}
\]
Note that $u$ and $v$ are finite surjective morphisms between regular schemes, hence flat, so all morphisms in the diagram are flat.

We claim that $v'\colon N \to (\PP^1)^n$ is nef almost disconnected. 
First, from its definition as the fiber product, $N$ corresponds to the locus of points $(x, D, (y_1, \dots, y_n))$ in $\PP^1 \times \PP^{n-1} \times (\PP^1)^n$ such that 
\begin{equation*}
    x + D = y_1 + \dots + y_n. 
\end{equation*}
It follows that $N$ identifies with the incidence divisor in $\PP^1 \times (\PP^1)^n$ consisting of points $(x, (y_1, \dots, y_n))$ for which $x = y_i$ for some $i$. Thus, we have an equality of effective Cartier divisors 
\begin{equation*}
    N = N_1 + \dots + N_n
\end{equation*}
in $\PP^1 \times (\PP^1)^n$, where $N_i \subset \PP^1 \times (\PP^1)^n$ is the locus where $x = y_i$. 
In particular, the projection $v' \colon N \to (\PP^1)^n$ restricts to an isomorphism on each $N_i$. 
For $0 \leq i \leq n$, we define 
\begin{equation*}
    F_i \coloneqq \frac{\cO_{\PP^1 \times (\PP^1)^n}(-N_{i+1} - \cdots - N_n)}{\cO_{\PP^1 \times (\PP^1)^n}(-N)}. 
\end{equation*}
These form a filtration 
\begin{equation}
\label{Fi-ON}
    0 = F_0 \subset F_1 \subset \cdots \subset F_n = \cO_{N}
\end{equation}
such that 
\begin{equation*}
    F_i/F_{i-1}\cong \iota_{i,*} A_i^{-1}, 
\end{equation*}
where $\iota_{i} \colon N_i \to N$ is the inclusion and $A_i = \cO_{\PP^1\times (\PP^1)^n}(N_{i+1} +\dots +N_{n})|_{N_i}$. 
Since the $A_i$ are nef, this proves our  claim. 

Let $M_i = M \times_{N} N_i$ be the pullback of $N_i$ along $u'$. 
It is easy to see that $M_i \subset M$ corresponds to the locus of points $((x_1, \dots, x_n), (y_1, \dots, y_n))$ in $(\PP^1)^n \times_{\PP^n} (\PP^1)^n$ such that $x_1 = y_i$. 
Therefore, there is an isomorphism 
\begin{equation*}
M_i \cong \PP^1 \times ((\PP^1)^{n-1} \times_{\PP^{n-1}} (\PP^1)^{n-1})
\end{equation*} 
given by 
\begin{equation*}
((x_1, \dots, x_n), (y_1, \dots, y_n)) \mapsto (x_1 = y_i, (x_2, \dots, x_n), (y_1, \dots, \widehat{y_i}, \dots, y_n)). 
\end{equation*} 
By the previous paragraph, we also have an isomorphism $N_i \cong \PP^1 \times (\PP^1)^{n-1}$ given by the restriction of $\id_{\PP^1} \times q_i \colon \PP^1 \times (\PP^1)^n \to \PP^1 \times (\PP^1)^{n-1}$, where $q_i \colon (\PP^1)^n \to (\PP^1)^{n-1}$ forgets the $i$th coordinate. 
Under these isomorphisms, the map 
\begin{equation*}
    u'_i \colon M_i \to N_i 
\end{equation*}
is identified with the map 
\begin{equation*}
    \id_{\PP^1} \times p_2 \colon \PP^1 \times  ((\PP^1)^{n-1} \times_{\PP^{n-1}} (\PP^1)^{n-1}) \to \PP^1 \times (\PP^1)^{n-1}. 
\end{equation*}
By the inductive hypothesis and Lemma~\ref{lem:FiltrationVSAlmost}\footnote{Lemma~\ref{lem:FiltrationVSAlmost} is stated in terms of the first projection $p_1$, but by symmetry it implies a version for $p_2$ as well, which is what we use here.}, 
$p_2 \colon (\PP^1)^{n-1} \times_{\PP^{n-1}} (\PP^1)^{n-1} \to (\PP^1)^{n-1}$ is nef almost disconnected. 
Thus, since by the above $u'_i$ is a flat base change of this map, by Lemma~\ref{lemma-filt-basic} and Remark~\ref{rmk:FiltBasicNefDisconnected} it is also nef almost disconnected. By construction, the supporting subschemes in the resulting filtration of $\cO_{M_i}$ are given by $\PP^1 \times \Gamma(\tau)$, where $\tau \in \fS_{n-1}$. 

Since $u' \colon M \to N$ is flat, pulling back the filtration $F_{\bullet}$ from~\eqref{Fi-ON} provides a filtration 
\begin{equation*}
0 = F'_0 \subset F'_1 \subset \cdots \subset F'_n = \cO_M 
\end{equation*} 
such that 
\begin{equation*}
    F'_{i}/F'_{i-1} \cong \iota'_{i,*} (A'_i)^{-1},
\end{equation*}
where $\iota'_i \colon M_i \to M$ is the inclusion and $A'_i = (u'_i)^*A_i$ is nef. 
Tensoring the filtration of $\cO_{M_i}$ provided by the previous paragraph and pushing forward along $\iota'_i$, we can then refine $F'_{\bullet}$ to a filtration 
\begin{equation*}
    0 = G_0 \subset G_1 \subset \cdots \subset G_{m} = \cO_{M}, 
\end{equation*}
of length $m = n!$. 
It is easy to see from the construction that the graded pieces are given by the pushforward of the inverse of nef line bundles from the graphs $\Gamma(\sigma)$ of permutations $\sigma \in \fS_{n}$. 
This proves that $h$ has the filtration property of the desired form.
\end{proof}


\addtocontents{toc}{\vspace{\normalbaselineskip}}



\begin{thebibliography}{99}

\bibitem{AJT:tstructures}
Alonso Tarr\'io, L., Jerem\'ias L\'opez, A., Souto Salorio, M., Construction of t-structures and equivalences of derived categories, {\it Trans. Amer. Math. Soc.} {\bf 355} (2003), 2523--2543.

\bibitem{AHLH}
Alper, J., Halpern-Leistner, D., Heinloth, J., Existence of moduli spaces for algebraic stacks, {\it Invent. Math.} {\bf 234} (2023), 949--1038.

\bibitem{ABM:Stability}
Anno, R., Bezrukavnikov, R., Mirkovi\'c, I., Stability conditions for Slodowy slices and real variations of stability, {\it Mosc. Math. J.} {\bf 15} (2015), 187--203, 403.

\bibitem{ArcaraBertram}
Arcara, D., Bertram, A., Bridgeland-stable moduli spaces for K-trivial surfaces (with an appendix by M. Lieblich), {\it J. Eur. Math. Soc. (JEMS)} {\bf 15} (2013), 1--38.

\bibitem{balmer}
Balmer, P., Dell'Ambrogio, I., Sanders, B., Grothendieck--{N}eeman duality and the {W}irthm\"uller isomorphism, {\it Compos. Math.} {\bf 152} (2016), 1740--1776.

\bibitem{bayer:short-proof}
Bayer, A., A short proof of the deformation property of Bridgeland stability conditions, {\it Math. Ann.} {\bf 375} (2019), 1597--1613.

\bibitem{stability-families}
Bayer, A., Lahoz, M., Macr\`i, E., Nuer, H., Perry, A., Stellari, P., Stability conditions in families, {\it Publ. Math. Inst. Hautes \'Etudes Sci.} {\bf 133} (2021), 157--325.

\bibitem{BLMS}
Bayer, A., Lahoz, M., Macr\`i, E., Stellari, P., Stability conditions on Kuznetsov components, {\it Ann.\ Sci.\ \'Ec.\ Norm.\ Sup\'er.} {\bf 56} (2023), 517--570.

\bibitem{BMLocalP2}
Bayer, A., Macr\`i, E., The space of stability conditions on the local projective plane, {\it Duke Math. J.} {\bf 160} (2011), 263--322.

\bibitem{BM-nef-divisor}
\bysame, Projectivity and birational geometry of Bridgeland moduli spaces, {\it J. Amer. Math. Soc.} {\bf 27} (2014), 707--752.

\bibitem{BMS:StabCY3s}
Bayer, A., Macr\`i, E., Stellari, P., The space of stability conditions on abelian threefolds, and on some {C}alabi-{Y}au threefolds, {\it Invent. Math.} {\bf 206} (2016), 1--65.

\bibitem{BMT:BG}
Bayer, A., Macr\`i, E., Toda, Y., Bridgeland stability conditions on threefolds I: Bogomolov-Gieseker type inequalities, {\it J. Algebraic Geom.} {\bf 23} (2014), 117--163.


\bibitem{BBDG}
Beilinson, A., Bernstein, J., Deligne, P., Gabber, O., Faisceaux pervers, {\it Ast\'erisque} {\bf 100} (2018), vi+180 pp.


\bibitem{Bondal}
Bondal, A., Representations of associative algebras and coherent sheaves,
{\it Izv.\ Akad.\ Nauk SSSR Ser. Mat.} {\bf 53} (1989), 25--44.

\bibitem{BO}
Bondal, A., Orlov, D., Reconstruction of a variety from the derived category
and groups of autoequivalences, {\it Compos.\ Math.} {\bf 125} (2001), 327--344.

\bibitem{BVDB}
Bondal, A., van den Bergh, M., Generators and representability of functors in commutative and noncommutative geometry, {\it Mosc. Math. J.} {\bf 3} (2003), 1--36, 258.

\bibitem{Bou:Takahashi}
Bousseau, P., A proof of N.~Takahashi's conjecture for $(\PP^2,E)$ and a refined sheaves/Gromov--Witten correspondence, {\it Duke Math. J.} {\bf 172} (2023), 2895--2955.

\bibitem{BDLP:Dentroscopy}
Bousseau, P., Descombes, P., Le Floch, B., Pioline, B., BPS dendroscopy on local $\PP^2$, {\it Comm. Math. Phys.} {\bf 405} (2024), Paper No. 108, 98 pp.

\bibitem{Bridgeland:NonCompactCY}
Bridgeland, T., Stability conditions on a non-compact Calabi--Yau threefold, {\it Comm. Math. Phys.} {\bf 266} (2006), 715--733.

\bibitem{Bridgeland:Stab}
\bysame, Stability conditions on triangulated categories, {\it Ann. of Math. (2)} {\bf 166} (2007), 317--345.

\bibitem{Bridgeland:K3}
\bysame, Stability conditions on K3 surfaces, {\it Duke Math. J.} {\bf 141} (2008), 241--291.

\bibitem{BridgelandSmith:IHES}
Bridgeland, T., Smith, I., Quadratic differentials as stability conditions,
{\it Publ. Math. Inst. Hautes \'Etudes Sci.} {\bf 121} (2015), 155--278.

\bibitem{CNS:Passage}
Canonaco, A., Neeman, A., Stellari, P., The passage among the subcategories of weakly approximable triangulated categories (with an appendix by C. Haesemeyer), eprint {\tt arXiv:2402.04605}.


\bibitem{FeyYiran:Threefolds}
Cheng, Y., Feyzbakhsh, S., Stability conditions on threefolds, eprint {\tt arXiv:2607.04788}.

\bibitem{hannah:stabonabquot}
Dell, H., Stability conditions on free abelian quotients, {\it \'Epijournal G\'eom. Alg\'ebrique} {\bf 9} (2025), Art.~16, 39 pp.

\bibitem{FKLR:CY}
Feyzbakhsh, S., Koseki, N., Liu, Z., Rekuski, N., Stability conditions on Calabi--Yau threefolds via Brill--Noether theory of curves, eprint {\tt arXiv:2509.24990}.

\bibitem{FLZ:Albanese}
Fu, L., Li, C., Zhao, X., Stability manifolds of varieties with finite {A}lbanese morphisms, {\it Trans. Amer. Math. Soc.} {\bf 375} (2022), 5669--5690.

\bibitem{HKK}
Haiden, F., Katzarkov, L., Kontsevich, M., Flat surfaces and stability structures, {\it Publ. Math. Inst. Hautes \'Etudes Sci.} {\bf 126} (2017), 247--318.

\bibitem{stab-E3}
Haiden, F., Sung, B., The stability manifold of $E\times E \times E$, eprint {\tt arXiv:2410.08028}.

\bibitem{DHL-robotis}
Halpern-Leistner, D., Robotis, A., The space of augmented stability conditions, eprint {\tt arXiv:2501.00710}.

\bibitem{HP:PeriodIndex}
Hotchkiss, J., Perry, A., The period-index conjecture for abelian threefolds and Donaldson--Thomas theory, eprint {\tt arXiv:2405.03315}.

\bibitem{HS:Twisted}
Huybrechts, D., Stellari, P., Proof of C\u{a}ld\u{a}raru's conjecture, in {\it Moduli spaces and arithmetic geometry}, 31--42, Adv. Stud. Pure Math. {\bf 45}, Math. Soc. Japan, Tokyo, 2006.

\bibitem{King}
King, A., Moduli of representations of finite-dimensional algebras, {\it Quart. J. Math. Oxford Ser. (2)} {\bf 45} (1994), 515--530.

\bibitem{KLP}
Kuznetsov, A., Liu, S., Perry, A., Inducing t-structures on semiorthogonal components, eprint {\tt arXiv:2606.26193}.

\bibitem{Li:FanoThreefolds}
Li, C., Stability conditions on Fano threefolds of Picard number 1, {\it J. Eur. Math. Soc. (JEMS)} {\bf 21} (2019), 709--726.

\bibitem{Li:CY}
\bysame, On stability conditions for the quintic threefold, {\it Invent. Math.} {\bf 218} (2019), 301--340.

\bibitem{Li:sb}
\bysame, A real reduction of the manifold of Bridgeland stability conditions, eprint {\tt arXiv:2506.21995}.

\bibitem{chunyi-stability}
\bysame, A remark on stability conditions on smooth projective varieties, eprint {\tt arXiv:2601.22994}.

\bibitem{curves}
Li, C., Macr\`i, E., Perry, A., Stellari, P., Zhao, X., Stability conditions on products of curves and Hilbert schemes of surfaces, eprint {\tt arXiv:2512.14207}.


\bibitem{lieblich:moduli-of-complex}
Lieblich, M., Moduli of complexes on a proper morphism, {\it J.\ Algebraic Geom.} \textbf{15} (2006), 175--206.

\bibitem{Lieblich:Twisted}
\bysame, Moduli of twisted sheaves, {\it Duke Math.\ J.} {\bf 138} (2007), 23--118.

\bibitem{LN}
Lipman, J., Neeman, A., Quasi-perfect scheme-maps and boundedness of the twisted inverse image functor, {\it Illinois J.\ Math.} {\bf 51} (2007), 209--236.

\bibitem{Peize-cubic-fivefolds}
Liu, P., Stability conditions and moduli spaces on the Kuznetsov component of cubic fivefolds, eprint {\tt arXiv:2509.21454}.

\bibitem{Liu:StabProd}
Liu, Y., Stability conditions on product varieties, {\it J.\ Reine Angew.\ Math.} \textbf{770} (2021), 135--157.

\bibitem{LiuMao:Tilt}
Liu, Z., Mao, T., Tilt-stability on singular schemes and Bogomolov--Gieseker-type inequalities, eprint {\tt arXiv:2605.13808}.

\bibitem{macricurves}
Macr\`i, E., Stability conditions on curves, {\it Math.\ Res.\ Lett.} \textbf{14} (2007), 657--672.

\bibitem{MMS:Inducing}
Macr\`i, E., Mehrotra, S., Stellari, P., Inducing stability conditions, {\it J.\ Algebraic Geom.} {\bf 18} (2009), 605--649.

\bibitem{milne:alg-group}
Milne, J.~S., {\it Algebraic groups}, Cambridge Stud. Adv. Math. {\bf 170}, Cambridge University Press, Cambridge, 2017.

\bibitem{mukai:semihomogeneous}
Mukai, S., Semi-homogeneous vector bundles on an Abelian variety, {\it J.\ Math.\ Kyoto Univ.} \textbf{18} (1978), 239--272.

\bibitem{neeman-Grothendieck-duality}
Neeman, A., The Grothendieck duality theorem via Bousfield's techniques and Brown representability, {\it J.\ Amer.\ Math.\ Soc.} {\bf 9} (1996), 205--236.

\bibitem{neeman-derived-cats-grothendieck-duality}
\bysame, Derived categories and Grothendieck duality, in {\it Triangulated categories}, 290--350, London Math. Soc. Lecture Note Ser. {\bf 375}, Cambridge University Press, Cambridge, 2010.

\bibitem{Neeman:BoundedTstructPerfect}
\bysame, Bounded $t$-structures on the category of perfect complexes,
{\it Acta Math.} {\bf 233} (2024), 239--284.

\bibitem{Orlov}
Orlov, D., Equivalences of derived categories and K3 surfaces, {\it J. Math. Sci. (New York)} {\bf 84} (1997), 1361--1381.

\bibitem{NCHPD}
Perry, A., Noncommutative homological projective duality, {\it Adv.\ Math.} {\bf 350} (2019), 877--972.

\bibitem{PPZ:GM}
Perry, A., Pertusi, L., Zhao, X., Stability conditions and moduli spaces for Kuznetsov components of Gushel--Mukai varieties, {\it Geom.\ Topol.} {\bf 26} (2022), 3055--3121.

\bibitem{PT:Moduli}
Piyaratne, D., Toda, Y., Moduli of Bridgeland semistable objects on 3-folds and Donaldson--Thomas invariants, {\it J. Reine Angew. Math.} {\bf 747} (2019), 175--219.

\bibitem{P-t-structures}
Polishchuk, A., Constant families of {$t$}-structures on derived categories of coherent sheaves, {\it Mosc.\ Math.\ J.} \textbf{7} (2007), 109--134.

\bibitem{Polishchuk:lagrangian}
\bysame, Phases of Lagrangian-invariant objects in the derived category
  of an abelian variety, {\it Kyoto J.\ Math.} \textbf{54} (2014), 427--482.

\bibitem{stacks-project}
The Stacks Project Authors, {\it Stacks Project}, \url{https://stacks.math.columbia.edu} (2026).

\bibitem{TodaK3}
Toda, Y., Moduli stacks and invariants of semistable objects on K3 surfaces, {\it Adv. Math.} {\bf 217} (2008), 2736--2781.

\bibitem{woolf}
Woolf, J., Some metric properties of spaces of stability conditions, {\it Bull.\ Lond.\ Math.\ Soc.} \textbf{44} (2012), 1274--1284.

\bibitem{WuZhang:LocalFano}
Wu, D., Zhang, N., Inducing stability conditions on canonical bundles over Fano varieties, in preparation.

\bibitem{Yos:Twisted}
Yoshioka, K., Moduli spaces of twisted sheaves on a projective variety, in {\it Moduli spaces and arithmetic geometry}, 1--30, Adv. Stud.\ Pure Math.\ {\bf 45}, Math.\ Soc.\ Japan, Tokyo, 2006.

\end{thebibliography}
\end{document}